\let\origsection\section
\renewcommand\section{\@ifstar{\starsection}{\nostarsection}}
\newcommand\nostarsection[1]
\sectionprelude\origsection{#1}\sectionpostlude}
\newcommand\starsection[1]
\newcommand\sectionprelude{%
  \vspace{0.5em}
}
\newcommand\sectionpostlude{%
%  \vspace{1em}
}
\setlist{itemsep=1ex,topsep=1ex} % without parskip
\setlist[1]{leftmargin=*}
\setlist[itemize,1]{labelindent=1em}
\setlist[itemize,2]{leftmargin=2pc,labelsep=*}
\renewcommand{\PrintDOI}[1]{%
  \href{http://dx.doi.org/#1}{{\tt DOI:#1}}%
%  \IfEmptyBibField{volume}{, (to appear in print)}{}%
}
\renewcommand{\eprint}[1]{#1}
\numberwithin{equation}{section}
\theoremstyle{plain}
\newtheorem{Thm}{Theorem}[section]
\newtheorem{Lem}[Thm]{Lemma}
\newtheorem{Prop}[Thm]{Proposition}
\newtheorem{Cor}[Thm]{Corollary}
\newtheorem{Thmintro}{Theorem}
\theoremstyle{definition}
\newtheorem{Def}[Thm]{Definition}
\theoremstyle{remark} % no extra space above or below
\newtheorem{Rem}[Thm]{Remark}
\newcommand\bp{\begin{proof}}
\newcommand\ep{\end{proof}}
\mathchardef\mhyph="2D
\newcommand{\msE}{\mathscr{E}}
\newcommand{\msK}{\mathscr{K}}
\newcommand{\msR}{\mathscr{R}}
\newcommand{\mfa}{\mathfrak{a}}
\newcommand{\mfb}{\mathfrak{b}}
\newcommand{\mfg}{\mathfrak{g}}
\newcommand{\mfh}{\mathfrak{h}}
\newcommand{\mfk}{\mathfrak{k}}
\newcommand{\mfm}{\mathfrak{m}}
\newcommand{\mfn}{\mathfrak{n}}
\newcommand{\mfp}{\mathfrak{p}}
\newcommand{\mfq}{\mathfrak{q}}
\newcommand{\mfs}{\mathfrak{s}}
\newcommand{\mfsl}{\mathfrak{sl}}
\newcommand{\mfsu}{\mathfrak{su}}
\newcommand{\mft}{\mathfrak{t}}
\newcommand{\mfu}{\mathfrak{u}}
\newcommand{\mfz}{\mathfrak{z}}
\newcommand{\mfX}{\mathfrak{X}}
\newcommand{\mcC}{\mathcal{C}}
\newcommand{\mcE}{\mathcal{E}}
\newcommand{\mcF}{\mathcal{F}}
\newcommand{\mcG}{\mathcal{G}}
\newcommand{\mcH}{\mathcal{H}}
\newcommand{\mcJ}{\mathcal{J}}
\newcommand{\mcK}{\mathcal{K}}
\newcommand{\mcM}{\mathcal{M}}
\newcommand{\mcO}{\mathcal{O}}
\newcommand{\mcR}{\mathcal{R}}
\newcommand{\mcS}{\mathcal{S}}
\newcommand{\mcT}{\mathcal{T}}
\newcommand{\mcU}{\mathcal{U}}
\newcommand{\mbc}{\mathbf{c}}
\newcommand{\mbs}{\mathbf{s}}
\newcommand{\mbt}{\mathbf{t}}
\newcommand{\C}{\mathbb{C}}
\newcommand{\K}{\mathbb{K}}
\newcommand{\N}{\mathbb{N}}
\newcommand{\Q}{\mathbb{Q}}
\newcommand{\R}{\mathbb{R}}
\newcommand{\Z}{\mathbb{Z}}
\newcommand{\rC}{\mathrm{C}}
\newcommand{\rH}{\mathrm{H}}
\DeclareMathOperator{\ad}{ad}
\DeclareMathOperator{\Ad}{Ad}
\DeclareMathOperator{\Alt}{Alt}
\DeclareMathOperator{\End}{End}
\DeclareMathOperator{\Hom}{Hom}
\DeclareMathOperator{\Rep}{Rep}
\DeclareMathOperator{\Irr}{Irr}
\DeclareMathOperator{\Sym}{Sym}
\DeclareMathOperator{\Tr}{Tr}
\DeclareMathOperator{\Tor}{Tor}
\DeclareMathOperator{\ord}{ord}
\DeclareMathOperator{\diag}{diag}
\DeclareMathOperator{\sgn}{sgn}
\DeclareMathOperator{\sech}{sech}
\newcommand{\id}{\mathrm{id}} % we are primarilly treating id as an object rather than operator; (we need \mathord for better spacing between \otimes)
\newcommand{\hotimes}{\mathbin{\hat\otimes}}
\newcommand{\ns}{\mathrm{ns}}
\newcommand{\nc}{\mathrm{nc}}
\newcommand{\htt}{\mathrm{ht}}
\newcommand{\cH}{\mathrm{cH}}
\newcommand{\rBC}{\mathrm{BC}}
\newcommand{\KZ}{\mathrm{KZ}}
\newcommand{\inte}{\mathrm{int}}
\newcommand{\quasiK}{\mathfrak{X}}
\newcommand{\RE}{\mathrm{RE}}
\newcommand{\Sk}{\mathrm{Sk}}
\newcommand{\tB}{\tilde{B}}
\newcommand{\HH}{\mathrm{HH}}
\newcommand{\fps}{\llbracket h \rrbracket}
\newcommand{\fpser}{\fps}%{\llbracket \hbar \rrbracket}
\newcommand{\fLauser}{[ h^{-1}, h \rrbracket}
\newcommand{\OHbimod}{{}_{\mathrm{res}}\mcO(H)_\counit}
\newcommand{\counit}{\epsilon}
\newcommand{\hlf}[1]{\frac{#1}{2}}
\newcommand{\absv}[1]{\left\lvert#1\right\rvert}
\newcommand{\medwedge}{{\textstyle\bigwedge}}
\newcommand{\qbin}[3]{\genfrac{[}{]}{0pt}{}{#1}{#2}_{#3}}
\newcommand{\eps}{\varepsilon}
\tikzset{
  x=0.7cm, % unit lengths
  y=0.7cm,
  % knot diagram common style
  knot diagram/every knot diagram/.append style={
    consider self intersections,
    clip width=5,
    end tolerance=0.05pt,
    clip radius=7pt,
%    draft mode=strands, % uncomment this for quick compilation
%    draft mode=crossings,
  },
  knot diagram/every strand/.append style={
    thick,
  }
}
\newcommand{\stwbr}[2][]{% #1: style (optional), #2: y-orig; "simple" twist braid starting from (1,y-orig)
\strand[#1] (1, #2+2) .. controls +(0,-0.3) and +(0,0.3) .. (0,#2+1)
.. controls +(0,-0.3) and +(0,0.3) .. (1,#2); }
\newcommand{\braidgen}[3][]{% #1: style (optional), #2: x-orig, #3: y-orig; crossing between (x-orig, y-orig) -> (x-orig+1,y-orig+1) and (x-orig+1,y-orig) -> (x-orig,y-orig+1)
\strand[#1] (#2,#3) .. controls +(0,0.3) and +(0,-0.3) ..
(#2+1,#3+1); \strand[#1] (#2+1,#3) .. controls +(0,0.3) and
+(0,-0.3) .. (#2,#3+1); }
\begin{document}

\title{Comparison of quantizations of symmetric spaces: cyclotomic Knizhnik--Zamolodchikov equations and Letzter--Kolb coideals}

\date{September 13, 2020; minor changes February 28, 2023}

\author{Kenny De Commer}
\address{Vrije Universiteit Brussel}
\email{kenny.de.commer@vub.be}

\author{Sergey Neshveyev}
\address{Universitetet i Oslo}
\email{sergeyn@math.uio.no}

\author{Lars Tuset}
\address{OsloMet - storbyuniversitetet}
\email{larst@oslomet.no}

\author{Makoto Yamashita}
\address{Universitetet i Oslo}
\email{makotoy@math.uio.no}

\thanks{The work of K.DC.~was supported by the FWO grants G025115N and G032919N. The work of S.N.~and M.Y.~was partially supported by the NFR funded project 300837 ``Quantum Symmetry''. M.Y.~also acknowledges support by Grant for Basic Science Research Projects from The Sumitomo Foundation and JSPS Kakenhi 18K13421 at an early stage of collaboration.}

\begin{abstract}
We establish an equivalence between two approaches to quantization of irreducible symmetric spaces of compact type within the framework of quasi-coactions, one based on the Enriquez--Etingof cyclotomic Knizhnik--Zamolodchikov (KZ) equations and the other on the Letzter--Kolb coideals.
This equivalence can be upgraded to that of ribbon braided quasi-coactions, and then the associated reflection operators ($K$-matrices) become a tangible invariant of the quantization.
As an application we obtain a Kohno--Drinfeld type theorem on type $\mathrm{B}$ braid group representations defined by the monodromy of KZ-equations and by the Balagovi\'{c}--Kolb universal $K$-matrices.
The cases of Hermitian and non-Hermitian symmetric spaces are significantly different. In particular, in the latter case a quasi-coaction is essentially unique, while in the former we show that there is a one-parameter family of mutually nonequivalent quasi-coactions.
\end{abstract}

\maketitle

\tableofcontents

\section*{Introduction}

This paper is about quantization of symmetric spaces of compact type.
It will be sufficient to concentrate on the irreducible simply connected symmetric spaces of \emph{type I}, that is, the spaces of the form $U/U^\sigma$ for a compact simply connected simple Lie group $U$ with an involutive automorphism $\sigma$.
Our approach is motivated by the groundbreaking work of Drinfeld \cite{MR1047964}, in which he gave a new algebraic proof of Kohno's theorem \cite{MR927394} on equivalence of the braid group representations that appear as deformations of representations of the symmetric group on tensor powers of some representation of $\mfg=\mfu^\C$. The representations in question are defined by the monodromy of the Knizhnik--Zamolodchikov (KZ) equations, on the one hand, and by the universal $R$-matrix of the Hopf algebraic deformation $U_h(\mfg)$ of the universal enveloping algebra $U(\mfg)$, on the other.

Drinfeld developed a framework of \emph{quasi-triangular quasi-bialgebras}, which captures both types of representations.
He showed that a deformation of $U(\mfg)$ among such quasi-bialgebras is controlled by the \emph{co-Hochschild cohomology} of the coalgebra $U(\mfg)$, up to a natural notion of equivalence derived from tensor categorical considerations.
This cohomology is the exterior algebra $\bigwedge \mfg$, and the part giving the deformation parameter is the one-dimensional space $(\bigwedge^3 \mfg)^\mfg$.
Moreover, this parameter is detected by the eigenvalues of the square of the braiding.

In the course of developing the theory Drinfeld also clarified the geometric structures behind such deformations.
Namely, the first order terms of the deformations correspond to Poisson--Lie group structures on~$U$, or structures of a Lie bialgebra on $\mfu$.
The two types of representations of the braid groups arise from different models of quantizations of Poisson--Lie groups, and Drinfeld's result says that such quantizations are essentially unique. In hindsight, his result can be interpreted as an instance of the \emph{formality principle}, which roughly says that deformations of algebraic structures are controlled by first order terms through a quasi-isomorphism of differential graded Lie algebras.

Having understood quantizations of Poisson--Lie groups, one natural next direction is to look at quantizations of the Poisson homogeneous spaces.
The first important step towards a classification of such spaces was again made by Drinfeld~\cite{MR1243249}: for the standard Poisson--Lie group structure on $U$, they correspond to the real Lagrangian subalgebras of~$\mfg$. A complete classification of these spaces (with connected stabilizers) was then given by Karolinsky~\cite{MR1485682}.

The first classification result for quantizations of Poisson homogeneous spaces was obtained by Podle\'{s} \cite{MR919322}.
He classified the actions of Woronowicz's compact quantum group $\mathrm{SU}_q(2)$ \cite{MR890482} with the same spectral pattern as that of $\mathrm{SU}(2)$ acting on (the functions on) the $2$-sphere $S^2$.
In other words, he considered coactions of the C$^*$-bialgebra $C(\mathrm{SU}_q(2))$, which is a deformation of the algebra of continuous functions on $\mathrm{SU}(2)$ and is dual to (an analytic version of) $U_h(\mfsl_2)$.
Podle\'{s} showed that there is a one-parameter family of isomorphism classes of such coactions.
From the geometric point of view this is explained by the fact that the covariant Poisson structures on $S^2$ form a Poisson pencil~\cite{MR1087382}.

Tensor categorical counterparts of Hopf algebra coactions are module categories.
Although the precise correspondence, through a Tannaka--Krein type duality, came later \citelist{\cite{MR1976459}\cite{MR3121622}\cite{MR3426224}}, in the context of quantization of Poisson homogeneous spaces there is already a rich accumulation of results obtained from various angles, all related to the \emph{reflection equation}.

This equation was introduced by Cherednik \cite{MR774205} to study quantum integrable systems on the half-line.
While braiding (Yang--Baxter operator) represents scattering of two particles colliding in a $1$-dimensional system, a solution of the reflection equation (reflection operator) represents the interaction of a particle with a boundary.
Adding this operator to a braided tensor category (where the Yang--Baxter operators live) gives rise to a new category with a larger space of morphisms, which admits a monoidal product of the braided tensor category from one side, thus yielding a module category~\cite{MR1644317}, or more precisely, a \emph{braided module category}~\cite{MR3248737}.

Matrix solutions of the reflection equation for the universal $R$-matrix of quasi-triangular Hopf algebras lead to \emph{coideal subalgebras}, as originally pointed out by Noumi \cite{MR1413836} and further clarified by Kolb--Stokman \cite{MR2565052}.
In this direction, the best understood class is that of \emph{quantum symmetric pairs}, that is, the coideals which are deformations of $U(\mfg^\theta)$ for a conjugate $\theta$ of $\sigma$ such that $\mfg^\theta$ is maximally noncompact relative to the Cartan subalgebra defining the deformation $U_h(\mfg)$.
Following Koornwinder's work \cite{MR1215439} on the dual coideals of the Podle\'{s} spheres, Letzter \cite{MR1717368} developed a systematic way of constructing such coideal subalgebras $U_h^{\mbt}(\mfg^\theta) < U_h(\mfg)$ for finite type Lie algebras, which was refined and extended by Kolb to Kac--Moody Lie algebras \cite{MR3269184}.
Next, a \emph{universal $K$-matrix} for $U_h^{\mbt}(\mfg^\theta)$, which gives reflection operators in the representations of $U_h^{\mbt}(\mfg^\theta)$, was defined by Kolb and Balagovi\'c \citelist{\cite{MR2453228}\cite{MR3905136}} expanding on the earlier work of Bao and Wang~\cite{MR3864017} on the (quasi-split) type $\mathrm{AIII}$ and $\mathrm{AIV}$ cases. The construction relied on a coideal analogue of Lusztig's bar involution \citelist{\cite{MR3864017}\cite{MR3414769}}. Kolb~\cite{MR4048733} further showed, developing on the ideas from \citelist{\cite{MR1644317}\cite{MR3248737}}, that these structures give rise  to \emph{ribbon twist-braided module categories}.

On the dual side, a deformation quantization of $U/U^\sigma$ from the reflection equation was developed by Gurevich, Donin, Mudrov, and others \citelist{\cite{arXiv:math/991141}\cite{MR1705663}\cite{MR1964382}\cite{MR1998103}}.
Here one sees a close connection to the theory of \emph{dynamical $r$-matrices} \citelist{\cite{MR1404026}\cite{MR1612160}}.

There is a parallel theory of module categories over the Drinfeld category, that is, the tensor category of finite dimensional $\mfg$-modules with the associator defined by the monodromy of the KZ-equations.
The basic idea is to add an extra pole in these equations, then the reflection operator appears as a suitably normalized monodromy around it.
Conceptually, the usual KZ-equations give flat connections on the configuration space of points in the complement of type $\mathrm A$ hyperplane configurations, and the modified equations are obtained by looking at the complement of type $\mathrm B$ hyperplane configurations.
Following early works of Leibman \cite{MR1289327} and Golubeva--Leksin \cite{MR1940926} on monodromy of such equations, Enriquez \cite{MR2383601} introduced \emph{cyclotomic KZ-equations}.
He also defined \emph{quasi-reflection algebras}, a particular class of quasi-coactions of quasi-bialgebras, which can be considered as type $\mathrm B$ analogues of quasi-triangular quasi-bialgebras.
This formalism turned out to have powerful applications to quantization of Poisson homogeneous spaces, where the associator of a quasi-coaction gives rise to a quantization of a dynamical $r$-matrix~\cite{MR2126485}.

Based on these developments, and guided by the categorical duality between module categories and Hopf algebraic coactions, we proposed a conjecture on equivalence between the following structures \cite{MR3943480}:
 \begin{itemize}
 \item a category of finite dimensional representations of $\mfg^\sigma$, considered as a ribbon twist-braided module category over the Drinfeld category, with the associator and ribbon twist-braid defined by the cyclotomic KZ-equations;
 \item a category of finite dimensional modules over a Letzter--Kolb coideal $U_h^{\mbt}(\mfg^\theta)$,  considered as a ribbon twist-braided module category over the category of $U_h(\mfg)$-modules, with the ribbon twist-braid defined by the Balagovi\'c--Kolb universal $K$-matrix.
\end{itemize}
To be precise, the conjecture was formulated in the analytic setting, that is, $q=e^h$ was assumed to be a real number and the categories carried unitary structures.
In this paper, we give a proof of the corresponding conjecture in the formal setting using the framework of quasi-coactions.

It should be mentioned that Brochier \cite{MR2892463} has already proved an interesting equivalence between two quasi-coactions on $U_h(\mfh)$, where $\mfh<\mfg$ is the Cartan subalgebra and one of the quasi-coactions comes from the cyclotomic KZ-equations associated with a finite order automorphism $\sigma$ such that $\mfg^\sigma = \mfh$. In his setting, the extra deformation parameter space is the formal group generated by the Cartan algebra.
The construction of the equivalence follows the strategy of \cite{MR1047964}, this time relying on the co-Hochschild cohomology studied by Calaque \cite{MR2233127}.

\smallskip
Now, let us sketch what we concretely carry out:
\begin{itemize}
\item Show that the quasi-coactions of Drinfeld's quasi-bialgebra induced by the cyclotomic KZ-equations are generically universal among the quasi-coactions deforming $\Delta$ on $U(\mfg^\sigma)$.
\item Give a complete classification of the corresponding ribbon twist-braids and show that the corresponding $K$-matrices give a complete invariant of the quasi-coactions.
\item In the Hermitian case (see below), when there is a one-parameter family of nonequivalent quasi-coactions, establish a correspondence with Poisson structures on $U / U^\sigma$ by studying coisotropic subgroups which are conjugates (`Cayley transforms') of $U^\sigma$.
\item Make a concrete comparison with the Letzter--Kolb coideals and the Balagovi\'c--Kolb braided module categorical structures.
\end{itemize}

In the first step, the main idea is to reduce the problem to vanishing of obstructions in a suitable version of the co-Hochschild cohomology.
This strategy is quite standard, see \citelist{\cite{MR1047964}\cite{MR2892463}}, but while these papers relied on the braiding/ribbon braids to have a good control of the cohomology, we work with the cohomology classes directly, analogously to Donin--Shnider's approach \cite{MR1390978} to Lie bialgebra quantization, and the identification of the ribbon twist-braids comes only towards the end.
The relevant co-Hochschild cohomology turns out to be isomorphic to $\bigwedge \mfm^\C$ for $\mfm^\C = \mfg \ominus \mfg^\sigma$, and the deformation of a quasi-coaction is controlled by the invariant part of the second cohomology, that is, $(\bigwedge^2 \mfm^\C)^{\mfg^{\sigma}}$.
Up to complexification, this space can be interpreted as the space of $U$-invariant bivectors on $U/U^\sigma$, hence there is a direct connection to equivariant deformation quantization.
This is where one sees the formality principle in action.

At this point we encounter an important dichotomy between the \emph{Hermitian} and the \emph{non-Hermitian} cases.
Although we already discussed it in \cite{MR3943480} based on the parameters $\mbt$ for the coideals $U_h^{\mbt}(\mfg^\theta)$, the following observation is perhaps more illuminating: the dimension of $(\bigwedge^2 \mfm^\C)^{\mfg^{\sigma}}$ is either zero or one, and is equal to that of the center of $\mfg^\sigma$.
In the Hermitian case, and only in this case, this dimension is one and the corresponding homogeneous space $U/U^\sigma$ has an invariant Hermitian structure, induced by an element of the center of $\mfg^\sigma$ (hence the name).

In the non-Hermitian case, the triviality of the center eliminates cohomological obstructions, quickly leading to rigidity of the algebra structure and coaction homomorphisms on $U(\mfg^\sigma)$.
Our results in this case can be summarized as follows.

\begin{Thmintro}[Section \ref{ssec:class-non-Hermitian} and Theorem~\ref{thm:uniq-tw-br-nonherm-case}]\label{thm:mainA}
Let $\mfu^\sigma < \mfu$ be a non-Hermitian irreducible symmetric pair.
Suppose that $\alpha \colon U(\mfg^\sigma)\fpser \to U(\mfg^\sigma) \otimes U(\mfg)\fpser$ and $\Psi \in U(\mfg^\sigma) \otimes U(\mfg)^{\otimes 2} \fpser$ define a quasi-coaction of Drinfeld's quasi-bialgebra $(U(\mfg)\fpser, \Delta, \Phi_{\KZ})$ that deforms $\Delta\colon U(\mfg^\sigma) \to U(\mfg^\sigma) \otimes U(\mfg)$, and let $(\alpha', \Psi')$ be another such pair.
Then $(\alpha, \Psi)$ and $(\alpha', \Psi')$ are obtained from each other by twisting. Moreover, the quasi-coaction $(U(\mfg^\sigma)\fpser, \alpha, \Psi)$ admits a unique ribbon $\sigma$-braid $\mcE$ with prescribed constant term $\mcE^{(0)}\in 1\otimes Z(U)$.
\end{Thmintro}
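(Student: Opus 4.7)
The plan is to run a standard inductive, order-by-order deformation argument in the formal parameter $h$, along the lines of Drinfeld's proof of Kohno's theorem and the Donin--Shnider approach to Lie bialgebra quantization. The key cohomological input is the identification, flagged in the introduction, of the relevant co-Hochschild cohomology with $\bigwedge \mfm^\C$, where $\mfm^\C = \mfg \ominus \mfg^\sigma$; the $\mfg^\sigma$-invariants of this exterior algebra govern equivariant deformations. In the non-Hermitian irreducible case $\mfm^\C$ is a nontrivial irreducible $\mfg^\sigma$-module, so $(\mfm^\C)^{\mfg^\sigma} = 0$, and $\dim \bigl(\bigwedge^2 \mfm^\C\bigr)^{\mfg^\sigma} = \dim Z(\mfg^\sigma) = 0$. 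These vanishings kill all obstructions in the degrees that will appear.

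For the first assertion, I would build a twist $F \in U(\mfg^\sigma) \otimes U(\mfg)\fpser$ inductively: suppose $F_n$ has been constructed so that twisting $(\alpha, \Psi)$ by $F_n$ matches $(\alpha', \Psi')$ modulo $h^{n+1}$. Writing out the coassociativity-type equations for a quasi-coaction and expanding to order $h^{n+1}$, the discrepancy assembles into a pair of cochains forming a $2$-cocycle in the relative co-Hochschild complex computing $\bigwedge\mfm^\C$, and the $\mfg^\sigma$-equivariance of all ingredients forces the class to be $\mfg^\sigma$-invariant. Since $\bigl(\bigwedge^2 \mfm^\C\bigr)^{\mfg^\sigma}=0$, the class is trivial and can be absorbed into a correction of order $h^{n+1}$ to $F_n$, yielding $F_{n+1}$. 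Passing to the formal limit produces a twist identifying $(\alpha, \Psi)$ with $(\alpha', \Psi')$.

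For the existence and uniqueness of a ribbon $\sigma$-braid $\mcE$ with prescribed constant term $\mcE^{(0)} \in 1\otimes Z(U)$, I would run parallel inductions in the same complex. Existence: starting from $\mcE_0 = \mcE^{(0)}$, which trivially satisfies the ribbon/cylinder axioms at order $0$, given a partial $\mcE_n$ satisfying these axioms modulo $h^{n+1}$, the order-$(n{+}1)$ obstruction is a $2$-cocycle whose invariant class lies in $\bigl(\bigwedge^2\mfm^\C\bigr)^{\mfg^\sigma}=0$, so $\mcE_n$ extends. Uniqueness: if $\mcE$ and $\mcE'$ share the same constant term and coincide modulo $h^{n+1}$, their difference at order $n+1$ is a $1$-cocycle with class in $(\mfm^\C)^{\mfg^\sigma}=0$, so they agree to one further order; by induction $\mcE = \mcE'$. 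The prescribed constant term is preserved throughout because all inductive corrections have strictly positive $h$-order.

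The main technical obstacle will be setting up the correct co-Hochschild complexes and verifying that the obstruction cochains arising from the inductive analysis genuinely are cocycles representing $\mfg^\sigma$-invariant classes --- this involves careful bookkeeping of the interplay between $\Phi_{\KZ}$, the twist action on $(\alpha, \Psi)$, and the ribbon/cylinder axioms for $\mcE$. A related subtlety is separating the trivial gauge freedom in the twist from honest cohomology classes, so that the invariance of the obstruction is not obscured by permissible adjustments. Once this framework is established, the non-Hermitian hypothesis enters uniformly through the single input $Z(\mfg^\sigma)=0$, closing both assertions simultaneously.
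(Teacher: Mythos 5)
Your argument for the first assertion (twist-equivalence of associators) is essentially the paper's: reduce to coaction map $\Delta$ (this uses Whitehead's first lemma, which you don't mention but is straightforward since $\mfg$ and $\mfg^\sigma$ are semisimple), then run the inductive argument where the order-$n$ discrepancy $\Psi^{(n)}-\Psi'^{(n)}$ is a $2$-cocycle in the co-Hochschild complex $B_{\mfg,\mfg^\sigma}$, whose second cohomology $(\bigwedge^2\mfm^\C)^{\mfg^\sigma}$ vanishes. This matches Theorem~\ref{thm:Psi-rigid} and Remark~\ref{rem:que-rigidity-non-Hermitian}.

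However, your treatment of the ribbon $\sigma$-braid has two problems, the second of which is a genuine gap. First, the uniqueness step is imprecise: you conclude from ``the difference $X$ is a $1$-cocycle with class in $(\mfm^\C)^{\mfg^\sigma}=0$'' that $X=0$, but vanishing of the class only gives $X = T_{01}-T_0$ for some central $T\in U(\mfg^\sigma)$, which need not be zero (central elements of $U(\mfg^\sigma)$ form a large algebra even when $\mfz(\mfg^\sigma)=0$). The paper's proof of Theorem~\ref{thm:uniq-br-tw-non-Herm} instead uses the two identities~\eqref{eq:rib-sig-tw-1} and~\eqref{eq:rib-sig-tw-2} \emph{separately} to force $X=1\otimes Y$ with $Y\in\mfg$ primitive, and then the intertwining relation~\eqref{eq:rib-sig-tw-0} to get $Y\in\mfz_\mfg(\mfk)$; this vanishes because \emph{both} $(\mfm^\C)^\mfk=0$ and $\mfz(\mfg^\sigma)=0$. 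Second, and more seriously, your proposed \emph{existence} proof by order-by-order extension does not match the paper and has a real gap: modifying $\mcE_n$ by $h^{n+1}T$ changes the two error terms from~\eqref{eq:rib-sig-tw-1} and~\eqref{eq:rib-sig-tw-2} by $T_{01,2}-T_{0,2}$ and $T_{0,12}-T_{0,1}-T_{0,2}$ respectively, so one is trying to solve \emph{two} equations with a single unknown, and it is not clear that the pair of obstructions assembles into a single $2$-cocycle whose vanishing suffices. The paper bypasses this entirely: existence is obtained from the explicit element $\exp(-h(2t^\mfk_{01}+C^\mfk_1))$ produced by the cyclotomic KZ-equations (credited to Enriquez; see the proof of Theorem~\ref{thm:uniq-tw-br-nonherm-case}), and then transferred to an arbitrary $(\alpha,\Psi)$ via the twist-equivalence from the first part and formula~\eqref{eq:braid-twist0}. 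You would need either this explicit construction or a much more careful analysis of the joint obstruction structure to close the existence argument.
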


In the above formulation the ribbon twist-braid is allowed to live in a certain completion of $U(\mfg^\sigma) \otimes U(\mfg)\fps$.
Namely, consider the \emph{multiplier algebra} of the algebra of finitely supported functions on the dual of $U^\sigma$ \cite{MR1378538}, which is the direct product of full matrix algebras
\[
\mcU(G^\sigma) = \prod_\pi \End(V_\pi),
\]
where $\pi$ runs over the irreducible finite dimensional representations of $\mfg^\sigma$ which appear in finite dimensional representations of $\mfg$.
We can further define
\[
\mcU(G^\sigma \times G^n) = \prod_{\pi, \pi_1, \dots, \pi_n} \End(V_\pi) \otimes \End(V_{\pi_1}) \otimes \dots \otimes \End(V_{\pi_n}),
\]
where $\pi_1, \dots \pi_n$ run over the irreducible finite dimensional representations of $\mfg$.
Then we take $\mcE$ as an element of $\mcU(G^\sigma\times G)\fps$.

The situation is more interesting in the Hermitian case.
Even up to equivalence defined by twisting, the quasi-coactions are no longer unique. In this case we show that generic quasi-coactions are equivalent to the ones arising from the cyclotomic KZ-equations with prescribed coefficients \citelist{\cite{MR2126485}\cite{MR3943480}}: the associator $\Psi_{\KZ,s; \mu}$, for parameters $s \in \C\setminus i\Q^\times$ and $\mu \in h\C\fpser$, is given as the normalized monodromy from $w = 0$ to $w = 1$ of the differential equation
\[
H'(w) = \left( \frac{\hbar(t^\mfk_{12} - t^\mfm_{12})}{w + 1} + \frac{\hbar t^\mfu_{12}}{w-1} + \frac{\hbar(2 t^\mfk_{01} + C^\mfk_1) + (s+\mu) Z_1}{w} \right) H(w).
\]
Here we put $\hbar = \frac{h}{\pi i}$, and the coefficients are given as follows:  $t^\mfu$, $t^\mfk$, $t^\mfm $ are the canonical $2$-tensors of $\mfu$, $\mfk = \mfu^\sigma$, and $\mfm = \mfu \ominus \mfk$ respectively, $C^\mfk$ is the Casimir element of $\mfk$ associated to $t^\mfk$, and $Z$ is a normalized element of $\mfz(\mfk)$.

If $s=0$, then $\Psi_{\KZ,s;\mu}$ makes sense in $U(\mfg^\sigma) \otimes U(\mfg)^{\otimes 2} \fpser$, but otherwise we can only say that $\Psi_{\KZ,s;\mu}$ is in $\mcU(G^\sigma\times G^2)\fps$.
It is therefore convenient to start working with the multiplier algebras throughout instead of the universal enveloping algebras.
Fortunately, the concepts of quasi-bialgebras and quasi-coactions have straightforward formulations in this setting, and from the categorical point of view this formalism is actually even more natural when dealing with semisimple module categories.
Then $(\mcU(G^\sigma)\fpser, \Delta, \Psi_{\KZ, s; \mu})$ is a quasi-coaction of $(\mcU(G)\fpser, \Delta, \Phi_{\KZ})$, and our results can be summarized as follows.

\begin{Thmintro}[Theorems \ref{thm:modified-KZ-universality} and \ref{thm:uniq-tw-br-herm-case}]
Let $\mfu^\sigma < \mfu$ be an irreducible Hermitian symmetric pair, and let $\omega$ be an invariant symplectic form on $U / U^\sigma$.
There is a countable subset $A \subset \C$ with the following property: if $\alpha \colon \mcU(G^\sigma)\fpser \to \mcU(G^\sigma \times G)\fpser$ and $\Psi \in \mcU(G^\sigma \times G^2) \fpser$ define a quasi-coaction of $(\mcU(G)\fpser, \Delta, \Phi_{\KZ})$ that deforms $\Delta\colon \mcU(G^\sigma) \to \mcU(G^\sigma\times G)$, and the first order term $\Psi^{(1)}$ of $\Psi$ satisfies $\langle \omega, \Psi^{(1)} \rangle \in \C \setminus A$, then there is a pair $(s, \mu)$, unique up to translation by $(2i\Z,0)$, such that $(\mcU(G^\sigma)\fpser, \alpha, \Psi)$ is equivalent to $(\mcU(G^\sigma)\fpser, \Delta, \Psi_{\KZ, s; \mu})$.
Moreover, $(\mcU(G^\sigma)\fpser, \alpha, \Psi)$ admits a unique ribbon $\sigma$-braid $\mcE$ with prescribed constant term $\mcE^{(0)}\in 1\otimes\exp(-\pi i s Z)Z(U)$.
\end{Thmintro}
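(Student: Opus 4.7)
The plan is to run an order-by-order deformation argument analogous to the strategy of \cite{MR1047964,MR2892463}, but now keeping track of the extra one-dimensional parameter space that is distinctive of the Hermitian case. As recalled in the introduction, the co-Hochschild cohomology controlling deformations of $(\Delta,\Phi_{\KZ})$-quasi-coactions on $U(\mfg^\sigma)$ is $\bigwedge\mfm^\C$, and the first-order deformation space $(\bigwedge^2\mfm^\C)^{\mfg^\sigma}$ is now one-dimensional. Concretely, pairing with the invariant symplectic form $\omega$ gives a canonical linear functional on this space, and as $s$ varies in $\C$ and $\mu$ varies in $h\C\fps$ the parameter combination entering the KZ equation traces out this line.

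First I would fix the leading-order parameter by choosing $s\in\C$ so that $\langle\omega,\Psi_{\KZ,s;0}^{(1)}\rangle$ matches $\langle\omega,\Psi^{(1)}\rangle$. The ambiguity $s\mapsto s+2i\Z$ reflects the periodicity of the monodromy factor $\exp(-\pi isZ)$ around the extra puncture at $w=0$. With $s$ fixed, I would proceed by induction on the order of $h$: assuming equivalence has been established modulo $h^{n+1}$, the obstruction to extending the twist one step further lives in the second co-Hochschild cohomology, which after restriction to $\mfg^\sigma$-invariants is the one-dimensional space detected by $\omega$. Since the higher-order coefficient $\mu^{(n)}$ of $\mu$ can be freely tuned at each step, this obstruction can always be killed; the remaining coboundary is then absorbed by adjusting the twist, completing the induction. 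Uniqueness of $(s,\mu)$ up to $(2i\Z,0)$ is checked by running the same argument in reverse: an equivalence between $(\Delta,\Psi_{\KZ,s;\mu})$ and $(\Delta,\Psi_{\KZ,s';\mu'})$ forces $\langle\omega,\Psi^{(1)}_{\KZ,s;0}-\Psi^{(1)}_{\KZ,s';0}\rangle=0$, hence $s-s'\in 2i\Z$, and then $\mu=\mu'$ by the same inductive step.

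The exceptional set $A$ enters through the following mechanism. At each order, producing the twist requires inverting certain linear operators on $\mcU(G^\sigma\times G)\fps$ built from $\Psi^{(1)}$, essentially the action of a spectral parameter that appears through the $\hbar(2t^\mfk_{01}+C^\mfk_1)+(s+\mu)Z_1$ term in the cyclotomic KZ equation. These operators fail to be invertible on weight spaces where the difference of eigenvalues falls into the resonance locus of the normalized monodromy, which is a countable subset of $\C$ in the value of $\langle\omega,\Psi^{(1)}\rangle$; taking $A$ to be the union of these resonance values ensures that every inductive step goes through.

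For the ribbon $\sigma$-braid part I would follow the template of Theorem~\ref{thm:mainA}. A ribbon $\sigma$-braid $\mcE$ is a solution of a fixed-point equation in $\mcU(G^\sigma\times G)\fps$, and the ambiguity in such a solution is controlled by $\mfz(\mfk)$, the center of $\mfg^\sigma$. In the Hermitian case this center is one-dimensional and spanned by $Z$, so prescribing the constant term $\mcE^{(0)}\in 1\otimes\exp(-\pi isZ)Z(U)$ eliminates precisely this ambiguity; existence and uniqueness then follow by the same order-by-order construction, with obstructions at each stage landing in the one-dimensional invariant cohomology and being killed using the normalization of $\mcE^{(0)}$. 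The main technical obstacle throughout is the identification of $A$: one has to verify that the KZ monodromy really realizes the full one-parameter family of cohomology classes without unexpected degenerations away from the resonance locus, which requires a careful analysis of the hypergeometric-type integrals defining $\Psi_{\KZ,s;\mu}$ and of how the distinguished generator $Z\in\mfz(\mfk)$ acts on the weight decomposition of $\mfm^\C$.
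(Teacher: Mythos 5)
Your high-level plan---fix $s$ at leading order by matching the class of $\Psi^{(1)}$ in $\rH^2(B_{G,G^\sigma})$, then induct on $h$-order and kill the one-dimensional obstruction at each step by tuning $\mu^{(n)}$---is the same strategy the paper uses (proof of Theorem~\ref{thm:modified-KZ-universality}). However, your account of where the countable set $A$ comes from is wrong, and this is not a cosmetic point.

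You describe $A$ as arising from ``resonance'' failures of invertibility of certain linear operators ``at each order'' of the induction, with the resonance locus of the KZ monodromy re-surfacing as a countable forbidden set for $\langle\omega,\Psi^{(1)}\rangle$. In fact, the obstruction to the construction lives entirely in the choice of $s$ at leading order, and never recurs. The explicit expansion of Proposition~\ref{prop:expand-modified-Psi} shows
$$
\langle\Omega,\Psi_{\KZ,s}^{(1)}\rangle = -\tfrac{i}{2}\tanh\bigl(\tfrac{\pi s}{2}\bigr)\dim\mfm,
$$
so matching $\langle\Omega,\Psi^{(1)}\rangle$ amounts to solving $\tanh(\pi s/2)=c$. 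Two things can go wrong: $\tanh$ omits the values $\pm 1$ from its range (producing the excluded values $\pm\tfrac{i}{2}\dim\mfm$), and the solution $s$ may fall in $i\Q^\times$, where $\Psi_{\KZ,s}$ is simply not defined because $\ad(sZ)$ has positive integers in its spectrum (producing the excluded values $\tfrac{i(\zeta-1)}{2(\zeta+1)}\dim\mfm$ for roots of unity $\zeta\ne\pm 1$). That is the set $A$. Once a valid $s$ is fixed, Corollary~\ref{cor:Psi-chi-expansion} shows the sensitivity of $\Psi_{\KZ,s;\mu}$ to $\mu^{(n)}$ is proportional to $\sech^2(\pi s/2)(t^{\mfm_+}_{12}-t^{\mfm_-}_{12})$, and $\sech^2(\pi s/2)\ne 0$ automatically for $s\notin i\Q^\times$; so the inductive step never encounters a new obstruction. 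There is no weight-space inversion and no recurrent resonance. Similarly, your uniqueness sketch for $\mu$ (``by the same inductive step'') glosses over the actual argument, which requires using $\rH^1(B_{G,G^\sigma})=0$ to reduce to $\mcH=1 \bmod h^{n+1}$ before reading off a contradiction with Corollary~\ref{cor:Psi-chi-expansion}.

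For the ribbon $\sigma$-braid part, your description is too coarse to be right. You say the obstructions at each stage land in ``the one-dimensional invariant cohomology'' and are killed by the normalization of $\mcE^{(0)}$. In the paper's proof of Theorem~\ref{thm:uniq-tw-br-herm-case}, the ambiguity at each order is a central element $Y\in\mfz_\mfg(\mfk)=\C Z$, and the key step is not a cohomology vanishing of the type you describe: one computes from the ribbon-braid axioms two identities for $T=\mcE'^{(n+1)}-\mcE^{(n+1)}$, uses $\rH^1(B_{G,G^\sigma})=0$ to express $T$ explicitly up to a central $S$, and then observes that the commutator term $[p\,t-t^\mfm,1\otimes Y]$ must lie in $\mcU(G^\sigma\times G^\sigma)$, which forces $Y=0$ because $\ad Z$ acts by nonzero scalars on $\mfm_\pm$. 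The normalization of $\mcE^{(0)}$ pins down the $Z(U)$-ambiguity (via equation~\eqref{eq:Ad-g}, which determines $\Ad g$ on $\mfm_\pm$), but the $h^n$-order uniqueness does not come from killing a class in a one-dimensional cohomology.
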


We resolve the cohomological obstruction to equivalence by looking at the expansion of $\Psi_{\KZ,s; \mu}$, where we follow Enriquez and Etingof's work \cite{MR2126485} on quantization of dynamical $r$-matrices. Up to a coboundary, $\Psi_{\KZ,s; \mu}$ has the expansion
\begin{equation*}
\Psi_{\KZ, s; \mu} \sim 1 - \hlf{h}\tanh\mathopen{}\left(\hlf{\pi (s + \mu)}\right)  \sum_{\alpha \in \Phi_\nc^+} \frac{(\alpha,\alpha)}{2} 1 \otimes \left( X_\alpha \otimes X_{-\alpha} - X_{-\alpha} \otimes X_{\alpha} \right) + \cdots,
\end{equation*}
where $\Phi_\nc^+$ is the set of positive roots in $\mfm^\C$ with respect to a choice of Cartan subalgebra in~$\mfg^\sigma$, and $X_{\pm \alpha}$ is a normalized root vector for $\pm \alpha$, see Sections \ref{ssec:cyclotomic-KZ} and \ref{sec:class-assoc-herm} for details.
This shows that, under a perturbation of $\mu$, the associator changes in the term one order higher than the perturbation, with a precise control of the cohomology class (formal Poisson structure) of the difference in that term. This leads to the universality of quasi-coactions with the associators $\Psi_{\KZ,s; \mu}$ and
can be interpreted as `poor man's formality' for equivariant deformation quantization.

We next apply these results to the Letzter--Kolb coideals. Since our classification is formulated in the framework of multiplier algebras, we show that the coideals indeed give rise to such structures, essentially by taking a completion. It should be stressed that the formalism of multiplier algebras is important not only for making sense of $\Psi_{\KZ, s; \mu}$. The second and even more important reason is that it allows us to check that the coactions defined by the Letzter--Kolb coideals are twistings of $\Delta$. The point is that since~$\mfg^\sigma$ is not semisimple in the Hermitian case, the standard arguments based on Whitehead's first lemma are not applicable. By working with the multiplier algebras, which are built out of semisimple algebras, we can circumvent the nonvanishing of Lie algebraic cohomological obstructions. We still need to use Letzter's result~\cite{MR1742961} on existence of spherical vectors for this, which means that we have to consider $*$-coideals $U^\mbt_h(\mfg^\theta)$.

Next, in the Hermitian case, we have to verify the condition on the first order term $\Psi^{(1)}$. For this we study Poisson homogeneous structures on $U/U^\sigma$.
More precisely, we have to compare two Poisson structures, corresponding to two ways we obtain the quasi-coactions.
On the one hand, from the cyclotomic KZ-equations we obtain a Poisson pencil \cite{MR1357743}, where one takes the sum of the left action of the standard $r$-matrix $r$ on $U / U^\sigma$ and a scalar multiple of the Kostant--Kirillov--Souriau bracket, which agrees with the bracket defined by the right action of $r$.
On the other hand, from the coideals we obtain the reduction of the Sklyanin bracket to quotients by \emph{coisotropic subgroups}.

Starting from the model $\sigma = \theta$ in the maximally noncompact position, where the subgroup is coisotropic~\cite{MR2102330}, we take a distinguished one-parameter family of subgroups $U^{\theta_\phi}$ that are conjugate to~$U^\theta$ by interpolated Cayley transforms, and show that the associated fixed point subgroups $U^{\theta_\phi}$ remain coisotropic. At the level of Lie algebras, this construction interpolates between the maximally noncompact subalgebra~$\mfg^\theta$ and the maximally compact one $\mfg^\nu$ (which contains $\mfh$).
Moreover, the Lie algebras $\mfg^{\theta_\phi}$ turn out to be the classical limits of the Letzter--Kolb $*$-coideals $U^\mbt_h(\mfg^\theta)$.
By a detailed analysis of the Cayley transforms, we are able to find the relation between the parameters $\phi$ and $\mbt$, as well as to compute the cohomology classes of $\Psi^{(1)}$ for the associators we get. In a bit imprecise form these results are summarized as follows.

\begin{Thmintro}[Theorems \ref{thm:qbialg-compar-KZ-LK-non-Hermitian}, \ref{thm:compar-rib-tw-br-KZ-LK-non-Hermitian}, \ref{thm:qbialg-compar-KZ-LK-Hermitian}, and \ref{compar-rib-tw-br-KZ-Hermitian}]\label{thm:main}
There is a parameter set $\mcT^*$ (consisting of one point $\mbt=0$ in the non-Hermitian case) defining $*$-coideals $U^\mbt_h(\mfg^\theta)$ and satisfying the following properties.
For every $\mbt\in\mcT^*$, the coideal $U^\mbt_h(\mfg^\theta)$ gives rise to a coaction of a multiplier bialgebra which is equivalent to the quasi-coaction $(\mcU(G^\theta_\mbt)\fps,\Delta,\Psi_{\KZ,s;\mu})$ of $(\mcU(G)\fps,\Delta,\Phi_\KZ)$, where $G^\theta_\mbt<G$ is a subgroup conjugate to $G^\theta$, while $s\in\R$ and $\mu\in h\R\fps$ are uniquely determined parameters (equal to $0$ in the non-Hermitian case), with $s$ given by an explicit formula.
Under this equivalence, the Balagovi\'c--Kolb ribbon twist-braids correspond to the ones coming from the cyclotomic KZ-equations.
\end{Thmintro}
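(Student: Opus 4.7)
The plan is to reduce Theorem~C to the universality and uniqueness results in Theorems~A and~B by exhibiting each Letzter--Kolb coideal $U^\mbt_h(\mfg^\theta)$ as a quasi-coaction of the multiplier quasi-bialgebra $(\mcU(G)\fps,\Delta,\Phi_\KZ)$ that deforms the trivial coaction $\Delta$ on a suitable conjugate $\mcU(G^\theta_\mbt)$ of $\mcU(G^\theta)$. Building on the analysis of the invariant part of $\bigwedge^2\mfm^\C$ and the Poisson pencil on $U/U^\sigma$ already set up in the paper, the identification of $s$ and $\mu$ will drop out of a first-order Poisson-geometric calculation once the LK coideal is placed in the multiplier-algebra framework.

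First I would construct $G^\theta_\mbt$ by writing the Cartan involution $\theta$ in maximally noncompact position and twisting by a one-parameter family of interpolated Cayley transforms, yielding involutions $\theta_\phi$ that interpolate between $\theta$ and the maximally compact $\nu$. I would verify that each $U^{\theta_\phi}$ remains coisotropic inside the Poisson--Lie group $U$, and that the Lie algebras $\mfg^{\theta_\phi}$ are exactly the classical limits of the LK $*$-coideals $U^\mbt_h(\mfg^\theta)$ under a parameter correspondence $\phi\leftrightarrow\mbt$ extracted from the Cayley transform. Next I would complete $U^\mbt_h(\mfg^\theta)$ in the multiplier-algebra sense, using Letzter's spherical-vector result (which forces the restriction to $*$-coideals) to show that the resulting inclusion is a twisting of $\Delta\colon\mcU(G^\theta_\mbt)\to\mcU(G^\theta_\mbt\times G)$; here working with the semisimple building blocks of the multiplier algebra is essential to bypass the failure of Whitehead's lemma on $\mfg^\sigma$ in the Hermitian case.

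In the non-Hermitian case the conclusion is now immediate: the quasi-coaction just constructed is a deformation of $\Delta$, so by Theorem~A it is equivalent to $(\mcU(G^\theta)\fps,\Delta,\Psi_{\KZ,0;0})$, and the ribbon twist-braid of Balagovi\'c--Kolb is identified with the KZ one by the uniqueness clause applied to the common constant term $\mcE^{(0)}\in 1\otimes Z(U)$. In the Hermitian case I would compute the first-order term $\Psi^{(1)}$ of the associator produced by the LK coideal. The computation should reproduce (up to a coboundary) the Poisson bivector on $U/U^{\theta_\phi}$ obtained by reducing the Sklyanin bracket via the coisotropic subgroup $U^{\theta_\phi}$, which should in turn match a specific member of the cyclotomic Poisson pencil $r^{\mathrm{left}} + c\,\text{KKS}$. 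Pairing with an invariant symplectic form $\omega$ converts this into the scalar $\langle\omega,\Psi^{(1)}\rangle$; I would then check that, as $\mbt$ varies over $\mcT^*$, this scalar avoids the countable bad set $A$, invoke Theorem~B to produce a unique $(s,\mu)$, and read the explicit formula for $s$ directly off the Cayley transform parameter $\phi(\mbt)$.

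The ribbon twist-braid comparison reduces to matching constant terms: the Balagovi\'c--Kolb universal $K$-matrix and the normalized KZ monodromy both produce central elements $\mcE^{(0)}\in 1\otimes\exp(-\pi isZ)Z(U)$ for the same $s$ computed above, so the uniqueness clause of Theorem~B gives the identification after twisting. The main obstacle I expect is the first-order Poisson computation in the Hermitian case: getting the cohomology class of $\Psi^{(1)}$ for the LK coideal in a form that can be directly compared with the KZ-side formula involving $\tanh(\pi(s+\mu)/2)\sum_{\alpha\in\Phi_\nc^+}(\alpha,\alpha)\bigl(X_\alpha\otimes X_{-\alpha}-X_{-\alpha}\otimes X_\alpha\bigr)$ requires a careful bookkeeping of how Cayley transforms interact with the coideal parameters, including the normalization of the central element $Z\in\mfz(\mfk)$ and the sign and scale conventions that make the pairing with $\omega$ agree on the nose.
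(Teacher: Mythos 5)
Your proposal follows the paper's strategy essentially verbatim: interpolated Cayley transforms to produce the coisotropic subgroups $G^\theta_\mbt$ and match the classical limits of the coideals; Letzter's spherical-vector result to put $U^\mbt_h(\mfg^\theta)$ into the multiplier framework as a twist of $\Delta\colon\mcU(G^\theta_\mbt)\to\mcU(G^\theta_\mbt\times G)$; rigidity (Theorem~A) in the non-Hermitian case; the first-order Poisson computation pairing $\Psi^{(1)}$ against the fundamental $2$-cycle in the Hermitian case (reproducing $s_\phi$ via $\sin(\pi\phi/2)=\tanh(\pi s_\phi/2)$); and the uniqueness of ribbon twist-braids with prescribed constant term to finish the comparison. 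The only element left implicit is the role of the Drinfeld twist $\mcF=1+h\tfrac{ir}{2}+O(h^2)$ that converts the $\Delta_h$-coaction into a quasi-coaction over $(\mcU(G)\fps,\Delta,\Phi_\KZ)$ before the universality theorems can be invoked, but you clearly intend this when you write that the inclusion is ``a twisting of $\Delta$''.
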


This implies a Kohno--Drinfeld type result (Theorems \ref{thm:Kohno-Drinfeld-non-Herm} and \ref{thm:Kohno-Drinfeld-Herm}) for quantum symmetric pairs, stating that representations of type $\mathrm B$ braid groups arising from the coideals and the cyclotomic KZ-equations are equivalent.

A formula for the parameter $\mu$ in Theorem~\ref{thm:main} can in principle be obtained by comparing the eigenvalues of the reflection operators in the two pictures.
In the general case this step might be somewhat involved, but at least for the $\mathrm{AIII}$ case (which corresponds to the symmetric pairs $\mfs(\mfu_p \oplus \mfu_{N-p}) < \mfsu_{N}$) this can be done thanks to the classification of reflection operators by Mudrov \cite{MR1917138}.

So far we have discussed the case of irreducible symmetric spaces of type I, i.e., $U/U^{\sigma}$ with $U$ simple.
However, the \emph{type II} case, corresponding to $U$ itself as a symmetric space, or the quotient of $U \times U$ by the diagonal subgroup, can be handled in essentially the same way as the non-Hermitian type~I cases.
In particular, Theorems \ref{thm:mainA} and \ref{thm:main} can be adapted to this case. This implies that an analogue of Theorem~\ref{thm:main} holds in general for Letzter--Kolb $*$-coideals of $U_h(\mfg)$ with $\mfg$ semisimple.

\medskip
Let us now briefly summarize the contents of the paper.
In Section \ref{sec:prelim} we recall basic definitions and introduce conventions which are used throughout the paper.

In Section \ref{sec:classification} we prove our main conceptual results on classification of quasi-coactions and ribbon twist-braids.
As explained above, the non-Hermitian case is done by a more or less standard cohomological argument, while in the Hermitian case we look into the structure of the associators arising from the cyclotomic KZ-equations.

In Section \ref{sec:int-subgroups} we focus on the Hermitian case and look at conjugates of $\mfu^\sigma < \mfu$ in the maximally compact position by interpolated Cayley transforms.
We show that these conjugates generate coisotropic subgroups and relate them to models arising from the cyclotomic KZ-equations, with an explicit formula for the first order term.

In Section \ref{sec:Letzter-Kolb} we explain how the quantized universal enveloping algebra and the Letzter--Kolb coideals fit into our setting of multiplier quasi-bialgebras and their quasi-coactions.

Finally, in Section \ref{sec:compar} we combine the results of the previous sections and prove our main comparison theorems. We finish the section with a detailed analysis of the $\mathrm{AIII}$ case.

There are three appendices, in which we collect some technical but not fundamentally new results used in the paper.

\medskip
Let us close the introduction with some further problems. First of all, a general formula for $\mu$ in Theorem~\ref{thm:main} would be nice to find, especially if this can be done in a unified way rather than via a case-by-case analysis. Second, the analytic version of the conjecture, as originally proposed in \cite{MR3943480}, remains to be settled, together with a comparison with  the `Vogan picture' introduced there. On the geometric side, one would like to extend the above results in the Hermitian case to all coadjoint orbits of~$U$.

\smallskip
\emph{Acknowledgements}:
K.DC.~thanks A. Brochier for discussions in an early stage of this project.
S.N.~is grateful to P.A.~{\O}stv{\ae}r for a reference.
M.Y.~thanks D.~Jordan and A.~Appel for stimulating discussions.
Last but not least, we would like to thank the anonymous reviewers for their careful reading and valuable suggestions to improve the readability.

\section{Preliminaries}
\label{sec:prelim}

\subsection{Conventions}

We treat $h$ as a formal variable, and put $h^* = h$ when we consider $*$-algebraic structures.
We put
\[
q=e^h\quad\text{and}\quad\hbar = \frac{h}{\pi i},
\]
the latter is mostly reserved for the KZ-equations. We denote the space of formal power series with coefficients in $A$ by
\[
A\fps = \biggl\{a = \sum_{n=0}^\infty  h^n a^{(n)} \biggm| a^{(n)} \in A \biggr\},
\]
and the space of Laurent series by
\[
A\fLauser = \biggl\{a = \sum_{n=k}^\infty h^n a^{(n)} \biggm| a^{(n)} \in A, k \in \Z \biggr\}.
\]
For $a \in A\fLauser$, we denote the smallest $n$ such that $a^{(n)} \neq 0$ by $\ord(a)$.

We denote the $h$-adically completed tensor product of $\C\fpser$-modules by $\hotimes$.
In particular, we have $(A\fpser) \hotimes (B\fpser) = (A \otimes B)\fpser$.

When $A = \C$ and $a \in \C\fpser$ has constant term $a^{(0)}>0$, we take its $n$th root $b = a^\frac{1}{n}$ to be the unique solution of $b^n = a$ such that $b^{(0)}$ is positive. A similar convention is used for $\log$.

\subsection{Simple Lie groups}

Throughout the entire paper $\mfu$ denotes a compact simple Lie algebra and $\mfg$ denotes its complexification. The connected and simply connected Lie groups corresponding to $\mfg$ and~$\mfu$ are denoted by~$G$ and~$U$.

We denote by $(\cdot,\cdot)_\mfg$ the unique invariant symmetric bilinear form on $\mfg$ such that, for any Cartan subalgebra $\mfh < \mfg$, its dual form on $\mfh^*$ has the property that $(\alpha,\alpha)=2$ for every short root $\alpha$. Let $t^\mfu\in\mfu^{\otimes 2}$ be the corresponding invariant tensor:
\begin{equation}
\label{eq:inv-2-tens}
t^\mfu=\sum_i X_i\otimes X^i,
\end{equation}
where $(X_i)_i$ is a basis in $\mfg$ and $(X^i)_i$ is the dual basis.

Recall that $(\cdot,\cdot)_\mfg$ is negative definite on $\mfu$. Therefore, if we define an antilinear involution $*$ on $\mfg$ by letting $X^*=-X$ for $X\in\mfu$, then $(X,Y^*)_\mfg$ becomes an $(\Ad U)$-invariant Hermitian scalar product on~$\mfg$.

We denote the category of finite dimensional algebraic representations of the linear algebraic group~$G$ (equivalently, finite dimensional representations of $\mfg$) by $\Rep G$. It is equivalent to the category of finite dimensional unitary representations of $U$. We write $\pi \in \Rep G$ to say that $\pi$ is a finite dimensional representation of $G$, its underlying space is denoted by $V_\pi$. We also fix a set $\Irr G$ of representatives of the isomorphism classes of irreducible representations.

We will often have to extend the scalars to $\C\fps$. Denote the category we get by $(\Rep G)\fps$. Thus, the objects of $(\Rep G)\fps$ are the $G$-modules over $\C\fps$ that are isomorphic to the modules of the form~$V_\pi\fps$ for $\pi\in\Rep G$.

\subsection{Multiplier algebras}

For $n = 1, 2, \dots$, we put
\[
\mcU(G^n) = \prod_{\substack{\pi_i \in \Irr G,\\ i = 1, \dots, n}} \End(V_{\pi_1}) \otimes \dots \otimes \End(V_{\pi_n})
\]
We view $G$ and $\mfg$ as subsets of $\mcU(G) = \mcU(G^1)$.

Since for every irreducible $\pi\in\Rep G$ there is a unique up to a scalar factor $U$-invariant Hermitian scalar product on $V_\pi$, we have a canonical involution $*$ on~$\mcU(G^n)$. There is also a unique homomorphism
\[
\Delta\colon \mcU(G) \to \mcU(G^2)
\]
characterized by the identities $(\pi_1 \otimes \pi_2)(\Delta(T))S=S\pi(T)$ for all intertwiners $S\colon V_\pi\to V_{\pi_1}\otimes V_{\pi_2}$.
Then $\Delta(g) = g \otimes g$ for $g \in G$. This characterizes the elements of $G$ among the nonzero elements of $\mcU(G)$. Similarly,
the identity $\Delta(X) = X \otimes 1 + 1 \otimes X$ for $X \in \mfg$ characterizes $\mfg$ inside~$\mcU(G)$.

Denote by $\mcO(G)$ the Hopf algebra of regular functions (matrix coefficients of finite dimensional representations) on $G$.
We occasionally write $\mcO(U)$ instead of $\mcO(G)$ when we think of it as a function algebra on~$U$.

There is a nondegenerate pairing between $\mcU(G)$ and $\mcO(G)$ that allows us to identify $\mcU(G)$ with the dual space of $\mcO(G)$.
Concretely, if $\pi$ is irreducible, $T\in\End(V_\pi)$, $v\in V_\pi$, $\ell\in V_\pi^*$, then for the matrix coefficient $a_{v,\ell}\in\mcO(G)$, $a_{v,\ell}(g)=\ell(\pi(g)v)$, we have
\[
\langle a_{v,\ell},T\rangle=\ell(T v),
\]
and $\langle f,T\rangle=0$ for the matrix coefficients $f$ of the irreducible representations $\pi'$ inequivalent to $\pi$.
Similarly, $\mcU(G^n)$ is the linear dual of $\mcO(G)^{\otimes n}$. With respect to this duality the bialgebra structures are related by
\begin{align*}
\langle f_1 \otimes f_2, \Delta(T) \rangle &= \langle f_1 f_2, T \rangle,&
\langle \Delta(f), T_1 \otimes T_2 \rangle &= \langle f, T_1 T_2 \rangle
\end{align*}
for $f_i \in \mcO(G)$ and $T_i \in \mcU(G)$.

\smallskip

We can do the same constructions  for any reductive linear algebraic group $H$ over $\C$. We then also define
\[
\mcU(H \times G^n) = \smashoperator[r]{\prod_{\substack{\pi \in \Irr H, \pi_i \in \Irr G,\\ i = 1, \dots, n}}} \End(V_\pi) \otimes \End(V_{\pi_1}) \otimes \dots \otimes \End(V_{\pi_n})
\]
for $0 \le n < \infty$. In a more invariant form, $\mcU(H \times G^n)$ is the linear dual of $\mcO(H\times G^n)$.

Assume in addition that $H$ is an algebraic subgroup of $G$.
Then the embedding $H\to G$ extends to an embedding of $\mcU(H^{n+1})$ into $\mcU(H\times G^n)$. In particular, the comultiplication $\Delta\colon\mcU(H)\to\mcU(H^2)$ can be viewed as a homomorphism $\mcU(H)\to\mcU(H\times G)$.

Note that in general $H$ is not simply connected. In Lie algebraic terms the category $\Rep H$ consists of the finite dimensional representations of $\mfh$ that are subrepresentations of the finite dimensional representations of $\mfg$ restricted to $\mfh$.

\subsection{Quasi-coactions and ribbon twist-braids}
\label{sec:quasi-coact-rib-tw-br}

The notion of a \emph{quasi-bialgebra} \cite{MR1047964} has a straightforward adaptation to the setting of multiplier algebras, cf.~\cite{MR2832264}*{Section~2}. We will be interested in multiplier quasi-bialgebras of the form $(\mcU(G)\fpser,\Delta_h,\epsilon_h,\Phi)$.
Thus, $\Delta_h$ is a \emph{nondegenerate} homomorphism $\mcU(G)\fpser\to\mcU(G^2)\fpser$, meaning that the images of the idempotents $\Delta_h(\id_{V_\pi})$ ($\pi\in \Irr G$) in $\End(V_{\pi_1}\otimes V_{\pi_2})\fpser$ add up to $1$, $\epsilon_h\colon\mcU(G)\fpser\to\C\fpser$ is a nondegenerate homomorphism, and $\Phi\in\mcU(G^3)\fpser$ is an invertible element (with $\Phi^{(0)}=1$) satisfying the same identities as in~\cite{MR1047964}*{Section~1}.

The assumption of nondegeneracy for the counit $\epsilon_h$ implies that it is determined by its restrictions to the blocks $\End(V_\pi)\fps$ of $\mcU(G)\fps$. Since there are no nonzero ($\C\fps$-linear) homomorphisms $\End(V)\fps\to\C\fps$ for $\dim V>1$ and there is a unique such homomorphism for $\dim V=1$, we conclude that $\epsilon_h$ coincides with the standard counit~$\epsilon$ on~$\mcU(G)\fps$. From now on we will therefore omit $\epsilon_h$ from the notation for a multiplier quasi-bialgebra.

Given a reductive algebraic subgroup $H$ of $G$, a \emph{quasi-coaction} of $(\mcU(G)\fpser,\Delta_h,\Phi)$ on $\mcU(H)\fpser$ is given by a nondegenerate homomorphism $\alpha\colon \mcU(H)\fpser \to \mcU(H \times G)\fps$ and an \emph{associator} $\Psi \in \mcU(H \times G^2)\fpser$ satisfying $\Psi^{(0)}=1$,
\[
(\id\otimes\epsilon)\alpha=\id,
\]
\begin{equation}
\label{eq:mod-assoc}
\Psi (\alpha \otimes \id) \alpha (T) = (\id \otimes \Delta_h) \alpha(T) \Psi \quad (T \in \mcU(H)\fpser),
\end{equation}
the \emph{mixed pentagon equation}
\begin{equation}
\label{eq:mix-pent}
\Phi_{1,2,3} \Psi_{0, 12, 3} \Psi_{0, 1, 2} =  \Psi_{0, 1, 23} \Psi_{01, 2, 3},
\end{equation}
with $\Psi_{01, 2, 3} = (\alpha \otimes \id)(\Psi)$, $\Psi_{0, 12, 3} = (\id_{\mcU(H)} \otimes \Delta_h \otimes \id)(\Psi)$, etc., and the normalization conditions
\[
(\id\otimes\epsilon\otimes\id)(\Psi) = (\id\otimes\id\otimes\epsilon)(\Psi)=1.
\]

A multiplier quasi-bialgebra $(\mcU(G)\fpser,\Delta_h,\Phi)$ defines a tensor category $((\Rep G)\fps,\otimes_h,\Phi)$, where the tensor product $\otimes_h$ on $(\Rep G)\fps$ is defined using $\Delta_h$ and the associativity isomorphism is given by the action of $\Phi$.
A quasi-coaction as above defines then the structure of a \emph{right $((\Rep G)\fps,\otimes_h,\Phi)$-module category} on $(\Rep H)\fps$.
Namely, the functor $\odot_\alpha\colon (\Rep H)\fps\times(\Rep G)\fps\to(\Rep H)\fps$ defining the module category structure is induced by $\alpha$, while the associativity morphisms are defined by the action of~$\Psi$. See~\cite{MR3943480}*{Section~1} for more details, but note that in~\cite{MR3943480} we worked in the analytic setting, meaning that $q=e^h$ was a real number and $\Phi\in\mcU(G^3)$, $\Psi\in\mcU(H\times G^2)$.

Next, let $\mcR \in \mcU(G^2) \fpser$ be an $R$-matrix (with $\mcR^{(0)}=1$) for $(\mcU(G)\fpser,\Delta_h,\Phi)$, that is, $\mcR\Delta_h(\cdot)=\Delta_h^{\mathrm{op}}(\cdot)\mcR$ and $\mcR$ satisfies the \emph{hexagon relations}.
Let $\beta$ be an automorphism of the quasi-triangular multiplier quasi-bialgebra $(\mcU(G)\fps,\Delta_h,\Phi, \mcR)$.
A \emph{ribbon $\beta$-braid} is given by an invertible element $\mcE \in \mcU(H \times G)\fpser$ satisfying
\begin{gather}
\mcE (\id \otimes \beta) \alpha(T) = \alpha(T) \mcE \quad (T \in \mcU(H)\fpser),\label{eq:rib-sig-tw-0}\\
(\alpha \otimes \id)(\mcE) = \Psi^{-1} \mcR_{21} \Psi_{021} \mcE_{02} (\id \otimes\id\otimes \beta)(\Psi_{021}^{-1} \mcR_{12} \Psi),\label{eq:rib-sig-tw-1}\\
(\id \otimes \Delta_h)(\mcE) = \mcR_{21} \Psi_{021} \mcE_{02} (\id \otimes\id\otimes \beta)(\Psi_{021}^{-1} \mcR_{12} \Psi) \mcE_{01} (\id \otimes \beta \otimes \beta)(\Psi^{-1}).\label{eq:rib-sig-tw-2}
\end{gather}
When $\beta$ is the identity map, we just say ``ribbon braid'' instead of ``ribbon $\id$-braid''.
We want to stress that, as opposed to $\Phi$, $\Psi$ and $\mcR$, we do not require $\mcE^{(0)}=1$. A quadruple $(\mcU(H)\fps, \alpha, \Psi, \mcE)$ satisfying~\eqref{eq:rib-sig-tw-0} and~\eqref{eq:rib-sig-tw-1} is a version of a \emph{quasi-reflection algebra} \cite{MR2383601}. In categorical terms, the action of $\mcE$ on $M\odot_\alpha N$ defines the structure of a \emph{ribbon $\beta$-braided module category} on $((\Rep H)\fps,\odot_\alpha, \Psi)$.
See again~\cite{MR3943480}*{Section~1} for more details.

\subsection{Twisting}\label{ssec:twisting}

We can transform a quasi-coaction $(\mcU(H)\fps, \alpha, \Psi)$ of $(\mcU(G)\fpser,\Delta_h,\Phi)$ into a new one as follows.
Suppose that we are given elements $\mcF\in\mcU(G^2)\fps$ and $\mcG\in \mcU(H\times G)\fps$ such that $\mcF^{(0)}=1$, $\mcG^{(0)}=1$ and
\begin{align*}
(\epsilon\otimes\id)(\mcF) &= (\id\otimes\epsilon)(\mcF)=1,&
(\id\otimes\epsilon)(\mcG) &= 1.
\end{align*}
Then the \emph{twisting of the quasi-coaction by $(\mcF,\mcG)$} is the quasi-coaction $(\mcU(H)\fpser,\alpha_\mcG,\Psi_{\mcF,\mcG})$ of the multiplier quasi-bialgebra $(\mcU(G)\fpser,\Delta_{h,\mcF},\Phi_\mcF)$, where
\begin{align*}
\Delta_{h,\mcF}&=\mcF\Delta_h(\cdot)\mcF^{-1}, & \Phi_{\mcF}&=(1\otimes \mcF)(\id\otimes \Delta_h)(\mcF)\Phi(\Delta_h\otimes\id)(\mcF^{-1})(\mcF^{-1}\otimes 1),\\
\alpha_\mcG&=\mcG\alpha(\cdot)\mcG^{-1}, &
\Psi_{\mcF,\mcG}&=(1\otimes\mcF)(\id\otimes \Delta_h)(\mcG)\Psi (\alpha\otimes \id)(\mcG^{-1})(\mcG^{-1}\otimes 1).
\end{align*}

Twisting defines an equivalence relation on the quasi-coactions. In categorical terms it means that we pass from $((\Rep G)\fps,\otimes_h,\Phi)$ to the equivalent tensor category $((\Rep G)\fps,\otimes_{h,\mcF},\Phi_\mcF)$, with the tensor product defined by $\Delta_{h,\mcF}$, and, up to this equivalence, the $((\Rep G)\fps,\otimes_h,\Phi)$-module category $((\Rep H)\fps,\odot_\alpha,\Psi)$ is equivalent to the $((\Rep G)\fps,\otimes_{h,\mcF},\Phi_\mcF)$-module category $$((\Rep H)\fps,\odot_{\alpha_\mcG},\Psi_{\mcF,\mcG}).$$

As the following result shows, twisting often allows one to push all the information on a quasi-coaction into the associators.

\begin{Lem}\label{lem:twisting-to-Delta}
Assume $H$ is a reductive algebraic subgroup of $G$ and $(\mcU(H)\fps, \alpha, \Psi)$ is a quasi-coaction of $(\mcU(G)\fpser,\Delta_h,\Phi)$ such that both $\alpha$ and $\Delta_h$ equal $\Delta$ modulo $h$. Then this quasi-coaction is a twisting of a quasi-coaction  $(\mcU(H)\fps, \Delta, \Psi')$ of $(\mcU(G)\fpser,\Delta,\Phi')$ for some $\Psi'$ and $\Phi'$.
\end{Lem}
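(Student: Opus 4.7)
The plan is to construct elements $\mcF\in\mcU(G^2)\fps$ and $\mcG\in\mcU(H\times G)\fps$, both with constant term $1$, satisfying the normalization conditions from Section~\ref{ssec:twisting} and the intertwining relations $\mcF\Delta_h(T)=\Delta(T)\mcF$ for $T\in\mcU(G)$ and $\mcG\alpha(T)=\Delta(T)\mcG$ for $T\in\mcU(H)$. Then the twisted data $(\mcU(H)\fps,\alpha_{\mcG},\Psi_{\mcF,\mcG})$ of $(\mcU(G)\fps,\Delta_{h,\mcF},\Phi_{\mcF})$ take the claimed form with $\Phi'=\Phi_{\mcF}$ and $\Psi'=\Psi_{\mcF,\mcG}$.

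I would build $\mcF$ inductively in the order of $h$. Assume that $\mcF_n$ satisfying $\mcF_n\Delta_h(\cdot)\mcF_n^{-1}\equiv\Delta\pmod{h^{n+1}}$ has been constructed, and write $\mcF_n\Delta_h(T)\mcF_n^{-1}=\Delta(T)+h^{n+1}\delta(T)+O(h^{n+2})$. The homomorphism identity modulo $h^{n+2}$ forces $\delta$ to be a Hochschild $1$-cocycle with values in the $\mcU(G)$-bimodule $\mcU(G^2)$ defined by $\Delta$. The crucial point is that this cocycle is a coboundary: projecting $\delta$ to the block $\End(V_{\pi_1}\otimes V_{\pi_2})$ of $\mcU(G^2)$, it factors through the semisimple subalgebra $(\pi_1\otimes\pi_2)\Delta(\mcU(G))$, so the Hochschild--Whitehead theorem for separable subalgebras of matrix algebras yields $x_{\pi_1,\pi_2}\in\End(V_{\pi_1}\otimes V_{\pi_2})$ with $\delta_{\pi_1,\pi_2}(T)=[x_{\pi_1,\pi_2},(\pi_1\otimes\pi_2)\Delta(T)]$; collecting over all pairs assembles $x\in\mcU(G^2)$ with $\delta=[x,\Delta(\cdot)]$. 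Then $\mcF_{n+1}:=(1-h^{n+1}x)\mcF_n$ extends the induction, and the $h$-adic limit $\mcF$ satisfies $\mcF\Delta_h(\cdot)\mcF^{-1}=\Delta$.

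To normalize, apply $\epsilon\otimes\id$ and $\id\otimes\epsilon$ to $\mcF\Delta_h=\Delta\mcF$: using that $\epsilon$ is a two-sided counit for $\Delta_h$ (as noted earlier in the excerpt), the elements $\mcF_0:=(\epsilon\otimes\id)(\mcF)$ and $\mcF_0':=(\id\otimes\epsilon)(\mcF)$ commute with every element of $\mcU(G)$, hence lie in $Z(\mcU(G))\fps$ with constant term $1$. Their embeddings $\mcF_0'^{-1}\otimes 1$ and $1\otimes\mcF_0^{-1}$ into $\mcU(G^2)$ are central there, so replacing $\mcF$ by $c(\mcF_0'^{-1}\otimes\mcF_0^{-1})\mcF$ with $c=(\epsilon\otimes\epsilon)(\mcF)$ preserves the intertwining property while enforcing $(\epsilon\otimes\id)(\mcF)=(\id\otimes\epsilon)(\mcF)=1$.

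The identical argument, with $\alpha$ playing the role of $\Delta_h$ and $\mcU(H\times G)$ that of $\mcU(G^2)$ as a $\mcU(H)$-bimodule via $\Delta$ (which makes sense because $H$ is a reductive algebraic subgroup of $G$), produces $\mcG\in\mcU(H\times G)\fps$ with $\mcG\alpha(\cdot)\mcG^{-1}=\Delta$ and $(\id\otimes\epsilon)(\mcG)=1$. The main obstacle throughout is the per-block cohomology vanishing: verifying that derivations of a semisimple subalgebra of a finite-dimensional matrix algebra are inner with values in the ambient matrix algebra, and that the inner representatives $x_{\pi_1,\pi_2}$ can be consistently assembled into an element of the full multiplier algebra. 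The multiplier setting makes this essentially automatic, since each block is already a full matrix algebra whose Hochschild cohomology vanishes, bypassing the sort of Lie-algebraic obstructions (such as the failure of Whitehead's lemma in the Hermitian case) that appear elsewhere in the paper.
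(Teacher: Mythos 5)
Your argument is essentially the same as the paper's: both rest on the observation that, on each block $\End(V_{\pi_1}\otimes V_{\pi_2})$, the two coproducts factor through a finite sum of matrix blocks, so the problem reduces to rigidity of homomorphisms from a finite-dimensional semisimple algebra into a matrix algebra. You unfold this rigidity via the standard inductive Hochschild argument, whereas the paper simply cites it; and you normalize the twist by correcting with central elements, whereas the paper picks the per-block factors to be $1$ when one of $\pi_1,\pi_2$ is trivial. Both are fine.

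There is however one inaccuracy worth flagging in your key step. You assert that the derivation $\delta\colon\mcU(G)\to\End(V_{\pi_1}\otimes V_{\pi_2})$ ``factors through the semisimple subalgebra $(\pi_1\otimes\pi_2)\Delta(\mcU(G))$.'' This is not justified: $\delta$ vanishing on $\ker\bigl((\pi_1\otimes\pi_2)\Delta\bigr)$ would require the conjugate of $(\pi_1\otimes\pi_2)\Delta_h$ to have that same kernel, which is not automatic. The correct statement, and the reason the argument works, is that the \emph{nondegeneracy} of $\Delta_h$ (and of $\Delta$) provides a finite subset $F\subset\Irr G$ such that both $(\pi_1\otimes\pi_2)\Delta_h$ and $(\pi_1\otimes\pi_2)\Delta$ factor through the finite-dimensional semisimple quotient $\bigoplus_{\pi\in F}\End(V_\pi)$ of $\mcU(G)$; hence $\delta$ factors through this quotient (not through the image subalgebra), and a derivation of $\bigoplus_{\pi\in F}\End(V_\pi)$ with values in the bimodule $\End(V_{\pi_1}\otimes V_{\pi_2})$ is inner. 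This finiteness coming from nondegeneracy is exactly what the paper's proof states explicitly, and without it the Hochschild--Whitehead vanishing does not apply (one would be asking whether all derivations of the full infinite product $\mcU(G)$ are inner). Once this is repaired, your proof is correct.
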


\bp
Take irreducible representations $\pi_1$ and $\pi_2$ of $G$. Consider the homomorphisms $f=(\pi_1\otimes\pi_2)\Delta$ and $f_h=(\pi_1\otimes\pi_2)\Delta_h$ from $\mcU(G)\fps$ into $\End(V_{\pi_1}\otimes V_{\pi_2})\fps$. The assumption of nondegeneracy for $\Delta_h$ implies that there exists a finite set $F\subset \Irr G$ such that $f_h$ factors through $\bigoplus_{\pi\in F}\End(V_\pi)\fps$. By taking $F$ large enough we may assume that the same is true for $f$. Since the algebra $\bigoplus_{\pi\in F}\End(V_\pi)$ is semisimple, there are no nontrivial deformations of any given homomorphism $\bigoplus_{\pi\in F}\End(V_\pi)\to \End(V_{\pi_1}\otimes V_{\pi_2})$. Hence there exists $\mcF_{\pi_1,\pi_2}\in \End(V_{\pi_1}\otimes V_{\pi_2})\fps$ such that $\mcF_{\pi_1,\pi_2}^{(0)}=1$ and $f_h=(\Ad \mcF_{\pi_1,\pi_2})f$. Then $\mcF=(\mcF_{\pi_1,\pi_2})_{\pi_1,\pi_2\in\Irr G}\in\mcU(G^2)\fps$ satisfies $\mcF^{(0)}=1$ and $\Delta_h=\mcF\Delta(\cdot)\mcF^{-1}$. Furthermore, since the counit of $(\mcU(G)\fpser,\Delta_h,\Phi)$ is $\epsilon$, we could take $\mcF_{\pi_1,\pi_2}=1$ if either $\pi_1$ or $\pi_2$ were trivial representations. In this case $\mcF$ would additionally satisfy $(\epsilon\otimes\id)(\mcF)= (\id\otimes\epsilon)(\mcF)=1$.

In a similar way we can find $\mcG\in\mcU(H\times G)\fps$ such that $\mcG^{(0)}=1$, $(\iota\otimes\epsilon)(\mcG)=1$ and $\alpha=\mcG\Delta(\cdot)\mcG^{-1}$. Then the twisting by $(\mcF^{-1},\mcG^{-1})$ gives the required quasi-coaction.
\ep

Next, given a quasi-coaction $(\mcU(H)\fps, \alpha, \Psi)$ of $(\mcU(G)\fpser,\Delta_h,\Phi)$, assume in addition we have an automorphism $\beta$ of $(\mcU(G)\fpser,\Delta_h,\Phi)$. If $\mcF$ satisfies $(\beta\otimes\beta)(\mcF)=\mcF$, then $\beta$ remains an automorphism of $(\mcU(G)\fpser,\Delta_{h,\mcF},\Phi_\mcF)$.
Assume also that $\mcR \in \mcU(G^2) \fpser$ is an $R$-matrix for $(\mcU(G)\fpser,\Delta_h,\Phi)$ that is fixed under $\beta$.
Then $\mcR_{\mcF}=\mcF_{21}\mcR\mcF^{-1}$ is an $R$-matrix for $(\mcU(G)\fpser,\Delta_{h,\mcF},\Phi_\mcF)$, again fixed by $\beta$.
Given a ribbon $\beta$-braid $\mcE$ for the original quasi-coaction we get a ribbon $\beta$-braid $\mcE_\mcG$ for the twisted quasi-coaction $(\mcU(H)\fpser,\alpha_\mcG,\Psi_{\mcF,\mcG})$ of $(\mcU(G)\fpser,\Delta_{h,\mcF},\Phi_\mcF,\mcR_\mcF)$ defined by
\begin{equation}\label{eq:braid-twist0}
\mcE_{\mcG}=\mcG\mcE(\id\otimes\beta)(\mcG)^{-1}.
\end{equation}

The condition $(\beta\otimes\beta)(\mcF)=\mcF$ can be relaxed, we will return to this in Section~\ref{ssec:comparison}.

\subsection{Symmetric pairs}

Let $\mfk$ be a proper Lie subalgebra of $\mfu$. We say that $\mfk<\mfu$ is a \emph{symmetric pair}, or more precisely, an \emph{irreducible symmetric pair of type I}, if there is a (necessarily unique) involutive automorphism $\sigma$ of $\mfu$ such that $\mfk=\mfu^\sigma$.
Whenever convenient we extend $\sigma$ to $\mcU(G)$, in particular, to~$\mfg$.
Let $K=U^\sigma$.
The compact group~$K$ is connected by~\cite{MR1834454}*{Theorem~VII.8.2}.
Using the Cartan decomposition of $G$ we can also conclude that $G^\sigma$ is connected.

Given such a symmetric pair, put
\[
\mfm=\{X\in\mfu\mid \sigma(X)=-X\},
\]
which is the orthogonal complement of $\mfk$ in $\mfu$ with respect to the invariant inner product.
We also write $\mfm^\C = \mfm \otimes_\R \C$ for its complexification.

We say that a symmetric pair $\mfk<\mfu$ is \emph{Hermitian}, if $U/K$ is a Hermitian symmetric space. Such symmetric pairs are equivalently characterized by either of the following conditions, see~\cite{MR1661166}*{Proposition~VI.1.3}:
\begin{itemize}
\item the center $\mfz(\mfk)$ is nontrivial (and $1$-dimensional);
\item the space $\mfm$ has a (unique up to a sign) $\mfk$-invariant complex structure.
\end{itemize}

The following closely related characterization will be crucial for us.

\begin{Lem}
\label{lem:inv-wedge}
For any symmetric pair $\mfk<\mfu$, we have $\dim (\bigwedge^2 \mfm)^\mfk = 1$ if $\mfk < \mfu$ is Hermitian, and $\dim (\bigwedge^2 \mfm)^\mfk = 0$ otherwise. We always have $\mfm^\mfk = 0$.
\end{Lem}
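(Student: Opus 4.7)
The strategy is to complexify and compute $\mfk^\C$-invariants in $\bigwedge^2\mfm^\C$ using the $\mfk^\C$-module decomposition. Since $\mfk$ is compact, $(\bigwedge^2\mfm)^\mfk\otimes_\R\C\cong(\bigwedge^2\mfm^\C)^{\mfk^\C}$ and similarly for $\mfm^\mfk$, so it suffices to work over $\C$. The invariant form $(\cdot,\cdot)_\mfg$ restricts to a non-degenerate symmetric $\mfk^\C$-invariant pairing on $\mfm^\C$, since $\mfm^\C$ is the orthogonal complement of $\mfk^\C$. The classical input I would use from the theory of symmetric spaces is the following: in the non-Hermitian case, $\mfm^\C$ is irreducible as a $\mfk^\C$-module; in the Hermitian case, $\mfm^\C=\mfp_+\oplus\mfp_-$ decomposes into the $\pm i$-eigenspaces of $\ad Z$ for a generator $Z$ of $\mfz(\mfk)$, where $\mfp_\pm$ are non-isomorphic irreducible $\mfk^\C$-modules (distinguished by the $Z$-eigenvalue), and the invariant form vanishes on each $\mfp_\pm\otimes\mfp_\pm$ while pairing $\mfp_+$ with $\mfp_-$.

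In the non-Hermitian case, Schur's lemma gives $\dim(\mfm^\C\otimes\mfm^\C)^{\mfk^\C}=\dim\End_{\mfk^\C}(\mfm^\C)=1$, and this one-dimensional invariant is spanned by the symmetric form, so it lies in $\Sym^2\mfm^\C$; hence $(\bigwedge^2\mfm^\C)^{\mfk^\C}=0$. In the Hermitian case I would decompose $\bigwedge^2\mfm^\C=\bigwedge^2\mfp_+\oplus(\mfp_+\otimes\mfp_-)\oplus\bigwedge^2\mfp_-$. The outer summands contribute no invariants since $\mfp_\pm^*\cong\mfp_\mp\not\cong\mfp_\pm$, while $(\mfp_+\otimes\mfp_-)^{\mfk^\C}\cong\End_{\mfk^\C}(\mfp_-)$ is one-dimensional by Schur, yielding $\dim(\bigwedge^2\mfm^\C)^{\mfk^\C}=1$.

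For $\mfm^\mfk=0$, I would observe that $\mfm$ is irreducible as a real $\mfk$-module (in the Hermitian case, $\mfp_+$ and $\mfp_-$ are interchanged by complex conjugation). If $\mfm^\mfk$ were nonzero, irreducibility would force $\mfm^\mfk=\mfm$, i.e., $[\mfk,\mfm]=0$; combined with $[\mfm,\mfm]\subset\mfk$, this would make $\mfk$ a proper nonzero ideal of the simple Lie algebra $\mfu$, a contradiction (here $\mfk\neq 0$, since $\sigma=-\id$ is a Lie algebra automorphism of $\mfu$ only in the abelian case). The main nontrivial input is the irreducibility of $\mfm^\C$ (respectively of $\mfp_\pm$) as a $\mfk^\C$-module, which is a classical result in the structure theory of irreducible symmetric spaces; granted this, the rest is essentially Schur's lemma plus a bookkeeping of tensor decompositions.
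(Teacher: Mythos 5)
Your proof is correct, and it reaches the same conclusion along a noticeably different path. The paper stays over $\R$: it uses the invariant inner product to identify $(\bigwedge^2\mfm)^\mfk$ with the space of $\mfk$-equivariant skew-adjoint operators $A$ on $\mfm$, and for a nonzero such $A$ observes that $A^2$ is negative-definite, self-adjoint, and $\mfk$-equivariant, hence by $\R$-irreducibility of $\mfm$ a negative scalar; rescaling makes $A$ an invariant complex structure, and the count then falls out of the well-known dichotomy (unique such structure up to sign in the Hermitian case, none otherwise) already recorded just before the lemma. You instead complexify at the outset and take as input the finer structure of $\mfm^\C$ as a $\mfk^\C$-module — absolutely irreducible in the non-Hermitian case, and $\mfm_+\oplus\mfm_-$ with non-isomorphic irreducible, mutually dual summands in the Hermitian case — and then count invariants by Schur's lemma together with the decomposition $\bigwedge^2(\mfm_+\oplus\mfm_-)=\bigwedge^2\mfm_+\oplus(\mfm_+\wedge\mfm_-)\oplus\bigwedge^2\mfm_-$. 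Both routes need $\R$-irreducibility of the isotropy representation; the paper's version is a bit more self-contained in that it \emph{derives} a complex structure from a putative invariant bivector rather than assuming the eigenspace decomposition, while yours is a more mechanical representation-theoretic count that also makes transparent why the non-Hermitian invariant in $\mfm\otimes\mfm$ is necessarily symmetric. Your argument for $\mfm^\mfk=0$ (a nonzero fixed space would force $\mfk$ to be an ideal) is a clean alternative to the paper's remark that $U/K$ is not one-dimensional; both are fine.
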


\bp
Since $U$ is simple by assumption, $U/K$ is an irreducible symmetric space, so $K$ acts irreducibly on $\mfm = T_{[e]}(U/K)$. As $U/K$ is not one-dimensional, this cannot be the trivial action, and we get $\mfm^\mfk = 0$.

Next, since $\mfm$ has a $\mfk$-invariant inner product, the space $(\bigwedge^2 \mfm)^\mfk$ is isomorphic to the space of $\mfk$-invariant skew-adjoint operators on $\mfm$. Assume we are given such a nonzero operator $A$. Then $A^2=-A^*A$ is self-adjoint, with negative eigenvalues. Hence $A^2$ is diagonalizable, and by irreducibility of the action of $\mfk$ on $\mfm$ we conclude that $A^2$ must be a strictly negative scalar. Therefore by rescaling $A$ we get a $\mfk$-invariant complex structure on $\mfm$. Since there is a unique up to a sign such structure in the Hermitian case and no such structure in the non-Hermitian case, we get the result.
\ep

\begin{Rem}\label{rem:centr-non-herm}
In the non-Hermitian case, the centralizer $Z_U(K)$ of $K$ in $U$ is a finite group that either agrees with the center $Z(U)$ of $U$, or contains it as a subgroup index $2$.
Indeed, we have $\mfz_\mfu(\mfk)=0$ from the above lemma, which implies the finiteness of $Z_U(K)$.
To see that the index of $Z(U)$ in $Z_U(K)$ is at most $2$, observe that any $K$-intertwiner on $\mfm$ has to be a (real) scalar by the vanishing of $(\bigwedge^2 \mfm)^\mfk$.
Then, given $g \in Z_U(K)$, the restriction of the finite-order $K$-intertwiner $\Ad_g$ to $\mfm$ should be $\pm 1$, which implies that either $g \in Z(U)$ or $\Ad_g = \sigma$.
If the inclusion $\mfk < \mfu$ is of \emph{equal rank}, there are elements $g$ satisfying $\Ad_g = \sigma$, hence we obtain $[Z_U(K) : Z(U)] = 2$.
Otherwise there is no such $g$, hence we obtain $Z_U(K) = Z(U)$.
\end{Rem}

An \emph{irreducible symmetric pair of type II} is an inclusion that is isomorphic to the diagonal inclusion of~$\mfu$ into $\mfu \oplus \mfu$ (with a simple compact Lie algebra $\mfu$).
This corresponds to the involution $\sigma(X,Y) = (Y,X)$ on~$\mfu \oplus \mfu$.
For such a pair we can put $\mfm = \{(X, -X) \mid X \in \mfu\}$. Since both $\mfm^{\mfu}$ and $(\medwedge^2 \mfm)^\mfu$ are trivial, such pairs behave in many respects similarly to the non-Hermitian type I pairs. We will therefore mostly focus on the type I case and only make a few remarks on the type II case.

Back to type I symmetric pairs, in the Hermitian case, it is known that an invariant complex structure on $\mfm$ is defined by an element of $\mfz(\mfk)$. The correct normalization is given by the following.

\begin{Lem}
\label{lem:eigenvals-adz}
Assuming that $\mfk<\mfu$ is a Hermitian symmetric pair, let $Z \in \mfz(\mfk)$ be a vector such that $(Z,Z)_\mfg=-1$.
Then on $\mfm$ we have $(\ad Z)^2=-a_\sigma^2\id$, where
\begin{equation*}
a_\sigma = \sqrt{\frac{2 h^\vee c}{\dim \mfm}},
\end{equation*}
$c\in\{1, 2, 3\}$ is the ratio of the square lengths of long and short roots of $\mfg$ and $h^\vee$ is the dual Coxeter number of $\mfg$.
\end{Lem}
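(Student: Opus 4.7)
The strategy is standard: show $(\ad Z)^2$ is a scalar on $\mfm$ by an invariance argument, then pin down the scalar by a trace computation.

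First I would check that $\ad Z$ preserves $\mfm$. Since $\sigma(Z)=Z$ (as $Z\in\mfk$) and $\sigma$ is a Lie algebra automorphism, $\ad Z$ commutes with $\sigma$, so it preserves the $\pm1$ eigenspace decomposition $\mfu=\mfk\oplus\mfm$. Moreover, because $Z$ lies in $\mfz(\mfk)$, the operator $\ad Z\colon\mfm\to\mfm$ commutes with the action of $\mfk$ on $\mfm$. Since $Z\in\mfu$ is skew under $*$, the operator $\ad Z$ is skew-adjoint with respect to the $K$-invariant inner product $-(\cdot,\cdot)_\mfg|_\mfu$ restricted to $\mfm$. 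Exactly as in the proof of Lemma~\ref{lem:inv-wedge}, $(\ad Z)^2$ is then a negative-semidefinite self-adjoint operator on $\mfm$ that commutes with the irreducible $\mfk$-action, so by Schur's lemma there exists $a\ge 0$ with $(\ad Z)^2=-a^2\id$ on $\mfm$; since $Z\ne 0$ and its action on $\mfg$ is nontrivial (being semisimple and nonzero), and since $\ad Z$ kills $\mfk$, the scalar $a$ is strictly positive. Set $a_\sigma:=a$.

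Next, to evaluate $a_\sigma$, I compute $\Tr_\mfg((\ad Z)^2)$ in two ways. On the one hand, this trace equals the value of the Killing form $B(Z,Z)$. The normalization of $(\cdot,\cdot)_\mfg$ (short roots have squared length $2$) is related to the Killing form by
\[
B=2h^\vee c\,(\cdot,\cdot)_\mfg,
\]
as follows from the standard identity $B=2h^\vee\langle\cdot,\cdot\rangle$, where $\langle\cdot,\cdot\rangle$ is the form for which long roots have squared length $2$, together with $\langle\cdot,\cdot\rangle=c(\cdot,\cdot)_\mfg$ (one checks this duality relation directly by comparing the squared lengths of short and long roots). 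Hence $B(Z,Z)=-2h^\vee c$ by our normalization $(Z,Z)_\mfg=-1$.

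On the other hand, split $\mfg=\mfk^\C\oplus\mfm^\C$. Since $Z\in\mfz(\mfk)$, $\ad Z$ vanishes on $\mfk^\C$, so
\[
\Tr_\mfg\bigl((\ad Z)^2\bigr)=\Tr_{\mfm^\C}\bigl((\ad Z)^2\bigr)=-a_\sigma^2\dim_\C\mfm^\C=-a_\sigma^2\dim\mfm.
\]
Combining the two computations gives $a_\sigma^2\dim\mfm=2h^\vee c$, i.e.\ $a_\sigma=\sqrt{2h^\vee c/\dim\mfm}$, as claimed.

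The only delicate point is pinning down the constant in $B=2h^\vee c\,(\cdot,\cdot)_\mfg$; everything else is a direct application of Schur's lemma and a trace computation, with $Z$'s membership in $\mfz(\mfk)$ collapsing the trace onto $\mfm^\C$. A cross-check via $\mfsl_n$ (where $c=1$, $h^\vee=n$, $(\cdot,\cdot)_\mfg=\Tr(\,\cdot\,\cdot\,)$ and $B=2n\Tr(\,\cdot\,\cdot\,)$) confirms the placement of $c$.
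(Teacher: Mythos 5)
Your proof is correct and follows the same route as the paper's: a Schur's-lemma argument (invoking Lemma~\ref{lem:inv-wedge}) to show $(\ad Z)^2$ is a nonpositive scalar on $\mfm$, followed by equating $B_{\mathrm{Kill}}(Z,Z)=\Tr((\ad Z)^2)=-a^2\dim\mfm$ with the normalization $B_{\mathrm{Kill}}=2h^\vee c\,(\cdot,\cdot)_\mfg$. The only difference is that you spell out the derivation of the normalization constant and the positivity of $a$, whereas the paper cites Kac for the former and leaves the latter implicit.
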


\bp
As $(\ad Z)|_{\mfm}$ is $\mfk$-invariant and skew-adjoint, by the proof of the previous lemma we have $(\ad Z)^2=-a^2$ on $\mfm$ for some scalar $a\ge0$. Hence for the Killing form $B_{\mathrm{Kill}}$ on $\mfg$ we have~$B_{\mathrm{Kill}}(Z,Z)=\Tr((\ad Z)^2) = - a^2 \dim \mfm$.
The Killing form and the normalized bilinear form $(\cdot,\cdot)_\mfg$ are related by $B_{\mathrm{Kill}}=2 h^\vee c (\cdot,\cdot)_\mfg$, see~\cite{MR1104219}*{Chapter 6, Exercise~2}. Combining this with $(Z, Z)_\mfg = -1$, we get that $a=a_\sigma$.
\ep

\begin{Cor}
The $\mfk$-invariant complex structures on $\mfm$ are given by $\pm\frac{1}{a_\sigma} (\ad Z)|_\mfm$. For the involutive automorphism~$\sigma$ such that $\mfk=\mfu^\sigma$ we have
\begin{equation*}
\sigma = \exp\mathopen{}\left(\frac{\pi}{a_\sigma} \ad Z\right).
\end{equation*}
\end{Cor}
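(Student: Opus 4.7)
The plan is to handle the two assertions separately, using the previous two lemmas as the main inputs.

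For the first assertion, let $J = \frac{1}{a_\sigma}(\ad Z)|_\mfm$. Since $Z\in\mfu$, the operator $\ad Z$ is skew-adjoint with respect to the invariant inner product on $\mfg$, hence $J$ is skew-adjoint on $\mfm$. Since $Z\in\mfz(\mfk)$, the operator $\ad Z$ commutes with the action of $\mfk$, so $J$ is $\mfk$-invariant. By Lemma~\ref{lem:eigenvals-adz} we have $J^2 = -\id$, so $J$ is a $\mfk$-invariant complex structure on $\mfm$. By the proof of Lemma~\ref{lem:inv-wedge}, any $\mfk$-invariant complex structure on $\mfm$ must agree with $J$ up to a sign, giving the claim.

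For the second assertion, consider $T=\exp\mathopen{}\left(\frac{\pi}{a_\sigma}\ad Z\right)\in\Aut(\mfu)$. Since $\mfu=\mfk\oplus\mfm$ and both summands are preserved by $\ad Z$, it suffices to compare $T$ and $\sigma$ on each. On $\mfk$, the element $Z$ is central, so $\ad Z$ vanishes there and thus $T|_\mfk = \id = \sigma|_\mfk$. On $\mfm$, the identity $J^2 = -\id$ yields, by the usual power series expansion,
\[
T|_\mfm = \exp(\pi J) = \cos(\pi)\id + \sin(\pi)J = -\id,
\]
which matches $\sigma|_\mfm$ because $\sigma$ acts as $-1$ on $\mfm$ by definition of $\mfm$. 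Hence $T=\sigma$.

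There is no real obstacle here: everything reduces to the facts that $\ad Z$ kills $\mfk$ and squares to $-a_\sigma^2$ on $\mfm$, together with the elementary exponential formula for a square root of $-\id$. The only thing worth double-checking is the sign convention: the corollary singles out a specific $\sigma$, and the computation above confirms that exponentiating with coefficient $\pi/a_\sigma$ (rather than, say, $-\pi/a_\sigma$) recovers the correct involution; if the opposite complex structure $-J$ were used instead, one would simply replace $Z$ by $-Z$, which leaves $\sigma$ unchanged since $\exp(-\pi J)=\exp(\pi J)=-\id$ on $\mfm$ as well.
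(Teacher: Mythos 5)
Your proof is correct. The paper states this corollary without proof, treating it as immediate from Lemmas~\ref{lem:inv-wedge} and~\ref{lem:eigenvals-adz}, and your argument is exactly the expected one: $J=\frac{1}{a_\sigma}(\ad Z)|_\mfm$ squares to $-\id$ by Lemma~\ref{lem:eigenvals-adz}, uniqueness up to sign comes from Lemma~\ref{lem:inv-wedge}, and $\exp(\pi J)=-\id$ together with $\ad Z|_\mfk=0$ gives the formula for $\sigma$.
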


In particular, we see that $K$ is the stabilizer of $Z$ in $U$ with respect to the adjoint action. As the adjoint and coadjoint representations are equivalent, this leads to yet another known characterization of the Hermitian symmetric pairs: a symmetric pair $\mfk<\mfu$ is Hermitian if and only if the homogeneous $U$-space $U/K$ is isomorphic to a coadjoint orbit of $U$.

\section{Classification of quasi-coactions and ribbon braids}\label{sec:classification}

Throughout this section $\mfk=\mfu^\sigma < \mfu$ denotes a symmetric pair.
Our goal is to classify using the co-Hochschild cohomology a class of quasi-coactions of $(\mcU(G)\fps,\Delta_h,\Phi)$ on $\mcU(G^\sigma)\fps$.

\subsection{Co-Hochschild cohomology for multiplier algebras}
\label{sec:geom-comput}

The \emph{co-Hochschild cochains} will play a central role in this paper. Let $H$ be a reductive algebraic subgroup of $G$. Put $\tB_{G,H}^n = \mcU(H \times G^n)$ for $0 \le n < \infty$, and define a differential $\tB_{G,H}^n\to \tB_{G,H}^{n+1}$ by
\begin{equation}
\label{eq:cohoch-diff}
d_{\cH}(T) = T_{01,2,\dots,n+1} - T_{0,1 2, \dots, n+1} + \dots + (-1)^n T_{0,1,\dots,n(n+1)} + (-1)^{n+1} T_{0,1,\dots,n},
\end{equation}
where $T_{0, \dots, j j + 1, \dots, n+1} = (\id_{\mcU(H \times G^{j-1})} \otimes \Delta \otimes \id_{\mcU(G^{n-j})})(T)$ and $T_{0,1,\dots,n}=T\otimes1$. The group $H$ acts diagonally by conjugation on $\mcU(H \times G^n)$, the differential $d_{\cH}$ is equivariant with respect to this action.
We put
\[
B_{G,H}^n=(\tB_{G,H}^n)^{H}.
\]

\begin{Prop}
\label{prop:comput-cohoch-mult-model}
The cohomology of $\tB_{G,H}$ is isomorphic to the exterior algebra $\bigwedge \mfg/\mfh$ as a graded $H$-module.
\end{Prop}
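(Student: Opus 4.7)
The plan is to dualize the complex, recognize the dual as a standard bar complex computing a Tor group, and then compute that Tor by a Koszul argument. By Peter--Weyl, the space $\mcO(H) \otimes \mcO(G)^{\otimes n}$ decomposes as a direct sum of finite-dimensional pieces indexed by tuples of irreducible representations, and $\tB_{G,H}^n = \mcU(H \times G^n)$ is its full linear dual block-by-block. Under the resulting perfect pairing (and using the formulas $\langle \Delta(f), T_1 \otimes T_2\rangle = \langle f, T_1 T_2\rangle$ and $\langle f, T \otimes 1\rangle = \epsilon(f)\langle\cdot,T\rangle$), $d_{\cH}$ is adjoint to the standard bar differential $\partial$ on $\mcO(H) \otimes \mcO(G)^{\otimes \bullet}$ computing $\Tor^{\mcO(G)}_\bullet(\mcO(H), \C)$, where $\mcO(H)$ is a right $\mcO(G)$-module via restriction and $\C$ is a left $\mcO(G)$-module via evaluation at $e$. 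The general fact that taking the linear dual of a chain complex of vector spaces commutes with cohomology then gives $H^i(\tB_{G,H}^\bullet, d_{\cH}) \cong \Tor^{\mcO(G)}_i(\mcO(H), \C)^*$, and the identification is $H$-equivariant throughout since the diagonal conjugation action preserves the block decomposition and commutes with all the structure maps.

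Next I would compute the Tor by Koszul. Both $H$ and $G$ are smooth affine, so the closed embedding $H \hookrightarrow G$ is regular of codimension $\dim \mfg/\mfh$: the defining ideal $I = \ker(\mcO(G) \to \mcO(H))$ is locally generated by a regular sequence and $\mcO(H)$ therefore admits a Koszul resolution over $\mcO(G)$. Using the factorization
\begin{equation*}
\mcO(H) \otimes^L_{\mcO(G)} \C \;\cong\; \bigl(\mcO(H) \otimes^L_{\mcO(G)} \mcO(H)\bigr) \otimes^L_{\mcO(H)} \C
\end{equation*}
together with the standard formula $\mcO(H) \otimes^L_{\mcO(G)} \mcO(H) \simeq \bigwedge^\bullet_{\mcO(H)}(I/I^2)$ (with zero differential), and the fact that $I/I^2 \cong N^*_{H/G}$ is $\mcO(H)$-locally free, the remaining tensor over $\mcO(H)$ with $\C$ is just restriction to the fiber at $e$ and produces $\bigwedge^\bullet (\mfg/\mfh)^*$. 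Hence $\Tor^{\mcO(G)}_i(\mcO(H), \C) \cong \bigwedge^i (\mfg/\mfh)^*$ as $H$-modules, with $H$ acting on $\mfg/\mfh$ via the adjoint action. Dualizing and using finite-dimensionality of each Tor group yields $H^i(\tB_{G,H}^\bullet, d_{\cH}) \cong \bigwedge^i \mfg/\mfh$ as $H$-modules, which is the claim.

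The main technical delicacy is the careful handling of the infinite-dimensional duality at the chain level: one must use that the Peter--Weyl decomposition exhibits $\mcO(H) \otimes \mcO(G)^{\otimes n}$ as an honest direct sum of finite-dimensional blocks whose full linear duals assemble as a direct product into $\mcU(H \times G^n)$, so that the formal duality between cycles/boundaries and cocycles/coboundaries becomes rigorous and compatible with the $H$-action. As a concrete alternative, if explicit cocycle representatives are needed later, one could instead build the quasi-isomorphism directly via antisymmetrization, sending $X_1 \wedge \cdots \wedge X_n \in \bigwedge^n \mfg/\mfh$ to $1 \otimes \Alt(X_1 \otimes \cdots \otimes X_n) \in \tB_{G,H}^n$, check cocycle and well-definedness (the failure of well-definedness on $\mfh$ is swallowed by the coboundaries $d_{\cH}(Y) = 1 \otimes Y$ for $Y \in \mfh$), and produce an explicit contracting homotopy on the complement; this avoids dualization at the price of heavier bookkeeping.
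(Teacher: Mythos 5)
Your proof is correct and follows essentially the same route as the paper: dualize $\tB_{G,H}$ to the bar complex, recognize it as computing a Tor group, and compute that Tor via the regularity of the closed embedding $H \hookrightarrow G$. The only cosmetic difference is that the paper packages the Tor as Hochschild homology $\HH_*(\mcO(G), \OHbimod) = \Tor^{\mcO(G \times G)}_*(\mcO(\Delta), \mcO(H\times\{e\}))$ and cites SGA~6, Prop.~VII.2.5 for the exterior-algebra identification, whereas you work with $\Tor^{\mcO(G)}_*(\mcO(H), \C_e)$ directly and spell out the Koszul computation; both give the same answer.
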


\bp
The complex $\tB_{G,H}$ is the algebraic linear dual of $\tB_{G,H}' = (\mcO(H) \otimes \mcO(G)^{\otimes n})_{n=0}^\infty$ with the differential $d\colon \mcO(H) \otimes \mcO(G)^{\otimes n} \to \mcO(H) \otimes \mcO(G)^{\otimes n-1}$ given by
\[
d(f_0 \otimes f_1 \otimes \dots \otimes f_n) = \sum_{i=0}^{n-1} (-1)^i f_0 \otimes \dots \otimes f_i f_{i+1} \otimes \dots \otimes f_n + (-1)^n f_n(e) f_0 \otimes \dots \otimes f_{n-1},
\]
where $f_0 f_1$ is the product of $f_0$ and the restriction of $f_1$ to $H$. Thus, the cohomology of $\tB_{G,H}$ is the linear dual of the homology of $\tB_{G,H}'$ as an $H$-module.

The complex $\tB_{G,H}'$ is the standard complex computing the Hochschild homology
\[
\HH_*(\mcO(G), \OHbimod) = \Tor^{\mcO(G) \otimes \mcO(G)}_*(\mcO(G), \OHbimod),
\]
where the bimodule $\OHbimod$ has the underlying space $\mcO(H)$ with the bimodule structure $f.a.f' = f'(e) f a$ for $f, f' \in \mcO(G)$ and $a \in \mcO(H)$. In other words, we are computing
\[
\Tor^{\mcO(G\times G)}_*(\mcO(\Delta), \mcO(H\times\{e\})),
\]
where $\Delta\subset G\times G$ is the diagonal. By~\cite{MR0354655}*{Proposition~VII.2.5}, this is the exterior algebra on $\Tor^{\mcO(G\times G)}_1(\mcO(\Delta), \mcO(H\times\{e\}))$, and the latter is the conormal space of $H \subset G$ at the point~$e$.
Since this conormal space is the dual of $\mfg/\mfh$, we obtain the assertion.
\ep

\begin{Rem}
Proposition~\ref{prop:comput-cohoch-mult-model} and its proof are valid for any linear algebraic group $G$ over $\C$ and any algebraic subgroup $H$, if we define $\mcU(H \times G^n)$ as the dual of $\mcO(H) \otimes \mcO(G)^{\otimes n}$.
\end{Rem}

\begin{Cor} \label{cor:cohom-B-G-H}
For any reductive algebraic subgroup $H < G$, the cohomology of $B_{G,H}$ is isomorphic to $(\bigwedge \mfg/\mfh)^H$.
\end{Cor}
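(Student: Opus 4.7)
The plan is to deduce the corollary from Proposition \ref{prop:comput-cohoch-mult-model} by showing that taking $H$-invariants commutes with cohomology in this setting, where the key input is the reductivity of $H$. Since $d_{\cH}$ is $H$-equivariant for the diagonal conjugation action, $B_{G,H}$ is a subcomplex of $\tB_{G,H}$, and there is a canonical comparison map
\[
H^n(B_{G,H}) \longrightarrow H^n(\tB_{G,H})^H \cong \left( \bigwedge\nolimits^n \mfg/\mfh \right)^H,
\]
so it suffices to show this map is an isomorphism.

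The main step is the following general observation. Recall from the proof of Proposition \ref{prop:comput-cohoch-mult-model} that $\tB_{G,H}^n = \mcU(H \times G^n)$ is the linear dual of the complex $\tB'_{G,H,n} = \mcO(H) \otimes \mcO(G)^{\otimes n}$ of rational $H$-modules, with the diagonal conjugation action on $\tB_{G,H}^n$ being contragredient to the diagonal $H$-action on $\tB'_{G,H,n}$. Since $H$ is reductive, every rational $H$-module $V$ splits canonically as $V = V^H \oplus V_0$, where $V_0$ is the sum of nontrivial isotypic components; dually $V^* = (V^H)^* \oplus V_0^*$, and the $H$-invariants of $V^*$ (for the contragredient action) are exactly $(V^H)^*$. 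Applying this degree by degree, $B_{G,H}$ is the linear dual of the direct summand of $\tB'_{G,H}$ consisting of the trivial isotypic part in each degree. Passing to (co)homology yields
\[
H^n(B_{G,H}) \cong \left( H^n(\tB_{G,H}) \right)^H \cong \left( \bigwedge\nolimits^n \mfg/\mfh \right)^H,
\]
which is exactly the claim.

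The only item that genuinely needs checking — the closest thing to an obstacle here — is that the isomorphism of Proposition \ref{prop:comput-cohoch-mult-model} is $H$-equivariant for the claimed actions. This is essentially automatic from the proof given: the Hochschild/Tor identification is natural in $H$-modules because the bimodule $\OHbimod$ is $H$-equivariant under the conjugation action, and the description of $\Tor^{\mcO(G \times G)}_1(\mcO(\Delta), \mcO(H\times\{e\}))$ as the conormal space of $H \subset G$ at the identity is canonical, so identifies $H$-equivariantly with the dual of $\mfg/\mfh$ under the adjoint action. Taking the exterior algebra preserves equivariance, which is all that is used.
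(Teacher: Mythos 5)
Your argument is correct and takes essentially the same route as the paper: both proofs reduce the corollary to Proposition~\ref{prop:comput-cohoch-mult-model} by using the reductivity of $H$ to commute $H$-invariants with cohomology. The paper states this directly via the isotypic decomposition of the finite-dimensional blocks $\End(V_\pi)\otimes\End(V_{\pi_1})\otimes\cdots\otimes\End(V_{\pi_n})$ of $\tB^n_{G,H}$, whereas you make it precise by dualizing to the chain complex $\tB_{G,H}'$ of rational $H$-modules, which amounts to the same thing; the $H$-equivariance you verify at the end is already built into the statement of Proposition~\ref{prop:comput-cohoch-mult-model} (``as a graded $H$-module'').
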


\bp
As the factors $\End(V_\pi) \otimes \End(V_{\pi_1}) \otimes \dots \otimes \End(V_{\pi_n})$ of $\tB^n_{G,H}$ decompose into direct sums of isotypical components, taking the $H$-invariant part commutes with taking cohomology.
\ep

We will mainly need the following particular case.

\begin{Cor}
\label{cor:cohom-B-G-Gtheta}
The cohomology of $B_{G,G^\sigma}$ is isomorphic to $(\bigwedge \mfm^\C)^{\mfk}$.
\end{Cor}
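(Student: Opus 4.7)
The plan is to deduce this directly from the preceding Corollary~\ref{cor:cohom-B-G-H}, so the proof should be very short; the only substantive content is identifying $\mfg/\mfg^\sigma$ with $\mfm^\C$ as a $\mfk$-module and passing from $G^\sigma$-invariants to $\mfk$-invariants.

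First, I would apply Corollary~\ref{cor:cohom-B-G-H} with $H=G^\sigma$, which is a reductive algebraic subgroup of $G$ since $\sigma$ is an involutive automorphism. This gives that the cohomology of $B_{G,G^\sigma}$ is isomorphic to $(\bigwedge \mfg/\mfg^\sigma)^{G^\sigma}$.

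Next, I would identify $\mfg/\mfg^\sigma$ with $\mfm^\C$ as $\mfg^\sigma$-modules. The automorphism $\sigma$ extends to a complex-linear involution of $\mfg$, and decomposing into $\pm 1$ eigenspaces yields $\mfg = \mfg^\sigma \oplus \mfg^{-\sigma}$; since $\mfm \subset \mfu$ is the $-1$ eigenspace of $\sigma$ on $\mfu$, complexifying gives $\mfg^{-\sigma} = \mfm^\C$. The projection $\mfg \to \mfm^\C$ along $\mfg^\sigma$ is therefore a $\mfg^\sigma$-equivariant isomorphism $\mfg/\mfg^\sigma \cong \mfm^\C$, and hence $\bigwedge \mfg/\mfg^\sigma \cong \bigwedge \mfm^\C$ as $\mfg^\sigma$-modules (equivalently, $G^\sigma$-modules, as $G^\sigma$ is connected).

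Finally, I would use that $G^\sigma$ is connected (already recorded in the excerpt after the definition of symmetric pair, via Cartan decomposition) to replace $G^\sigma$-invariants by $\mfg^\sigma$-invariants, and then observe that a complex subspace of a complex representation of $\mfg^\sigma = \mfk^\C$ is $\mfg^\sigma$-invariant if and only if it is $\mfk$-invariant. This identifies $(\bigwedge \mfg/\mfg^\sigma)^{G^\sigma}$ with $(\bigwedge \mfm^\C)^{\mfk}$, giving the result. There is no real obstacle here since all the work was done in Proposition~\ref{prop:comput-cohoch-mult-model} and Corollary~\ref{cor:cohom-B-G-H}; the only point requiring care is the connectedness of $G^\sigma$, which has already been established.
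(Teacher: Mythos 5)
Your proposal is correct and matches the paper's one-line proof, which likewise derives the result from Corollary~\ref{cor:cohom-B-G-H} using the connectedness of $G^\sigma$ and the identification $\mfg^\sigma = \mfk^\C$. You have simply spelled out the identification $\mfg/\mfg^\sigma \cong \mfm^\C$ and the passage from group to Lie algebra invariants, which the paper leaves implicit.
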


\bp
This follows from the previous corollary, since $G^\sigma$ is connected and $\mfg^\sigma=\mfk^\C$.
\ep

\begin{Rem}
Instead of the multiplier algebras we could use the universal enveloping algebras and define complexes~$\tB_{\mfg,\mfh}$ and $B_{\mfg,\mfh}$, see Appendix~\ref{sec:appendix-dg}. The canonical maps $U(\mfg) \to \mcU(G)$ and $U(\mfh) \to \mcU(H)$ are injective homomorphisms compatible with the coproduct maps $\mcU(G) \to \mcU(G \times G)$ and $\mcU(H) \to \mcU(H \times H)$. Thus we get an inclusion $\tB_{\mfg,\mfh}\to\tB_{G,H}$, and if $H$ is connected, we also get an inclusion $B_{\mfg,\mfh}\to B_{G,H}$.
Corollary~\ref{cor:Buk-cohom} shows that these maps are quasi-isomorphisms.
\end{Rem}

\subsection{Classification of associators and ribbon braids: non-Hermitian case} \label{ssec:class-non-Hermitian}

Assume the symmetric pair $\mfk<\mfu$ is non-Hermitian.

Consider a multiplier quasi-bialgebra $(\mcU(G)\fps,\Delta_h,\Phi)$ such that $\Delta_h=\Delta$ modulo $h$. We claim that up to twisting by $(1,\mcG)$ it has at most one quasi-coaction $(\mcU(G^\sigma)\fps,\alpha,\Psi)$ such that $\alpha=\Delta$ modulo~$h$. Since by Lemma~\ref{lem:twisting-to-Delta} we may assume that both $\Delta_h$ and $\alpha$ equal $\Delta$, the following is an equivalent statement.

\begin{Thm}
\label{thm:Psi-rigid}
Let $\mfk = \mfu^\sigma < \mfu$ be a non-Hermitian symmetric pair, and $\Psi,\Psi'\in\mcU(G^\sigma\times G^2)\fpser$ be two associators defining quasi-coactions of $(\mcU(G)\fps,\Delta,\Phi)$, with the coaction homomorphisms $\alpha=\alpha' = \Delta$.
Then there is an element $\mcH \in 1+h\mcU(G^\sigma \times G)^\mfk\fpser$ such that $(\id\otimes\epsilon)(\mcH)=1$ and $\Psi = \mcH_{0, 12} \Psi' \mcH_{01, 2}^{-1} \mcH_{0,1}^{-1}$.
\end{Thm}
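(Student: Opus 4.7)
The natural approach is to kill the difference $\Psi-\Psi'$ order by order in $h$ by a sequence of twistings. A key preliminary observation is that whenever $\mcH\in\mcU(G^\sigma\times G)^\mfk\fps$ with $\mcH^{(0)}=1$, the diagonal $\mfk$-invariance together with connectedness of $G^\sigma$ forces $\mcH$ to commute with $\Delta(\mcU(G^\sigma))$. Consequently the twisting of Section~\ref{ssec:twisting} applied with $(\mcF,\mcG)=(1,\mcH)$ preserves the multiplier quasi-bialgebra $(\mcU(G)\fps,\Delta,\Phi)$, preserves the coaction $\alpha=\Delta$, and sends $\Psi'$ to $\mcH_{0,12}\Psi'\mcH_{01,2}^{-1}\mcH_{0,1}^{-1}$. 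Since such twistings compose and the $h$-adic topology is complete, it suffices to prove: whenever $\tilde\Psi'$ agrees with $\Psi$ modulo $h^n$ for some $n\ge 1$, one can find $Y\in\mcU(G^\sigma\times G)^\mfk$ with $(\id\otimes\epsilon)(Y)=0$ such that the twist by $(1,1+h^nY)$ makes $\tilde\Psi'$ agree with $\Psi$ modulo $h^{n+1}$.

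For the inductive step, write $\Psi=\tilde\Psi'+h^nX+O(h^{n+1})$. Subtracting the mixed pentagon equations~\eqref{eq:mix-pent} for $\Psi$ and $\tilde\Psi'$, which share the same $\Phi$, and extracting the coefficient of $h^n$, where only the constant terms $1$ of $\Phi$, $\Psi$, $\tilde\Psi'$ contribute, one finds
\[
X_{0,12,3}+X_{0,1,2}=X_{0,1,23}+X_{01,2,3},
\]
which is exactly $d_{\cH}(X)=0$ in $\tB_{G,G^\sigma}^2$. Moreover, the modified associativity~\eqref{eq:mod-assoc} with $\alpha=\Delta$ forces both $\Psi$ and $\tilde\Psi'$ to commute with $\Delta^{(2)}(T)$ for every $T\in\mcU(G^\sigma)$, so they are invariant under the diagonal adjoint action of the connected group $G^\sigma$, placing $X$ in $B_{G,G^\sigma}^2$. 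The normalization axioms for $\Psi$ and $\tilde\Psi'$ additionally yield $(\id\otimes\epsilon\otimes\id)(X)=(\id\otimes\id\otimes\epsilon)(X)=0$.

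The crucial input is Corollary~\ref{cor:cohom-B-G-Gtheta} combined with Lemma~\ref{lem:inv-wedge}: in the non-Hermitian case the second cohomology of $B_{G,G^\sigma}$ is $(\bigwedge^2\mfm^\C)^\mfk=0$. Hence $X=-d_{\cH}(Y)$ for some $\mfk$-invariant $Y$, and a direct expansion using $(\tilde\Psi')^{(0)}=1$ gives
\[
\mcH_{0,12}\tilde\Psi'\mcH_{01,2}^{-1}\mcH_{0,1}^{-1}=\tilde\Psi'+h^n(Y_{0,12}-Y_{01,2}-Y_{0,1})+O(h^{n+1})=\tilde\Psi'+h^nX+O(h^{n+1}),
\]
producing the desired matching modulo $h^{n+1}$. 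Composing the successive corrections $h$-adically yields $\mcH$, and multiplicativity of $(\id\otimes\epsilon)$ preserves the counit normalization $(\id\otimes\epsilon)(\mcH)=1$ at each stage.

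The most delicate point is arranging the level-$1$ normalization $(\id\otimes\epsilon)(Y)=0$. This is handled by the standard argument for co-Hochschild-type complexes: the inclusion of the normalized subcomplex (cochains that vanish under $\epsilon$ applied to any $G$-slot) into the full complex is a quasi-isomorphism. Since $X$ already lies in the normalized subcomplex at degree $2$ by the previous paragraph, a primitive $Y$ can be chosen in the $\mfk$-invariant part of the normalized subcomplex at degree $1$, which is precisely the condition $(\id\otimes\epsilon)(Y)=0$. The main conceptual obstacle has already been cleared by the authors in the form of Corollary~\ref{cor:cohom-B-G-Gtheta} and Lemma~\ref{lem:inv-wedge}; the remaining work is the cocycle verification above and this normalization bookkeeping.
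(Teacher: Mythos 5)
Your proof follows the paper's argument closely: an induction on the $h$-order of $\Psi-\Psi'$, extraction of a $\mfk$-invariant $2$-cocycle in $B_{G,G^\sigma}$ from the difference of mixed pentagon equations, and the vanishing of $\rH^2(B_{G,G^\sigma})$ provided by Corollary~\ref{cor:cohom-B-G-Gtheta} and Lemma~\ref{lem:inv-wedge} to kill the obstruction by an inner twist, exactly as in the paper. One small simplification worth noting: the counit normalization $(\id\otimes\epsilon)(Y)=0$ you arrange via a quasi-isomorphism with the normalized subcomplex is in fact automatic, since $(\id\otimes\epsilon\otimes\id)\bigl(d_\cH(Y)\bigr)=(\id\otimes\epsilon)(Y)\otimes 1$, so any primitive of a cocycle killed by $\id\otimes\epsilon\otimes\id$ already satisfies the condition; conversely, the point you pass over lightly (that the individual twists compose into a single twist by a $\mfk$-invariant element) is where the paper inserts an explicit remark about commutation of $T_{0,1}$ and $T_{01,2}$ at different orders, which is what justifies that the combined twist has the stated form.
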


\bp
Suppose that $\Psi^{(k)} = \Psi'^{(k)}$ for $k < n$.
We claim that there is $T \in \mcU(G^\sigma \times G)^\mfk$ such that $(\id\otimes\epsilon)(T)=0$ and $\Psi$ and $\mcH_{0,12} \Psi' \mcH_{01,2}^{-1} \mcH_{0,1}^{-1}$ have the same terms up to (and including) order $n$ for $\mcH = 1 - h^n T$.
The lemma is then proved by inductively applying this claim and taking the product of the elements $1 - h^n T$ we thus get. Note only that by $\mfk$-invariance the elements $T_{0,1}$ and $T_{01,2}$ obtained at different steps commute with each other.

Take the difference of identities~\eqref{eq:mix-pent} for $\Psi$ and $\Psi'$ and consider the terms of order $n$. Since~$\Psi$ and~$\Psi'$ have the same terms up to order $n-1$, we get
\[
(\Psi^{(n)}-\Psi'^{(n)})_{0, 12, 3} + (\Psi^{(n)}-\Psi'^{(n)})_{0, 1, 2}=(\Psi^{(n)}-\Psi'^{(n)})_{0, 1, 23} + (\Psi^{(n)}-\Psi'^{(n)})_{01, 2, 3}.
\]
Since $\Psi$ and $\Psi'$ are $\mfk$-invariant by~\eqref{eq:mod-assoc}, it follows that $\Psi^{(n)} - \Psi'^{(n)}$ is a cocycle in $B_{G, G^\sigma}^2$.
As we are in the non-Hermitian case, by Corollary \ref{cor:cohom-B-G-Gtheta} and Lemma~\ref{lem:inv-wedge}, we have $\Psi^{(n)}-\Psi'^{(n)}=d_\cH(T)$ for some $T\in \mcU(G^\sigma \times G)^\mfk$. As $(\id\otimes\epsilon\otimes\id)(\Psi^{(n)}-\Psi'^{(n)})=0$, we have $(\id\otimes\epsilon)(T)=0$. Thus $T$ satisfies our claim.
\ep

\begin{Rem}\label{rem:que-rigidity-non-Hermitian}
Analogous results are true at the level of the universal enveloping algebras instead of the multiplier algebras. More precisely, given a quasi-bialgebra $(U(\mfg)\fps,\Delta_h,\Phi)$ such that $\Delta_h=\Delta$ modulo~$h$, up to twisting by $(1,\mcG)$ there is at most one quasi-coaction  $(U(\mfg^\sigma)\fps,\alpha,\Psi)$ of this quasi-bialgebra such that $\alpha=\Delta$ modulo $h$. This is proved along the same lines as Lemma~\ref{lem:twisting-to-Delta} and Theorem~\ref{thm:Psi-rigid}, but now relying on Whitehead's lemma for the semisimple Lie algebras $\mfg$ and $\mfg^\sigma$ to show that there are no nontrivial deformations of $\Delta\colon U(\mfg)\to U(\mfg)\otimes U(\mfg)$ and $\Delta\colon U(\mfg^\sigma)\to U(\mfg^\sigma)\otimes U(\mfg)$, and using Corollary~\ref{cor:Buk-cohom} instead of Corollary~\ref{cor:cohom-B-G-Gtheta}.
\end{Rem}

Next let us fix a $\sigma$-invariant $R$-matrix $\mcR \in \mcU(G^2)\fpser$ for $(\Delta, \Phi)$ and look at compatible ribbon $\sigma$-braids.
Note that the left hand side of \eqref{eq:rib-sig-tw-0} becomes $\mcE \Delta(T)$ in the present case.

\begin{Thm}
\label{thm:uniq-br-tw-non-Herm}
Let $\mfk = \mfu^\sigma < \mfu$ be a non-Hermitian symmetric pair, and let $\Phi \in \mcU(G^3)^G\fpser$ and $\mcR \in \mcU(G^2)^G\fpser$ be $\sigma$-invariant elements defining the structure of a quasi-triangular multiplier quasi-bialgebra $(\mcU(G)\fpser, \Delta, \Phi, \mcR)$.
Assume further that we are given a quasi-coaction of the form $(\mcU(G^\sigma)\fpser, \Delta, \Psi)$ by this quasi-bialgebra, and that $\mcE \in \mcU(G^\sigma \times G)\fpser$ is a ribbon $\sigma$-braid for $\mcR$.
Then $\mcE^{(0)}=1\otimes g$ for an element $g$ in the centralizer $Z_U(K)$, and any other ribbon $\sigma$-braid, for the same $\Phi$, $\Psi$ and $\mcR$, and with the same order $0$ term, coincides with $\mcE$.
Furthermore, if $\mcR^{(1)}_{\phantom{1}}+\mcR^{(1)}_{21}\ne0$, then $g\in Z(U)$.
\end{Thm}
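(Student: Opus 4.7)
The plan is to exploit the relations~\eqref{eq:rib-sig-tw-0}--\eqref{eq:rib-sig-tw-2} order by order. The order-zero parts will pin down $\mcE^{(0)}=1\otimes g$ with $g\in Z_U(K)$; an inductive differencing argument analogous to Theorem~\ref{thm:Psi-rigid} will yield uniqueness via the rigidity $\mfg^\mfk=0$; and an order-one analysis combined with Schur's lemma on $\mfm$ will upgrade the centralizer from $K$ to $U$.

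At order zero, I would set $\Phi^{(0)}=\Psi^{(0)}=\mcR^{(0)}=1$ in the three relations. Equation~\eqref{eq:rib-sig-tw-1} reduces to $(\Delta\otimes\id)(\mcE^{(0)})=\mcE^{(0)}_{02}$, which (after contracting the $G^\sigma$-slot with $\epsilon$) forces $\mcE^{(0)}=1\otimes g$ with $g\in\mcU(G)$; then~\eqref{eq:rib-sig-tw-2} gives $\Delta(g)=g\otimes g$, so $g\in G$; and~\eqref{eq:rib-sig-tw-0} forces $g$ to centralize $\mcU(G^\sigma)$, i.e., $g\in Z_G(K)$. In the non-Hermitian case Lemma~\ref{lem:inv-wedge} yields $\mfg^\mfk=0$, so $Z_G(K)$ is discrete and coincides with $Z_U(K)$ via the Cartan decomposition $G=U\cdot\exp(i\mfu)$.

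For uniqueness I would induct on the order. If $\mcE,\mcE'$ agree through order $n-1$ and share the order-zero term $1\otimes g$, the difference $X:=\mcE^{(n)}-\mcE'^{(n)}$ satisfies the three equations at order $n$ with all inhomogeneous lower-order terms cancelled, namely: $X$ commutes with $\Delta(\mcU(G^\sigma))$; $(\Delta\otimes\id)(X)=X_{02}$; and $(\id\otimes\Delta)(X)=X_{02}(1\otimes g\otimes 1)+(1\otimes 1\otimes g)X_{01}$. The second identity forces $X=1\otimes x$ (via $\epsilon\otimes\id\otimes\id$); the third then reduces to $\Delta(x)=g\otimes x+x\otimes g$, so $y:=xg^{-1}$ is primitive and hence lies in $\mfg$. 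The first condition becomes $y\in\mfg^\mfk=0$, so $X=0$.

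For the $Z(U)$ claim, $\Ad(g)$ already fixes $\mfk$; since the $K$-module $\mfm$ is real-irreducible and, in the non-Hermitian case, carries no invariant complex structure, Schur's lemma on isometries leaves only $\Ad(g)|_\mfm=\pm\id$. The $+\id$ option gives $\Ad(g)=\id$ on $\mfg$, hence $g\in Z(G)\cap U=Z(U)$. The remaining case $\Ad(g)=\sigma$ is what has to be excluded when $S:=\mcR^{(1)}+\mcR^{(1)}_{21}\ne 0$. To do so, form $\mcE':=\mcE\cdot(1\otimes g^{-1})$: conjugation by $g$ now implements $\sigma$ on the second $G$-slot, so the $\sigma$-twists in~\eqref{eq:rib-sig-tw-1}--\eqref{eq:rib-sig-tw-2} get absorbed and $\mcE'$ satisfies the \emph{untwisted} reflection equations with $\mcE'^{(0)}=1$. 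Expanding~\eqref{eq:rib-sig-tw-1} for $\mcE'$ at order one and projecting $\epsilon\otimes\id\otimes\id$ yields the identity
\[
(\iota\otimes\id)(\mcE'^{(1)}) = 1\otimes\zeta' + S \quad\text{in } \mcU(G^2),
\]
with $\zeta':=(\epsilon\otimes\id)(\mcE'^{(1)})$ and $\iota\colon\mcU(G^\sigma)\hookrightarrow\mcU(G)$. The main obstacle is that both the left-hand side and $1\otimes\zeta'$ lie automatically in the subspace $\iota(\mcU(G^\sigma))\otimes\mcU(G)\subset\mcU(G^2)$, forcing $S$ into it as well; but $\iota(\mcU(G^\sigma))\cap\mfg=\mfk^\C$, while the leading part of any nonzero symmetric $G$-invariant $S$ involves the Casimir tensor $t^\mfu$, which has a nontrivial $\mfm^\C\otimes\mfm^\C$ component. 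This forces $S=0$, excluding $\Ad(g)=\sigma$ and completing the proof.
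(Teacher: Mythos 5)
Your proof of the first two claims (that $\mcE^{(0)}=1\otimes g$ with $g\in Z_U(K)$, and that the order-zero term determines $\mcE$) is essentially the same as the paper's, up to a cosmetic difference: the paper first renormalizes to $\tilde\mcE=\mcE(1\otimes g^{-1})$ so that the induction runs with constant term $1$ (making the order-$n$ identities $g$-free), whereas you keep $g$ around and recover primitivity via $y=xg^{-1}$. Both work.

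For the final claim you take a genuinely different route. You first use real irreducibility of $\mfm$ and the absence of an invariant complex structure to argue that the $K$-equivariant orthogonal map $\Ad g|_\mfm$ must be $\pm\id$, and then try to rule out $-\id$ (i.e.\ $\Ad g=\sigma$) from the order-one part of~\eqref{eq:rib-sig-tw-1}. The paper needs no such dichotomy: after twisting $\Phi$ and $\Psi$ to be trivial modulo $h^2$ it identifies $\mcR^{(1)}=-\lambda t^\mfu$ and reads off directly from the $\mfm\otimes\mfm$-component of the first-order identity that $\Ad g$ fixes $\mfm$ pointwise. Your Schur dichotomy is correct and is a tidy alternative way to organize the rigidity, but your exclusion step has a gap. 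You assert that a nonzero $G$-invariant symmetric $S=\mcR^{(1)}+\mcR^{(1)}_{21}$ must ``involve the Casimir tensor $t^\mfu$'' and hence have a nontrivial $\mfm^\C\otimes\mfm^\C$ component. This is true, but not for free: a priori $\mcR^{(1)}$ is just a $G$-invariant element of $\mcU(G^2)$, not an element of $\mfg\otimes\mfg$; for instance it could a priori contain products of central elements of $\mcU(G)$, which do lie in the image of $\mcU(G^\sigma\times G)$ and would defeat your argument. To reduce to $S\in\C\,t^\mfu$ you need the hexagon relations: after twisting $\Phi$ to $1$ modulo $h^2$ (by a multiplier-algebra analogue of Drinfeld's Proposition~3.1, using that $\rH^3(\tB_G)\cong\bigwedge^3\mfg$ is detected by the normalized alternation), the hexagons force $\mcR^{(1)}\in(\mfg\otimes\mfg)^\mfg=\C\,t^\mfu$. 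Since such a twist leaves $\mcR^{(1)}+\mcR^{(1)}_{21}$ unchanged, the conclusion transfers back to the original $\mcR$. Once this step is supplied your argument closes; without it, the claim ``$S$ involves $t^\mfu$'' is unjustified.
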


The group $Z_U(K)$ is finite as remarked in Remark \ref{rem:centr-non-herm}, so we have at most finitely many ribbon $\sigma$-braids.

\bp
From~\eqref{eq:rib-sig-tw-1} we get that $(\Delta\otimes\id)(\mcE^{(0)})=\mcE_{02}^{(0)}$. This implies that $\mcE^{(0)}=1\otimes g$, with $g=(\epsilon\otimes\id)(\mcE^{(0)})\in\mcU(G)$. From~\eqref{eq:rib-sig-tw-2} we then get $\Delta(g)=g\otimes g$, hence $g\in G$. But then~\eqref{eq:rib-sig-tw-0} shows that $g\in Z_G(G^\sigma)$. Finally, as $\mfz_\mfg(\mfk)=\mfz_\mfu(\mfk)^\C=0$, using the Cartan decomposition of $G$ we see that $Z_G(K)=Z_U(K)$, hence $g\in Z_U(K)$.

Assume now that $\mcE'$ is another ribbon $\sigma$-braid with ${\mcE'}{}^{(0)}=1\otimes g$. We want to show that $\mcE'=\mcE$. It will be convenient to first get rid of~$g$. By multiplying both elements by $1\otimes g^{-1}$ on the right we get new ribbon $\tilde\sigma$-braids $\tilde\mcE$ and $\tilde\mcE'$ in $\mcU(G^\sigma\times G)\fpser$ with the order zero terms $1$, where $\tilde\sigma=(\Ad g)\circ\sigma$.

We argue by induction on $n$ that $\tilde\mcE^{(n)}=\tilde\mcE'^{(n)}$. Suppose that we already know that $\tilde\mcE^{(k)} = \tilde\mcE'^{(k)}$ for $k < n$.
Comparing the terms of degree $n$ in \eqref{eq:rib-sig-tw-1} and \eqref{eq:rib-sig-tw-2}, we obtain
\begin{align*}
X_{01,2} &= X_{0,2},&
X_{0,12} &= X_{0,2} + X_{0,1}
\end{align*}
for $X = \tilde\mcE^{(n)} - \tilde\mcE'^{(n)}$.

The first equality says that $X = 1 \otimes Y$ for $Y = (\epsilon \otimes \id)(X) \in \mcU(G)$.
Then the second equality says $\Delta(Y) = Y_1 + Y_2$, that is, $Y$ is primitive, and we obtain $Y \in \mfg$.

Comparing the terms of degree $n$ in \eqref{eq:rib-sig-tw-0}, we see next that $Y$ has to centralize $\mfk$. Hence $Y=0$ and $\tilde\mcE'^{(n)} = \tilde\mcE^{(n)}$.

\smallskip

It remains to prove the last statement of the theorem. So assume $\mcR^{(1)}_{\phantom{1}}+\mcR^{(1)}_{21}\ne0$.
Let us write $\tB_{G}$, $B_\mfg$, etc., instead of $\tB_{G,\{e\}}$, $B_{\mfg,0}$.

By an analogue of~\cite{MR1047964}*{Proposition~3.1} for the multiplier algebras, by twisting $\Phi$ we may assume that $\Phi=1$ modulo~$h^2$.
Such an analogue is proved in the same way as in~\cite{MR1047964} using that the embedding map $\tB_{\mfg}\to \tB_{G}$ is a quasi-isomorphism by Corollary~\ref{cor:Buk-cohom}.

Namely, consider the normalized skew-symmetrization map $\Alt\colon \tB^3_{G}\to \tB^3_{G}$.
This map kills the coboundaries and transforms the cocycles of the form $X_1\otimes X_2\otimes X_3$, $X_i\in\mfg$, into cohomologous ones by Remark~\ref{rem:co-Hochshc-classes}.
But the classes of such cocycles span the entire space $\rH^3(\tB_G)$ by the same remark and Corollary~\ref{cor:Buk-cohom}.
Therefore if $T\in \tB^3_G$ is a cocycle killed by $\Alt$, then it is a coboundary. The hexagon relations imply that $\Alt(\Phi^{(1)})=0$, so $\Phi^{(1)}=d_\cH(T)$ for some $T\in \tB^2_G$. We may assume that $T$ is $G$- and $\sigma$-invariant, since $\Phi^{(1)}$ has these invariance properties. Then the twisting by $\mcF=1-h T$ proves our claim.

Note that twisting does not change the element $\mcR^{(1)}_{\phantom{1}}+\mcR^{(1)}_{21}$. The hexagon relations imply then that $\mcR^{(1)}\in\mfg\otimes\mfg$ (see the proof of \cite{MR1047964}*{Proposition~3.1}), and since $\mcR$ commutes with the image of $\Delta$, we get $\mcR^{(1)}\in(\mfg\otimes\mfg)^\mfg$.
Hence $\mcR^{(1)}=-\lambda t^\mfu$ for some $\lambda\ne0$, where $t^\mfu$ is the normalized invariant $2$-tensor defined by \eqref{eq:inv-2-tens}.

Next, identity~\eqref{eq:mix-pent} implies that $\Psi^{(1)}$ is a $2$-cocycle in $B_{G, G^\sigma}$, hence by twisting we may assume that $\Psi=1$ modulo $h^2$. We remind also that under twisting the ribbon twist-braids transform via formula~\eqref{eq:braid-twist0}.

Now, by looking at the first order terms in~\eqref{eq:rib-sig-tw-1} for a $\tilde\sigma$-braid $\tilde\mcE$, with $\tilde\mcE^{(0)}=1$, we get
\[
\tilde\mcE^{(1)}_{01,2}=-\lambda t^\mfu_{2,1}+\tilde\mcE^{(1)}_{0,2}-(\id\otimes\id\otimes\tilde\sigma)(\lambda t^\mfu_{1,2}).
\]
The tensor $t^\mfu$ lies in $\mfk\otimes\mfk+\mfm\otimes\mfm$.
Denote the components of $t^\mfu$ in  $\mfk\otimes\mfk$ and $\mfm\otimes\mfm$ by $t^\mfk$ and $t^\mfm$, resp. Then the above identity can be written as
\[
\tilde\mcE^{(1)}_{01,2}=-2\lambda t^\mfk_{1,2}+\tilde\mcE^{(1)}_{0,2}+(\id\otimes\id\otimes\Ad g)(\lambda t^\mfm_{1,2})-\lambda t^\mfm_{1,2}.
\]
Applying $\epsilon$ to the $0$th leg and letting $T=(\epsilon\otimes\id)(\tilde\mcE^{(1)})$, we obtain
\begin{equation}\label{eq:mcE1}
\tilde\mcE^{(1)}=-2\lambda t^\mfk+1\otimes T+(\id\otimes\Ad g)(\lambda t^\mfm)-\lambda t^\mfm.
\end{equation}
But we must have $\tilde\mcE^{(1)}\in\mcU(G^\sigma\times G)$. Since $t^\mfm=\sum_j Y_j\otimes Y^j$ for a basis $(Y_j)_j$ in $\mfm$ and the dual basis~$(Y^j)_j$, this is possible only when $\Ad g$ acts trivially on $\mfm$. Hence $g\in Z(U)$.
\ep

\begin{Rem}
Theorems \ref{thm:Psi-rigid} and \ref{thm:uniq-br-tw-non-Herm} also hold for the type II symmetric pairs with appropriate modifications.
Namely, consider $\tilde G = G \times G$ and its diagonal subgroup $\Delta(G) < \tilde G$, which is the fixed point subgroup of the involution $\sigma(g, h) = (h, g)$ on $\tilde G$.
Then for the quasi-coactions of $(\mcU(\tilde G)\fpser, \Delta, \Phi)$ on the multiplier algebra $\mcU(G)\fpser$, with the coaction map extending $G \ni g \mapsto (g, g, g) \in G \times \tilde G$, and with associators $\Psi \in \mcU(G \times \tilde G^2)\fpser$ and ribbon $\sigma$-braids $\mcE \in \mcU(G \times \tilde G)\fpser$, one can easily prove analogues of these theorems.
First, the proof of Theorem \ref{thm:Psi-rigid} carries over almost without a change.
Indeed, its proof relies on Lemma~\ref{lem:inv-wedge} and Corollary \ref{cor:cohom-B-G-Gtheta}, both of which have analogues for $G \simeq \Delta(G) < \tilde G$.
As for Theorem \ref{thm:uniq-br-tw-non-Herm}, we have $\mfz_{\tilde\mfg}(\mfg) = 0$ for the diagonal inclusion $\mfg < \tilde \mfg$, and $Z_{\tilde G}(G) = Z(U) \times Z(U)$, which is enough to adapt the first half of the proof.
\end{Rem}

\subsection{Associators from cyclotomic KZ equations}\label{ssec:cyclotomic-KZ}

We want to extend the results of the previous subsection to the Hermitian case.
Since $\rH^2(B_{G,G^\sigma})$ is now one-dimensional by Lemma~\ref{lem:inv-wedge}, we should expect a one-parameter family of nonequivalent associators.
In this subsection we define a candidate for such a family arising from the cyclotomic KZ-equations.

Thus, assume $\mfk<\mfu$ is a Hermitian symmetric pair.
We have an element $Z\in\mfz(\mfk)$, unique up to a sign, such that
\[
(Z,Z)_\mfg=-a_\sigma^{-2}.
\]
This normalization is equivalent to $(\ad Z)^2=-1$ on $\mfm$ by Lemma~\ref{lem:eigenvals-adz}.
We fix such $Z$ for the rest of this section.
The operator $\ad Z$ has eigenvalues $\pm i$ on $\mfm^\C$. Denote by $\mfm_{\pm}\subset\mfm^\C$ the corresponding eigenspaces.

We remind that we denote the components of  the normalized invariant $2$-tensor $t^\mfu$ in $\mfk\otimes\mfk$ and $\mfm\otimes\mfm$ by $t^\mfk$ and $t^\mfm$, resp. The tensor $t^\mfm$ lies in $\mfm_+\otimes\mfm_- + \mfm_-\otimes\mfm_+$. We denote the components of $t^\mfm$ in $\mfm_\pm\otimes\mfm_\mp$ by $t^{\mfm_\pm}$.
We thus have
\begin{align}\label{eq:interaction-z-tm}
t^\mfm &= t^{\mfm_+}+t^{\mfm_-},&
(\ad Z \otimes \id)(t^{\mfm_\pm}) &= \pm i t^{\mfm_\pm},&
(\id\otimes\ad Z)(t^{\mfm_\pm}) &= \mp i t^{\mfm_\pm}.
\end{align}

Given $s \in \C$, consider the following elements of $\mcU(G^\sigma\times G^2)\fpser$:
\begin{align}
\label{eq:picard-ingr}
A_{-1} &= \hbar(t^\mfk_{12} - t^\mfm_{12}),&
A_1 &= \hbar t^\mfu_{12},&
A_0 &= \hbar(2 t^\mfk_{01} + C^\mfk_1) + s Z_1,
\end{align}
where $C^\mfk$ is the Casimir element of $\mfk$, the image of $t^\mfk$ under the product map $U(\mfk)\otimes U(\mfk)\to U(\mfk)$.
These lead to \emph{the shifted modified $2$-cyclotomic KZ$_2$-equation} \citelist{\cite{MR2126485}\cite{MR3943480}}
\begin{equation}
\label{eq:cyc-KZ-2}
G'(w) = \left( \frac{A_{-1}}{w + 1} + \frac{A_1}{w-1} + \frac{A_0}{w} \right) G(w).
\end{equation}

\begin{Rem} \label{rem:twisting}
Consider a $\C\fLauser$-valued character $\nu$ on $U(\mfk)$ such that $\nu(Z) = -(2 \hbar a_\sigma^2)^{-1}s$. Then the slicing map $\varsigma_\nu = (\nu\otimes\id)\Delta$ is an algebra homomorphism $U(\mfk)\to U(\mfk)\fLauser$ satisfying
\[
(\varsigma_\nu \otimes \id)(2t^\mfk) = 2 t^\mfk + \hbar^{-1}(1 \otimes s Z)
\]
and commuting with the right coaction $\Delta$ by $U(\mfg)$.
In particular, at least formally speaking, \eqref{eq:cyc-KZ-2} is obtained from the case $s = 0$ by slicing.
But since $\nu$ cannot be extended to $U(\mfk)\fps$, one should be careful with this construction.
\end{Rem}

The normalized monodromy $\Psi_{\KZ,s}\in\mcU(G^\sigma\times G^2)\fpser$ of~\eqref{eq:cyc-KZ-2} from $w = 0$ to $w = 1$ is well-defined as long as the operator $\ad(s Z)$ on $\mcU(G)$ does not have positive integers in its spectrum, cf.~\cite{MR2832264}*{Proposition~3.1}.
Since each matrix block $\End(V_\pi)$ in $\mcU(G)$ is generated by the image of $\mfg$, the eigenvalues of $\ad Z$ are~$i n$ for $n \in \Z$ by Lemma~\ref{lem:eigenvals-adz} and our choice of normalization.
Therefore $\Psi_{\KZ,s}$ is well-defined for all $s\not\in i\Q^\times$. The element $\Psi_{\KZ,s}$ together with the coproduct $\Delta\colon\mcU(G^\sigma)\to\mcU(G^\sigma\times G)$ gives a quasi-coaction of $(\mcU(G)\fpser,\Delta,\Phi_\KZ)$ on~$\mcU(G^\sigma)\fpser$, where $\Phi_\KZ=\Phi(\hbar t^\mfu_{12},\hbar t^\mfu_{23})\in\mcU(G^3)\fpser$ is Drinfeld's KZ-associator for~$G$.

In more detail, $\Psi_{\KZ,s}$ is defined as follows.
Under our restrictions on $s$, a standard argument (see, e.g., \cite{MR2832264}*{Proposition~3.1}) shows that there is a unique $\mcU(G^\sigma\times G^2)\fpser$-valued solution $G_0$ of~\eqref{eq:cyc-KZ-2} on $(0,1)$ such that $G_0(w)w^{-A_0}$ extends to an analytic function in the unit disc with value $1$ at $w=0$. Similarly, there is a unique solution $G_1$ of~\eqref{eq:cyc-KZ-2} such that $G_1(1-w)w^{-A_1}$ extends to an analytic function in the unit disc with value $1$ at $w=0$. Then
\[
\Psi_{\KZ,s}=G_1(w)^{-1}G_0(w)
\]
for any $0<w<1$. We can also write this as
\begin{equation} \label{eq:KZass}
\Psi_{\KZ,s}=\lim_{w\to 1}(1-w)^{-A_1}G_0(w)=\lim_{w\to 1}(1-w)^{-A_1}G_0(w)w^{-A_0}.
\end{equation}

The case $s=0$ is special: in this case, it can be shown, using for example iterated integrals \citelist{\cite{MR1321145}*{Chapter XIX}\cite{MR2383601}}, that $\Psi_{\KZ,0}$ lives in the algebra $U(\mfg^\sigma\otimes \mfg^{\otimes2})\fpser$ rather than in its completion $\mcU(G^\sigma\times G^2)\fpser$.
Note also that this associator is well-defined in the non-Hermitian case as well. We will denote it by $\Psi_\KZ$.

Observe also that if $s\in\R$, then $G_0$ is unitary, hence $\Psi_{\KZ,s}$ is unitary as well. Indeed, in this case $(G_0(w)^*)^{-1}$ has the defining properties of $G_0(w)$, hence coincides with it.

\begin{Prop}[cf.~\cite{MR2126485}*{Proposition 4.7}]
\label{prop:expand-modified-Psi}
For every $s\not\in i\Q^\times$, we have
\[
\Psi_{\KZ,s}= 1 + \frac{h}{\pi i}\left((\log 2)t^\mfu_{12}+\gamma t^\mfm_{12}+ \psi\Bigl(\frac{1}{2}-\frac{is}{2}\Bigr)t^{\mfm_+}_{12}
+ \psi\Bigl(\frac{1}{2}+\frac{is}{2}\Bigr)t^{\mfm_-}_{12}\right)+ O(h^2),
\]
where $\gamma$ is Euler's constant and $\displaystyle \psi=\frac{\Gamma'}{\Gamma}$ is the digamma function.
\end{Prop}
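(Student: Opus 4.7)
The plan is to proceed by first-order perturbation theory in $h$ around the zeroth-order cyclotomic KZ equation. Since $A_{\pm 1}$ and $A_0-sZ_1$ are all $O(h)$, setting $h=0$ in~\eqref{eq:cyc-KZ-2} gives $G'(w)=(sZ_1/w)G(w)$, whose unique solution satisfying the normalization defining $G_0$ is $G_0^{(0)}(w)=w^{sZ_1}$. Writing $G_0(w)=w^{sZ_1}(1+hG_0^{(1)}(w)+O(h^2))$ and collecting order-$h$ terms yields
\[
G_0^{(1)\prime}(w) = w^{-sZ_1}\left(\frac{A_{-1}^{(1)}}{w+1}+\frac{A_1^{(1)}}{w-1}+\frac{(A_0-sZ_1)^{(1)}}{w}\right)w^{sZ_1}.
\]
Since $Z\in\mfz(\mfk)$, the tensors $t^\mfk_{12}$, $t^\mfk_{01}$, $C^\mfk_1$ commute with $w^{sZ_1}$, while~\eqref{eq:interaction-z-tm} forces $w^{-sZ_1}t^{\mfm_\pm}_{12}w^{sZ_1}=w^{\mp is}t^{\mfm_\pm}_{12}$, so the conjugation is entirely explicit.

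Next I would extract $\Psi_{\KZ,s}^{(1)}$ from the defining limit in~\eqref{eq:KZass}. Expanding $(1-w)^{-A_1}=1-h\log(1-w)A_1^{(1)}+O(h^2)$ and, using $[sZ_1,(A_0-sZ_1)^{(1)}]=0$, $w^{-A_0}=w^{-sZ_1}(1-h\log w\cdot(A_0-sZ_1)^{(1)})+O(h^2)$, and noting that $\log 1=0$, one obtains
\[
\Psi_{\KZ,s}^{(1)} = \lim_{w\to 1^-}\bigl[G_0^{(1)}(w)-\log(1-w)A_1^{(1)}\bigr].
\]
The $(A_0-sZ_1)^{(1)}/w$ term contributes only a $\log w\cdot(A_0-sZ_1)^{(1)}$ summand to $G_0^{(1)}$, which vanishes at $w=1$; dropping it and absorbing $\log(1-w)A_1^{(1)}=\int_0^w A_1^{(1)}/(u-1)\,du$, the limit becomes the absolutely convergent integral
\[
\Psi_{\KZ,s}^{(1)} = \int_0^1\left[F'(w)-\frac{A_1^{(1)}}{w-1}\right]dw,
\]
where $F'(w)$ is the sum of the $t^\mfk_{12}$ and $t^{\mfm_\pm}_{12}$ pieces of $G_0^{(1)\prime}(w)$.

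Finally, decompose the integrand along $t^\mfk_{12}$, $t^{\mfm_+}_{12}$ and $t^{\mfm_-}_{12}$. The $t^\mfk_{12}$ part collapses to $\int_0^1 dw/(w+1)=\log 2$, because its $1/(w-1)$ singularity cancels against $A_1^{(1)}$. For the $t^{\mfm_\pm}_{12}$ pieces (with $a=\mp is$) I must evaluate
\[
\int_0^1\left[-\frac{w^a}{w+1}+\frac{w^a-1}{w-1}\right]dw.
\]
Using the identity $\int_0^1(1-w^c)/(1-w)\,dw=\psi(c+1)+\gamma$ and, via the substitution $v=w^2$, $\int_0^1 w^a/(1+w)\,dw = \tfrac{1}{2}[\psi(a/2+1)-\psi((a+1)/2)]$, together with the digamma duplication formula $\psi(2z)=\tfrac{1}{2}\psi(z)+\tfrac{1}{2}\psi(z+\tfrac{1}{2})+\log 2$ applied with $z=(1\mp is)/2$, this simplifies to $\log 2+\gamma+\psi((1\mp is)/2)$. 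Reassembling, using $\log 2\cdot(t^\mfk_{12}+t^{\mfm_+}_{12}+t^{\mfm_-}_{12})=\log 2\cdot t^\mfu_{12}$ and $\gamma\cdot(t^{\mfm_+}_{12}+t^{\mfm_-}_{12})=\gamma\cdot t^\mfm_{12}$, yields the stated formula.

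The main obstacle is the regularization bookkeeping at the two endpoints: one must verify that the logarithmic singularities of $G_0^{(1)}$ at $w=0$ and $w=1$ cancel exactly against the prefactors $w^{-A_0}$ and $(1-w)^{-A_1}$, and that all intermediate integrals converge for the allowed values of $s$ (which they do whenever $s$ lies in a strip of sufficiently small imaginary part, the general case $s\notin i\Q^\times$ following by analytic continuation). The reduction to digamma values via the $v=w^2$ substitution together with the duplication formula is the only nontrivial piece of integral calculus; everything else is elementary.
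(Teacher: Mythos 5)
Your proof is correct and takes essentially the same overall approach as the paper: first-order perturbation in $h$, conjugate away the $sZ_1$ factor using the eigenvalue identities~\eqref{eq:interaction-z-tm}, and evaluate the $t^\mfk$, $t^{\mfm_+}$, $t^{\mfm_-}$ pieces of the regularized integral separately. The only genuine difference is in the endgame: the paper sets $H_0(w)=G_0(w)w^{-A_0}$, works with the commutator differential equation and the explicit propagator $(w/u)^{\ad(sZ_1)}$, and evaluates the resulting integral $c(s)$ by expanding $(u^2-1)^{-1}$ as a power series and matching against the series $\psi(z)+\gamma=\sum_n(\tfrac{1}{n+1}-\tfrac{1}{n+z})$, whereas you expand $G_0$ as $w^{sZ_1}(1+hG_0^{(1)}+\dots)$ directly and reduce the final integral to Gauss's digamma integral plus the $v=w^2$ substitution and the duplication formula. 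I checked that your integral identities are correct and that the two routes land on the same $\log 2+\gamma+\psi(\tfrac12\mp\tfrac{is}{2})$; the regularization issues you flag (the $\log w\cdot(A_0-sZ_1)^{(1)}$ contribution vanishing at $w=1$, the logarithmic divergence at $w=1$ being exactly absorbed by $-\log(1-w)A_1^{(1)}$, and the fact that $G_0^{(1)}(1)$ and $H_0^{(1)}(1)$ agree in the limit since $w^{\ad sZ_1}\to\id$) all go through. Your digamma-identity route is arguably a touch slicker, at the cost of invoking the duplication formula rather than staying entirely elementary.
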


\bp If we restrict to a finite dimensional block of $\mcU(G^\sigma\times G^2)$, then $\ad(s Z_1)$ has a finite number of eigenvalues there, so the corresponding component of $\Psi_{\KZ,s}$ is well-defined for all $s\not\in i N^{-1}\Z^\times$ for some~$N$. As it is analytic in $s$ in this domain, it therefore suffices to consider real~$s$.

Put $H_0(w)=G_0(w)w^{-A_0}$. Then $H_0$ satisfies the differential equation
\[
H'_0(w)=\left( \frac{A_{-1}}{w + 1} + \frac{A_1}{w-1} \right)H_0(w)+\left[\frac{A_0}{w},H_0(w)\right]
\]
and the initial condition $H_0(0)=1$, and by~\eqref{eq:KZass} we have
\[
\Psi_{\KZ,s}=\lim_{w\to 1}(1-w)^{-A_1}H_0(w).
\]

Consider the expansion in $h$. For the order zero terms we immediately get $\Psi^{(0)}_{\KZ,s}=H^{(0)}_0=1$. Next, consider the order one terms. Let us write $H$ for $\pi i H^{(1)}_0$, so that $H_0=1+\hbar H+O(h^2)$. Then
\[
H'(w)=\left( \frac{t^\mfk_{12} - t^\mfm_{12}}{w + 1} + \frac{t^\mfu_{12}}{w-1} \right)+\left[\frac{s Z_1}{w},H(w)\right]
\]
and $H(0)=0$, while
\[
\pi i\Psi_{\KZ,s}^{(1)}=\lim_{w\to1}(H(w)-\mathinner{\log(1-w)} t^\mfu_{12}).
\]

By \eqref{eq:interaction-z-tm}, we have
\begin{multline*}
H(w)=\int^w_0\left(\frac{w}{u}\right)^{\ad (s Z_1)}\left( \frac{t^\mfk_{12} - t^\mfm_{12}}{u + 1} + \frac{t^\mfu_{12}}{u-1} \right) d u \\
= \int^w_0 \left( t^\mfk_{12} \Bigl(\frac1{u+1} + \frac1{u-1} \Bigr) + \Bigl( \left( \frac{w}{u} \right)^{i s} t^{\mfm_+}_{12} + \left( \frac{w}{u} \right)^{-i s} t^{\mfm_-}_{12}  \Bigr) \Bigl( \frac{-1}{u+1} + \frac1{u-1} \Bigr) \right)d u.
\end{multline*}
Note that this integral is well-defined for $0\le w<1$ as $s$ is assumed to be real.
We then get
\[
\pi i\Psi_{\KZ,s}^{(1)}=b t^\mfk_{12}+c(s)t^{\mfm_+}_{12}+c(-s)t^{\mfm_-}_{12},
\]
where
\begin{align*}
b&=\lim_{w\to 1}\left(\int^w_0\left( \frac{1}{u + 1} + \frac{1}{u-1} \right) d u-\log(1-w)\right)=\log 2,\\
c(s)&=\lim_{w\to 1}\left(\int^w_0\left(\frac{w}{u}\right)^{is}\left( \frac{1}{u - 1} - \frac{1}{u+1} \right) d u-\log(1-w)\right).
\end{align*}

To compute $c(s)$, we write $(u^2-1)^{-1}$ as a power series, integrate and get
\[
c(s)=\lim_{w\to 1}\biggl(-\sum^\infty_{n=0}\frac{w^{2n+1}}{n+\frac{1}{2}-\frac{is}{2}}-\log(1-w)\biggr).
\]
Together with the Taylor expansion of $w^{-1}\log(1-w^2)$ and the standard formula
\[
\psi(z)+\gamma=\sum^\infty_{n=0}\left(\frac{1}{n+1}-\frac{1}{n+z}\right)
\]
this gives
\[
c(s)=\psi\Bigl(\frac{1}{2}-\frac{is}{2}\Bigr)+\gamma+\lim_{w\to 1}(w^{-1}\log(1-w^2)-\log(1-w))
=\psi\Bigl(\frac{1}{2}-\frac{is}{2}\Bigr)+\gamma+\log2,
\]
which completes the proof of the proposition.
\ep

Using the formula $\psi(1-z)-\psi(z)=\pi\cot(\pi z)$ it will be convenient to rewrite the result as
\begin{multline}\label{eq:Psi}
\Psi_{\KZ,s}= 1 + \frac{h}{\pi i}\bigg((\log 2)t^\mfu_{12}+\Big(\gamma+\frac{\psi\Bigl(\frac{1}{2}-\frac{is}{2}\Bigr)+\psi\Bigl(\frac{1}{2}+\frac{is}{2}\Bigr)}{2}\Big)t^\mfm_{12}\\
-\frac{\pi i}{2}\tanh\Bigl(\frac{\pi s}{2}\Bigr)\bigl(t^{\mfm_+}_{12}-t^{\mfm_-}_{12}\bigr)\bigg)+ O(h^2).
\end{multline}

Now, take $\mu\in h\C\fpser$. Replacing $s\not\in i\Q^\times$ by $s+\mu$ in \eqref{eq:picard-ingr}, we can construct yet another associator, which we denote by~$\Psi_{\KZ,s;\mu}$.
If $s\in\R$ and $\mu\in h\R\fpser$, then $\Psi_{\KZ,s;\mu}$ is unitary for the same reason as for~$\Psi_{\KZ,s}$.

\begin{Rem}
Similarly to Remark~\ref{rem:twisting}, $\Psi_{\KZ,s;\mu}$ could be obtained from $\Psi_{\KZ,s}$ by slicing by a character~$\nu$ of $U(\mfk)$ satisfying $\nu(Z) = -(2\hbar a_\sigma^2)^{-1}\mu$.
Since such a character does not always extend to $\mcU(G^\sigma)$, to make sense of this we should have allowed in the construction of $\Psi_{\KZ,s}$ arbitrary finite dimensional representations of $\mfg^\sigma$ instead of those in $\Rep G^\sigma$.
Alternatively, with $s\not\in i\R$ fixed, both $\Psi_{\KZ,s;\mu}$ and $\Psi_{\KZ,s+z}$ for small $z$ are specializations of an associator in $\mcU(G^\sigma\times G^2)\llbracket h,\mu\rrbracket$ constructed by treating $\mu$ as a second formal parameter. But this implies that $\Psi_{\KZ,s;\mu}$ is obtained from the Taylor expansion of $\Psi_{\KZ,s+z}$ at $z=0$ by simply taking $\mu$ as the argument:
\begin{equation}\label{eq:taylor}
\Psi_{\KZ,s;\mu}=\sum^\infty_{k=0}\frac{\mu^k}{k!}\frac{d^k\Psi_{\KZ,s}}{d s^k}=\sum_{n,k=0}^\infty\frac{h^n\mu^k}{k!}\frac{d^k\Psi^{(n)}_{\KZ,s}}{d s^k}.
\end{equation}
This also works for $s\in i(\R\setminus\Q^\times)$ if we consider only the components of $\Psi_{\KZ,s;\mu}$ in finite dimensional blocks of $\mcU(G^\sigma\times G^2)$, which are well-defined and analytic in a neighborhood of $s$.
\end{Rem}

\begin{Cor}
\label{cor:Psi-chi-expansion}
For all $s\not\in i\Q^\times$ and $\mu,\nu\in h\C\fpser$, we have
\begin{multline*}
\Psi_{\KZ,s;\mu + \nu} - \Psi_{\KZ,s; \mu} = h^{1+\ord(\nu)}\nu^{(\ord(\nu))}\biggl(\frac{1}{4\pi}\Bigl(\psi'\Bigl(\frac{1}{2}+\frac{is}{2}\Bigr)-\psi'\Bigl(\frac{1}{2}-\frac{is}{2}\Bigr)\Bigr)t^\mfm_{12}\\
-\frac{\pi}{4}\sech^2\Bigl(\frac{\pi s}{2}\Bigr)\bigl(t^{\mfm_+}_{12}-t^{\mfm_-}_{12}\bigr)\biggr)+ O(h^{2 + \ord(\nu)}).
\end{multline*}
\end{Cor}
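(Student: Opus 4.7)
The plan is to combine the formal Taylor identity~\eqref{eq:taylor} with a direct differentiation of the first-order expansion in~\eqref{eq:Psi}. Applying \eqref{eq:taylor} to both $\mu$ and $\mu+\nu$ and subtracting gives
\[
\Psi_{\KZ,s;\mu+\nu} - \Psi_{\KZ,s;\mu} = \sum_{k\ge 1}\frac{(\mu+\nu)^k-\mu^k}{k!}\frac{d^k\Psi_{\KZ,s}}{ds^k} = \sum_{k\ge 1}\sum_{j=1}^{k}\binom{k}{j}\frac{\mu^{k-j}\nu^j}{k!}\frac{d^k\Psi_{\KZ,s}}{ds^k},
\]
so the task reduces to isolating the coefficient of $h^{1+\ord(\nu)}$ on the right-hand side.

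Next I would carry out the order-in-$h$ bookkeeping. Because $\Psi^{(0)}_{\KZ,s}=1$ is $s$-independent, every $\frac{d^k\Psi_{\KZ,s}}{ds^k}$ with $k\ge 1$ starts at order $h$; combined with $\ord(\mu),\ord(\nu)\ge 1$, the $(k,j)$-summand has $h$-order at least $1+(k-j)+j\,\ord(\nu)$. The inequality $(k-j)+(j-1)\ord(\nu)\ge 1$ holds whenever $(k,j)\ne (1,1)$, so every such summand is $O(h^{2+\ord(\nu)})$. Within the remaining summand $\nu\cdot\frac{d\Psi_{\KZ,s}}{ds}$, only the product of the leading $h$-coefficients contributes at order $1+\ord(\nu)$, namely $\nu^{(\ord(\nu))}\frac{d\Psi^{(1)}_{\KZ,s}}{ds}$; all other products land in $O(h^{2+\ord(\nu)})$. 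The leading term of the difference is therefore $h^{1+\ord(\nu)}\nu^{(\ord(\nu))}\frac{d\Psi^{(1)}_{\KZ,s}}{ds}$.

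The last step is to differentiate~\eqref{eq:Psi} termwise with respect to $s$. The $(\log 2)t^\mfu_{12}$ and $\gamma t^\mfm_{12}$ contributions are $s$-independent; the chain rule applied to $\psi(\tfrac12\mp\tfrac{is}{2})$ yields $\tfrac{i}{4}\bigl(\psi'(\tfrac12+\tfrac{is}{2})-\psi'(\tfrac12-\tfrac{is}{2})\bigr)t^\mfm_{12}$; and $\tfrac{d}{ds}\tanh(\tfrac{\pi s}{2})=\tfrac{\pi}{2}\sech^2(\tfrac{\pi s}{2})$ produces $-\tfrac{\pi^2 i}{4}\sech^2(\tfrac{\pi s}{2})(t^{\mfm_+}_{12}-t^{\mfm_-}_{12})$. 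Dividing through by the global factor $\pi i$ recovers exactly the bracketed expression in the statement. There is no substantive obstacle; the only mild delicacy is that~\eqref{eq:taylor} must be read blockwise in $\mcU(G^\sigma\times G^2)$, which is legitimate because $s\notin i\Q^\times$ ensures analyticity of each finite-dimensional block in a neighborhood of $s$, as explained in the remark preceding the corollary.
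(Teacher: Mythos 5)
Your proof is correct and follows exactly the same route as the paper: both invoke the Taylor identity~\eqref{eq:taylor} to reduce the difference to $h^{1+\ord(\nu)}\nu^{(\ord(\nu))}\frac{d\Psi^{(1)}_{\KZ,s}}{ds}+O(h^{2+\ord(\nu)})$ and then differentiate~\eqref{eq:Psi}. The only difference is that the paper states the intermediate identity without comment, whereas you spell out the $h$-order bookkeeping that justifies it; your computation of $\frac{d\Psi^{(1)}_{\KZ,s}}{ds}$ and the final cancellation of the $\pi i$ factor are both accurate.
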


\bp
By~\eqref{eq:taylor}, we have
\[
\Psi_{\KZ,s;\mu + \nu} - \Psi_{\KZ,s; \mu} = h^{1+\ord(\nu)}\nu^{(\ord(\nu))}\frac{d\Psi^{(1)}_{\KZ,s}}{d s}+ O(h^{2 + \ord(\nu)}).
\]
Hence the result follows from~\eqref{eq:Psi}.
\ep

\subsection{Detecting co-Hochschild classes}

To see that the associators $\Psi_{\KZ,s;\mu}$ are not all equivalent, we need to see that a perturbation of the parameter $\mu$ gives rise to a nontrivial $2$-cocycle in $B_{G,G^\sigma}$.
We can actually see that this is the case from results in Appendix \ref{sec:appendix-dg}, but let us present a concrete cycle to detect this.

Consider the tensor
\begin{equation}\label{eq:fund-class}
\Omega = [t^\mfu_{12}, t^\mfu_{13}] = \sum_{i, j} [X_i, X_j] \otimes X^i \otimes X^j\in \bigl(\medwedge^3 \mfu\bigr)^\mfu
\end{equation}
with $(X_i)_i$ and $(X^i)_i$ as in \eqref{eq:inv-2-tens}.
Every element $X\in\mfg$ defines a function on $G$ such that $g\mapsto (X,(\Ad g)(Z))_\mfg$. This way $\Omega$ defines an element of $\mcO(G^\sigma)\otimes\mcO(G)\otimes\mcO(G)$, which by slightly abusing notation we continue to denote by $\Omega$. Thus, for $(g,h,k)\in G^\sigma\times G\times G$,
\[
\Omega(g,h,k)=\bigl([(\Ad h)(Z),(\Ad k)(Z)],(\Ad g)(Z)\bigr)_\mfg=\bigl([(\Ad h)(Z),(\Ad k)(Z)],Z\bigr)_\mfg,
\]
since $G^\sigma$ stabilizes $Z$. This is a $2$-cycle in the complex $\tilde B'_{G,G^\sigma}$ from the proof of Proposition~\ref{prop:comput-cohoch-mult-model}, as
\[
\Omega(g,g,h)-\Omega(g,h,h)+\Omega(g,h,e)=0
\]
for all $(g,h)\in G^\sigma\times G$. Hence the map $\langle\Omega,\cdot\rangle\colon \mcU(G^\sigma\times G^2)\to\C$ defined by pairing with $\Omega$ passes to~$\rH^2(B_{G,G^\sigma})$. Explicitly, for $T\in \mcU(G^\sigma\times G^2)$ we have
\begin{equation}
\label{eq:pairing}
\langle \Omega,T \rangle=\epsilon(T_0)\bigl([(\ad T_1)(Z),(\ad T_2)(Z)],Z\bigr)_\mfg,
\end{equation}
where $\ad$ denotes the extension of the adjoint representation of $\mfg$ to $\mcU(G)$.

\begin{Prop}
\label{prop:partial-rmat-cohom-nontriv}
The elements $t^\mfk_{12}$, $t^{\mfm_\pm}_{12}$ are $2$-cocycles in $B_{G,G^\sigma}$. Furthermore, $t^\mfk_{12}$ and $t^{\mfm_{\phantom{-}}}_{12}=t^{\mfm_+}_{12}+t^{\mfm_-}_{12}$ are coboundaries, while
\[
\langle\Omega,t^{\mfm_\pm}_{12}\rangle=\pm \frac{i}{2}\dim \mfm.
\]
In particular, $t^{\mfm_+}_{12}$ and $-t^{\mfm_-}_{12}$ represent the same nontrivial class in $\rH^2(B_{G,G^\sigma})$.
\end{Prop}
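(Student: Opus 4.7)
The plan is to verify the four claims in order by explicit computation: the cocycle property from primitivity of Lie algebra elements, the coboundary statements from the Casimir trick, and the pairing values in an $(\ad Z)$-eigenbasis of $\mfm^\C$.

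For the cocycle property, note that on a $2$-cochain $T \in \tB^2_{G, G^\sigma}$ the differential \eqref{eq:cohoch-diff} becomes
\[
d_\cH(T) = T_{01,2,3} - T_{0,12,3} + T_{0,1,23} - T \otimes 1.
\]
Applied to $1 \otimes X \otimes Y$ with $X, Y \in \mfg$, the primitivity $\Delta(X) = X \otimes 1 + 1 \otimes X$ makes the six summands cancel in pairs, so $d_\cH(1 \otimes t) = 0$ for any $t \in \mfg \otimes \mfg$, and in particular for $t^\mfk, t^\mfm, t^{\mfm_\pm}$. These tensors are $\mfk$-invariant ($t^\mfk$ and $t^\mfm$ by $\mfg$-invariance of $(\cdot,\cdot)_\mfg$; $t^{\mfm_\pm}$ since $[Z, \mfk] = 0$ makes the decomposition $\mfm^\C = \mfm_+ \oplus \mfm_-$ stable under $\mfk$), and connectedness of $G^\sigma$ promotes this to $G^\sigma$-invariance, placing them in $B^2_{G, G^\sigma}$.

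For the coboundaries I would invoke the Casimir trick. The element $1 \otimes C^\mfg \in \mcU(G^\sigma \times G)$ is $G^\sigma$-invariant since $C^\mfg$ is central, and using $\Delta(C^\mfg) = C^\mfg \otimes 1 + 1 \otimes C^\mfg + 2 t^\mfu$ together with the degree-one differential $d_\cH(T) = T_{01,2} - T_{0,12} + T \otimes 1$ one gets $d_\cH(1 \otimes C^\mfg) = -2 t^\mfu_{12}$. Running the same calculation with the Casimir $C^\mfk$ of $\mfk$ (central in $U(\mfk)$, hence $G^\sigma$-invariant) and $\Delta(C^\mfk) = C^\mfk \otimes 1 + 1 \otimes C^\mfk + 2 t^\mfk$ yields $d_\cH(1 \otimes C^\mfk) = -2 t^\mfk_{12}$. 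Hence $t^\mfk_{12}$ and $t^\mfu_{12}$ are coboundaries, and therefore so is $t^\mfm_{12} = t^\mfu_{12} - t^\mfk_{12}$.

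For the pairing values I would fix an $(\ad Z)$-eigenbasis: pick $(E_\alpha)$ in $\mfm_+$ and take $(F_\alpha) \subset \mfm_-$ dual to it with respect to $(\cdot,\cdot)_\mfg$. This is legitimate because $\ad Z$-invariance of the form together with $\ad Z|_{\mfm_\pm} = \pm i$ forces $\mfm_\pm$ to be isotropic, so the form restricts to a perfect pairing $\mfm_+ \times \mfm_- \to \C$. Then $t^{\mfm_+} = \sum_\alpha E_\alpha \otimes F_\alpha$ and $t^{\mfm_-} = \sum_\alpha F_\alpha \otimes E_\alpha$, and formula \eqref{eq:pairing} together with $[E_\alpha, Z] = -iE_\alpha$, $[F_\alpha, Z] = iF_\alpha$ and invariance of the form gives
\[
\langle \Omega, t^{\mfm_+}_{12}\rangle = \sum_\alpha \bigl([{-}iE_\alpha, iF_\alpha], Z\bigr)_\mfg = \sum_\alpha \bigl([E_\alpha, F_\alpha], Z\bigr)_\mfg = i\sum_\alpha (E_\alpha, F_\alpha)_\mfg = i \dim_\C \mfm_+ = \tfrac{i}{2}\dim\mfm,
\]
and swapping the roles of $E$ and $F$ yields $\langle \Omega, t^{\mfm_-}_{12}\rangle = -\tfrac{i}{2}\dim\mfm$. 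Since $t^\mfm_{12} = t^{\mfm_+}_{12} + t^{\mfm_-}_{12}$ is a coboundary, we conclude $[t^{\mfm_+}_{12}] = [-t^{\mfm_-}_{12}]$ in $\rH^2(B_{G, G^\sigma})$, and the nonzero pairing value $\tfrac{i}{2}\dim\mfm$ witnesses that this class is nontrivial. The only point requiring care is the sign bookkeeping in this last computation, which is made transparent by the choice of self-dual $(\ad Z)$-eigenbasis.
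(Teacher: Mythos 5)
Your proof is correct and follows essentially the same route as the paper's: the cocycle property from $d_\cH(1\otimes X\otimes Y)=0$ combined with $\mfk$-invariance, the coboundary statements via the Casimir identity $d_\cH(1\otimes C^\mfk)=-2t^\mfk_{12}$ (and likewise for $C^\mfu$), and the pairing computed in a dual-basis presentation of $t^{\mfm_\pm}$ using $\ad Z|_{\mfm_\pm}=\pm i$ and invariance of the form. The only cosmetic difference is that the paper writes the basis as $(Y_j)\subset\mfm_+$ with dual $(Y^j)\subset\mfm_-$ rather than $(E_\alpha),(F_\alpha)$; the cancellation of the $\mp i$ factors and the final count $\dim_\C\mfm_+=\frac12\dim_\R\mfm$ are identical.
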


\bp
It is easy to check that $d_\cH(1\otimes X\otimes Y)=0$ for all $X,Y\in\mfg$. As $t^\mfk_{12}$ and $t^{\mfm_\pm}_{12}$ are $\mfk$-invariant, they are therefore $2$-cocycles in $B_{G,G^\sigma}$.

We have
\[
d_\cH(C^{\mfk}_1) = C^{\mfk}_2 - \Delta(C^{\mfk})_{12} + C^{\mfk}_1 = -2 t^{\mfk}_{12},
\]
so $t^{\mfk}_{12}$ is a coboundary. Similarly, $d_\cH(C^{\mfu}_1) = -2 t^{\mfu}_{12}$, so that $ t^{\mfu}_{12}$ is also a coboundary, and hence $t^\mfm_{12}=t^{\mfu}_{12}-t^\mfk_{12}$ is a coboundary as well.

Next, take a basis $(Y_j)_j$ in $\mfm_+$ and the dual basis $(Y^j)_j$ in $\mfm_-$. Using that $\ad Z$ acts by the scalar $\pm i$ on $\mfm_\pm$, we then compute:
\begin{align*}
\langle \Omega,t^{\mfm_+}_{12}\rangle&=\sum_j \bigl( [(\ad Y_j)(Z), (\ad Y^j)(Z)], Z \bigr)_\mfg=\sum_j ([Y_j, Y^j],Z)_\mfg
=\sum_j (Y^j,[Z,Y_j])_\mfg\\
&=i\sum_j(Y^j,Y_j)_\mfg=i\dim_\C\mfm_+=\frac{i}{2}\dim_\R \mfm.
\end{align*}
The value $\langle\Omega,t^{\mfm_-}_{12}\rangle$ is obtained similarly, but it also follows from the above, as $t^{\mfm_-}_{12}=t^{\mfm_{\phantom{-}}}_{12}-t^{\mfm_+}_{12}$  and~$t^{\mfm_{\phantom{-}}}_{12}$ is a coboundary.
\ep

\begin{Rem}\label{rem:coadj-orb-and-hoch-cochains}
Let us give a different perspective on the above pairing and its nontriviality.

We can view the tensor~\eqref{eq:fund-class} also as a function on $(U/K)^3$ in the same way as above. Let us call this function $\omega$. Then it is again easy to check that $\omega$ is a $2$-cycle in the Hochschild chain complex $(C_n(A,A)=A^{\otimes(n+1)},b)$ for $A=\mcO(U)^\mfk\subset C(U/K)$. Under the Hochschild--Kostant--Rosenberg map this cycle corresponds to the differential $2$-form associated with the Kostant--Kirillov--Souriau bracket on the coadjoint orbit of $(\cdot,Z)_\mfg$, which in turn defines a nonzero class in $\rH^2(U/K;\C)\cong\C$.

We have a left $\mcU(G)$-module structure on $\mcO(U)$ given by right translations: $T.a=a_{(0)}\langle a_{(1)},T\rangle$. Given $T\in B^n_{G,G^\sigma}$, we can then define an $n$-cocycle $D_T$ in the Hochschild cochain complex $(C^n(A,A)=\Hom(A^{\otimes n},A),\delta)$ by
\[
D_T(a_1, \dots, a_n) = \epsilon(T_0)(T_1.a_1)\cdots (T_n.a_n).
\]
The Hochschild cochains act on the chains by contractions: given $D \in C^m(A, A)$, we have
\[
i_D \colon C_n(A,A) \to C_{n-m}(A,A), \quad a_0\otimes \dots \otimes a_n \mapsto a_0 D(a_1, \dots, a_{m})\otimes a_{m+1}\otimes \dots \otimes a_n,
\]
with the convention $i_D = 0$ if $n < m$.

Now, if $T\in B^n_{G,G^\sigma}$ and $c\in C_n(A,A)$ is $U$-invariant (with respect to left translations), then $i_{D_T}c\in A$ is $U$-invariant, hence a scalar. It can be checked that if $b c=0$, then this scalar depends only on the cohomology class of~$T$. Taking $c=\omega$, we recover pairing~\eqref{eq:pairing}: $i_{D_T}\omega=\langle \Omega,T\rangle$.
\end{Rem}

\subsection{Classification of associators: Hermitian case}
\label{sec:class-assoc-herm}

We are now ready to establish, in the Hermitian case, a universality result for the associators $\Psi_{\KZ,s;\mu}$ for generic quasi-coactions $(\mcU(G^\sigma)\fps,\alpha,\Psi)$ of $(\mcU(G)\fps,\Delta,\Phi_\KZ)$ such that $\alpha=\Delta$ modulo $h$. Similarly to Section~\ref{ssec:class-non-Hermitian}, it suffices to consider the case $\alpha=\Delta$.

\begin{Thm}
\label{thm:modified-KZ-universality}
Let $\mfk = \mfu^\sigma <\mfu$ be a Hermitian symmetric pair.
Assume we are given a quasi-coaction $(\mcU(G^\sigma)\fpser,\Delta,\Psi)$ of $(\mcU(G)\fpser,\Delta,\Phi_\KZ)$ such that the number $\langle\Omega,\Psi^{(1)}\rangle$ defined by~\eqref{eq:pairing} is
\[
\text{neither}\quad \pm\frac{i}{2}\dim\mfm, \quad \text{nor}\quad \frac{i(\zeta-1)}{2(\zeta+1)}\dim\mfm
\]
for a root of unity $\zeta\ne\pm1$.
Then there exist $s \not\in i\Q^\times$, $\mu \in h\C\fpser$ and $\mcH \in 1 + h\mcU(G^\sigma\times G)^\mfk\fpser$ such that $(\id\otimes\epsilon)(\mcH)=1$ and $\Psi_{\KZ,s;\mu} =\mcH_{0, 12}\Psi \mcH_{01, 2}^{-1} \mcH_{0,1}^{-1}$.
Furthermore,
\begin{enumerate}[label=\textup{(\roman*)}]
\item\label{it:mod-KZ-un-1} the number $s$ is unique up to adding $2i k$ ($k\in \Z$), and once $s$ is fixed, the element~$\mu$ is uniquely determined;
\item\label{it:mod-KZ-un-2} we can choose $s\in\R$ if and only if $\langle\Omega,\Psi^{(1)}\rangle$ is a purely imaginary number in the interval
\[
(-\frac{i}{2}\dim\mfm,\frac{i}{2}\dim\mfm);
\]
\item\label{it:mod-KZ-un-3} if $s\in\R$ and $\Psi$ is unitary, then $\mu\in h\R\fpser$ and $\mcH$ can be chosen to be unitary.
\end{enumerate}
\end{Thm}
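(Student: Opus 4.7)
The plan is $h$-adic induction in the spirit of the proof of Theorem~\ref{thm:Psi-rigid}, now using the parameters $s$ and $\mu$ to absorb the unique nontrivial class in the one-dimensional cohomology $\rH^2(B_{G,G^\sigma})$ (by Lemma~\ref{lem:inv-wedge} and Corollary~\ref{cor:cohom-B-G-Gtheta}), detected by the pairing \eqref{eq:pairing} with $\Omega$. By Lemma~\ref{lem:twisting-to-Delta} we may assume $\alpha = \Delta$. Coboundaries will be absorbed into $\mcH$ (whose factors are taken $\mfk$-invariant, hence commuting with $\Delta(\mcU(G^\sigma))$ so that the twisting preserves $\alpha = \Delta$), while the nontrivial class will be corrected by $s$ at order one and by $\mu$ at higher orders.

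For the initial match, the expansion \eqref{eq:Psi} together with Proposition~\ref{prop:partial-rmat-cohom-nontriv} yields
\[
\langle \Omega, \Psi_{\KZ,s}^{(1)} \rangle = -\frac{i\dim\mfm}{2}\tanh\mathopen{}\left(\frac{\pi s}{2}\right),
\]
so I solve $\langle\Omega,\Psi_{\KZ,s}^{(1)}\rangle = \langle\Omega,\Psi^{(1)}\rangle$ for $s$. The excluded values $\pm \frac{i}{2}\dim\mfm$ correspond to $\tanh(\pi s/2) = \mp 1$, with no finite solution, and the root-of-unity values to $e^{\pi s} = \zeta^{-1}$, which forces every representative of $s$ modulo the $2i\Z$-ambiguity into $i\Q^\times$; otherwise a suitable $s \in \C\setminus i\Q^\times$ exists. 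The residual difference $\Psi^{(1)} - \Psi_{\KZ,s}^{(1)}$ is then a $\mfk$-invariant cocycle with trivial class, hence $d_\cH(T_1)$ for some $\mfk$-invariant $T_1$ killed by $\id \otimes \epsilon$, and I initialize $\mcH = 1 - h T_1$, $\mu = 0$.

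For the inductive step, suppose $\mu$ and $\mcH = 1 + hT_1 + \ldots + h^n T_n$ (with $\mfk$-invariant $T_k$'s annihilated by $\id \otimes \epsilon$) have been constructed so that $\Psi' := \mcH_{0,12}\Psi\mcH_{01,2}^{-1}\mcH_{0,1}^{-1}$ agrees with $\Psi_{\KZ,s;\mu}$ through order $h^n$. Differencing the mixed pentagon equations \eqref{eq:mix-pent} for $\Psi'$ and $\Psi_{\KZ,s;\mu}$ at order $n+1$ (where the $\Phi_\KZ$ contributions and all nonlinear terms in lower-order $\Psi$'s cancel), one sees as in Theorem~\ref{thm:Psi-rigid} that $D := \Psi'^{(n+1)} - \Psi_{\KZ,s;\mu}^{(n+1)}$ is a $\mfk$-invariant cocycle in $B^2_{G,G^\sigma}$. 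By Corollary~\ref{cor:Psi-chi-expansion}, updating $\mu$ to $\mu + c h^n$ shifts $\Psi_{\KZ,s;\mu}^{(n+1)}$ by an element whose $\Omega$-pairing equals $-\frac{i\pi c\dim\mfm}{4}\sech^2(\pi s/2)$, which is nonzero; I choose $c$ to cancel $\langle\Omega, D\rangle$. The remaining residue of $D$ then has trivial class and equals $d_\cH(T_{n+1})$ for some $\mfk$-invariant $T_{n+1}$ annihilated by $\id \otimes \epsilon$, and left-multiplying $\mcH$ by $1 - h^{n+1}T_{n+1}$ completes the induction without perturbing lower orders.

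Part (i) follows from the $2i$-period of $\tanh(\pi s/2)$ and the nonvanishing of $\sech^2(\pi s/2)$, which gives the order-by-order uniqueness of $\mu$ once $s$ is fixed. Part (ii) is $\tanh(\R) = (-1,1)$. For (iii), I would add a $\ast$-compatibility check to each inductive step: $\Psi_{\KZ,s;\mu}$ is unitary when $s \in \R$ and $\mu \in h\R\fps$, so assuming inductively that $\mcH$ has been chosen unitary modulo the relevant order and $\Psi$ is unitary, the obstruction class $\langle\Omega,D\rangle$ comes out real, forcing $c \in \R$, and the primitive $T_{n+1}$ can be selected anti-self-adjoint so $1 - h^{n+1}T_{n+1}$ is unitary to the needed order. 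The main obstacle is precisely this refinement—ensuring the $\ast$-structure descends coherently through the cocycle/coboundary decomposition at each stage, which requires showing that a Hermitian coboundary can be realized as the differential of an anti-self-adjoint cochain modulo the contribution of Hermitian cocycles; the rest of the proof is cohomological bookkeeping on top of the explicit first-order formulae of Section~\ref{ssec:cyclotomic-KZ}.
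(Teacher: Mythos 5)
Your overall strategy is the one the paper uses: detect the class of $\Psi^{(1)}$ by the pairing with $\Omega$, solve for $s$ via $\eqref{eq:s-detection}$, then run an $h$-adic induction where coboundaries are absorbed into $\mcH$ and the one-dimensional obstruction class in $\rH^2(B_{G,G^\sigma})$ is killed at each order by perturbing $\mu$ with the help of Corollary~\ref{cor:Psi-chi-expansion}. That part is essentially right. However, there is a genuine gap, and two lesser imprecisions.

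The gap is in part~\ref{it:mod-KZ-un-1}. You assert that uniqueness of $\mu$ ``follows from the $2i$-period of $\tanh$ and the nonvanishing of $\sech^2$, which gives the order-by-order uniqueness of $\mu$.'' But the nonvanishing of $\sech^2$ only shows that, within your chosen inductive scheme, the coefficient $c$ at each step is forced once all earlier choices are made. It does not address uniqueness of $\mu$ as stated in the theorem: if $\Psi_{\KZ,s;\mu'} =\mcH_{0,12}\Psi_{\KZ,s;\mu}\mcH_{01,2}^{-1}\mcH_{0,1}^{-1}$ for some $\mcH$ not constructed by your procedure, why is $\mu'=\mu$? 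The missing ingredient is a normalization of $\mcH$. Suppose $\mu^{(k)}=\mu'^{(k)}$ for $k<n$ but $\mu^{(n)}\ne\mu'^{(n)}$; then $\Psi_{\KZ,s;\mu}=\Psi_{\KZ,s;\mu'}$ modulo $h^{n+1}$, and one must first show that $\mcH$ can be replaced so that $\mcH=1$ modulo $h^{n+1}$. This uses $\rH^1(B_{G,G^\sigma})=0$ (Lemma~\ref{lem:inv-wedge} and Corollary~\ref{cor:cohom-B-G-Gtheta}): the lowest order coefficient of $\mcH$ is a $1$-cocycle, hence of the form $S_{01}-S_0$ with $S$ central, and twisting by $\exp(-h^{k+1}(S_{01}-S_0))$ commutes with $\Psi_{\KZ,s;\mu}$ and can be cancelled. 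Only then does the nontriviality of $[t^{\mfm_+}_{12}-t^{\mfm_-}_{12}]$, together with Corollary~\ref{cor:Psi-chi-expansion}, produce the contradiction. Without this normalization your claim of uniqueness does not follow.

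Two smaller points on~\ref{it:mod-KZ-un-3}. First, the phrase ``$\langle\Omega,D\rangle$ comes out real'' is off by a factor of $i$: for skew-adjoint $T$ one finds $\overline{\langle\Omega,T\rangle}=\langle\Omega,T^*\rangle=-\langle\Omega,T\rangle$, so the pairing is \emph{purely imaginary}; similarly $\langle\Omega,t^{\mfm_+}_{12}-t^{\mfm_-}_{12}\rangle=i\dim\mfm$ is purely imaginary. The conclusion $c\in\R$ still holds, but what is being matched is the imaginary parts, not real numbers. Second, taking $T_{n+1}$ anti-self-adjoint and correcting by $1-h^{n+1}T_{n+1}$ does not give an exactly unitary $\mcH$: that factor is unitary only modulo $h^{2n+4}$, and the accumulated product $\prod_k(1-h^{k+1}T_k)$ is unitary only modulo $h^4$. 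Since the skew-adjointness argument for $D=\Psi'^{(n+1)}-\Psi_{\KZ,s;\mu}^{(n+1)}$ requires $\Psi'$ unitary modulo $h^{n+2}$, this fails for $n\ge 3$. The fix is either to use the exactly unitary $\exp(h^{n+1}T^{\mathrm{skew}})$, or the polar correction $\mcH\mapsto\mcH(\mcH^*\mcH)^{-1/2}$ as in the paper, keeping $\mcH$ exactly unitary at every stage.
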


We will later (Remark \ref{rem:s-mu-uniq}) slightly improve this result, by showing that for any $\Psi$ the parameter~$\mu$ is independent of the choice of $s$.

\bp By Proposition~\ref{prop:partial-rmat-cohom-nontriv} and our restrictions on $\Psi$, we can choose $s \not\in i\Q^\times$ such that
\begin{equation}\label{eq:s-detection}
-\frac{1}{2}\tanh\Bigl(\frac{\pi s}{2}\Bigr) \langle \Omega, t^{\mfm_+}_{12}-t^{\mfm_-}_{12}\rangle
=-\frac{i}{2}\tanh\Bigl(\frac{\pi s}{2}\Bigr)\dim\mfm
= \langle \Omega,\Psi^{(1)}\rangle.
\end{equation}
We then start with $\mcH=1$ and $\mu=0$ and modify them by induction on $n$ to have $\Psi_{\KZ,s;\mu} =\mcH_{0, 12}\Psi \mcH_{01, 2}^{-1} \mcH_{0,1}^{-1}$ modulo $h^{n+1}$.

Consider $n=1$. By the proof of Theorem~\ref{thm:Psi-rigid}, $\Psi_{\KZ,s;\mu}^{(1)}-\Psi^{(1)}$ is a $2$-cocycle in $B_{G,G^\sigma}$.
By Lemma~\ref{lem:inv-wedge} and Corollary~\ref{cor:cohom-B-G-Gtheta}, we have $\dim \rH^2(B_{G,G^\sigma})=1$. Hence our choice of~$s$, identity~\eqref{eq:Psi} and Proposition~\ref{prop:partial-rmat-cohom-nontriv} imply that $\Psi_{\KZ,s;\mu}^{(1)}-\Psi^{(1)}$ is a coboundary, so that $\Psi^{(1)}-\Psi^{(1)}_{\KZ,s;\mu}=d_\cH(T)$ for some $T\in\mcU(G^\sigma\times G)^\mfk$.
Letting $\mcH^{(1)}=T$, we then get  $\Psi_{\KZ,s;\mu} = \mcH_{0, 12} \Psi \mcH_{01, 2}^{-1} \mcH_{0,1}^{-1}$ modulo $h^2$.

For the induction step, assume we have $\Psi_{\KZ,s;\mu} =\mcH_{0, 12}\Psi \mcH_{01, 2}^{-1} \mcH_{0,1}^{-1}$ modulo $h^{n+1}$ for some $n\ge1$. Then, again by the proof of Theorem~\ref{thm:Psi-rigid},
\[
\Psi_{\KZ,s;\mu}^{(n+1)}-(\mcH_{0, 12}\Psi \mcH_{01, 2}^{-1} \mcH_{0,1}^{-1})^{(n+1)}
\]
is a $2$-cocycle in $B_{G,G^\sigma}$. On the other hand, by Corollary \ref{cor:Psi-chi-expansion}, for any $a\in\C$, we have
\[
\Psi_{\KZ,s;\mu+h^n a}^{(n+1)}-\Psi_{\KZ,s;\mu}^{(n+1)}=-a\frac{\pi}{4}\sech^2\Bigl(\frac{\pi s}{2}\Bigr)\bigl(t^{\mfm_+}_{12}-t^{\mfm_-}_{12}\bigr)+b t^\mfm_{12}
\]
for some $b\in\C$, and $\Psi_{\KZ,s;\mu+h^n a}^{(k)}=\Psi_{\KZ,s;\mu}^{(k)}$ for $k\le n$. As $t^{\mfm_+}_{12}-t^{\mfm_-}_{12}$ represents a nontrivial cohomology class, the value of $\sech$ is nonzero for our $s$, and $t^{\mfm}_{12}$ is cohomologically trivial, we see that with different choices of $a$ the above difference can represent arbitrary classes in $\rH^2(B_{G,G^\sigma})\cong\C$. In particular, we can find $a\in\C$ such that
\begin{equation}\label{eq:psi-induction}
(\mcH_{0, 12}\Psi \mcH_{01, 2}^{-1} \mcH_{0,1}^{-1})^{(n+1)}-\Psi_{\KZ,s;\mu+h^n a}^{(n+1)}=d_\cH(T)
\end{equation}
for some $T\in\mcU(G^\sigma\times G)^\mfk$. Replacing $\mcH$ by $(1+h^{n+1}T)\mcH$ and $\mu^{(n)}$ by $\mu^{(n)}+a$, we then get $\Psi_{\KZ,s;\mu} =\mcH_{0, 12}\Psi \mcH_{01, 2}^{-1} \mcH_{0,1}^{-1}$ modulo $h^{n+2}$, proving the induction step.

As at the step $n$ of our induction process we only modify $\mu^{(n-1)}$ and $\mcH^{(k)}$ for $k\ge n$, in the limit we get the required $\mu$ and $\mcH$. It remains to prove \ref{it:mod-KZ-un-1}--\ref{it:mod-KZ-un-3}.

\smallskip

\ref{it:mod-KZ-un-3}: Assume $s\in\R$ and that $\Psi$ is unitary. In this case we slightly modify the above inductive procedure to make sure that at every step we have unitarity of $\mcH$ and that $\mu\in h\R\fpser$.

Consider $n=1$. We found $\mcH$ such that  $\Psi_{\KZ,s;\mu} = \mcH_{0, 12} \Psi \mcH_{01, 2}^{-1} \mcH_{0,1}^{-1}$ modulo $h^2$. The unitarity of~$\Psi_{\KZ,s;\mu}$ and~$\Psi$ implies then that the same identity holds for the unitary $\mcH(\mcH^*\mcH)^{-1/2}$ instead of $\mcH$, cf.~the proof of~\cite{MR2832264}*{Proposition~2.3}.

For the induction step, we assume that we have $\Psi_{\KZ,s;\mu} =\mcH_{0, 12}\Psi \mcH_{01, 2}^{-1} \mcH_{0,1}^{-1}$ modulo $h^{n+1}$ for some $n\ge1$, $\mu\in h\R\fpser$ and unitary $\mcH$. Then we take the unique $a\in\C$ such that
\[
(\mcH_{0, 12}\Psi \mcH_{01, 2}^{-1} \mcH_{0,1}^{-1})^{(n+1)}-\Psi_{\KZ,s;\mu}^{(n+1)}+a\frac{\pi}{4}\sech^2 \Bigl(\frac{\pi s}{2}\Bigr)\bigl(t^{\mfm_+}_{12}-t^{\mfm_-}_{12}\bigr)
\]
is a coboundary. By taking adjoints and using that $(t^{\mfm_+})^*=t^{\mfm_-}$, we also get that
\[
\bigl((\mcH_{0, 12}\Psi \mcH_{01, 2}^{-1} \mcH_{0,1}^{-1})^{(n+1)}-\Psi_{\KZ,s;\mu}^{(n+1)}\bigr)^*-\bar a\frac{\pi}{4}\sech^2\Bigl(\frac{\pi s}{2}\Bigr)\bigl(t^{\mfm_+}_{12}-t^{\mfm_-}_{12}\bigr)
\]
is a coboundary. Hence in order to conclude that $a\in\R$ it suffices to show that if we have two unitaries~$\Psi_1$ and~$\Psi_2$ in $1+h\mcU(G^\sigma\times G^2)\fpser$ such that $\Psi_1=\Psi_2$ modulo $h^{n+1}$, then the element $\Psi_1^{(n+1)}-\Psi_2^{(n+1)}$ is skew-adjoint. But this is clear from the identity
\[
(\Psi_1-\Psi_2)^*=-\Psi_1^*(\Psi_1-\Psi_2)\Psi_2^*.
\]

Then we take $T$ satisfying \eqref{eq:psi-induction} and replace $\mu^{(n)}$ by $\mu^{(n)}+a$ and $\mcH$ by
\[
(1+h^{n+1}T)\bigl((1+h^{n+1}T^*)(1+h^{n+1}T)\bigr)^{-1/2}\mcH,
\]
similarly to the step $n=1$.

\smallskip

\ref{it:mod-KZ-un-1}, \ref{it:mod-KZ-un-2}: If $\Psi_{\KZ,s;\mu} =\mcH_{0, 12}\Psi \mcH_{01, 2}^{-1} \mcH_{0,1}^{-1}$, then $\Psi^{(1)}-\Psi^{(1)}_{\KZ,s}=d_\cH(\mcH^{(1)})$. Hence~\eqref{eq:s-detection} is not only sufficient but also necessary for the existence of~$\mcH$ and~$\mu$. Therefore $s$ is determined uniquely up to adding $2i k$ ($k\in \Z$). This also makes \ref{it:mod-KZ-un-2} obvious.

Next, assume $\Psi_{\KZ,s;\mu} =\mcH_{0, 12}\Psi_{\KZ,s;\mu'} \mcH_{01, 2}^{-1} \mcH_{0,1}^{-1}$ for some $s\not\in i\Q^\times$, $\mu,\mu' \in h\C\fpser$ and $\mcH \in 1 + h\mcU(G^\sigma\times G)^\mfk\fpser$. We have to show that $\mu=\mu'$. Assume this is not the case.

Let $n\ge1$ be the smallest order such that $\mu^{(n)}\ne\mu'^{(n)}$. By Corollary \ref{cor:Psi-chi-expansion} we have $\Psi_{\KZ,s;\mu} = \Psi_{\KZ,s;\mu'}$ modulo $h^{n+1}$.
We claim that we can modify $\mcH$ so that we still have $\Psi_{\KZ,s;\mu} =\mcH_{0, 12}\Psi_{\KZ,s;\mu'} \mcH_{01, 2}^{-1} \mcH_{0,1}^{-1}$, but $\mcH=1$ modulo $h^{n+1}$.

We will modify $\mcH$ by induction on $k\le n$ to get $\Psi_{\KZ,s;\mu} =\mcH_{0, 12}\Psi_{\KZ,s;\mu'} \mcH_{01, 2}^{-1} \mcH_{0,1}^{-1}$ and $\mcH=1$ modulo~$h^{k+1}$. Assume we have these two properties for some $k<n$. Then
\[
d_\cH(\mcH^{(k+1)})=\Psi_{\KZ,s;\mu'}^{(k+1)} -\Psi_{\KZ,s;\mu}^{(k+1)}=0.
\]
As $\rH^1(B_{G,G^\sigma})=0$ by Lemma~\ref{lem:inv-wedge} and Corollary~\ref{cor:cohom-B-G-Gtheta}, there exists a central element $S\in\mcU(G^\sigma)$ such that $\mcH^{(k+1)}=S_{01}-S_0$.
Putting $\mcH'=\exp(-h^{k+1}(S_{01}-S_0))$, we see that $\Psi_{\KZ,s;\mu}$ commutes with~$\mcH'_{0,12}$, hence we have $\mcH'_{0, 12}\Psi_{\KZ,s;\mu} \mcH_{01, 2}'^{-1} \mcH_{0,1}'^{-1}=\Psi_{\KZ,s;\mu}$.
It follows that by replacing~$\mcH$ by~$\mcH' \mcH$ we get $\Psi_{\KZ,s;\mu} =\mcH_{0, 12}\Psi_{\KZ,s;\mu'} \mcH_{01, 2}^{-1} \mcH_{0,1}^{-1}$ and $\mcH=1$ modulo~$h^{k+2}$. Thus our claim is proved.

It follows now that $\Psi_{\KZ,s;\mu'}^{(n+1)} -\Psi_{\KZ,s;\mu}^{(n+1)}=d_\cH(\mcH^{(n+1)})$. Since $t^{\mfm_+}_{12}-t^{\mfm_-}_{12}$ is not a coboundary, this contradicts Corollary \ref{cor:Psi-chi-expansion}. Hence $\mu=\mu'$.
\ep

\begin{Rem}
In view of Corollary~\ref{cor:Buk-cohom}, a similar result should in principle be true at the level of the universal enveloping algebras as well.
However, since we only know that $\Psi_{\KZ,s;\mu}\in\mcU(G^\sigma\times G)\fpser$ (for $s\ne0$), in the first place one has to show that $\Psi_{\KZ,s;\mu}$ belongs to $U(\mfg^\sigma) \otimes U(\mfg)^{\otimes 2}\fpser$, at least up to some twist.
\end{Rem}

\subsection{Classification of ribbon braids}
\label{sec:class-ribb-br}

We complement our classification of associators by describing compatible ribbon twist-braids, both in the Hermitian and non-Hermitian cases.

In the following we always take the universal $R$-matrix
\[
\mcR_\KZ = \exp(-h t^\mfu) \in \mcU(G^2)\fpser
\]
for $(\mcU(G)\fpser, \Delta, \Phi_{\KZ})$.

\begin{Thm}
\label{thm:uniq-tw-br-nonherm-case}
If $\mfk = \mfu^\sigma <\mfu$ is a non-Hermitian symmetric pair, then the ribbon $\sigma$-braids for the quasi-coaction $(\mcU(G^\sigma)\fpser,\Delta,\Psi_\KZ)$ of $(\mcU(G)\fpser,\Delta,\Phi_\KZ,\mcR_\KZ)$ are the elements
\[
\mcE_\KZ g_1 = \exp(-h(2t^\mfk_{01}+C^\mfk_1))g_1,
\]
where $g\in Z(U)$.
\end{Thm}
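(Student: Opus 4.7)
The plan is to combine the uniqueness half of Theorem~\ref{thm:uniq-br-tw-non-Herm} with a direct verification that $\mcE_\KZ$ is a ribbon $\sigma$-braid. Since $\mcR_\KZ = \exp(-h t^\mfu)$, we have $\mcR_\KZ^{(1)}+\mcR^{(1)}_{\KZ,21} = -2 t^\mfu \ne 0$, so Theorem~\ref{thm:uniq-br-tw-non-Herm} applies: any ribbon $\sigma$-braid has constant term $1\otimes g$ for some $g\in Z(U)$, and is uniquely determined by this constant term.

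Next I would show that multiplication by $g_1$ for $g\in Z(U)$ preserves the set of ribbon $\sigma$-braids. This is a routine calculation: $g$ is grouplike, so $\Delta(g)=g\otimes g$ exactly matches the extra $g$'s produced by $(\alpha\otimes\id)$ and $(\id\otimes\Delta)$ on $\mcE g_1$, while centrality of $g$ in $\mcU(G)$ allows all inserted factors to be collected and cancelled against $(\mcE g_1)_{02}=\mcE_{02}g_2$ and $(\mcE g_1)_{01}=\mcE_{01}g_1$ on the right-hand sides of \eqref{eq:rib-sig-tw-1} and \eqref{eq:rib-sig-tw-2}; identity \eqref{eq:rib-sig-tw-0} for $\mcE g_1$ reduces to the one for $\mcE$ similarly, using also that $\sigma$ acts trivially on $\mcU(G^\sigma)$. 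So the theorem reduces to showing that $\mcE_\KZ$ itself is a ribbon $\sigma$-braid.

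Equation \eqref{eq:rib-sig-tw-0} for $\mcE_\KZ$ is immediate from the identity
\[
2 t^\mfk_{01}+C^\mfk_1 = \Delta(C^\mfk)_{01}-C^\mfk_0,
\]
which follows from $\Delta(C^\mfk)=C^\mfk_1+C^\mfk_2+2 t^\mfk$: since $\mfg^\sigma=\mfk^\C$, the Casimir $C^\mfk$ is central in $\mcU(G^\sigma)$, hence both $\Delta(C^\mfk)_{01}$ and $C^\mfk_0$ commute with $\alpha(T)=\Delta(T)$ for every $T\in\mcU(G^\sigma)$; combined with $(\id\otimes\sigma)\Delta(T)=\Delta(T)$, this gives \eqref{eq:rib-sig-tw-0}.

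The main obstacle is the verification of \eqref{eq:rib-sig-tw-1} and \eqref{eq:rib-sig-tw-2}. Here I would exploit the interpretation of $\mcE_\KZ$ as a renormalized half-monodromy of the cyclotomic KZ system. With $A_0=\hbar(2 t^\mfk_{01}+C^\mfk_1)$ denoting the residue of the connection \eqref{eq:cyc-KZ-2} at $w=0$ (for $s=\mu=0$), we have $\mcE_\KZ=e^{-\pi i A_0}$, which matches the $w^{A_0}$ local behaviour at $w=0$ of the fundamental solution $G_0$ used to define $\Psi_\KZ$. Following Enriquez~\cite{MR2383601}, I would pass to the 3-variable cyclotomic KZ system on the cell $\{(w_1,w_2):0<w_1<w_2<1\}$ and analyse the normalized solutions associated with the various boundary strata (two variables colliding away from the singularities, a variable approaching the reflection point $w=0$, a variable approaching $w=\pm 1$): the transition operators between these strata are precisely $\Psi_\KZ$, $\Phi_\KZ$, $\mcR_\KZ$, and the half-monodromy $\mcE_\KZ$, and the compatibility of the transitions along the edges of the cell yields \eqref{eq:rib-sig-tw-1} and \eqref{eq:rib-sig-tw-2}. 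A heavier but more self-contained alternative would be an order-by-order induction in $h$: at each order the difference of the two sides is a cocycle in a suitable relative co-Hochschild complex, and the vanishing of the relevant cohomology (Corollary~\ref{cor:cohom-B-G-Gtheta} in the non-Hermitian case), combined with the uniqueness from Theorem~\ref{thm:uniq-br-tw-non-Herm} applied to the already-constructed lower-order approximations, forces it to be trivial.
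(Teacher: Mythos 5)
Your proposal is correct and takes essentially the same route as the paper: uniqueness comes from Theorem~\ref{thm:uniq-br-tw-non-Herm} (the paper, like you, uses that $\mcR_\KZ^{(1)}+\mcR_{\KZ,21}^{(1)}=-2t^\mfu\neq0$ to land in $Z(U)$), the $g_1$ twist is a routine check, and the existence of $\mcE_\KZ=e^{-\pi i A_0}$ as a ribbon $\sigma$-braid is obtained from Enriquez's cyclotomic KZ monodromy theory, which the paper handles by citing \cite{MR2383601} and \cite{MR3943480}*{Theorem~3.8} rather than re-deriving the transition relations among $\Psi_\KZ$, $\Phi_\KZ$, $\mcR_\KZ$, $\mcE_\KZ$.
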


\bp
The fact that $\exp(-h(2t^\mfk_{01}+C^\mfk_1))$ is a ribbon $\sigma$-braid is essentially proved in~\cite{MR2383601}, see~\cite{MR3943480}*{Theorem~3.8}. Hence the elements $\exp(-h(2t^\mfk_{01}+C^\mfk_1))g_1$ ($g\in Z(U)$) are ribbon $\sigma$-braids as well. The claim that these are the only ribbon $\sigma$-braids follows from Theorem~\ref{thm:uniq-br-tw-non-Herm}.
\ep

A similar result holds in the Hermitian case, but the proof is more involved.

\begin{Thm}
\label{thm:uniq-tw-br-herm-case}
If $\mfk = \mfu^\sigma < \mfu$ is a Hermitian symmetric pair, $s\not\in i\Q^\times$ and $\mu\in h\C\fpser$, then the ribbon $\sigma$-braids for the quasi-coaction $(\mcU(G^\sigma)\fpser,\Delta,\Psi_{\KZ,s;\mu})$ of $(\mcU(G)\fpser,\Delta,\Phi_\KZ,\mcR_\KZ)$ are the elements
\[
\mcE_{\KZ,s;\mu} g_1 = \exp(-h(2t^\mfk_{01}+C^\mfk_1)-\pi i(s+\mu)Z_1)g_1,
\]
where $g\in Z(U)$.
\end{Thm}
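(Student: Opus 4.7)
My plan is to prove existence of the stated ribbon $\sigma$-braids and then to establish their uniqueness by a cohomological induction parallel to Theorem~\ref{thm:uniq-br-tw-non-Herm}, modified to handle the one-dimensional centralizer $Z_\mfg(\mfk)=\C Z$ that is specific to the Hermitian case.

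For existence, I would identify $\mcE_{\KZ,s;\mu}$ with the normalized half-monodromy $\exp(-\pi iA_0)$ of the cyclotomic KZ equation~\eqref{eq:cyc-KZ-2} around the puncture $w=0$, where $A_0=\hbar(2t^\mfk_{01}+C^\mfk_1)+(s+\mu)Z_1$; rewriting with $\hbar=h/(\pi i)$ yields the stated formula. The case $s=\mu=0$ is~\cite{MR3943480}*{Theorem~3.8} (building on~\cite{MR2383601}), and the same analytic continuation argument applies for general $s$ and $\mu$ because the singular structure of~\eqref{eq:cyc-KZ-2} at $w=0,\pm1$ is unaffected by the $sZ_1$ summand, the eigenvalue condition $s\not\in i\Q^\times$ being already imposed for $\Psi_{\KZ,s;\mu}$ itself to exist. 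The elements $\mcE_{\KZ,s;\mu}g_1$ with $g\in Z(U)$ are then also ribbon $\sigma$-braids, since $Z(U)$ is central in $\mcU(G)$ and contained in any maximal torus of $K$ (hence $\sigma$-fixed).

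For uniqueness, let $\mcE$ be any ribbon $\sigma$-braid. Mimicking the first part of the proof of Theorem~\ref{thm:uniq-br-tw-non-Herm}, the order-zero terms of~\eqref{eq:rib-sig-tw-0}--\eqref{eq:rib-sig-tw-2} yield $\mcE^{(0)}=1\otimes g$ with $g\in Z_G(K)$. In the Hermitian case $Z_G(K)^\circ=\exp(\C Z)$ is positive-dimensional, so I must narrow $g$ further. Passing to the $\tilde\sigma$-braid $\tilde\mcE=\mcE(1\otimes g^{-1})$ with $\tilde\sigma=(\Ad g)\circ\sigma$ and $\tilde\mcE^{(0)}=1$, I would analyze~\eqref{eq:rib-sig-tw-1} at order~$1$ using the explicit expression~\eqref{eq:Psi} for $\Psi^{(1)}_{\KZ,s;\mu}$. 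The same manipulation as in the second half of the proof of Theorem~\ref{thm:uniq-br-tw-non-Herm}---reading off the components of $\tilde\mcE^{(1)}$ and demanding membership in $\mcU(G^\sigma\times G)$---now absorbs the $t^{\mfm_\pm}_{12}$ parts of $\Psi^{(1)}$ through the $\ad Z$-eigenvalue relations~\eqref{eq:interaction-z-tm}; matching the resulting coefficients forces $\Ad g$ to agree with $\exp(-\pi is\ad Z)$ on $\mfm^\C$, i.e.\ $g\in\exp(-\pi isZ)Z(U)$.

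It remains to show that two ribbon $\sigma$-braids $\mcE,\mcE'$ with identical constant term coincide. Normalizing by $(1\otimes g^{-1})$ on the right and inducting on $n\ge1$, the linearized ribbon braid equations force the difference $X=\tilde\mcE^{(n)}-\tilde\mcE'^{(n)}=1\otimes Y$ with $Y$ primitive and centralizing $\mfk$, so $Y\in Z_\mfg(\mfk)=\C Z$, say $Y=cZ$. The main obstacle, and the key departure from the non-Hermitian case, is eliminating this residual one-dimensional freedom. I would do so by examining~\eqref{eq:rib-sig-tw-1} at order $n+1$: the perturbation $c(1\otimes Z)$ at order $n$ couples with the $t^{\mfm_\pm}_{12}$ components of $\mcR^{(1)}$ and $\Psi^{(1)}_{\KZ,s;\mu}$ via~\eqref{eq:interaction-z-tm}, and the resulting consistency requirement is that a scalar multiple of $c\bigl(t^{\mfm_+}_{12}-t^{\mfm_-}_{12}\bigr)$ be a coboundary in $B_{G,G^\sigma}$. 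Since this element represents a nontrivial class in $\rH^2(B_{G,G^\sigma})$ by Proposition~\ref{prop:partial-rmat-cohom-nontriv}, the only possibility is $c=0$. Conceptually this is the mirror image of Theorem~\ref{thm:modified-KZ-universality}: the same cohomology class that detects the parameter $\mu$ in $\Psi_{\KZ,s;\mu}$ simultaneously obstructs $Z$-direction perturbations of $\mcE_{\KZ,s;\mu}$.
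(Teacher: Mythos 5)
Your proposal mirrors the paper's proof closely in its skeleton — existence via the half-monodromy around $w=0$, the identification of the constant term $g\in\exp(-\pi is Z)Z(U)$ from the order-one terms of \eqref{eq:rib-sig-tw-1} and \eqref{eq:Psi}, and the induction reducing the ambiguity at each step to a primitive $Y\in\mfz_\mfg(\mfk)=\C Z$. The gap is in your final step, the elimination of $Y=cZ$.

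You propose that examining \eqref{eq:rib-sig-tw-1} at order $n+1$ yields a consistency requirement that a scalar multiple of $c(t^{\mfm_+}_{12}-t^{\mfm_-}_{12})$ be a coboundary in $B_{G,G^\sigma}$, and that the nontriviality of this class (Proposition~\ref{prop:partial-rmat-cohom-nontriv}) forces $c=0$. But this is not what happens. Writing $T=\tilde\mcE'^{(n+1)}-\tilde\mcE^{(n+1)}$, the order-$(n+1)$ terms of \eqref{eq:rib-sig-tw-1} and \eqref{eq:rib-sig-tw-2} produce two linear relations in $T$ and $Y$ (identities \eqref{compar-hbar-2-coeff-1} and \eqref{compar-hbar-2-coeff-2} in the paper); subtracting them gives $d_\cH(T)$, and a direct computation shows that this expression is an \emph{explicit coboundary} for \emph{every} $Y\in\C Z$. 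There is no obstruction in $\rH^2(B_{G,G^\sigma})$ here, so your proposed mechanism cannot force $c=0$. The nontrivial class $[t^{\mfm_+}_{12}-t^{\mfm_-}_{12}]$ simply never appears as an obstruction in this step.

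What the paper actually uses is different: once $d_\cH(T)$ is seen to be a coboundary, the vanishing of $\rH^1(B_{G,G^\sigma})$ determines $T$ up to a central element, and that explicit form shows $T\in\mcU(G^\sigma\times G^\sigma)$. Feeding this back into \eqref{compar-hbar-2-coeff-1} forces the commutator $[pt-t^\mfm,1\otimes Y]$ (with $p=\tanh(\pi s/2)\ne\pm1$) to land in $\mcU(G^\sigma\times G^\sigma)$. Since $[t^{\mfm_\pm},1\otimes Z]$ is a nonzero scalar multiple of $t^{\mfm_\pm}\in\mfm\otimes\mfm$, which does not sit inside $\mfg^\sigma\otimes\mfg^\sigma$, the only way this can hold is $Y=0$. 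So the decisive constraint is structural (a leg-membership condition), not cohomological, and your argument would need to be replaced by this kind of analysis. The intuition that the class $t^{\mfm_+}-t^{\mfm_-}$ simultaneously governs $\mu$ and the $Z$-direction of the ribbon braid is correct in spirit — compare Remark~\ref{rem:s-mu-uniq}, which uses the completed Theorem~\ref{thm:uniq-tw-br-herm-case} to recover the uniqueness of $\mu$ — but the logical direction there is opposite to the one you propose.
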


\bp
The fact that the element $\mcE = \mcE_{\KZ,s;\mu}=\exp(-h(2t^\mfk_{01}+C^\mfk_1)-\pi i(s+\mu)Z_1)$ is a ribbon $\sigma$-braid, and hence that the elements $\mcE(1\otimes g)$ ($g\in Z(U)$) are ribbon $\sigma$-braids as well, follows again from the proof of~\cite{MR3943480}*{Theorem~3.8}.

Let $\mcE'$ be another ribbon $\sigma$-braid. The same argument as in the proof of Theorem~\ref{thm:uniq-br-tw-non-Herm} shows that $\mcE'^{(0)}=1\otimes g$ for an element $g\in Z_G(G^\sigma)$.
We now use the same strategy as in the proof of the last statement of that theorem to get more restrictions on $g$.
Namely, we replace $\mcE'$ by $\mcE'(1\otimes g^{-1})$ to get a ribbon $\tilde\sigma$-braid, where $\tilde\sigma=(\Ad g)\circ\sigma$, and then twist $\mcE'$ and $\Psi_{\KZ,s;\mu}$ further by an element $H$ to get rid of the terms $t^\mfu$ and $t^\mfm$ in $\Psi^{(1)}_{\KZ,s;\mu}$, see~\eqref{eq:Psi} and recall that by Corollary~\ref{cor:Psi-chi-expansion} identity~\eqref{eq:Psi} is still valid for $\Psi_{\KZ,s;\mu}$.
Note for a future use that by the proof of Proposition~\ref{prop:partial-rmat-cohom-nontriv} we can take $\mcH$ of the form
\begin{equation}\label{eq:H-Casimir}
\mcH=1+h(a C^\mfu_1+b C^\mfk_1)
\end{equation}
for appropriate constants $a$ and $b$. Thus, our new associator $\Psi = \mcH_{0,12}\Psi_{\KZ,s;\mu}\mcH^{-1}_{01,2}\mcH^{-1}_{0,1}$ satisfies
\begin{equation}\label{eq:tilde-Psi-lower-term}
\Psi = 1 -\frac{h}{2}\tanh\Bigl(\frac{\pi s}{2}\Bigr)(t^{\mfm_+}_{12}-t^{\mfm_-}_{12})+ O(h^2).
\end{equation}

Looking at the order one terms in~\eqref{eq:rib-sig-tw-1} and applying $\epsilon$ to the $0$th leg, instead of~\eqref{eq:mcE1} we now get
\begin{multline*}
\mcE'^{(1)}=-2t^\mfk+1\otimes T+(\id\otimes\Ad g)(t^\mfm)-t^\mfm\\
+\tanh\Bigl(\frac{\pi s}{2}\Bigr)(t^{\mfm_+}-t^{\mfm_-})+\tanh\Bigl(\frac{\pi s}{2}\Bigr)(\id\otimes\Ad g)(t^{\mfm_+}-t^{\mfm_-}),
\end{multline*}
where $T=(\epsilon\otimes\id)(\mcE'^{(1)})$ and we used that $t^{\mfm_\pm}_{21}=t^{\mfm_\mp}_{12}$. As $\mcE'^{(1)}\in\mcU(G^\sigma\times G)$, this means that
\begin{equation}\label{eq:Ad-g}
\Ad g-\id\mp\tanh\Bigl(\frac{\pi s}{2}\Bigr)\id\mp\tanh\Bigl(\frac{\pi s}{2}\Bigr)\Ad g=0\quad\text{on}\quad \mfm_\pm.
\end{equation}
Hence $\Ad g=e^{\pm\pi s}\id$ on $\mfm_\pm$, which implies that $g=\exp(-\pi is Z)g'$ for some $g'\in Z(G)=Z(U)$.

Without loss of generality we may assume that $g'=e$, and then we want to prove that our original~$\mcE'$ coincides with $\mcE$. It is more convenient to modify $\mcE$ in the same way as $\mcE'$, that is, by replacing it by
\[
\mcH\mcE(1\otimes g^{-1})(\id\otimes\tilde\sigma)(\mcH)^{-1}=\exp(-h(2t^\mfk_{01}+C^\mfk_1)-\pi i\mu Z_1),
\]
where we used that $\mcH$ has the form~\eqref{eq:H-Casimir}.

Thus, our new setup is that we have two ribbon $\tilde\sigma$-braids $\mcE'$ and $\mcE$, with $\tilde\sigma=(\Ad \exp(-\pi is Z))\circ\sigma$, with respect to an associator~$\Psi$ satisfying~\eqref{eq:tilde-Psi-lower-term}, $\mcE'^{(0)}=\mcE^{(0)}=1$,
\begin{equation}\label{eq:mcE(1)}
\mcE^{(1)}=-(2t^\mfk_{01}+C^\mfk_1+\pi i\mu^{(1)} Z_1),
\end{equation}
and the goal is to show that $\mcE'=\mcE$.

We will prove by induction on $n$ that $\mcE'^{(n)}=\mcE^{(n)}$. Consider the case $n = 1$. Put
\[
p=\tanh\Bigl(\frac{\pi s}{2}\Bigr)\quad\text{and}\quad t=t^{\mfm_+}-t^{\mfm_-}.
\]
By the argument in the proof of Theorem \ref{thm:uniq-br-tw-non-Herm}, we have $\mcE'^{(1)} = \mcE^{(1)} + 1 \otimes Y$ for some $Y \in \mfz_\mfg(\mfk)=\C Z$. Put also $T=\mcE'^{(2)}-\mcE^{(2)}$.

Using \eqref{eq:tilde-Psi-lower-term}, formula \eqref{eq:rib-sig-tw-1} for $\mcE$, modulo $h^3$ and terms depending only on $\mcR_\KZ$ and $\Psi$, becomes
\[
(1 + h \mcE^{(1)} + h^2 \mcE^{(2)})_{01,2}
= (1 + h(pt_{1,2} - t^\mfu_{1,2}))(1 + h \mcE^{(1)} + h^2 \mcE^{(2)})_{0,2} (\id\otimes\id \otimes\tilde \sigma)(1 - h(pt_{1,2} + t^\mfu_{1,2})),
\]
where we again used that $t_{21}=-t_{12}$.
We have a similar formula for $\mcE'$.
Taking the difference and comparing the coefficients of $h^2$, we obtain
\[
T_{01, 2} = T_{0,2} + (p t_{1,2} - t^\mfu_{1,2}) Y_2 - Y_2 (\id \otimes\id\otimes \tilde\sigma)(p t_{1,2} + t^\mfu_{1,2}).
\]
Using the identity
\begin{equation}\label{eq:Ad-g2}
(p t - t^\mfm)- (\id\otimes \tilde\sigma)(p t + t^\mfm)=0,
\end{equation}
which is an equivalent form of~\eqref{eq:Ad-g}, we can write this as
\begin{equation}
\label{compar-hbar-2-coeff-1}
T_{01, 2} = T_{0,2} -2t^\mfk_{1,2}Y_2+[p t_{1,2} - t^\mfm_{1,2},Y_2].
\end{equation}

Similarly, formula \eqref{eq:rib-sig-tw-2} for $\mcE$, modulo $h^3$ and terms depending only on $\mcR_\KZ$ and $\Psi$, becomes
\begin{multline*}
(1 + h \mcE^{(1)} + h^2 \mcE^{(2)})_{0,12}
= \Bigl(1 + h \Bigl(\frac{p}{2} t_{1,2} - t^\mfu_{1,2}\Bigr)\Bigr) (1 + h \mcE^{(1)} + h^2 \mcE^{(2)})_{0,2}\\ \times(\id \otimes\id\otimes \tilde\sigma)(1 - h (p t_{1,2} + t^\mfu_{1,2}))
(1 + h \mcE^{(1)} + h^2 \mcE^{(2)})_{0,1}(\id \otimes\tilde\sigma\otimes \tilde\sigma)\Bigl(1+ h\frac{p}{2} t_{1,2}\Bigr),
\end{multline*}
and we have a similar identity for $\mcE'$. Taking the difference and comparing the coefficients of $h^2$, we obtain
\begin{multline*}
T_{0,12} = T_{0,2} + T_{0,1} + \Bigl(\frac{p}{2} t_{1,2} - t^\mfu_{1,2}\Bigr) (Y_1 + Y_2) - Y_2 (\id\otimes\id \otimes \tilde\sigma)(p t_{1,2} + t^\mfu_{1,2}) \\ - (\id \otimes \id\otimes \tilde\sigma)(p t_{1,2} + t^\mfu_{1,2}) Y_1
+ (Y_1 + Y_2) (\id\otimes \tilde\sigma \otimes \tilde\sigma)\Bigl(\frac{p}{2} t_{1,2}\Bigr) + Y_2 \mcE^{(1)}_{0,1} + \mcE^{(1)}_{0,2} Y_1 + Y_1 Y_2.
\end{multline*}
Using that $\tilde\sigma\otimes\tilde\sigma$ is the identity on $\mfm_\pm\otimes\mfm_\mp$ and that $\ad Y_1=-\ad Y_2$ on $\mfm_\pm$, we can write this as
\begin{multline*}
T_{0,12} = T_{0,2} + T_{0,1} + (p t_{1,2} - t^\mfu_{1,2}) (Y_1 + Y_2) - Y_2 (\id\otimes\id \otimes \tilde\sigma)(p t_{1,2} + t^\mfu_{1,2}) \\ - (\id \otimes \id\otimes \tilde\sigma)(p t_{1,2} + t^\mfu_{1,2}) Y_1
+ Y_2 \mcE^{(1)}_{0,1} + \mcE^{(1)}_{0,2} Y_1 + Y_1 Y_2,
\end{multline*}
and then, using again~\eqref{eq:Ad-g2}, we get
\begin{equation}
\label{compar-hbar-2-coeff-2}
T_{0,12} = T_{0,2} + T_{0,1} -2t^\mfk_{1,2}(Y_1+Y_2) + [p t_{1,2}  - t^\mfm_{1,2},Y_2]+ Y_2 \mcE^{(1)}_{0,1} + \mcE^{(1)}_{0,2} Y_1 + Y_1 Y_2.
\end{equation}

Subtracting \eqref{compar-hbar-2-coeff-2} from \eqref{compar-hbar-2-coeff-1}, we obtain
\[
d_{\cH}(T) =2 t^\mfk_{12} Y_1 - \mcE^{(1)}_{01} Y_2 - \mcE^{(1)}_{02} Y_1 - Y_1 Y_2.
\]
By~\eqref{eq:mcE(1)}, this means that
\[
d_\cH(T)=(2 t^\mfk_{12}+2t^\mfk_{02}+C^\mfk_2+\pi i\mu^{(1)} Z_2)Y_1+ (2t^\mfk_{01}+C^\mfk_1+\pi i\mu^{(1)} Z_1)Y_2 - Y_1Y_2.
\]
As $Y\in\C Z$, we can also write this as
\[
d_\cH(T)=(2 t^\mfk_{12}+2t^\mfk_{02}+C^\mfk_2)Y_1+ (2t^\mfk_{01}+C^\mfk_1+2\pi i\mu^{(1)} Z_1)Y_2-Y_1Y_2.
\]

On the other hand, a straightforward computation shows that the right hand side of the above identity is the coboundary of
\[
2t^\mfk_{01}Y_0+C^\mfk_1Y_0+C^\mfk_0Y_1+2\pi i\mu^{(1)}Z_0Y_1-Y_0Y_1.
\]
As $\rH^1(B_{G,G^\sigma})=0$ by Lemma~\ref{lem:inv-wedge} and Corollary~\ref{cor:cohom-B-G-Gtheta}, it follows that there exists~$S$ in the center of $\mcU(G^\sigma)$ such that
\[
T=S_{01}-S_0+2t^\mfk_{0,1}Y_0+C^\mfk_1Y_0+C^\mfk_0Y_1+2\pi i\mu^{(1)}Z_0Y_1-Y_0Y_1.
\]

The only consequence of the above identity that we need is that $T\in\mcU(G^\sigma\times G^\sigma)$. By looking at~\eqref{compar-hbar-2-coeff-1} we see that this implies
\[
[pt-t^\mfm,1\otimes Y]=(p-1)[t^{\mfm_+},1\otimes Y]-(p+1)[t^{\mfm_-},1\otimes Y]\in\mcU(G^\sigma\times G^\sigma).
\]
As $Y$ is a scalar multiple of $Z$, and $\ad Z$ acts by nonzero operators on $\mfm_\pm$, this is possible only if $Y=0$. This shows that $\mcE'^{(1)} = \mcE^{(1)}$.

The induction step is similar. Assuming that $\mcE'^{(k)} = \mcE^{(k)}$ for $k<n$ for some $n\ge2$, we have $\mcE'^{(n)} = \mcE^{(n)} + 1 \otimes Y$ for an element $Y \in \mfz_\mfg(\mfk)=\C Z$. Then, with $T=\mcE'^{(n+1)}-\mcE^{(n+1)}$, comparing the coefficients of $h^{n+1}$ in \eqref{eq:rib-sig-tw-1} we get the same identity~\eqref{compar-hbar-2-coeff-1}. If we do the same for \eqref{eq:rib-sig-tw-2}, the only difference from~\eqref{compar-hbar-2-coeff-2} is that we do not get the term $Y_1Y_2$ at the end. But this has almost no effect on the rest of the argument, we just have to remove the terms $Y_1Y_2$ and $Y_0Y_1$ in the subsequent identities. Thus we get $Y=0$.
\ep

\begin{Rem}\label{rem:s-mu-uniq}
Theorem~\ref{thm:uniq-tw-br-herm-case} implies that in the Hermitian case the ribbon twist-braids contain complete information about the associators.
Namely, assume that $\Psi_{\KZ,s';\mu'} =\mcH_{0, 12}\Psi_{\KZ,s;\mu} \mcH_{01, 2}^{-1} \mcH_{0,1}^{-1}$ for some $\mcH \in 1 + h\mcU(G^\sigma\times G)^\mfk\fpser$. By~\eqref{eq:braid-twist0} and Theorem~\ref{thm:uniq-tw-br-herm-case} it follows that
\[
\mcH \exp(-h(2t^\mfk_{01}+C^\mfk_1)-\pi i(s+\mu)Z_1) (\id\otimes\sigma)(\mcH)^{-1}
=\exp(-h(2t^\mfk_{01}+C^\mfk_1)-\pi i(s'+\mu')Z_1)g_1
\]
for some $g\in Z(U)$. Since $(\epsilon\otimes\id)(\mcH)$ is a $\mfk$-invariant element of $\mcU(G)$ and $\sigma$ is an inner automorphism defined by an element of $K$, we have $\sigma\bigl((\epsilon\otimes\id)(\mcH)\bigr)=(\epsilon\otimes\id)(\mcH)$, and then by applying $\epsilon$ to the $0$th leg we get
\[
\exp(-h C^\mfk-\pi i(s+\mu)Z)=\exp(-h C^\mfk-\pi i(s'+\mu')Z)g.
\]
This implies that $s=s'+2ik$ for some $k\in\Z$, and $\mu=\mu'$.
\end{Rem}

\section{Interpolated subgroups}\label{sec:int-subgroups}

In this section we fix an involutive automorphism $\nu$ of $\mfu$ such that $\mfu^\nu<\mfu$ is an irreducible Hermitian symmetric pair.
We are going to fix a Cartan subalgebra in $\mfu^\nu$ and then apply the results of the previous section to particular conjugates of $\nu$ and true coactions of a quantization of $(\mcU(G),\Delta)$. Along the way we will study a distinguished family of coisotropic subgroups of $U$ that are conjugates of $U^\nu$. The main result is Theorem~\ref{thm:assoc-for-prescribed-classical-limit}, where we in particular relate the induced Poisson structures on the associated homogeneous spaces to a Poisson pencil for the Kostant--Kirillov--Souriau bracket which appears from the cyclotomic KZ-equations.

Throughout this section we use the subscript~$\nu$ for the Lie algebra constructions we had for $\sigma$. Thus, $\mfm_\nu=\{X\in\mfu \mid \nu(X)=-X\}$, $Z_\nu\in\mfz(\mfg^\nu)$.

\subsection{Root vectors and Poisson structure}\label{ssec:Cartan}

Let us quickly review a standard Poisson--Lie group structure on $U$ making $U^\nu$ a Poisson--Lie subgroup (for Hermitian symmetric pairs).

Let $\mft$ be a Cartan subalgebra of $\mfu$ containing $\mfz(\mfu^\nu)$.
Then~$\mft$ is contained in $\mfu^\nu$, and its complexification~$\mfh$ is a Cartan subalgebra of $\mfg$.

Recall that a root $\alpha$ is called \emph{compact} if $\mfg_\alpha\subset\mfg^\nu$, and \emph{noncompact} otherwise.
As $\mfg^\nu$ is the centralizer of $\mfz(\mfu^\nu)$, a root is compact if and only if it vanishes on~$\mfz(\mfu^\nu)$.

As in Section~\ref{ssec:cyclotomic-KZ}, we fix $Z_\nu\in\mfz(\mfu^\nu)$ such that $(Z_\nu,Z_\nu)_\mfg=-a_\nu^{-2}$.
Let us fix an ordered basis in~$i\mft$, with~$-i Z_\nu$ being the first element of the basis, and consider the corresponding lexicographic order on the roots. Then, in this order, any noncompact positive root is bigger than any compact root. Furthermore, the noncompact positive roots are \emph{totally positive} in the sense of~\cite{MR0072427}, meaning that if~$\gamma$ is a noncompact positive root, $\alpha_1, \dots, \alpha_k$ are compact roots, and $m_1,\dots, m_k$ are integers such that $\gamma' = \gamma + \sum_{i=1}^km_i \alpha_i$ is a root, then $\gamma'$ is positive.

We denote by $\Phi$ the set of all roots, and by $\Phi^+$ that of positive roots.
We further denote by $\Phi_\nc^+$ (resp.~$\Phi_\nc^-$) the set of positive (resp.~negative) noncompact roots.
Let $\Pi = \{\alpha_i\}_{i\in I}$ be the set of simple positive roots.
Recall that we denote by $\mfm_{\nu\pm}\subset\mfm_\nu^\C$ the eigenspaces of $\ad Z_\nu$ corresponding to the eigenvalues $\pm i$. It is clear from our choice of the ordering that
\begin{align*}
\mfm_{\nu+}&=\bigoplus_{\alpha \in \Phi_\nc^+}\mfg_\alpha,&
\mfm_{\nu-}&=\bigoplus_{\alpha \in \Phi_\nc^-}\mfg_\alpha.
\end{align*}

Since $2i$ is not an eigenvalue of $\ad Z_\nu$, we have $[\mfm_{\nu+},\mfm_{\nu+}]=0$.
It follows that $\mfp=\mfg^\nu+\mfm_{\nu+}$ is a parabolic subalgebra of $\mfg$.
As $\mfg^\nu$ acts irreducibly on $\mfg/\mfp\cong \mfm_{\nu-}$, this parabolic subalgebra is maximal.
Hence it corresponds to a maximal proper subset of $\Pi$.
This set must consist of compact roots, and hence its complement consists of one noncompact root.
We denote this unique noncompact simple positive root by $\alpha_o$.
It corresponds to the black vertex in a standard \emph{Vogan diagram} of~$\nu$.

For every root $\alpha$, let $H_{\alpha}\in \mfh$ be the element dual to the coroot $\alpha^{\vee} = \frac{2}{(\alpha,\alpha)}\alpha$, so that we have
\[
\alpha(H_{\beta}) = (\alpha,\beta^{\vee})\quad (\alpha,\beta \in \Phi),
\]
where $(\cdot,\cdot)$ is the scalar product dual to the restriction of $(\cdot,\cdot)_\mfg$ to $\mfh$.
For positive roots $\alpha$, we choose root vectors $X_{\alpha}\in \mfg_{\alpha}$ such that the antilinear involution for $\mfu$ satisfies
\[
(X_{\alpha},X_{\alpha}^*)_{\mfg} = \frac{2}{(\alpha,\alpha)}.
\]
Then $[X_\alpha,X^*_{\alpha}]=H_\alpha$, and we put $X_{-\alpha}=X^*_\alpha$.

Let
\begin{align*}
Y_{\alpha} &= \frac{i}{2}(X_{\alpha}+ X_{-\alpha}) \in \mfu,&
Z_{\alpha} &= \frac{1}{2}( X_{\alpha}-X_{-\alpha}) \in \mfu \quad (\alpha \in \Phi^+),
\end{align*}
and consider the antisymmetric tensor
\begin{equation*}\label{EqDefr}
r = \sum_{\alpha>0} (\alpha,\alpha) (Y_{\alpha} \otimes Z_{\alpha} - Z_{\alpha} \otimes Y_{\alpha}) \in \mfu^{\otimes 2}.
\end{equation*}
Note that we can also write
\begin{equation}\label{EqComplr}
r = i \sum_{\alpha>0} \frac{(\alpha,\alpha)}{2} (X_{-\alpha} \otimes X_{\alpha} - X_{\alpha} \otimes X_{-\alpha}) \in \mfg^{\otimes 2}.
\end{equation}
Then $i r \pm t^\mfu$ (with $t^\mfu$ defined by~\eqref{eq:inv-2-tens}) satisfies the classical Yang--Baxter equation, and $\mfu$ becomes a Lie bialgebra with the cobracket
\[
\delta_r(X) = [r, \Delta(X)].
\]
Thus, $U$ becomes a Poisson--Lie group with the \emph{Sklyanin Poisson bracket}
\[
\{f_1,f_2\}_{\Sk} = m (r^{(l,l)} - r^{(r,r)})(f_1 \otimes f_2),
\]
where $m$ is the product map, and for $X \in \mfu$ and $f \in C^{\infty}(U)$ we put
\begin{align*}
 (X^{(l)} f)(g) &= \frac{d}{d t} f(e^{t X} g) |_{t = 0},&
 (X^{(r)} f)(g) &= \frac{d}{d t} f(g e^{t X}) |_{t = 0}.
\end{align*}
Note that if we as usual view $\mfu$ and $U$ as sitting inside $\mcU(G)$, then we can write the Sklyanin bracket as
\[
\{f_1,f_2\}_{\Sk}(g) = \langle f_1 \otimes f_2, [r, g\otimes g] \rangle\quad (g\in U,\ f_1, f_2 \in \mcO(U)).
\]

The Lie algebra $\mfu^\nu$ is the intersection of $\mfu$ with a parabolic subalgebra of $\mfg$, namely, $\mfu^\nu=\mfu\cap\mfp$.
It is well-known that this implies that $U^\nu$ is a Poisson--Lie subgroup of $U$.

\subsection{Satake form}\label{ssec:Satake}

We will have to conjugate $U^\nu$ in order to go beyond the Poisson--Lie subgroups, making it closer to the \emph{coisotropic subgroup} associated with symmetric pairs \cite{MR2102330}.
In order to do so, let us review the \emph{Satake form} of involutions.

For $X\subset I$, we denote by $\Pi_X$ the subset $\{\alpha_j \mid j\in X\}\subset\Pi$. Assume $\theta$ is a nontrivial involution on~$\mfu$ such that its extension to $\mfg$ leaves $\mfh$ globally invariant. We write $\Theta\in \End(\mfh^*)$ for the endomorphism dual to $\theta|_{\mfh}$.

\begin{Def}\label{def:split-pair}
We say that $\theta$ is in \emph{maximally split form}, or in \emph{Satake form}, with respect to $(\mfh,\mfb^+)$, or that $(\mfh,\mfb^+)$ is a \emph{split pair} for $\theta$, if there exists $X\subset I$ satisfying the following conditions:
\begin{enumerate}
\item $\Phi^+ \cap \Theta(\Phi^+) = \Phi^+ \cap \Z \Pi_X$,
\item $\theta = \id$ on $\mfg_{\alpha}$ for all $\alpha \in \Phi^+\cap \Z \Pi_X$.
\end{enumerate}
\end{Def}

The above set $X$ is then uniquely determined, representing the black vertices in the corresponding Satake diagram. Then there exist unimodular $w_{\alpha} \in \C$ such that
\begin{equation*}
\theta(X_{\alpha}) = w_{\alpha} X_{\Theta(\alpha)} \quad (\alpha\in \Phi).
\end{equation*}
Moreover, there exists a unique Dynkin diagram involution $\tau_{\theta}$ such that
\begin{equation}\label{eq:Theta-tau}
\Theta(\alpha) = -w_X\tau_{\theta}(\alpha)\quad (\alpha \in \Phi),
\end{equation}
with $w_X$  the longest element of the Weyl group associated to $X$. This involution leaves the set $X$ globally invariant. See \cite{MR1155464} and \cite{MR3269184}*{Appendix~A} for details.

Put
\[
I_\ns = \{i \in I\setminus X\mid \tau_{\theta}(i) = i \textrm{ and }(\alpha_i,\alpha_j)=0 \textrm{ for all } j \in X\},
\]
which corresponds to the $\tau_{\theta}$-stable white vertices not connected to any black vertices in the Satake diagram.
We then put
\[
I_{\mcS}=\{i\in I_\ns\mid ({\alpha}^\vee_j,\alpha_i) \in 2\Z\textrm{ for all }j\in I_\ns\}.
\]
We also put
\[
I_{\mcC} = \{i \in I \setminus X \mid \tau_{\theta}(i)\neq  i\textrm{ and }(\alpha_i,\Theta(\alpha_i)) = 0\}.
\]
It can be shown that this is the set of white vertices not fixed by $\tau_\theta$ such that, if $\bar \alpha_i$ is the corresponding restricted root, then $2 \bar \alpha_i$ is not a restricted root, see \citelist{\cite{MR0153782}\cite{MR1834454}*{p.~530}}.

We will use the following definition from~\cite{MR3943480}.

\begin{Def}\label{def:S-type-C-type}
We say that a Hermitian symmetric pair $\mfu^{\theta}<\mfu$  is
\begin{itemize}
\item of \emph{S-type}, if $I_{\mcS} \neq \emptyset$,
\item of \emph{C-type}, if there exists $i\in I\setminus(X\cup I_\mcC)$ such that $\tau_\theta(i)\ne i$.
\end{itemize}
\end{Def}

See \cite{MR3943480}*{Appendix C} for a concrete classification of the Hermitian symmetric pairs into these types.
Recall that the restricted root system is always of type $\rC$ or $\rBC$ in the Hermitian case \cite{MR0161943}*{Theorem 2}.
By a case-by-case analysis (see, e.g., \citelist{\cite{MR1064110}*{Reference Chapter, table 9}\cite{MR1834454}*{Chapter X, Table VI}\cite{MR1920389}*{Appendix C}}), one sees that we are in the S-type case exactly when the restricted root system is of type $\rC$, while we are in the C-type case when the restricted root system is of type $\rBC$.

Moreover, by \cite{MR3943480}*{Lemma C.2}, in the C-type case there exists exactly one $\tau_\theta$-orbit of the form $\{o \neq o'=\tau_\theta(o)\}$ in $I\setminus(X\cup I_\mcC)$.
We call the roots $\alpha_o$ and $\alpha_{o'}$ \emph{distinguished}.
By the same lemma, in the S-type case the set $I_\mcS$ consists of one root $\alpha_o$, which we again call distinguished.
Note that we use the same label $o$ for one of the distinguished roots as for a noncompact root in Section~\ref{ssec:Cartan}.
This will be justified in Proposition~\ref{prop:restrictions}.

We now recall the construction of a \emph{cascade} of orthogonal roots in the setting of Section~\ref{ssec:Cartan}.
Let~$\gamma_1$ be the largest in our lexicographic order (hence necessarily noncompact) root of $\mfg^{(0)}=\mfg$, and let $\mfg^{(1)}$ be the centralizer of~$H_{\gamma_1}$.
If $\mfg^{(1)}\not\subset\mfg^\nu$, then let $\gamma_2$ be the largest root such that $X_{\gamma_2}\in\mfg^{(1)}$, and let~$\mfg^{(2)}$ be the centralizer of $\{H_{\gamma_1},H_{\gamma_2}\}$.
Continuing this until we have $\mfg^{(s)} \subset \mfg^\nu$, we obtain a strictly decreasing sequence of Lie subalgebras $(\mfg^{(i)})_{i=1}^s$ and noncompact positive roots $\gamma_1, \dots, \gamma_s$. Furthermore, these roots form a maximal family of mutually orthogonal noncompact roots, and they are mutually {strongly orthogonal}, i.e.,~ $\gamma_i \pm \gamma_{j}$ is never a root, see \cite{MR1920389}*{Lemma~7.143}.

Let $\mfh^+ \subset \mfh$ consist of all $H\in\mfh$ with $\gamma_i(H) = 0$ for all $i$.
Let $\mfh^-$ be the complex linear span of $\{H_{\gamma_i} \mid i = 1, \dots, s \}$.
Then clearly $\mfh = \mfh^+ \oplus \mfh^-$, and hence for the dual spaces we have $\mfh^*=(\mfh^+)^* \oplus (\mfh^-)^*$. Using this decomposition we will often think of the $\gamma_i$ as elements of $(\mfh^-)^*$.

On the other hand, let $\widetilde{\mfh}^-$ be the complex linear span of $\{X_{\gamma_i} - X_{-\gamma_i} \mid i = 1, \dots, s\}$.
Then also $\mfh^+ \oplus \widetilde{\mfh}^-$ is a Cartan subalgebra of $\mfg$. This is a version of Harish-Chandra's construction of maximally split Cartan subalgebras.

To relate the two Cartan subalgebras, we consider the (partial, unitary) \emph{Cayley transform} $\Ad g_1$, where
\[
g_1 = \exp\Bigl(\frac{\pi i}{4} \sum_{i=1}^s (X_{-\gamma_i} + X_{\gamma_i})\Bigr) \in U,
\]
cf.~\cite{MR1920389}*{Section~VI.7}. Then $(\Ad {g_1})(i H_{\gamma_i}) = X_{\gamma_i} -X_{-\gamma_i}$ (see Lemma~\ref{lem:P_0} below), so $(\Ad{g_1})(\mfh^-) = \widetilde{\mfh}^-$, while $\Ad{g_1}$ acts as the identity on $\mfh^+$. We then have the following concrete presentation of maximally split involutions.

\begin{Prop}
\label{prop:concr-form-max-split-invol}
The involution $\theta = (\Ad {g_1})^{-1}\circ \nu\circ(\Ad{g_1})$ is in maximally split form with respect to $(\mfh,\mfb^+)$, with the associated set $X =  \{i\in I \mid H_{\alpha_i} \in \mfh^+\}$.
\end{Prop}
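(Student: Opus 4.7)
The plan is to first verify that $\theta$ preserves $\mfh$ and compute $\theta|_\mfh$ explicitly, then check the two defining conditions of the Satake form for the specified $X$. For the action on $\mfh$: since $\mft\subset\mfu^\nu$, $\nu$ fixes $\mfh=\mft^{\C}$ pointwise; using the $\mfsl_2$-triple identity $(\Ad g_1)(iH_{\gamma_j})=X_{\gamma_j}-X_{-\gamma_j}$ from Lemma~\ref{lem:P_0}, together with the strong orthogonality of the cascade roots so that the factors of $g_1$ commute and can be analyzed independently, $\Ad g_1$ is the identity on $\mfh^+$ and sends $\mfh^-$ isomorphically onto $\widetilde{\mfh}^-$. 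Noncompactness of each $\gamma_j$ gives $\nu=-\id$ on $\mfg_{\pm\gamma_j}$, hence on $\widetilde{\mfh}^-$. Combining these yields $\theta=\id$ on $\mfh^+$ and $\theta=-\id$ on $\mfh^-$, so $\theta(\mfh)=\mfh$, and dually $\Theta=\id$ on $(\mfh^+)^*$, $\Theta=-\id$ on $(\mfh^-)^*$.

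For condition~(2), given $i\in X$, the relation $H_{\alpha_i}\in\mfh^+$ gives $(\alpha_i,\gamma_j)=0$ for all $j$; both Cartan integers between $\alpha_i$ and $\gamma_j$ then vanish, forcing $\alpha_i\pm\gamma_j\notin\Phi$ and hence $[X_{\pm\alpha_i},X_{\pm\gamma_j}]=0$, so $\Ad g_1$ fixes $X_{\pm\alpha_i}$ and $\theta|_{\mfg_{\pm\alpha_i}}=\nu|_{\mfg_{\pm\alpha_i}}$. Next I would argue that $\alpha_i$ is compact: if it were noncompact, $\{\alpha_i,\gamma_1,\dots,\gamma_s\}$ would be $s+1$ mutually strongly orthogonal noncompact positive roots, contradicting the maximality of the cascade recalled in the excerpt. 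Thus $\theta=\id$ on $\mfg_{\pm\alpha_i}$, and since any $\mfg_\alpha$ with $\alpha\in\Phi\cap\Z\Pi_X$ lies in the Lie subalgebra generated by these root spaces, condition~(2) follows from the fact that $\theta$ is a Lie algebra automorphism.

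For condition~(1), the inclusion $\supseteq$ is immediate from the first step, since roots in $\Z\Pi_X$ vanish on $\mfh^-$ (each $\alpha_i$, $i\in X$, does) and are therefore fixed by $\Theta$. For the reverse, given $\alpha\in\Phi^+$ with $\Theta(\alpha)=\alpha|_{\mfh^+}-\alpha|_{\mfh^-}\in\Phi^+$, I want to conclude $\alpha|_{\mfh^-}=0$. The plan is to establish the compatibility of the lexicographic positive system on $\Phi$ with a positive system on the restricted root system $\bar\Phi\subset(\mfh^-)^*$: namely $\bar\Phi^+:=\{\bar\beta\mid\beta\in\Phi^+,\ \bar\beta\neq 0\}$ satisfies $\bar\Phi^+\cap(-\bar\Phi^+)=\emptyset$. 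Granted this, both $\bar\alpha$ and $\overline{\Theta(\alpha)}=-\bar\alpha$ would lie in $\bar\Phi^+$ unless $\bar\alpha=0$, giving $\alpha\in\Z\Pi_X$. The starting observation for compatibility is that the projection $V_1^{-}=\tfrac{1}{2}\sum_j H_{\gamma_j}$ of $V_1=-iZ_\nu$ onto $\mfh^-$ satisfies $\gamma_j(V_1^{-})=1$, so every noncompact positive root restricts to an element with strictly positive pairing with $V_1^{-}$.

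The hardest step is this compatibility statement for compact positive roots with nonzero restriction. Since $-iZ_\nu$ need not lie in $\mfh^-_\R$, the interplay between the chosen ordering of $i\mft$ and the decomposition $\mfh=\mfh^+\oplus\mfh^-$ has to be tracked via the subsequent basis vectors, and the argument will ultimately rely on the Harish-Chandra classification of $\bar\Phi$ as a root system of type $\rC_s$ or $\rBC_s$ known for Hermitian symmetric pairs.
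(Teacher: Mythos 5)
Your overall structure matches the paper: compute $\theta|_\mfh$ as $\id$ on $\mfh^+$ and $-\id$ on $\mfh^-$, then verify the two Satake conditions. However, both verifications contain gaps.

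For condition~(2), the step ``both Cartan integers between $\alpha_i$ and $\gamma_j$ vanish, forcing $\alpha_i\pm\gamma_j\notin\Phi$'' is invalid: orthogonality of two roots does not imply strong orthogonality. In $\mathrm{B}_2$ the short roots $e_1,e_2$ are orthogonal yet $e_1\pm e_2$ are roots, and since Hermitian symmetric pairs occur in types $\mathrm{B}$ and $\mathrm{C}$ this is a real issue, not a cosmetic one. The correct order (the paper's) is to first deduce that $\alpha_i$ is compact from the maximality of $\{\gamma_1,\dots,\gamma_s\}$ as a family of mutually \emph{orthogonal} noncompact roots (strong orthogonality is not needed at this stage), and then invoke Proposition~\ref{prop:structure-of-roots-from-Harish-Chandra} \ref{it:structure-of-roots-from-Harish-Chandra-1}, which is a substantive result of Harish-Chandra rather than a consequence of mere orthogonality, to obtain strong orthogonality of $\alpha_i$ to each $\gamma_j$ and hence the required commutation with $g_1$.

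For condition~(1), you explicitly leave the lexicographic-order compatibility unproved; on top of that, the passage from $\bar\alpha=0$ to $\alpha\in\Z\Pi_X$ is itself a gap, since $\bar\alpha=0$ only says $\alpha\in C_0$ and does not directly give a decomposition of $\alpha$ into simple roots from $X$. The paper closes both gaps at once with the restriction list from Proposition~\ref{prop:structure-of-roots-from-Harish-Chandra}: every positive root restricts to $0$, $\gamma_i$, $\hlf1\gamma_i$ or $\hlf1(\gamma_i\pm\gamma_j)$ ($i<j$), so the restrictions of $\Phi^+$ and $\Theta(\Phi^+)$ meet only at $0$; and since no nontrivial $\Z_{\ge 0}$-combination of these restrictions vanishes, $\bar\alpha=0$ forces all simple roots in the decomposition of $\alpha$ to lie in $X$. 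I recommend taking this route rather than the $\rC_s$/$\rBC_s$ classification of $\bar\Phi$, which in this paper is only established afterwards, in Proposition~\ref{prop:restrictions}.
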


In order to prove this, we will need some detailed information on the restricted roots. We will follow closely the treatment of Harish-Chandra in~\cite{MR0082056}.

For $\lambda, \mu \in \mfh^*$, let us write $\lambda \sim \mu$ when they restrict to the same functional on $\mfh^-$.
For each $i$, let $C_i$ denote the set of compact positive roots $\alpha$ such that $\alpha \sim \hlf1 \gamma_i$.
Similarly, let $P_i$ be the set of noncompact positive roots $\gamma$ such that $\gamma \sim \hlf1 \gamma_i$.

Next, for $i < j$, let $C_{i j}$ denote the set of compact positive roots $\alpha$ such that $\alpha \sim \hlf1(\gamma_i - \gamma_j)$.
Let $P_{i j}$ denote the set of noncompact positive roots $\gamma$ such that $\gamma \sim \hlf1(\gamma_i + \gamma_j)$.

Finally, let $P_0=\{\gamma_1, \dots, \gamma_s\}$, and $C_0$ denote the set of positive roots $\alpha$ such that $\alpha \sim 0$, that is, $H_\alpha \in \mfh^+$ , or equivalently, $\alpha$ is orthogonal to $\gamma_1,\dots,\gamma_s$. The set $C_0$ consists of compact roots, as $\{\gamma_1,\dots,\gamma_s\}$ is a maximal family of mutually orthogonal noncompact positive roots.

\begin{Prop}
\label{prop:structure-of-roots-from-Harish-Chandra}
The set $\Phi^+$ is partitioned by the subsets $P_0$, $C_0$, $(P_i)_{i=1}^s$, $(C_i)_{i=1}^s$, $(P_{i j})_{1 \le i < j \le s}$, and $(C_{i j})_{1 \le i < j \le s}$.
Moreover,
\begin{enumerate}[label=\textup{(\roman*)}]
\item\label{it:structure-of-roots-from-Harish-Chandra-1} if $\alpha\in C_0$, then $\alpha$ is strongly orthogonal to $\gamma_i$ for all $i$;
\item\label{it:structure-of-roots-from-Harish-Chandra-2} for every $1\le i\le s$, the map $\alpha \mapsto \gamma_i - \alpha$ is a bijection from~$C_i$ onto~$P_i$;
\item\label{it:structure-of-roots-from-Harish-Chandra-3} for all $1\le i < j \le s$, the maps $\alpha\mapsto\gamma_i-\alpha$ and $\alpha \mapsto   \gamma_j+\alpha$ are  bijections from $C_{i j}$ onto $P_{i j}$.
\end{enumerate}
\end{Prop}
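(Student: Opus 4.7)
The plan is to deduce the proposition by first determining the possible restrictions $\alpha|_{\mfh^-}$ for $\alpha \in \Phi$ via the integers $n_k(\alpha) := \alpha(H_{\gamma_k}) = 2(\alpha,\gamma_k)/(\gamma_k,\gamma_k)$, and then carrying out a case analysis separating compact from noncompact roots. I first establish the structural bound: $n_k(\alpha)\in\{0,\pm 1,\pm 2\}$, with $|n_k(\alpha)|=2$ forcing $\alpha=\pm\gamma_k$, at most two $n_k$ are nonzero for $\alpha\ne\pm\gamma_k$, and when two are nonzero they both have absolute value~$1$. The length bounds use that all noncompact roots have the same length in an irreducible Hermitian pair (so all $\gamma_k$ are long relative to any $\alpha$), together with strong orthogonality of the cascade and standard root-string arguments applied to each $s_{\gamma_k}$.

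The ingredient that separates the classes is the vanishing $[\mfm_{\nu+},\mfm_{\nu+}]=0$, so the sum of two positive noncompact roots is never a root. This forces $n_k(\alpha) \geq 0$ for every positive noncompact $\alpha \neq \gamma_k$: if $n_k(\alpha) < 0$, then the $\gamma_k$-string through $\alpha$ contains $\alpha+\gamma_k$, a positive noncompact root equal to the sum of two positive noncompact roots, contradiction. The chosen lex order (with $-iZ_\nu$ first and $H_{\gamma_1},\ldots,H_{\gamma_s}$ in the next $s$ slots) ensures that for every positive root the first nonzero entry among $n_1(\alpha),\ldots,n_s(\alpha)$ is positive. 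Together these pin down the possible restrictions: for positive noncompact $\alpha\ne\gamma_k$ all nonzero $n_k$ equal $+1$, giving restrictions $\hlf1\gamma_k$ or $\hlf1(\gamma_i+\gamma_j)$; while for compact positive $\alpha$ the additional pattern $(n_i,n_j)=(1,-1)$ with $i<j$ is allowed, giving restriction $\hlf1(\gamma_i-\gamma_j)$. The residual case of restriction $\hlf1(\gamma_i+\gamma_j)$ with $\alpha$ compact is excluded by looking at $\gamma_i-\alpha$, which would be a positive noncompact root with $n_j=-1$, contradicting the sign constraint just established.

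For (i), given $\alpha\in C_0$, suppose $\alpha+\gamma_k\in\Phi$; then it is a positive noncompact root with $n_k=2$ and all other $n_l=0$, hence by the uniqueness of $\gamma_k$ in its restricted class $\alpha+\gamma_k=\gamma_k$, a contradiction. The case $\alpha-\gamma_k$ is symmetric, proving strong orthogonality. For (ii) and (iii), for $\alpha\in C_i$ or $C_{ij}$ the pairing $(\alpha,\gamma_i)=\hlf1(\gamma_i,\gamma_i)$ yields $s_{\gamma_i}(\alpha)=\alpha-\gamma_i\in\Phi$, so $\gamma_i-\alpha\in\Phi$; similarly $(\alpha,\gamma_j)=-\hlf1(\gamma_j,\gamma_j)$ for $\alpha\in C_{ij}$ gives $\alpha+\gamma_j\in\Phi$. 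These images are noncompact (compact plus or minus noncompact), lie in $P_i$ respectively $P_{ij}$ by computing restrictions, and are positive because their $-iZ_\nu$-value equals $1$. Bijectivity follows from explicit inverses: $\beta\mapsto\gamma_i-\beta$ inverts the first map and $\beta\mapsto\beta-\gamma_j$ inverts the second, with the inverses landing in the source classes by the same restriction-plus-compactness calculation.

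The main obstacle is the initial structural analysis bounding $n_k(\alpha)$ and proving that at most two are nonzero; this is essentially Harish-Chandra's lemma and the most delicate piece. Once this and the sign consequences of $[\mfm_{\nu+},\mfm_{\nu+}]=0$ combined with the chosen lex order are in place, the rest of the argument is direct bookkeeping.
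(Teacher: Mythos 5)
Your proposal follows Harish-Chandra's Lemma~16, which is the very reference the paper cites, and the skeleton --- case analysis on the Cartan integers $n_k(\alpha)$, the sign constraint from $[\mfm_{\nu+},\mfm_{\nu+}]=0$, and the reflections $s_{\gamma_i}$ for the bijections in (ii)--(iii) --- is the right one. But the supporting claim that ``all noncompact roots have the same length in an irreducible Hermitian pair'' is false: in the CI pair $\mfu_n<\mfsp_n$ the noncompact positive roots include both $e_i+e_j$ (short) and $2e_i$ (long), and in the BDI pair $\mfso_2\oplus\mfso_{2n-1}<\mfso_{2n+1}$ they include $e_1\pm e_j$ (long) and $e_1$ (short). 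Even were the claim true, it would give no control on $|n_k(\alpha)|$ for \emph{compact} $\alpha$, which is half the case analysis. What you actually need --- and what is true --- is that each cascade root $\gamma_k$ is a long root of $\Phi$; but this has to be extracted from the cascade construction itself (the highest root is always long, and one tracks this down the chain $\mfg^{(0)}\supset\mfg^{(1)}\supset\cdots$), and you cannot appeal to Proposition~\ref{prop:restrictions}, whose equal-length statement for the $\gamma_i$ is deduced from the present proposition.

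A secondary imprecision: the ordered basis with $-iZ_\nu$ first followed by $H_{\gamma_1},\dots,H_{\gamma_s}$ is not a legitimate choice, since these vectors are linearly dependent ($Z_\nu=\frac{i}{2}\sum_j H_{\gamma_j}$ in the S-type case, Corollary~\ref{cor:center-S-type}), and the cascade roots are determined only \emph{after} an ordering is fixed. The sign constraint on compact positive roots is cleaner to obtain directly from the cascade: if $\alpha>0$ satisfies $n_1(\alpha)=\cdots=n_{k-1}(\alpha)=0$ and $n_k(\alpha)<0$, then $\alpha$ is a root of $\mfg^{(k-1)}$ and $\alpha+\gamma_k$ would be a root of $\mfg^{(k-1)}$ lying strictly above $\gamma_k$, contradicting the maximality of $\gamma_k$ there. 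Modulo these two repairs, together with the acknowledged but unproved bound that at most two of the $n_k(\alpha)$ are nonzero, your argument does reconstruct Harish-Chandra's lemma.
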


The proof is practically identical to that of \cite{MR0082056}*{Lemma~16}, we therefore omit the details.

\begin{proof}[Proof of Proposition~\ref{prop:concr-form-max-split-invol}]
First of all observe that by construction $\theta=\id$ on $\mfh^+$ and $\theta=-\id$ on $\mfh^-$.
This already implies that
\[
\Phi^+\cap\Z\Pi_X\subset\{\alpha>0\mid\Theta(\alpha)=\alpha\}\subset \Phi^+\cap\Theta(\Phi^+).
\]

Next, by Proposition \ref{prop:structure-of-roots-from-Harish-Chandra}, every positive root restricts to $0$, $\gamma_i$, $\hlf1 \gamma_i$ or $\hlf1(\gamma_i \pm \gamma_j)$ for some $i$, $j$ with $i < j$. From this we see that the intersection of the restrictions of $\Phi^+$ and $\Theta(\Phi^+)$ is at most $\{0\}$. In particular, if $\alpha\in \Phi^+\cap\Theta(\Phi^+)$, then $\alpha$ restricts to $0$. Decompose such an $\alpha$ into a combination of the simple roots and restrict to $\mfh^-$. Since no nontrivial sum with nonnegative integral coefficients of the vectors $\gamma_i$, $\hlf1 \gamma_i$ and $\hlf1(\gamma_i \pm \gamma_j)$ $(i<j)$ is zero, it follows that $\alpha$ decomposes into a combination of the simple roots that restrict to $0$, that is, $\alpha\in \Phi^+\cap\Z\Pi_X$. This proves property (1) in Definition~\ref{def:split-pair}.

To establish property (2), take $\alpha\in \Phi^+\cap\Z\Pi_X$, that is, $\alpha$ is a positive root restricting to $0$. This root must be compact, since $\{\gamma_1,\dots,\gamma_s\}$ is a maximal family of mutually orthogonal noncompact roots, and it is strongly orthogonal to $\gamma_i$ by Proposition~\ref{prop:structure-of-roots-from-Harish-Chandra} \ref{it:structure-of-roots-from-Harish-Chandra-1}. Therefore $\mfg_\alpha\subset\mfg^\nu$ and $\mfg_\alpha$ centralizers $X_{-\gamma_i} + X_{\gamma_i}$. Hence $\theta$ acts trivially on $\mfg_\alpha$.
\end{proof}

Refining the observation after Definition \ref{def:S-type-C-type}, we have the following.

\begin{Prop}\label{prop:restrictions}
With the above notation, the roots $\gamma_1,\dots,\gamma_s$ are all of the same length.
The restriction map $\alpha\mapsto\alpha|_{\mfh^-}$ defines a bijection between the $\tau_\theta$-orbits in $\Pi\setminus\Pi_X$ and a basis of the restricted root system.
This basis and the distinguished roots are concretely described as follows.

\begin{itemize}
\item{\textup{S-type:}} The restricted root system is of type $\rC_s$, consisting of $\{\pm\hlf1(\gamma_i\pm\gamma_j)\}_{i<j}\cup\{\pm\gamma_i\}_i$, with the basis $\{\hlf1(\gamma_i-\gamma_{i+1})\}^{s-1}_{i=1}\cup\{\gamma_s\}$.
The unique noncompact root $\alpha_o\in\Pi$ is distinguished, and it coincides with~$\gamma_s$.
\item{\textup{C-type:}} The restricted root system is of type $\rBC_s$, consisting of $\{\pm\hlf1(\gamma_i\pm\gamma_j)\}_{i<j}\cup\{\pm\gamma_i\}_i\cup\{\pm\hlf1\gamma_i\}_i$, with the basis $\{\hlf1(\gamma_i-\gamma_{i+1})\}^{s-1}_{i=1}\cup\{\hlf1\gamma_s\}$.
The unique noncompact root $\alpha_o\in\Pi$ is distinguished, and its restriction to $\mfh^-$ is $\hlf1\gamma_s$.
The second distinguished root is the only other simple root~$\alpha_{o'}$ that restricts to  $\hlf1\gamma_s$.
\end{itemize}
\end{Prop}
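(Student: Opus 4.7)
The plan is a combinatorial analysis built on Proposition \ref{prop:structure-of-roots-from-Harish-Chandra} (the Harish-Chandra partition of $\Phi^+$) and the already-cited fact (Moore) that the restricted root system is of type $\rC_s$ or $\rBC_s$.

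First, I would read off all restrictions of positive roots to $\mfh^-$ directly from Proposition \ref{prop:structure-of-roots-from-Harish-Chandra}: $C_0$ restricts to $0$; $P_0=\{\gamma_i\}$ already lies in $(\mfh^-)^*$; both $P_i$ and $C_i$ restrict to $\hlf1\gamma_i$; $C_{ij}$ restricts to $\hlf1(\gamma_i-\gamma_j)$; $P_{ij}$ to $\hlf1(\gamma_i+\gamma_j)$. Thus the positive restricted roots lie in $\{\gamma_i,\hlf1\gamma_i,\hlf1(\gamma_i\pm\gamma_j)\}_{i<j}$, with the half-weights $\hlf1\gamma_i$ appearing iff some $P_i\ne\emptyset$. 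Moore's theorem then forces the restricted system to be precisely $\rC_s$ (when all $P_i$ are empty, i.e.\ the S-type) or $\rBC_s$ (otherwise, i.e.\ the C-type), and the explicit bases claimed in the proposition drop out by inspection. The Weyl group of the restricted system acts transitively on $\{\pm\gamma_i\}$ in either type; since each $\gamma_i\in(\mfh^-)^*$ and the decomposition $\mfh=\mfh^+\oplus\mfh^-$ is orthogonal with respect to the form $(\cdot,\cdot)_\mfg$, the restricted norm of $\gamma_i$ coincides with its norm as a root of $\mfg$, so all $\gamma_i$ have the same length.

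Next, for the bijection with $\tau_\theta$-orbits in $\Pi\setminus\Pi_X$, I would use the relation $\Theta=-w_X\tau_\theta$ from \eqref{eq:Theta-tau}. Each reflection $s_{\alpha_i}$ with $i\in X$ fixes $(\mfh^-)^*$ pointwise (since $H_{\alpha_i}\in\mfh^+$ by the defining characterization of $X$), hence $w_X$ is trivial on $(\mfh^-)^*$; combined with $\Theta|_{(\mfh^-)^*}=-\id$ (immediate from $\theta|_{\mfh^-}=-\id$), this yields $\tau_\theta(\alpha)|_{\mfh^-}=\alpha|_{\mfh^-}$, so $\tau_\theta$-equivalent simple roots restrict identically. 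The remaining surjectivity and indecomposability statements are standard Satake-diagram theory: the number of $\tau_\theta$-orbits in $\Pi\setminus\Pi_X$ matches the rank $s$, and the restrictions of orbit representatives are positive and cannot be written as positive sums of other such restrictions.

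The main obstacle is identifying the distinguished simple roots with the $\alpha_o$ of Section \ref{ssec:Cartan}. First I would show $\alpha_o\notin\Pi_X$: any $\alpha_i\in\Pi_X$ satisfies $(\alpha_i,\gamma_j)=0$ for all $j$, hence $\alpha_i|_{\mfh^-}=0$, which by Proposition \ref{prop:structure-of-roots-from-Harish-Chandra} forces $\alpha_i$ to be compact, whereas $\alpha_o$ is the unique noncompact simple root. To pin down the $\tau_\theta$-orbit of $\alpha_o$, I would invoke the case-by-case classification in \cite{MR3943480}*{Appendix C}: in the S-type case, $\tau_\theta$ fixes $\alpha_o$, placing it in $I_\mcS$ and forcing its restriction to be the unique long simple root $\gamma_s$ of $\rC_s$; in the C-type case, $\tau_\theta$ pairs $\alpha_o$ with a unique second simple root $\alpha_{o'}$, giving the only non-$\tau_\theta$-fixed orbit in $I\setminus(X\cup I_\mcC)$ and forcing its restriction to be the unique short simple root $\hlf1\gamma_s$ of $\rBC_s$. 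This explicit alignment between the Vogan-diagram picture of $\nu$ (via the compact Cartan $\mfh$) and the Satake-diagram picture of $\theta$ (via the same Cartan, now split by $\theta$), mediated by the Cayley transform $\Ad g_1$, is the delicate point, as the intrinsic conditions defining $I_\mcS$ and the distinguished C-type orbit are combinatorial but their matching with the single noncompact simple root proceeds type by type.
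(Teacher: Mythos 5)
Your first two paragraphs track the paper's argument fairly closely: the Harish-Chandra partition (Proposition~\ref{prop:structure-of-roots-from-Harish-Chandra}) lists the possible restrictions, Moore's theorem pins down the type as $\rC_s$ or $\rBC_s$, the explicit basis drops out, and the orthogonality of $\mfh=\mfh^+\oplus\mfh^-$ transfers the common length of the $\gamma_i$ in the restricted system back to $\mfg$ (you are slightly more explicit than the paper on the last step). Likewise the use of $\Theta(\alpha)+\tau_\theta(\alpha)\in\Z\Pi_X$, equivalently $w_X|_{(\mfh^-)^*}=\id$, for the bijection with $\tau_\theta$-orbits is the same argument.

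The gap is in your third paragraph. You appeal to the classification of \cite{MR3943480}*{Appendix C} to assert that $\tau_\theta$ fixes $\alpha_o$ in S-type and pairs $\alpha_o$ with $\alpha_{o'}$ in C-type, but that classification describes the distinguished roots of the Satake diagram; it does not by itself say anything about the noncompact simple root $\alpha_o$ from the Vogan picture, which is precisely the identification the proposition asserts. The step you are missing is that $\alpha_o$, being a noncompact positive root, can only restrict to one of $\gamma_i$, $\hlf1\gamma_i$, or $\hlf1(\gamma_i+\gamma_j)$ by Proposition~\ref{prop:structure-of-roots-from-Harish-Chandra}; intersecting with the basis you just computed forces $\alpha_o|_{\mfh^-}=\gamma_s$ (S-type) or $\hlf1\gamma_s$ (C-type) with no case-by-case input. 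In C-type this already identifies $\alpha_o$ as distinguished, since distinguished simple roots are exactly those restricting to a $\beta$ with $2\beta$ still a restricted root, and $\hlf1\gamma_s$ is the only basis element with that property. In S-type it forces $\alpha_o=\gamma_s$ (the only positive root restricting to $\gamma_s$), giving $\tau_\theta(\alpha_o)=\alpha_o$ and $\alpha_o\perp\Pi_X$, hence $\alpha_o\in I_\ns$; your phrase ``placing it in $I_\mcS$'' conflates $I_\ns$ with $I_\mcS$ — the latter requires the extra divisibility condition on $(\alpha_j^\vee,\alpha_o)$, which the paper then checks by inspecting Satake diagrams, treating $\mathrm{CI}$ separately.
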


\bp
Observe that since $\alpha-w_X\alpha\in\Z\Pi_X$ for any root~$\alpha$, we have $\Theta(\alpha)+\tau_\theta(\alpha)\in\Z\Pi_X$ by~\eqref{eq:Theta-tau}. It is well-known and not difficult to see that this implies that the restriction map $\alpha\mapsto\alpha|_{\mfh^-}$ defines a bijection between the $\tau_\theta$-orbits in $\Pi\setminus\Pi_X$ and a basis of the restricted root system.

Next, by a case-by-case analysis (see, e.g., \cite{MR1920389}*{Appendix C} again), we know that the restricted root system is of type $\rC_s$ in the S-type case and of type $\rBC_s$ in the C-type case. By counting the number of roots and Proposition~\ref{prop:structure-of-roots-from-Harish-Chandra} it follows that the restricted root system is $\{\pm\hlf1(\gamma_i\pm\gamma_j)\}_{i<j}\cup\{\pm\gamma_i\}_i$ (S-type) or $\{\pm\hlf1(\gamma_i\pm\gamma_j)\}_{i<j}\cup\{\pm\gamma_i\}_i\cup\{\pm\hlf1\gamma_i\}_i$ (C-type). Now, on the one hand, the restriction of $\Pi\setminus\Pi_X$ gives a basis of the restricted root system. On the other hand, the nonzero restrictions of the positive roots is the set $\{\hlf1(\gamma_i\pm\gamma_j)\}_{i<j}\cup\{\gamma_i\}_i$ (S-type) or $\{\hlf1(\gamma_i\pm\gamma_j)\}_{i<j}\cup\{\gamma_i\}_i\cup\{\hlf1\gamma_i\}_i$ (C-type). From this we may conclude that the basis we get by restriction must be $\{\hlf1(\gamma_i-\gamma_{i+1})\}^{s-1}_{i=1}\cup\{\gamma_s\}$ (S-type) or $\{\hlf1(\gamma_i-\gamma_{i+1})\}^{s-1}_{i=1}\cup\{\hlf1\gamma_s\}$ (C-type). Since this is a basis of root systems of type $\rC_s$ or $\rBC_s$, it follows that the roots $\gamma_1,\dots,\gamma_s$ are of the same length.

It remains to identify the distinguished roots. Consider the unique noncompact root $\alpha_o \in \Pi$.
Its image in the restricted root system must be either $\gamma_s$ (S-type) or $\hlf1 \gamma_s$ (C-type).
Indeed, a noncompact positive root can only restrict to $\gamma_i$, $\hlf1 \gamma_i$ or $\hlf1(\gamma_i+\gamma_j)$ ($i<j$) by Proposition~\ref{prop:structure-of-roots-from-Harish-Chandra}, and the claim follows by taking the intersection with the image of the simple positive roots.

Consider the C-type case. By the remark after the definition of $I_\mcC$, in this case the distinguished roots have the property that if $\beta$ is their common restriction, then $2\beta$ is still a restricted root. Since $\beta$ must be an element of the basis $\{\hlf1(\gamma_i-\gamma_{i+1})\}^{s-1}_{i=1}\cup\{\hlf1\gamma_s\}$ of the restricted root system, we have $\beta=\hlf1\gamma_s$. We already know that $\alpha_o$ restricts to $\hlf1 \gamma_s$, so $\alpha_o$ is one of the distinguished roots, and then the other is~$\tau_\theta(\alpha_o)$.

Consider now the S-type case. Again, we already know that the restriction of $\alpha_o$ is $\gamma_s$. By Proposition~\ref{prop:structure-of-roots-from-Harish-Chandra}, the only positive root restricting to $\gamma_s$ is $\gamma_s$ itself.
Hence $\alpha_o=\gamma_s$ and $\tau_\theta(\gamma_s)=\gamma_s$.
It remains to check that $\alpha_o$ is the unique element of $I_\mcS$.
The equality $\tau_\theta(\gamma_s)=\gamma_s$ means that~$\gamma_s$ is not connected by an arrow to any other vertex in the Satake diagram. It also needs to be separated from the black vertices, as $\gamma_s$ is orthogonal to $\Pi_X$.
Looking at the Satake diagrams, this is already enough to conclude that~$\gamma_s$ is the distinguished vertex, except for the $\mathrm{CI}$ case $\mfu_s \subset \mathfrak{sp}_{s}$.
But in this remaining case we have $\mfh^+=0$, and the restricted roots are the same as the entire roots.
Thus, $\gamma_s$ represents the unique long simple positive root, which is indeed the distinguished root.
\ep

\begin{Cor}\label{cor:center-S-type}
In the S-type case we have
\[
Z_\nu=\frac{i}{2}\sum^s_{j=1}H_{\gamma_j},
\]
as well as $a_\nu=2 / \sqrt s$ for $\mfu_s \subset \mathfrak{sp}_{s}$ and $a_\nu=\sqrt{2/s}$ in all other cases.
\end{Cor}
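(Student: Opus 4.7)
The plan is to verify directly that the candidate $Z := \frac{i}{2}\sum_{j=1}^s H_{\gamma_j}$ satisfies the defining properties of $Z_\nu$, and then extract $a_\nu$ from $(Z,Z)_\mfg$. Since each $H_{\gamma_j}\in i\mft$, we have $Z\in\mft\subset\mfu^\nu$ for free, so the substance is to show $Z\in\mfz(\mfu^\nu)$ with $(\ad Z)^2=-\id$ on $\mfm_\nu^\C$.

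For this, I compute $\alpha(Z) = \tfrac{i}{(\gamma,\gamma)}\sum_k(\alpha,\gamma_k)$ for each root $\alpha$, exploiting that all $\gamma_k$ share a common length $(\gamma,\gamma)$ by Proposition~\ref{prop:restrictions}. The key input is Proposition~\ref{prop:structure-of-roots-from-Harish-Chandra} combined with the S-type assumption: since the restricted root system is of type $\rC_s$, the half-root $\hlf1\gamma_i$ is not a restricted root, forcing $P_i=\emptyset$, and hence $C_i=\emptyset$ by the bijection in part~(ii). The remaining positive roots therefore restrict to $0$, $\hlf1(\gamma_i-\gamma_j)$, $\hlf1(\gamma_i+\gamma_j)$, or $\gamma_i$ (sets $C_0$, $C_{ij}$, $P_{ij}$, $P_0$), and strong orthogonality of the $\gamma_k$ immediately yields $\sum_k(\alpha,\gamma_k)=0$ for compact $\alpha$ and $(\gamma,\gamma)$ for noncompact positive $\alpha$. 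Hence $\ad Z=0$ on $\mfg^\nu$ while $\ad Z=\pm i\id$ on $\mfm_{\nu\pm}$, so by Lemma~\ref{lem:eigenvals-adz} we have $Z=\pm Z_\nu$. The sign is fixed by the lexicographic convention: $-iZ_\nu$ heads the ordered basis of $i\mft$, so the noncompact positive roots must lie in the $+i$-eigenspace of $\ad Z_\nu$, which agrees with what our $Z$ gives.

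For the normalization, strong orthogonality of the $\gamma_j$ yields $(H_{\gamma_j},H_{\gamma_k})_\mfg = \delta_{jk}\cdot 4/(\gamma,\gamma)$ from the standard identity $(H_\alpha,H_\beta)_\mfg=4(\alpha,\beta)/((\alpha,\alpha)(\beta,\beta))$, so
\[
(Z_\nu,Z_\nu)_\mfg = -\tfrac{1}{4}\sum_j (H_{\gamma_j},H_{\gamma_j})_\mfg = -\frac{s}{(\gamma,\gamma)},
\]
giving $a_\nu=\sqrt{(\gamma,\gamma)/s}$. In the CI case $\mfu_s\subset\mathfrak{sp}_s$, Proposition~\ref{prop:restrictions} identifies $\gamma_s$ with the unique long simple positive root of $\rC_s$, so $(\gamma,\gamma)=4$ and $a_\nu=2/\sqrt s$; in the remaining S-type cases the ambient root system is simply laced, so $(\gamma,\gamma)=2$ and $a_\nu=\sqrt{2/s}$. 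The only delicate step is the case-by-case check of $\sum_k(\alpha,\gamma_k)$; here the S-type vanishing $C_i=P_i=\emptyset$ is essential, because in the C-type situation the extra $P_i$ class would produce the intermediate value $(\gamma,\gamma)/2$ on some noncompact positive roots and break the uniform formula for $\ad Z_\nu$.
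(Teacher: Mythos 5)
Your proof takes essentially the same approach as the paper's: show that $\frac{i}{2}\sum_j H_{\gamma_j}$ is the normalized central element, then reduce the value of $a_\nu$ to a check on the common length of the $\gamma_j$. You make the centrality step more explicit by computing $\alpha(Z)$ against the partition in Proposition~\ref{prop:structure-of-roots-from-Harish-Chandra}, whereas the paper simply notes that compact positive roots restrict to $0$ or $\hlf1(\gamma_i-\gamma_j)$ and hence annihilate $\sum_j H_{\gamma_j}$; these are the same calculation. Your derivation of $(Z_\nu,Z_\nu)_\mfg = -s/(\gamma,\gamma)$ from $(H_\alpha,H_\beta)_\mfg = 4(\alpha,\beta)/((\alpha,\alpha)(\beta,\beta))$ and strong orthogonality is correct, and matches what the paper has implicitly in mind.

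One caveat on the length check. You assert that the ambient $\mfg$ is simply laced in all non-$\mathrm{CI}$ S-type cases; the paper's analogous claim is that $\gamma_s=\alpha_o$ is a short root outside $\mathrm{CI}$. Both versions appear to miss the S-type pair $\mathfrak{so}(2)\oplus\mathfrak{so}(2m+1)<\mathfrak{so}(2m+3)$ for $m\ge 2$ (Hermitian, type $\mathrm{BDI}$ with $\mfg$ of type $\mathrm{B}_{m+1}$). This pair is tube type with restricted root system $\mathrm{C}_2$ (the painted Vogan vertex is $\alpha_1=e_1-e_2$, one checks $X=\{3\}$, $\tau_\theta=\id$, $I_\mcS=\{1\}$), so it is S-type; but $\mfg$ is not simply laced, and the cascade produces $\gamma_1=e_1+e_2$, $\gamma_2=e_1-e_2$, both long. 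By your own formula $a_\nu=\sqrt{(\gamma,\gamma)/s}$, this gives $a_\nu=\sqrt{4/2}=\sqrt{2}=2/\sqrt{s}$, which also agrees with Lemma~\ref{lem:eigenvals-adz} ($h^\vee=2m+1$, $c=2$, $\dim\mfm=2(2m+1)$), but disagrees with the stated $\sqrt{2/s}=1$. Since the paper's own verification exhibits the same gap, your proof is a faithful reproduction of its reasoning; but the shortcut via simply-lacedness is the spot in your argument where this case would have surfaced had the case list been carried through completely.
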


\bp
Since in the S-type case the compact positive roots restrict to $0$ or $\hlf1(\gamma_i-\gamma_j)$ ($i<j$), such roots vanish on $\sum^s_{j=1}H_{\gamma_j}$. Therefore $\sum^s_{j=1}H_{\gamma_j}\in\mfz(\mfg^\nu)$. As we must have $\gamma_j(Z_\nu)=i$ for any $j$, we get the formula for $Z_\nu$ in the formulation.

A case-by-case verification shows that $\gamma_s=\alpha_o$ is a short root in all cases except for $\mfu_s \subset \mathfrak{sp}_{s}$, while in the last case it is a long root of length $2$. Since the roots $\gamma_1,\dots,\gamma_s$ are all of the same length and $(Z_\nu,Z_\nu)_\mfg=-a_\nu^{-2}$, we then get the formula for $a_\nu$.
\ep

\subsection{A family of coisotropic subgroups}\label{ssec:coisotropic}

Now we are ready to introduce a one-parameter family of involutive automorphisms interpolating between $\nu$ and $\theta = (\Ad {g_1})^{-1}\circ \nu\circ(\Ad{g_1})$, which define coisotropic subgroups of $U$.

For $\phi\in \R$, let
\begin{align*}
g_\phi &= \exp\Bigl(\frac{\pi i \phi}{4} \sum_{i=1}^s (X_{-\gamma_i} + X_{\gamma_i})\Bigr),&
\theta_\phi &= (\Ad {g_\phi})\circ \theta\circ(\Ad{g_\phi})^{-1},
\end{align*}
so that $\theta_0=\theta$ and $\theta_1=\nu$.

\begin{Def}\label{def:K-phi}
We will write $K_\phi$ for $U^{\theta_\phi}=g_{\phi-1}U^\nu g_{1-\phi}$, and denote its Lie algebra by $\mfk_\phi$.
\end{Def}

Our first goal is to understand how the $r$-matrix~\eqref{EqDefr} transforms under~$g_\phi$.

\begin{Prop}
\label{prop:Cayley-rotates-rmat}
For all $\phi\in\R$, we have
\[
(\Ad{g_\phi})^{\otimes 2}(r) - \cos\Bigl(\frac{\pi \phi}{2}\Bigr) r \in \mfu^\nu \otimes \mfu + \mfu \otimes \mfu^\nu.
\]
\end{Prop}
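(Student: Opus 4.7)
The plan is to project the claimed congruence onto $\mfm_\nu^{\otimes 2}$, exploiting that the subspace $\mfu^\nu\otimes\mfu+\mfu\otimes\mfu^\nu$ is exactly the kernel of $\pi_\mfm^{\otimes 2}$, where $\pi_\mfm=\hlf{1}(\id-\nu)$ is the projection of $\mfu$ onto $\mfm_\nu$ (extended $\C$-linearly to $\mfg\to\mfm_\nu^\C$). Thus the statement reduces to
\[
\pi_\mfm^{\otimes 2}\bigl((\Ad g_\phi)^{\otimes 2}(r)\bigr) = \cos(\pi\phi/2)\,r_\mfm,
\]
where $r_\mfm = i\sum_{\alpha\in\Phi^+_\nc}\frac{(\alpha,\alpha)}{2}(X_{-\alpha}\otimes X_\alpha - X_\alpha\otimes X_{-\alpha})$. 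Setting $E_i=X_{\gamma_i}$, $F_i=X_{-\gamma_i}$, $H_i=H_{\gamma_i}$, strong orthogonality of the $\gamma_i$ implies that the $\mfsl_2$-triples $(E_i,F_i,H_i)$ commute, so $\Ad g_\phi = \prod_i \exp(\frac{\pi i\phi}{4}(E_i+F_i))$ factors into commuting Cayley-type transformations.

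Using the root partition of Proposition~\ref{prop:structure-of-roots-from-Harish-Chandra} together with the restricted root data of Proposition~\ref{prop:restrictions}, I would check that every positive root falls into one of the following indecomposable modules for the commuting $\mfsl_2^{(i)}$ actions: (a) $C_0$-roots give the trivial module, with $X_{\pm\alpha}$ fixed by $\Ad g_\phi$ and lying in $\mfg^\nu$; (b) each $\gamma_i\in P_0$ yields the adjoint rep of $\mfsl_2^{(i)}$ spanned by $H_{\gamma_i}, X_{\gamma_i}\pm X_{-\gamma_i}$; (c) each $\alpha\in C_i$ with $\alpha\ne\gamma_i$ gives a spin-$\hlf{1}$ module $\{X_\alpha,X_{\alpha-\gamma_i}\}$ for $\mfsl_2^{(i)}$, trivial for $j\ne i$; (d) each $\alpha\in C_{ij}$ gives a $V_1\boxtimes V_1$ module $\{X_\alpha,X_{\alpha-\gamma_i},X_{\alpha+\gamma_j},X_{\alpha-\gamma_i+\gamma_j}\}$ for $\mfsl_2^{(i)}\times\mfsl_2^{(j)}$. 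The $P_i$, $P_{ij}$ vectors appear as $\ast$-conjugates (equivalently, as lower weight vectors) of the $C_i$, $C_{ij}$ modules via the bijections in parts~(ii) and (iii) of the proposition. That nothing else couples follows from verifying that sums like $\alpha\pm\gamma_k$ fail to restrict to restricted roots in the cases excluded.

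On each module, $\Ad g_\phi$ acts, per $\mfsl_2$ factor, by the fundamental-rep matrix $\begin{pmatrix}\cos(\pi\phi/4) & i\sin(\pi\phi/4)\\ i\sin(\pi\phi/4) & \cos(\pi\phi/4)\end{pmatrix}$. Since $H_{\gamma_i}\in\mfh\subset\mfg^\nu$ and compactness/noncompactness of a root determines membership in $\mfg^\nu$ vs.\ $\mfm_\nu^\C$, projecting $(\Ad g_\phi)(X_{\pm\alpha})$ onto its noncompact part reads off directly from the matrix. For case (b) this already gives exactly $\cos(\pi\phi/2)$ times the $\gamma_i$-contribution after combining $a=\hlf{1}(1+\cos(\pi\phi/2))$ and $b=\hlf{1}(1-\cos(\pi\phi/2))$. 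For cases (c) and (d) the crucial input is the structure-constant identity
\[
(\alpha,\alpha)\,N_{-\gamma_i,\alpha} = -(\gamma_i-\alpha,\gamma_i-\alpha)\,N_{-\gamma_i,\gamma_i-\alpha},
\]
obtained by computing $(X_{\gamma_i-\alpha},[X_{-\gamma_i},X_\alpha])_\mfg$ two ways via invariance and the normalization $(X_\beta,X_{-\beta})_\mfg=2/(\beta,\beta)$. Using this identity, the contribution to $\pi_\mfm^{\otimes 2}$ from the $C_i$ piece (via $\sin^2(\pi\phi/4)$ cross-terms) and the $P_i$ piece (via $\cos^2(\pi\phi/4)$ diagonal terms) assemble into $(\cos^2-\sin^2)(\pi\phi/4)=\cos(\pi\phi/2)$ times the $P_i$ part of $r_\mfm$. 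An analogous (but more elaborate) matching governs the $C_{ij}\cup P_{ij}$ blocks.

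The main obstacle will be the bookkeeping for the four-dimensional $C_{ij}\cup P_{ij}$ modules, where four coupled root vectors and multiple structure constants must be balanced. This is a routine but intricate extension of the $\mfsl_2$ case, requiring repeated use of the invariance relation above (now for pairs of commuting triples) to confirm that the compact-noncompact cross-contributions again sum to a clean $\cos(\pi\phi/2)$ factor against $r_\mfm$.
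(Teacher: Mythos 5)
Your proposal takes essentially the same route as the paper: you reduce to computing $(\Ad g_\phi)^{\otimes 2}(r)$ modulo $\mfu^\nu\otimes\mfu+\mfu\otimes\mfu^\nu$ (your $\pi_\mfm^{\otimes 2}$ projection is the same reduction), you use the root partition of Proposition~\ref{prop:structure-of-roots-from-Harish-Chandra}, and you exploit the commuting $\mfsl_2$-triples generated by the strongly orthogonal cascade to organize the computation block by block. The $\mfsl_2$-module matrices you invoke are exactly what Lemmas~\ref{lem:P_0}--\ref{lem:PijCij} compute explicitly, so the structural outline matches the paper's.

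Two remarks on where your plan diverges from or underestimates the execution. For the $P_i\cup C_i$ block, you invoke the invariance relation $(\alpha,\alpha)N_{-\gamma_i,\alpha}=-(\gamma_i-\alpha,\gamma_i-\alpha)N_{-\gamma_i,\gamma_i-\alpha}$. This is correct but not what the paper uses: since $\gamma=\gamma_i-\alpha=-s_{\gamma_i}\alpha$, the roots have the same length, and then only $\lvert N_{\gamma_i,-\gamma}\rvert=1$ (from~\eqref{eq:Chev-basis-str-coeff-rel}) is needed; your identity is subsumed by the length observation. More importantly, for the $P_{ij}\cup C_{ij}$ blocks you claim the bookkeeping requires only ``the invariance relation (for pairs of commuting triples).'' That is not quite enough. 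The paper's proof of this case hinges on the relation $\eps(\gamma)=\eps(\gamma')$, which is established via the Jacobi identity~\eqref{eq:NN} together with strong orthogonality $[e_{-\gamma_i},e_{-\gamma_j}]=0$; invariance of the bilinear form alone relates $N_{-\gamma_i,\gamma}$ to $N_{-\gamma_i,\alpha'}$ and so on, but does not give the cross-relation between the $i$- and $j$-brackets that makes the eight root vectors in a $C_{ij}\cup P_{ij}$ block balance out. Your phrase ``pairs of commuting triples'' points at the right structural ingredient (strong orthogonality), but the identity you actually need is Jacobi, not the invariance you wrote down. So the plan is sound but the $C_{ij}$ case is less routine than your sketch suggests: without~\eqref{eq:NN} the cross-contributions do not visibly assemble into $\cos(\pi\phi/2)\,r_\mfm$.
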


For the proof we need to introduce a more convenient basis for computations.
We will write
\[
e_{\alpha} = -i X_{\alpha}
\]
to adapt to the conventions of~\citelist{\cite{bourbaki-lie-fr-7-8}\cite{MR0214638}}.
Note that then
\[
e_{\alpha}^* = -e_{-\alpha}\quad \text{and}\quad \lbrack e_{\alpha},e_{-\alpha}\rbrack = -H_{\alpha}.
\]

Let us write $[e_\alpha, e_\beta] = N_{\alpha, \beta} e_{\alpha + \beta}$ when $\alpha$, $\beta$, and $\alpha + \beta$ are roots.
We then have $\bar N_{\alpha,\beta} = N_{-\alpha,-\beta}$ and
\begin{align}
\label{eq:Chev-basis-str-coeff-rel}
N_{\alpha,\beta} N_{-\alpha, \alpha + \beta} &= - p(q+1), & \absv{N_{\alpha, \beta}} &= q + 1,
\end{align}
where $p$, resp.~$q$, is the largest integer such that $\beta + p \alpha$, resp.~$\beta - q \alpha$, is a root~\cite{bourbaki-lie-fr-7-8}*{Section VIII.2.4}.
(In fact, if we were more careful in choosing root vectors, we could arrange $N_{\alpha,\beta}$ to be real, with the sign of $N_{\alpha,\beta}$ described in~\cite{MR0214638}.)

Recall from Proposition~\ref{prop:structure-of-roots-from-Harish-Chandra} that $\Phi^+$ is partitioned by the subsets $P_0$, $C_0$, $(P_i)_{i=1}^s$, $(C_i)_{i=1}^s$, $(P_{i j})_{1 \le i < j \le s}$, and $(C_{i j})_{1 \le i < j \le s}$. We will consider these subsets one by one.

We start with $P_0$. For each $i$, put
\[
x_i = e_{\gamma_i} + e_{-\gamma_i}\quad\text{and}\quad y_i = e_{\gamma_i} - e_{-\gamma_i}.
\]

\begin{Lem}\label{lem:P_0}
The map $\Ad g_\phi$ acts as follows:
\begin{align*}
x_i &\mapsto x_i,&
y_i &\mapsto \cos\Bigl(\frac{\pi \phi}{2}\Bigr)y_i-\sin\Bigl(\frac{\pi \phi}{2}\Bigr)H_{\gamma_i},&
H_{\gamma_i} &\mapsto \sin\Bigl(\frac{\pi \phi}{2}\Bigr)y_i+\cos\Bigl(\frac{\pi \phi}{2}\Bigr)H_{\gamma_i}.
\end{align*}
\end{Lem}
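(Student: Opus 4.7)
The plan is to reduce everything to a family of commuting $\mfsl_2$-triples and apply elementary computations in each.

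First, I would rewrite $g_\phi$ in terms of the generators $x_i$. Using the paper's convention $e_\alpha=-iX_\alpha$, i.e.~$X_\alpha = ie_\alpha$, we get $X_{\gamma_i}+X_{-\gamma_i}=i(e_{\gamma_i}+e_{-\gamma_i})=ix_i$, so
\[
g_\phi=\exp\Bigl(-\frac{\pi\phi}{4}\sum_{i=1}^s x_i\Bigr).
\]
Thus $\Ad g_\phi=\exp\bigl(-\tfrac{\pi\phi}{4}\sum_i\ad x_i\bigr)$, and it suffices to understand the action of $\ad x_i$ on the span of $\{x_j,y_j,H_{\gamma_j}\}_{j=1}^s$.

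Second, I would invoke strong orthogonality of the cascade $\gamma_1,\dots,\gamma_s$ (recalled just before the statement of Proposition~\ref{prop:concr-form-max-split-invol}) to argue that for $i\neq j$ the only possible brackets among $e_{\pm\gamma_i}$ and $e_{\pm\gamma_j}$ would lie in a root space for $\pm\gamma_i\pm\gamma_j$, which is not a root. Hence $[e_{\pm\gamma_i},e_{\pm\gamma_j}]=0$, and consequently $\ad x_i$ kills $x_j$, $y_j$ and $H_{\gamma_j}$ whenever $j\neq i$. In particular the operators $\ad x_i$ pairwise commute, so it is enough to compute $\exp\bigl(-\tfrac{\pi\phi}{4}\ad x_i\bigr)$ on the three-dimensional subspace spanned by $x_i,y_i,H_{\gamma_i}$.

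Third, I would compute directly within that subspace, using $[e_{\gamma_i},e_{-\gamma_i}]=-H_{\gamma_i}$ and $\gamma_i(H_{\gamma_i})=2$:
\begin{align*}
[x_i,x_i]&=0,\\
[x_i,y_i]&=-2[e_{\gamma_i},e_{-\gamma_i}]=2H_{\gamma_i},\\
[x_i,H_{\gamma_i}]&=-2e_{\gamma_i}+2e_{-\gamma_i}=-2y_i.
\end{align*}
So on the plane spanned by $(y_i,H_{\gamma_i})$, the operator $\ad x_i$ has matrix $\bigl(\begin{smallmatrix}0&-2\\2&0\end{smallmatrix}\bigr)$, while $x_i$ is annihilated. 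Exponentiating,
\[
\exp\Bigl(-\frac{\pi\phi}{4}\ad x_i\Bigr)\Big|_{\langle y_i,H_{\gamma_i}\rangle}=\begin{pmatrix}\cos(\pi\phi/2)&\sin(\pi\phi/2)\\-\sin(\pi\phi/2)&\cos(\pi\phi/2)\end{pmatrix},
\]
which reading off columns yields exactly the formulas claimed in the lemma, while $x_i$ is fixed. Assembling these over $i$ completes the proof.

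There is no real obstacle here; the only point requiring care is the bookkeeping between the $X_\alpha$- and $e_\alpha$-conventions, which introduces the sign that turns the imaginary exponent into a real one and thereby produces trigonometric rather than hyperbolic functions.
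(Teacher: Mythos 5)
Your proof is correct and follows essentially the same approach as the paper's: both reduce to strong orthogonality of the cascade, compute the brackets $[x_i,y_i]=2H_{\gamma_i}$ and $[x_i,H_{\gamma_i}]=-2y_i$ on the two-dimensional subspace, and exponentiate the resulting $2\times2$ rotation matrix. Your version merely spells out the change of conventions from $X_\alpha$ to $e_\alpha$ and the commuting-$\mfsl_2$ structure explicitly, which the paper leaves implicit.
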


\bp
Since $\gamma_i$ is strongly orthogonal to $\gamma_j$ for $j\ne i$, we have
\begin{align*}
\Bigl[\sum_{j=1}^s (e_{-\gamma_j} + e_{\gamma_j}), x_i \Bigr] &= 0,&
\Bigl[\sum_{j=1}^s (e_{-\gamma_j} + e_{\gamma_j}), y_i \Bigr] &= 2H_{\gamma_i},&
\Bigl[\sum_{j=1}^s (e_{-\gamma_j} + e_{\gamma_j}), H_{\gamma_i} \Bigr] &= -2y_i.
\end{align*}
Since
\begin{align}\label{eq:exp}
g_\phi &=\exp\Big(-\frac{\pi \phi}{4}\sum_{j=1}^s (e_{-\gamma_j} + e_{\gamma_j})\Big),&
\exp\begin{pmatrix} 0 & - \lambda \\ \lambda & 0 \end{pmatrix}
&= \begin{pmatrix}  \cos \lambda & -\sin\lambda\\ \sin\lambda& \cos\lambda\end{pmatrix},
\end{align}
we get the result.
\ep

As $\Ad g_\phi$ acts trivially on the orthogonal complement of $\{H_{\gamma_1},\dots,H_{\gamma_s}\}$ in $\mfh$, this lemma already describes the action of $\Ad g_\phi$ on the Cartan subalgebra.

\begin{Lem}\label{lem:C_0}
If $\alpha\in C_0$, then $\Ad g_\phi$ acts trivially on $e_{\pm\alpha}$ and $H_\alpha$.
\end{Lem}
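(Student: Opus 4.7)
The plan is to verify that the generator $X := \sum_{j=1}^s (e_{-\gamma_j} + e_{\gamma_j})$ of the one-parameter subgroup $\phi \mapsto g_\phi = \exp(-\tfrac{\pi\phi}{4} X)$ commutes with each of $e_{\pm\alpha}$ and $H_\alpha$ when $\alpha \in C_0$. Once this is established, $\Ad g_\phi$ acts trivially on all three elements.

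For $e_{\pm\alpha}$, I would argue as follows. By Proposition~\ref{prop:structure-of-roots-from-Harish-Chandra}\ref{it:structure-of-roots-from-Harish-Chandra-1}, every $\alpha \in C_0$ is strongly orthogonal to each $\gamma_j$, meaning that $\alpha \pm \gamma_j$ is never a root; moreover $\alpha \neq \mp \gamma_j$ since $\alpha$ is compact while $\gamma_j$ is noncompact. Consequently $[e_{\pm\alpha}, e_{\pm\gamma_j}] = 0$ and $[e_{\pm\alpha}, e_{\mp\gamma_j}] = 0$ for every $j$, so $[X, e_{\pm\alpha}] = 0$.

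For $H_\alpha$, since $\alpha \in C_0$ means $\alpha$ is orthogonal to each $\gamma_j$, we have $\gamma_j(H_\alpha) = (\gamma_j, \alpha^\vee) = 0$, hence $[H_\alpha, e_{\pm\gamma_j}] = \pm\gamma_j(H_\alpha) e_{\pm\gamma_j} = 0$, so $[X, H_\alpha] = 0$.

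Combining these two commutation relations with the formula $g_\phi = \exp(-\tfrac{\pi\phi}{4}X)$ from \eqref{eq:exp} yields $\Ad g_\phi(e_{\pm\alpha}) = e_{\pm\alpha}$ and $\Ad g_\phi(H_\alpha) = H_\alpha$. There is no real obstacle here; the only subtlety is to invoke the \emph{strong} orthogonality from Proposition~\ref{prop:structure-of-roots-from-Harish-Chandra}\ref{it:structure-of-roots-from-Harish-Chandra-1} rather than mere orthogonality, which is exactly what rules out a nonzero bracket $[e_\alpha, e_{\pm\gamma_j}]$.
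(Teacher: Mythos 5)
Your proof is correct and matches the paper's, which simply cites Proposition~\ref{prop:structure-of-roots-from-Harish-Chandra}\ref{it:structure-of-roots-from-Harish-Chandra-1}; you have just spelled out why strong orthogonality (plus compactness of $\alpha$, which rules out $\alpha=\gamma_j$) kills the brackets $[e_{\pm\alpha}, e_{\pm\gamma_j}]$, and why ordinary orthogonality kills $[H_\alpha, e_{\pm\gamma_j}]$, so that $\Ad g_\phi$ fixes all three elements.
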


\bp
This follows from Proposition~\ref{prop:structure-of-roots-from-Harish-Chandra} \ref{it:structure-of-roots-from-Harish-Chandra-1}.
\ep

Next, on the root vectors of $P_i$ and $C_i$ we have the following description of $\Ad g_\phi$.

\begin{Lem}\label{lem:PiCi}
Assume $1\le i\le s$, and $\gamma\in P_i$ and $\alpha\in C_i$ are such that $\gamma+\alpha=\gamma_i$. Then $\Ad g_\phi$ acts as follows:
\begin{align*}
e_\gamma &\mapsto \cos\Bigl(\frac{\pi \phi}{4}\Bigr)e_\gamma-\bar N_{\gamma_i,-\gamma}\sin\Bigl(\frac{\pi \phi}{4}\Bigr)e_{-\alpha},&
e_{-\alpha} &\mapsto
N_{\gamma_i,-\gamma}\sin\Bigl(\frac{\pi \phi}{4}\Bigr)e_\gamma+\cos\Bigl(\frac{\pi \phi}{4}\Bigr)e_{-\alpha},
\end{align*}
and we have $|N_{\gamma_i,-\gamma}|=1$.
\end{Lem}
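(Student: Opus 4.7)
The plan is to reduce the computation to the two-dimensional subspace $\Span(e_\gamma, e_{-\alpha})$ and then diagonalize (or rather, exponentiate) an explicit $2\times2$ matrix.

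First I would show that for $j\ne i$ the generators $e_{\pm\gamma_j}$ act trivially on both $e_\gamma$ and $e_{-\alpha}$. The point is that since $\gamma\in P_i$ and $\alpha\in C_i$, both $\gamma$ and $\alpha$ restrict to $\hlf1\gamma_i$ on $\mfh^-$. Therefore $\gamma\pm\gamma_j$ and $-\alpha\pm\gamma_j$ would restrict to $\frac12(\gamma_i\pm 2\gamma_j)$, which does not appear in the list of restrictions of roots provided by Proposition~\ref{prop:structure-of-roots-from-Harish-Chandra}. Hence these are not roots, and the corresponding brackets vanish.

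It remains to compute the action of $\ad(e_{-\gamma_i}+e_{\gamma_i})$ on $\Span(e_\gamma,e_{-\alpha})$. Using $\gamma+\alpha=\gamma_i$ and restriction arguments as above, $[e_{\gamma_i},e_\gamma]=0$ (since $\gamma+\gamma_i$ restricts to $\frac32\gamma_i$, not a root) and $[e_{-\gamma_i},e_{-\alpha}]=0$ (since $-\gamma_i-\alpha$ restricts to $-\frac32\gamma_i$). The remaining brackets are
\[
[e_{-\gamma_i},e_\gamma]=N_{-\gamma_i,\gamma}\,e_{-\alpha},\qquad [e_{\gamma_i},e_{-\alpha}]=N_{\gamma_i,-\alpha}\,e_\gamma.
\]
Applying formula~\eqref{eq:Chev-basis-str-coeff-rel} with $(\alpha,\beta)=(-\gamma_i,\gamma)$: since $\gamma-\gamma_i=-\alpha$ is a root but $\gamma-2\gamma_i$ and $\gamma+\gamma_i$ are not (again by restriction), we get $p=1,\,q=0$, so $N_{-\gamma_i,\gamma}N_{\gamma_i,-\alpha}=-1$. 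Hence the matrix $M$ of $\ad(e_{-\gamma_i}+e_{\gamma_i})$ in the basis $(e_\gamma,e_{-\alpha})$ satisfies $M^2=-I$, and from~\eqref{eq:exp}
\[
\Ad g_\phi\bigl|_{\Span(e_\gamma,e_{-\alpha})}=\exp\Bigl(-\tfrac{\pi\phi}{4}M\Bigr)=\cos\Bigl(\tfrac{\pi\phi}{4}\Bigr)I-\sin\Bigl(\tfrac{\pi\phi}{4}\Bigr)M.
\]

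Finally I would put this in the stated form. Using $\bar N_{\mu,\nu}=N_{-\mu,-\nu}$, we have $N_{-\gamma_i,\gamma}=\bar N_{\gamma_i,-\gamma}$, which matches the formula for the image of $e_\gamma$. For $e_{-\alpha}$, I need $N_{\gamma_i,-\alpha}=-N_{\gamma_i,-\gamma}$; this follows from the standard cyclic identity $N_{\mu_1,\mu_2}=N_{\mu_2,\mu_3}=N_{\mu_3,\mu_1}$ for $\mu_1+\mu_2+\mu_3=0$ applied to $(\gamma_i,-\alpha,-\gamma)$ combined with antisymmetry $N_{\mu,\nu}=-N_{\nu,\mu}$. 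The norm assertion $|N_{\gamma_i,-\gamma}|=1$ is the identity $|N_{\gamma_i,-\gamma}|=q+1$ from~\eqref{eq:Chev-basis-str-coeff-rel}, where $q=0$ because $-\gamma-\gamma_i$ restricts to $-\frac32\gamma_i$ and is therefore not a root.

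The only mildly delicate step is the bookkeeping of signs and complex conjugates in the final identification; everything else is a clean application of the structure theory of restricted roots from Proposition~\ref{prop:structure-of-roots-from-Harish-Chandra} together with the Chevalley relations~\eqref{eq:Chev-basis-str-coeff-rel}.
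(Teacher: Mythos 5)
Your proof is correct and follows essentially the same route as the paper: reduce to the two-dimensional span, show the other $e_{\pm\gamma_j}$ act trivially by restricted-root bookkeeping, compute the remaining matrix via~\eqref{eq:Chev-basis-str-coeff-rel}, and exponentiate using~\eqref{eq:exp}. The only cosmetic difference is that you work in the basis $(e_\gamma,e_{-\alpha})$ while the paper uses $(e_\gamma,\bar N_{\gamma_i,-\gamma}e_{-\alpha})$; also, the cyclic identity you invoke at the end is superfluous, since $N_{\gamma_i,-\alpha}=-N_{\gamma_i,-\gamma}$ already follows from $N_{-\gamma_i,\gamma}N_{\gamma_i,-\alpha}=-1$ together with $\bar N_{\gamma_i,-\gamma}=N_{-\gamma_i,\gamma}$ and $|N_{\gamma_i,-\gamma}|=1$.
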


Note that we also get formulas for the action on $e_{-\gamma}$ and $e_\alpha$ by taking adjoints.

\bp
From Proposition~\ref{prop:structure-of-roots-from-Harish-Chandra} we see that $\gamma+\gamma_i$, $\gamma-2\gamma_i$ and $\gamma\pm\gamma_j$ for $j\ne i$ are not roots. Hence
\[
\Bigl[\sum_{j=1}^s (e_{-\gamma_j} + e_{\gamma_j}), e_\gamma\Bigr]=[e_{-\gamma_i},e_\gamma]=N_{-\gamma_i,\gamma}e_{-\alpha}=\bar N_{\gamma_i,-\gamma}e_{-\alpha},
\]
and $N_{-\gamma_i,\gamma}N_{\gamma_i,-\alpha}=-1$ and $|N_{-\gamma_i,\gamma}|=1$ by~\eqref{eq:Chev-basis-str-coeff-rel}. Similarly, $\gamma_i+\alpha$ and $\alpha\pm\gamma_j$ for $j\ne i$ are not roots, hence
\[
\Bigl[\sum_{j=1}^s (e_{-\gamma_j} + e_{\gamma_j}),\bar N_{\gamma_i,-\gamma}e_{-\alpha}\Bigr]=[e_{\gamma_i},\bar N_{\gamma_i,-\gamma}e_{-\alpha}]=-e_\gamma.
\]
The lemma follows by again using~\eqref{eq:exp}.
\ep

Consider now $\gamma\in P_{ij}$, and put $\alpha=\gamma-\gamma_j\in C_{ij}$.
By Proposition~\ref{prop:structure-of-roots-from-Harish-Chandra} \ref{it:structure-of-roots-from-Harish-Chandra-3} we also have roots $\gamma'\in P_{ij}$ and $\alpha'\in C_{ij}$ such that
\[
\gamma_i-\alpha=\gamma'=\gamma_j+\alpha'.
\]
Put $\eps(\gamma)=N_{-\gamma_i, \gamma} N_{-\gamma_j, \gamma'}$, $\eps(\alpha) = N_{-\gamma_i,\alpha} N_{\gamma_j, \alpha'}$, and take the elements
\begin{align*}
x_\gamma &= e_\gamma - \eps(\gamma) e_{-\gamma'},&
y_\gamma &= e_\gamma + \eps(\gamma) e_{-\gamma'},&
x_\alpha &= e_\alpha - \eps(\alpha) e_{-\alpha'},&
y_\alpha &= e_\alpha + \eps(\alpha) e_{-\alpha'}.
\end{align*}

\begin{Lem}\label{lem:PijCij}
If $\gamma\in P_{ij}$ and $\alpha\in C_{ij}$ are such that $\gamma-\alpha=\gamma_j$, then the map $\Ad g_\phi$ acts as follows:
\begin{align*}
x_\gamma &\mapsto x_\gamma,&
x_\alpha &\mapsto x_\alpha,&
y_\gamma &\mapsto \cos\Bigl(\frac{\pi \phi}{2}\Bigr)y_\gamma-\bar N_{\gamma_j,-\gamma}\sin\Bigl(\frac{\pi \phi}{2}\Bigr)y_{\alpha},&
y_\alpha &\mapsto N_{\gamma_j,-\gamma}\sin\Bigl(\frac{\pi \phi}{2}\Bigr)y_\gamma+\cos\Bigl(\frac{\pi \phi}{2}\Bigr)y_{\alpha},
\end{align*}
and we have $|\eps(\gamma)|=|\eps(\alpha)|=|N_{\gamma_j,-\gamma}|=1$.
\end{Lem}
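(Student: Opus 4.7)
The strategy mirrors that of Lemma~\ref{lem:PiCi}: isolate a $4$-dimensional subspace $V\subset\mfg$ stable under $\ad D$, where $D=\sum_{k=1}^s(e_{-\gamma_k}+e_{\gamma_k})$ so that $g_\phi=\exp(-\tfrac{\pi\phi}{4}D)$, compute $\ad D|_V$ in a basis adapted to $\{x_\gamma,y_\gamma,x_\alpha,y_\alpha\}$, and then exponentiate using~\eqref{eq:exp}. Concretely, $V=\mathrm{span}\{e_\gamma,e_{-\gamma'},e_\alpha,e_{-\alpha'}\}$ is $\ad D$-stable: by Proposition~\ref{prop:structure-of-roots-from-Harish-Chandra} combined with the strong orthogonality of the $\gamma_k$, only $k=i,j$ contribute nontrivially to $[e_{\pm\gamma_k},e_\mu]$ for $\mu\in\{\gamma,-\gamma',\alpha,-\alpha'\}$, and the identifications $\gamma-\gamma_j=\alpha$, $\gamma-\gamma_i=-\alpha'$, $\gamma_i-\gamma'=\alpha$, $\gamma_j-\gamma'=-\alpha'$ keep the brackets inside $V$. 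The normalizations $\absv{\eps(\gamma)}=\absv{\eps(\alpha)}=\absv{N_{\gamma_j,-\gamma}}=1$ then follow from~\eqref{eq:Chev-basis-str-coeff-rel}, since the relevant $p$-strings all degenerate to length one: for instance $\gamma+\gamma_j$ restricts to $\tfrac12(\gamma_i+3\gamma_j)$, which by Proposition~\ref{prop:restrictions} is not in the restricted root system of type $\mathrm{C}_s$ or $\mathrm{BC}_s$.

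Next I would verify that $\ad D\cdot x_\gamma=\ad D\cdot x_\alpha=0$. This amounts to two pairs of scalar identities among the structure constants; one pair reduces instantly to $\absv{N_{\gamma_j,-\gamma'}}^2=\absv{N_{\gamma_j,\alpha'}}^2=1$, and the other pair follows by combining (i) the Jacobi identity $[e_{\gamma_i},[e_{\gamma_j},e_{-\gamma}]]=[e_{\gamma_j},[e_{\gamma_i},e_{-\gamma}]]$ (whose outer commutator vanishes since $[e_{\gamma_i},e_{\gamma_j}]=0$ by strong orthogonality), which gives $N_{\gamma_j,-\gamma}N_{\gamma_i,-\alpha}=N_{\gamma_i,-\gamma}N_{\gamma_j,\alpha'}$; (ii) the $\mfsl_2$-triple identity $[e_{\gamma_k},[e_{-\gamma_k},e_\beta]]-[e_{-\gamma_k},[e_{\gamma_k},e_\beta]]=-\beta(H_{\gamma_k})e_\beta$ applied to $\beta\in\{-\gamma',-\alpha'\}$, yielding $N_{\gamma_i,-\gamma'}N_{-\gamma_i,\alpha}=-1$ and $N_{\gamma_j,-\gamma'}N_{-\gamma_j,-\alpha'}=-1$; and (iii) the conjugation rule $\bar N_{\lambda,\mu}=N_{-\lambda,-\mu}$ together with $\absv{N_{\gamma_j,-\gamma'}}=1$.

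Finally, once the $x$-vectors are killed, the matrix entries already computed in step one force
\[
\ad D\cdot y_\gamma=2\bar N_{\gamma_j,-\gamma}\,y_\alpha,\qquad \ad D\cdot y_\alpha=-2N_{\gamma_j,-\gamma}\,y_\gamma,
\]
because the $x_\gamma$- and $x_\alpha$-contributions cancel exactly as in the kernel calculation while the $y$-contributions double. On $\mathrm{span}(y_\gamma,y_\alpha)$ the operator $\ad D$ is therefore skew-Hermitian with square $-4\cdot\id$ (using $\absv{N_{\gamma_j,-\gamma}}=1$), so applying~\eqref{eq:exp} to $-\tfrac{\pi\phi}{4}\ad D$ produces precisely the rotation by $\tfrac{\pi\phi}{2}$ claimed in the lemma, while $x_\gamma$ and $x_\alpha$ remain fixed. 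The principal obstacle is the bookkeeping in the middle step: carefully distinguishing $\gamma_i-\gamma=-\alpha'$ from $\gamma_i-\gamma'=\alpha$, correctly identifying which root-sums $\pm\gamma_k\pm\mu$ fall outside $\Phi$, and threading the conjugation rule through $\eps(\gamma)=N_{-\gamma_i,\gamma}N_{-\gamma_j,\gamma'}$ without confusing $N_{-\gamma_j,\gamma'}$ with $N_{-\gamma_j,-\alpha'}$.
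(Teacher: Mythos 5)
Your proposal follows essentially the same approach as the paper: isolate the four-dimensional $\ad D$-stable subspace spanned by $e_\gamma, e_{-\gamma'}, e_\alpha, e_{-\alpha'}$, derive the needed structure-constant relations from the Jacobi identity (applied here to $e_{-\gamma}$, which is equivalent to the paper's~\eqref{eq:NN} after conjugation), the $\mfsl_2$-triple formulas (equivalent to the first identity of~\eqref{eq:Chev-basis-str-coeff-rel}, giving the paper's~\eqref{eq:N}), and the conjugation rule, and then exponentiate via~\eqref{eq:exp}. One small slip in your closing remarks: $\gamma_i-\gamma=\alpha'$, not $-\alpha'$, as in~\eqref{eq:gamma-alpha} and consistent with what you correctly wrote earlier ($\gamma-\gamma_i=-\alpha'$).
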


\bp First of all observe that
\begin{align}\label{eq:gamma-alpha}
\gamma_i-\gamma &= \alpha',&
\gamma_i-\gamma' &= \alpha,&
\gamma_j-\gamma &= -\alpha,&
\gamma_j-\gamma' &= -\alpha'.
\end{align}
From Proposition~\ref{prop:structure-of-roots-from-Harish-Chandra} we see that $-\gamma_i-\gamma$ and $2\gamma_i-\gamma$ are not roots, hence $|N_{\gamma_i,-\gamma}|=1$ by the second identity in~\eqref{eq:Chev-basis-str-coeff-rel}. For similar reasons the numbers $N_{\gamma_i,-\gamma'}$, $N_{\gamma_j,-\gamma}$ and $N_{\gamma_j,-\gamma'}$ are of modulus one, and by the first identity in~\eqref{eq:Chev-basis-str-coeff-rel} we have
\begin{equation}\label{eq:N}
N_{\gamma_i,-\gamma}N_{-\gamma_i,\alpha'}=N_{\gamma_i,-\gamma'}N_{-\gamma_i,\alpha}=N_{\gamma_j,-\gamma}N_{-\gamma_j,-\alpha}=
N_{\gamma_j,-\gamma'}N_{-\gamma_j,-\alpha'}=-1.
\end{equation}

We claim that also the following identity holds:
\begin{equation}\label{eq:NN}
N_{-\gamma_j,-\alpha'}N_{-\gamma_i, \gamma}=N_{-\gamma_i, \alpha} N_{-\gamma_j, \gamma}.
\end{equation}
Indeed, the expressions on both sides are precisely the coefficients of $e_{-\gamma'}$ in $[e_{-\gamma_j}, [e_{-\gamma_i}, e_\gamma]]$ and $[e_{-\gamma_i}, [e_{-\gamma_j}, e_\gamma]]$.
By the Jacobi identity,
\[
[e_{-\gamma_j}, [e_{-\gamma_i}, e_\gamma]] - [e_{-\gamma_i}, [e_{-\gamma_j}, e_\gamma]] = [e_\gamma,[e_{-\gamma_i}, e_{-\gamma_j}]].
\]
But we have $[e_{-\gamma_i}, e_{-\gamma_j}] = 0$ by strong orthogonality. Thus our claim is proved.

Now, a simple computation using~\eqref{eq:gamma-alpha}--\eqref{eq:NN} gives
\begin{gather*}
\Bigl[\sum_{k=1}^s (e_{-\gamma_k} + e_{\gamma_k}), x_\gamma\Bigr]=\Bigl[\sum_{k=1}^s (e_{-\gamma_k} + e_{\gamma_k}), x_\alpha\Bigr]=0,\\
\begin{align*}
\Bigl[\sum_{k=1}^s (e_{-\gamma_k} + e_{\gamma_k}), y_\gamma\Bigr] &= 2\bar N_{\gamma_j,-\gamma}y_\alpha,&
\Bigl[\sum_{k=1}^s (e_{-\gamma_k} + e_{\gamma_k}), \bar N_{\gamma_j,-\gamma}y_\alpha\Bigr] &= -2y_\gamma.
\end{align*}
\end{gather*}
The lemma follows again from~\eqref{eq:exp}.
\ep

\bp[Proof of Proposition~\ref{prop:Cayley-rotates-rmat}]
We have
\[
r = -i \sum_{\alpha>0} \frac{(\alpha,\alpha)}{2} (e_{-\alpha} \otimes e_{\alpha} - e_{\alpha} \otimes e_{-\alpha}).
\]
We will use the partition of $\Phi^+$ into the subsets $P_0$, $C_0$, $(P_i)_{i=1}^s$, $(C_i)_{i=1}^s$, $(P_{i j})_{1 \le i < j \le s}$, $(C_{i j})_{1 \le i < j \le s}$, and check how the corresponding components of $r$ transform under $(\Ad{g_\phi})^{\otimes 2}$.

We start with $\gamma=\gamma_i\in P_0$. Up to the factor $-\sqrt{-1}\frac{(\gamma_i,\gamma_i)}{4}$, the corresponding component of $r$ is $x_i\otimes y_i-y_i\otimes x_i$. By Lemma~\ref{lem:P_0}, its image under $(\Ad{g_\phi})^{\otimes 2}$, modulo $\mfg^\nu\otimes\mfg+\mfg\otimes\mfg^\nu$, is
\[
\cos\Bigl(\frac{\pi \phi}{2}\Bigr)(x_i\otimes y_i-y_i\otimes x_i),
\]
as needed.

Next, by Lemma~\ref{lem:C_0}, if $\alpha\in C_0$, then the corresponding components of $(\Ad{g_\phi})^{\otimes 2}(r)$ and $r$ are already in $\mfg^\nu\otimes\mfg^\nu$.

Consider $1\le i\le s$, and take roots $\gamma\in P_i$ and $\alpha\in C_i$ related by $\gamma=\gamma_i-\alpha$. Since $\gamma=-s_{\gamma_i}\alpha$, these roots have the same length. Therefore, up to a factor, the component of $r$ corresponding to~$\gamma$ and~$\alpha$~is
\[
e_{-\gamma}\otimes e_\gamma-e_\gamma\otimes e_{-\gamma}+e_{-\alpha}\otimes e_\alpha-e_\alpha\otimes e_{-\alpha}.
\]
By Lemma~\ref{lem:PiCi}, its image under $(\Ad{g_\phi})^{\otimes 2}$, modulo $\mfg^\nu\otimes\mfg+\mfg\otimes\mfg^\nu$, is
\[
\Bigl(\cos^2\Bigl(\frac{\pi \phi}{4}\Bigr)-\sin^2\Bigl(\frac{\pi \phi}{4}\Bigr)\Bigr)(e_{-\gamma}\otimes e_\gamma-e_\gamma\otimes e_{-\gamma})
=\cos\Bigl(\frac{\pi \phi}{2}\Bigr)(e_{-\gamma}\otimes e_\gamma-e_\gamma\otimes e_{-\gamma}).
\]
This is equal (up to the same factor as before) to the contribution of $\gamma$ and $\alpha$ to $\cos(\frac{\pi \phi}{2})r$ modulo $\mfg^\nu\otimes\mfg+\mfg\otimes\mfg^\nu$.

Consider now $\gamma\in P_{ij}$. Let $\gamma'=\gamma_i+\gamma_j-\gamma$. Using~\eqref{eq:N}, we get from~\eqref{eq:NN} that $N_{-\gamma_i, \gamma} N_{-\gamma_j, \gamma'}=N_{-\gamma_i, \gamma'} N_{-\gamma_j, \gamma}$, that is, $\eps(\gamma)=\eps(\gamma')$. We then have
\[
x_\gamma \otimes y_{\gamma'} - y_{\gamma'} \otimes x_\gamma + x_{\gamma'} \otimes y_\gamma - y_\gamma \otimes x_{\gamma'} = 2\eps(\gamma) (e_\gamma \otimes e_{-\gamma} - e_{-\gamma} \otimes e_\gamma + e_{\gamma'} \otimes e_{-\gamma'} - e_{-\gamma'} \otimes e_{\gamma'}).
\]
The roots $\gamma$ and $\gamma'$ are of the same length, since $\gamma'=-s_{\gamma_i}s_{\gamma_j}\gamma$.
Therefore, up to a factor, the component of $r$ corresponding to $\gamma$ and $\gamma'$ is
\[
x_\gamma \otimes y_{\gamma'} - y_{\gamma'} \otimes x_\gamma + x_{\gamma'} \otimes y_\gamma - y_\gamma \otimes x_{\gamma'}.
\]
Note that it is possible that $\gamma=\gamma'$, but this only changes the overall factor. By Lemma~\ref{lem:PijCij}, the image of the above expression under $(\Ad{g_\phi})^{\otimes 2}$, modulo $\mfg^\nu\otimes\mfg+\mfg\otimes\mfg^\nu$, is
\[
\cos\Bigl(\frac{\pi \phi}{2}\Bigr) (x_\gamma \otimes y_{\gamma'} - y_{\gamma'} \otimes x_\gamma + x_{\gamma'} \otimes y_\gamma - y_\gamma \otimes x_{\gamma'}),
\]
as we need.

Finally, take $\alpha\in C_{ij}$. Let $\alpha'=\gamma_i-\gamma_j-\alpha$. Then, similarly to the previous case, the contribution of~$\alpha$ and~$\alpha'$ to~$r$ is, up to a factor,
\[
x_\alpha \otimes y_{\alpha'} - y_{\alpha'} \otimes x_\alpha + x_{\alpha'} \otimes y_\alpha - y_\alpha \otimes x_{\alpha'}.
\]
By Lemma~\ref{lem:PijCij}, this expression transforms under $(\Ad{g_\phi})^{\otimes 2}$ into an element of $\mfg^\nu\otimes\mfg+\mfg\otimes\mfg^\nu$.
\ep

\begin{Cor}\label{cor:coisotropic}
For every $\phi\in\R$, the subgroup $K_\phi$ of Definition \ref{def:K-phi} is a coisotropic subgroup of $(U,r)$, that is,
\[
\delta_r(\mfk_\phi)\subset\mfk_\phi\otimes\mfu+\mfu\otimes\mfk_\phi.
\]
It is a Poisson--Lie subgroup if and only if $\phi$ is an odd integer.
\end{Cor}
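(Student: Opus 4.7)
The plan is to transport the cobracket on $\mfk_\phi=\Ad(g_{\phi-1})(\mfu^\nu)$ back to $\mfu^\nu$ via the Cayley transform. For $X\in\mfu^\nu$ and $Y=\Ad(g_{\phi-1})X\in\mfk_\phi$, one has
\[
\delta_r(Y)=(\Ad g_{\phi-1})^{\otimes 2}\bigl[(\Ad g_{1-\phi})^{\otimes 2}(r),\Delta(X)\bigr],
\]
and Proposition~\ref{prop:Cayley-rotates-rmat} applied with parameter $1-\phi$ (using $\cos(\pi(1-\phi)/2)=\sin(\pi\phi/2)$) gives a decomposition
\[
(\Ad g_{1-\phi})^{\otimes 2}(r)=\sin(\pi\phi/2)\,r+r_0,\qquad r_0\in \mfu^\nu\otimes\mfu+\mfu\otimes\mfu^\nu.
\]
Coisotropy of $K_\phi$ then follows immediately: $[r,\Delta(X)]\in\mfu^\nu\otimes\mfu^\nu$ because $U^\nu$ is a Poisson--Lie subgroup of $U$, and $[r_0,\Delta(X)]\in\mfu^\nu\otimes\mfu+\mfu\otimes\mfu^\nu$ because $\mfu^\nu$ is a Lie subalgebra, so applying $\Ad(g_{\phi-1})^{\otimes 2}$ places $\delta_r(Y)$ in $\mfk_\phi\otimes\mfu+\mfu\otimes\mfk_\phi$.

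For the Poisson--Lie subgroup criterion, the same transport reduces the problem to determining when $[r_0,\Delta(X)]\in\mfu^\nu\otimes\mfu^\nu$ for every $X\in\mfu^\nu$. I would decompose $r_0$ along $\mfu=\mfu^\nu\oplus\mfm_\nu$; since $r_0$ has no $\mfm_\nu\otimes\mfm_\nu$ component and the $\ad_{\Delta(\mfu^\nu)}$-action preserves each of $\mfu^\nu\otimes\mfu^\nu$, $\mfu^\nu\otimes\mfm_\nu^\C$, $\mfm_\nu^\C\otimes\mfu^\nu$ (using that $\mfm_\nu$ is a $\mfu^\nu$-module), the condition becomes $\mfu^\nu$-invariance of the off-diagonal components of $r_0$. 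In the Hermitian case $\mfm_{\nu\pm}$ are irreducible $\mfu^\nu$-modules on which $Z_\nu$ acts by $\pm i$, while it acts trivially on the adjoint representation of $\mfu^\nu$; hence $(\mfu^\nu\otimes\mfm_\nu^\C)^{\mfu^\nu}=0=(\mfm_\nu^\C\otimes\mfu^\nu)^{\mfu^\nu}$, and the criterion reduces cleanly to $r_0\in\mfu^\nu\otimes\mfu^\nu$.

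The final step is to pin down those $\phi$ for which $r_0\in\mfu^\nu\otimes\mfu^\nu$, and I would carry this out by revisiting the proof of Proposition~\ref{prop:Cayley-rotates-rmat} block by block with the rotation parameter $1-\phi$. The $C_0$-contribution to $r_0$ lies in $\mfu^\nu\otimes\mfu^\nu$ automatically, since $\Ad g_{1-\phi}$ fixes it. For $P_i\cup C_i$ the explicit computation built from Lemma~\ref{lem:PiCi} gives an image equal to $\sin(\pi\phi/2)$ times the original, so this block contributes zero to $r_0$. For $P_{ij}\cup C_{ij}$ and $P_0$, Lemmas~\ref{lem:PijCij} and \ref{lem:P_0} show that the images are $\sin(\pi\phi/2)$ times the original plus ``off-diagonal'' corrections proportional to $\cos(\pi\phi/2)$ and lying in $\mfu^\nu\otimes\mfm_\nu^\C+\mfm_\nu^\C\otimes\mfu^\nu$; the correction from $\gamma_i\in P_0$ is a nonzero multiple of $\cos(\pi\phi/2)(x_i\otimes H_{\gamma_i}-H_{\gamma_i}\otimes x_i)$, with $H_{\gamma_i}\in\mfh$ while the $P_{ij}\cup C_{ij}$ corrections only involve root vectors, so no cancellation is possible. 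Consequently $r_0\in\mfu^\nu\otimes\mfu^\nu$ is equivalent to $\cos(\pi\phi/2)=0$, i.e.\ to $\phi\in 2\Z+1$.

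The main obstacle is precisely this last step: Proposition~\ref{prop:Cayley-rotates-rmat} only records the rotated $r$-matrix modulo $\mfu^\nu\otimes\mfu+\mfu\otimes\mfu^\nu$, so to isolate the genuinely non-$\mfu^\nu\otimes\mfu^\nu$ part of $r_0$ one must reach into its proof and use the Cayley rotation formulas from Lemmas~\ref{lem:P_0}, \ref{lem:PiCi}, \ref{lem:PijCij} constructively, together with the linear-independence observation that distinguishes the $P_0$-correction from all other corrections.
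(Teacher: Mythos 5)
Your proof of coisotropy matches the paper's argument in substance (transport the Sklyanin cobracket by the Cayley transform and invoke Proposition~\ref{prop:Cayley-rotates-rmat}), but your treatment of the ``Poisson--Lie iff $\phi$ odd'' criterion takes a genuinely different route. The paper's ``only if'' direction is a soft argument: since $K_\phi$ has full rank, if it is a Poisson--Lie subgroup it must contain the maximal torus, so $\theta_\phi$ fixes $\mfh$ pointwise, which by Lemma~\ref{lem:P_0} forces $\sin(\pi(1-\phi)/2)=0$; the ``if'' direction is then a case analysis (S-type vs.\ C-type, parity of $n$) showing that for $\phi$ odd $K_\phi$ equals $U^\nu$ or is a maximal parabolic-type subgroup. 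Your approach instead reduces the Poisson--Lie condition to $r_0\in\mfu^\nu\otimes\mfu^\nu$ via the observation $(\mfu^\nu\otimes\mfm^\C_\nu)^{\mfu^\nu}=0$ (driven by the $Z_\nu$-eigenvalue bookkeeping, not by irreducibility as you suggest), and then computes $r_0$ block by block. This is more uniform --- it dispatches both implications at once --- but it relies on extracting the exact off-diagonal part of $r_0$, which the paper's Proposition~\ref{prop:Cayley-rotates-rmat} deliberately does not record; you correctly flag that one must reopen its proof.

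One computational slip: the $P_i\cup C_i$ block does \emph{not} transform into $\sin(\pi\phi/2)$ times itself under $(\Ad g_{1-\phi})^{\otimes 2}$. Working out Lemma~\ref{lem:PiCi} on
\[
e_{-\gamma}\otimes e_\gamma-e_\gamma\otimes e_{-\gamma}+e_{-\alpha}\otimes e_\alpha-e_\alpha\otimes e_{-\alpha}
\]
shows the $\mfm_\nu\otimes\mfm_\nu$ and $\mfu^\nu\otimes\mfu^\nu$ parts pick up the factor $\cos(\pi(1-\phi)/2)=\sin(\pi\phi/2)$, but there is also a cross term of the form
\[
-\cos\Bigl(\frac{\pi\phi}{2}\Bigr)\bigl[\bar N(e_{-\gamma}\otimes e_{-\alpha}-e_{-\alpha}\otimes e_{-\gamma})+N(e_\alpha\otimes e_\gamma-e_\gamma\otimes e_\alpha)\bigr],
\]
which lands in $\mfm_\nu\otimes\mfu^\nu+\mfu^\nu\otimes\mfm_\nu$ and is not zero. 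So $P_i\cup C_i$ \emph{does} contribute off-diagonal terms to $r_0$, again proportional to $\cos(\pi\phi/2)$. Fortunately this does not damage your conclusion: these corrections, like those from $P_{ij}\cup C_{ij}$, involve only root vectors and thus cannot cancel the $\cos(\pi\phi/2)(x_i\otimes H_{\gamma_i}-H_{\gamma_i}\otimes x_i)$ terms coming from $P_0$ (which carry a Cartan leg); so the off-diagonal part of $r_0$ vanishes exactly when $\cos(\pi\phi/2)=0$, as you claim. You should, however, correct the claim that this block ``contributes zero to $r_0$''.
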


\bp
By definition we have $\mfk_\phi=(\Ad g_{\phi-1})(\mfu^\nu)$. Since $U^\nu$ is a Poisson--Lie subgroup of $(U,r)$, $K_\phi$ is a Poisson--Lie subgroup of $(U,(\Ad g_{\phi-1})^{\otimes 2}(r))$. As
\[
r-\cos\Bigl(\frac{\pi(1-\phi)}{2}\Bigr)(\Ad g_{\phi-1})^{\otimes 2}(r)\in\mfk_\phi\otimes\mfu+\mfu\otimes\mfk_\phi,
\]
this shows that $K_\phi$ is coisotropic in $(U,r)$.

Assume now that $K_\phi$ is a Poisson--Lie subgroup of $(U,r)$ for some $\phi$. Since $K_\phi$ has the same rank as~$U$, it follows that $\mfk_\phi$ must contain the Cartan subalgebra $\mft$ (see, e.g., \cite{MR1995786}*{Proposition~2.1}). Therefore $\theta_\phi=(\Ad g_{\phi-1})\circ\nu\circ(\Ad g_{\phi-1})^{-1}$ acts trivially on $\mfh$. From Lemma~\ref{lem:P_0} we see that this is the case if and only if $\sin(\frac{\pi(1-\phi)}{2})=0$, that is, $\phi$ is an odd integer.

Assume that indeed $\phi=2n+1$ for some $n\in\Z$. In the S-type case, when the sets $P_i$ and $C_i$ are empty for $1\le i\le s$, we see from Lemmas~\ref{lem:C_0} and \ref{lem:PijCij} that $\mfk_\phi^\C=\mfg^\nu$, so $K_\phi=U^\nu$ is a Poisson-Lie subgroup.

Consider the C-type case. If $n$ is even, so that $\cos(\frac{\pi(\phi-1)}{2})=\pm1$, we see from Lemmas~\ref{lem:C_0}--\ref{lem:PijCij} that $\mfk_\phi^\C=\mfg^\nu$, so $K_\phi=U^\nu$ is again a Poisson--Lie subgroup.

Assume now that $n$ is odd.
Then $\sin(\frac{\pi(\phi-1)}{4})=\pm1$, and we see from Lemmas~\ref{lem:C_0}--\ref{lem:PijCij} that $\mfk_\phi^\C$ is spanned by $\mfh$, $X_{\pm\alpha}$ ($\alpha\in C_0$), $X_{\pm\gamma}$ ($\gamma\in P_i$, $1\le i\le s$) and $X_{\pm\alpha}$ ($\alpha\in C_{ij}$).
Moreover, by Proposition~\ref{prop:restrictions}, the nondistinguished simple roots in $\Pi\setminus\Pi_X$ lie in the sets $C_{i,i+1}$, while the distinguished roots satisfy $\alpha_o\in P_s$ and $\alpha_{o'}\in C_s$.
We conclude that we have $X_{\pm\alpha}\in\mfk_\phi^\C$ for the nondistinguished simple roots~$\alpha$, $X_{\pm\alpha_{o}}\in\mfk_\phi^\C$ and $X_{\pm\alpha_{o'}}\not\in\mfk_\phi^\C$. It follows that if $\mfq\subset\mfg$ is the parabolic subalgebra defined by the subset $\Pi\setminus\{\alpha_{o'}\}$ of simple roots, then $\mfq\cap\mfu\subset\mfk_\phi$.
We have a Dynkin diagram involution $\tau_\theta$ mapping $\Pi\setminus\{\alpha_{o'}\}$ onto $\Pi\setminus\{\alpha_o\}$.
Then the corresponding automorphism of $\mfg$ maps $\mfq$ onto $\mfp=\mfg^\nu+\mfm_{\nu+}$. Since $\mfu^\nu=\mfp\cap\mfu$ is a maximal proper Lie subalgebra of $\mfu$, it follows that $\mfq\cap\mfu$ is a maximal proper Lie subalgebra of $\mfu$. Hence $\mfq\cap\mfu=\mfk_\phi$, and therefore $K_\phi$ is a Poisson--Lie subgroup of $(U,r)$.
\ep

\begin{Rem}
We see from the above argument, or directly from Lemmas~\ref{lem:P_0}--\ref{lem:PijCij}, that $(\Ad g_\phi)(\mfu^\nu)=\mfu^\nu$ if and only if $\phi\in 2\Z$ in the S-type case and $\phi\in 4\Z$ in the C-type case.
\end{Rem}

We finish this subsection by exhibiting generators of $\mfk_\phi^\C$.

\begin{Prop}\label{PropFixTheta}
If $\phi\in\R\setminus (1+2\Z)$, then the Lie algebra $\mfk_\phi^\C$ is generated by the following elements: $H \in \mfh^\theta$, $X_{\pm \alpha}$ for $\alpha\in\Pi_X$, $X_{\alpha} + \theta(X_\alpha)$ for the nondistinguished roots $\alpha\in\Pi\setminus\Pi_X$, plus the following elements:
\begin{itemize}
\item{\textup{S-type:}} $X_{\alpha_o}+\theta(X_{\alpha_o})-s_o H_{\alpha_o}$, where $s_o=i\tan(\frac{\pi \phi}{2})$;
\item{\textup{C-type:}} $X_{\alpha_o}+c_o\theta(X_{\alpha_o})$ and $X_{\alpha_{o'}}+c_o^{-1}\theta(X_{\alpha_{o'}})$, where
 $c_o=-\cot(\frac{\pi}{4}(\phi-1))$.
\end{itemize}
\end{Prop}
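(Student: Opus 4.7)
The plan is to reduce everything to the identity $\mfk_\phi^\C=(\Ad g_\phi)(\mfg^\theta)$ combined with the classical description of $\mfg^\theta$: for the Satake-form involution $\theta$, the fixed point algebra $\mfg^\theta$ is generated by $\mfh^\theta$, the root vectors $X_{\pm\alpha}$ for $\alpha\in\Pi_X$, and the elements $X_\alpha+\theta(X_\alpha)$ for $\alpha\in\Pi\setminus\Pi_X$ (this is standard Satake theory). The strategy is then to inspect how $\Ad g_\phi$ transforms each of these generators and match the result with the list in the proposition.

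First I would dispose of the generators that are left unchanged by $\Ad g_\phi$. Since $\mfh^\theta=\mfh^+$ lies in the orthogonal complement of $\mathrm{span}_\C(H_{\gamma_1},\dots,H_{\gamma_s})$, Lemma~\ref{lem:P_0} shows that $\Ad g_\phi$ fixes $\mfh^\theta$ pointwise. Since $\Pi_X=\{\alpha_i:H_{\alpha_i}\in\mfh^+\}\subset C_0$, Lemma~\ref{lem:C_0} gives that $X_{\pm\alpha}$ for $\alpha\in\Pi_X$ are also $\Ad g_\phi$-fixed. For the nondistinguished roots $\alpha\in\Pi\setminus\Pi_X$, Proposition~\ref{prop:restrictions} places $\alpha$ in some $C_{i,i+1}$, and the element $X_\alpha+\theta(X_\alpha)$ can be written in the basis of Lemma~\ref{lem:PijCij}. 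A direct computation, using that with our choice $\theta=(\Ad g_1)^{-1}\circ\nu\circ(\Ad g_1)$ the Satake coefficient $w_\alpha$ satisfies $w_\alpha=-\eps(\alpha)$ for such $\alpha$, shows that $X_\alpha+\theta(X_\alpha)$ is proportional to the $\Ad g_\phi$-invariant vector $x_\alpha$ of Lemma~\ref{lem:PijCij}, hence lies in $\mfk_\phi^\C$ as claimed.

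The heart of the proof is the distinguished case. In the S-type case $\alpha_o=\gamma_s\in P_0$, and a direct calculation with the chain $\theta=(\Ad g_1)^{-1}\circ\nu\circ(\Ad g_1)$ using Lemma~\ref{lem:P_0} yields $\theta(X_{\gamma_s})=-X_{-\gamma_s}$. Applying Lemma~\ref{lem:P_0} to $X_{\gamma_s}-X_{-\gamma_s}=X_{\alpha_o}+\theta(X_{\alpha_o})$ gives
\[
(\Ad g_\phi)\bigl(X_{\alpha_o}+\theta(X_{\alpha_o})\bigr)=\cos\Bigl(\tfrac{\pi\phi}{2}\Bigr)\bigl(X_{\alpha_o}+\theta(X_{\alpha_o})\bigr)-i\sin\Bigl(\tfrac{\pi\phi}{2}\Bigr)H_{\alpha_o},
\]
and for $\phi\notin 1+2\Z$ one divides by $\cos(\pi\phi/2)\ne 0$ to recover $X_{\alpha_o}+\theta(X_{\alpha_o})-s_o H_{\alpha_o}$ with $s_o=i\tan(\pi\phi/2)$, as claimed. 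In the C-type case the distinguished roots satisfy $\alpha_o\in P_s$, $\alpha_{o'}\in C_s$, and $\alpha_o+\alpha_{o'}=\gamma_s$ (by Propositions~\ref{prop:restrictions} and~\ref{prop:structure-of-roots-from-Harish-Chandra}\ref{it:structure-of-roots-from-Harish-Chandra-2}). One applies Lemma~\ref{lem:PiCi} with $\gamma=\alpha_o$ and $\alpha=\alpha_{o'}$, identifies $\theta(X_{\alpha_o})$ and $\theta(X_{\alpha_{o'}})$ explicitly via the analogous chain computation, and solves the resulting linear system for the $\theta_\phi$-fixed elements in $\C X_{\pm\alpha_o}+\C X_{\pm\alpha_{o'}}$; this yields the two claimed generators with $c_o=-\cot(\pi(\phi-1)/4)$.

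The main obstacle will lie in the bookkeeping of the unimodular Satake phases $w_\alpha$ together with the structure constants $N_{\gamma_i,-\gamma}$ appearing in Lemmas~\ref{lem:PiCi} and~\ref{lem:PijCij}. In the nondistinguished case this requires verifying the clean identity $w_\alpha=-\eps(\alpha)$, and in the C-type distinguished case it is the interplay of these phases that produces the reciprocal pair $(c_o,c_o^{-1})$, requiring a compatible normalization of the root vectors $X_{\alpha_o},X_{\alpha_{o'}}$. Once all the local calculations are in place, generation of $\mfk_\phi^\C$ by the listed elements follows from the fact that $\Ad g_\phi\colon\mfg^\theta\to\mfk_\phi^\C$ is a Lie algebra isomorphism, under which the listed elements are, modulo the subalgebra generated by $\mfh^\theta$ and $\{X_{\pm\alpha}\}_{\alpha\in\Pi_X}$, precisely (nonzero scalar multiples of) the $\Ad g_\phi$-images of the standard Satake generators of $\mfg^\theta$.
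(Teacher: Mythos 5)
Your overall strategy — reducing to $\mfk_\phi^\C=(\Ad g_\phi)(\mfg^\theta)$ and transporting the Kolb/Satake generators of $\mfg^\theta$ via $\Ad g_\phi$ using Lemmas~\ref{lem:P_0}--\ref{lem:PijCij} — is exactly the paper's, and your handling of $\mfh^\theta$, the $\Pi_X$ root vectors, the nondistinguished roots, and the S-type distinguished root is correct.

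The C-type case, however, is based on a false premise. Your assertion $\alpha_o+\alpha_{o'}=\gamma_s$ does not hold in general: both distinguished roots restrict to $\hlf1\gamma_s$ on $\mfh^-$, so the sum restricts to $\gamma_s$, but equality would require the $\mfh^+$-components to cancel, i.e.\ $\Theta(\alpha_o)=-\alpha_{o'}$, which by Lemma~\ref{lem:Theta} forces $\alpha_o$ to be orthogonal to $\Pi_X$. In a C-type Satake diagram the distinguished white vertices are joined to the black part, so this fails (e.g.\ in $\mathrm{AIII}$ with $0<p<N/2$ one has $\gamma_s-\alpha_p=L_{p+1}-L_{N-p+1}\neq\alpha_{N-p}$). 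The correct partners under $\gamma\mapsto\gamma_s-\gamma$ are $\gamma_s-\alpha_o=w_X\alpha_{o'}$ and $\gamma_s-\alpha_{o'}=w_X\alpha_o$; in particular $\theta(X_{\alpha_o})\in\mfg_{-w_X\alpha_{o'}}$, not $\mfg_{-\alpha_{o'}}$, so the span $\C X_{\pm\alpha_o}+\C X_{\pm\alpha_{o'}}$ is not $\theta$-stable and the linear system you propose to solve is not available. The way through, as in the paper, is to apply Lemma~\ref{lem:PiCi} separately to the two pairs $(\gamma,\alpha)=(\alpha_o,\,\gamma_s-\alpha_o)$ and $(\gamma,\alpha)=(\gamma_s-\alpha_{o'},\,\alpha_{o'})$, which gives
\[
(\Ad g_\phi)\bigl(X_{\alpha_o}+\theta(X_{\alpha_o})\bigr)=\Bigl(\cos\tfrac{\pi\phi}{4}-\sin\tfrac{\pi\phi}{4}\Bigr)X_{\alpha_o}+\Bigl(\cos\tfrac{\pi\phi}{4}+\sin\tfrac{\pi\phi}{4}\Bigr)\theta(X_{\alpha_o})
\]
and the analogous formula for $\alpha_{o'}$, from which the pair $(c_o,c_o^{-1})$ with $c_o=-\cot\tfrac{\pi(\phi-1)}{4}$ falls out directly.
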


\begin{proof}
We denote by $\mfg_\phi$ the Lie algebra generated by the elements in the formulation. For $\phi=0$, the generators of $\mfg_0$ are the adjoints of the generators of $\mfg^\theta$ from~\cite{MR3269184}*{Lemma~2.8}. Since $\mfg^\theta$ is $*$-invariant, we therefore get $\mfg_0=\mfg^\theta$, that is, the proposition is true for $\phi=0$. In order to prove it for all $\phi\in\R\setminus\{1+2\Z\}$ it suffices to show that $\mfg_\phi=(\Ad g_\phi)(\mfg_0)$.

We will check how $\Ad g_\phi$ acts on the generators of $\mfg_0$. By definition, $\Ad g_\phi$ is the identity map on $\mfh^\theta=\mfh^+$. By Lemma~\ref{lem:C_0} it is also the identity map on $X_{\pm\alpha}$ for $\alpha\in \Pi_X\subset C_0$.

Next, consider a nondistinguished root $\alpha\in\Pi\setminus\Pi_X$. Then $\alpha\in C_{i,i+1}$ for some $1\le i\le s-1$. As $\theta=(\Ad g_{-1})\circ\nu\circ(\Ad g_1)$, from Lemma~\ref{lem:PijCij} we see that $\theta(x_{\alpha}) = x_{\alpha}$ and $\theta(y_{\alpha}) = -y_{\alpha}$. It follows that $\theta(e_{\alpha}) = -\eps(\alpha)e_{-\alpha'}$, and therefore
\[
X_{\alpha} + \theta(X_{\alpha}) = i e_{\alpha} + i\theta(e_{\alpha}) = ix_{\alpha}.
\]
By Lemma~\ref{lem:PijCij}, $\Ad g_\phi$ acts trivially on this element.

It remains to understand what happens with the generators corresponding to the distinguished roots. Consider the S-type case.
Then the distinguished root is $\alpha_o = \gamma_s$. By Lemma~\ref{lem:P_0} we have $\theta(x_s)=-x_s$ and $\theta(y_s)=y_s$, hence $\theta(e_{\gamma_s})=-e_{-\gamma_s}$. Therefore
\[
X_{\alpha_o}+\theta(X_{\alpha_o})=i y_s.
\]
By Lemma~\ref{lem:P_0} we then get
\[
(\Ad g_\phi)(X_{\alpha_o}+\theta(X_{\alpha_o}))=\cos\Bigl(\frac{\pi \phi}{2}\Bigr)(X_{\alpha_o}+\theta(X_{\alpha_o}))-i\sin\Bigl(\frac{\pi \phi}{2}\Bigr)H_{\gamma_s},
\]
which is exactly the remaining generator of $\mfg_\phi$ multiplied by $\cos(\frac{\pi \phi}{2})$.

Consider now the C-type case. In this case the distinguished roots are $\alpha_o\in P_s$ and $\alpha_{o'}\in C_s$. Generally, if $\gamma\in P_s$ and $\alpha\in C_s$ are such that $\gamma+\alpha=\gamma_s$, then by Lemma~\ref{lem:PiCi} we have $\theta(e_\gamma)=-\bar N_{\gamma_s,-\gamma}e_{-\alpha}$. Applying the same lemma again we get
\[
(\Ad g_\phi)(X_\gamma+\theta(X_\gamma))=\Bigl(\cos\Bigl(\frac{\pi \phi}{4}\Bigr)-\sin\Bigl(\frac{\pi \phi}{4}\Bigr)\Bigr)X_\gamma+\Bigl(\cos\Bigl(\frac{\pi \phi}{4}\Bigr)+\sin\Bigl(\frac{\pi \phi}{4}\Bigr)\Bigr)\theta(X_\gamma).
\]
For $\gamma={\alpha_o}$ the right hand side is, up to the factor $\cos(\frac{\pi \phi}{4})-\sin(\frac{\pi \phi}{4})=-\sqrt{2}\sin(\frac{\pi(\phi-1)}{4})$, the generator of $\mfg_\phi$ corresponding to $\alpha_o$. We similarly get
\[
(\Ad g_\phi)(X_\alpha+\theta(X_\alpha)) = \sqrt{2}\cos\Bigl(\frac{\pi (\phi-1)}{4}\Bigr) X_\alpha-\sqrt{2}\sin\Bigl(\frac{\pi (\phi-1)}{4}\Bigr)\theta(X_\alpha),
\]
so again we see that for $\alpha=\alpha_{o'}$ the right hand side is, up to a factor, the corresponding generator of~$\mfg_\phi$.
Thus the identity $\mfg_\phi=(\Ad g_\phi)(\mfg_0)$ is proved.
\end{proof}

\begin{Def}\label{def:G-phi}
Denote by $G_\phi$ the subgroup $G^{\theta_\phi}=(\Ad g_{\phi-1})(G^\nu)$ of~$G$.
\end{Def}

\subsection{Coactions of quantized multiplier algebras}\label{ssec:coactions}

Let us relate the computation of the previous subsection to the associators from Section \ref{ssec:cyclotomic-KZ}.

Given a reductive algebraic subgroup $H$ (which will be $G_\phi$) of $G$, consider a coaction $(\mcU(H)\fpser,\alpha)$ of a multiplier Hopf algebra $(\mcU(G)\fpser,\Delta_h)$. By Lemma~\ref{lem:twisting-to-Delta}, if $\Delta_h$ and $\alpha$ both equal $\Delta$ modulo $h$, they can be twisted to $\Delta$. We will assume that this can be done by elements satisfying extra properties. Specifically, assume there exist $\mcF\in\mcU(G\times G)\fpser$ and $\mcG\in\mcU(H\times G)\fpser$ such that
\begin{gather}
\label{eq:G-twist}\begin{align}
\mcG^{(0)}&=1,&(\id\otimes\epsilon)(\mcG)&=1,& \alpha &= \mcG\Delta(\cdot)\mcG^{-1}, \end{align}\\
\label{eq:F-twist}\begin{align}
\mcF^{(0)} &= 1,& (\epsilon\otimes\id)(\mcF) &= (\id\otimes\epsilon)(\mcF)=1,& \Delta_{h} &= \mcF\Delta(\cdot)\mcF^{-1},
\end{align}\\
%\notag
\label{eq:Dr-twist-lin-ord-term}
\mcF = 1 + h\frac{i r}{2} + O(h^2), \\
\label{eq:Drinfeld-twist}
(\id\otimes\Delta)(\mcF^{-1})(1\otimes\mcF^{-1})(\mcF\otimes1)(\Delta\otimes\id)(\mcF)=\Phi_\KZ.
\end{gather}
We remind that $\Phi_\KZ=\Phi(\hbar t^\mfu_{12},\hbar t^\mfu_{23})\in U(\mfg)^{\otimes 3}\fpser$ is Drinfeld's KZ-associator for $G$. Then by twisting by $(\mcF^{-1},\mcG^{-1})$ we get a quasi-coaction $(\mcU(H)\fpser,\Delta,\Psi)$ of $(\mcU(G)\fpser,\Delta,\Phi_\KZ)$ and we can try to apply the results of Section~\ref{sec:classification}.

\begin{Thm}
\label{thm:assoc-for-prescribed-classical-limit}
Let $\mfu^\nu < \mfu$ be a Hermitian symmetric pair, and let $G_\phi$ be as in Definition \ref{def:G-phi} for some $\phi\in\R\setminus(1+2\Z)$.
Assume we are given a coaction $\alpha\colon\mcU(G_\phi)\fpser\to\mcU(G_\phi\times G)\fpser$ of a multiplier Hopf algebra $(\mcU(G)\fpser,\Delta_h)$. Assume also that there exist $\mcF\in\mcU(G\times G)\fpser$ and $\mcG\in\mcU(G_\phi\times G)\fpser$ satisfying conditions~\eqref{eq:G-twist}--\eqref{eq:Drinfeld-twist}. Then there exist unique $s_\phi\in\R$ and $\mu\in h\C\fpser$  such that the coaction is obtained by twisting the quasi-coaction $(\mcU(G_\phi)\fpser,\Delta,\Psi_{\KZ,s_\phi;\mu})$ of $(\mcU(G)\fpser,\Delta,\Phi_\KZ)$. The parameter~$s_\phi$ is determined by
\begin{equation}\label{eq:s-phi}
\sin\Bigl(\frac{\pi \phi}{2}\Bigr) = \tanh\Bigl(\frac{\pi s_\phi}{2}\Bigr).
\end{equation}
If, in addition, $\mcF$ and $\mcG$ are chosen to be unitary, then $\mu\in h\R\fpser$.
\end{Thm}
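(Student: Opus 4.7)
The plan is to reduce to Theorem~\ref{thm:modified-KZ-universality} by first turning the coaction into a quasi-coaction with associator $\Phi_\KZ$, then computing the first-order cohomology class of the resulting associator to detect the parameter $s_\phi$. To carry out the first step, I would twist $(\mcU(G_\phi)\fps,\alpha,1)$ of $(\mcU(G)\fps,\Delta_h,1)$ by $(\mcF^{-1},\mcG^{-1})$: by \eqref{eq:F-twist}--\eqref{eq:G-twist} both coaction and coproduct become $\Delta$, and by \eqref{eq:Drinfeld-twist} the associator becomes $\Phi_\KZ$, giving a quasi-coaction $(\mcU(G_\phi)\fps,\Delta,\Psi)$ of $(\mcU(G)\fps,\Delta,\Phi_\KZ)$ with
\[
\Psi = (1\otimes\mcF^{-1})(\id\otimes\Delta_h)(\mcG^{-1})(\alpha\otimes\id)(\mcG)(\mcG\otimes 1).
\]
A direct expansion at order $h$, using $\mcF^{(1)}=ir/2$ from \eqref{eq:Dr-twist-lin-ord-term}, gives
\[
\Psi^{(1)} = -\tfrac{i}{2}(1\otimes r) + d_\cH(\mcG^{(1)}),
\]
so the class of $\Psi^{(1)}$ in $\rH^2(B_{G,G_\phi})\cong\C$ equals that of $-\tfrac{i}{2}(1\otimes r)$.

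Next I would detect this class by pairing with the fundamental cycle $\Omega=\Omega_{\theta_\phi}$ built from $Z_{\theta_\phi}=\Ad g_{\phi-1}(Z_\nu)$. Since $\Ad g_{\phi-1}$ preserves $(\cdot,\cdot)_\mfg$ and the pairing \eqref{eq:pairing} is equivariant, we have $\langle\Omega_{\theta_\phi},1\otimes r\rangle=\langle\Omega_\nu,1\otimes(\Ad g_{1-\phi})^{\otimes 2}(r)\rangle$. Applying Proposition~\ref{prop:Cayley-rotates-rmat} with $\phi$ replaced by $1-\phi$ (so that $\cos(\pi(1-\phi)/2)=\sin(\pi\phi/2)$) gives
\[
(\Ad g_{1-\phi})^{\otimes 2}(r)\equiv\sin\mathopen{}\left(\tfrac{\pi\phi}{2}\right) r\pmod{\mfg^\nu\otimes\mfg+\mfg\otimes\mfg^\nu},
\]
and the error subspace pairs to zero against $\Omega_\nu$ because $Z_\nu$ centralizes $\mfg^\nu$. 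Splitting $r$ via \eqref{EqComplr} into its compact and noncompact parts as $r=r_c+i(t^{\mfm_-}-t^{\mfm_+})$, with $r_c\in\mfg^\nu\otimes\mfg^\nu$ pairing to zero, and invoking Proposition~\ref{prop:partial-rmat-cohom-nontriv}, I obtain $\langle\Omega_\nu,1\otimes r\rangle=\dim\mfm$ and consequently
\[
\langle\Omega_{\theta_\phi},\Psi^{(1)}\rangle = -\tfrac{i}{2}\sin\mathopen{}\left(\tfrac{\pi\phi}{2}\right)\dim\mfm.
\]

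Finally I would invoke Theorem~\ref{thm:modified-KZ-universality}. For $\phi\in\R\setminus(1+2\Z)$ we have $\sin(\pi\phi/2)\in(-1,1)$, so the computed class is an imaginary number in $i(-\tfrac{1}{2}\dim\mfm,\tfrac{1}{2}\dim\mfm)$, avoiding the forbidden values $\pm\tfrac{i}{2}\dim\mfm$ and the real numbers $\tfrac{i(\zeta-1)}{2(\zeta+1)}\dim\mfm=-\tfrac{1}{2}\tan(\theta/2)\dim\mfm$ (with $\zeta=e^{i\theta}\ne\pm 1$). The theorem then produces $s_\phi\in\C\setminus i\Q^\times$ (unique modulo $2i\Z$) and $\mu\in h\C\fpser$ (uniquely determined) such that $(\mcU(G_\phi)\fps,\Delta,\Psi)$ is a twist of $(\mcU(G_\phi)\fps,\Delta,\Psi_{\KZ,s_\phi;\mu})$. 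Matching against $-\tfrac{i}{2}\tanh(\pi s_\phi/2)\dim\mfm$ (cf.~\eqref{eq:s-detection}) yields the formula \eqref{eq:s-phi}. Part~(ii), combined with injectivity of $\tanh$ on $\R$, makes the real choice of $s_\phi$ unique. If $\mcF$ and $\mcG$ are unitary then so is $\Psi$ by inspection of the twisting formula (all factors are unitary since $\Delta_h$ and $\alpha$ are $*$-homomorphisms), and part~(iii) then forces $\mu\in h\R\fpser$.

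The main obstacle is the second paragraph: pinning down the cohomology class of $\Psi^{(1)}$ via Proposition~\ref{prop:Cayley-rotates-rmat} and the explicit computation of $\langle\Omega_\nu,1\otimes r\rangle$. Once that is done, the remaining steps are bookkeeping plus direct invocation of Theorem~\ref{thm:modified-KZ-universality}.
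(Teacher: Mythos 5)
Your argument reproduces the paper's existence proof almost exactly: twist by $(\mcF^{-1},\mcG^{-1})$ to land in the KZ picture, extract $\Psi^{(1)}=-\tfrac{i}{2}r_{12}+d_\cH(\mcG^{(1)})$, and detect the cohomology class of the first-order term by pairing with the fundamental cycle. Your only stylistic deviation is that you compute the pairing by using the $\Ad U$-equivariance of \eqref{eq:pairing} to move everything to the base point $\nu$ and pair $\Omega_\nu$ against $(\Ad g_{1-\phi})^{\otimes 2}(r)$, whereas the paper works directly with $\Omega_\phi$ and the observation $ir\equiv t^{\mfm_{\nu+}}-t^{\mfm_{\nu-}}\pmod{\mfg^\nu\otimes\mfg^\nu}$, then rotates the $t^{\mfm_{\nu\pm}}$ by $\Ad g_{\phi-1}$. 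Both computations give $\langle\Omega_\phi,\Psi^{(1)}\rangle=-\tfrac{i}{2}\sin(\tfrac{\pi\phi}{2})\dim\mfm$, so this is just a matter of taste.

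There is, however, a genuine gap in the uniqueness claim. Your argument pins down $s_\phi$ and $\mu$ for the \emph{specific} $\Psi$ obtained from the \emph{given} pair $(\mcF,\mcG)$, invoking parts (i)--(iii) of Theorem~\ref{thm:modified-KZ-universality}. But the theorem asserts that $(s_\phi,\mu)$ are uniquely determined by $\alpha$ alone. A priori, a different choice of $(\mcF',\mcG')$ satisfying \eqref{eq:G-twist}--\eqref{eq:Drinfeld-twist} (or indeed any pair exhibiting $\alpha$ as a twist of some $\Psi_{\KZ,s';\mu'}$) produces a different $\Psi'$, and nothing in your write-up rules out $\mu'\ne\mu$. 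The paper closes this gap with Lemma~\ref{lem:unique-twist}: any two admissible Drinfeld twists $\mcF,\mcF'$ differ by a central twist $\mcF'=\mcF(u\otimes u)\Delta(u)^{-1}$, which can be absorbed into $\mcG'$ by replacing $\mcG'$ with $(1\otimes u^{-1})\mcG'$. After this normalization one has $\mcF'=\mcF$, so that $\Psi$ and $\Psi'$ differ by a $(1,\mcH)$-twist, and only then does the uniqueness statement in Theorem~\ref{thm:modified-KZ-universality} apply to force $s'=s_\phi$ and $\mu'=\mu$. You will need this step (or some substitute) to get the full uniqueness assertion; without it, what you have shown is existence, the formula for $s_\phi$, and the reality of $\mu$ in the unitary case.
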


Here $\Psi_{\KZ,s_\phi;\mu}$ is defined for the Hermitian symmetric pair $\mfk_\phi<\mfu$ as in Section~\ref{ssec:cyclotomic-KZ}, using the element $Z_\phi=(\Ad g_{\phi-1})(Z_\nu)$ of~$\mfz(\mfk_\phi)$. We will give examples of coactions satisfying the assumptions of the theorem in Section~\ref{sec:Letzter-Kolb}.

\smallskip

We will need the following lemma for the uniqueness part.

\begin{Lem}[cf.~\cite{MR1080203}*{Proposition~3.2}]\label{lem:unique-twist}
Assume we are given a homomorphism $\Delta_h\colon\mcU(G)\fpser\to\mcU(G\times G)\fpser$ and two elements $\mcF,\mcF'\in \mcU(G\times G)\fpser$ satisfying~\eqref{eq:F-twist} and the identity
\begin{equation}\label{eq:two-twists}
(\id\otimes\Delta)(\mcF^{-1})(1\otimes\mcF^{-1})(\mcF\otimes1)(\Delta\otimes\id)(\mcF)
=(\id\otimes\Delta)(\mcF'^{-1})(1\otimes\mcF'^{-1})(\mcF'\otimes1)(\Delta\otimes\id)(\mcF'),
\end{equation}
defining a $G$-invariant element of $\mcU(G^3)\fpser$.
Then there exists a unique central element $u\in\mcU(G)\fpser$ such that $u=1$ modulo $h$ and
\[
\mcF'=\mcF(u\otimes u)\Delta(u)^{-1}.
\]
If, in addition, $\mcF$ and $\mcF'$ are unitary, then $u$ is also unitary.
\end{Lem}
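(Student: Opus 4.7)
The plan is to construct $u$ inductively in powers of $h$, with the inductive step relying on the vanishing of $(\bigwedge^2 \mfg)^{\mfg}$ (Whitehead's second lemma for the simple Lie algebra $\mfg$), and then to deduce uniqueness and unitarity by formal-series manipulations.

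For the inductive construction, I set $u_0 = 1$ and suppose that a central $u_n \in 1 + h\mcU(G)\fpser$ has been found with $\mcF' \equiv \mcF(u_n \otimes u_n)\Delta(u_n)^{-1} \pmod{h^{n+1}}$. Put $v = (u_n \otimes u_n)^{-1} \mcF^{-1} \mcF'\, \Delta(u_n) = 1 + h^{n+1} w + O(h^{n+2})$ for some $w \in \mcU(G^2)$. Since $\mcF$ and $\mcF'$ both conjugate $\Delta$ to the common $\Delta_h$ and $u_n$ is central, $v$ commutes with $\Delta(\mcU(G))$, and hence $w$ is $\mfg$-invariant for the diagonal adjoint action through $\Delta$. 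Substituting $\mcF \cdot (u_n \otimes u_n)\Delta(u_n)^{-1} \cdot v$ for $\mcF'$ in~\eqref{eq:two-twists} and isolating the coefficient of $h^{n+1}$ — noting that $(u_n \otimes u_n)\Delta(u_n)^{-1}$ is a Drinfeld coboundary by the central element $u_n$, hence contributes trivially to the associator — yields the additive 2-cocycle identity
\[
(\Delta \otimes \id)(w) - (\id \otimes \Delta)(w) + w \otimes 1 - 1 \otimes w = 0
\]
in the co-Hochschild complex of Section~\ref{sec:geom-comput} (without auxiliary $H$-leg). The $\mfg$-invariant part of this complex computes $(\bigwedge^\bullet \mfg)^{\mfg}$ by the quasi-isomorphism with the Chevalley--Eilenberg complex recorded in Corollary~\ref{cor:Buk-cohom}, and $(\bigwedge^2 \mfg)^{\mfg} = 0$ by Whitehead's second lemma. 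So $w = \Delta(y) - y \otimes 1 - 1 \otimes y$ for a central $y \in \mcU(G)$, which we may normalize to satisfy $\epsilon(y) = 0$ using $(\epsilon \otimes \id)(v) = (\id \otimes \epsilon)(v) = 1$. Setting $u_{n+1} = u_n(1 + h^{n+1} y)$ advances the induction, and $u = \lim_n u_n \in 1 + h Z(\mcU(G))\fpser$ is the required element.

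For uniqueness, if $u$ and $u'$ are two central solutions, then $w = u^{-1} u'$ is central with $w^{(0)} = 1$, and comparing the two expressions for $\mcF^{-1}\mcF'$ gives $\Delta(w) = w \otimes w$. Writing $w = \exp Y$ with $Y \in h\mcU(G)\fpser$, grouplikeness forces $Y$ to be primitive, hence $Y \in h\mfg\fpser$ by the characterization of $\mfg$ inside $\mcU(G)$ recalled in Section~\ref{sec:prelim}; centrality of $Y$ then forces $Y \in h\mfz(\mfg)\fpser = 0$, so $u = u'$. For unitarity, if $\mcF$ and $\mcF'$ are unitary, applying $*$ to $\mcF' = \mcF(u \otimes u)\Delta(u)^{-1}$ — using $\mcF^* = \mcF^{-1}$, $\mcF'^* = \mcF'^{-1}$, and that $\Delta$ is a $*$-homomorphism — and inverting the resulting identity produces the alternative factorization $\mcF' = \mcF(u^{*-1} \otimes u^{*-1})\Delta(u^{*-1})^{-1}$, so the uniqueness statement forces $u^{*-1} = u$.

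The main obstacle will be the careful derivation of the additive 2-cocycle identity for $w$ from~\eqref{eq:two-twists} at each inductive step. The manipulation is entirely formal but requires tracking how the substitution of $\mcF \cdot (u_n \otimes u_n)\Delta(u_n)^{-1} \cdot v$ affects the four factors on each side of~\eqref{eq:two-twists} and verifying that the coboundary piece $(u_n \otimes u_n)\Delta(u_n)^{-1}$ cancels out at order $h^{n+1}$, leaving behind the clean co-Hochschild cocycle condition on $w$.
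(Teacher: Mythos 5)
Your proof follows essentially the same approach as the paper's: an $h$-adic induction reducing the existence statement to the vanishing of a degree-$2$ co-Hochschild cohomology class for $(\mcU(G),\Delta)$ invariant under the diagonal adjoint action, followed by a grouplike-primitive argument for uniqueness and a direct involution argument for unitarity. The paper phrases the inductive step more economically (it shows that whenever $\mcF$ and $\mcF'$ agree mod $h^{n+1}$ the difference $S = \mcF'^{(n+1)} - \mcF^{(n+1)}$ is an invariant $2$-cocycle in $B_G$, then splits it off into a coboundary, rather than carrying along an explicit $u_n$ and error term $v$), and it cites Proposition~\ref{prop:comput-cohoch-mult-model} for $\rH^\bullet(\tB_G) \cong \bigwedge\mfg$ directly rather than routing through Corollary~\ref{cor:Buk-cohom}, but these are presentational differences only. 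For uniqueness the paper looks at the lowest nonzero order of $v = u'u^{-1}$ instead of taking the logarithm; again equivalent. One small slip to correct: with your sign convention $w = \Delta(y) - y\otimes 1 - 1\otimes y$, the update should be $u_{n+1} = u_n(1 - h^{n+1}y)$ (or define $y$ by $w = y\otimes 1 + 1\otimes y - \Delta(y)$), since $(u_{n+1}\otimes u_{n+1})\Delta(u_{n+1})^{-1} = (u_n\otimes u_n)\Delta(u_n)^{-1}\bigl(1 + h^{n+1}(y\otimes 1 + 1\otimes y - \Delta(y))\bigr) + O(h^{n+2})$, which has the opposite sign from your $w$.
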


\bp
To be able to use an inductive construction for $u$, it suffices to show that if $\mcF=\mcF'$ modulo $h^{n+1}$, then there exists a central element $T\in\mcU(G)$ such that
\[
\mcF'^{(n+1)}= \mcF^{(n+1)}+ T\otimes 1+1\otimes T-\Delta(T).
\]
By considering the order $n+1$ terms in~\eqref{eq:two-twists} we get that the element $S=\mcF'^{(n+1)} - \mcF^{(n+1)}$ satisfies
\[
(\id\otimes\Delta)(S)+1\otimes S - S\otimes1 - (\Delta\otimes\id)(S)=0.
\]
This means that $S$ is a $2$-cocycle in the complex $\tB_G=\tB_{G,e}$ from Section~\ref{sec:geom-comput}. Furthermore, $\mcF^{-1}\mcF'$ commutes with the image of $\Delta$, hence $S$ also commutes with the image of $\Delta$. Therefore $S$ is a $2$-cocycle in the complex $B_G=\tB_G^G$. By Proposition~\ref{prop:comput-cohoch-mult-model}, the cohomology of $\tB_G$ is $\bigwedge \mfg$, and then the cohomology of $B_G$ is $(\bigwedge\mfg)^\mfg$. In particular, $\rH^2(B_G)=0$, which implies the existence of $T$.

\smallskip

Assume now that we have two central elements $u$ and $u'$ with the required properties. Consider the central element $v=u' u^{-1}\in\mcU(G)$. Then $v=1$ modulo $h$ and $\Delta(v)=v\otimes v$. Assume $v\ne1$ and take the smallest $n\ge1$ such that $v^{(n)}\ne0$. Then $\Delta(v^{(n)})=v^{(n)}\otimes1+1\otimes v^{(n)}$, hence $v^{(n)}\in\mfg$. But as $v$ is central, we must have $v^{(n)}\in\mfz(\mfg)=0$, which is a contradiction.

\smallskip

Finally, if $\mcF$ and $\mcF'$ are unitary, then $(u^{-1})^*$ has the defining properties of $u$, hence $(u^{-1})^*=u$.
\ep

\bp[Proof of Theorem~\ref{thm:assoc-for-prescribed-classical-limit}]
Taking $s_\phi\in\R$ defined by~\eqref{eq:s-phi}, let us first prove the existence of $\mu$. By twisting the coaction $\alpha$ by $(\mcF^{-1},\mcG^{-1})$ we obtain a quasi-coaction $(\mcU(G_\phi)\fpser,\Delta,\Psi)$ of $(\mcU(G)\fpser,\Delta,\Phi_\KZ)$, where
\[
\Psi=(\id\otimes \Delta)(\mcG^{-1})(1\otimes\mcF^{-1})(\alpha\otimes \id)(\mcG)(\mcG\otimes 1)
=(\id\otimes \Delta)(\mcG^{-1})(1\otimes\mcF^{-1})(\mcG\otimes 1)(\Delta\otimes \id)(\mcG),
\]
and hence
\begin{equation} \label{eq:Psi-r}
\Psi^{(1)}=-\frac{i r_{12}}{2}+d_\cH(\mcG^{(1)}).
\end{equation}

Let us use the subscript $\phi$ for the constructions we had in Section~\ref{sec:classification} applied to the pair $\mfk_\phi<\mfu$.
By Theorem~\ref{thm:modified-KZ-universality}, in order to prove the existence of $\mu$, it suffices to compute $\langle \Omega_\phi,\Psi^{(1)}\rangle$.

Since $\Omega_\phi$ is a cycle in the chain complex $\tB'_{G,G_\phi}$, the term $d_\cH(\mcG^{(1)})$ in~\eqref{eq:Psi-r} does not contribute to the pairing. By Proposition~\ref{prop:Cayley-rotates-rmat}, we have
\[
r-\cos\Bigl(\frac{\pi}{2}(1-\phi)\Bigr)(\Ad g_{\phi-1})^{\otimes 2}(r)\in\mfg_\phi\otimes\mfg+\mfg\otimes\mfg_\phi.
\]
We also have $i r=t^{\mfm_{\nu+}}-t^{\mfm_{\nu-}}$ modulo $\mfg^\nu\otimes\mfg^\nu$, hence $(\Ad g_{\phi-1})^{\otimes 2}(i r)=t^{\mfm_{\phi+}}-t^{\mfm_{\phi-}}$ modulo $\mfg_\phi\otimes\mfg_\phi$. Therefore
\[
i r-\sin\Bigl(\frac{\pi \phi}{2}\Bigr)(t^{\mfm_{\phi+}}-t^{\mfm_{\phi-}})\in\mfg_\phi\otimes\mfg+\mfg\otimes\mfg_\phi.
\]

As $\mfg_\phi$ centralizes $Z_\phi$, any cochain in $1\otimes\mfg_\phi\otimes\mfg+1\otimes\mfg\otimes\mfg_\phi$ pairs trivially with $\Omega_\phi$. Hence
\[
\langle \Omega_\phi,\Psi^{(1)}\rangle=-\frac{1}{2}\sin\Bigl(\frac{\pi \phi}{2}\Bigr) \langle \Omega_\phi,t^{\mfm_{\phi+}}_{12}-t^{\mfm_{\phi-}}_{12}\rangle.
\]
By Theorem~\ref{thm:modified-KZ-universality} and identity~\eqref{eq:s-detection}, it follows that $(\mcU(G_\phi)\fpser,\Delta,\Psi)$ is obtained by twisting the quasi-coaction $(\mcU(G_\phi)\fpser,\Delta,\Psi_{\KZ,s_\phi;\mu})$ for some $\mu\in h\C\fpser$, and if, in addition, $\mcF$ and $\mcG$ are unitary, we can choose $\mu\in h\R\fpser$.

\smallskip

Assume now that the coaction $\alpha\colon\mcU(G_\phi)\fpser\to\mcU(G_\phi\times G)\fpser$ is obtained by twisting the quasi-coaction $(\mcU(G_\phi)\fpser,\Delta,\Psi_{\KZ,s';\mu'})$ of $(\mcU(G)\fpser,\Delta,\Phi_\KZ)$ for some other $s'\in\R$ and $\mu'\in h\C\fpser$. Let $(\mcF',\mcG')$ be a pair defining this twisting. By Lemma~\ref{lem:unique-twist}, we have  $\mcF'=\mcF(u\otimes u)\Delta(u)^{-1}$ for a central element $u\in\mcU(G)\fpser$ such that $u=1$ modulo $h$. But then the pairs $(\mcF',\mcG')$ and $(\mcF,(1\otimes u^{-1})\mcG')$ define the same twistings. In other words, without loss of generality we may assume that $\mcF'=\mcF$. Then the quasi-coaction $(\mcU(G_\phi)\fpser,\Delta,\Psi_{\KZ,s';\mu'})$ is obtained from $(\mcU(G_\phi)\fpser,\Delta,\Psi_{\KZ,s_\phi;\mu})$ by twisting with $(1,\mcG'^{-1}\mcG)$. By Theorem~\ref{thm:modified-KZ-universality} this implies that $s'=s_\phi$ and $\mu'=\mu$. \ep

\begin{Rem}
Using isomorphisms and twistings that are not trivial modulo $h$, we can pass from $G_\phi$ to its conjugate by an element $g\in U$.
Namely, the conjugation by $\Ad g$ in the $0$th leg transforms the quasi-coaction $(\mcU(G_\phi)\fpser,\Delta,\Psi_{\KZ,s_\phi;\mu})$ of $(\mcU(G)\fpser,\Delta,\Phi_\KZ)$ into the isomorphic quasi-coaction
\[
(\mcU(g G_\phi g^{-1})\fpser,(\Ad g)_0\circ\Delta\circ(\Ad g)^{-1},(\Ad g)_0(\Psi_{\KZ,s_\phi;\mu}))
\]
of $(\mcU(G)\fpser,\Delta,\Phi_\KZ)$, and then the twisting by $1\otimes g\in\mcU(g G_\phi g^{-1}\times G)\fpser$ gives the quasi-coaction $(\mcU(g G_\phi g^{-1})\fpser,\Delta,\Psi_{\KZ,s_\phi;\mu})$ of $(\mcU(G)\fpser,\Delta,\Phi_\KZ)$, where $\Psi_{\KZ,s_\phi;\mu}$ now denotes the associator defined by the symmetric pair $(\Ad g)(\mfk_\phi)<\mfu$.
\end{Rem}

Before moving on to the next part, let us explain some geometric structures motivating the above computations.

Starting from the KZ-equations, after fixing a twist $\mcF$ satisfying \eqref{eq:F-twist}--\eqref{eq:Drinfeld-twist}, an associator $\Psi \in \mcU(G^\nu \times G^2)\fpser$ defines an associative product $*^\Psi_h$ on $\mcO(U/U^\nu)\fpser = \mcO(U)^{U^\nu}\fpser$, see~\cite{MR2126485}*{Section 6}.
Moreover, the algebra $(\mcO(U/U^\nu)\fpser, *^\Psi_h)$ becomes a comodule algebra over the \emph{quantized function algebra} $\mcO_h(U)$, the restricted dual Hopf algebra of $(\mcU(G)\fpser, \Delta_h)$.
This structure corresponds to the module category $((\Rep G^\nu)\fpser, \Psi)$ under the Tannaka--Krein type duality for module categories and coactions.

To be more precise, given an element $\Psi \in \mcU(G^\nu \times G^2)^{G^\nu}\fpser$ such that $(\mcU(G^\nu)\fpser, \Delta, \Psi)$ is a quasi-coaction of $(\mcU(G)\fpser, \Delta, \Phi_\KZ)$, we define $f_1 *^\Psi_h f_2$ by
\[
\langle f_1 *^\Psi_h f_2, T \rangle = \langle f_1 \otimes f_2, \mcF \Delta(T) (\epsilon \otimes \id)(\Psi) \rangle \quad (T \in \mcU(G)).
\]
As we have $\mcF^{(1)} - \mcF^{(1)}_{21} = i r$, the corresponding Poisson bracket
\[
\{f_1, f_2\}_{\Psi} = \lim_{h \to 0} \frac{1}{i h} (f_1 *^{\Psi}_h f_2 - f_2 *^{\Psi}_h f_1)
\]
is characterized by
\[
\langle\{f_1, f_2\}_{\Psi}, T \rangle = \bigl\langle f_1 \otimes f_2, r \Delta(T) - i \Delta(T) (\epsilon \otimes \id)(\Psi^{(1)} - \Psi^{(1)}_{0,2,1}) \bigr\rangle.
\]

If we have $\Psi' = \mcH_{0, 12}\Psi \mcH_{01, 2}^{-1} \mcH_{0,1}^{-1}$ with $\mcH \in 1 + h\mcU(G^\nu\times G)^\mfk\fpser$, the invertible transformation $\rho_\mcH$ of $\mcO(U/U^\nu)\fpser$ characterized by
\[
\langle \rho_\mcH(f), T \rangle = \langle f, T (\epsilon \otimes \id)(\mcH) \rangle
\]
satisfies $\rho_\mcH(f_1 *^{\Psi}_h f_2) = \rho_\mcH(f_1) *^{\Psi'}_h \rho_\mcH(f_2)$, i.e., we get isomorphic deformation quantizations from twist equivalent associators.
From this we obtain $\{f_1, f_2\}_{\Psi'} = \{f_1, f_2\}_{\Psi}$ for such $\Psi$ and $\Psi'$.

Combined with the identification of $\rH^2(B_{G,G^\nu})$, we obtain a decomposition
\begin{equation}\label{eq:Pois-bra-for-Psi}
\{f_1, f_2\}_{\Psi} = \{f_1, f_2\}_\alpha + x \{f_1, f_2\}_\beta,
\end{equation}
for some complex number $x$ (which is real for unitary $\Psi$), with
\begin{align*}
\{f_1, f_2\}_\alpha &= m r^{(l,l)}(f_1 \otimes f_2),&
\{f_1, f_2\}_\beta &= i m (t^{\mfm_{\nu+}}_{12}  - t^{\mfm_{\nu-}}_{12})^{(r,r)}(f_1 \otimes f_2).
\end{align*}
Note that $\{f_1, f_2\}_\beta$ is invariant for the left translation action of $U$, while $\{f_1, f_2\}_\alpha$ is equivariant for the Sklyanin bracket on $U$.
The left invariance of $\{f_1, f_2\}_\beta$  implies that the associated Poisson bivectors commute with respect to the Schouten--Nijenhuis bracket.
Note also that we can write $\{f_1, f_2\}_\beta = m r^{(r,r)}(f_1 \otimes f_2)$, and in fact it is the Kostant--Kirillov--Souriau bracket if we identify $U/U^\nu$ with a coadjoint orbit as in Remark \ref{rem:coadj-orb-and-hoch-cochains}, see \cite{MR1357743}.
The Poisson bracket associated with $\Psi_{\KZ,s;\mu}$ is given by $x = \tanh(\hlf{\pi s})$.

Turning to the side of coisotropic subgroups, by Corollary \ref{cor:coisotropic}, $U / K_\phi$ admits a Poisson bracket which is just the restriction of the Sklyanin bracket: $\{f_1, f_2\}_\phi = \{f_1, f_2\}_{\Sk}$ for $f_i \in \mcO(U/K_\phi) \subset \mcO(U)$.
This gives the structure of a Poisson homogeneous space on $U/K_\phi \cong U/U^\nu$ over $(U, r)$.

In fact, any Poisson homogeneous structure of $U/U^\nu$ over $(U, r)$ is of the form \eqref{eq:Pois-bra-for-Psi} (this seems to be folklore, but the idea can be traced back to \cite{MR1087382}*{Appendix}).
We thus obtain a correspondence between the parameters $\phi$ and $s$ through the comparison of the factor $x$.

\begin{Rem}
The bracket \eqref{eq:Pois-bra-for-Psi} defines a Poisson action of $(U, r)$ on $U/U^\nu$ for any $x$, but there is a distinguished range which naturally shows up in our considerations: $-1 < x < 1$.
Starting from the KZ-equations the value of $\tanh(\hlf{\pi s})$ is confined in this range when $s \in \R$ and we have a unitary associator.
On the side of the Cayley transform, this corresponds to the case that $K_\phi$ is coisotropic but \emph{not} a Poisson--Lie subgroup of $(U, r)$.
In this case the Poisson bivector vanishes on a nondiscrete subset of $U/U^\nu$, while in the limit case $x = \pm 1$ it only vanishes at one point, see the next subsection.
When $\absv{x} > 1$, we get a symplectic structure.
\end{Rem}

\subsection{Regularity of ribbon braids}\label{sec:reg-rib-br}

We finish this section with a technical result, which we will need later, showing that in the non-Poisson subgroup case a ribbon braid in the algebra of formal Laurent series must lie in the algebra of formal power series. More precisely, we will prove the following.

\begin{Thm}
\label{Thm:not-PL-subgr-forces-K-mat-to-be-formal-pow-ser}
Let $\mfu^\sigma<\mfu$ be a Hermitian symmetric pair, and $r$ be an $r$-matrix of the form \eqref{EqComplr} for some Cartan subalgebra $\mft < \mfu$ and a choice of positive roots.
Assume we are given a coaction
\[
\alpha\colon\mcU(G^\sigma)\fpser\to\mcU(G^\sigma\times G)\fpser
\]
of a multiplier Hopf algebra $(\mcU(G)\fpser,\Delta_h)$ such that there exist $\mcF\in\mcU(G\times G)\fpser$ and $\mcG\in\mcU(G^\sigma\times G)\fpser$ satisfying conditions~\eqref{eq:G-twist}--\eqref{eq:Drinfeld-twist}. Assume also that there exists a ribbon braid
\[
\mcE \in \prod_{\substack{\rho\in\Irr G^\sigma,\\ \pi\in\Irr G}} \left( \End(V_\rho) \otimes \End(V_\pi) \fLauser \right)
\]
for this coaction with respect to the $R$-matrix $\mcR=\mcF_{21}\exp(-ht^\mfu)\mcF^{-1}$ of $(\mcU(G)\fpser,\Delta_h)$. Then, unless~$U^\sigma$ is a Poisson--Lie subgroup of $(U,r)$, we must have
$\mcE \in \mcU(G^\sigma \times G) \fpser$.
\end{Thm}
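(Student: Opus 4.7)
The plan is to reduce to a standard form by successive twistings and then perform a leading-order analysis of the ribbon-braid equations. In a first step I would twist by $(\mcF^{-1}, \mcG^{-1})$ to pass from the given coaction to a quasi-coaction $(\mcU(G^\sigma)\fpser, \Delta, \Psi)$ of $(\mcU(G)\fpser, \Delta, \Phi_\KZ, \mcR_\KZ)$, where $\mcR_\KZ = \exp(-h t^\mfu)$. By formula~\eqref{eq:braid-twist0} (specialized to $\beta = \id$), the ribbon braid transforms into $\mcG\mcE\mcG^{-1}$, which stays in the Laurent-series algebra $\prod_{\rho,\pi}(\End V_\rho \otimes \End V_\pi)\fLauser$ because $\mcG\in\mcU(G^\sigma\times G)\fpser$ is invertible. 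The non-Poisson--Lie hypothesis enters next: after conjugating $\sigma$ by an element of $U$ to realize $G^\sigma$ as some $G_\phi$ from Section~\ref{sec:int-subgroups}, Corollary~\ref{cor:coisotropic} identifies the Poisson--Lie case with $\phi\in 1+2\Z$, so our assumption yields $\phi\in\R\setminus(1+2\Z)$, whereupon Theorem~\ref{thm:assoc-for-prescribed-classical-limit} furnishes one further twist bringing $\Psi$ to the explicit $\Psi_{\KZ,s_\phi;\mu}$ with finite real $s_\phi$.

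Now assume for contradiction that some component of the resulting Laurent ribbon braid $\mcE$ has negative order, and let $N<0$ be the minimal such order. Using $\mcR_\KZ^{(0)} = \Psi^{(0)} = 1$, the first ribbon-braid equation~\eqref{eq:rib-sig-tw-1} at order $N$ collapses to
\[
(\Delta\otimes\id)(\mcE^{(N)}) = \mcE^{(N)}_{02}.
\]
Applying $\epsilon$ to the $G^\sigma$-leg of $\mcE^{(N)}$ forces $\mcE^{(N)} = 1\otimes Y$ for some $Y\in\mcU(G)$. Turning to~\eqref{eq:rib-sig-tw-2} at order $2N$ (which lies strictly below $N$), the left-hand side vanishes since $\mcE$ starts at order $N$, while the right-hand side reduces to $\mcE^{(N)}_{02}\cdot\mcE^{(N)}_{01} = 1\otimes Y\otimes Y$. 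Hence $Y\otimes Y = 0$ in $\mcU(G\times G)$, which forces $Y = 0$ because $\pi(Y)\otimes\pi(Y)\neq 0$ whenever $\pi(Y)\neq 0$ for some $\pi\in\Irr G$. This contradicts the minimality of $N$, so $\mcE\in\mcU(G^\sigma\times G)\fpser$.

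The main technical obstacle is that a priori the Laurent orders $N_{\rho,\pi}$ of the different components $\mcE_{\rho,\pi}$ may differ, so the symbol ``$\mcE^{(N)}$'' does not refer directly to an element of $\mcU(G^\sigma\times G)$. Concretely, one must pick a pair $(\rho_0,\pi_0)$ realizing the minimum order and trace through how $\Delta$ in legs~$0$ and~$1$ of the ribbon-braid equations couples the $(\rho_0,\pi_0)$-component to the adjacent components appearing in $\mcE_{02}$ and $\mcE_{01}$, before the order-doubling argument above can be run at the $(\rho_0,\pi_0,\pi_0)$-level. This is a matter of careful bookkeeping of representation-theoretic indices rather than a new idea, but it is where the subtlety of the statement lives.
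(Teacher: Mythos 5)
Your argument breaks at the step where you compare the two sides of \eqref{eq:rib-sig-tw-2} at order $2N$. You assume a global minimum Laurent order $N$ exists across all components $(\rho,\pi)\in\Irr G^\sigma\times\Irr G$, and you acknowledge this as a ``bookkeeping'' issue, but it is in fact the crux: no such minimum need exist, and the equations you invoke are precisely what destroys it. Indeed, after twisting $\Psi$ and $\mcR$ to have constant term $1$, the $(\rho,\pi,\pi')$-component of the right-hand side of \eqref{eq:rib-sig-tw-2} is a product whose leading term has order $\ord(\mcE_{\rho,\pi})+\ord(\mcE_{\rho,\pi'})$, while the $(\rho,\pi,\pi')$-component of the left-hand side $(\id\otimes\Delta)(\mcE)$ has order $\min_{\sigma\subset\pi\otimes\pi'}\ord(\mcE_{\rho,\sigma})$. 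Equating these shows that a single component of negative order $N$ forces components of order $2N$, $3N$, \dots\ to appear, so the orders telescope to $-\infty$. Consequently the expression $\mcE^{(N)}$ does not define an element of $\mcU(G^\sigma\times G)$, and the left-hand side of \eqref{eq:rib-sig-tw-2} at order $2N$ is \emph{not} zero — exactly the coupling to ``adjacent components'' you flag but do not resolve. What your argument actually establishes is that a negative order cannot be minimal, i.e.\ that the orders are either all nonnegative or unbounded below; it does not close the second case.

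A second symptom of the gap: your use of the non-Poisson--Lie hypothesis is vacuous. You invoke it to put $\Psi$ in the normal form $\Psi_{\KZ,s_\phi;\mu}$ via Theorem~\ref{thm:assoc-for-prescribed-classical-limit}, but the subsequent order analysis uses only that $\Psi^{(0)}=\mcR^{(0)}=1$, which holds for any associator. If the order-counting alone worked, the theorem would hold without the ``unless $U^\sigma$ is Poisson--Lie'' caveat, and Proposition~\ref{prop:non-invert-K-mat-in-adj-impl-PL-subgr} and all of Section~\ref{sec:reg-rib-br} preceding it would be unnecessary. The paper's proof instead passes to the $K$-matrix $\mcK=(\epsilon\otimes\id)(\mcE)$ and works one irreducible at a time: Proposition~\ref{prop:non-invert-K-mat-in-adj-impl-PL-subgr} is a genuinely Poisson-geometric input (via Lemmas~\ref{lem:zero-loci-of-Pois-brackets}--\ref{lem:Poisson-lift}: the antipode relation and $U^\sigma$-equivariance force the lowest-order term $\mcK^{\ad,(k_{\ad})}$ to be a scalar multiple of one of the projections $P^g_\pm$ onto $\mfm_{\sigma\pm}$, and the vanishing of the RE bracket at that projection pins $[g]$ down to a zero of $r^{(l,l)}\mp r^{(r,r)}$, which is exactly the Poisson--Lie locus). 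Once the adjoint component is controlled, the reflection equation \eqref{eq:K-2} bootstraps to $\Rep G_{\ad}$, and the fact that $\pi^{\otimes n}\in\Rep G_{\ad}$ for some $n$ handles arbitrary $\pi$ — with no reference to a global minimum order. That Poisson-geometric step is the missing idea, not a bookkeeping detail.
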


Consider the $K$-matrix $\mcK=(\epsilon\otimes\id)(\mcE)$ and put $u=(\epsilon\otimes\id)(\mcG)$. Then from identity~\eqref{eq:rib-sig-tw-1} for our ribbon braid we get
\[
(u\otimes1)\mcE (u^{-1}\otimes1)=\mcR_{21}(1\otimes\mcK)\mcR.
\]
Therefore in order to prove the theorem it suffices to show that $\mcK\in\mcU(G) \fpser$ unless $U^\sigma$ is a Poisson--Lie subgroup of $(U,r)$.

Identities \eqref{eq:rib-sig-tw-0} and \eqref{eq:rib-sig-tw-2} imply
\begin{align}
\mcK (\Ad u)(T)&=(\Ad u)(T)\mcK\quad\text{for all}\quad T\in\mcU(G^\sigma)\fpser,\label{eq:K-0}\\
\Delta_h(\mcK)&=\mcR_{21}(1\otimes\mcK)\mcR(\mcK\otimes1).\label{eq:K-2}
\end{align}
A key step now is to prove the following.

\begin{Prop}
\label{prop:non-invert-K-mat-in-adj-impl-PL-subgr}
With $\sigma$, $\Delta_h$ and $\mcR$ as in Theorem~\ref{Thm:not-PL-subgr-forces-K-mat-to-be-formal-pow-ser}, assume we are given an invertible element
\[
\mcK\in \prod_{\pi\in\Irr G} \left(\End(V_\pi) \fLauser\right)
\]
satisfying conditions \eqref{eq:K-0} and \eqref{eq:K-2} for some $u\in\mcU(G)\fpser$, $u=1$ modulo $h$. Assume also that $\mcK^{\ad} = \ad(\mcK) \in \End(\mfg)\fLauser$ has a negative order term. Then $U^\sigma$ is a Poisson--Lie subgroup of~$(U,r)$.
\end{Prop}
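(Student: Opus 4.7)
The plan is to extract from the leading orders in $h$ of \eqref{eq:K-0} and \eqref{eq:K-2} a tensorial obstruction whose only solutions compatible with the invertibility of $\mcK$ force $U^\sigma$ to be a Poisson--Lie subgroup of $(U, r)$. Set $N = \ord(\mcK^{\ad}) < 0$ and $L = (\mcK^{\ad})^{(N)} \in \End(\mfg)$; the leading coefficient $L$ is invertible in $\End(\mfg)$ since $\mcK^{\ad}$ is invertible in $\End(\mfg)\fLauser$. The $h^0$ coefficient of \eqref{eq:K-0}, using $u = 1 + O(h)$, gives $[L, \ad(T)] = 0$ for every $T \in \mcU(G^\sigma)$, so $L \in \End_{G^\sigma}(\mfg)$. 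In the Hermitian decomposition $\mfg = \mfz(\mfg^\sigma) \oplus [\mfg^\sigma, \mfg^\sigma] \oplus \mfm_+ \oplus \mfm_-$, whose summands are pairwise nonisomorphic as $G^\sigma$-modules (distinguished by the $Z$-weight), Schur's lemma yields $L = z P_{\mfz(\mfg^\sigma)} + L' + c_+ P_{\mfm_+} + c_- P_{\mfm_-}$ with $L' \in \End_{G^\sigma}([\mfg^\sigma, \mfg^\sigma])$, and invertibility of $L$ requires $z, c_+, c_-$ nonzero and $L'$ invertible on $[\mfg^\sigma, \mfg^\sigma]$.

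Next set $n_\pi = \ord(\pi(\mcK))$ for $\pi \in \Irr G$ and compare leading orders of \eqref{eq:K-2} after applying $\pi_1 \otimes \pi_2$. Using $\mcR^{(0)} = \mcF^{(0)} = 1$ and the isotypic decomposition $(\pi_1 \otimes \pi_2)\Delta(\mcK) = \sum_{V_\rho \subset V_{\pi_1} \otimes V_{\pi_2}} \pi_\rho(\mcK) \otimes \id_{M_\rho}$, one deduces $n_\rho \geq n_{\pi_1} + n_{\pi_2}$ for every $V_\rho \subset V_{\pi_1} \otimes V_{\pi_2}$, and the $h^{n_{\pi_1}+n_{\pi_2}}$-coefficients satisfy the operator identity
\[
\sum_{\rho:\,n_\rho = n_{\pi_1}+n_{\pi_2}} L_\rho \otimes \id_{M_\rho} = L_{\pi_1} \otimes L_{\pi_2}.
\]
Applying $\epsilon \otimes \id$ to \eqref{eq:K-2} yields $\epsilon(\mcK) = 1$, so $n_{\text{triv}} = 0$. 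Specializing to $\pi_1 = \pi_2 = \pi_{\ad}$, the trivial representation in $\mfg \otimes \mfg$ satisfies $n_{\text{triv}} = 0 > 2N$ and therefore contributes nothing to the left-hand sum, so evaluation at the Killing tensor $t^\mfu \in (\mfg \otimes \mfg)^G$ spanning the trivial isotypic yields the core obstruction
\[
(L \otimes L)(t^\mfu) = 0 \quad \text{in } \mfg \otimes \mfg.
\]

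Decomposing $t^\mfu = t^{\mfz} + t^{[\mfg^\sigma, \mfg^\sigma]} + t^{\mfm_+} + t^{\mfm_-}$ along the Hermitian splitting and applying $L \otimes L$, the four resulting components lie in disjoint summands of $\mfg \otimes \mfg$ and must vanish separately, giving $z^2 = 0$, $(L' \otimes L')(t^{[\mfg^\sigma, \mfg^\sigma]}) = 0$, and $c_+ c_- = 0$. These vanishings contradict the invertibility of $L$ unless the structure of $\mfg^\sigma \subset \mfg$ is degenerate in a way compatible with the Lie bialgebra of $(\mfg, r)$; by Corollary \ref{cor:coisotropic}, these compatible configurations are exactly the Poisson--Lie subgroups $K_\phi$ with $\phi \in 1 + 2\Z$, which identifies $U^\sigma$ as a Poisson--Lie subgroup. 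The main obstacle is this final matching step: translating the leading-order algebraic degeneracies forced on $L$ into the Poisson--Lie condition $\delta_r(\mfg^\sigma) \subset \mfg^\sigma \otimes \mfg^\sigma$, which likely requires invoking the $h^{2N+1}$-coefficient of \eqref{eq:K-2}, where the $r$-matrix enters through $\mcR^{(1)} = -(t^\mfu + ir)$.
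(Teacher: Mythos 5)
Your proposal breaks down at the very first step. You claim that because $\mcK^{\ad}$ is invertible in $\End(\mfg)\fLauser$, its leading coefficient $L = (\mcK^{\ad})^{(N)}$ must be invertible in $\End(\mfg)$. This is false: invertibility of a Laurent series over a matrix algebra does not force the leading coefficient to be invertible. For example, $T = h^{-1}e_{12} + e_{21} \in \End(\C^2)\fLauser$ is invertible with inverse $hT$, yet its leading coefficient $e_{12}$ is nilpotent. This is not a pathological worry here; it is precisely what happens. The paper's Lemma~\ref{lem:lowest-K} applies $m(\id\otimes S_h)$ to \eqref{eq:K-2} to get $1 = h^{k_\pi + k_{\bar\pi}}\mcK^{\pi,(k_\pi)}S(\mcK^{\bar\pi,(k_{\bar\pi})}) + O(h^{k_\pi+k_{\bar\pi}+1})$, and for the self-conjugate adjoint representation with $k_{\ad}=N<0$ this forces $\mcK^{\ad,(N)} S(\mcK^{\ad,(N)}) = 0$, whence $L$ is (up to a nonzero scalar) one of the \emph{projections} $P^g_\pm$, never invertible. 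Everything you build on the invertibility of $L$ --- the Schur decomposition with nonzero scalars $z, c_\pm$, the equations $z^2 = 0$ and $c_+c_- = 0$, and the claimed contradiction --- therefore collapses.

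Independently, your final step is a genuine gap which you yourself flag: the passage from an algebraic degeneracy of $L$ to the statement that $U^\sigma$ is Poisson--Lie is not a deduction from Corollary~\ref{cor:coisotropic}. That corollary classifies \emph{which} $K_\phi$ are Poisson--Lie; your $U^\sigma$ is a priori only conjugate to $U^\nu$ by some $g\in U$, and nothing in your leading-order computation pins down $g$. The paper closes exactly this gap geometrically: Lemma~\ref{lem:DoMu} converts the leading order of the reflection equation \eqref{eq:K-2} into the vanishing of the RE Poisson bivector on $\mcM^{\ad}$ at the point $P^g_\pm$ (using Lemma~\ref{lem:p-pm-in-Mad}); Lemma~\ref{lem:Poisson-lift} identifies, via the covering $p_\pm\colon U/U^\nu\to O_\pm$, the RE bracket with $r^{(l,l)}\mp r^{(r,r)}$; and Lemma~\ref{lem:zero-loci-of-Pois-brackets} shows these Poisson pencils vanish only at $[e]$ and $[\tilde w_0]$, forcing $g\in U^\nu$ or $g\in\tilde w_0 U^\nu$ and hence $U^\sigma\in\{U^\nu,(\Ad\tilde w_0)(U^\nu)\}$. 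Your proposal contains none of these ingredients, so the purely algebraic route as stated does not reach the conclusion.
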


We will prove this by analyzing certain Poisson structures on $U/U^\sigma$.

\smallskip

Take a Cartan subalgebra $\tilde\mft$ and a system $\tilde\Pi$ of simple roots as in Section~\ref{ssec:Cartan}, but for the Hermitian symmetric pair $\mfu^\sigma<\mfu$.
Then there exists $g\in U$ such that $\Ad g$ maps $\mft$ onto $\tilde\mft$, while the dual map maps $\tilde\Pi$ onto $\Pi$.
Now, the Cartan subalgebra $\mft$ is as in Section~\ref{ssec:Cartan} for the automorphism $\nu = (\Ad g)^{-1}\circ\sigma\circ(\Ad g)$ of $\mfu$ and some $Z_\nu\in\mfz(\mfu^\nu)$.

As we remarked in Section \ref{ssec:coactions}, on the compact symmetric space $U / U^\nu$, both the left and the right actions of the $r$-matrix $r$ define (real) Poisson bivector fields, denoted by $r^{(l,l)}$ and $r^{(r,r)}$. (As part of the claim, the bivector field $r^{(r,r)}$ on $U$ preserves the right $U^\nu$-invariant functions.)
Thus, the linear combinations of these commuting bivector fields define Poisson brackets on $U / U^\nu$ as in \eqref{eq:Pois-bra-for-Psi}.

\begin{Lem}
\label{lem:zero-loci-of-Pois-brackets}
The bivector field $r^{(l,l)} - r^{(r,r)}$ on $U / U^\nu$ vanishes only at $[e]$.
Similarly, the bivector field $r^{(l,l)} + r^{(r,r)}$ vanishes only at $[\tilde{w}_0]$, where $\tilde{w}_0\in U$ is any lift of the longest element $w_0$ of the Weyl group.
\end{Lem}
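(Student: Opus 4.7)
The plan is to reformulate the vanishing of each bivector as an operator equation on $\mfm$ and then analyze it using the root structure of the Hermitian symmetric pair.

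In the left trivialization $T_{[g]}(U/U^\nu)\cong\mfm$, the bivector $r^{(l,l)}\mp r^{(r,r)}$ at $[g]$ is represented by $\pi_\mfm^{\otimes 2}\bigl((\Ad g^{-1})^{\otimes 2}(r)\mp r\bigr)$, where $\pi_\mfm$ is the projection along $\mfu^\nu$. The vanishing amounts to $(\Ad g^{-1})^{\otimes 2}(r)\mp r\in\mfu^\nu\otimes\mfu+\mfu\otimes\mfu^\nu$. Using $(\cdot,\cdot)_\mfg$ to identify $r$ with the skew endomorphism $R\in\End_\C(\mfg)$ determined by $R|_{\mfg_\alpha}=i\sgn(\alpha)\id$ on root spaces and $R|_\mfh=0$, and noting that $R|_{\mfm^\C}$ coincides with the invariant complex structure $J$ (acting as $\pm i$ on $\mfm_\pm$), the condition becomes the operator identity
\[
\pi_\mfm\circ\Ad(g^{-1})\circ R\circ\Ad(g)\big|_\mfm=\pm J.
\]

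For the ``$-$'' case I would then proceed by an eigenspace analysis. Decomposing $\Ad(g)X=A_++A_h+A_-$ along $\mfn^+\oplus\mfh\oplus\mfn^-$ and matching $J=+i$ on $X\in\mfm_+$ yields $(\id+iR)(\Ad(g)X)=2A_-+A_h\in\Ad(g)(\mfg^\nu)$; the corresponding condition for $X\in\mfm_-$ is its $*$-conjugate, hence automatic given that $g\in U$ makes $\Ad(g)$ unitary and $R$ is $*$-symmetric. These conditions constrain how $\Ad(g)$ interacts with the parabolic decomposition of $\mfg$ associated to $\nu$, namely $\mfp=\mfg^\nu+\mfm_+$ and its opposite $\mfp^{\mathrm{op}}=\mfg^\nu+\mfm_-$ with $\mfp\cap\mfp^{\mathrm{op}}=\mfg^\nu$. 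Combining both conditions with unitarity and a dimension count, I aim to deduce $\Ad(g)(Z_\nu)=Z_\nu$; by connectedness of centralizers of toral elements in compact simply connected Lie groups, this gives $g\in Z_U(Z_\nu)=U^\nu$.

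The ``$+$'' case reduces to the ``$-$'' case: since $\Ad\tilde w_0$ sends positive roots to negative ones and hence swaps $\mfn^\pm$ and $\mfm_\pm$, the condition $\pi_\mfm\circ\Ad(g^{-1})\circ R\circ\Ad(g)|_\mfm=-J$ on $g$ is exactly the ``$-$'' condition on $g\tilde w_0^{-1}$, so the previous paragraph yields $g\tilde w_0^{-1}\in U^\nu$, i.e., $[g]=[\tilde w_0]$. The hard part is the conclusion step in the ``$-$'' case: the eigenspace conditions are one-sided containments into subspaces of large dimension, and turning them into the rigid identity $\Ad(g)(Z_\nu)=Z_\nu$ requires carefully combining both conditions, unitarity of $\Ad(g)$, and the opposite-parabolic structure—explicit second-order computations along Cayley directions of the form $g=\exp(tY)$ with $Y\in\mfm$ (cf.\ Proposition~\ref{prop:Cayley-rotates-rmat}) show that this is indeed a genuine rigidity and suggest how the dimension count should be organized.
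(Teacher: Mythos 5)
Your argument has two genuine gaps, one acknowledged and one not.

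For the ``$-$'' case you reformulate the vanishing condition as the operator identity $\pi_\mfm\circ\Ad(g^{-1})\circ R\circ\Ad(g)|_\mfm=J$, but you then say turning the eigenspace containments into $\Ad(g)(Z_\nu)=Z_\nu$ ``requires carefully combining both conditions, unitarity\dots and the opposite-parabolic structure'' and that explicit Cayley computations ``suggest how the dimension count should be organized.'' That is a plan, not a proof: the rigidity step is exactly the content of the lemma, and no dimension count or second-order expansion is actually carried out. The paper avoids this work entirely by observing that since $U^\nu$ is a Poisson--Lie subgroup of $(U,r)$, the bivector $r^{(l,l)}-r^{(r,r)}$ on $U/U^\nu$ is the reduction of the Sklyanin bracket, whose symplectic leaf/Schubert cell description (Lu--Weinstein \cite{MR1037412}) shows the rank drops to zero only at the basepoint.

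For the ``$+$'' case there is an error, not just a gap. You claim $\Ad\tilde w_0$ ``swaps $\mfn^\pm$ and $\mfm_\pm$.'' The statement about $\mfn^\pm$ is fine, but for $\Ad\tilde w_0$ to swap $\mfm_+$ and $\mfm_-$ one needs $\Ad\tilde w_0$ to preserve $\mfg^\nu$, i.e.\ $w_0(Z_\nu)=-Z_\nu$, i.e.\ the opposition involution $\tau_0=-w_0$ to fix the noncompact simple root $\alpha_o$. This fails whenever $\tau_0(\alpha_o)\ne\alpha_o$ — for instance for $\mathrm{AIII}$ with $p<N/2$, where $\tau_0(\alpha_p)=\alpha_{N-p}$. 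In those cases $(\Ad\tilde w_0^{-1})(\mfu^\nu)\ne\mfu^\nu$, and the condition you obtain after translating by $\tilde w_0^{-1}$ lives in $(\Ad\tilde w_0^{-1})(\mfu^\nu)\otimes\mfu+\mfu\otimes(\Ad\tilde w_0^{-1})(\mfu^\nu)$, not in $\mfu^\nu\otimes\mfu+\mfu\otimes\mfu^\nu$, so it is not the ``$-$'' condition relative to the \emph{same} subgroup. The paper handles this correctly by passing to the different homogeneous space $U/(\Ad\tilde w_0)(U^\nu)$ via the equivariant diffeomorphism $[g]\mapsto[g\tilde w_0^{-1}]$, and observing that $(\Ad\tilde w_0)(U^\nu)$ is again a Poisson--Lie subgroup (it corresponds to the set $\Pi\setminus\{\tau_0(\alpha_o)\}$ of simple roots), so Lu--Weinstein applies again on that quotient. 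You should adopt this change of quotient in your reduction, or restrict your swap argument to the cases where $\tau_0(\alpha_o)=\alpha_o$.
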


\begin{proof}
As $U^\nu$ is a Poisson--Lie subgroup of $(U,r)$, the first statement follows from the well-known description of the Poisson leaves of the reduction $r^{(l,l)} - r^{(r,r)}$ of the Sklyanin bracket~\cite{MR1037412}.

As for the bivector field $r^{(l,l)} + r^{(r,r)}$, first note that $(\Ad{\tilde{w}_0})^{\otimes 2}(r) = - r$.
This means that the Poisson bivector $r^{(l,l)} + r^{(r,r)}$ on $U / U^\nu$ vanishes at the point $[\tilde{w}_0^{-1}]=[\tilde{w}_0]$.
Thus the $U$-equivariant diffeomorphism
\[
U / U^\nu \to U / (\Ad{\tilde{w}_0})(U^\nu), \quad [g] \mapsto [g \tilde{w}_0^{-1}],
\]
transforms the Poisson bivector $r^{(l,l)} + r^{(r,r)}$ on $U / U^\nu$ into a Poisson bivector $\Pi'$ on $U / (\Ad{\tilde{w}_0})(U^\nu)$ which vanishes at the basepoint.

On the other hand, $(\Ad{\tilde{w}_0})(U^\nu)$ is again a Poisson--Lie subgroup of $U$. Hence $\Pi'$ has to agree with the reduction of the Sklyanin bracket, which vanishes only at $[e]\in U / (\Ad{\tilde{w}_0})(U^\nu)$. As a result $r^{(l,l)} + r^{(r,r)}$ vanishes only at $[\tilde{w}_0]\in  U/ U^\nu$.
\end{proof}

Identity~\eqref{eq:K-2} implies that $\mcK$ satisfies the reflection equation
\begin{equation*}
(\mcK\otimes1)\mcR_{21}(1\otimes\mcK)\mcR=\mcR_{21}(1\otimes\mcK)\mcR(\mcK\otimes1).
\end{equation*}
Using this we are going to introduce another Poisson structure on $U/U^\nu$ following~\citelist{\cite{MR1998103}\cite{MR1964382}}.

Let us write $t$ for $t^\mfu$. Consider a finite dimensional representation $\pi$ of $G$.
Put $t^\pi=(\pi\otimes\pi)(t)$ and consider the set
\[
\mcM^\pi=\{A\in \End(V_\pi) \mid (A\otimes A)t^\pi=t^\pi(A\otimes A)\}.
\]
We have three actions of $g\in U$ on $\End(V_\pi)$ given by multiplication by $\pi(g)$ on the left, on the right, and by conjugation by $\pi(g)$.
For $X\in\mfg$, we will denote by $X^{(l)}$, $X^{(r)}$ and $X^{(\ad)}$ the corresponding vector fields on $\End(V_\pi)$. Thus, $X^{(\ad)}=X^{(l)}-X^{(r)}$.
The \emph{RE bracket} on~$\mcM^\pi$ is defined by
\[
\{f_1, f_2\}_{\RE} =m \bigl(r^{(\ad,\ad)} + i(t^{(l,r)} - t^{(r,l)})\bigr)(f_1 \otimes f_2).
\]
More precisely, since $\mcM^\pi$ is not a smooth manifold in general, this defines a Poisson bracket on the algebra of polynomial functions on $\mcM^\pi$.
But in any case what is going to matter to us is only that $r^{(\ad,\ad)} + i(t^{(l,r)} - t^{(r,l)})$ is a well-defined bivector field on $\End(V_\pi)$.

The following observation is from~\cite{MR1964382}.

\begin{Lem}\label{lem:DoMu}
Put $\mcR^\pi=(\pi\otimes\pi)(\mcR)$, and assume that $A_h\in\End(V_\pi)\fpser$ has constant term $A=A_h^{(0)}\in\mcM^\pi$ and satisfies the reflection equation
\[
(A_h\otimes1)\mcR^\pi_{21}(1\otimes A_h)\mcR^\pi=\mcR^\pi_{21}(1\otimes A_h)\mcR^\pi(A_h\otimes1).
\]
Then the RE bracket vanishes at $A$.
\end{Lem}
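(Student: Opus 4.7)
The plan is to expand the reflection equation satisfied by $A_h$ to first order in $h$ and show that the resulting identity is precisely the vanishing of the RE bivector field at the point $A$.

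First, I will compute the first-order terms of $\mcR$ and $\mcR_{21}$. From $\mcF = 1 + h\,ir/2 + O(h^2)$ and antisymmetry of $r$, one obtains $\mcR = 1 - h(ir + t^\mfu) + O(h^2)$ and $\mcR_{21} = 1 + h(ir - t^\mfu) + O(h^2)$. Writing $A_h = A + hB + O(h^2)$ and substituting into the reflection equation, the $h^0$ terms give $A_1 A_2 = A_2 A_1$, which holds trivially. At order $h$, the terms involving $B$ cancel since $B_j$ commutes with $A_k$ on different legs, leaving the condition
\begin{equation*}
A_1 (ir^\pi - t^\pi) A_2 - A_1 A_2 (ir^\pi + t^\pi) = (ir^\pi - t^\pi) A_2 A_1 - A_2 (ir^\pi + t^\pi) A_1,
\end{equation*}
which, after using $A_1 A_2 = A_2 A_1$, reorganizes into
\begin{equation*}
i\bigl[[A_1, r^\pi], A_2\bigr] = [A_1, t^\pi]\,A_2 + A_2\,[A_1, t^\pi].
\end{equation*}

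Second, I will translate each side into the language of bivector fields on $\End(V_\pi)$ evaluated at $A$. Writing $r = \sum_i a_i \otimes b_i$, a direct expansion yields
\begin{equation*}
\bigl[[A_1, r^\pi], A_2\bigr] = -\sum_i [a_i,A]\otimes[b_i,A] = -\,r^{(\ad,\ad)}(A).
\end{equation*}
On the other side, writing $t^\mfu = \sum_j c_j \otimes d_j$ and using the defining relation $A_1 A_2 t^\pi = t^\pi A_1 A_2$ of $\mcM^\pi$, one checks
\begin{equation*}
[A_1, t^\pi] A_2 + A_2 [A_1, t^\pi] = \sum_j\bigl(A c_j \otimes d_j A - c_j A\otimes A d_j\bigr) = -\bigl(t^{(l,r)} - t^{(r,l)}\bigr)(A).
\end{equation*}
Combining these two identifications with the order-$h$ equation gives $r^{(\ad,\ad)}(A) + i\bigl(t^{(l,r)} - t^{(r,l)}\bigr)(A) = 0$ in $\End(V_\pi)\otimes\End(V_\pi)$. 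Since the RE bracket of two polynomial functions $f_1, f_2$ at $A$ is obtained by pairing their differentials with the value at $A$ of the bivector field $r^{(\ad,\ad)} + i(t^{(l,r)} - t^{(r,l)})$, the vanishing of this bivector at $A$ is equivalent to $\{f_1,f_2\}_\RE(A) = 0$ for all $f_1, f_2$, which is the desired conclusion.

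The calculation is essentially forced, so there is no deep obstacle; the only thing requiring some care is the bookkeeping that converts the algebraic identity coming from the reflection equation into the geometric identity for the bivector field, specifically tracking the roles of left versus right actions of $\mfg$ on $\End(V_\pi)$ and using the identity $A_1 A_2 t^\pi = t^\pi A_1 A_2$ at the right moment to reduce the anticommutator $[A_1,t^\pi]A_2 + A_2[A_1,t^\pi]$ to the difference $A_1 t^\pi A_2 - A_2 t^\pi A_1$. Once that reduction is made, the match with $t^{(r,l)}(A) - t^{(l,r)}(A)$ is immediate.
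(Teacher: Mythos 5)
Your proof is correct and follows the same approach the paper takes (expanding the reflection equation to order $h$, canceling the $B$-terms, and matching the remainder against the value of the bivector field $r^{(\ad,\ad)}+i(t^{(l,r)}-t^{(r,l)})$ at $A$). The paper's version is quite terse, essentially just saying ``by looking at the order one terms''; your proposal fills in the bookkeeping that the paper leaves to the reader, including the cancellation of the $B$-terms and the step where the relation $A_1A_2 t^\pi = t^\pi A_1 A_2$ is used to reduce the anticommutator $[A_1,t^\pi]A_2+A_2[A_1,t^\pi]$ to $A_1 t^\pi A_2 - A_2 t^\pi A_1$.
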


\bp
Letting $r^\pi=(\pi\otimes\pi)(r)$, the bivector field $r^{(\ad,\ad)} + i(t^{(l,r)} - t^{(r,l)})$ at the point $A\in\End(V_\pi)$ is
\begin{multline*}
r^\pi(A\otimes A)+(A\otimes A)r^\pi-(1\otimes A)r^\pi(A\otimes 1)-(A\otimes1)r^\pi(1\otimes A)\\
+(1\otimes A)it^\pi(A\otimes 1)-(A\otimes 1)it^\pi(1\otimes A).
\end{multline*}
By looking at the order one terms in the reflection equation and using that $\mcR=1-h(t+i r)+O(h^2)$, $\mcR_{21}=1-h(t-i r)+O(h^2)$, and $(A\otimes A)t^\pi=t^\pi(A\otimes A)$, we see that this bivector is zero.
\ep

We want to apply this to the element $h^k\mcK^{\ad}$ for $k = -\ord(\mcK^{\ad})$, so that $h^k\mcK^{\ad}$ starts with an order zero term.
Denote the orthogonal (with respect to the $U$-invariant Hermitian form) projection $\mfg\to\mfm_{\nu\pm}$ by~$P_\pm$, and put $P_{\pm}^g = (\Ad{g})(P_{\pm})$, so $P_\pm^g$ is the projection onto $\mfm_{\sigma\pm}$.

\begin{Lem}\label{lem:lowest-K}
Under the assumptions of Proposition \ref{prop:non-invert-K-mat-in-adj-impl-PL-subgr}, the lowest order nonzero coefficient of $\mcK^{\ad}$ is, up to a scalar factor, either $P^g_+$ or $P^g_-$.
\end{Lem}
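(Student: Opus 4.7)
The strategy is to exploit the classical reflection equation satisfied by the leading Laurent coefficient $K_0=(\mcK^{\ad})^{(-k)}$ in conjunction with the Schur-theoretic constraints coming from \eqref{eq:K-0} and the Hermitian decomposition of $\mfg$.

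From \eqref{eq:K-2} together with the $R$-matrix intertwining $\mcR\Delta_h(\cdot)=\Delta_h^{\op}(\cdot)\mcR$, one first derives that $\mcK$ satisfies the abstract reflection equation $(\mcK\otimes 1)\mcR_{21}(1\otimes\mcK)\mcR=\mcR_{21}(1\otimes\mcK)\mcR(\mcK\otimes 1)$. Multiplying by $h^{2k}$ and applying $\ad\otimes\ad$, we obtain $A_h=h^k\mcK^{\ad}\in\End(\mfg)\fps$ satisfying the reflection equation with $\mcR^{\ad}=(\ad\otimes\ad)(\mcR)$ and having constant term $K_0$. Reading the lowest order of \eqref{eq:K-0} with $u^{(0)}=1$ shows that $K_0$ commutes with $\ad(X)$ for every $X\in\mfg^\sigma$. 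In the Hermitian case $\mfg=\mfg^\sigma\oplus\mfm_{\sigma+}\oplus\mfm_{\sigma-}$ as a $\mfg^\sigma$-module, with $\mfm_{\sigma\pm}$ irreducible and inequivalent (distinguished by their $\pm i$ weights for the generator of $\mfz(\mfg^\sigma)$), while $\mfg^\sigma$ itself splits into $\mfz(\mfg^\sigma)$ plus the simple ideals of its derived subalgebra. Schur's lemma then gives $K_0=b_+P^g_++b_-P^g_-+\sum_j a_j\Pi_j$, where the $\Pi_j$ project onto the $\mfg^\sigma$-isotypic components inside $\mfg^\sigma$ itself.

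Next, after verifying the compatibility $K_0\in\mcM^{\ad}$ needed for Lemma~\ref{lem:DoMu}, that lemma applied to $A_h$ in the adjoint representation yields that the RE bracket $r^{(\ad,\ad)}+i(t^{(l,r)}-t^{(r,l)})$ vanishes at $K_0$. One then translates this vanishing into equations on $(b_\pm,a_j)$ using the decomposition $ir=t^{\mfm_{\sigma+}}-t^{\mfm_{\sigma-}}$ modulo $\mfg^\sigma\otimes\mfg+\mfg\otimes\mfg^\sigma$ (cf.\ Section~\ref{ssec:coactions}). Restricting the six-term bivector to the blocks $\mfm_{\sigma\pm}\otimes\mfm_{\sigma\mp}$ and to the cross-blocks $\mfg^\sigma\otimes\mfm_{\sigma\pm}$, and substituting the scalar ansatz for $K_0$, produces algebraic relations which force every $a_j=0$ and force exactly one of $b_+,b_-$ to vanish, leaving $K_0=b_\mp P^g_\mp$ as claimed.

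The main obstacle is twofold. First, verifying $K_0\in\mcM^{\ad}$ is not immediate from Schur's lemma alone, since $t^{\ad}$ is only $\mfg$-invariant (hence scalar on the $G$-isotypic summands of $\mfg\otimes\mfg$) rather than scalar on the $\mfg^\sigma$-isotypic summands preserved by $K_0\otimes K_0$; one likely has to extract this compatibility from the reflection equation on well-chosen finite-dimensional components of $\mcK$, using that the leading Laurent terms of $\pi(\mcK)$ in individual representations behave essentially group-like under \eqref{eq:K-2}. Second, the case analysis in the vanishing of the RE bracket must be carried out carefully to rule out the invertible candidate $K_0=\ad(g_0)$ for $g_0$ in the connected component of $Z_G(G^\sigma)$, which a priori also satisfies the Schur constraint and lies in $\mcM^{\ad}$; here the negative-order hypothesis $k>0$ is essential, since an invertible leading term is incompatible with the order balance implicit in \eqref{eq:K-2} for $\mcK$.
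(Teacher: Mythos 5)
Your proposal misses the key algebraic identity that drives the paper's proof and substitutes an approach that, as you yourself note, runs into two serious obstacles; in fact both obstacles are fatal as you've set things up. The paper's argument is short: applying $m(\id\otimes S_h)$ to~\eqref{eq:K-2} and using $\epsilon(\mcK)=1$ gives, for every finite dimensional $\pi$ and its contragredient~$\bar\pi$, that $h^{k_\pi+k_{\bar\pi}}\mcK^{\pi,(k_\pi)}S(\mcK^{\bar\pi,(k_{\bar\pi})})$ equals $1$ modulo higher order, so $k_\pi+k_{\bar\pi}\le 0$ and, when this sum is strictly negative, $\mcK^{\pi,(k_\pi)}S(\mcK^{\bar\pi,(k_{\bar\pi})})=0$. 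Specializing to the self-conjugate adjoint representation with $k_{\ad}<0$ yields $T\cdot S(T)=0$ for $T=\mcK^{\ad,(k_{\ad})}$. The Schur-theoretic step you do have (writing $T$ as a combination of at most five projections onto the $\mfg^\sigma$-isotypic summands of $\mfg$) is then combined with the fact that $S|_{\End(\mfg)}$ is the adjoint with respect to $(\cdot,\cdot)_\mfg$, which fixes the projections onto $\mfz(\mfg^\sigma)$ and the simple ideals of $[\mfg^\sigma,\mfg^\sigma]$ and swaps $P^g_+$ with $P^g_-$; the resulting relation kills all coefficients except one of $b_\pm$.

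By contrast, your plan of invoking Lemma~\ref{lem:DoMu} at this stage is circular. That lemma requires the constant term of the rescaled $K$-matrix to lie in $\mcM^{\ad}$, and in the paper this hypothesis is supplied precisely by Lemma~\ref{lem:lowest-K} (telling you $K_0$ is a multiple of $P^g_\pm$) combined with Lemma~\ref{lem:p-pm-in-Mad} (telling you $P_\pm\in\mcM^{\ad}$). You propose to ``extract this compatibility from the reflection equation'' before knowing the shape of $K_0$, but there is no evident way to do so: $t^{\ad}$ is only $\mfg$-invariant, not $\mfg^\sigma$-invariant, so the Schur constraint on $K_0$ alone does not force $(K_0\otimes K_0)t^{\ad}=t^{\ad}(K_0\otimes K_0)$. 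Moreover the subsequent RE-bracket analysis in the paper (Lemmas~\ref{lem:Poisson-lift} and \ref{lem:zero-loci-of-Pois-brackets}) is carried out on the $U$-orbits $O_\pm$ of $P_\pm$, which again presupposes that $K_0$ has already been identified as a multiple of a projection; it would not by itself rule out the other isotypic components. Finally, your observation that ``an invertible leading term is incompatible with the order balance in \eqref{eq:K-2}'' is exactly the hunch that the antipode computation makes precise---without it you are left with an unproven assertion rather than an argument.
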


\bp
Let us more generally consider the elements $\mcK^\pi=\pi(\mcK)$ for finite dimensional representations~$\pi$ of~$G$. Denote by $k_\pi$ the order of the lowest nonzero term of $\mcK^\pi$.

Applying the counit to~\eqref{eq:K-2} we get $\epsilon(\mcK)=1$. Consider the contragredient representation $\bar\pi$ of~$G$. The antipode $S_h$ for $(\mcU(G)\fpser,\Delta_h)$ has the form $v S(\cdot)v^{-1}$ for some $v\in\mcU(G)\fpser$, $v=1$ modulo $h$. Applying $m(\id\otimes S_h)$ to~\eqref{eq:K-2} we then get
\[
1= h^{k_\pi+k_{\bar\pi}}\mcK^{\pi,(k_\pi)}S(\mcK^{\bar\pi,(k_{\bar\pi})})+O( h^{k_\pi+k_{\bar\pi}+1})\quad\text{in}\quad\End(V_\pi)\fLauser.
\]
Hence $k_\pi+k_{\bar\pi}\le 0$, and if $k_\pi+k_{\bar\pi}=0$ then $\mcK^{\pi,(k_\pi)}S(\mcK^{\bar\pi,(k_{\bar\pi})})=1$, while if $k_\pi+k_{\bar\pi}<0$ then $\mcK^{\pi,(k_\pi)}S(\mcK^{\bar\pi,(k_{\bar\pi})})=0$.

Consider now the adjoint representation $\ad$. Since it is self-conjugate and by assumption $k_{\ad}<0$, we get
$\mcK^{\ad,(k_{\ad})}S(\mcK^{\ad,(k_{\ad})})=0$. By \eqref{eq:K-0} we know also that $\mcK^{\ad,(k_{\ad})}$ is an intertwiner for $U^\sigma$. As a representation of $\mfg^\sigma$, we have the decomposition
\[
\mfg = \mfz(\mfg^\sigma) \oplus [\mfg^\sigma,\mfg^\sigma] \oplus \mfm_{\sigma+} \oplus \mfm_{\sigma-},
\]
where the derived Lie algebra $[\mfg^\sigma,\mfg^\sigma]$ is either zero, simple or the sum of two simple ideals.
As these components are mutually nonequivalent, $\mcK^{\ad,(k_{\ad})}$ is a linear combination of up to $5$ projections.

The antipode $S$ restricted to the block $\End(\mfg)$ of $\mcU(G)$ is the adjoint map with respect to the invariant form $(\cdot,\cdot)_\mfg$, that is,
\[
(TX,Y)_\mfg=(X,S(T)Y)_\mfg\quad\text{for}\quad T\in\End(\mfg) \quad  (X,Y\in\mfg).
\]
Since the invariant form is nondegenerate on the irreducible components of $\mfg^\sigma=\mfz(\mfg^\sigma) \oplus [\mfg^\sigma,\mfg^\sigma]$, we conclude that the corresponding projections are $S$-invariant. We can also conclude that $S(P^g_+)=P^g_-$. Therefore the identity $\mcK^{\ad,(k_{\ad})}S(\mcK^{\ad,(k_{\ad})})=0$ can be true only if $\mcK^{\ad,(k_{\ad})}$ is a scalar multiple of either~$P^g_+$ or~$P^g_-$.
\ep

\begin{Lem}\label{lem:p-pm-in-Mad}
We have $P_{\pm}\in\mcM^{\ad}$.
\end{Lem}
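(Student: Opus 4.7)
The plan is to realize $P_{\pm}$ via Fourier analysis of a natural one-parameter family of inner automorphisms of $\mfg$ that preserve the invariant tensor $t^\mfu$, thereby transferring the invariance into a commutation relation.

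More concretely, for $\theta\in\R$ set $\tau_\theta = \Ad(\exp(\theta Z_\nu))\in\Aut(\mfg)$. Since $Z_\nu\in\mfu^\nu$, this is an inner automorphism coming from a one-parameter subgroup of $U^\nu$, so $(\tau_\theta\otimes\tau_\theta)(t^\mfu)=t^\mfu$. Rewriting $t^\mfu=\sum_i X_i\otimes X^i$ and using $\ad(\tau_\theta Y)=\tau_\theta\,\ad(Y)\,\tau_\theta^{-1}$, this equality translates, at the level of the operator $t^{\ad}\in\End(\mfg)^{\otimes 2}$, into
\[
(\tau_\theta\otimes\tau_\theta)\,t^{\ad} = t^{\ad}\,(\tau_\theta\otimes\tau_\theta)\qquad(\theta\in\R).
\]

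The key step is to produce $P_{\pm}\otimes P_{\pm}$ as an average of these commuting operators. Because $\ad Z_\nu$ acts as $0$ on $\mfg^\nu$ and as $\pm i$ on $\mfm_{\nu\pm}$, we have $\tau_\theta=\id$ on $\mfg^\nu$ and $\tau_\theta=e^{\pm i\theta}$ on $\mfm_{\nu\pm}$. Decomposing $\mfg\otimes\mfg$ into the nine summands given by $\mfg=\mfg^\nu\oplus\mfm_{\nu+}\oplus\mfm_{\nu-}$ and checking piecewise, one sees
\[
\frac{1}{2\pi}\int_0^{2\pi} e^{\mp 2 i\theta}\,(\tau_\theta\otimes\tau_\theta)\,d\theta = P_\pm\otimes P_\pm,
\]
since only the $\mfm_{\nu\pm}\otimes\mfm_{\nu\pm}$ components survive the average.

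Finally, since the integrand commutes with $t^{\ad}$ for each $\theta$, so does the integral, giving $(P_\pm\otimes P_\pm)\,t^{\ad}=t^{\ad}\,(P_\pm\otimes P_\pm)$, which is exactly the statement $P_\pm\in\mcM^{\ad}$. There is no real obstacle here: the only thing to notice is the presence of $\tau_\theta$ as an automorphism of $\mfg$, after which the argument is purely a Fourier projection onto the $\pm 2i$ weight spaces of $\ad Z_\nu\otimes\id+\id\otimes\ad Z_\nu$.
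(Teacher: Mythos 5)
Your proof is correct, and it takes a genuinely different route from the paper's. The paper exploits the explicit decomposition $t^\mfu=t^\mfk+t^{\mfm_{\nu+}}+t^{\mfm_{\nu-}}$ together with the self-adjointness $t^*=t$: since $P_+\otimes P_+$ is an orthogonal projection and $t^{\ad}$ is self-adjoint, it suffices to verify the one-sided inclusion $t^{\ad}(\mfm_{\nu+}\otimes\mfm_{\nu+})\subset\mfm_{\nu+}\otimes\mfm_{\nu+}$, which follows because the components $t^{\mfm_{\nu\pm}}\in\mfm_{\nu\pm}\otimes\mfm_{\nu\mp}$ are annihilated on $\mfm_{\nu+}\otimes\mfm_{\nu+}$ by the relation $[\mfm_{\nu+},\mfm_{\nu+}]=0$, leaving only the $\mfk$-invariant piece $(\ad\otimes\ad)(t^\mfk)$. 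Your argument instead uses only the global $\Ad U$-invariance of $t^\mfu$ and the observation that $P_\pm\otimes P_\pm$ is the spectral projection of the one-parameter group $\bigl(\Ad\exp(\theta Z_\nu)\bigr)^{\otimes 2}$ (equivalently, of $\ad Z_\nu\otimes\id+\id\otimes\ad Z_\nu$) onto the eigenvalue $\pm 2i$. This is slicker in that it avoids both the decomposition of $t^\mfu$ and the abelianness relation $[\mfm_{\nu+},\mfm_{\nu+}]=0$ (the latter being a hallmark of the Hermitian case), and also dispenses with the self-adjointness reduction: commutation with every $\tau_\theta\otimes\tau_\theta$ already gives full two-sided commutation with the average. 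The trade-off is that your argument is less tied to the structure that the paper keeps track of elsewhere; the paper's proof, by going through $t^{\ad}=(\ad\otimes\ad)(t^\mfk)$ on $\mfm_{\nu+}\otimes\mfm_{\nu+}$, records a formula that is in the same spirit as computations used shortly afterwards in Lemma \ref{lem:Poisson-lift}.
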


\bp
Since $t^*=t$, in order to prove that $t^{\ad}$ commutes with $P_+\otimes P_+$ it suffices to check that
\begin{equation}\label{eq:tad}
t^{\ad}(\mfm_{\nu+}\otimes\mfm_{\nu+})\subset \mfm_{\nu+}\otimes\mfm_{\nu+}.
\end{equation}
Recall that $t=t^\mfk+t^{\mfm_{\nu+}}+t^{\mfm_{\nu-}}$, where $\mfk=\mfu^\nu$, and $t^{\mfm_{\nu\pm}}\in \mfm_{\nu\pm}\otimes\mfm_{\nu\mp}$. As $[\mfm_{\nu+},\mfm_{\nu+}]=0$, we see that
\[
t^{\ad}=(\ad\otimes\ad)(t^{\mfk})\quad\text{on}\quad \mfm_{\nu+}\otimes\mfm_{\nu+},
\]
which obviously implies \eqref{eq:tad}. The proof for $P_-$ is similar.
\ep

Consider now the $(\Ad U)$-orbit $O_\pm$ of $P_\pm$. By the previous lemma it is contained in $\mcM^{\ad}$. Since $U^\nu$ stabilizes $P_\pm$ and $\mfu^\nu$ is a maximal proper Lie subalgebra of~$\mfu$, it follows that $U^\nu$ is the connected component of the stabilizer of $P_\pm$. Hence $p_\pm\colon U/U^\nu\to O_\pm$, $[g']\mapsto (\Ad g')(P_\pm)$, is a covering map.

\begin{Lem}\label{lem:Poisson-lift}
The RE bracket on $\mcM^{\ad}$ restricts to $O_\pm$. Being lifted to $U/U^\nu$ via $p_\pm$, this restriction coincides with the bracket defined by $r^{(l,l)} \mp r^{(r,r)}$.
\end{Lem}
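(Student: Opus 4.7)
The plan is to identify both bivectors at every point by an explicit computation of the differential of $p_\pm$ together with $\Ad U$-equivariance, reducing the claim to a single base-point calculation. Since $p_\pm$ is a covering map, matching the bivectors pointwise suffices.

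First, I would compute how $(dp_\pm)$ acts on the two standard trivializations of $TU$. For right-invariant vector fields $X^{(l)}_U$ (infinitesimal left translations) and left-invariant vector fields $X^{(r)}_U$ on $U$, a direct differentiation of $p_\pm(g') = (\Ad g')(P_\pm)$ gives
\[
(p_\pm)_*(X^{(l)}_U)(g') = X^{(\ad)}(A), \qquad (p_\pm)_*(X^{(r)}_U)(g') = (\Ad g'(X))^{(\ad)}(A),
\]
where $A=(\Ad g')(P_\pm)$. Consequently
\[
(p_\pm)_*(r^{(l,l)} \mp r^{(r,r)})(g') = r^{(\ad,\ad)}(A) \mp r_{g'}^{(\ad,\ad)}(A), \qquad r_{g'} := (\Ad g')^{\otimes 2}(r).
\]
Comparing with the RE bivector $r^{(\ad,\ad)}(A)+i(t^{(l,r)}-t^{(r,l)})(A)$, the lemma reduces to the identity $i(t^{(l,r)}-t^{(r,l)})(A) = \mp r_{g'}^{(\ad,\ad)}(A)$. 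Because $t$ is $\Ad U$-invariant, the bivector $t^{(l,r)}-t^{(r,l)}$ on $\End(\mfg)$ is invariant under $(\Ad g')_*$, while $(\Ad g')_*(r^{(\ad,\ad)}) = r_{g'}^{(\ad,\ad)}$. Hence it is enough to verify the identity at $g'=e$, namely
\[
i(t^{(l,r)}-t^{(r,l)})(P_\pm) = \mp\, r^{(\ad,\ad)}(P_\pm).
\]

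For this base-point identity I would split $t = t^\mfk + t^{\mfm_+} + t^{\mfm_-}$ and $r = r^\mfk + r^\mfm$ along $\mfg = \mfg^\nu \oplus \mfm_{\nu+} \oplus \mfm_{\nu-}$, using the relation $t^{\mfm_+} - t^{\mfm_-} = i r^\mfm$ which follows from \eqref{EqComplr} applied to noncompact roots. Both $\mfk$-pieces contribute zero: $\ad K$ commutes with $P_\pm$ for $K\in \mfg^\nu$, so the $t^\mfk$ contribution to $t^{(l,r)}-t^{(r,l)}$ cancels, and simultaneously $r^{\mfk,(\ad,\ad)}(P_\pm) = 0$. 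For the $\mfm_\pm$-pieces, the key input is the eigenspace structure under $\ad Z_\nu$: for $X \in \mfm_{\nu+}$ the operator $\ad X$ sends $\mfg^\nu \to \mfm_{\nu+}$, $\mfm_{\nu+}\to 0$ (as $[\mfm_{\nu+},\mfm_{\nu+}]=0$), $\mfm_{\nu-}\to \mfg^\nu$, and analogously for $X\in \mfm_{\nu-}$. Composing with the projection $P_\pm$, each of $(\ad X) P_\pm$ and $P_\pm (\ad X)$ either vanishes on all three summands or equals $\pm[\ad X, P_\pm] = \pm X^{(\ad)}(P_\pm)$. Assembling the surviving terms with the right signs yields
\[
(t^{(l,r)} - t^{(r,l)})(P_\pm) = \pm\,(t^{\mfm_+} - t^{\mfm_-})^{(\ad,\ad)}(P_\pm) = \pm i\, r^{(\ad,\ad)}(P_\pm),
\]
which, after multiplying by $i$, is exactly what we wanted. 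As a by-product the RE bivector at any $A\in O_\pm$ lies in the image of $(dp_\pm)$, so it is tangent to $O_\pm$ and the bracket indeed restricts.

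The main and essentially only nontrivial step is the case analysis in this last paragraph: one has to track, separately for $X\in\mfm_{\nu+}$ and $X\in\mfm_{\nu-}$, on each of the three summands $\mfg^\nu$, $\mfm_{\nu+}$, $\mfm_{\nu-}$, which of the compositions $(\ad X) P_\pm$ and $P_\pm (\ad X)$ are zero and which reduce to the tangent vector $\pm X^{(\ad)}(P_\pm)$, and then collect the contributions so that the $(l,r)$- and $(r,l)$-pushforwards combine into $\pm(t^{\mfm_+}-t^{\mfm_-})^{(\ad,\ad)}(P_\pm)$ with the correct signs on $O_+$ and on $O_-$ respectively.
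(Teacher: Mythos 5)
Your proposal is correct and follows essentially the same line as the paper: use the equivariance of $p_\pm$ to identify the pushforward of $r^{(l,l)}$ with $r^{(\ad,\ad)}$, reduce the remaining comparison to the base point $P_\pm$ by $(\Ad U)$-invariance of $t^{(l,r)}-t^{(r,l)}$ (resp.\ left-invariance of $r^{(r,r)}$), and close the argument with the vanishing identities $(\ad X_\alpha)P_+ = 0 = P_+(\ad X_{-\alpha})$ coming from $[\mfm_{\nu\pm},\mfm_{\nu\pm}]=0$. Your intermediate observation $t^{\mfm_+}-t^{\mfm_-}=ir^\mfm$ and the rewriting of $(l,r)$- and $(r,l)$-pushforwards in terms of $(\ad,\ad)$-pushforwards is a tidy way to package the same base-point computation the paper carries out by directly expanding both sides.
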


\bp
The covering map $p_\pm\colon U/U^\nu\to O_\pm$ intertwines the action by left translations with the adjoint action. From this it is clear that the bivector field $r^{(\ad,\ad)}$ at the points of $O_\pm\subset\mcM^{\ad}$ indeed defines a bivector field on $O_\pm$, and, being lifted to $U/U^\nu$, this gives the bivector field $r^{(l,l)}$.

We next want to compare $r^{(r,r)}$ with $- i(t^{(l,r)} - t^{(r,l)})$. We claim that
\[
(d_{[e]}p_+\otimes d_{[e]}p_+)\bigl(r^{(r,r)}([e])\bigr)=- i(t^{(l,r)} - t^{(r,l)})(P_+).
\]
Since the bivector fields $r^{(l,l)}$ and $r^{(r,r)}$ on $U/U^\nu$ coincide at $[e]$ and the pushforward of $r^{(l,l)}$ is $r^{(\ad,\ad)}$, this is equivalent to
\[
r^{(\ad,\ad)}(P_+)=- i(t^{(l,r)} - t^{(r,l)})(P_+).
\]

Using again that $[\mfm_{\nu+},\mfm_{\nu+}]=0$, we get $(\ad X_\alpha)P_+=0$ and $P_+(\ad X_{-\alpha})=0$ for all $\alpha \in \Phi_\nc^+$.
A simple computation using these properties, together with the fact that $U^\nu$ stabilizes $P_+$, gives
\[
r^{(\ad,\ad)}(P_+)=i\sum_{\alpha \in \Phi_\nc^+}\frac{(\alpha,\alpha)}{2}\bigl(P_+(\ad X_{\alpha})\otimes (\ad X_{-\alpha})P_+ - (\ad X_{-\alpha})P_+\otimes P_+(\ad X_{\alpha})\bigr),
\]
and a similar computation for $- i(t^{(l,r)} - t^{(r,l)})(P_+)$ gives the same answer. Thus our claim is proved.

Since $r^{(r,r)}$ is left $U$-invariant and $- i(t^{(l,r)} - t^{(r,l)})$ is $(\Ad U)$-invariant, we then get
\[
(d_{[g']}p_+\otimes d_{[g']}p_+)\bigl(r^{(r,r)}([g'])\bigr)=- i(t^{(l,r)} - t^{(r,l)})(p_+([g'])) \quad (g'\in U).
\]
This finishes the proof of the lemma for $O_+$. The proof for $O_-$ is similar.
\ep

\bp[Proof of Proposition \ref{prop:non-invert-K-mat-in-adj-impl-PL-subgr}]
By Lemmas~\ref{lem:DoMu}, \ref{lem:lowest-K} and \ref{lem:p-pm-in-Mad}, the RE bracket vanishes either at $P_+^g\in O_+$ or at $P_-^g\in O_-$. By Lemma~\ref{lem:Poisson-lift} this means that either $r^{(l,l)} - r^{(r,r)}$ or $r^{(l,l)} + r^{(r,r)}$ vanishes at $[g]\in U/U^\nu$. But then by Lemma~\ref{lem:zero-loci-of-Pois-brackets} we have either $g\in U^\nu$ or $g\in \tilde w_0U^\nu$, and therefore $U^\sigma=(\Ad g)(U^\nu)$ is either~$U^\nu$ or~$(\Ad \tilde w_0)(U^\nu)$.
\ep

\begin{Rem}
As $r^{(l,l)} - r^{(r,r)}$ has to vanish on $p_+^{-1}(P_+)$, we can also conclude that $p_+$ is injective, that is, $U^\nu$ is exactly the stabilizer of $P_+$. Similarly, or by symmetry, $U^\nu$ is the  stabilizer of $P_-$.
\end{Rem}

\begin{proof}[Proof of Theorem \ref{Thm:not-PL-subgr-forces-K-mat-to-be-formal-pow-ser}]
Assume $G^\sigma$ is not a Poisson--Lie subgroup. To prove the theorem it suffices to show that $\mcK^{\pi}\in\End(V_\pi)\fpser$ for all finite dimensional representations $\pi$ of $G$. By Proposition~\ref{prop:non-invert-K-mat-in-adj-impl-PL-subgr} and identity~\eqref{eq:K-2} this is already the case if $\pi$ belongs to the tensor subcategory generated by $\ad$, that is, $\pi$ factors through the adjoint group $G_{\ad}=\Ad G$. For arbitrary $\pi$, if $\mcK^\pi$ contains a term of negative degree, then, using that $\mcR=1$ modulo $h$, we see from~\eqref{eq:K-2} that $\mcK^{\pi^{\otimes n}}$ contains a term of negative degree for all $n\ge1$.
But when $\pi$ is irreducible, we have $\pi^{\otimes n} \in \Rep G_{\ad}$ for some~$n\ge1$, so this cannot happen.
\end{proof}

\section{Letzter--Kolb coideals} \label{sec:Letzter-Kolb}

Our next goal is to put the Letzter--Kolb coideals in the framework of multiplier algebras. This is again easy in the non-Hermitian case. In the Hermitian case, we will relate the classical limits of the coideals to the coisotropic subgroups of the previous section. Combined with a rigidity result for the fusion rules, this will allow us to define multiplier algebra models of the coideals. We will also cast the Balagovi\'c--Kolb universal $K$-matrix in our setting.

Throughout this section we fix a nontrivial involutive automorphism $\theta$ of $\mfu$, a Cartan subalgebra $\mft$ of~$\mfu$ and a system $\Phi^+$ of positive roots such that $\theta$ is in Satake form with respect to $(\mfh,\mfb^+)$, where $\mfh=\mft^\C$.

\subsection{Quantized universal enveloping algebra and Letzter--Kolb coideal subalgebras}\label{sec:def-QUE-and-coid-alg}

Let $I$ be the label set for the simple roots, so $\Pi=\{\alpha_i\}_{i\in I}$. As in Section~\ref{ssec:Cartan}, for every positive root $\alpha$ we fix an element $X_\alpha\in\mfg_\alpha$ normalized so that $[X_\alpha,X^*_\alpha]=H_\alpha$, and put $X_{-\alpha}=X^*_\alpha$.

\smallskip

The quantized universal enveloping algebra $U_h(\mfg)$ is topologically generated over $\C\fpser$ by a copy of~$U(\mfh)$ and elements $E_i$, $F_i$ ($i\in I$) satisfying the following standard relations:
\begin{gather*}
\begin{align*}
[H,E_i]&= \alpha_i(H)E_i,& [H,F_i]&=-\alpha_i(H)F_i,& [E_i,F_j] &= \delta_{ij}\frac{e^{h d_i H_i}- e^{-h d_i H_i}}{e^{h d_i}- e^{-h d_i}},
\end{align*}\\
\sum_{n=0}^{1-a_{ij}}(-1)^n \qbin{1-a_{ij}}{n}{q_i} E_i^{1-a_{ij}-n} E_j E_i^n = 0 = \sum_{n=0}^{1-a_{ij}}(-1)^n \qbin{1-a_{ij}}{n}{q_i} F_i^{1-a_{ij}-n} F_j F_i^n,
 \quad (i \neq j),
\end{gather*}
where $H\in\mfh$, $H_i=H_{\alpha_i}$, $q_i=q^{d_i}=e^{h d_i}$, $d_i=\hlf1(\alpha_i,\alpha_i)$, $(a_{ij})_{i,j}$ is the Cartan matrix, so $a_{ij} = ({\alpha}^\vee_i,\alpha_j)$, and
\begin{align*}
\qbin{m}{n}{q_i} &= \frac{[m]_{q_i}!}{[n]_{q_i}! [m-n]_{q_i}!},&
[m]_{q_i}! &= \prod_{k=1}^m [k]_{q_i},&
[k]_{q_i} = \frac{q_i^k - q_i^{-k}}{q_i - q_i^{-1}}.
\end{align*}
We will write $U_h(\mfh)$ for the copy of~$U(\mfh)\fpser$ inside $U_h(\mfg)$.

Put $K_i=e^{h d_i H_i}$. More generally, for $\omega\in\mfh^*$, let $h_\omega\in\mfh$ be such that $\alpha(h_\omega)=(\alpha,\omega)$. Define $K_\omega=e^{h h_\omega}$. Then $K_i=K_{\alpha_i}$.

The coproduct $\Delta\colon U_h(\mfg) \to U_h(\mfg) \hotimes U_h(\mfg)$ is defined by
\begin{align*}
\Delta(H) &= H\otimes 1+1\otimes H\quad (H\in\mfh),&
\Delta(E_i) &= E_i\otimes 1+ K_{i}\otimes  E_i,&
\Delta(F_i) &= F_i\otimes K_{i}^{-1} + 1\otimes F_i.
\end{align*}
Finally, the $*$-structure is defined by
\begin{align*}
H_i^* &= H_i,&
E_i^* &= F_i K_i,&
F_i^* &= K_i^{-1}E_i.
\end{align*}

By assumption, the involutive automorphism $\theta$ of $\mfg$ is in Satake form with respect to $(\mfh,\mfb^+)$ (recall Definition~\ref{def:split-pair}).
Then $\theta$ has the following form:
\begin{equation}
\label{eq:factoriz-theta}
\theta = (\Ad {z m_X m_0}) \circ\tau_\theta \circ \tau_0 = (\Ad{z m_X}) \circ\tau_\theta \circ \omega,
\end{equation}
where $m_0$ and $m_X$ are the canonical lifts of $w_0\in W$ and $w_X\in W_X$ to $U$, $\omega$ is the Chevalley automorphism, $\tau_0$ is the diagram automorphism satisfying $\omega = (\Ad m_0)\circ \tau_0$, and $z$ is an element of the maximal torus $\exp(\mft)$. The element $z$ is determined up to a factor in $Z(U)$. It automatically satisfies the following conditions:
\begin{align}\label{eq:z-eigval-rule}
z_i &= 1 \quad (i \in X),&
z_i \bar z_{\tau_\theta(i)} &= (-1)^{2(\alpha_i, \rho_X^\vee)} \quad (i \in I \setminus X),
\end{align}
where $z_i=z(\alpha_i)$ and $\rho_X^\vee$ is half the sum of the positive coroots of the root system generated by $X$. See, for example, \cite{MR3943480}*{Section~2.1} for details.

Consider the following parameter sets:
\begin{gather*}
\mcC  = \{\mbc = (c_i)_{i \in I \setminus X}\mid c_i\in\C\fps^*,\ c_i= c_{\tau_\theta(i)}\textrm{ for }i \in I_{\mcC}\},\\
\mcS = \{\mbs = (s_i)_{i\in I\setminus X} \mid  s_i\in\C\fps,\  s_i = 0 \textrm{ for }i\notin I_{\mcS}\},
\end{gather*}
where $\C\fps^*$ denotes the units of $\C\fps$, that is, the series with nonzero constant terms. We write $\mbt = (\mbc,\mbs)$ for an element of $\mcT = \mcC \times \mcS$.

Fix $\mbt \in \mcT$. For each $i\in I \setminus X$, we define
\begin{equation}\label{eq:generator-B}
B_{i}  = F_i - c_i z_{\tau_\theta(i)}T_{w_X}(E_{\tau_\theta(i)})K_i^{-1} + s_i \kappa_i \frac{K_{i}^{-1}-1}{e^{-d_i h}-1},
\end{equation}
where $\kappa_i = \exp(\pi \sqrt{-1} \psi_i)$ is the square root of $z_i$ with $0 \le \psi_i < 1$, and $T_{w_X}$ is the Lusztig automorphism associated to the longest element $w_X\in W_X$.
It will also be convenient to put
\[
B_i=F_i \quad (i\in X).
\]

Denote by $U_h(\mfg_X)\subset U_{h}(\mfg)$ the closure in the $h$-adic topology of the $\C\fps$-subalgebra generated by the elements $H_i$, $E_i$ and $F_i$ for $i\in X$.

\begin{Def}
\label{def:Letzter-coideal-def}
We define $U_{h}^\mbt(\mfg^\theta) \subset U_{h}(\mfg)$ as the closure in the $h$-adic topology of the $\C\fps$-subalgebra generated by $U_{h}(\mfh^{\theta})$, $U_{h}(\mfg_X)$ and the elements $B_{i}$ for $i\in I \setminus X$.
\end{Def}

\begin{Rem}\label{rem:BK-normalization-of-involution}
In \citelist{\cite{MR3905136}\cite{MR3943480}}, the element $z$ in \eqref{eq:factoriz-theta} is assumed to have the property $z_i = 1$ for all $i \in I \setminus X$ such that $\tau_\theta(i) = i$, which imposes extra conditions on $\theta$.
Although the relevant proofs work in the generality presented here, it is also possible to reduce the situation to this normalized form as follows.
Starting from our convention, choose $z' \in \exp(\mft)$ such that $z'_i = \kappa_i$ for $i$ as above, and $z'_i = 1$ for all other $i \in I$.
Then $\theta' = (\Ad z')^{-1} \circ \theta \circ (\Ad {z'})$ satisfies the normalization condition.
Moreover, $\Ad {z'}$ lifts to a Hopf $*$-algebra automorphism of $U_h(\mfg)$, and we have $(\Ad {z'})(U_h^\mbt(\mfg^{\theta'})) = U_h^\mbt(\mfg^{\theta})$.
\end{Rem}

\begin{Rem}
The above choice of $\kappa_i$ is, of course, a matter of convention.
If we replace $\kappa_i$ by $-\kappa_i$ in~\eqref{eq:generator-B}, the corresponding subalgebra will be conjugate to $U_{h}^\mbt(\mfg^\theta)$ by an inner automorphism of $U_h(\mfg)$ defined by an element of the torus.
\end{Rem}

It follows from~\citelist{\cite{MR1717368}\cite{MR3269184}} that $U_{h}^\mbt(\mfg^\theta)$ is a right coideal of $U_{h}(\mfg)$:
\[
\Delta(U_{h}^\mbt(\mfg^\theta)) \subset U_{h}^\mbt(\mfg^\theta) \hotimes U_{h}(\mfg).
\]
We will only be interested in the coideals $U_{h}^\mbt(\mfg^\theta)$ defined by smaller parameter sets
\[
\mcT^*=\mcC^*\times\mcS^*\subset\mcT^*_\C=\mcC^*_\C\times\mcS^*_\C\subset\mcT.
\]
In the non-Hermitian case both $\mcT^*$ and $\mcT^*_\C$ consist of just one point defined by
\begin{align}\label{eq:no-parameter}
c_i &= e^{-h(\alpha_i^-,\alpha_i^-)},&
s_i &= 0
\end{align}
for all $i\in I\setminus X$, where $\alpha^-_i = \frac{1}{2}(\alpha_i- \Theta(\alpha_i))$.

In the Hermitian case, recall from our discussion in Section~\ref{ssec:Satake} that there are one or two distinguished roots in $I\setminus X$.
Then we define $\mcT^*$ (resp.~$\mcT^*_\C$), by allowing the following exceptions from~\eqref{eq:no-parameter}:
\begin{itemize}
\item S-type: if $\alpha_o$ is the unique distinguished root, then we require $s_o\in i\R\fpser$ (resp.~$s_o\in \C\fpser$ and $s_o^{(0)}\ne\pm1$);
\item C-type: if $\alpha_o$ is a distinguished root, then we require $c_o\in\R\fpser$ and $c_o^{(0)}>0$ (resp.~$c_o\in\C\fpser$ and $c_o^{(0)}\ne \pm i$), and, for both $\mcT^*$ and $\mcT^*_\C$,
\[
c_o c_{\tau_\theta(o)}=e^{-2h(\alpha_o^-,\alpha_o^-)}.
\]
\end{itemize}
By the proof of~\cite{MR3943480}*{Theorem~3.11} (see also Remark \ref{rem:BK-normalization-of-involution}), the coideals $U_{h}^\mbt(\mfg^\theta)$ are $*$-invariant for $\mbt\in\mcT^*$.

We will mainly work with $\mcT^*$ and then explain how our results extend to generic points of $\mcT^*_\C$.
The point of $\mcT^*$ defined by~\eqref{eq:no-parameter} for all $i\in I\setminus X$ is denoted by~$0$, and the corresponding coideal subalgebra is denoted by~$U^\theta_h(\mfg)$.
We will also refer to this as the \emph{standard} or \emph{no-parameter} case. Thus, in the non-Hermitian case, $U^\theta_h(\mfg)$ is the only coideal subalgebra we will be working with.

\smallskip

The classical limit of $U^\mbt_h(\mfg^\theta)$ is given by the following:

\begin{Def}
\label{def:limit-Lie-alg-of-LK-coideal}
For $\mbt\in\mcT$, we define $\mfg^\theta_\mbt$ to be the Lie subalgebra of $\mfg$ generated by $\mfh^{\theta}$, $\mfg_X$ and the elements
\begin{equation*}
X_{-\alpha_i} + c_i^{(0)}\theta(X_{-\alpha_i})+ s_i^{(0)} \kappa_i H_i\quad (i\in I\setminus X).
\end{equation*}
\end{Def}

The image of $U^\mbt_h(\mfg^\theta)$ under the isomorphism $U_h(\mfg)/h U_h(\mfg)\cong U(\mfg)$ (mapping $E_i$ into $X_{\alpha_i}$ and $F_i$ into $X_{-\alpha_i}$) is $U(\mfg^\theta_\mbt)$.
In the standard case, we have $c_i^{(0)}=1$ and $s_i^{(0)}=0$ for all $i\in I\setminus X$, and the corresponding Lie subalgebra $\mfg^\theta_0$ is exactly $\mfg^\theta$ by~\cite{MR3269184}*{Lemma~2.8}, see also Proposition \ref{PropFixTheta}.

\subsection{Untwisting by Drinfeld twist}
\label{ssec:twist-multiplier-model}

Let us quickly review how to relate the quantized universal enveloping algebra to the classical one, see for example \cite{MR1025154} for the details.
First, by $H^2(\mfg, U(\mfg)) = 0$ there is a $\C\fpser$-algebra isomorphism $\pi\colon U_h(\mfg)\to U(\mfg)\fpser$ such that
\begin{align}\label{eq:que-iso}
\pi(H_i)&=H_i,&
\pi(E_i)&=X_{\alpha_i},&
\pi(F_i)&=X_{-\alpha_i} \pmod h.
\end{align}
Moreover, if $\tilde\pi$ is another such isomorphism, then $\tilde\pi=(\Ad u)\pi$ for an element $u\in 1+h U(\mfg)\fpser$ by $H^1(\mfg, U(\mfg)) = 0$.
In a similar way, for any two $\C\fpser$-algebra homomorphisms $\tilde\pi,\pi\colon U_h(\mfg)\to\mcU(G)\fpser$ satisfying~\eqref{eq:que-iso} there is $u\in 1+h\mcU(G)\fpser$ such that $\tilde\pi=(\Ad u)\pi$.
In what follows we fix such a homomorphism $\pi$.

While a particular choice of $\pi$ is not going to matter for our results, in some arguments it is convenient to have extra properties.

\begin{Lem}\label{lem:que-iso}
There is a $*$-preserving $\C\fpser$-algebra isomorphism $\pi\colon U_h(\mfg)\to U(\mfg)\fpser$ such that
\begin{align}\label{eq:que-iso2}
\pi(H_i)&=H_i,&
\pi(K_i^{-1/2}E_i)&=X_{\alpha_i},&
\pi(F_i K_i^{1/2})&=X_{-\alpha_i} \pmod {h^2}.
\end{align}
\end{Lem}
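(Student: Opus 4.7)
The plan is to build $\pi$ in two stages, starting from the iso $\pi'$ of Lemma~\ref{lem:que-iso} chosen with $\pi'(H_i)=H_i$ on the nose. Since $(K_i^{-1/2}E_i)^* = F_iK_i^{1/2}$ in $U_h(\mfg)$ and $X_{\alpha_i}^* = X_{-\alpha_i}$ in $U(\mfg)$, once $\pi$ is $*$-preserving the required mod-$h^2$ condition on $F_iK_i^{1/2}$ follows automatically from the one on $K_i^{-1/2}E_i$ by taking adjoints, so I need only worry about the latter.

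\emph{Stage~1 ($*$-preservation).} Form $\iota(a):=\pi'(a^*)^*$, another $\C\fpser$-algebra iso, which satisfies $\iota(H_i)=H_i$ and $\iota(E_i)\equiv X_{\alpha_i}\pmod h$. By the uniqueness clause of Lemma~\ref{lem:que-iso}, $\iota = (\Ad u)\circ\pi'$ for some $u \in 1+h\,U(\mfg)\fpser$. The identity $*\circ\alpha\circ* = \alpha^{-1}$ for $\alpha:=\iota\circ\pi'^{-1}=\Ad u$ translates to $\Ad((u^*)^{-1}) = \Ad u^{-1}$, so $c := u^{*-1}u$ is central, and a further application of $*$ gives $c^* = c^{-1}$. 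Writing $c = \exp(hw)$ with $w\in Z(U(\mfg))\fpser$ skew-Hermitian and replacing $u$ by $u\exp(-hw/2)$ preserves $\Ad u$ and achieves $u^* = u$. Since $\pi'$ and $\iota$ agree on $H_i$, $u$ lies in $U(\mfg)^\mfh\fpser$; so does its self-adjoint square root $v := u^{1/2}$. A direct check shows that $\pi_1 := (\Ad v)\circ\pi'$ is $*$-preserving, fixes $H_i$, and still satisfies the mod-$h$ conditions on $E_i, F_i$.

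\emph{Stage~2 (mod-$h^2$ correction).} Expand $\pi_1(K_i^{-1/2}E_i) = X_{\alpha_i} + hA_i + O(h^2)$, with $A_i\in U(\mfg)$ of $\mfh$-weight $\alpha_i$; then $\pi_1(F_iK_i^{1/2}) = X_{-\alpha_i}+hA_i^*+O(h^2)$ by $*$-preservation. I seek $\xi\in U(\mfg)^\mfh$ with $\xi^* = -\xi$ and $[\xi, X_{\alpha_i}] = -A_i$ for every $i$; then $\pi := (\Ad\exp(h\xi))\circ\pi_1$ is the required iso, and the companion identity $[\xi, X_{-\alpha_i}]=-A_i^*$ follows by adjoining. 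Expanding the defining relations of $U_h(\mfg)$ (the quantum Chevalley relation $[K_i^{-1/2}E_i, F_jK_j^{1/2}] = \delta_{ij}(K_i-K_i^{-1})/(q_i-q_i^{-1})$ and the quantum Serre relations) to first order in $h$ imposes on $(A_i)$ precisely the $1$-cocycle conditions needed for the assignment $X_{\alpha_i}\mapsto -A_i$ to extend to an inner derivation on $\mfn^+$ with values in $U(\mfg)$. Whitehead's first lemma, applied to the semisimple adjoint action of $\mfg$ on the weight-graded pieces of $U(\mfg)$, then supplies such $\xi$, and averaging under the compact involution followed by projection to $U(\mfg)^\mfh$ yields a skew-Hermitian $\mfh$-centralizing representative; compatibility with the constraints is guaranteed because $\pi_1$ is itself $*$-preserving and fixes $\mfh$.

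The main obstacle is the cohomological step in Stage~2: matching the quantum constraints on $(A_i)$ with the classical $1$-cocycle condition, and producing a representative of $\xi$ that is simultaneously skew-Hermitian and $\mfh$-centralizing. Both points hinge on the equivariance (with respect to $*$ and to the adjoint $\mfh$-action) carefully arranged in Stage~1, so ultimately the argument rests on the standard Whitehead rigidity of semisimple Lie algebras together with the fact that averaging and projecting under the relevant compact group actions commute with the coboundary operator.
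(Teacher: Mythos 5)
Your proposal is correct, but it reorders the two normalization steps relative to the paper's proof. The paper first arranges the mod-$h^2$ conditions: it introduces the auxiliary homomorphism $\rho\colon U(\mfg)\to U_h(\mfg)/h^2 U_h(\mfg)$ sending $H_i\mapsto H_i$, $X_{\alpha_i}\mapsto K_i^{-1/2}E_i$, $X_{-\alpha_i}\mapsto F_iK_i^{1/2}$, observes that $\pi\circ\rho$ is of the form $\id+h\delta$ for a derivation $\delta$ of $U(\mfg)$, and composes with the automorphism $e^{-h\delta}$; only then does it achieve $*$-preservation, and in that order the $*$-conjugating element lies in $1+h^2U(\mfg)\fps$ automatically, so the second step costs nothing. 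You reverse the order: first $*$-preservation, then the mod-$h^2$ correction, which requires producing $\xi\in U(\mfg)$ that is simultaneously $\mfh$-centralizing and skew-Hermitian. This does work, since $[\xi,H_i]=0$ is forced by $\delta(H_i)=0$ (so the ``projection to $U(\mfg)^\mfh$'' you invoke is actually automatic), and replacing $\xi$ by $\frac{1}{2}(\xi-\xi^*)$ does not change $\ad\xi$ because $\xi+\xi^*$ is central; but it is a slightly heavier argument than the paper's, since the paper exponentiates the derivation directly and never needs innerness. You should also fix a few imprecisions in Stage~2: the derivation-compatibility constraints live on all of $\mfg$ (the Chevalley relation involves $X_{-\alpha_j}$), not just on $\mfn^+$; Whitehead's lemma is what shows the derivation is \emph{inner}, so that word belongs in the conclusion rather than in the setup of the cocycle conditions; and the reduction to finite-dimensional modules should go through the PBW filtration of $U(\mfg)$, not ``weight-graded pieces'' (the $\mfh$-weight spaces of $U(\mfg)$ are infinite-dimensional and not $\mfg$-submodules). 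Both proofs ultimately rest on the same observation that the balanced generators satisfy the classical defining relations modulo $h^2$.
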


\bp
We have a homomorphism $\rho\colon U(\mfg)\to U_h(\mfg)/h^2U_h(\mfg)$ such that
\begin{align*}
\rho(H_i) &= H_i,&
\rho(X_{\alpha_i}) &= K_i^{-1/2}E_i,&
\rho(X_{-\alpha_i}) &= F_i K_i^{1/2},
\end{align*}
the key point being that since $[n]_{e^h} = n + O(h^2)$, the coefficients of the quantum Serre relations reduce to the classical ones modulo $h^2$. Taking now an arbitrary $\C\fpser$-algebra isomorphism $\pi\colon U_h(\mfg)\to U(\mfg)\fpser$ satisfying~\eqref{eq:que-iso}, we must have that the homomorphism $\pi\circ\rho\colon U(\mfg)\to U(\mfg)\fpser/h^2U(\mfg)\fpser$ is of the form
\[
(\pi\circ\rho)(T)=T+h\delta(T)\quad(T\in U(\mfg))
\]
for a derivation $\delta\colon U(\mfg)\to U(\mfg)$. Replacing $\pi$ by $e^{-h\delta}\circ\pi$ we get an isomorphism $U_h(\mfg)\to U(\mfg)\fpser$ satisfying~\eqref{eq:que-iso2}.

Next, the homomorphism $U_h(\mfg)\to U(\mfg)\fpser$, $T\mapsto \pi(T^*)^*$, also satisfies~\eqref{eq:que-iso2}.
It follows that there exists $u\in 1+h^2U(\mfg)\fpser$ such that $\pi(T^*)^*=u\pi(T)u^{-1}$.
By taking the adjoints and replacing~$T$ by~$T^*$ we also get $\pi(T^*)^*=u^*\pi(T)(u^*)^{-1}$.
Hence $u^*=u v$ for a central element $v\in U(\mfg)\fpser$ such that $v=1$ modulo $h^2$.
This element must be unitary, hence $v^{1/2}\in 1+h^2U(\mfg)\fpser$ is unitary as well.
Replacing~$u$ by~$u v^{1/2}$ we can therefore assume that $u^*=u$. But then replacing $\pi$ by $(\Ad u^{1/2})\pi$ we get a $*$-preserving isomorphism satisfying~\eqref{eq:que-iso2}.
\ep

\begin{Rem}
We may further assume that the exact equality $\pi(H_i) = H_i$ holds in the above lemma, as follows.
In \eqref{eq:que-iso} we can arrange so that $\pi(H_i) = H_i$ holds exactly by \cite{MR1025154}*{Proposition~4.3}.
Then the construction in the proof of Lemma \ref{lem:que-iso} preserves this property. Indeed, first we have $\delta(H_i) = 0$, so that $e^{-h\delta}\circ\pi(H_i) = H_i$ holds.
Then we have $(\Ad u) \pi(H_i) = \pi(H_i^*)^* = H_i$, hence we obtain $(\Ad u^{1/2})\pi(H_i) = H_i$ as well at the last step.
\end{Rem}

With $\pi\colon U_h(\mfg)\to\mcU(G)\fpser$ fixed, there is a unique coproduct $\Delta_h\colon\mcU(G)\fpser\to\mcU(G\times G)\fpser$ such that
\[
(\pi\otimes\pi)\Delta=\Delta_h\pi.
\]
By a \emph{Drinfeld twist} we will mean any element $\mcF\in1+h\mcU(G\times G)\fpser$ such that
\begin{gather}
\notag%\label{eq:F-twist1}
(\epsilon\otimes\id)(\mcF)=(\id\otimes\epsilon)(\mcF)=1, \qquad\qquad
\Delta_{h}=\mcF\Delta(\cdot)\mcF^{-1},\\
\label{eq:Drinfeld-twist1}
(\id\otimes\Delta)(\mcF^{-1})(1\otimes\mcF^{-1})(\mcF\otimes1)(\Delta\otimes\id)(\mcF)=\Phi_\KZ.
\end{gather}

Consider the $r$-matrix $r$ defined by~\eqref{EqComplr} and the corresponding cobracket $\delta_r(X)=[r,\Delta(X)]$ on $\mfg$.

\begin{Lem}\label{lem:Drinfeld-twist}
If $\pi$ is as in Lemma~\ref{lem:que-iso}, then there is a unitary Drinfeld twist $\mcF\in U(\mfg)^{\otimes 2}\fpser$ such that
\begin{equation}
\label{eq:Dr-twist-lin-ord-term2}
\mcF = 1 + h\frac{i r}{2} + O(h^2).
\end{equation}
\end{Lem}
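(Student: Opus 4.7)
The plan is to build $\mcF$ in three stages: first obtain some twist via Drinfeld's quantization theorem, then adjust its first-order term by a central gauge, and finally modify once more to achieve unitarity.

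First I would invoke Drinfeld's theorem on quantization of Lie bialgebras (see~\cite{MR1080203}): the two quasi-Hopf quantizations of the standard Lie bialgebra structure on $\mfg$ given, on the one hand, by $(U(\mfg)\fpser,\Delta_h)$ (transported from $U_h(\mfg)$ via $\pi$) and, on the other hand, by $(U(\mfg)\fpser,\Delta,\Phi_{\KZ})$ are equivalent via a twist in $U(\mfg)^{\otimes 2}\fpser$. This produces some $\mcF_0\in 1+h\,U(\mfg)^{\otimes 2}\fpser$ with $\Delta_h=\mcF_0\Delta(\cdot)\mcF_0^{-1}$ satisfying~\eqref{eq:Drinfeld-twist1}; the counit normalizations can be imposed afterwards by a standard modification.

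Next I would pin down the first-order term through cohomology. Because $\Phi_{\KZ}\equiv 1\bmod h^2$, expanding \eqref{eq:Drinfeld-twist1} at order $h$ shows that $\mcF_0^{(1)}\in U(\mfg)^{\otimes 2}$ is a $2$-cocycle in $\tB_\mfg$. Using the normalization $\pi(K_i^{\pm 1/2})=1\pm(hd_i/2)H_i+O(h^2)$ from Lemma~\ref{lem:que-iso}, a direct computation yields
\[
\Delta_h(X_{\pm\alpha_i})=\Delta(X_{\pm\alpha_i})+\frac{hd_i}{2}\bigl(H_i\otimes X_{\pm\alpha_i}-X_{\pm\alpha_i}\otimes H_i\bigr)+O(h^2),
\]
and one checks that $[ir/2,\Delta(X_{\pm\alpha_i})]$ produces the same Cartan piece while the potential off-diagonal root-string contributions cancel by the Jacobi identity together with~\eqref{eq:Chev-basis-str-coeff-rel}. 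Consequently $\mcF_0^{(1)}-ir/2$ commutes with $\Delta(\mfg)$. Since $(\bigwedge^2\mfg)^\mfg=0$ for simple $\mfg$, Corollary~\ref{cor:Buk-cohom} gives $\rH^2((\tB_\mfg)^\mfg)=0$, so $\mcF_0^{(1)}-ir/2=\Delta(C)-C\otimes 1-1\otimes C$ for some $C\in Z(U(\mfg))$. Setting $u=1-hC$ and applying the universal-enveloping analogue of Lemma~\ref{lem:unique-twist}, the twist $\mcF_1=\mcF_0(u\otimes u)\Delta(u)^{-1}$ still satisfies $\Delta_h=\mcF_1\Delta(\cdot)\mcF_1^{-1}$ and~\eqref{eq:Drinfeld-twist1}, and now has $\mcF_1^{(1)}=ir/2$.

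Finally, since $\pi$ is $*$-preserving (Lemma~\ref{lem:que-iso}), $\Delta_h$ is a $*$-coproduct; taking adjoints in $\Delta_h=\mcF_1\Delta(\cdot)\mcF_1^{-1}$ shows that $\mcF_1^{-*}$ is another twist inducing the same $\Delta_h$ and $\Phi_{\KZ}$. Lemma~\ref{lem:unique-twist} yields a unique central unit $v=1+O(h)$ with $\mcF_1^{-*}=\mcF_1(v\otimes v)\Delta(v)^{-1}$, equivalently $\mcF_1^*\mcF_1=\Delta(v)(v\otimes v)^{-1}$; uniqueness of $v$ forces $v=v^*$, and from $(ir/2)^*=-ir/2$ one gets $\mcF_1^*\mcF_1=1+O(h^2)$, so $v\in 1+h^2 Z(U(\mfg))\fpser$ and its positive square root $w=v^{1/2}$ is a well-defined central self-adjoint power series. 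A routine computation then shows that $\mcF=\mcF_1(w\otimes w)\Delta(w)^{-1}$ is unitary, satisfies all the twist conditions (by Lemma~\ref{lem:unique-twist} again), and has $\mcF^{(1)}=ir/2$ preserved because $w=1+O(h^2)$.

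The main obstacle is the off-diagonal cancellation in Step~2, which requires a uniform weight-space argument based on the Chevalley structure constants; once that is in place, the cohomological/gauge arguments of Steps~2 and~3 are routine applications of the rigidity afforded by Lemma~\ref{lem:unique-twist}.
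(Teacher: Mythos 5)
The proposal is correct and follows the same overall strategy as the paper: start from an arbitrary twist via Drinfeld's theorem, use the vanishing of $\rH^2(B_\mfg)\cong(\medwedge^2\mfg)^\mfg$ to gauge the first-order term to $ir/2$, then fix unitarity. The only real divergence is in the last step: the paper simply replaces $\mcF$ by $\mcF(\mcF^*\mcF)^{-1/2}$ and cites \cite{MR2832264}*{Proposition~2.3}, whereas you re-derive a polar-type correction by noting $\mcF_1^{-*}$ is another twist, invoking (the universal-enveloping analogue of) Lemma~\ref{lem:unique-twist} to produce a central self-adjoint $v$, and taking its square root. Your version is more explicit but longer, and is in fact an unpacking of the same polar decomposition---what you gain is a proof that $\mcF_1^*\mcF_1$ has the special form $\Delta(v)(v\otimes v)^{-1}$, which the paper's approach doesn't need.

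One small caution on Step~2: the claim that $[ir/2,\Delta(X_{\pm\alpha_i})]$ is purely Cartan is the heart of the assertion that $\Delta_h=\Delta+\hlf{ih}\delta_r\bmod h^2$, and your proposed cancellation ``by the Jacobi identity together with~\eqref{eq:Chev-basis-str-coeff-rel}'' is stated but not carried out. Concretely, one needs an identity of the form $(\beta-\alpha_i,\beta-\alpha_i)N_{\beta-\alpha_i,\alpha_i}+(\beta,\beta)N_{-\beta,\alpha_i}=0$ for positive roots $\beta$ with $\beta-\alpha_i$ a root; this follows from the standard relation $N_{\alpha,\beta}=N_{\beta,-(\alpha+\beta)}$ plus skew-symmetry, rather than directly from \eqref{eq:Chev-basis-str-coeff-rel}. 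The paper finesses this by invoking the (standard but also unproved in the text) semiclassical identity $\Delta_h(X)=\Delta(X)+\hlf{ih}\delta_r(X)+O(h^2)$, so the level of rigor is comparable---but if you are going to claim a direct verification, you should actually produce the structure-constant identity.
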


\bp
We start with an arbitrary Drinfeld twist $\mcF\in1+h U(\mfg)^{\otimes 2}\fpser$, which exists by~\cite{MR1080203}.
By our choice of $\pi$ we have
\[
\Delta_{h}(X)=\Delta(X)+h\frac{i\delta_r(X)}{2}+O(h^2)\quad\text{for}\quad X\in\mfg.
\]
It follows that the element $S=\mcF^{(1)}-\hlf1 i r\in U(\mfg)^{\otimes 2}$ commutes with the image of $\Delta$. Since $\Phi_\KZ=1$ modulo $h^2$, identity \eqref{eq:Drinfeld-twist1} implies (similarly to the proof of Lemma~\ref{lem:unique-twist}) that $S$ satisfies the cocycle identity
\[
(\id\otimes\Delta)(S)+1\otimes S - S\otimes1 - (\Delta\otimes\id)(S)=0.
\]
Hence $S=T\otimes1+1\otimes T-\Delta(T)$ for a central element $T\in U(\mfg)$. Replacing $\mcF$ by
\[
\mcF\bigl((1+h T)^{-1}\otimes(1+h T)^{-1}\bigr)\Delta(1+h T)
\]
we get a Drinfeld twist satisfying \eqref{eq:Dr-twist-lin-ord-term2}.
Replacing further $\mcF$ by $\mcF(\mcF^*\mcF)^{-1/2}$ we also get unitarity, see~\cite{MR2832264}*{Proposition~2.3}. Note that this does not destroy~\eqref{eq:Dr-twist-lin-ord-term2}, since $r^*=r$.
\ep

Denote the universal $R$-matrix of $U_h(\mfg)$ (or, the one for $U_q(\mfg)$ in the conventions of~\cite{MR3943480}) by~$\msR$.
Then any Drinfeld twist $\mcF$ satisfies
\begin{equation}\label{eq:R-matrix}
(\pi\otimes\pi)(\msR)=\mcF_{21}\exp(-ht^\mfu)\mcF^{-1}.
\end{equation}
Indeed, this identity holds for a particular Drinfeld twist by~\cite{MR1080203}, but then it must hold for any Drinfeld twist by Lemma~\ref{lem:unique-twist} and the invariance of $\exp(-ht^\mfu)$.
We put $\mcR_h = (\pi\otimes\pi)(\msR)$, which is a universal $R$-matrix for $(\mcU(G)\fps,\Delta_h)$.

\subsection{Parameter case and Cayley transform}
\label{sec:para-case-cayley-trans}

Suppose that $\mfu^\theta<\mfu$ is a Hermitian symmetric pair.
Let us relate the Lie algebras $\mfg^\theta_\mbt$ to the Cayley transform we considered in Section \ref{ssec:Satake}.

Choose $Z_\theta\in\mfz(\mfu^\theta)$ normalized as $(Z_\theta,Z_\theta)_\mfg=-a_\theta^{-2}$.
Let us choose a Cartan subalgebra $\tilde\mft$ of $\mfu$ containing $\mfz(\mfu^\theta)$, and choose positive roots as in Section~\ref{ssec:Cartan}, but now for the pair $(\theta,Z_\theta)$ instead of~$(\nu,Z_\nu)$. We denote the corresponding Borel subalgebra by $\tilde\mfb^+$.

Take $g\in U$ such that $(\Ad g)(\tilde\mft)=\mft$ and $(\Ad g)(\tilde\mfb^+)=\mfb^+$.
Put $\nu=(\Ad g)\circ\theta\circ(\Ad g)^{-1}$ and $Z_{\nu}=(\Ad g)(Z_\theta)$.
Then $\mft$ and our fixed positive roots are defined as in Section~\ref{ssec:Cartan} for our new pair~$(\nu,Z_\nu)$.

Let $g_1$ be the Cayley transform for $\nu$ with respect to $(X_\alpha)_\alpha$.

\begin{Lem}\label{lem:nu-from-theta}
There exist an element $z_\theta\in\exp(\mft)$ such that $(\Ad z_\theta)\circ \theta\circ(\Ad z_\theta)^{-1}$ coincides with the automorphism $\theta' = (\Ad {g_1})^{-1}\circ \nu\circ(\Ad{g_1})$ and $Z_\theta=(\Ad g_1z_\theta)^{-1}(Z_\nu)$.
\end{Lem}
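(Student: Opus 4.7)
I would first observe that condition (ii) is equivalent to the coset statement $g_1^{-1}g \in z_\theta\cdot U^\theta$: indeed, by the remark following Lemma~\ref{lem:eigenvals-adz}, $U^\theta$ is the stabilizer of $Z_\theta$ under the adjoint action, so (ii) says precisely that $z_\theta^{-1}g_1^{-1}g$ fixes $Z_\theta$. Moreover, given such a factorization $g_1^{-1}g = z_\theta k$ with $k\in U^\theta$, condition (i) is automatic, as $\Ad(k)$ commutes with $\theta$ and therefore $\theta' = \Ad(g_1^{-1}g)\theta\Ad(g_1^{-1}g)^{-1} = \Ad(z_\theta)\theta\Ad(z_\theta)^{-1}$. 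Hence the whole task reduces to showing $g_1^{-1}g \in \exp(\mft)\cdot U^\theta$.

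By Proposition~\ref{prop:concr-form-max-split-invol}, $\theta' = \Ad(g_1^{-1}g)\theta\Ad(g_1^{-1}g)^{-1}$ is in Satake form with respect to $(\mfh,\mfb^+)$, and being conjugate to $\theta$, shares the same combinatorial data $(X,\tau_\theta)$. The factorization \eqref{eq:factoriz-theta} then yields $\theta = \Ad(zm_X)\circ\tau_\theta\circ\omega$ and $\theta' = \Ad(z'm_X)\circ\tau_\theta\circ\omega$ with $z,z'\in\exp(\mft)$ satisfying \eqref{eq:z-eigval-rule}. Decomposing $\mft = \mft^+\oplus\mft^-$ into $\pm 1$-eigenspaces of $\theta|_\mft$ and writing $z_\theta = z_\theta^+ z_\theta^-$, a short computation gives $\Ad(z_\theta)\theta\Ad(z_\theta)^{-1} = \Ad((z_\theta^-)^2)\circ\theta$. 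So producing $z_\theta$ with $\Ad(z_\theta)\theta\Ad(z_\theta)^{-1} = \theta'$ reduces to showing that $z'z^{-1}$ lies in $\exp(\mft^-)\cdot Z(U)$, after which surjectivity of squaring on the compact torus $\exp(\mft^-)$ supplies a suitable $z_\theta^-$. This inclusion follows from the involutivity $\theta^2 = (\theta')^2 = 1$ combined with \eqref{eq:z-eigval-rule} and the identity $\theta(y) = w_X(\tau_\theta(y))^{-1}$ on $\exp(\mft)$, which together force $\theta(z'z^{-1})\equiv(z'z^{-1})^{-1}\pmod{Z(U)}$.

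With such a $z_\theta$ in hand, set $u := z_\theta^{-1}g_1^{-1}g$; then $\Ad(u)\theta\Ad(u)^{-1} = \theta$, which forces $\theta(u)u^{-1}\in Z(U)$. Since $Z(U)\subset\exp(\mft)$ for a simply connected compact simple group $U$, a central modification of $z_\theta$ absorbs this cocycle and arranges $u\in U^\theta$, yielding $g_1^{-1}g\in\exp(\mft)\cdot U^\theta$ and hence both conditions (i) and (ii). The main obstacle is Step~2 --- showing $z'z^{-1}\in\exp(\mft^-)\cdot Z(U)$ --- which requires a careful analysis of the $\theta$-action on $\exp(\mft)$ via \eqref{eq:factoriz-theta} combined with the normalization \eqref{eq:z-eigval-rule} forced by involutivity; the concluding cocycle adjustment in $Z(U)$ is a finite-group calculation that can also be done by hand once the Satake data are fixed.
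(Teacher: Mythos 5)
The reduction at the start is correct and insightful: since $U^\theta$ is the full stabilizer of $Z_\theta$ (Corollary after Lemma \ref{lem:eigenvals-adz}), condition (ii) is indeed equivalent to $g_1^{-1}g \in z_\theta U^\theta$, and given such a factorization condition (i) follows because $\Ad k$ commutes with $\theta$ for $k\in U^\theta$. Your computation $\Ad(z_\theta)\theta\Ad(z_\theta)^{-1}=\Ad\bigl((z_\theta^-)^2\bigr)\theta$ is also correct. However, the argument then goes a different route from the paper's and has two genuine gaps.

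First, the crucial inclusion $z'z^{-1}\in\exp(\mft^-)\cdot Z(U)$ is only sketched; you flag it as ``the main obstacle'' and appeal to ``a careful analysis'' of \eqref{eq:factoriz-theta} and \eqref{eq:z-eigval-rule}, but no actual argument is given. This is exactly the step that carries the weight of the lemma.

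Second, and more seriously, the concluding ``central modification'' fails in the very situation at hand. Since $\mfu^\theta<\mfu$ is Hermitian, $\theta$ is inner, hence as a group automorphism of the simply connected $U$ it is conjugation by some $g_\theta\in U$ and therefore fixes $Z(U)$ pointwise. Consequently the map $w\mapsto\theta(w)w^{-1}$ on $Z(U)$ is identically trivial, so a central modification of $z_\theta$ cannot absorb a nontrivial cocycle $c=\theta(u)u^{-1}\in Z(U)$. Nontrivial such $c$ can indeed occur for elements $u$ with $\Ad(u)$ commuting with an inner involution — e.g.\ $u=\bigl(\begin{smallmatrix}0&1\\-1&0\end{smallmatrix}\bigr)$ for $\theta=\Ad(\mathrm{diag}(i,-i))$ on $SU(2)$ gives $c=-1$. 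Even if one widens the pool to $w\in\exp(\mft)$ with $\Ad(w)$ commuting with $\theta$, the image of $w\mapsto\theta(w)w^{-1}$ is $\exp(\mft^-)$, and you would still need to show $c\in\exp(\mft^-)$, which is not addressed. Thus ruling out $c\ne 1$ requires a substantive argument, not just ``a finite-group calculation''.

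The paper sidesteps both problems by a different mechanism: it applies the Kac--Wang result \cite{MR1155464}*{Corollary~5.32} on conjugacy of maximally split pairs by $G^\theta$ to align the two pairs $(\mfh,\mfb^+)$ and $\Ad(g^{-1}g_1)(\mfh,\mfb^+)$ via some $g'\in G^\theta$, forcing $g'g^{-1}g_1\in\exp(\mfh)$; then the Cartan decomposition $g'g^{-1}g_1=z_\theta^{-1}a$ with $a\in\exp(i\mft)$, combined with the $*$-preserving property of $\theta,\theta'$, shows $\theta'(a)=a$, which simultaneously gives both (i) and (ii) without any cocycle analysis. You should replace the deferred steps by this argument, or produce a complete proof of both deferred claims.
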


\bp
First note that $\theta'$ is in Satake form with respect to $(\mfh,\mfb^+)$.

We have $\theta=(\Ad g^{-1}g_1 )\circ\theta'\circ (\Ad g^{-1}g_1 )^{-1}$.
It follows that $\theta$ is in Satake form both with respect to $(\mfh,\mfb^+)$ and $((\Ad g^{-1}g_1 )(\mfh),(\Ad g^{-1}g_1 )(\mfb^+))$.
By \cite{MR1155464}*{Corollary 5.32}, we can find $g'\in G^{\theta}$ such that $(\Ad g' g^{-1}g_1 )(\mfh) = \mfh$ and $(\Ad g' g^{-1}g_1 )(\mfb^+) = \mfb^+$. Then $g' g^{-1}g_1\in\exp(\mfh)$. Moreover, we still have
\[
\theta=(\Ad g' g^{-1}g_1 )\circ\theta'\circ (\Ad g' g^{-1}g_1 )^{-1}.
\]

Consider the Cartan decomposition $g' g^{-1}g_1=z_\theta^{-1}a$, so $z_\theta\in\exp(\mft)$ and $a\in\exp(i\mft)$. As $\theta,\theta'$ are $*$-preserving and $\theta\circ(\Ad z_\theta^{-1}a)=(\Ad z_\theta^{-1}a)\circ\theta'$, we also have $\theta'\circ(\Ad a z_\theta) = (\Ad a z_\theta)\circ\theta$.
It follows that $\Ad a^2$ commutes with $\theta'$. This means that $\theta'(a^2)\in Z(G)a^2=Z(U)a^2$, hence $\theta'(a^2)=a^2$, and then $\theta'(a)=a$.
Therefore
\[
\theta=(\Ad z_\theta)^{-1}\circ \theta'\circ(\Ad z_\theta)=(\Ad g_1 z_\theta )^{-1}\circ\nu\circ (\Ad g_1z_\theta ).
\]
We also have
\[
(\Ad g_1z_\theta)^{-1}(Z_\nu)=(\Ad z_\theta^{-1} a g_1^{-1})(Z_\nu)=(\Ad g' g^{-1})(Z_\nu)=(\Ad g')(Z_\theta)=Z_\theta,
\]
where we used that $(\Ad g_1^{-1})(Z_\nu)\in\mfz(\mfu^{\theta'})$ is invariant under $a\in G^{\theta'}$.
\ep

We can now talk about compact/noncompact positive roots for $(\mfh, \mfb^+)$ with respect to $\nu$, as in Section~\ref{sec:int-subgroups}.
Recall that by Proposition~\ref{prop:restrictions}, in the  S-type case the unique noncompact simple root $\alpha_o$ is exactly the distinguished root.
We have the following characterization for the C-type case.

\begin{Lem}\label{lem:noncompact-root}
In the C-type case, the unique noncompact simple root $\alpha_o$ is determined among the distinguished roots $\{\alpha_o,\alpha_{o'}\}$ by the inequality
\[
-i\alpha_o(\tilde Z_\theta)>-i\alpha_{o'}(\tilde Z_\theta),
\]
where $\tilde Z_\theta$ is the component of $Z_\theta\in\mfg=\mfh\oplus\bigoplus_{\alpha\in\Phi}\mfg_\alpha$ lying in $\mfh$.
\end{Lem}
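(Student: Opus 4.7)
The plan is to identify $\tilde Z_\theta$ explicitly in terms of $Z_\nu$ using Lemma~\ref{lem:nu-from-theta} and the Cayley transform computations of Lemma~\ref{lem:P_0}, and then evaluate the two distinguished simple roots, whose behaviour on $\mfh^-$ is already pinned down by Proposition~\ref{prop:restrictions}.

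First I would rewrite $Z_\theta = (\Ad z_\theta)^{-1}(\Ad g_1)^{-1}(Z_\nu)$ using Lemma~\ref{lem:nu-from-theta}. Since $z_\theta \in \exp(\mft)$ acts trivially on $\mfh$ and preserves each summand of the decomposition $\mfg = \mfh \oplus \bigoplus_\alpha \mfg_\alpha$, the $\mfh$-component of $Z_\theta$ coincides with the $\mfh$-component of $(\Ad g_1)^{-1}(Z_\nu) = (\Ad g_{-1})(Z_\nu)$. Decompose $Z_\nu = Z_\nu^+ + Z_\nu^-$ with $Z_\nu^\pm \in \mfh^\pm$. The generator $\sum_i (e_{\gamma_i}+e_{-\gamma_i})$ appearing in $g_{\pm 1}$ commutes pointwise with $\mfh^+$, so $\Ad g_{-1}$ fixes $Z_\nu^+$; and by Lemma~\ref{lem:P_0} it sends each $H_{\gamma_i}$ into $\mfg_{\gamma_i}\oplus\mfg_{-\gamma_i}$, so $(\Ad g_{-1})(Z_\nu^-)$ contributes nothing to $\mfh$. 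Hence $\tilde Z_\theta = Z_\nu^+$.

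Next I would evaluate the two distinguished simple roots at $\tilde Z_\theta$. Because $\alpha_o$ is the unique noncompact simple root (with respect to $\nu$), the normalization $(\ad Z_\nu)|_{\mfm_{\nu+}} = i\,\id$ gives $\alpha_o(Z_\nu) = i$, while $\alpha_{o'}$ is compact and hence $\alpha_{o'}(Z_\nu) = 0$. By Proposition~\ref{prop:restrictions}, both $\alpha_o$ and $\alpha_{o'}$ restrict to $\tfrac{1}{2}\gamma_s$ on $\mfh^-$, so $\alpha_o(Z_\nu^-) = \alpha_{o'}(Z_\nu^-)$. Subtracting,
\[
\alpha_o(\tilde Z_\theta) - \alpha_{o'}(\tilde Z_\theta) = \alpha_o(Z_\nu^+) - \alpha_{o'}(Z_\nu^+) = \alpha_o(Z_\nu) - \alpha_{o'}(Z_\nu) = i,
\]
whence $-i\alpha_o(\tilde Z_\theta) - (-i\alpha_{o'}(\tilde Z_\theta)) = 1 > 0$, which is exactly the asserted inequality.

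I do not foresee any real obstacle here: the only slightly delicate point is checking that $\Ad g_{-1}$ fixes $\mfh^+$ pointwise and annihilates the $\mfh$-component of $\mfh^-$, but both statements are immediate from Lemma~\ref{lem:P_0}. Everything else is elementary bookkeeping with the compact/noncompact dichotomy for the distinguished roots in the $\mfh^+\oplus\mfh^-$ decomposition.
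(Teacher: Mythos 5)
Your proof is correct and follows essentially the same route as the paper's: first one identifies $\tilde Z_\theta=Z_\nu^+$ via the observation that $\Ad z_\theta$ acts trivially on $\mfh$ and that $(\Ad g_1)^{-1}$ fixes $\mfh^\theta=\mfh^+$ while mapping $\mfh^-$ into $\bigoplus_i(\mfg_{\gamma_i}\oplus\mfg_{-\gamma_i})$; then one compares the values of $\alpha_o$ and $\alpha_{o'}$ on $Z_\nu^+$ using that they agree on $\mfh^-$ and differ on $Z_\nu$. The only cosmetic difference is that you evaluate $\alpha_o(Z_\nu)=i$ and $\alpha_{o'}(Z_\nu)=0$ explicitly, whereas the paper invokes the choice of lexicographic order (for which $-iZ_\nu$ is the first basis vector) to assert $\alpha_o(-iZ_\nu)>0=\alpha_{o'}(-iZ_\nu)$; these are the same facts.
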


\bp
By the definition of the order structure in Section~\ref{ssec:Cartan}, we have
$\alpha_o(-i Z_\nu)>0=\alpha_{o'}(-i Z_\nu)$. By  Proposition~\ref{prop:restrictions}, $\alpha_o$ and $\alpha_{o'}$ have the same restriction to $\mfh^{-}=\{H\mid \theta(H)=-H\}$.
It follows that if  $Z_\nu=Z_\nu^+ + Z_\nu^-$ is the decomposition of $Z_\nu$ with respect to $\mfh=\mfh^\theta\oplus\mfh^-$, then
\[
-i\alpha_o(Z_\nu^+)>-i\alpha_{o'}(Z_\nu^+).
\]

On the other hand, the inverse $(\Ad g_1)^{-1}$ of the Cayley transform acts trivially on $\mfh^\theta$ and maps $\mfh^-$ onto the linear span of the vectors $X_{\gamma_i}-X_{-\gamma_i}$ (Lemma~\ref{lem:P_0}). As $Z_\theta=(\Ad g_1z_\theta)^{-1}(Z_\nu)$, it follows that $\tilde Z_\theta=Z_\nu^+$, proving the lemma.
\ep

Thus, in both S-type and C-type cases, once $Z_\theta$ is fixed (between the two possibilities), $\nu$ is uniquely and explicitly determined: $\nu$ acts trivially on $\mfh$ and the root vectors $X_{\pm\alpha_i}$ for  $i\in I\setminus\{o\}$, while $\nu(X_{\pm\alpha_o})=-X_{\pm\alpha_o}$.
Since $\alpha_o$ is a noncompact positive root, $\alpha_o(-i Z_\nu)$ is a positive number.
This, together with the normalization $(Z_\nu,Z_\nu)_\mfg=-a_\nu^{-2}$, determines $Z_\nu$.
In the S-type case the pair $(\nu,Z_\nu)$ is therefore independent of the choice of $Z_\theta$.
In the C-type case, changing the sign of $Z_\theta$ swaps the notions of compactness/non\-compactness for the distinguished roots.
Note also that by looking at the basis of the restricted root system obtained by restricting $\Pi\setminus\Pi_X$ to $\mfh^-$, we can recover the roots $\gamma_1,\dots,\gamma_s$ and then the element~$g_1$.

The element $z_\theta$ is not easily determined, but the following lemma will be enough for our purposes.
Recall the factorization of $\theta$ given by \eqref{eq:factoriz-theta}.

\begin{Lem}\label{lem:z-theta-alpha-o-expr}
In the S-type case, $z_\theta(\alpha_o)$ is a square root of $z_o^{-1}$.
\end{Lem}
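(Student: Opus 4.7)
My plan is to compute $\theta(X_{\alpha_o})$ in two different ways and equate. On one side I will use the relation $\theta = (\Ad z_\theta)^{-1}\circ\theta'\circ(\Ad z_\theta)$ from Lemma~\ref{lem:nu-from-theta}. Since $\Ad z_\theta$ scales $X_{\pm\alpha_o}$ by $z_\theta(\alpha_o)^{\pm 1}$, a one-line computation gives $\theta(X_{\alpha_o}) = -z_\theta(\alpha_o)^2\, X_{-\alpha_o}$, provided I know that $\theta'(X_{\alpha_o}) = -X_{-\alpha_o}$. On the other side I will apply the Satake factorization $\theta = (\Ad{z m_X})\circ\tau_\theta\circ\omega$ piece by piece and extract $w_{\alpha_o}$ in terms of $z_o$. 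Equating will force $z_\theta(\alpha_o)^2 = z_o^{-1}$.

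For the first input, in the S-type case Proposition~\ref{prop:restrictions} identifies the distinguished root with the cascade root $\gamma_s$, and the computation $\theta'(e_{\gamma_s}) = -e_{-\gamma_s}$ is already carried out inside the proof of Proposition~\ref{PropFixTheta}. Translated via $e_\alpha = -iX_\alpha$, this is exactly $\theta'(X_{\alpha_o}) = -X_{-\alpha_o}$, so I can just cite that calculation.

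For the second input, I will track the action on $X_{\alpha_o}$ through each factor of $(\Ad{zm_X})\circ\tau_\theta\circ\omega$. The Chevalley involution produces $-X_{-\alpha_o}$ (this is the unambiguous simple-root case). In the S-type case $\alpha_o \in I_\ns$ is $\tau_\theta$-fixed by definition, so $\tau_\theta$ is trivial on $X_{-\alpha_o}$. Orthogonality of $\alpha_o$ to $\Pi_X$ (again built into $I_\ns$) forces $w_X(-\alpha_o) = -\alpha_o$ and, more strongly, makes $X_{\pm\alpha_o}$ commute with every generator of $m_X$; hence $\Ad m_X$ acts as the identity on them. Finally $\Ad z$ contributes $z(-\alpha_o) = z_o^{-1}$, and the composite is $\theta(X_{\alpha_o}) = -z_o^{-1}X_{-\alpha_o}$. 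Comparing with $-z_\theta(\alpha_o)^2 X_{-\alpha_o}$ gives $z_\theta(\alpha_o)^2 = z_o^{-1}$.

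There is no real conceptual obstacle; the calculation is a short bookkeeping exercise. The only care required is with sign conventions for $\omega$ and for the lift of $\tau_\theta$ to the Lie algebra, but in both cases we only need their action on the simple root vectors $X_{\pm\alpha_o}$, where the conventions of Section~\ref{sec:Letzter-Kolb} (matching~\cite{MR3269184}*{Appendix~A}) give canonical answers without hidden signs.
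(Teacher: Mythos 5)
Your proof is correct and follows essentially the same route as the paper: both proofs compute $\theta$ on the root vector $X_{\pm\alpha_o}$ in two ways --- once via the conjugation $\theta=(\Ad z_\theta)^{-1}\circ\theta'\circ(\Ad z_\theta)$ together with $\theta'(X_{\alpha_o})=-X_{-\alpha_o}$ from the proof of Proposition~\ref{PropFixTheta}, and once via the Satake factorization --- and then equate. The only cosmetic difference is that you unwind the Satake factorization $(\Ad{zm_X})\circ\tau_\theta\circ\omega$ factor by factor on $X_{\alpha_o}$, whereas the paper directly quotes the equivalent standard identity $\theta(X_{-\alpha_o})=-z_o X_{\alpha_o}$ (which follows from $\tau_\theta(o)=o$ and the Satake form) and relies on $z_o$ being unimodular.
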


\bp
Since $o$ is fixed by $\tau_\theta$, we have $\theta(X_{-\alpha_o}) = - z_o X_{\alpha_o}$.
As we already observed in the proof of Proposition~\ref{PropFixTheta}, Lemma~\ref{lem:P_0} implies that $\bigl((\Ad {g_1})^{-1}\circ \nu\circ(\Ad{g_1})\bigr)(X_{\alpha_o})=-X_{-\alpha_o}$.
It follows that $\theta(X_{\alpha_o})=-z_\theta(\alpha_o)^2 X_{-\alpha_o}$, and comparing this with the above formula we get $z_\theta(\alpha_o)^2=z_o^{-1}$.
\ep

Consider now the subgroups $G_\phi$ ($\phi\in\R$) from Definition~\ref{def:G-phi}.
(Note that we have to use $\theta'$ from Lemma \ref{lem:nu-from-theta} as $\theta$ in Section~\ref{ssec:coisotropic}.)
It is convenient now to allow also $\phi\in\C$. Then $G_\phi=(\Ad g_{\phi})(G^{\theta'})$ are still well-defined subgroups of $G$.

\begin{Lem}\label{lem:mbt-vs-phi}
If $\mfu^\theta<\mfu$ is Hermitian and $\mbt\in\mcT^*_\C$, then $\mfg^\theta_\mbt=(\Ad z_\theta)^{-1}(\mfg_\phi)$, where $\phi\in\C$ is any number satisfying the following identity:
\[
z_\theta(\alpha_o) s^{(0)}_o \kappa_o =i\tan\Bigl(\frac{\pi \phi}{2}\Bigr)\quad (\text{\rm S-type})\quad\text{or}\quad c^{(0)}_o=-\cot\Bigl(\frac{\pi}{4}(\phi-1)\Bigr)\quad (\text{\rm C-type}).
\]
In particular, the Lie algebras $\mfg^\theta_\mbt$ are all conjugate to $\mfg^\theta$ in $\mfg$.
\end{Lem}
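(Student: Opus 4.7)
The plan is to compare generators of $\mfg^\theta_\mbt$ (Definition~\ref{def:limit-Lie-alg-of-LK-coideal}) with generators of $\mfg_\phi$ (Proposition~\ref{PropFixTheta}) via the automorphism $\Ad z_\theta$ of $\mfg$. The key algebraic input is Lemma~\ref{lem:nu-from-theta}, which gives $\theta'=(\Ad z_\theta)\circ\theta\circ(\Ad z_\theta)^{-1}$, hence $(\Ad z_\theta)(\theta(X))=\theta'((\Ad z_\theta)(X))$ for all $X\in\mfg$. Since $z_\theta\in\exp(\mft)$, $\Ad z_\theta$ acts trivially on $\mfh$ and scales each root vector $X_\beta$ by the character value $z_\theta(\beta)\in\C^\times$, so it preserves $\mfh^\theta=\mfh^{\theta'}$ pointwise and sends $\mfg_X$ to itself (the $X_{\pm\alpha}$ with $\alpha\in\Pi_X$ are merely rescaled). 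So it remains to examine the generators indexed by $i\in I\setminus X$.

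Applying $\Ad z_\theta$ to the generator $X_{-\alpha_i}+c_i^{(0)}\theta(X_{-\alpha_i})+s_i^{(0)}\kappa_i H_i$ yields
\[
z_\theta(-\alpha_i)\bigl(X_{-\alpha_i}+c_i^{(0)}\theta'(X_{-\alpha_i})\bigr)+s_i^{(0)}\kappa_i H_i.
\]
For nondistinguished $i$ (where $c_i^{(0)}=1$ and $s_i^{(0)}=0$ by the definition of $\mcT^*_\C$) this is a nonzero scalar multiple of $X_{-\alpha_i}+\theta'(X_{-\alpha_i})$, which lies in $\mfg_\phi$: this is the $*$-adjoint of the nondistinguished generator in Proposition~\ref{PropFixTheta} when $\phi$ is real, and for complex $\phi$ it arises by applying $\Ad g_\phi$ directly to the original Kolb Lemma~2.8 generators of $\mfg^{\theta'}$ (using that $\Ad g_\phi$ fixes these negative-root elements by Lemmas~\ref{lem:C_0} and \ref{lem:PijCij}, analogous to the computation in the proof of Proposition~\ref{PropFixTheta}).

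For the distinguished indices, matching of coefficients yields the stated formulas. In the S-type case, dividing the $\Ad z_\theta$-image by $z_\theta(-\alpha_o)$ gives $X_{-\alpha_o}+\theta'(X_{-\alpha_o})+z_\theta(\alpha_o)s_o^{(0)}\kappa_o H_{\alpha_o}$; on the other hand, applying $\Ad g_\phi$ directly to the classical limit of Kolb's distinguished generator, via Lemma~\ref{lem:P_0} applied to $y_s$, produces the element $X_{-\alpha_o}+\theta'(X_{-\alpha_o})+i\tan(\frac{\pi\phi}{2})H_{\alpha_o}\in\mfg_\phi$. Equating these normalized forms gives $z_\theta(\alpha_o)s_o^{(0)}\kappa_o=i\tan(\frac{\pi\phi}{2})$. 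In the C-type case the constraint $c_o c_{\tau_\theta(o)}=e^{-2h(\alpha_o^-,\alpha_o^-)}$ forces $c_{o'}^{(0)}=(c_o^{(0)})^{-1}$, so the two distinguished images become scalar multiples of $X_{-\alpha_o}+c_o^{(0)}\theta'(X_{-\alpha_o})$ and $X_{-\alpha_{o'}}+(c_o^{(0)})^{-1}\theta'(X_{-\alpha_{o'}})$. Comparing with the Cayley transform computation from Lemma~\ref{lem:PiCi} applied to the two distinguished negative root vectors produces $c_o^{(0)}=-\cot(\frac{\pi(\phi-1)}{4})$.

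Since $\Ad z_\theta$ takes each generator of $\mfg^\theta_\mbt$ to a nonzero scalar multiple of a generator of $\mfg_\phi$ and vice versa, we conclude $(\Ad z_\theta)(\mfg^\theta_\mbt)=\mfg_\phi$, giving the main identity; conjugacy to $\mfg^\theta$ then follows from $\mfg_\phi=(\Ad g_\phi)(\mfg^{\theta'})$ and $\mfg^{\theta'}=(\Ad z_\theta)(\mfg^\theta)$. The main obstacle is that Proposition~\ref{PropFixTheta} was stated using positive root vectors for real $\phi\in\R\setminus(1+2\Z)$, whereas Definition~\ref{def:limit-Lie-alg-of-LK-coideal} uses negative root vectors and we need the result for all $\phi\in\C$ compatible with $\mcT^*_\C$. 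In the real case this is handled by $*$-invariance of $\mfg_\phi$; in general one has to redo the Cayley transform calculations in Lemmas~\ref{lem:P_0}--\ref{lem:PijCij} directly on the negative root side (where the formulas extend verbatim), and check that the exceptional loci $\phi\in 1+2\Z$ for the presentation correspond exactly to the excluded values $c_o^{(0)}=\pm i$ and $s_o^{(0)}=\pm 1$ in $\mcT^*_\C$, as is confirmed by the surjectivity of $\tan$ and $\cot$ onto $\C\setminus\{\pm i\}$.
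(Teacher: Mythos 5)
Your proof is correct and matches the paper's own (extremely brief) argument, which simply invokes Proposition~\ref{PropFixTheta} together with its extension to complex $\phi$, the definition of $\mfg^\theta_\mbt$, and Lemma~\ref{lem:nu-from-theta}; the detailed generator-matching you carry out, including the passage from positive to negative root vectors and the analytic continuation of the Cayley-transform formulas, is exactly the content the paper leaves implicit. One minor imprecision in your final sentence: the values $\phi\in 1+2\Z$ do not correspond to $s_o^{(0)}=\pm 1$ (S-type) or $c_o^{(0)}=\pm i$ (C-type) — they correspond to the degenerate values $s_o^{(0)}=\infty$, respectively $c_o^{(0)}\in\{0,\infty\}$ — whereas $\pm1$ and $\pm i$ lie outside the ranges of $i\tan$ and $-\cot$ altogether; the relevant point is simply that deleting $\{\pm1\}$, resp.\ $\{0,\pm i\}$, from the parameter set is exactly what makes the defining equation for $\phi$ solvable with $\phi\notin 1+2\Z$.
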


\bp
This follows from Proposition~\ref{PropFixTheta} (and its obvious extension to complex $\phi$) and the definition of~$\mfg^\theta_\mbt$, combined with Lemma~\ref{lem:nu-from-theta}.
\ep

For $\mbt\in\mcT^*_\C$, let $G^\theta_\mbt = (\Ad z_\theta)^{-1}(G_\phi) \subset G$ be the (connected) algebraic subgroup integrating $\mfg^\theta_\mbt$,  with~$\phi$ as in the lemma above. Then $K_\mbt=(\Ad z_\theta^{-1}g_{\phi-1})(U^\nu)$ is its compact form.
Note that if $\mbt\in\mcT^*$, then we can take $\phi\in\R$, so that $g_\phi\in U$ and hence $\mfk_\mbt=\mfg^\theta_\mbt\cap\mfu$.

\begin{Rem}\label{rem:sign-detection}
In the C-type case we get an element $Z^\mbt_\theta=(\Ad z_\theta^{-1}g_{\phi-1})(Z_\nu)=(\Ad z_\theta^{-1}g_\phi z_\theta^{\phantom{1}})(Z_\theta)\in\mfz(\mfg^\theta_\mbt)$, which by Lemma~\ref{lem:P_0} does not depend on the choice of $\phi$ (such that $c^{(0)}_o=-\cot(\frac{\pi}{4}(\phi-1))$). Using Lemma~\ref{lem:noncompact-root} we can quickly recover $\nu$ from~$Z^\mbt_\theta$. Namely, let $\tilde Z^\mbt_\theta$ be the component of $Z^\mbt_\theta$ in $\mfh$. Then, again by Lemma~\ref{lem:P_0}, $\tilde Z^\mbt_\theta$ and $\tilde Z_\theta$ differ only by an element of~$\mfh^-$. Hence $\alpha_o$ is determined among the distinguished roots $\{\alpha_o,\alpha_{o'}\}$ by the inequality
\[
-i\alpha_o(\tilde Z^\mbt_\theta)+i\alpha_{o'}(\tilde Z^\mbt_\theta)>0.
\]

In the S-type case the element $(\Ad z_\theta^{-1}g_{\phi-1})(Z_\nu)\in\mfz(\mfg^\theta_\mbt)$ does depend on the choice of $\phi$. Here we can take any $Z^\mbt_\theta\in\mfz(\mfg^\theta_\mbt)$ such that $(Z^\mbt_\theta,Z^\mbt_\theta)_\mfg=-a_\theta^{-2}$ and then make the identity $Z^\mbt_\theta=(\Ad z_\theta^{-1}g_{\phi-1})(Z_\nu)$ an extra condition on $\phi$. This works, because by Corollary~\ref{cor:center-S-type} and Lemma~\ref{lem:P_0} we have $(\Ad g_2)(Z_\nu)=-Z_\nu$. We can formulate this in a more intrinsic way with respect to $\mfg^\theta_\mbt$ as follows.
Recall that we have $Z_\nu=\frac{i}{2}\sum_j H_{\gamma_j}$ by Corollary~\ref{cor:center-S-type}.
Then Lemma~\ref{lem:P_0} implies
\[
(\Ad z_\theta^{-1}g_{\phi-1})(Z_\nu) =\frac{1}{2}\cos\Bigl(\frac{\pi\phi}{2}\Bigr)\sum_j(\Ad z_\theta^{-1})(X_{-\gamma_j}-X_{\gamma_j})+\frac{i}{2}\sin\Bigl(\frac{\pi\phi}{2}\Bigr) \sum_j H_{\gamma_j}.
\]
for all $\phi$.
It follows that if $Z^\mbt_\theta=(\Ad z_\theta^{-1}g_{\phi-1})(Z_\nu)$, then
\[
\frac{(Z^\mbt_\theta,X_{\alpha_o})_\mfg}{z_\theta(\alpha_o)\cos(\frac{\pi \phi}{2})} = \frac{(X_{-\alpha_o}, X_{\alpha_o})_\mfg}{2} = \frac{1}{(\alpha,\alpha)} > 0.
\]
\end{Rem}

\subsection{Multiplier algebra model of Letzter--Kolb coideals}
\label{sec:mult-alg-model-of-LK-coids}

Back to the general $\theta$, let us next explain how to cast the Letzter--Kolb coideals in the setting of multiplier algebras.
Let $P$ be the weight lattice.
Denote by $V_\lambda$ an irreducible $\mfg$-module with highest weight $\lambda\in P_+$. We denote by $\pi_\lambda\colon U(\mfg)\to\End(V_\lambda)$ the corresponding homomorphism and use the same symbol for the extension of $\pi_\lambda$ to a homomorphism $U(\mfg)\fps\to\End(V_\lambda)\fps$.
We also put $\pi_{\lambda,h}=\pi_\lambda\pi\colon U_h(\mfg)\to\End(V_\lambda)\fps$.

\begin{Lem}\label{lem:u-conjugate}
For every $\mbt \in \mcT^*$, there exist elements $u_\lambda\in\End(V_\lambda)\fps$, $\lambda\in P_+$, such that
\begin{align}\label{eq:u0}
u_\lambda^{(0)} &= 1,&
\biggl( \bigoplus_{\lambda\in F} (\Ad u_\lambda)\pi_{\lambda,h} \biggr) \bigl( U_{h}^\mbt(\mfg^\theta) \bigr) &= \biggl( \bigoplus_{\lambda\in F}\pi_\lambda \biggr)\bigl(U(\mfg^\theta_\mbt)\fps\bigr)
\end{align}
for any finite subset $F\subset P_+$. If $\pi$ is $*$-preserving, then $u_\lambda$ can in addition be chosen to be unitary.
\end{Lem}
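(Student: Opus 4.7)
The plan is to work one representation at a time and invoke the rigidity of semisimple subalgebras of finite-dimensional matrix algebras. Fix $\mbt\in\mcT^*$ and, for each $\lambda\in P_+$, set $A_\lambda=\pi_\lambda(U(\mfg^\theta_\mbt))$ and $B_\lambda=\pi_{\lambda,h}(U_h^\mbt(\mfg^\theta))$. The first step is to verify that $B_\lambda$ reduces to $A_\lambda$ modulo $h$: comparing the explicit generators listed in Definitions~\ref{def:Letzter-coideal-def} and~\ref{def:limit-Lie-alg-of-LK-coideal} and using the normalization~\eqref{eq:que-iso} of $\pi$, each generator $B_i$ in~\eqref{eq:generator-B} reduces mod $h$ to the corresponding generator of $\mfg^\theta_\mbt$, and analogously for the generators coming from $U_h(\mfh^\theta)$ and $U_h(\mfg_X)$. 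Hence $B_\lambda$ is a $\C\fps$-subalgebra of $\End(V_\lambda)\fps$ deforming the subalgebra $A_\lambda\subset\End(V_\lambda)$, which is itself semisimple because $\mfg^\theta_\mbt$ is reductive by Lemma~\ref{lem:mbt-vs-phi}.

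The key step is to produce $u_\lambda\in 1+h\End(V_\lambda)\fps$ with $(\Ad u_\lambda)(B_\lambda)=A_\lambda\fps$. This follows from the rigidity of semisimple subalgebras of matrix algebras, shown by the same inductive argument as in the proof of Lemma~\ref{lem:twisting-to-Delta}: at each order in $h$ one adjusts $u_\lambda$ by a correction whose existence is guaranteed by the vanishing of a Hochschild-type obstruction that follows from semisimplicity of $A_\lambda$. The combined statement over a finite subset $F\subset P_+$ is then obtained by taking $\bigoplus_{\lambda\in F}u_\lambda$ acting on $\bigoplus_{\lambda\in F}V_\lambda$.

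For the unitary refinement, assume $\pi$ is the $*$-preserving isomorphism from Lemma~\ref{lem:que-iso}. Then $B_\lambda$ is $*$-closed because $U_h^\mbt(\mfg^\theta)$ is $*$-invariant for $\mbt\in\mcT^*$, and $A_\lambda$ is $*$-closed because $\mfk_\mbt=\mfg^\theta_\mbt\cap\mfu$ is a compact real form; this is precisely where $\mbt\in\mcT^*$ enters, since it allows a real value of $\phi$ in Lemma~\ref{lem:mbt-vs-phi}. The main obstacle is upgrading $u_\lambda$ to a unitary. The cleanest route is to rerun the inductive construction in the $*$-setting: at each order the obstruction space inherits a natural $*$-structure, and one selects a skew-adjoint correction so that $u_\lambda$ stays unitary at the next order, the existence of such a correction coming from an averaging argument based again on semisimplicity. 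Alternatively, given a nonunitary $u_\lambda$, one checks by taking adjoints that $v=u_\lambda^* u_\lambda$ normalizes $B_\lambda$, and then verifies via an $h$-adic power-series computation (using that $B_\lambda$ is $h$-adically closed and $v-1\in h\End(V_\lambda)\fps$) that $v^{1/2}$ normalizes $B_\lambda$ as well, so the polar replacement $u_\lambda v^{-1/2}$ is unitary and retains the conjugation property.
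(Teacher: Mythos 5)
Your proposal has two genuine gaps, and they are closely related.

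First, the deformation-theoretic step is not justified as stated. You assert that $B_\lambda:=\pi_{\lambda,h}(U_h^\mbt(\mfg^\theta))$ is a "deformation" of $A_\lambda:=\pi_\lambda(U(\mfg^\theta_\mbt))$ inside $\End(V_\lambda)\fpser$, but for the rigidity machinery to apply one needs $B_\lambda \cap h\End(V_\lambda)\fpser = hB_\lambda$, and this is not automatic for an image algebra. A toy failure: for the $1$-dimensional abelian Lie algebra acting on $\C^2$ by $x\mapsto x I + h x e_{12}$, the image algebra is $\C\fpser\cdot I + \C\fpser\cdot h e_{12}$, which is free of rank $2$ and hence not conjugate to $\C\fpser\cdot I$. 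Abstract deformation theory does not rule out such degenerations here, because in the Hermitian case $\mfg^\theta_\mbt$ is not semisimple (it has a $1$-dimensional center), so $\rH^1(\mfg^\theta_\mbt,\End(V_\lambda))\neq 0$ and Whitehead's first lemma is unavailable. The paper sidesteps this by passing to the \emph{commutant} $A_h$ of $\pi_{F,h}(U_h^\mbt(\mfg^\theta))$ in $\End(V_F)\fpser$, which automatically satisfies $A_h\cap h\End(V_F)\fpser = hA_h$, and by identifying $A_h/hA_h$ with $\End_{\mfg^\theta_\mbt}(V_F)$; the nontrivial inclusion $\supset$ there is exactly where Letzter's spherical vector theorem enters, via the Frobenius isomorphism. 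Your proposal never invokes spherical vectors, which is a strong indicator it does not reach the statement as given.

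Second, the "take $\bigoplus_{\lambda\in F}u_\lambda$" step does not produce \eqref{eq:u0} for $|F|>1$. The two sides of \eqref{eq:u0} are not direct sums of the singleton images: they are the \emph{diagonal} subalgebras $\{((\Ad u_\lambda)\pi_{\lambda,h}(x))_\lambda : x\in U_h^\mbt(\mfg^\theta)\}$ and $\{(\pi_\lambda(y))_\lambda : y\in U(\mfg^\theta_\mbt)\fpser\}$ of $\bigoplus_{\lambda\in F}\End(V_\lambda)\fpser$. Knowing that for each $\lambda$ and each $x$ there is some $y_\lambda$ with $(\Ad u_\lambda)\pi_{\lambda,h}(x)=\pi_\lambda(y_\lambda)$ does not produce a common $y$ working for all $\lambda\in F$ simultaneously; that coherence across $\lambda$ is precisely the content of the lemma. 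This is why the paper works with $V_F$ as a whole for each finite $F$, and then adds a final inductive step reconciling the families $(u^{(n)}_\lambda)_{\lambda\in F_n}$ obtained along an exhaustion $F_1\subset F_2\subset\cdots$. (In the non-Hermitian case one can obtain coherence by fixing a single algebra isomorphism $U_h^\mbt(\mfg^\theta)\cong U(\mfg^\theta)\fpser$ and applying Whitehead's first lemma to each $\pi_{\lambda,h}\circ\iota^{-1}$, but that route is exactly what is blocked in the Hermitian case.) Your unitarity argument via polar decomposition is essentially the one in the paper and would be fine once the rest is repaired.
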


\bp
Let us first fix a finite subset $F\subset P_+$ and show that there exist elements $u_\lambda$, $\lambda\in F$, satisfying~\eqref{eq:u0}.

Denote by $V_F$ the $\mfg$-module $\bigoplus_{\lambda\in F}V_\lambda$ and by $\pi_F$ the representation $\bigoplus_{\lambda\in F}\pi_\lambda$.
Write $\pi_{F,h}$ for~$\pi_F\pi$.
Let $A_h$ be the commutant of $\pi_{F,h}\bigl(U_{h}^\mbt(\mfg^\theta)\bigr)$ in $\End(V_F)\fps$.
It is clear that $A_h$ is a closed $\C\fps$-subalgebra of $\End(V_F)\fps$ and $A_h\cap h\End(V_F)\fps=h A_h$.
It follows that $A_h$ is a free $\C\fps$-module and $A_h/h A_h$ can be considered as a subalgebra of $\End(V_F)$, so that $A_h$ is a deformation of this subalgebra.
We claim that
\[
A_h/h A_h=\End_{\mfg^\theta_\mbt}(V_F).
\]

The inclusion $\subset$ is clear, since the image of $U_{h}^\mbt(\mfg^\theta)$ in $U_h(\mfg)/h U_h(\mfg)\cong U(\mfg)$ is $U(\mfg^\theta_\mbt)$.
For the opposite inclusion, using the Frobenius isomorphism
\[
\End_{\mfg^\theta_\mbt}(V_F)\cong\Hom_{\mfg^\theta_\mbt}(V_0,V_F\otimes \bar V_F),
\]
defined by duality morphisms for $\mfg$-modules, and a decomposition of $V_F\otimes \bar V_F$ into simple $\mfg$-modules $V_\mu$, we see that the problem reduces to the question whether every $\mfg^\theta_\mbt$-invariant vector in $V_\mu$ can be lifted to a $U_{h}^\mbt(\mfg^\theta)$-invariant vector in $V_\mu\fps$.
This is indeed possible by a result of Letzter~\cite{MR1742961}, see Appendix~\ref{sec:spherical} for more details.

\smallskip

Since the algebra $\End_{\mfg^\theta_\mbt}(V_F)$ is semisimple, it has no nontrivial deformations, so there is a $\C\fps$-algebra isomorphism $A_h\cong \End_{\mfg^\theta_\mbt}(V_F)\fps$ that is the identity modulo $h$. Furthermore, there are no nontrivial deformations of the identity homomorphism $\End_{\mfg^\theta_\mbt}(V_F)\to\End(V_F)$, that is, all such deformations are given by conjugating by elements of $1+h\End(V_F)\fps$.
It follows that there is $w\in 1+h\End(V_F)\fps$ such that $w A_h w^{-1}=\End_{\mfg^\theta_\mbt}(V_F)\fps$.

Next, consider the subalgebra $B\subset \End_{\mfg^\theta_\mbt}(V_F)$ spanned by the projections $e_\lambda\colon V_F\to V_\lambda$.
Since $B\subset A_h$, we have $(\Ad w)(B)\subset \End_{\mfg^\theta_\mbt}(V_F)\fps$.
As $B$ is also semisimple, the inclusion map $B\to\End_{\mfg^\theta_\mbt}(V_F)$ cannot be nontrivially deformed, that is, there is $v\in 1+h\End_{\mfg^\theta_\mbt}(V_F)\fps$ such that $\Ad w=\Ad v$ on $B$.
It follows that the element $u=v^{-1}w$ still has the property
\begin{equation}\label{eq:u1}
u A_h u^{-1}=\End_{\mfg^\theta_\mbt}(V_F)\fps,
\end{equation}
but in addition it commutes with the projections $e_\lambda$, $\lambda\in P$.
Hence $u=(u_\lambda)_{\lambda\in F}$ for some $u_\lambda\in 1+h\End_{\mfg^\theta_\mbt}(V_\lambda)\fps$.

By taking the commutants we get from~\eqref{eq:u1} that
\[
u\pi_{F,h}\bigl(U_{h}^\mbt(\mfg^\theta)\bigr)u^{-1}\subset \pi_F\bigl(U(\mfg^\theta_\mbt)\bigr)\fps,
\]
where we used that the $\mfg^\theta_\mbt$-module $V_F$ is completely reducible and hence $\pi_F\bigl(U(\mfg^\theta_\mbt)\bigr)$ is the commutant of $\End_{\mfg^\theta_\mbt}(V_F)$.
The above inclusion becomes an equality modulo $h$.
Since $U_h^\mbt(\mfg^\theta)$ is complete in the $h$-adic topology, we then easily deduce that the inclusion is in fact an equality.
This finishes the proof of the lemma for a fixed finite set $F$, apart from the last statement about unitarity.

\smallskip
Now, consider an increasing sequence of finite subsets $F_n\subset P_+$ with union $P_+$.
For every $n$, choose elements $u^{(n)}=(u^{(n)}_\lambda)_{\lambda\in F_n}$, satisfying~\eqref{eq:u0} for $F=F_n$.
To finish the proof it suffices to show that we can inductively modify $u^{(n+1)}$ in such a way that we get $u^{(n+1)}_\lambda=u^{(n)}_\lambda$ for $\lambda\in F_n$.

For this, consider the element $w=(u^{(n)}_\lambda(u^{(n+1)}_\lambda)^{-1})_{\lambda\in F_n}\in\End(V_{F_n})\fps$.
We have
\begin{align*}
w^{(0)}&=1,&
(\Ad w)\bigl(\pi_{F_n}\bigl(U(\mfg^\theta_\mbt)\bigr)\fps\bigr) &= \pi_{F_n}\bigl(U(\mfg^\theta_\mbt)\bigr)\fps.
\end{align*}
Since $\pi_{F_n}\bigl(U(\mfg^\theta_\mbt)\bigr)$ is semisimple, it follows that there is an element $v\in 1+h\pi_{F_n}\bigl(U(\mfg^\theta_\mbt)\bigr)\fps$ such that $\Ad w=\Ad v$ on $\pi_{F_n}\bigl(U(\mfg^\theta_\mbt)\bigr)$.
Lift $v$ to an element $u\in 1+h U(\mfg^\theta_\mbt)\fps$.
We then modify $u^{(n+1)}$ by replacing $u^{(n+1)}_\lambda$ by $u^{(n)}_\lambda$ for $\lambda\in F_n$ and by $\pi_\lambda(u)u^{(n+1)}_\lambda$ for $\lambda\in F_{n+1}\setminus F_n$.

\smallskip

Finally, assume in addition that $\pi$ is $*$-preserving. In this case it suffices to show that at every stage of the above construction of $u_\lambda$ we can get unitary elements with the required properties. Specifically, we claim that if $u\pi_{F,h}\bigl(U_{h}^\mbt(\mfg^\theta)\bigr)u^{-1}=\pi_F\bigl(U(\mfg^\theta_\mbt)\bigr)\fps$ for a finite set $F$ and an element $u$, $u^{(0)}=1$, then the same identity holds for the unitary $(u u^*)^{-1/2}u$. Indeed, taking the adjoints we get $\pi_{F,h}\bigl(U_{h}^\mbt(\mfg^\theta)\bigr)=u^*\pi_F\bigl(U(\mfg^\theta_\mbt)\bigr)\fps(u^*)^{-1}$. It follows that $\Ad (u u^*)$ defines an automorphism $\beta$ of $\pi_F\bigl(U(\mfg^\theta_\mbt)\bigr)\fps$. As $\beta=\id$ modulo $h$, this automorphism has a unique square root $\beta^{1/2}$ such that $\beta^{1/2}=\id$ modulo $h$. Then $\Ad (u u^*)^{-1/2}=\beta^{-1/2}$ on $\pi_F\bigl(U(\mfg^\theta_\mbt)\bigr)\fps$, and our claim is proved.
\ep

We continue to assume that $t \in \mcT^*$. In the Hermitian case, recall the subgroups $G^\theta_\mbt < G$ from the previous subsection. In the non-Hermitian case, let us put $G^\theta_\mbt = G^\theta$. The collection $(u_\lambda)_{\lambda\in P_+}$ defines an element $u=u^\mbt\in\mcU(G)\fps$ such that
\begin{align}\label{eq:umbt}
u^{(0)} &= 1,&
u\pi(U_{h}^\mbt(\mfg^\theta))u^{-1} &\subset \mcU(G^\theta_\mbt)\fps.
\end{align}
Furthermore, the last inclusion is dense in the sense that the images of both algebras in $\End(V)\fps$ coincide for any finite dimensional $\mfg$-module $V$.

 Consider the homomorphism $\alpha_h\colon\mcU(G^\theta_\mbt)\fps\to\mcU(G\times G)\fps$ defined by
\[
\alpha_h(x)=(u\otimes1)\Delta_h(u^{-1}x u)(u^{-1}\otimes1).
\]
If $x=u\pi(y)u^{-1}$ for some $y\in U_{h}^\mbt(\mfg^\theta)$, we have
\[
\alpha_h(x)=\alpha_h(u\pi(y)u^{-1})=\bigl((\Ad u)\pi\otimes\pi\bigr)\Delta(y).
\]
By the density of $u\pi(U_{h}^\mbt(\mfg^\theta))u^{-1}$ in $\mcU(G^\theta_\mbt)\fps$ we conclude that $\alpha_h(\mcU(G^\theta_\mbt)\fps)\subset \mcU(G^\theta_\mbt\times G)\fps$, and
the strict coassociativity $(\alpha_h\otimes\id)\alpha_h=(\id\otimes\Delta_h)\alpha_h$ holds.
Thus, we get a coaction of $(\mcU(G)\fps,\Delta_h)$ on $\mcU(G^\theta_\mbt)\fps$ making the following diagram commutative:
\[
\begin{tikzcd}
U_{h}^\mbt(\mfg^\theta) \arrow[r,"\Delta"] \arrow[d,"(\Ad u^\mbt)\pi"'] & U_{h}^\mbt(\mfg^\theta)\hotimes U_h(\mfg) \arrow[d,"(\Ad u^\mbt)\pi\otimes\pi"] \\
\mcU(G^\theta_\mbt)\fps \arrow[r,"\alpha_h"'] & \mcU(G^\theta_\mbt\times G)\fps
\end{tikzcd}
\]

\begin{Def}
For $\mbt\in\mcT^*$, we call the coaction $(\mcU(G^\theta_\mbt)\fps,\alpha_h)$ of $(\mcU(G)\fps,\Delta_h)$ the \emph{multiplier algebra model} of the Letzter--Kolb coideal $U^\mbt_h(\mfg^\theta)$.
\end{Def}

It is not difficult to see that up to twisting this model does not depend on the choice of $\pi$ and $u$.

Let us record an immediate consequence of the construction of $\alpha_h$, which we will use later.

\begin{Prop}
\label{prop:conjug-coideal-coact-to-Delta}
For every $\mbt\in\mcT^*$, there is an element $\mcG\in\mcU(G^\theta_\mbt\times G)\fps$ such that
\begin{align*}
\mcG^{(0)}&=1,&
(\id\otimes\epsilon)(\mcG) &= 1,&
\alpha_h=\mcG\Delta(\cdot)\mcG^{-1}.
\end{align*}
If $\alpha_h$ is $*$-preserving, then $\mcG$ can in addition be chosen to be unitary.
\end{Prop}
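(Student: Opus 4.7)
The plan is to follow the strategy of the proof of Lemma~\ref{lem:twisting-to-Delta}, now applied to the pair of homomorphisms $\alpha_h$ and $\Delta$ from $\mcU(G^\theta_\mbt)\fps$ to $\mcU(G^\theta_\mbt\times G)\fps$. The essential input is that $\alpha_h$ coincides with $\Delta$ modulo~$h$, which is immediate from $u^{(0)}=1$, the congruence $\Delta_h\equiv\Delta$ modulo~$h$, and the explicit formula $\alpha_h(x)=(u\otimes 1)\Delta_h(u^{-1}xu)(u^{-1}\otimes 1)$.

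Given this, I would fix irreducible representations $\rho\in\Irr G^\theta_\mbt$ and $\pi\in\Irr G$ and consider the two homomorphisms $f=(\rho\otimes\pi)\Delta$ and $f_h=(\rho\otimes\pi)\alpha_h$ from $\mcU(G^\theta_\mbt)\fps$ into $\End(V_\rho\otimes V_\pi)\fps$. By the nondegeneracy of $\alpha_h$ and $\Delta$, both maps factor through a common finite direct sum of matrix blocks $\bigoplus_{\rho'\in F}\End(V_{\rho'})\fps$ of $\mcU(G^\theta_\mbt)\fps$, and they agree modulo~$h$. Since the domain is semisimple, the standard rigidity argument produces $\mcG_{\rho,\pi}\in 1+h\End(V_\rho\otimes V_\pi)\fps$ with $f_h=(\Ad\mcG_{\rho,\pi})\,f$. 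Assembling these elements over all $(\rho,\pi)$ yields $\mcG\in\mcU(G^\theta_\mbt\times G)\fps$ with $\mcG^{(0)}=1$ and $\alpha_h=\mcG\Delta(\cdot)\mcG^{-1}$. The normalization $(\id\otimes\epsilon)(\mcG)=1$ is then secured by simply setting $\mcG_{\rho,\pi}=1$ whenever $\pi$ is the trivial one-dimensional representation of $G$; this is legitimate because the counit axioms $(\id\otimes\epsilon)\alpha_h=\id=(\id\otimes\epsilon)\Delta$ force $f=f_h=\rho$ in that block, so no conjugation is needed.

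For the unitary refinement, suppose $\alpha_h$ is $*$-preserving. Then both $f$ and $f_h$ are $*$-homomorphisms, and taking adjoints in $f_h=(\Ad\mcG_{\rho,\pi})\,f$ forces $\mcG_{\rho,\pi}^*\mcG_{\rho,\pi}$ to centralize $f(\mcU(G^\theta_\mbt)\fps)$. The standard polar-decomposition trick, namely replacing $\mcG_{\rho,\pi}$ by $\mcG_{\rho,\pi}(\mcG_{\rho,\pi}^*\mcG_{\rho,\pi})^{-1/2}$, produces a unitary element in $1+h\End(V_\rho\otimes V_\pi)\fps$ implementing the same conjugation; the modification is trivial when $\mcG_{\rho,\pi}=1$, so the counit normalization persists. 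There is no serious obstacle: the argument is entirely block-by-block and relies only on the rigidity of homomorphisms of finite-dimensional semisimple algebras, which is standard. The only point requiring a moment's care is the book-keeping ensuring that the block-wise twists $\mcG_{\rho,\pi}$ assemble into an element of the multiplier algebra $\mcU(G^\theta_\mbt\times G)\fps$ (rather than merely $\mcU(G\times G)\fps$), which is built into the indexing by $(\rho,\pi)$ with $\rho\in\Irr G^\theta_\mbt$.
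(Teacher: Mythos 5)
Your proof is correct and follows essentially the same approach as the paper, which simply observes that $\alpha_h\equiv\Delta\pmod h$ and invokes Lemma~\ref{lem:twisting-to-Delta} (whose $\mcG$-part you re-derive block-by-block), then applies the polar decomposition $\mcG\mapsto\mcG(\mcG^*\mcG)^{-1/2}$ for the unitary refinement. Your block-wise presentation makes the preservation of the counit normalization under polar decomposition slightly more visible, but the substance is identical.
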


\bp
Since $\alpha_h = \Delta \bmod h$, Lemma \ref{lem:twisting-to-Delta} implies the existence of such an element $\mcG$.
If $\alpha_h$ is in addition $*$-preserving, then we can replace $\mcG$ by the unitary $\mcG(\mcG^*\mcG)^{-1/2}$.
\ep

\begin{Rem}\label{rem:multiplier-generic}
By the above arguments and Remark~\ref{rem:generic-spherical}, the multiplier algebra model can also be defined for all $\mbt\in\mcT^*_\C$ excluding a countable set of values of $s_o^{(0)}$ (S-type) or $c_o^{(0)}$ (C-type).
\end{Rem}

\begin{Rem}\label{rem:que-model}
By Proposition~\ref{Propform}, for every $\mbt\in\mcT^*_\C$, the algebra $U^\mbt_h(\mfg^\theta)$ is a deformation of $U(\mfg^\theta_\mbt)$.
In the non-Hermitian case, $\mfg^\theta_\mbt = \mfg^\theta$ is semisimple and standard arguments show that if $\pi$ has image $U(\mfg)\fps$, then there exists $u\in 1+h U(\mfg)\fps$ such that $u\pi(U^\theta_h(\mfg))u^{-1}=U(\mfg^\theta)\fps$.
There also exists $\mcG\in 1+h U(\mfg^\theta) \otimes U(\mfg)\fps$ satisfying the conditions in Proposition~\ref{prop:conjug-coideal-coact-to-Delta}, analogously to Remark \ref{rem:que-rigidity-non-Hermitian}. (Moreover, by the remark following Proposition~\ref{prop:coideal-deform} we can go beyond the standard case and consider any $\mbt=(\mbc,\mbs)\in\mcT$ such that $c^{(0)}_i=1$ for all $i\in I\setminus X$.)
In other words, in the non-Hermitian case the multiplier algebra model does not have any particular advantages over the coideal picture.

In the Hermitian case it is still true that $U^\mbt_h(\mfg^\theta)$ is a trivial algebra deformation of $U(\mfg^\theta_\mbt)$, see Proposition~\ref{prop:coideal-deform}. But since in this case the first cohomology of $\mfg^\theta_\mbt$ with coefficients in a finite dimensional module is not always zero, it is not clear whether $u$ and $\mcG$ exist at the level of the universal enveloping algebras.
\end{Rem}

\begin{Rem}\label{rem:mult-model-type-II}
Type II symmetric pairs can be dealt with analogously to the non-Hermitian case.
The relevant involution on $\mfu \oplus \mfu$ in the Satake form is given by $\theta(X, Y) = (\omega(Y),\omega(X))$ for the Chevalley involution $\omega$.
The corresponding Satake diagram is the disjoint union of two copies of the Dynkin diagram of $\mfg$, with the corresponding vertices joined by arrows.
Cohomological considerations as above, both for multiplier algebras and universal enveloping algebras, carry over.
\end{Rem}

\subsection{\texorpdfstring{$K$}{K}-matrix of Balagovi\'{c}--Kolb}\label{ssec:K-matrix}

Next let us recall the construction of universal $K$-matrices for the coideals $U^\mbt_h(\mfg^\theta)$ according to~\citelist{\cite{MR3905136}\cite{MR4048733}\cite{MR3943480}}.
(Strictly speaking, these papers have an extra normalization condition on $\theta$ as in Remark \ref{rem:BK-normalization-of-involution}.
We can either adapt their construction to our setting, or we can first put this extra condition and then use $\Ad {z'}$ as in Remark \ref{rem:BK-normalization-of-involution} to remove it later.)

Denote by $U_q(\mfg)$ the $\C(q^{1/d})$-subalgebra of $U_h(\mfg)\otimes_{\C\fps}\C\fLauser$ generated by $K_\omega$ ($\omega\in P$), $E_i$ and $F_i$, where $q=e^h$ and $d = 4 \det((a_{ij})_{i,j})$.
(We use the same notation in Appendix~\ref{sec:spherical} for the algebra defined over $\K=\C\fLauser$, but since we are not going to use that algebra here, this should not lead to confusion.)
As usual we denote by $x \mapsto \bar x$ the \emph{bar involution}, the $\C$-linear automorphism of $U_q(\mfg)$ characterized by
\begin{align*}
\overline{q^{1/d}} &= q^{-1/d},&
\overline{K_\omega} &= K_{-\omega},&
\overline{E_i} &= E_i,&
\overline{F_i} &= F_i.
\end{align*}
In a similar way as before we define coideals $U^\mbt_q(\mfg^\theta)\subset U_q(\mfg)$ for $\mbt=(\mbc,\mbs)$ such that $c_i,s_i\in\C(q^{1/d})$.

We will first construct, following~\cite{MR3905136}, the $K$-matrix for a particular parameter $\mbt'\in\mcT$ defined by
\begin{align*}
c'_i &= q^{\frac{1}{2}(\alpha_i, \Theta(\alpha_i) - 2 \rho_X)},&
s'_i &= 0,
\end{align*}
where $\rho_X$ is half the sum of the positive roots of the root system generated by $X$.
The parameter $\mbt'=(\mbc',\mbs')$ satisfies the assumptions in~\cite{MR3905136}*{Section 5.4}.

A key ingredient of the construction in~\cite{MR3905136} is a \emph{quasi-$K$-matrix} $\quasiK$. Denote by $U^+\subset U_q(\mfg)$ the $\C(q^{1/d})$-subalgebra generated by the elements~$E_i$, and by $U^+_\mu\subset U^+$ the subspace of vectors of weight $\mu\in Q_+$, where $Q$ is the root lattice. Then
\[
\quasiK = \sum_{\mu \in Q_+} \quasiK_{\mu}\quad (\quasiK_{\mu}\in U^+_\mu),
\]
where the sum is considered in a completion of $U_q(\mfg)$ defined similarly to our multiplier algebra $\mcU(G)$, but over the field $\C(q^{1/d})$. The elements $\quasiK_\mu$ are uniquely determined by $\quasiK_0=1$ and the following recursive relations:
\begin{equation}\label{eq:quasi-K-recurrence}
[F_i,\quasiK_\mu]=\quasiK_{\mu-\alpha_i+\Theta(\alpha_i)}\overline{c_i' X_i}K_i - q^{-(\alpha_i,\Theta(\alpha_i))}K_i^{-1}c_i' X_i\quasiK_{\mu-\alpha_i+\Theta(\alpha_i)} \quad (i\in I),
\end{equation}
with the convention that $\quasiK_{\mu-\alpha_i+\Theta(\alpha_i)}=0$ if $\mu-\alpha_i+\Theta(\alpha_i)\not\in Q_+$.
Here we put
\begin{align*}
X_i &=0 \quad (i\in X),&
X_i &=- z_{\tau_\theta(i)}T_{w_X}(E_{\tau_\theta(i)}) \quad (i\in I\setminus X).
\end{align*}

To use $\quasiK$ in our setting, we need the following integrality property. Let $R\subset\C(q^{1/d})$ be the localization of the ring $\C[q^{1/d}]$ at $q^{1/d}=1$. Denote by $U^{+,\inte}$ the $R$-subalgebra of $U^+$ generated by the elements $E_i$, and put $U^{+,\inte}_{\mu}=U^+_\mu\cap U^{+,\inte}$.

Next, let $I^*\subset I\setminus X$ be a set of representatives of the $\tau_\theta$-orbits in $I\setminus X$. As we already used in Section~\ref{ssec:Satake} (although only in the Hermitian setting), the elements $\alpha_i^-=\hlf1(\alpha_i-\Theta(\alpha_i))$ for $i\in I^*$ form a basis of the restricted root system, and we have $\alpha_{\tau_\theta(i)}^-=\alpha_i^-$ for all $i$.

\begin{Prop}
Take $\mu\in Q_+$, $\mu\ne0$. If $\mu$ has the form
\[
\mu=\sum_{i\in I^*}k_i\alpha_i^-
\]
for some $k_i\in 2\Z_+$, then $\quasiK_\mu\in (q^{1/d}-1)U^{+,\inte}_{\mu}$. Otherwise $\quasiK_\mu=0$.
\end{Prop}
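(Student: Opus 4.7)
The plan is to prove both claims simultaneously by induction on the height of $\mu$, using the recurrence \eqref{eq:quasi-K-recurrence} as the only structural input. Two features of this recurrence are crucial: for $i\in X$ one has $X_i=0$, so the recurrence forces $[F_i,\quasiK_\mu]=0$; and for $i\in I\setminus X$ the weight shift is $-\alpha_i+\Theta(\alpha_i)=-2\alpha_i^-$, which lies in the restricted root lattice and is insensitive, up to the choice of representative in $I^*$, to replacing $i$ by $\tau_\theta(i)$.

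For the vanishing half, I will use the classical fact that a nonzero element $x\in U^+_\mu$ with $\mu\neq 0$ cannot satisfy $[F_i,x]=0$ for all $i\in I$; this follows from the nondegeneracy of the Rosso bilinear pairing between $U^+$ and $U^-$, or equivalently from the injectivity of Lusztig's skew-derivations $r_i$ on the weight space. Assuming $\quasiK_\mu\neq 0$ with $\mu\neq 0$, the fact produces some $i$ with $[F_i,\quasiK_\mu]\neq 0$. The recurrence then rules out $i\in X$ and forces $\quasiK_{\mu-2\alpha_i^-}\neq 0$, so $\mu-2\alpha_i^-\in Q_+$; by induction $\mu-2\alpha_i^-$ has the stated form, and adding back $2\alpha_i^-=2\alpha_{i^*}^-$ for the $I^*$-representative $i^*$ of the $\tau_\theta$-orbit of $i$ keeps $\mu$ in that same form.

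For the integrality half, assume $\mu\neq 0$ is of the stated form. The coefficients $c_i'=q^{\frac{1}{2}(\alpha_i,\Theta(\alpha_i)-2\rho_X)}$ and their bar involutes lie in $R^\times$, while $X_i=-z_{\tau_\theta(i)}T_{w_X}(E_{\tau_\theta(i)})$ and $\overline{X_i}$ lie in $U^{+,\inte}$ because Lusztig's braid operators preserve the integral form. Combined with the inductive hypothesis $\quasiK_{\mu-2\alpha_i^-}\in(q^{1/d}-1)U^{+,\inte}_{\mu-2\alpha_i^-}$ (valid since $\mu-2\alpha_i^-$ is of the stated form whenever it lies in $Q_+$), the right-hand side of \eqref{eq:quasi-K-recurrence} thus lies in $(q^{1/d}-1)$ times an integral element.

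The main obstacle is then to transfer this integral bound on $[F_i,\quasiK_\mu]$ back to $\quasiK_\mu\in(q^{1/d}-1)U^{+,\inte}_\mu$ itself. I intend to handle this step using Lusztig's skew-derivations $r_i,{}_ir\colon U^+\to U^+$, which preserve $U^{+,\inte}$ and encode $[F_i,-]$ through a relation of the shape $(q_i-q_i^{-1})[F_i,x]=r_i(x)K_i-K_i^{-1}\,{}_ir(x)$; since an element of $U^+_\mu$ with $\mu\neq 0$ is recovered from the tuple $(r_i(x))_i$ via a height-reducing procedure with $R$-rational coefficients, the integral divisibility of each $r_i(\quasiK_\mu)$ should transfer to $\quasiK_\mu$ itself, closing the induction.
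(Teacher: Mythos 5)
Your vanishing argument is essentially the paper's argument presented in contrapositive form and is fine; the paper phrases it via the direct implication (if $\mu$ is not of the stated form, then by induction $\quasiK_{\mu-2\alpha_i^-}$ vanishes and hence all $[F_i,\quasiK_\mu]=0$, forcing $\quasiK_\mu=0$ by vanishing of skew-derivatives). The integrality half, however, has a genuine gap at the base case. You invoke ``the inductive hypothesis $\quasiK_{\mu-2\alpha_i^-}\in(q^{1/d}-1)U^{+,\inte}_{\mu-2\alpha_i^-}$ (valid since $\mu-2\alpha_i^-$ is of the stated form whenever it lies in $Q_+$)'', but this fails precisely when $\mu-2\alpha_i^-=0$: the weight $0$ is of the stated form with all $k_i=0$ and lies in $Q_+$, yet the Proposition excludes $\mu=0$, and indeed $\quasiK_0=1$ is emphatically not divisible by $q^{1/d}-1$. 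This happens exactly in the bottom case $\mu=2\alpha_j^-$ (with $i=j$ or $\tau_\theta(j)$ in the recurrence), and there the right-hand side of~\eqref{eq:quasi-K-recurrence} becomes $\overline{c_i' X_i}K_i - q^{-(\alpha_i,\Theta(\alpha_i))}K_i^{-1}c_i' X_i$. The divisibility by $q^{1/d}-1$ comes from a cancellation: both summands reduce to the same classical element of $U(\mfn^+)$ modulo $q^{1/d}-1$, so their difference vanishes in the classical limit. This is a separate verification that your uniform application of the inductive hypothesis skips over; the paper handles it by singling out the case $\htt^*(\mu)=1$ in a dedicated base step.

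Separately, your final ``transfer'' step is sketched rather than carried out, and the sketch points at a mechanism (inverting the skew-derivations via some ``height-reducing procedure with $R$-rational coefficients'') that, if taken literally, risks introducing denominators divisible by $q^{1/d}-1$. The clean argument — the one the paper uses and the one your setup in fact needs — avoids any explicit inversion: take the minimal $k$ with $(q^{1/d}-1)^k\quasiK_\mu\in U^{\inte}$; if $k\ge0$, reduce modulo $q^{1/d}-1$ to obtain a nonzero element of $U(\mfn^+)_\mu$ that commutes with all $X_{-\alpha_i}$ (equivalently, on which the classical skew-derivatives vanish), which is impossible for $\mu\ne 0$; hence $k\le-1$. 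Your ``recovery'' intuition is pointing in the right direction (the relevant input really is injectivity of the classical skew-derivatives on positive-weight spaces), but the minimal-power-plus-reduction formulation is what makes the integral divisibility follow cleanly, and you should make that step explicit rather than asserting it ``should transfer''.
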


\bp
Let us start with the second statement, that is, $\quasiK_\mu=0$ if either $\Theta(\mu)\ne-\mu$, or $\Theta(\mu)=-\mu$ but in the decomposition $\mu=\sum_{i\in I^*}k_i\alpha_i^-$ some integers $k_i\ge0$ are odd.
This is a refinement of a condition in \cite{MR3905136}*{Section 6.1}, and the proof is basically the same.

To be precise, consider the height of $\mu$ defined by $\htt(\mu)=\sum_{i\in I}m_i$ if $\mu=\sum_{i\in I}m_i\alpha_i$.
We verify the condition by induction on $\htt(\mu)$.
Since $\mu-\alpha_i+\Theta(\alpha_i)=\mu-2\alpha_i^-$ for $i\in I\setminus X$ is either not in $Q_+$ or it satisfies the same assumptions as $\mu$, by the inductive hypothesis we get from~\eqref{eq:quasi-K-recurrence} that $[F_i,\quasiK_\mu]=0$ for all $i\in I$. This means that Lusztig's skew-derivatives $_i r(\quasiK_\mu)$ and $r_i(\quasiK_\mu)$ are zero, which is possible only if $\quasiK_\mu=0$, see~\cite{MR2759715}*{Proposition 3.1.6 and Lemma~1.2.15}.

\smallskip

Turning to the first statement, assume $\mu=\sum_{i\in I^*}k_i\alpha_i^-$ with $k_i\in2\Z_+$. Put $\htt^*(\mu)=\hlf1\sum_{i\in I^*}k_i$. We will prove the statement by induction on $\htt^*(\mu)$.

Consider the case $\htt^*(\mu)=1$. Then $\mu=2\alpha_j^-=\alpha_j-\Theta(\alpha_j)$ for some $j\in I^*$. From~\eqref{eq:quasi-K-recurrence} we then get
\begin{align}\label{eq:quasi-K-recurrence2}
[F_i,\quasiK_\mu] &= 0\quad (i\in I\setminus\{j,\tau_\theta(j)\}),&
[F_i,\quasiK_\mu] &= \overline{c_i' X_i}K_i - q^{-(\alpha_i,\Theta(\alpha_i))}K_i^{-1}c_i' X_i
\quad (i=j,\tau_\theta(j)).
\end{align}
Denote by $U^\inte$ the $R$-subalgebra of $U_q(\mfg)$ generated by the elements $K_i^{\pm1}$, $\frac{K_i-1}{q-1}$, $E_i$ and $F_i$. Then we have an isomorphism $U^\inte/(q^{1/d}-1)U^\inte \to U(\mfg)$ such that
\begin{align*}
K_i^{\pm1} &\mapsto 1,&
\frac{K_i-1}{q-1} &\mapsto d_i H_i,&
E_i &\mapsto X_{\alpha_i},&
F_i &\mapsto X_{-\alpha_i}.
\end{align*}
Since $X_i\in U^\inte$, by~\eqref{eq:quasi-K-recurrence2} we conclude that $[F_i,\quasiK_\mu]\in (q^{1/d}-1)U^\inte$ for all $i\in I$. We claim that this implies that $\quasiK_\mu\in (q^{1/d}-1)U^\inte$, hence $\quasiK_\mu\in (q^{1/d}-1)U^{+,\inte}_\mu$, as $(q^{1/d}-1)U^\inte\cap U^+=(q^{1/d}-1)U^{\inte,+}$ by the triangular decomposition of $U^\inte$.

Indeed, assuming $\quasiK_\mu\ne0$, let $k\in\Z$ be the smallest number such that $(q^{1/d}-1)^k\quasiK_\mu\in U^\inte$. If $k\ge0$, then, on the one hand, the image of $(q^{1/d}-1)^k\quasiK_\mu$ in $U(\mfg)$ is a nonzero element of $U(\mfn^+)_\mu$, and on the other hand this image commutes with $X_{-\alpha_i}$ for all $i$. But this is impossible, hence $k\le -1$.

The inductive step is similar. Using~\eqref{eq:quasi-K-recurrence} and the inductive hypothesis we get $[F_i,\quasiK_\mu]\in (q^{1/d}-1)U^\inte$ for all $i\in I$. Hence $\quasiK_\mu\in (q^{1/d}-1)U^{+,\inte}_{\mu}$.
\ep

Recall that $\pi\colon U_h(\mfg)\to\mcU(G)\fps$ denotes a fixed homomorphism satisfying~\eqref{eq:que-iso}. When it is convenient, we extend it to $U_h(\mfg)\otimes_{\C\fps}\C\fLauser$ and the completion of $U_q(\mfg)$ from~\cite{MR3905136}, but then the target algebra should be $\mcU(G)\fLauser$ and $\prod_{\pi\in\Irr G} \left(\End(V_\pi) \fLauser \right)$, respectively.

\begin{Cor}\label{cor:quasiK}
We have $\pi(\quasiK)\in 1+h\mcU(G)\fps$.
\end{Cor}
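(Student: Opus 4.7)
The plan is as follows. By the proposition just proved, $\quasiK$ decomposes as $1+\sum_{\mu\ne 0}\quasiK_\mu$, and every nonzero $\quasiK_\mu$ for $\mu\ne0$ lies in $(q^{1/d}-1)U^{+,\inte}_\mu$, where $U^{+,\inte}$ is the $R$-subalgebra of $U^+$ generated by the $E_i$'s with $R\subset\C(q^{1/d})$ being the localization at $q^{1/d}=1$. The key observation is that under the inclusion $R\hookrightarrow\C\fps$ induced by $q^{1/d}=e^{h/d}$, we have $q^{1/d}-1\in h\C\fps$; more precisely, $q^{1/d}-1=h/d+O(h^2)$. Consequently $\pi(U^{+,\inte})\subset\mcU(G)\fps$ (since $\pi(E_i)\in\mcU(G)\fps$ and the scalars come from $R\subset\C\fps$), and therefore $\pi(\quasiK_\mu)\in h\,\mcU(G)\fps$ for every $\mu\ne 0$ with $\quasiK_\mu\ne 0$.

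Next I would address convergence of the sum $\sum_\mu\pi(\quasiK_\mu)$ in $\mcU(G)\fps=\prod_{\pi\in\Irr G}\End(V_\pi)\fps$. This is done block by block: each $\quasiK_\mu$ has weight $\mu\in Q_+$ (i.e., it shifts weights by $\mu$), so for a fixed irreducible $V_\pi$, the operator $\pi(\quasiK_\mu)$ can only be nonzero when $\mu$ is the difference of two weights of $V_\pi$. Since $V_\pi$ is finite dimensional, only finitely many $\mu\in Q_+$ contribute to the $\pi$-block, so the sum reduces to a finite sum in $\End(V_\pi)\fps$ and is automatically well-defined there.

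Putting the two steps together, in every block we have
\[
\pi_\pi(\quasiK)=\id_{V_\pi}+\sum_{\mu\ne 0}\pi_\pi(\quasiK_\mu)\in 1+h\,\End(V_\pi)\fps,
\]
and assembling these over $\pi\in\Irr G$ yields $\pi(\quasiK)\in 1+h\,\mcU(G)\fps$, as required. There is no real obstacle here beyond verifying that the extension of $\pi$ to $U_q(\mfg)\subset U_h(\mfg)\otimes_{\C\fps}\C\fLauser$ restricts correctly to $U^{+,\inte}$ and lands in $\mcU(G)\fps$ (rather than only in $\mcU(G)\fLauser$); this is immediate from $R\subset\C\fps$ and $\pi(E_i)\in\mcU(G)\fps$.
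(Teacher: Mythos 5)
Your argument is correct and is precisely the (implicit) argument the paper relies on when it records this as an immediate corollary of the preceding proposition: the integrality statement $\quasiK_\mu\in(q^{1/d}-1)U^{+,\inte}_\mu$ puts each nonzero term in $h\,\mcU(G)\fps$ after applying $\pi$, and the finite-dimensionality of each block $\End(V_\pi)$ together with the fact that $\quasiK_\mu$ shifts weights by $\mu$ guarantees that only finitely many terms survive per block, so the sum is well-defined and lands in $1+h\,\mcU(G)\fps$.
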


Following \cite{MR3905136}, consider a homomorphism $\gamma\colon P\to\C(q^{1/d})^\times$ such that
\begin{align}\label{eq:gamma-cond}
\gamma(\alpha_i) &= c_i' z_{\tau_\theta(i)}\quad (i\in I\setminus X),&
\gamma(\alpha_i) &= 1\quad (i\in X),
\end{align}
and put
\[
\xi(\omega) = \gamma(\omega) q^{-(\omega^+, \omega^+) + \sum_{i \in I} (\alpha_i^-, \alpha_i^-) \omega(\varpi_i^\vee)},
\]
where $\omega^+ = \hlf1 (\omega + \Theta(\omega))$ and $(\varpi_i^\vee)_{i\in I}$ is the dual basis (fundamental coweights) of $(\alpha_i)_{i\in I}$, see~\cite{MR3905136}*{(8.1)}.
It satisfies the relation
\begin{equation*}
\xi(\omega + \alpha_i) = \gamma(\alpha_i) q^{-(\alpha_i, \Theta(\alpha_i)) - (\omega, \alpha_i + \Theta(\alpha_i))} \xi(\omega)
\end{equation*}
for $\omega\in P$ and $i \in I$, which is enough for most of the purposes.
We can view $\xi$ as an element of a completion of $U_q(\mfh)\subset U_q(\mfg)$. Then one takes
\begin{equation*}
\msK' = \mfX \xi T_{w_X}^{-1} T_{w_0}^{-1},
\end{equation*}
where $T_{w_X}$ and $T_{w_0}$ now denote the canonical elements implementing the Lusztig automorphisms.
This gives a universal $K$-matrix for $U^{\mbt'}_q(\mfg^\theta)$ in the conventions of~\cite{MR3905136}.

To pass to our setting, consider the element $\omega_0$ of $\mfh^*$ characterized by
\begin{align*}
(\omega_0,\alpha_i) &= 0\quad (i\in X),&
(\omega_0,\alpha_i) &= \frac{1}{4}(\Theta(\alpha_{\tau_\theta(i)}) - \alpha_{\tau_\theta(i)} -\Theta(\alpha_i) +2\rho_X,\alpha_i)\quad (i\in I\setminus X),
\end{align*}
and use the isomorphism $\Ad K_{\omega_0}$ of $U^{\mbt'}_q(\mfg^\theta)$ onto $U^{\theta}_q(\mfg)$. Namely, define
\begin{equation}
\label{eq:k-mats}
\msK = \tau_\theta\tau_0\bigl((\Ad K_{\omega_0})(\msK')\bigr)=(\Ad K_{\omega_0})\bigl(\tau_\theta\tau_0(\msK')\bigr),
\end{equation}
where $\tau_\theta\tau_0$ is the automorphism of the Hopf algebra $U_q(\mfg)$ induced by the automorphism $\tau_\theta\tau_0$ of the Dynkin diagram.
Finally, using the universal $R$-matrix of $U_q(\mfg)$, we put
\begin{equation}
\label{eq:BK-braid}
\msE=\msR_{21}(1\otimes\msK)(\id\otimes\tau_\theta\tau_0)(\msR).
\end{equation}
This is a ribbon $\tau_\theta\tau_0$-braid for $U_q^\theta(\mfg)$, hence also for $U_h^\theta(\mfg)$, see \cite{MR3943480}*{Section~3.3}. Then
\[
\mcE_h=((\Ad u)\pi\otimes\pi)(\msE)\in\mcU(G^\theta\times G)\fps
\]
is a well-defined ribbon $(\tau_\theta\tau_0)_h$-braid for the multiplier algebra model of $U_h^\theta(\mfg)$, where $u$ is the element~\eqref{eq:umbt} (for $\mbt=0$) and $(\tau_\theta\tau_0)_h$ denotes the unique automorphism of $\mcU(G)\fps$ such that
\begin{equation}\label{eq:twist-auto_h}
\pi\circ \tau_\theta\tau_0=(\tau_\theta\tau_0)_h\circ\pi.
\end{equation}
We call $\msE$ (and also $\mcE_h$) a \emph{Balagovi\'{c}--Kolb ribbon $(\tau_\theta\tau_0)_h$-braid}. Note that this element depends on the choice of~$\gamma$, and the set of these twist-braids forms a torsor over $Z(U)$.

\begin{Rem}
It is not difficult to see that Corollary~\ref{cor:quasiK} and identities~\eqref{eq:z-eigval-rule} imply that $\mcE_h=1\otimes g z m_X m_0$ modulo $h$ for some $g\in Z(U)$. This is consistent with Theorems~\ref{thm:compar-rib-tw-br-KZ-LK-non-Hermitian} and~\ref{compar-rib-tw-br-KZ-Hermitian} below.
\end{Rem}

This finishes our discussion of the ribbon twist-braids in the standard case. Assume now that $\mfu^\theta<\mfu$ is Hermitian and take $\mbt\in\mcT^*_\C$.
Note that $\tau_\theta\tau_0=\id$ now, since $\theta$ is an inner automorphism. The coideal $U^\mbt_h(\mfg^\theta)$ can be obtained from $U^\theta_q(\mfg)$ by twisting and $h$-adic completion similarly to~\cite{MR3943480}*{Theorem C.7}. Namely, define a character $\chi_\mbt\colon U^\theta_q(\mfg)\to\C\fLauser$ as follows:
\begin{itemize}
\item S-type: $\chi_\mbt(K_\omega)=1$ for $\omega\in P^\Theta$, $\chi_\mbt=\epsilon$ on $U_q(\mfg_X)$, $\chi_\mbt(B_i)=0$ for the nondistinguished vertices~$i$,
\[
\chi_\mbt(B_o)=\frac{s_o \kappa_o}{e^{-d_oh}-1};
\]
\item C-type: $\chi_\mbt=\epsilon$ on $U_q(\mfg_X)$, $\chi_\mbt(B_i)=0$ for all $i$, $\chi_\mbt(K_\omega)=\lambda(\omega)$ for $\omega\in P^\Theta$, where $\lambda\colon P\to \C\fLauser^\times$ is any homomorphism such that $\lambda(\alpha_i)=1$ for all $i\in I\setminus\{o\}$ and
\[
\lambda(\alpha_o)=c_o^{-1}e^{-h(\alpha_o^-,\alpha_o^-)}.
\]
\end{itemize}
Then $(\chi_\mbt\otimes\id)\Delta$ maps the generators of $U_q^\theta(\mfg)$ into those of $U^\mbt_h(\mfg^\theta)$, except that in the type S case $B_o$ is mapped into
\[
F_o - c_oz_{\tau_\theta(o)}T_{w_X}(E_{\tau_\theta(o)})K_o^{-1} + s_o \kappa_o \frac{K_{o}^{-1}}{e^{-d_oh}-1},
\]
but this differs only by an additive constant (which may, however, lie in $h^{-1}\C\fps$ rather than in $\C\fps$) from the corresponding generator of $U^\mbt_h(\mfg^\theta)$. By applying this map to the first leg of $\msE$, and using the factorization of $\msE$ given in~\cite{MR4048733}, we get a ribbon braid~$\msE^\mbt$ for $U^\mbt_h(\mfg^\theta)$. Then
\[
\mcE^\mbt_h=((\Ad u^\mbt)\pi\otimes\pi)(\msE^\mbt)
\]
is a ribbon braid for the multiplier algebra model of $U^\mbt_h(\mfg^\theta)$, whenever this model is well-defined.
We call~$\msE^\mbt$ (and~$\mcE^\mbt_h$) again a \emph{Balagovi\'{c}--Kolb ribbon braid}.

One problem, however, is that in the S-type case the construction of $\msE^\mbt$ guarantees only that
\[
\mcE^\mbt_h\in\prod_{\substack{\rho\in\Irr G^\theta_\mbt,\\ \pi\in\Irr G}} \left(\End(V_\rho)\otimes\End(V_\pi) \fLauser \right).
\]

\begin{Prop}
For all $\mbt\in\mcT^*_\C$, we have $(\pi\otimes\pi)(\msE^\mbt)\in\mcU(G\times G)\fps$.
\end{Prop}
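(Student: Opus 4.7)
The plan is to apply Theorem~\ref{Thm:not-PL-subgr-forces-K-mat-to-be-formal-pow-ser} to the multiplier algebra model $(\mcU(G^\theta_\mbt)\fpser,\alpha_h)$ constructed in Section~\ref{sec:mult-alg-model-of-LK-coids}. First I would treat $\mbt\in\mcT^*$: in this range Lemma~\ref{lem:u-conjugate} produces the model, Lemma~\ref{lem:Drinfeld-twist} provides a (unitary) Drinfeld twist $\mcF\in\mcU(G\times G)\fpser$ satisfying \eqref{eq:F-twist}, \eqref{eq:Dr-twist-lin-ord-term} and \eqref{eq:Drinfeld-twist}, Proposition~\ref{prop:conjug-coideal-coact-to-Delta} provides $\mcG\in\mcU(G^\theta_\mbt\times G)\fpser$ satisfying \eqref{eq:G-twist}, and \eqref{eq:R-matrix} identifies the $R$-matrix $\mcR_h=(\pi\otimes\pi)(\msR)$ with $\mcF_{21}\exp(-ht^\mfu)\mcF^{-1}$. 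The Balagovi\'c--Kolb element $\mcE^\mbt_h=((\Ad u^\mbt)\pi\otimes\pi)(\msE^\mbt)$ is the ribbon braid to which the theorem is applied, lying a priori only in the Laurent series completion. The theorem will then place $\mcE^\mbt_h$ in $\mcU(G^\theta_\mbt\times G)\fpser$ provided $K_\mbt$ is not a Poisson--Lie subgroup of $(U,r)$; conjugating by $(u^\mbt)^{-1}\otimes 1\in\mcU(G\times G)\fpser$ and using the natural embedding $\mcU(G^\theta_\mbt)\hookrightarrow\mcU(G)$ induced by $G^\theta_\mbt<G$ then yields $(\pi\otimes\pi)(\msE^\mbt)\in\mcU(G\times G)\fpser$.

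The crucial check is that $K_\mbt$ is \emph{not} a Poisson--Lie subgroup of $(U,r)$. By Lemma~\ref{lem:mbt-vs-phi} we have $\mfg^\theta_\mbt=(\Ad z_\theta)^{-1}(\mfg_\phi)$ with $z_\theta\in\exp(\mft)$, and since each summand $X_{-\alpha}\otimes X_\alpha$ of $r$ in~\eqref{EqComplr} has total $\mft$-weight zero, $r$ is $(\Ad z_\theta)^{\otimes 2}$-invariant, so by Corollary~\ref{cor:coisotropic} $K_\mbt$ is Poisson--Lie iff $\phi\in 1+2\Z$. In the S-type case Lemma~\ref{lem:z-theta-alpha-o-expr} gives $z_\theta(\alpha_o)\kappa_o=\pm 1$, so the identity $z_\theta(\alpha_o)s_o^{(0)}\kappa_o=i\tan(\pi\phi/2)$ of Lemma~\ref{lem:mbt-vs-phi} has purely imaginary finite left hand side for $s_o^{(0)}\in i\R$, forbidding $\phi\in 1+2\Z$. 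In the C-type case the constraint $c_o^{(0)}>0$ combined with $c_o^{(0)}=-\cot(\pi(\phi-1)/4)$ forces $(\phi-1)/4$ away from $\hlf1\Z$, again ruling out odd $\phi$.

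To pass from $\mcT^*$ to $\mcT^*_\C$ I would invoke analytic continuation. For any irreducible representations $V_\lambda,V_\mu$ of $G$, each matrix coefficient of $(\pi_\lambda\otimes\pi_\mu)(\msE^\mbt)$ at a fixed order in $h$ is a polynomial in finitely many of the coefficients of the parameters $s_o^{(n)}$ (S-type) or $c_o^{(n)}$ (C-type), since the character $\chi_\mbt$ depends linearly on these and the factorization of $\msE$ from~\cite{MR4048733} involves only finite sums at each order. The coefficients of the negative powers of $h$ in these polynomials vanish identically on the totally real subset $\mcT^*\subset\mcT^*_\C$, and hence they vanish identically on $\mcT^*_\C$.

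The principal obstacle will be the Poisson--Lie check: one must track how the reality/positivity constraints built into $\mcT^*$ exclude the discrete set of ``bad'' parameter values that would render $K_\mbt$ a genuine Poisson--Lie subgroup, and this relies on the explicit normalization $z_\theta(\alpha_o)\kappa_o=\pm 1$ from Lemma~\ref{lem:z-theta-alpha-o-expr} in the S-type case.
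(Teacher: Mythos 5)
Your proposal is correct and follows essentially the same strategy as the paper: apply Theorem~\ref{Thm:not-PL-subgr-forces-K-mat-to-be-formal-pow-ser} to the multiplier algebra model for $\mbt\in\mcT^*$, using Corollary~\ref{cor:coisotropic} to exclude the Poisson--Lie case, and then extend to $\mcT^*_\C$ by a Zariski-density (``analytic continuation'') argument on the coefficients. A couple of small remarks. First, the paper immediately reduces to the Hermitian S-type case, observing that this is the only situation where the construction of $\msE^\mbt$ a priori produces Laurent rather than power series (because $\chi_\mbt(B_o)=\frac{s_o\kappa_o}{e^{-d_oh}-1}$ has a pole when $s_o^{(0)}\ne 0$); in the non-Hermitian case there is no parameter, and in C-type the character involves only $\lambda(\alpha_o)=c_o^{-1}e^{-h(\alpha_o^-,\alpha_o^-)}\in\C\fps^*$, so the statement is automatic. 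Your treatment of the C-type Poisson--Lie exclusion is therefore harmless but superfluous, and your claim that the dependence on the $c_o^{(n)}$ is polynomial is imprecise there (the character involves $c_o^{-1}$, so it is rational, as the paper states), though this does not affect the argument since that case need not be considered at all. Second, you are right that the key normalization $z_\theta(\alpha_o)\kappa_o=\pm1$ from Lemma~\ref{lem:z-theta-alpha-o-expr}, together with the $(\Ad z_\theta)^{\otimes 2}$-invariance of $r$, is what makes the Poisson--Lie check go through; the paper leaves this unwound, and your spelled-out version is a useful expansion.
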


\bp
We need only to consider the Hermitian S-type case.
Assume first that $\mbt\in\mcT^*$, that is, $s_o\in i\R\fps$. Then $\mcE^\mbt_h$ is a ribbon braid for the coaction $(\mcU(G^\theta_\mbt)\fps,\alpha_h)$ of $(\mcU(G)\fps,\Delta_h,\mcR_h)$. Hence the assertion follows from Theorem~\ref{Thm:not-PL-subgr-forces-K-mat-to-be-formal-pow-ser}, which is applicable by the results of Section~\ref{sec:mult-alg-model-of-LK-coids} and Corollary~\ref{cor:coisotropic}.
For the general case, observe that by construction the coefficient of $h^k$ of the component of $(\pi\otimes\pi)(\msE^\mbt)$ in $\End(V)\otimes\End(W)\fLauser$ is a rational function in finitely many parameters $s_o^{(n)}$. Since for $k<0$ this function vanishes for purely imaginary $s_o^{(n)}$, it must be zero.
\ep

In particular, if the multiplier algebra model of $U^\mbt_h(\mfg^\theta)$ is well-defined for some $\mbt\in\mcT^*_\C$, then we have $\mcE^\mbt_h\in\mcU(G^\theta_\mbt\times G)\fps$. It would still be interesting to find a more explicit construction of $\msE^\mbt$ similar to that for $\msE$, and provide a more direct proof of the above proposition.

\section{Comparison theorems}
\label{sec:compar}

We will combine the results of the previous sections to compare the Letzter--Kolb coideals with the quasi-coactions defined by the KZ-equations.

\subsection{Twisting of ribbon twist-braids}\label{ssec:comparison}

Let us start by refining the twisting procedure from Section~\ref{ssec:twisting}.
Assume $H$ is a reductive algebraic subgroup of $G$, and $(\mcU(H)\fps, \alpha, \Psi)$ is a quasi-coaction of $(\mcU(G)\fpser,\Delta_h,\Phi)$. Then, given $\mcF\in\mcU(G^2)\fps$ and $\mcG\in \mcU(H\times G)\fps$ such that $\mcF^{(0)}=1$, $\mcG^{(0)}=1$ and
\begin{align*}
(\epsilon\otimes\id)(\mcF) &= (\id\otimes\epsilon)(\mcF)=1,&
(\id\otimes\epsilon)(\mcG) &= 1,
\end{align*}
we get a quasi-coaction $(\mcU(H)\fpser,\alpha,\Psi_{\mcF,\mcG})$ of $(\mcU(G)\fpser,\Delta_{h,\mcF},\Phi_\mcF)$.

Now, assume in addition that $\beta$ is an involutive automorphism of $(\mcU(G)\fpser,\Delta_h,\Phi)$ and $v\in\mcU(G)\fps$ is an element such that $v^{(0)}=1$,
\begin{align}\label{eq:auto-twisting}
v\beta(v) &=1,&
\mcF &= (v\otimes v)(\beta\otimes\beta)(\mcF)\Delta_h(v)^{-1}.
\end{align}

\begin{Prop}
\label{prop:twist-of-twist-braids}
Under the above assumptions, $\beta_v =v\beta(\cdot)v^{-1}$ is an involutive automorphism of $(\mcU(G)\fpser,\Delta_{h,\mcF},\Phi_\mcF)$.
Furthermore, suppose that $\mcR \in \mcU(G^2) \fpser$ is an $R$-matrix for $(\mcU(G)\fpser,\Delta_h,\Phi)$ fixed by $\beta$, and that $\mcE\in \mcU(H\times G)\fps$ is a ribbon $\beta$-braid for $\mcR$.
Then $\mcR_\mcF=\mcF_{21}\mcR\mcF^{-1}$ is an $R$-matrix for $(\mcU(G)\fpser,\Delta_{h,\mcF},\Phi_\mcF)$ fixed by $\beta_v$, and
\begin{equation}\label{eq:braid-twist}
\mcE_{\mcG,v}=\mcG\mcE(\id\otimes\beta)(\mcG)^{-1}(1\otimes v^{-1})=\mcG\mcE(1\otimes v^{-1})(\id\otimes\beta_v)(\mcG)^{-1}
\end{equation}
is a ribbon $\beta_v$-braid for the quasi-coaction $(\mcU(H)\fpser,\alpha_\mcG,\Psi_{\mcF,\mcG})$ of $(\mcU(G)\fpser,\Delta_{h,\mcF},\Phi_\mcF, \mcR_\mcF)$.
\end{Prop}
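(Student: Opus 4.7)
The plan is to verify, in order: (i) that $\beta_v$ is an involutive algebra automorphism of the twisted quasi-bialgebra; (ii) the standard statements about $\mcR_\mcF$ together with its $\beta_v$-invariance; (iii) the equivalence of the two expressions given for $\mcE_{\mcG,v}$; and (iv) the three ribbon $\beta_v$-braid identities. The first three are bookkeeping; the last step is where the hypothesis \eqref{eq:auto-twisting} really pays off.

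For (i), note first that $\beta_v^2(T)=v\beta(v)\, T\, \beta(v^{-1})v^{-1}=T$ by $v\beta(v)=1$, so $\beta_v$ is involutive. That $\beta_v$ is an algebra automorphism is obvious. For compatibility with $\Delta_{h,\mcF}=\mcF\Delta_h(\cdot)\mcF^{-1}$, I would substitute \eqref{eq:auto-twisting} in the form $\mcF\Delta_h(v)=(v\otimes v)(\beta\otimes\beta)(\mcF)$ into $\Delta_{h,\mcF}(\beta_v(T))=\mcF\Delta_h(v)\Delta_h(\beta(T))\Delta_h(v^{-1})\mcF^{-1}$, use $\Delta_h\beta=(\beta\otimes\beta)\Delta_h$, and observe that the right-hand side collapses to $(\beta_v\otimes\beta_v)\Delta_{h,\mcF}(T)$. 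Compatibility with $\Phi_\mcF$ is the corresponding 3-leg computation; alternatively it follows formally from the pentagon equation once $\beta$-invariance of $\Phi$ and compatibility of $\beta_v$ with $\Delta_{h,\mcF}$ are in hand.

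For (ii), that $\mcR_\mcF$ is an $R$-matrix for $(\mcU(G)\fpser,\Delta_{h,\mcF},\Phi_\mcF)$ is the standard twist formula. Its $\beta_v$-invariance reduces, after using $(\beta_v\otimes\beta_v)(\mcF)=\mcF\Delta_h(v)(v^{-1}\otimes v^{-1})$ (a direct rewriting of \eqref{eq:auto-twisting}) and the quasi-triangular identity $\mcR\Delta_h(v)=\Delta^{\mathrm{op}}_h(v)\mcR$, to $(\beta\otimes\beta)(\mcR)=\mcR$.

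For (iii), the equality of the two expressions is immediate from $(\id\otimes\beta_v)(\mcG)=(1\otimes v)(\id\otimes\beta)(\mcG)(1\otimes v^{-1})$, which is just the definition of $\beta_v$.

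The substantive work is (iv). The key calculational input, beyond \eqref{eq:auto-twisting}, is the identity $(1\otimes v^{-1})(\id\otimes\beta_v)(X)=(\id\otimes\beta)(X)(1\otimes v^{-1})$ valid for any $X$, which lets me migrate the factor $1\otimes v^{-1}$ across $(\id\otimes\beta_v)$ and convert $\beta_v$ into $\beta$. For \eqref{eq:rib-sig-tw-0}, substituting the second form of $\mcE_{\mcG,v}$ into $\mcE_{\mcG,v}(\id\otimes\beta_v)\alpha_\mcG(T)$ and applying this migration reduces the claim to the original \eqref{eq:rib-sig-tw-0} for $\mcE$. For \eqref{eq:rib-sig-tw-1} and \eqref{eq:rib-sig-tw-2}, the same principle applies but with more factors: substitute the definitions of $\alpha_\mcG$, $\Psi_{\mcF,\mcG}$, $\mcR_\mcF$ on both sides, use \eqref{eq:auto-twisting} (and its $(\alpha\otimes\id)$- and $(\id\otimes\Delta_h)$-transforms, obtained by applying these maps to both sides of \eqref{eq:auto-twisting}) to push the $\mcF$- and $v$-factors through, and recognize the surviving expression as the known identity for $\mcE$. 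The hardest step will be \eqref{eq:rib-sig-tw-2}: the right-hand side has six factors, and one must track both how $\mcG$ in each of the three legs interacts with $\Psi$ and $\mcR$, and how the two insertions of $1\otimes v^{-1}$ (one from each copy of $\mcE_{\mcG,v}$) combine with $(\id\otimes\beta\otimes\beta)(\Psi^{-1})$ on the extreme right via the twice-iterated migration identity. Once the substitutions are aligned, this equation collapses to~\eqref{eq:rib-sig-tw-2} for $\mcE$.
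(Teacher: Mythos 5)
Your proposal is correct in outline, but it takes a genuinely different route from the paper's. The paper opens its own proof by acknowledging that "the claims are not difficult to check by a direct computation, but let us explain a more conceptual proof using crossed products," and then proceeds via smashed products: it forms $\mcU(G)\fps\rtimes_\beta\Z/2\Z$ with the coproduct extended by $\tilde\Delta_h(\lambda_\beta)=\lambda_\beta\otimes\lambda_\beta$, twists this by $\mcF$, shows that the map $f$ sending $\lambda_\beta\mapsto v^{-1}\lambda_{\beta_v}$ is an isomorphism onto $\mcU(G)\fps\rtimes_{\beta_v}\Z/2\Z$ with its twisted coproduct (this is precisely where \eqref{eq:auto-twisting} is used), and finally observes that a ribbon $\beta$-braid $\mcE$ for the original data corresponds to a genuine ribbon braid $\mcE(1\otimes\lambda_\beta)$ in the crossed product. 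Then \eqref{eq:braid-twist} falls out from formula~\eqref{eq:braid-twist0} applied to crossed products, since $(\id\otimes f)(\mcG\mcE(1\otimes\lambda_\beta)\mcG^{-1})=\mcE_{\mcG,v}(1\otimes\lambda_{\beta_v})$. Your direct approach instead traces \eqref{eq:auto-twisting} and the migration identity $(1\otimes v^{-1})(\id\otimes\beta_v)(X)=(\id\otimes\beta)(X)(1\otimes v^{-1})$ through each of the three ribbon-braid axioms by hand; this works, and your verification of \eqref{eq:rib-sig-tw-0} in particular is a good sanity check, but as you note \eqref{eq:rib-sig-tw-2} requires tracking six twist factors and two insertions of $1\otimes v^{-1}$. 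The crossed-product argument packages all of that bookkeeping into the single verification that $f$ is a quasi-bialgebra isomorphism, which is a two-leg computation rather than a three-leg one. What your approach buys is self-containment (no crossed-product formalism), at the cost of length.

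One small caution: your remark that $\beta_v$-invariance of $\Phi_\mcF$ "follows formally from the pentagon equation once $\beta$-invariance of $\Phi$ and compatibility of $\beta_v$ with $\Delta_{h,\mcF}$ are in hand" is not quite right as stated. The pentagon equation does not determine the associator given the coproduct — both $\Phi_\mcF$ and $(\beta_v^{\otimes 3})(\Phi_\mcF)$ would satisfy the pentagon and be compatible with $\Delta_{h,\mcF}$, so one still needs an argument for why they agree. Stick with the direct 3-leg computation you offer first (apply $\beta_v^{\otimes 3}$ to the twist formula for $\Phi_\mcF$ and push the $v$'s through using \eqref{eq:auto-twisting} and $(\id\otimes\Delta_h)(\mcF\Delta_h(v))=(1\otimes\mcF)(\id\otimes\Delta_h)(\mcF)\,(\Delta_h\otimes\id)\Delta_h(v)\cdots$ and so on); it closes without issue.
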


We call $(\mcU(G)\fpser,\Delta_{h,\mcF},\Phi_\mcF,\beta_v)$ the twisting of $(\mcU(G)\fpser,\Delta_h,\Phi,\beta)$ by $(\mcF,v)$.

\bp
The claims are not difficult to check by a direct computation, but let us explain a more conceptual proof using crossed products (or smashed products), cf.~\cite{MR3943480}*{Remark 1.13}.
Namely, consider the algebra
\[
\mcU(G)\fps\rtimes_\beta\Z/2\Z = \{ a + a' \lambda_\beta \mid a, a' \in \mcU(G)\fps, \lambda_\beta^2 = 1, \lambda_\beta a = \beta(a) \lambda_\beta \}.
\]
We can extend in the usual way the coproduct $\Delta_h$ on $\mcU(G)\fps$ to a coproduct $\tilde\Delta_h$ on $\mcU(G)\fps\rtimes_\beta\Z/2\Z$ by letting $\tilde\Delta_h(\lambda_\beta)=\lambda_\beta\otimes\lambda_\beta$.
Then $(\mcU(G)\fps\rtimes_\beta\Z/2\Z,\tilde\Delta_h,\Phi)$ is a multiplier quasi-bialgebra.

Now, given $(\mcF,v)$ as above, we can twist $(\mcU(G)\fps\rtimes_\beta\Z/2\Z,\tilde\Delta_h,\Phi)$ by $\mcF$ to get a new multiplier quasi-bialgebra $(\mcU(G)\fps\rtimes_\beta\Z/2\Z,(\tilde\Delta_h)_\mcF,\Phi_\mcF)$.
On the other hand, we can first twist $(\mcU(G)\fps,\Delta_h,\Phi)$ by $\mcF$ and then consider the crossed product by $\beta_v$ to get $(\mcU(G)\fps\rtimes_{\beta_v}\Z/2\Z,(\Delta_{h,\mcF})^\sim,\Phi_\mcF)$.
The map
\[
f\colon \mcU(G)\fps\rtimes_\beta\Z/2\Z\to \mcU(G)\fps\rtimes_{\beta_v}\Z/2\Z,\quad a\mapsto a,\quad \lambda_\beta\mapsto v^{-1}\lambda_{\beta_v},
\]
is an isomorphism of these two multiplier quasi-bialgebras.
In particular, $$(\mcU(G)\fps\rtimes_{\beta_v}\Z/2\Z,(\Delta_{h,\mcF})^\sim,\Phi_\mcF)$$ is indeed a multiplier quasi-bialgebra, and hence $\beta_v$ is an automorphism of $(\mcU(G)\fps,\Delta_{h,\mcF},\Phi_\mcF)$.

Let us turn to ribbon twist-braids.
First note that $\mcR$ is still an $R$-matrix for $(\mcU(G)\fps\rtimes_\beta\Z/2\Z,\tilde\Delta_h,\Phi)$ by its $\beta$-invariance.
Moreover, we can view  $(\mcU(H)\fps,\alpha,\Psi)$ as a quasi-coaction of this multiplier quasi-bialgebra.
Then an element $\mcE\in \mcU(H\times G)\fps$ is a ribbon $\beta$-braid for the original quasi-coaction and $\mcR$ if and only if $\mcE(1\otimes\lambda_\beta)$ is a ribbon braid for the new one and $\mcR$ again.

Finally, the map $f$ satisfies
\[
(\id \otimes f)(\mcG \mcE (1 \otimes \lambda_\beta) \mcG^{-1}) = \mcE_{\mcG,v} (1 \otimes \lambda_{\beta_v}),
\]
showing that formula~\eqref{eq:braid-twist}~is a consequence of~\eqref{eq:braid-twist0} for the crossed products and trivial automorphisms.
\ep

\begin{Rem}
Let us also mention a categorical perspective on conditions~\eqref{eq:auto-twisting}, which does not rely on crossed products.
The automorphism $\beta$ defines an autoequivalence $F_\beta$ of $((\Rep G)\fps,\otimes_h,\Phi)$.
The twisting by $\mcF$ produces an equivalent category $((\Rep G)\fps,\otimes_{h,\mcF},\Phi_\mcF)$.
The functor $F_\beta$ gives rise to an autoequivalence of this new category, which, however, is not defined by any automorphism in general.
Conditions~\eqref{eq:auto-twisting} ensure that this autoequivalence is naturally monoidally isomorphic to an autoequivalence defined by an automorphism, namely, to $F_{\beta_v}$.
\end{Rem}

We now return to the setup of Section~\ref{ssec:twist-multiplier-model}.
Let $\pi\colon U_h(\mfg)\to\mcU(G)\fps$ be a homomorphism satisfying~\eqref{eq:que-iso}.
Assume $\beta$ is an involutive automorphism of the Dynkin diagram of $\mfg$.
We denote by the same symbol the corresponding automorphisms of $(U_h(\mfg),\Delta)$ and $(\mcU(G)\fps,\Delta)$; it will always be clear from the context which one we are using.
These are automorphisms of the quasi-triangular (multiplier quasi-)bialgebras $(U_h(\mfg), \Delta, \msR)$ and $(\mcU(G)\fps, \Delta, \Phi_\KZ, \mcR_\KZ)$.
We also note that, similarly to~\eqref{eq:twist-auto_h}, there is a unique automorphism $\beta_h$ of $(\mcU(G)\fps,\Delta_h)$ such that
\[
\pi\circ\beta=\beta_h\circ\pi.
\]

\begin{Lem}\label{lem:automorphism-twisting}
Let $\mcF$ be a Drinfeld twist for $\pi$ (in the sense of Section~\ref{ssec:twist-multiplier-model}). Then there exists a unique element $v\in 1+h\mcU(G)\fps$ such that
\begin{align}\label{eq:auto-twisting2}
\beta_h &= v\beta(\cdot)v^{-1},&
\mcF &= (v\otimes v)(\beta\otimes\beta)(\mcF)\Delta(v)^{-1}.
\end{align}
We also have $v\beta(v)=1$. If, in addition, $\pi$ is $*$-preserving and $\mcF$ is unitary, then $v$ is unitary.
\end{Lem}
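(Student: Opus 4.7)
The plan is to first produce some $v_0 \in 1 + h\mcU(G)\fps$ satisfying only the conjugation relation $\beta_h = (\Ad v_0)\beta$ (the first half of \eqref{eq:auto-twisting2}), and then to adjust it by a central factor to enforce the twist compatibility (the second half). For the existence of $v_0$, observe that $\beta_h\beta^{-1}$ is an algebra automorphism of $\mcU(G)\fps = \prod_\pi \End(V_\pi)\fps$ that equals the identity modulo $h$ and preserves the block decomposition: since $\pi$ reduces to the identity modulo $h$, both $\beta$ and $\beta_h$ act on $\Irr G$ through the same permutation, so their composition is block-preserving. On each finite-dimensional matrix block, an automorphism trivial modulo $h$ is inner by a standard order-by-order argument (derivations of matrix algebras being inner); collecting over blocks gives $v_0$, which one rescales by the scalar $\epsilon(v_0)^{-1} \in 1 + h\C\fps$ to arrange $\epsilon(v_0) = 1$.

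Next consider $\mcF'' := (v_0\otimes v_0)(\beta\otimes\beta)(\mcF)\Delta(v_0^{-1})$. A direct manipulation combining $\beta_h = (\Ad v_0)\beta$ with the Hopf identity $(\beta_h \otimes \beta_h)\Delta_h = \Delta_h\beta_h$ gives $\mcF''\Delta(\cdot)\mcF''^{-1} = \Delta_h$. The associator $\Phi_\KZ$ is built from the $\beta$-invariant canonical tensor $t^\mfu$, so $(\beta\otimes\beta)(\mcF)$ still satisfies \eqref{eq:Drinfeld-twist1} with $\Phi_\KZ$; the further modification by $(v_0\otimes v_0)\cdot(-)\cdot\Delta(v_0^{-1})$ preserves this equation, because it corresponds at the categorical level to postcomposing the monoidal equivalence underlying $(\beta\otimes\beta)(\mcF)$ with the Hopf algebra isomorphism $\Ad v_0$ from $(\mcU(G)\fps, (\beta\otimes\beta)\Delta_h\beta^{-1})$ to $(\mcU(G)\fps, \Delta_h)$. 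Thus $\mcF''$ is itself a Drinfeld twist for $\pi$, and Lemma~\ref{lem:unique-twist} produces a unique central $u \in 1 + hZ(\mcU(G)\fps)$ with $\mcF'' = \mcF(u \otimes u)\Delta(u^{-1})$. Setting $v := v_0 u^{-1}$, the conjugation relation is preserved by centrality of $u$, and the twist relation follows by rearranging, using that $u\otimes u$ is central in $\mcU(G^2)\fps$ since $u$ acts by scalars on each irreducible.

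For the involution property $v\beta(v) = 1$, applying $\beta\otimes\beta$ to the twist relation and substituting back yields $\mcF = ((v\beta(v))^{-1} \otimes (v\beta(v))^{-1})\mcF\Delta(v\beta(v))$, which is equivalent to $w := v\beta(v)$ being $\Delta_h$-group-like; meanwhile $\beta_h^2 = \id$ together with the conjugation relation forces $\Ad(w) = \id$, so $w$ is central. A central $\Delta_h$-group-like element is automatically $\Delta$-group-like (as $w\otimes w$ is central in $\mcU(G^2)\fps$ and thus commutes with $\mcF$), and order-by-order primitivity combined with $\mfz(\mfg) = 0$ forces $w = 1$. Uniqueness of $v$ follows from the same rigidity applied to $v^{-1}v'$. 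For unitarity, $\beta$ preserves the $*$-structure of $U_h(\mfg)$ (since $\beta(E_i^*) = \beta(E_i)^*$), hence so does $\beta_h$; taking adjoints of both conditions shows that $(v^*)^{-1}$ again satisfies them, and uniqueness forces $v^* = v^{-1}$. The main technical obstacle is the pentagon verification for $\mcF''$ in the second paragraph: either the conceptual monoidal-equivalence argument must be made precise, or a direct but tedious Sweedler-calculus check must be carried out.
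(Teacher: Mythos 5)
Your argument is correct and follows essentially the same route as the paper: produce some $v_0$ with $\beta_h=(\Ad v_0)\beta$, show that $\mcF''=(v_0\otimes v_0)(\beta\otimes\beta)(\mcF)\Delta(v_0)^{-1}$ is again a Drinfeld twist, and invoke Lemma~\ref{lem:unique-twist} to fix the central ambiguity. The two places where your route deviates are both legitimate. For the existence of $v_0$, the paper applies the rigidity statement for homomorphisms $U_h(\mfg)\to\mcU(G)\fps$ (the two homomorphisms $\pi\circ\beta$ and $\beta\circ\pi$ agree modulo $h$, so differ by an inner automorphism); you instead argue directly that $\beta_h\beta^{-1}$ is a block-preserving automorphism of $\mcU(G)\fps$ trivial modulo $h$ and then use inner rigidity block by block. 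These are essentially the same Whitehead-type argument packaged differently; in fact the block-preservation is automatic for any automorphism trivial modulo $h$, since it fixes the central idempotents. For $v\beta(v)=1$, the paper checks that $\beta(v)^{-1}$ satisfies both conditions and invokes uniqueness, whereas you compute directly that $w=v\beta(v)$ is a central $\Delta_h$-group-like, hence $\Delta$-group-like, hence trivial. Both work, and yours is arguably slightly more self-contained.

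The concern you flag at the end is unwarranted: the claim that $(v_0\otimes v_0)(\beta\otimes\beta)(\mcF)\Delta(v_0)^{-1}$ still satisfies \eqref{eq:Drinfeld-twist1} needs no categorical machinery. It is a short Sweedler computation: $(\beta\otimes\beta)(\mcF)$ satisfies \eqref{eq:Drinfeld-twist1} because $\Phi_\KZ$ is $\beta$-invariant; and for any $w\in1+h\mcU(G)\fps$ and any $\mcF_0$ satisfying \eqref{eq:Drinfeld-twist1}, the modified twist $(w\otimes w)\mcF_0\Delta(w)^{-1}$ produces the conjugate $\Delta^{(2)}(w)\Phi_\KZ\Delta^{(2)}(w)^{-1}$, which equals $\Phi_\KZ$ by $G$-invariance of the latter. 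The paper treats this as evident, and so may you. One small point of economy: the rescaling by $\epsilon(v_0)^{-1}$ is harmless but unnecessary, since the central correction supplied by Lemma~\ref{lem:unique-twist} automatically forces $\epsilon(v)=1$ (apply $\epsilon\otimes\id$ to the twist relation).
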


In other words, once $\mcF$ is fixed, $\beta_h$ is a twisting of the automorphism $\beta$ of $(\mcU(G)\fps,\Delta)$ in a unique way.

\bp
Since the homomorphisms $\pi\circ\beta$ and $\beta\circ\pi$ are equal modulo $h$, there exists $w\in 1+h\mcU(G)\fps$ such that
$\pi\circ\beta=(\Ad w)\circ\beta\circ\pi$. Then $\beta_h=(\Ad w)\circ\beta$.

We claim that $(w\otimes w)(\beta\otimes\beta)(\mcF)\Delta(w)^{-1}$ is again a Drinfeld twist (for the same $\pi$). Since $\Phi_\KZ$ is invariant under $\beta$, condition~\eqref{eq:Drinfeld-twist1} is satisfied for $(\beta\otimes\beta)(\mcF)$, hence also for $(w\otimes w)(\beta\otimes\beta)(\mcF)\Delta(w)^{-1}$.
It remains to check that
\[
\Delta_h=(w\otimes w)(\beta\otimes\beta)(\mcF)\Delta(w)^{-1}\Delta(\cdot)\Delta(w)(\beta\otimes\beta)(\mcF^{-1})(w^{-1}\otimes w^{-1}),
\]
or equivalently,
\[
\Delta_h\bigl(\beta_h(\cdot)\bigr)=(w\otimes w)(\beta\otimes\beta)(\mcF)\Delta(w)^{-1}\Delta\bigl(\beta_h(\cdot)\bigr)\Delta(w)(\beta\otimes\beta)(\mcF^{-1})(w^{-1}\otimes w^{-1}).
\]
But this is true, as the right hand side of the above identity is easily seen to be equal to $(\beta_h\otimes\beta_h)\Delta_h$.

By Lemma~\ref{lem:unique-twist} it follows that by multiplying $w$ by a central element we get an element $v\in 1+h\mcU(G)\fps$ satisfying~\eqref{eq:auto-twisting2}. Assume $v'$ is another element with the same properties. Then $v^{-1}v'$ is a central element, hence it also equals $v' v^{-1}$ and
\[
\mcF=(v' v^{-1}\otimes v' v^{-1})\mcF\Delta(v' v^{-1})^{-1}.
\]
By the uniqueness part of Lemma~\ref{lem:unique-twist} we conclude that $v' v^{-1}=1$.

Next, since $\beta_h$ and $\beta$ are both involutive, the element $\beta(v)^{-1}$ has the same properties as $v$, hence $\beta(v)^{-1}=v$. Similarly, if $\pi$ is $*$-preserving and $\mcF$ is unitary, then $\beta_h$ is $*$-preserving as well, and the element $(v^*)^{-1}$ has the same properties as $v$, hence $(v^*)^{-1}=v$.
\ep

\subsection{Comparison theorem: non-Hermitian case}
\label{sec:KZ-LK-qcoact-compar-non-Herm}

We are now ready to prove our main results relating the multiplier algebra models of the Letzter--Kolb coideals to cyclotomic KZ-equations.
Let us first consider the non-Hermitian case.

\begin{Thm}\label{thm:qbialg-compar-KZ-LK-non-Hermitian}
Assume $\mfk=\mfu^\theta<\mfu$ is a non-Hermitian symmetric pair, with $\theta$ in Satake form~\eqref{eq:factoriz-theta}.
Then the multiplier algebra model of the Letzter--Kolb coideal $U^\theta_h(\mfg)$, which is a coaction $(\mcU(G^\theta)\fps,\alpha_h)$ of $(\mcU(G)\fps,\Delta_h)$, is obtained by twisting from the quasi-coaction $(\mcU(G^\theta)\fps,\Delta,\Psi_\KZ)$ of the multiplier quasi-bialgebra $(\mcU(G)\fps,\Delta,\Phi_\KZ)$.
Any such twisting extends to a twisting between the automorphism $\tau_\theta\tau_0$ of $(\mcU(G)\fps,\Delta,\Phi_\KZ)$ and the automorphism $(\tau_\theta\tau_0)_h$ of $(\mcU(G)\fps,\Delta_h)$.
\end{Thm}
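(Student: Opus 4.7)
The plan is to combine the rigidity result of Theorem~\ref{thm:Psi-rigid} with the automorphism-twisting lemma (Lemma~\ref{lem:automorphism-twisting}). First I would fix a Drinfeld twist $\mcF \in \mcU(G^2)\fps$ satisfying $\mcF^{(0)} = 1$, $(\epsilon\otimes\id)(\mcF) = (\id\otimes\epsilon)(\mcF) = 1$, $\Delta_h = \mcF\Delta(\cdot)\mcF^{-1}$, and the KZ identity~\eqref{eq:Drinfeld-twist1} (this exists by the results of Section~\ref{ssec:twist-multiplier-model}, namely Lemma~\ref{lem:Drinfeld-twist}). By Proposition~\ref{prop:conjug-coideal-coact-to-Delta} applied to the multiplier algebra model, pick $\mcG \in \mcU(G^\theta \times G)\fps$ with $\mcG^{(0)} = 1$, $(\id\otimes\epsilon)(\mcG) = 1$, and $\alpha_h = \mcG\Delta(\cdot)\mcG^{-1}$. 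Then twisting $(\mcU(G^\theta)\fps, \alpha_h)$ by $(\mcF^{-1}, \mcG^{-1})$ produces a quasi-coaction $(\mcU(G^\theta)\fps, \Delta, \Psi)$ of $(\mcU(G)\fps, \Delta, \Phi_\KZ)$ for some associator $\Psi$.

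Now both $\Psi$ and $\Psi_\KZ$ are associators for quasi-coactions of $(\mcU(G)\fps, \Delta, \Phi_\KZ)$ on $\mcU(G^\theta)\fps$, with coaction homomorphism equal to $\Delta$. Since we are in the non-Hermitian case, Theorem~\ref{thm:Psi-rigid} applies and yields an $\mcH \in 1 + h\,\mcU(G^\theta \times G)^\mfk\fps$ with $(\id\otimes\epsilon)(\mcH) = 1$ and $\Psi = \mcH_{0,12}\Psi_\KZ \mcH_{01,2}^{-1}\mcH_{0,1}^{-1}$. Composing with the twist $(\mcF, \mcG)$ exhibits $(\mcU(G^\theta)\fps, \alpha_h)$ as a twisting of $(\mcU(G^\theta)\fps, \Delta, \Psi_\KZ)$, proving the first assertion.

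For the second assertion, I would apply Lemma~\ref{lem:automorphism-twisting} with $\beta = \tau_\theta\tau_0$ and the Drinfeld twist $\mcF$ obtained above. The lemma provides a (unique) element $v \in 1 + h\,\mcU(G)\fps$ with $v\,\beta(v) = 1$, satisfying the two relations
\[
(\tau_\theta\tau_0)_h = v\,(\tau_\theta\tau_0)(\cdot)\,v^{-1}, \qquad \mcF = (v \otimes v)(\tau_\theta\tau_0 \otimes \tau_\theta\tau_0)(\mcF)\,\Delta(v)^{-1}.
\]
These are exactly the hypotheses~\eqref{eq:auto-twisting} of Proposition~\ref{prop:twist-of-twist-braids}, which say that twisting by $(\mcF, v)$ transforms the automorphism $\tau_\theta\tau_0$ of $(\mcU(G)\fps, \Delta, \Phi_\KZ)$ into the automorphism $(\tau_\theta\tau_0)_h$ of $(\mcU(G)\fps, \Delta_h, 1)$. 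Since the given twist $(\mcF, \mcG)$ between the quasi-coactions was arbitrary, and Lemma~\ref{lem:automorphism-twisting} supplies $v$ for any such $\mcF$, this establishes the ``any such twisting extends'' statement.

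The step requiring some care is the verification that Lemma~\ref{lem:automorphism-twisting} is genuinely applicable here, i.e.\ that $\tau_\theta\tau_0$ is indeed an automorphism of the quasi-bialgebra in the required sense and that $(\tau_\theta\tau_0)_h$ is the well-defined twisted automorphism arising from the construction of $\Delta_h$ via $\pi$; this uses~\eqref{eq:twist-auto_h}. The rest is a direct assembly of the ingredients already established in Sections~\ref{sec:classification}, \ref{sec:Letzter-Kolb}, and~\ref{ssec:comparison}, with no further cohomological input needed since the non-Hermitian vanishing $(\bigwedge^2 \mfm^\C)^\mfk = 0$ is precisely what makes Theorem~\ref{thm:Psi-rigid} available.
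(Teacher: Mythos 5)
Your proposal is correct and follows essentially the same route as the paper's own (quite terse) proof: twist $\alpha_h$ to $\Delta$ using a Drinfeld twist $\mcF$ and the element $\mcG$ from Proposition~\ref{prop:conjug-coideal-coact-to-Delta}, invoke the non-Hermitian rigidity Theorem~\ref{thm:Psi-rigid} to identify the resulting associator with $\Psi_\KZ$ up to further twist, and then use Lemma~\ref{lem:automorphism-twisting} for the statement about $\tau_\theta\tau_0$ and $(\tau_\theta\tau_0)_h$. You are simply filling in the details that the published proof leaves implicit.
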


\bp
Using a Drinfeld twist $\mcF$ and an element $\mcG$ provided by Proposition~\ref{prop:conjug-coideal-coact-to-Delta} we can twist the coaction $(\mcU(G^\theta)\fps,\alpha_h)$ of $(\mcU(G)\fps,\Delta_h)$ to a quasi-coaction  $(\mcU(G^\theta)\fps,\Delta,\Psi)$ of $(\mcU(G)\fps,\Delta,\Phi_\KZ)$ for some $\Psi$. The first statement of the theorem follows then from Theorem~\ref{thm:Psi-rigid}. The second statement, on twisting~$\tau_\theta\tau_0$ to~$(\tau_\theta\tau_0)_h$, follows from Lemma~\ref{lem:automorphism-twisting}.
\ep

\begin{Thm}\label{thm:compar-rib-tw-br-KZ-LK-non-Hermitian}
The twisting provided by Theorem \ref{thm:qbialg-compar-KZ-LK-non-Hermitian} establishes a one-to-one correspondence between the following data:
\begin{itemize}
\item the ribbon $\tau_\theta\tau_0$-braids for the quasi-coaction $(\mcU(G^\theta)\fpser, \Delta, \Psi_\KZ)$ of the quasi-triangular multiplier quasi-bialgebra $(\mcU(G)\fps,\Delta,\Phi_\KZ, \mcR_\KZ)$, given by
\begin{equation}\label{eq:braid-non-Hermitian}
\mcE'_\KZ g_1 = \exp(-h(2t^{\mfk}_{01}+C^\mfk_1))(z m_X m_0 g)_1 \quad (g\in Z(U));
\end{equation}
\item the Balagovi\'{c}--Kolb ribbon $(\tau_\theta\tau_0)_h$-braids~$\msE$ (or their images $\mcE_h$) for the coideal $U^\theta_h(\mfg)$ of the quasi-triangular bialgebra $(U_h(\mfg), \Delta_h, \msR)$, for different choices of $\gamma$ satisfying \eqref{eq:gamma-cond}.
\end{itemize}
Under this correspondence, we have $\mcE_h^{(0)} = 1 \otimes z m_X m_0 g$.
\end{Thm}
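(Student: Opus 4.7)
The plan is to combine Proposition \ref{prop:twist-of-twist-braids} with the classification of Theorem \ref{thm:uniq-tw-br-nonherm-case}. First, Theorem \ref{thm:qbialg-compar-KZ-LK-non-Hermitian} furnishes a Drinfeld twist $\mcF$ and an element $\mcG \in \mcU(G^\theta \times G)\fps$ converting the quasi-coaction $(\mcU(G^\theta)\fps, \Delta, \Psi_\KZ)$ of $(\mcU(G)\fps, \Delta, \Phi_\KZ)$ into the coaction $(\mcU(G^\theta)\fps, \alpha_h)$ of $(\mcU(G)\fps, \Delta_h)$, while Lemma \ref{lem:automorphism-twisting} applied to $\mcF$ with $\beta = \tau_\theta\tau_0$ yields $v \in 1 + h\mcU(G)\fps$ satisfying \eqref{eq:auto-twisting2}. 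Proposition \ref{prop:twist-of-twist-braids} then produces the bijection $\mcE \mapsto \mcE_{\mcG, v}$ between ribbon $\tau_\theta\tau_0$-braids for $(\mcU(G^\theta)\fps, \Delta, \Psi_\KZ)$ with respect to $\mcR_\KZ$ and ribbon $(\tau_\theta\tau_0)_h$-braids for $(\mcU(G^\theta)\fps, \alpha_h)$ with respect to $\mcR_h = (\pi \otimes \pi)(\msR)$, given by formula \eqref{eq:braid-twist}.

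Second, I would identify the ribbon $\tau_\theta\tau_0$-braids on the KZ side with those in \eqref{eq:braid-non-Hermitian}. Setting $g_0 = zm_Xm_0$ so that $\theta = (\Ad g_0) \circ \tau_\theta\tau_0$, the identities $\Delta(g_0) = g_0 \otimes g_0$ and $(\Ad g_0)^{\otimes k}(t^\mfu) = t^\mfu$ (hence $g_0$ commutes with $\Phi_\KZ$ and $\mcR_\KZ$) allow a direct check that $\mcE' \mapsto \mcE'(1 \otimes g_0)$ establishes a bijection between ribbon $\theta$-braids and ribbon $\tau_\theta\tau_0$-braids for $(\mcU(G^\theta)\fps, \Delta, \Psi_\KZ)$. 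In the non-Hermitian case $\mfz_\mfu(\mfk) = 0$ forces $Z_U(K) = Z(U)$, and Theorem \ref{thm:uniq-tw-br-nonherm-case} identifies the ribbon $\theta$-braids with $\{\exp(-h(2t^\mfk_{01} + C^\mfk_1))g_1 \mid g \in Z(U)\}$; since elements of $Z(U)$ commute with $g_0$, multiplication by $1 \otimes g_0$ yields precisely the elements of \eqref{eq:braid-non-Hermitian}.

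For the Balagovi\'c--Kolb side, $\mcE_h = ((\Ad u)\pi \otimes \pi)(\msE)$ is a ribbon $(\tau_\theta\tau_0)_h$-braid for $(\mcU(G^\theta)\fps, \alpha_h)$ (as recalled in Section \ref{ssec:K-matrix}), so by the bijection of the first paragraph it is the image of a unique element of \eqref{eq:braid-non-Hermitian}. Since $\mcF^{(0)} = \mcG^{(0)} = v^{(0)} = 1$, formula \eqref{eq:braid-twist} preserves order zero, whence $\mcE_h^{(0)} = 1 \otimes g_0 g = 1 \otimes zm_Xm_0 g$ as claimed. To see that all $g \in Z(U)$ are realized as $\gamma$ varies among the solutions of \eqref{eq:gamma-cond}, observe that two such $\gamma$'s differ by a homomorphism $P \to \C^\times$ vanishing on $Q$, i.e.\ an element of $\Hom(P/Q, \C^\times) \cong Z(U)$, and inspection of \eqref{eq:k-mats}--\eqref{eq:BK-braid} (with $\quasiK$ independent of $\gamma$) shows that this multiplies $\mcE_h$ in the second leg by the corresponding element of $Z(U)$.

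The main obstacle I anticipate is the elementary but tedious verification in the second paragraph. Each of the three axioms \eqref{eq:rib-sig-tw-0}--\eqref{eq:rib-sig-tw-2} must be checked separately: \eqref{eq:rib-sig-tw-0} uses only $\theta = (\Ad g_0) \circ \tau_\theta\tau_0$, whereas \eqref{eq:rib-sig-tw-1} and \eqref{eq:rib-sig-tw-2} additionally require commuting $g_0$ in the appropriate position past $(\id \otimes \id \otimes \tau_\theta\tau_0)$ and $(\id \otimes \tau_\theta\tau_0 \otimes \tau_\theta\tau_0)$ applied to permutations of $\Psi_\KZ$ and $\mcR_\KZ$, which uses the $G$-invariance of $\Phi_\KZ$ and $\mcR_\KZ$ together with $\Delta(g_0) = g_0 \otimes g_0$. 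None of these steps is deep, but they must all be aligned correctly.
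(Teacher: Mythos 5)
Your proposal is correct and follows essentially the same route as the paper's proof: apply Theorem~\ref{thm:uniq-tw-br-nonherm-case} to classify ribbon $\theta$-braids, convert them to ribbon $\tau_\theta\tau_0$-braids by right multiplication with $1\otimes z m_X m_0$, then push through the twisting machinery of Proposition~\ref{prop:twist-of-twist-braids} (with the $v$ supplied by Lemma~\ref{lem:automorphism-twisting}) to land on ribbon $(\tau_\theta\tau_0)_h$-braids, finally identifying these with the Balagovi\'c--Kolb braids via the $Z(U)$-torsor structure on both sides. You spell out two points the paper leaves terse -- the direct verification that right multiplication by $1\otimes z m_X m_0$ intertwines the two sets of axioms, and the fact that varying $\gamma$ over the solutions of \eqref{eq:gamma-cond} hits every element of $Z(U)$ -- but these are exactly the observations the paper is implicitly invoking when it appeals to the torsor structure recorded in Section~\ref{ssec:K-matrix}, so there is no substantive divergence.
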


\bp
By Theorem~\ref{thm:uniq-tw-br-nonherm-case} we have a complete classification of ribbon $\theta$-braids for the quasi-coaction $(\mcU(G^\theta)\fps,\Delta,\Psi_\KZ)$ of $(\mcU(G)\fps,\Delta,\Phi_\KZ)$.
Since $\theta=(\Ad z m_X m_0)\circ\tau_\theta\circ\tau_0$, the multiplication by $1\otimes z m_X m_0$ on the right gives a one-to-one correspondence between the ribbon $\theta$-braids and the ribbon $\tau_\theta\tau_0$-braids, so the latter ones are given by~\eqref{eq:braid-non-Hermitian}.
As any Drinfeld twist $\mcF$ satisfies~\eqref{eq:R-matrix}, formula~\eqref{eq:braid-twist} provides a correspondence between the ribbon $\tau_\theta\tau_0$-braids and the $(\tau_\theta\tau_0)_h$-braids.
Since both the ribbon $\tau_\theta\tau_0$-braids and the Balagovi\'{c}--Kolb ribbon $(\tau_\theta\tau_0)_h$-braids are torsors over the finite group $Z(U)$, this gives a bijective correspondence stated in the theorem.

Finally, since by definition the elements $\mcF$, $\mcG$, and $v$ used in the twisting have constant terms $1$, we get the claim about $\mcE_h^{(0)}$.
\ep

\begin{Rem}\label{rem:non-Hermitian-UEA-extension}
As we pointed out throughout the paper (see Remarks~\ref{rem:que-rigidity-non-Hermitian} and \ref{rem:que-model}), in the non-Hermitian case there is no real need to consider the multiplier algebra model, so a similar result holds at the level of the universal enveloping algebras, also beyond the standard case.

Let us formulate this more precisely.
Let $\mbt=(\mbc,\mbs)\in\mcT$ be such that $c^{(0)}_i=1$ for all $i\in I\setminus X$ (recall also that, by definition, we have $s_i=0$ for all $i\in I\setminus X$).
If we fix algebra isomorphisms $U_h(\mfg)\cong U(\mfg)\fps$ and $U^\mbt_h(\mfg^\theta) \cong U(\mfg^\theta)\fps$ that are identity modulo $h$ (that is, they are given by~\eqref{eq:que-iso} and Proposition~\ref{prop:coideal-deform}), then the coproduct $\Delta \colon  U_h(\mfg) \to U_h(\mfg)\hotimes U_h(\mfg)$ defines a coproduct $\Delta_h$ on $U(\mfg)\fps$ and a coaction $\alpha_h\colon U(\mfg^\theta)\fps\to U(\mfg^\theta) \otimes U(\mfg)\fps$ of $(U(\mfg)\fps,\Delta_h)$.

The claim then is that this coaction is obtained by twisting from the quasi-coaction $(U(\mfg^\theta)\fps,\Delta,\Psi_\KZ)$ of $(U(\mfg)\fps,\Delta,\Phi_\KZ)$.
Any such twisting extends to a twisting between the automorphisms $\tau_\theta\tau_0$ of $(U(\mfg)\fps,\Delta,\Phi_{\KZ})$ and $(U(\mfg)\fps,\Delta_h)$ and, in the standard case $\mbt=0$, provides a one-to-one correspondence between the ribbon $\tau_\theta\tau_0$-braids as in Theorem~\ref{thm:compar-rib-tw-br-KZ-LK-non-Hermitian}.
\end{Rem}

\begin{Rem}\label{rem:compar-thm-type-II}
The type II symmetric pairs admit analogues of Theorems \ref{thm:qbialg-compar-KZ-LK-non-Hermitian} and \ref{thm:compar-rib-tw-br-KZ-LK-non-Hermitian} and Remark~\ref{rem:non-Hermitian-UEA-extension}, with essentially the same proofs. Indeed, as we have explained along the way, the intermediate results used in the proofs, such as Theorems \ref{thm:Psi-rigid} and \ref{thm:uniq-tw-br-nonherm-case} and Proposition~\ref{prop:conjug-coideal-coact-to-Delta}, all have analogues for the type~II case.
\end{Rem}

\subsection{Comparison theorem: Hermitian case}
\label{sec:KZ-LK-qcoact-compar-Herm}

In the Hermitian case we do need to consider the multiplier algebra model in our approach.

\begin{Thm}\label{thm:qbialg-compar-KZ-LK-Hermitian}
Assume $\mfu^\theta<\mfu$ is a Hermitian symmetric pair, with $\theta$ in Satake form~\eqref{eq:factoriz-theta}.
Take $\mbt\in\mcT^*$ and choose $Z^\mbt_\theta\in\mfz(\mfg^\theta_\mbt)$ such that $(Z^\mbt_\theta,Z^\mbt_\theta)_\mfg=-a_\theta^{-2}$.
Then the coaction $(\mcU(G^\theta_\mbt)\fps,\alpha_h)$ of $(\mcU(G)\fps,\Delta_h)$ is obtained by twisting from the quasi-coaction $(\mcU(G^\theta_\mbt)\fps,\Delta,\Psi_{\KZ,s;\mu})$ of $(\mcU(G)\fps,\Delta,\Phi_\KZ)$
for uniquely defined $s\in\R$ and $\mu\in h\R\fps$, where $\Psi_{\KZ,s;\mu}$ is defined using $Z^\mbt_\theta$.

The parameter $s\in\R$ is determined as follows:
\begin{itemize}
\item{\textup{S-type:}} if $\alpha_o$ is the unique distinguished root and $c = -i s^{(0)}_o$, then
\[
s=\pm\frac{2}{\pi}\log\bigl((1+c^2)^{1/2}+c\bigr),
\]
where $\pm$ is the sign of $\kappa_o (Z^\mbt_\theta,X_{\alpha_o})_\mfg$;
\item{\textup{C-type:}} if $\alpha_o$ is the unique distinguished root such that $-i\alpha_o(\tilde Z^\mbt_\theta)+i\alpha_{\tau_\theta(o)}(\tilde Z^\mbt_\theta)>0$, where $\tilde Z^\mbt_\theta$ is the component of $Z^\mbt_\theta$ in $\mfh$, and $c=c^{(0)}_o$, then
\[
s=\frac{2}{\pi}\log c.
\]
\end{itemize}
\end{Thm}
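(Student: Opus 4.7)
The plan is to reduce to Theorem~\ref{thm:modified-KZ-universality} applied to the Hermitian pair $\mfk_\mbt<\mfu$ (with $Z^\mbt_\theta$ as the normalized central element), via the twisting framework of Section~\ref{ssec:twist-multiplier-model}. First, I would choose $\pi$ to be $*$-preserving (Lemma~\ref{lem:que-iso}), take a unitary Drinfeld twist $\mcF$ with $\mcF^{(1)}=ir/2$ (Lemma~\ref{lem:Drinfeld-twist}), and select a unitary $\mcG$ with $\alpha_h=\mcG\Delta(\cdot)\mcG^{-1}$ (Proposition~\ref{prop:conjug-coideal-coact-to-Delta}, applicable because $U_h^\mbt(\mfg^\theta)$ is $*$-invariant for $\mbt\in\mcT^*$). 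Twisting $\alpha_h$ by $(\mcF^{-1},\mcG^{-1})$ produces a unitary quasi-coaction $(\mcU(G^\theta_\mbt)\fps,\Delta,\Psi)$ of $(\mcU(G)\fps,\Delta,\Phi_\KZ)$ with $\Psi^{(1)}=-\frac{ir_{12}}{2}+d_\cH(\mcG^{(1)})$.

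The second step verifies the genericity hypothesis of Theorem~\ref{thm:modified-KZ-universality} by computing $\langle\Omega_\mbt,\Psi^{(1)}\rangle$, following the strategy of Theorem~\ref{thm:assoc-for-prescribed-classical-limit}. By Lemma~\ref{lem:mbt-vs-phi} together with Remark~\ref{rem:sign-detection} (which allows us to fix $\phi$ so as to match the prescribed $Z^\mbt_\theta$), we have $\mfg^\theta_\mbt=(\Ad z_\theta^{-1}g_{\phi-1})(\mfg^\nu)$ and $Z^\mbt_\theta=(\Ad z_\theta^{-1}g_{\phi-1})(Z_\nu)$ for a unique $\phi\in(-1,1)$. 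Applying $(\Ad z_\theta^{-1})^{\otimes 2}$ to Proposition~\ref{prop:Cayley-rotates-rmat} and using that $(\Ad z_\theta)^{\otimes 2}(r)=r$ (since $z_\theta\in\exp(\mft)$ acts by inverse scalars on the pairs $X_\alpha\otimes X_{-\alpha}$), one obtains $ir-\sin(\pi\phi/2)(t^{\mfm_{\mbt+}}-t^{\mfm_{\mbt-}})\in\mfg^\theta_\mbt\otimes\mfg+\mfg\otimes\mfg^\theta_\mbt$. Pairing with $\Omega_\mbt$ via Proposition~\ref{prop:partial-rmat-cohom-nontriv} gives $\langle\Omega_\mbt,\Psi^{(1)}\rangle=-\frac{i}{2}\sin(\pi\phi/2)\dim\mfm$, which lies in the admissible interval. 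Theorem~\ref{thm:modified-KZ-universality} then yields unique $s\in\R$ and $\mu\in h\R\fps$ with $(\mcU(G^\theta_\mbt)\fps,\Delta,\Psi)$ twist-equivalent to $(\mcU(G^\theta_\mbt)\fps,\Delta,\Psi_{\KZ,s;\mu})$, satisfying $\tanh(\pi s/2)=\sin(\pi\phi/2)$.

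The third step converts the $\phi$-formulas in Lemma~\ref{lem:mbt-vs-phi} into the explicit $s$-formulas. For the S-type case, write $c=-is_o^{(0)}\in\R$; by Lemma~\ref{lem:z-theta-alpha-o-expr} and $\kappa_o^2=z_o$, the quantity $\epsilon:=z_\theta(\alpha_o)\kappa_o$ satisfies $\epsilon^2=1$, and Lemma~\ref{lem:mbt-vs-phi} rearranges to $\tan(\pi\phi/2)=\epsilon c$. Combining $\tanh(\pi s/2)=\sin(\pi\phi/2)$ with $\sin^2=\tan^2/(1+\tan^2)$ and $\sinh^2=\tanh^2/(1-\tanh^2)$ yields $\sinh(\pi s/2)=\epsilon c$, hence $s=\epsilon\cdot\frac{2}{\pi}\log(\sqrt{1+c^2}+c)$; Remark~\ref{rem:sign-detection} (using $\cos(\pi\phi/2)>0$) identifies $\epsilon$ with $\sgn(\kappa_o(Z^\mbt_\theta,X_{\alpha_o})_\mfg)$. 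For the C-type case, Lemma~\ref{lem:mbt-vs-phi} gives $c=c_o^{(0)}=-\cot(\pi(\phi-1)/4)$; setting $u=\pi(\phi-1)/4$, the double-angle identity $\sin(\pi\phi/2)=\cos(2u)=(1-\tan^2 u)/(1+\tan^2 u)$ gives $\tanh(\pi s/2)=(c^2-1)/(c^2+1)$, so $e^{\pi s}=c^2$ and $s=\frac{2}{\pi}\log c$. The correct choice of the distinguished root $\alpha_o$ (versus $\alpha_{\tau_\theta(o)}$) is exactly the condition provided by Remark~\ref{rem:sign-detection} in the C-type case.

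The main obstacle is the bookkeeping of signs and parameter choices in the S-type case: matching the freedom in $Z^\mbt_\theta$ to a unique $\phi$, extracting the reality of $\epsilon=z_\theta(\alpha_o)\kappa_o$ from the defining relations among $z_\theta$, $\kappa_o$, and $z$, and finally reading off $\epsilon$ as the sign of an intrinsic quantity not involving the auxiliary $\phi$. These are all controlled by the identities gathered in Remark~\ref{rem:sign-detection}.
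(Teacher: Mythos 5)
Your proposal is correct and follows essentially the same route as the paper's proof: reduce to the $\Delta$-picture via a $*$-preserving $\pi$, a unitary Drinfeld twist $\mcF$ with first-order term $ir/2$, and a unitary $\mcG$, then identify the cohomology class of $\Psi^{(1)}$ by pairing against $\Omega$ using Proposition~\ref{prop:Cayley-rotates-rmat}, and finally translate $\tanh(\hlf{\pi s})=\sin(\hlf{\pi\phi})$ into the stated $s$-formulas via Lemma~\ref{lem:mbt-vs-phi}, Lemma~\ref{lem:z-theta-alpha-o-expr} and Remark~\ref{rem:sign-detection}. The only difference from the paper is presentational: the paper applies $\Ad z_\theta$ to move to the $G_\phi$-picture, invokes Theorem~\ref{thm:assoc-for-prescribed-classical-limit} wholesale, and then conjugates back, whereas you push $(\Ad z_\theta^{-1})$ through the $r$-matrix computation and work directly in the $G^\theta_\mbt$-picture.

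Two small points deserve care. First, the uniqueness of $(s,\mu)$ claimed in the theorem is uniqueness over all choices of twisting data $(\mcF,\mcG)$, not merely for the particular $\Psi$ you constructed; Theorem~\ref{thm:modified-KZ-universality}(i) only gives uniqueness for a fixed $\Psi$. To close this you should invoke Lemma~\ref{lem:unique-twist} (as the proof of Theorem~\ref{thm:assoc-for-prescribed-classical-limit} does) to show any two Drinfeld twists differ by a central gauge, so all resulting associators are related by twisting with an element of the form $\mcG'^{-1}\mcG$, at which point Theorem~\ref{thm:modified-KZ-universality}(i) applies. Second, in the S-type step you quietly assume $\cos(\hlf{\pi\phi})>0$ in passing from $\tan(\hlf{\pi\phi})=\epsilon c$ to $\sinh(\hlf{\pi s})=\epsilon c$, and again in deducing $\epsilon=\sgn(\kappa_o(Z^\mbt_\theta,X_{\alpha_o})_\mfg)$ from Remark~\ref{rem:sign-detection}. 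That sign is actually fixed by the prescribed $Z^\mbt_\theta$ and need not be positive. The two potential sign errors cancel (both steps flip together under $\cos\mapsto-\cos$), so your final formula is correct, but the intermediate claims as written are not; the paper avoids this by expressing the sign of $s$ directly as $\sgn(z_\theta(\alpha_o)\kappa_o\cos(\hlf{\pi\phi}))$ and only then identifying it with $\sgn(\kappa_o(Z^\mbt_\theta,X_{\alpha_o})_\mfg)$ via Remark~\ref{rem:sign-detection}.
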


\bp
Choose $Z_\theta\in\mfz(\mfu^\theta)$ such that $(Z_\theta,Z_\theta)_\mfg=-a_\theta^{-2}$. In the C-type case we require also that if $\alpha_o$ is the distinguished root as in the formulation of the theorem, then $-i\alpha_o(Z_\theta)+i\alpha_{\tau_\theta(o)}(Z_\theta)>0$, which determines $Z_\theta$ uniquely. By Lemma~\ref{lem:nu-from-theta} and the discussion following it we then get a pair $(\nu,Z_\nu)$ as in Section~\ref{sec:int-subgroups}.

By twisting the coaction we may assume that $\pi\colon U_h(\mfg)\to \mcU(G)\fps$ defining the multiplier algebra model is as in Lemma~\ref{lem:que-iso}. Then by Lemma~\ref{lem:Drinfeld-twist} there is a unitary Drinfeld twist such that
\[
\mcF=1+h\frac{i r}{2}+O(h^2).
\]
By Lemma~\ref{lem:u-conjugate} we could also choose $u$ satisfying \eqref{eq:umbt} to be unitary, which means that by twisting~$\alpha_h$ we may assume $\alpha_h$ to be $*$-preserving. Hence, by Proposition~\ref{prop:conjug-coideal-coact-to-Delta}, $\alpha_h$ is a twisting of $\Delta$ by a unitary element~$\mcG$.

By Lemma~\ref{lem:mbt-vs-phi} we have $\mfg^\theta_\mbt=(\Ad z_\theta)^{-1}(\mfg_\phi)$ and $Z^\mbt_\theta=(\Ad z_\theta^{-1}g_{\phi-1})(Z_\nu)$, where $\phi$ is determined as follows (see also Remark \ref{rem:sign-detection}):
\begin{itemize}
\item{\textup{S-type:}} $z_\theta(\alpha_o)s^{(0)}_o\kappa_o=i\tan(\frac{\pi \phi}{2})$ and $\displaystyle\frac{(Z^\mbt_\theta,X_{\alpha_o})_\mfg}{z_\theta(\alpha_o)\cos(\frac{\pi \phi}{2})}>0$;
\item{\textup{C-type:}} $c^{(0)}_o=-\cot(\frac{\pi}{4}(\phi-1))$.
\end{itemize}

The map $\Ad z_\theta$ defines isomorphisms $\mcU(G^\theta_\mbt)\to\mcU(G_\phi)$ and $\mcU(G)\to\mcU(G)$ and  transforms the coaction $(\mcU(G^\theta_\mbt)\fps,\alpha_h)$ of $(\mcU(G)\fps,\Delta_h)$ into a coaction $(\mcU(G_\phi)\fps,\tilde\alpha_h)$ of $(\mcU(G)\fps,\tilde\Delta_h)$. As $[z_\theta\otimes z_\theta,r]=0$, the latter coaction satisfies the assumptions of Theorem~\ref{thm:assoc-for-prescribed-classical-limit}. Hence this coaction is a twisting of the quasi-coaction $(\mcU(G_\phi)\fps,\Delta,\Psi_{\KZ,s;\mu})$ of $(\mcU(G)\fps,\Delta,\Phi_\KZ)$ for uniquely determined $s\in\R$ and $\mu\in h\R\fps$, with $s$ determined by
\[
\sin\Bigl(\frac{\pi \phi}{2}\Bigr) = \tanh\Bigl(\frac{\pi s}{2}\Bigr).
\]

Applying $(\Ad z_\theta)^{-1}$, we conclude that our original coaction is obtained by twisting from the quasi-coaction $(\mcU(G^\theta_\mbt)\fps,\Delta,\Psi_{\KZ,s;\mu})$ of $(\mcU(G)\fps,\Delta,\Phi_\KZ)$, and the pair $(s,\mu)$ is the only one with this property.

It remains to verify the formulas for $s$ in the formulation of the theorem.

In the S-type case, using $s^{(0)}_o=i c$ we can write $z_\theta(\alpha_o)\kappa_o c=\tan(\frac{\pi \phi}{2})$.
We also have $z_\theta(\alpha_o) \kappa_o = \pm 1$ by Lemma \ref{lem:z-theta-alpha-o-expr}.
We thus obtain $\sin(\hlf{\pi \phi})=\pm c(1+c^2)^{-1/2}$, or equivalently
\[
\hlf{\pi s} = \pm \frac{1}{2}\log\mathopen{}\left( \frac{1 + c (1 + c^2)^{-1/2}}{1 - c (1 + c^2)^{-1/2}} \right) = \pm \log\bigl((1+c^2)^{1/2}+c\bigr),
\]
with the $\pm$ being equal to the sign of $c^{-1} \sin(\hlf{\pi \phi}) = z_\theta(\alpha_o) \kappa_o \cos(\hlf{\pi \phi})$.
This is equal to the sign of $\kappa_o (Z^\mbt_\theta,X_{\alpha_o})_\mfg$ because $(z_\theta(\alpha_o)\cos(\hlf{\pi \phi}))^{-1} (Z^\mbt_\theta,X_{\alpha_o})_\mfg > 0$.

In the C-type case, writing $c^{(0)}_o=c > 0$, we have
\begin{equation*}
\sin\Bigl(\hlf{\pi \phi}\Bigr) = \frac{c^2-1}{c^2+1} = \frac{c-c^{-1}}{c+c^{-1}},
\end{equation*}
hence $\hlf{\pi s} = \log c$.
\ep

\begin{Rem}\label{rem:unitarity}
Throughout the paper we made a number of statements about unitarity. We used them in the proof of Theorem~\ref{thm:qbialg-compar-KZ-LK-Hermitian} to make sure that for $\mbt\in\mcT^*$ we get $s\in\R$ and $\mu\in h\R\fps$. It follows that $\Psi_{\KZ,s;\mu}$ is unitary, and once we know this, a standard argument based on polar decomposition shows that if $\pi\colon U_h(\mfg)\to \mcU(G)\fps$ and $\alpha_h$  are $*$-preserving, then the twisting can be done by unitary elements. The same is true for Theorem~\ref{thm:qbialg-compar-KZ-LK-non-Hermitian}.
\end{Rem}

The parameter $\mu$ can in principle be determined by comparing the $K$-matrices using the next theorem.
We will do this in detail in the type $\mathrm{AIII}$ case in Section~\ref{ssec:aiii}.

\begin{Thm}\label{compar-rib-tw-br-KZ-Hermitian}
The twisting provided by Theorem \ref{thm:qbialg-compar-KZ-LK-non-Hermitian} establishes a one-to-one correspondence between the following data:
\begin{itemize}
\item the ribbon braids for the quasi-coaction $(\mcU(G^\theta_\mbt)\fpser, \Delta, \Psi_{\KZ,s;\mu})$ of the quasi-triangular multiplier quasi-bialgebra $(\mcU(G)\fps,\Delta,\Phi_\KZ,\mcR_\KZ)$, given by
\begin{equation}\label{eq:braid-Hermitian}
\mcE''_{\KZ,s;\mu} g_1 = \exp\Big(-h(2t^{\mfk_\mbt}_{01}+C^{\mfk_\mbt}_1)+\pi(1-is-i \mu)(Z^\mbt_\theta)_1\Big)g_1 \quad (g\in Z(U)),
\end{equation}
\item the Balagovi\'{c}--Kolb ribbon braids~$\msE^\mbt$ (or their images $\mcE^\mbt_h$) for the coideal $U^\mbt_h(\mfg^\theta)$ of the quasi-triangular bialgebra $(U_h(\mfg), \Delta, \msR)$, for different choices of $\gamma$ satisfying \eqref{eq:gamma-cond}.
\end{itemize}
Under this correspondence, we have $\mcE^\mbt_h = 1 \otimes \exp(\pi(1-is)Z^\mbt_\theta)g \bmod h$.
\end{Thm}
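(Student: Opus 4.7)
The argument will run in parallel to that of Theorem~\ref{thm:compar-rib-tw-br-KZ-LK-non-Hermitian}, but now using the Hermitian classification result of Theorem~\ref{thm:uniq-tw-br-herm-case} and with one additional conversion step because that theorem classifies ribbon $\sigma$-braids while the objects to be matched on the Letzter--Kolb side are genuine ribbon braids.

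First I will classify the ribbon braids for the KZ quasi-coaction. Taking $\sigma = \theta_\mbt$ (which fixes $\mfk_\mbt$, so $\mfg^\theta_\mbt = \mfk_\mbt^\C$), Theorem~\ref{thm:uniq-tw-br-herm-case} describes the ribbon $\theta_\mbt$-braids for $(\mcU(G^\theta_\mbt)\fpser,\Delta,\Psi_{\KZ,s;\mu})$ as $\mcE_{\KZ,s;\mu}(1\otimes g)$ with $g\in Z(U)$. Since $\mfu^\theta<\mfu$ is Hermitian, the corollary following Lemma~\ref{lem:eigenvals-adz} (applied to the pair $\mfk_\mbt<\mfu$) gives $\theta_\mbt = \Ad(\exp(\pi Z^\mbt_\theta))$ with the normalization $(Z^\mbt_\theta,Z^\mbt_\theta)_\mfg=-a_\theta^{-2}$. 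A direct check of the definition of a ribbon $\beta$-braid then shows that right multiplication by $1\otimes\exp(\pi Z^\mbt_\theta)$ gives a bijection between ribbon $\theta_\mbt$-braids and ribbon braids (with trivial twist). Because $Z^\mbt_\theta\in\mfz(\mfg^\theta_\mbt)$ commutes with $t^{\mfk_\mbt}_{01}$ and $C^{\mfk_\mbt}_1$, the exponents combine to yield precisely formula~\eqref{eq:braid-Hermitian}; in particular the resulting elements form a torsor over $Z(U)$.

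Next I will match these with the Balagovi\'c--Kolb braids via the twisting from Theorem~\ref{thm:qbialg-compar-KZ-LK-Hermitian}. That twisting is implemented by a Drinfeld twist $\mcF$ and an element $\mcG$ with constant terms $1$, and by~\eqref{eq:R-matrix} it automatically relates $\mcR_\KZ$ to the $R$-matrix $\mcR_h$ of $(\mcU(G)\fpser,\Delta_h)$. Since in the Hermitian case $\theta$ is inner and hence $\tau_\theta\tau_0=\id$, no additional automorphism twisting is required: the Balagovi\'c--Kolb braid $\msE^\mbt$ is a genuine ribbon braid, and the simple twisting formula~\eqref{eq:braid-twist0} with $\beta=\id$ (i.e.\ $\mcE_\mcG=\mcG\mcE\mcG^{-1}$) sets up a one-to-one correspondence between ribbon braids for the KZ quasi-coaction and ribbon braids for $(\mcU(G^\theta_\mbt)\fpser,\alpha_h)$. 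As the Balagovi\'c--Kolb braids $\mcE^\mbt_h$, parameterized by $\gamma$ satisfying~\eqref{eq:gamma-cond}, also form a torsor over $Z(U)$, the correspondence is bijective.

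Finally the claim about the constant term will follow by inspection: $\mcF^{(0)}=\mcG^{(0)}=1$, so the twisting preserves the order $h^0$ term, while $\mu\in h\R\fps$ has positive $h$-order, so formula~\eqref{eq:braid-Hermitian} gives
\[
\mcE''_{\KZ,s;\mu}g_1 \equiv 1\otimes\exp(\pi(1-is)Z^\mbt_\theta)\,g \pmod h,
\]
from which the stated expression for $\mcE^\mbt_h\bmod h$ follows. The main technical ingredient behind this plan, beyond the classification theorems, is that $\msE^\mbt$ produces an element of $\mcU(G^\theta_\mbt\times G)\fpser$ and not merely of the formal Laurent completion; this is precisely the content of the last proposition of Section~\ref{ssec:K-matrix}, which rests on Theorem~\ref{Thm:not-PL-subgr-forces-K-mat-to-be-formal-pow-ser} and thus ultimately on the fact that $K_\mbt$ is coisotropic but (for $\mbt\in\mcT^*$) not a Poisson--Lie subgroup of $(U,r)$. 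This is the step I expect to be least routine, but it is already handled in the preceding section, so the present proof can invoke it as a black box.
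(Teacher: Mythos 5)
Your proof is correct and follows essentially the same route as the paper, which sketches exactly this: apply Theorem~\ref{thm:uniq-tw-br-herm-case} with $\sigma=\exp(\pi\ad Z^\mbt_\theta)$, use right multiplication by $1\otimes\exp(\pi Z^\mbt_\theta)$ to convert ribbon $\sigma$-braids to ribbon braids, then transport along the twist of Theorem~\ref{thm:qbialg-compar-KZ-LK-Hermitian} (with $\tau_\theta\tau_0=\id$ so no $v$-correction is needed), and conclude by the $Z(U)$-torsor count plus the constant-term inspection. Your extra care about the normalization $\theta_\mbt=\Ad(\exp(\pi Z^\mbt_\theta))$ and about why $\mcE^\mbt_h$ actually lands in $\mcU(G^\theta_\mbt\times G)\fps$ (the last proposition of Section~\ref{ssec:K-matrix}, resting on Theorem~\ref{Thm:not-PL-subgr-forces-K-mat-to-be-formal-pow-ser}) is exactly the right thing to flag, and is implicit in the paper's terse ``similarly to Theorem~\ref{thm:compar-rib-tw-br-KZ-LK-non-Hermitian}''.
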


\bp
This is proved similarly to Theorem~\ref{thm:compar-rib-tw-br-KZ-LK-non-Hermitian}, but now using the classification result from Theorem~\ref{thm:uniq-tw-br-herm-case}, applied to $\sigma=\exp(\pi\ad Z^\mbt_\theta)$, and the fact that the multiplication by $1\otimes \exp(\pi Z^\mbt_\theta)$ on the right gives a one-to-one correspondence between the ribbon $\sigma$-braids and the ribbon braids.
\ep

Analogous results hold for generic $\mbt\in\mcT^*_\C$.
More precisely, we have to exclude a countable set of values of $s^{(0)}_o$ (S-type) and $c^{(0)}_o$ (C-type) for the distinguished roots to be sure that a multiplier algebra model for $U^\mbt_h(\mfg^\theta)$  exists, see Remark~\ref{rem:multiplier-generic}.
We also have to make sure that $\Psi_{\KZ,s;\mu}$ is well-defined, which means that~$s$ should be outside a set $A$ satisfying $i(1+2\Z) \subset A \subset i\Q^\times$.

\begin{Prop}
In the S-type case, for generic $\mbt \in \mcT^*_\C$, the coaction $(\mcU(G^\theta_\mbt)\fps,\alpha_h)$ of $(\mcU(G)\fps,\Delta_h)$ is obtained by twisting from the quasi-coaction $(\mcU(G^\theta_\mbt)\fps,\Delta,\Psi_{\KZ,s;\mu})$ of $(\mcU(G)\fps,\Delta,\Phi_\KZ)$ for $s \in \C$ satisfying $e^{\pi s} = \bigl((1+c^2)^{1/2}+c\bigr)^2$, with $c = -i s_o^{(0)}$, and a uniquely determined $\mu \in h \C\fpser$, where the square root $(1+c^2)^{1/2}$ is chosen such that $(1+c^2)^{1/2} \kappa_o (Z^\mbt_\theta,X_{\alpha_o})_\mfg>0$.
In the C-type case, the same holds for $s$ satisfying $e^{\pi s} = c^2$, with $c = c_o^{(0)}$, where $\alpha_o$ is the unique distinguished root such that $-i\alpha_o(\tilde Z^\mbt_\theta)+i\alpha_{\tau_\theta(o)}(\tilde Z^\mbt_\theta)>0$. Such a twisting establishes a one-to-one correspondence between the ribbon braids~\eqref{eq:braid-Hermitian}
and the Balagovi\'{c}--Kolb ribbon braids.
\end{Prop}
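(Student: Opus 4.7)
The plan is to run the argument of Theorems~\ref{thm:qbialg-compar-KZ-LK-Hermitian} and~\ref{compar-rib-tw-br-KZ-Hermitian} in the complex parameter setting, keeping track of the finite genericity conditions. First, by Remark~\ref{rem:multiplier-generic}, outside a countable set of values of $s_o^{(0)}$ (S-type) or $c_o^{(0)}$ (C-type), the multiplier algebra model $(\mcU(G^\theta_\mbt)\fps,\alpha_h)$ of $U_h^\mbt(\mfg^\theta)$ is well-defined, so we may fix such a $\mbt$. Next, drop the unitarity requirements: choose any homomorphism $\pi\colon U_h(\mfg)\to\mcU(G)\fps$ satisfying~\eqref{eq:que-iso2}, any Drinfeld twist $\mcF$ with $\mcF=1+h i r/2+O(h^2)$ (which exists by Lemma~\ref{lem:Drinfeld-twist}, ignoring the unitarity claim), and an element $\mcG\in\mcU(G^\theta_\mbt\times G)\fps$ provided by Proposition~\ref{prop:conjug-coideal-coact-to-Delta}. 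Twisting by $(\mcF^{-1},\mcG^{-1})$ transforms $\alpha_h$ into a quasi-coaction $(\mcU(G^\theta_\mbt)\fps,\Delta,\Psi)$ of $(\mcU(G)\fps,\Delta,\Phi_\KZ)$.

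Now, Lemma~\ref{lem:mbt-vs-phi} is already valid for complex $\phi$, giving $\mfg^\theta_\mbt=(\Ad z_\theta)^{-1}(\mfg_\phi)$. Conjugating by $\Ad z_\theta$ and repeating the computation of $\langle\Omega_\phi,\Psi^{(1)}\rangle$ carried out in the proof of Theorem~\ref{thm:assoc-for-prescribed-classical-limit} (which only used the identity $(Ad g_{\phi-1})^{\otimes 2}(r)=\cos(\pi(1-\phi)/2)r$ mod $\mfg_\phi\otimes\mfg+\mfg\otimes\mfg_\phi$, valid for complex $\phi$ as well, since Proposition~\ref{prop:Cayley-rotates-rmat} is a polynomial identity), we find that
\[
\langle\Omega_\phi,\Psi^{(1)}\rangle=-\tfrac{i}{2}\sin\Bigl(\tfrac{\pi\phi}{2}\Bigr)\dim\mfm.
\]
Provided $\sin(\pi\phi/2)$ avoids the countable set excluded in Theorem~\ref{thm:modified-KZ-universality} (after converting via $\sin(\pi\phi/2)=\tanh(\pi s/2)$), that theorem supplies unique $s\in\C$ (modulo $2i\Z$) and $\mu\in h\C\fps$ such that $\Psi$ is twist-equivalent to $\Psi_{\KZ,s;\mu}$. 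The relation $\tanh(\pi s/2)=\sin(\pi\phi/2)$ combined with Lemma~\ref{lem:mbt-vs-phi} then yields $e^{\pi s}=((1+c^2)^{1/2}+c)^2$ in the S-type case (using $\tan(\pi\phi/2)=z_\theta(\alpha_o)\kappa_o c=\pm c$ from Lemma~\ref{lem:z-theta-alpha-o-expr}, which determines the branch of the square root via the sign condition on $(Z^\mbt_\theta,X_{\alpha_o})_\mfg$), and $e^{\pi s}=c^2$ in the C-type case (by a direct trigonometric computation from $c=-\cot(\pi(\phi-1)/4)$).

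For the correspondence of ribbon braids, Theorem~\ref{thm:uniq-tw-br-herm-case} is formulated for arbitrary $s\not\in i\Q^\times$ and $\mu\in h\C\fps$, so it already classifies ribbon braids for $(\mcU(G^\theta_\mbt)\fps,\Delta,\Psi_{\KZ,s;\mu})$ in the present setting. Multiplying on the right by $1\otimes\exp(\pi Z^\mbt_\theta)$ converts ribbon $\sigma$-braids to ribbon braids exactly as in the proof of Theorem~\ref{compar-rib-tw-br-KZ-Hermitian}, yielding the family~\eqref{eq:braid-Hermitian}. On the coideal side, $\msE^\mbt$ and $\mcE^\mbt_h$ remain well-defined for $\mbt\in\mcT^*_\C$ by the construction in Section~\ref{ssec:K-matrix}, and Proposition~\ref{prop:twist-of-twist-braids} (with trivial Dynkin automorphism since $\tau_\theta\tau_0=\id$ in the Hermitian case) transports them into ribbon braids for the quasi-coaction. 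The two collections of ribbon braids are torsors over $Z(U)$, so the bijection follows once we verify the constant term $\mcE^\mbt_h=1\otimes\exp(\pi(1-is)Z^\mbt_\theta)g$ modulo $h$; this follows by continuity in $\mbt$ from the corresponding identity established in Theorem~\ref{compar-rib-tw-br-KZ-Hermitian} for $\mbt\in\mcT^*$, since both sides depend rationally on the parameters.

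The only genuine subtlety is the interplay of the two countable exceptional sets (one excluding $s_o^{(0)}$ or $c_o^{(0)}$ where the multiplier model degenerates, the other excluding the values of $\langle\Omega_\phi,\Psi^{(1)}\rangle$ forbidden in Theorem~\ref{thm:modified-KZ-universality}); taking their union still leaves a generic set of $\mbt\in\mcT^*_\C$, which is all the proposition asserts. The computation of the explicit formulas for $s$ is entirely elementary once one has the link $\tanh(\pi s/2)=\sin(\pi\phi/2)$, so no new ideas beyond those used in Theorem~\ref{thm:qbialg-compar-KZ-LK-Hermitian} are required.
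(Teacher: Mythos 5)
Your proposal is correct and follows essentially the same approach as the paper's own proof, which is deliberately terse: it simply declares the argument essentially identical to those of Theorems~\ref{thm:qbialg-compar-KZ-LK-Hermitian} and~\ref{compar-rib-tw-br-KZ-Hermitian} and then elaborates only the S-type branch-cut computation $(1+c^2)^{1/2} = \bigl(z_\theta(\alpha_o)\kappa_o\cos(\tfrac{\pi\phi}{2})\bigr)^{-1}$ behind the positivity condition. Your expansion correctly identifies the places where the unitarity hypotheses must be dropped and where Remark~\ref{rem:multiplier-generic} and the generic restriction of Theorem~\ref{thm:modified-KZ-universality} enter.
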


\bp
The proof is essentially identical to that of Theorems~\ref{thm:qbialg-compar-KZ-LK-Hermitian} and~\ref{compar-rib-tw-br-KZ-Hermitian}. Let us only explain where the condition $(1+c^2)^{1/2} \kappa_o (Z^\mbt_\theta,X_{\alpha_o})_\mfg>0$ in the S-type case comes from.

Recall that $(1+ c^2)^{-1} = \cos^2(\frac{\pi \phi}{2})$.
We want to choose the square root $(1+c^2)^{1/2}$ so that $\sin(\frac{\pi \phi}{2}) = c (1+ c^2)^{-1/2}$ holds.
Then we obtain
\[
\frac{e^{\pi s} - 1}{e^{\pi s} + 1} = \tanh\Bigl(\hlf{\pi s}\Bigr) = c (1+ c^2)^{-1/2} = \frac{\bigl((1+c^2)^{1/2}+c\bigr)^2 - 1}{\bigl((1+c^2)^{1/2}+c\bigr)^2 + 1},
\]
which gives the asserted formula for $e^{\pi s}$.
From the proof of Theorem \ref{thm:qbialg-compar-KZ-LK-Hermitian}, we see that the desired choice is given by
\[
(1 + c^2)^{1/2} = \frac{1}{z_\theta(\alpha_o) \kappa_o \cos(\frac{\pi \phi}{2})}.
\]
Then we have
\[
(1+c^2)^{1/2} \kappa_o (Z^\mbt_\theta,X_{\alpha_o})_\mfg = \frac{(Z^\mbt_\theta,X_{\alpha_o})_\mfg}{z_\theta(\alpha_o)\cos(\frac{\pi \phi}{2})} > 0,
\]
hence the condition in the statement of the theorem.
\ep

\subsection{A Kohno--Drinfeld type theorem}
\label{sec:compar-br-gr-rep}

The above results allow us to compare certain representations of type $\mathrm{B}$ braid groups.

Recall that the braid group $\Gamma_n$ of type $\mathrm{B}_n$ is generated by elements $\rho_1$, $\sigma_1,\dots,\sigma_{n-1}$ subject to the following relations:
\begin{align*}
\sigma_i\sigma_j&=\sigma_j\sigma_i\quad(|i-j|>1),& \sigma_i\sigma_j\sigma_i&=\sigma_j\sigma_i\sigma_j\quad (|i-j|=1),\\
\rho_1\sigma_i&=\sigma_i\rho_1\quad (i>1),& \rho_1\sigma_1\rho_1\sigma_1&=\sigma_1\rho_1\sigma_1\rho_1.
\end{align*}
This is the subgroup of the usual (type $\mathrm{A}_{n}$) braid group on $n+1$ strands consisting of the braids with the first strand fixed.

\begin{figure}[ht]
\begin{subfigure}[b]{.3\linewidth}
\centering
\begin{tikzpicture}
\begin{knot}[
%  draft mode=crossings,
]
\stwbr{0}
\strand[only when rendering/.style={dashed}] (0.2,0) -- (0.2,2);
\strand[] (2,0) -- (2,2);
\strand[] (3,0) -- (3,2);
\flipcrossings{1}
\end{knot}
\end{tikzpicture}
\caption{$\rho_1$}
\end{subfigure}
\begin{subfigure}[b]{.3\linewidth}
\centering
\begin{tikzpicture}
\begin{knot}[
%  draft mode=crossings,
]
\strand[only when rendering/.style={dashed}] (0.2,0) -- (0.2,2);
\braidgen{1}{0.5};
\strand[] (1,0) -- (1,0.5);
\strand[] (2,0) -- (2,0.5);
\strand[] (1,1.5) -- (1,2);
\strand[] (2,1.5) -- (2,2);
\strand[] (3,0) -- (3,2);
\flipcrossings{1}
\end{knot}
\end{tikzpicture}
\caption{$\sigma_1$}
\end{subfigure}
\begin{subfigure}[b]{.3\linewidth}
\centering
\begin{tikzpicture}
\begin{knot}[
%  draft mode=crossings,
]
\strand[only when rendering/.style={dashed}] (0.2,0) -- (0.2,2);
\braidgen{2}{0.5};
\strand[] (2,0) -- (2,0.5);
\strand[] (3,0) -- (3,0.5);
\strand[] (2,1.5) -- (2,2);
\strand[] (3,1.5) -- (3,2);
\strand[] (1,0) -- (1,2);
\flipcrossings{1}
\end{knot}
\end{tikzpicture}
\caption{$\sigma_2$}
\end{subfigure}
\caption{Generators of $\Gamma_3$}
\end{figure}
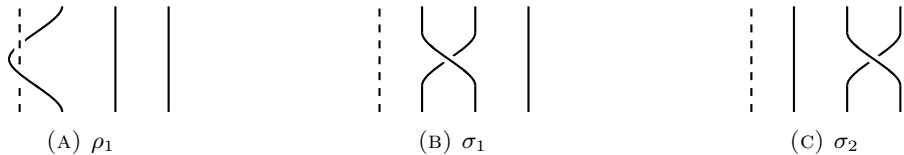

Assume we have a quasi-coaction $(B,\alpha,\Psi)$ of a  quasi-bialgebra $(A,\Delta,\Phi)$, and a ribbon braid $\mcE\in B\otimes A$ with respect to an $R$-matrix $\mcR\in A\otimes A$.
(As before, we will actually use the corresponding variants for multiplier algebras.)
Consider a (left) $B$-module $V$ and an $A$-module $W$. Then we get a representation of $\Gamma_n$ on $V\otimes W^{\otimes n}$, with $\rho_1$ acting by $\mcE$ on the zeroth and first factors and $\sigma_i$ acting by the braiding $\Sigma\mcR$ on the $i$-th and $(i+1)$-st factors, where $\Sigma$ denotes the flip.
More precisely, we have to fix a parenthesization on $V\otimes W^{\otimes n}$ and take into account the associativity morphisms, but different choices lead to equivalent representations by the standard coherence argument.
For example, for $n=3$ we can take
\[
((V\otimes W)\otimes W)\otimes W,
\]
and then the representation is defined by
\begin{align*}
\rho_1 &\mapsto \mcE_{0,1},&
\sigma_1 &\mapsto \Psi^{-1}_{0,1,2}(\Sigma\mcR)_{1,2}\Psi_{0,1,2},&
\sigma_2 &\mapsto \Psi^{-1}_{01,2,3}(\Sigma\mcR)_{2,3}\Psi_{01,2,3}.
\end{align*}

\smallskip

Let us first consider the non-Hermitian case.
Since the involutive automorphisms of quasi-bialgebras are nontrivial, we first need to pass to crossed products, as in the proof of Proposition \ref{prop:twist-of-twist-braids}.

On the side of $q$-deformations, we take the Hopf algebra $U_h(\mfg)\rtimes_{\tau_\theta\tau_0}\Z/2\Z$ and consider the ribbon braid $\msE(1\otimes\lambda_{\tau_\theta\tau_0})$ for a Balagovi\'{c}--Kolb ribbon twist-braid $\msE$ (defined by $\gamma$ satisfying \eqref{eq:gamma-cond}).

Take any $U^\theta_h(\mfg)$-module $V$ and $(U_h(\mfg)\rtimes_{\tau_\theta\tau_0}\Z/2\Z)$-module $W$ that are finitely generated and free as $\C\fps$-modules. Note that a $(U_h(\mfg)\rtimes_{\tau_\theta\tau_0}\Z/2\Z)$-module is the same as a $U_h(\mfg)$-module plus a $\C\fps$-linear isomorphism $u\colon W\to W$ such that $u^2=1$ and $u a=(\tau_\theta\tau_0)(a)u$ for all $a\in U_h(\mfg)$. We then get a representation of $\Gamma_n$ as described above from $\msE$ and $\msR$.

On the side of cyclotomic KZ-equations, we can start with finite dimensional representations of $U(\mfg^\theta)$ and $U(\mfg)\rtimes_{\tau_\theta\tau_0}\Z/2\Z$ on $\tilde V$ and $\tilde W$. The quasi-coaction $(U(\mfg^\theta)\fps,\Delta,\Psi_\KZ)$ of $(U(\mfg)\fps\rtimes_{\tau_\theta\tau_0}\Z/2\Z,\Delta,\Phi_\KZ)$ together with $\mcR_\KZ = e^{-h t}$ and a ribbon $\tau_\theta\tau_0$-braid $\mcE'_\KZ g_1$~\eqref{eq:braid-non-Hermitian} define a representation of $\Gamma_n$ on $(\tilde V\otimes\tilde W^{\otimes n})\fps$.

Then Theorems \ref{thm:qbialg-compar-KZ-LK-non-Hermitian} and \ref{thm:compar-rib-tw-br-KZ-LK-non-Hermitian} and Remark~\ref{rem:non-Hermitian-UEA-extension} imply the following.

\begin{Thm}\label{thm:Kohno-Drinfeld-non-Herm}
Let $\mfu^\theta < \mfu$ be a non-Hermitian symmetric pair, with $\theta$ in Satake form.
Let $V$ be a $U^\theta_h(\mfg)$-module, and $W$ be a $(U_h(\mfg)\rtimes_{\tau_\theta\tau_0}\Z/2\Z)$-module, that are finitely generated and free as $\C\fps$-modules.
Then the representation of $\Gamma_n$ on $V\otimes_{\C\fps}W^{\otimes n}$ defined by $\msE$ and $\Sigma \msR$ is equivalent to the one on $((V/h V)\otimes(W/h W)^{\otimes n})\fps$ defined by $(\mcE'_\KZ g_1, \Psi_{\KZ}, \Sigma \mcR_\KZ, \Phi_{\KZ})$ for the choice of $g$ satisfying $1 \otimes (z m_X m_0 g) = \msE \bmod h$.
\end{Thm}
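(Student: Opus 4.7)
The strategy is to reduce the statement to the universal enveloping algebra version of Theorems \ref{thm:qbialg-compar-KZ-LK-non-Hermitian} and \ref{thm:compar-rib-tw-br-KZ-LK-non-Hermitian} provided by Remark~\ref{rem:non-Hermitian-UEA-extension}, and then translate the resulting twist equivalence of (quasi-)coactions into an intertwiner of braid group representations by passing through a crossed product construction along the lines of Proposition~\ref{prop:twist-of-twist-braids}.

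First I fix $\C\fps$-algebra isomorphisms $\pi\colon U_h(\mfg)\to U(\mfg)\fps$ satisfying \eqref{eq:que-iso} and $U_h^\theta(\mfg)\to U(\mfg^\theta)\fps$ that are the identity modulo $h$. Transporting the $U_h$-actions through these, the modules $V$ and $W$ become $U(\mfg^\theta)\fps$- and $U(\mfg)\fps$-modules of the form $(V/hV)\fps$ and $(W/hW)\fps$, equipped with a deformed coaction $\alpha_h$ and a deformed automorphism $(\tau_\theta\tau_0)_h$. By Remark~\ref{rem:non-Hermitian-UEA-extension} there are a Drinfeld twist $\mcF\in U(\mfg)^{\otimes 2}\fps$ and an element $\mcG\in U(\mfg^\theta)\otimes U(\mfg)\fps$, both with constant term $1$ and properly normalized by $\epsilon$, such that the twisting by $(\mcF,\mcG)$ converts the quasi-coaction $(U(\mfg^\theta)\fps,\Delta,\Psi_\KZ)$ of $(U(\mfg)\fps,\Delta,\Phi_\KZ)$ into the coaction $(U(\mfg^\theta)\fps,\alpha_h)$ of $(U(\mfg)\fps,\Delta_h,1)$, with $\mcR_h = \mcF_{21}\mcR_\KZ\mcF^{-1}$. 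By Lemma~\ref{lem:automorphism-twisting} (applied in the UEA setting, using Remark~\ref{rem:non-Hermitian-UEA-extension}) I get a unique $v\in 1+hU(\mfg)\fps$ with $v\,\tau_\theta\tau_0(v)=1$ such that $(\tau_\theta\tau_0)_h=v\,\tau_\theta\tau_0(\cdot)\,v^{-1}$ and \eqref{eq:auto-twisting} holds.

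Next I pass to the crossed products, as in the proof of Proposition~\ref{prop:twist-of-twist-braids}. The Hopf algebra $A:=U(\mfg)\fps\rtimes_{\tau_\theta\tau_0}\Z/2\Z$ carries the quasi-bialgebra structure $(\tilde\Delta,\Phi_\KZ)$ extending $\Delta$ trivially on $\lambda_{\tau_\theta\tau_0}$, and $(U(\mfg^\theta)\fps,\Delta,\Psi_\KZ)$ becomes a quasi-coaction of it. The pair $(\mcF,v)$ twists $(A,\tilde\Delta,\Phi_\KZ)$ to a new quasi-bialgebra which is isomorphic, via $f\colon a+a'\lambda_{\tau_\theta\tau_0}\mapsto a+a'v^{-1}\lambda_{(\tau_\theta\tau_0)_h}$, to $U(\mfg)\fps\rtimes_{(\tau_\theta\tau_0)_h}\Z/2\Z$ with its genuine coproduct. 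A ribbon $\tau_\theta\tau_0$-braid $\mcE'_\KZ g_1$ as in~\eqref{eq:braid-non-Hermitian} becomes, via the transposition trick of Proposition~\ref{prop:twist-of-twist-braids}, a ribbon braid $(\mcE'_\KZ g_1)(1\otimes\lambda_{\tau_\theta\tau_0})$ for the $A$-quasi-coaction; twisting by $(\mcF,\mcG)$ and applying $\id\otimes f$, Theorem~\ref{thm:compar-rib-tw-br-KZ-LK-non-Hermitian} combined with \eqref{eq:braid-twist} identifies the result with $\mcE_h(1\otimes\lambda_{(\tau_\theta\tau_0)_h})$ for the Balagovi\'c--Kolb braid $\msE$ corresponding to some $\gamma$ satisfying~\eqref{eq:gamma-cond} and the $g\in Z(U)$ determined by $\mcE_h^{(0)}=1\otimes zm_Xm_0g$.

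Finally, viewing $W$ as a module over the crossed product $U(\mfg)\fps\rtimes_{(\tau_\theta\tau_0)_h}\Z/2\Z$ (with the $\Z/2$-generator acting by the isomorphism used to identify the two $\Z/2$-structures on $W$) and pulling it back through $f$ to an $A$-module, the twist equivalence $(\mcF,\mcG)$ between the two quasi-coactions produces the desired intertwiner of braid group representations: $\mcG$ conjugates $\alpha_h$ to $\Delta$, $\mcF$ conjugates $\mcR_h$ to $\mcR_\KZ$ and the deformed associativity to the KZ associator, so the generators $\sigma_i$ correspond on the nose, while the matching of ribbon braids above shows that $\rho_1$ is intertwined with $\mcE'_\KZ g_1$. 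The main obstacle, and the only nontrivial bookkeeping in the argument, is to organize the coherence/reparenthesization identities so that the twist $(\mcF,\mcG,v)$ passes cleanly from the quasi-coaction level to the level of $\Gamma_n$-actions on $V\otimes W^{\otimes n}$; this is a standard but delicate MacLane coherence check that the explicit formulas of Proposition~\ref{prop:twist-of-twist-braids} are designed to handle.
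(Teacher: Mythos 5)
Your proposal is correct and follows the same route as the paper: the paper presents Theorem~\ref{thm:Kohno-Drinfeld-non-Herm} as a direct consequence of Theorems~\ref{thm:qbialg-compar-KZ-LK-non-Hermitian} and~\ref{thm:compar-rib-tw-br-KZ-LK-non-Hermitian} via Remark~\ref{rem:non-Hermitian-UEA-extension}, with the crossed product mechanism of Proposition~\ref{prop:twist-of-twist-braids} doing the translation between $\tau_\theta\tau_0$-braids and honest ribbon braids. You simply spell out the intermediate bookkeeping (fixing $\pi$, invoking Lemma~\ref{lem:automorphism-twisting}, the isomorphism $f$ of crossed products) that the paper leaves implicit.
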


In the Hermitian case, we can do similar constructions with the following modifications.
First, as $\tau_\theta\tau_0=\id$, we don't have to take crossed products.
Thus, given $\mbt\in\mcT^*$, a Balagovi\'{c}--Kolb ribbon braid $\msE^\mbt$ for $U^\mbt_h(\mfg^\theta)$ and $\Sigma \mcR$ defines a representation of $\Gamma_n$ on $V\otimes_{\C\fps}W^{\otimes n}$.
On the side of KZ-equations, by our construction of $\Psi_{\KZ,s;\mu}$, we can only consider $U(\mfg^\theta_\mbt)$-modules $\tilde V$ that can be integrated to representations of $G^\theta_\mbt$, or equivalently, that are direct summands of finite dimensional $U(\mfg)$-modules.
We use the ribbon braid $\mcE''_{\KZ,s;\mu}$ from \eqref{eq:braid-Hermitian}.

As a consequence of Theorems~\ref{thm:qbialg-compar-KZ-LK-Hermitian} and~\ref{compar-rib-tw-br-KZ-Hermitian}, we get the following result.

\begin{Thm}\label{thm:Kohno-Drinfeld-Herm}
Let $\mfu^\theta < \mfu$ be a Hermitian symmetric pair, with $\theta$ in Satake form, and $\mbt\in\mcT^*$.
Let~$V$ be a $U^\mbt_h(\mfg^\theta)$-module and $W$ be a $U_h(\mfg)$-module that are finitely generated and free as $\C\fps$-modules, and assume also that $V$ is a direct summand of a $U_h(\mfg)$-module with the same property.
Then the representation of $\Gamma_n$ on $V\otimes_{\C\fps}W^{\otimes n}$ defined by $\msE^\mbt$ and $\msR$ is equivalent to the representation on $((V/h V)\otimes(W/h W)^{\otimes n})\fps$ defined by $(\mcE''_{\KZ,s;\mu} g_1,\Psi_{\KZ,s;\mu}, \Sigma \mcR_\KZ, \Phi_\KZ)$, for the subgroup $G^\theta_\mbt<G$ and parameters $(s,\mu)$ from Theorem~\ref{thm:qbialg-compar-KZ-LK-Hermitian}, for the choice of $g \in Z(U)$ satisfying $1 \otimes (\exp(\pi i (1-i s) Z^\mbt_\theta) g) = \msE^\mbt \bmod h$.
\end{Thm}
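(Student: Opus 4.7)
My plan is to realise both braid group representations as coming from the same ribbon braided module categorical structure, up to a twisting that has already been produced in Theorems~\ref{thm:qbialg-compar-KZ-LK-Hermitian} and~\ref{compar-rib-tw-br-KZ-Hermitian}, and then transfer the isomorphism of representations through explicit intertwining data.

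First, I would lift the left-hand representation to the multiplier algebra setting developed in Section~\ref{sec:mult-alg-model-of-LK-coids}. The hypothesis that $V$ is a direct summand of a $U_h(\mfg)$-module guarantees that $V/hV$ and $W/hW$ decompose as direct sums of simple objects of $(\Rep G^\theta_\mbt)$ and $(\Rep G)$, respectively. Consequently, the actions of $U^\mbt_h(\mfg^\theta)$ on $V$ and $U_h(\mfg)$ on $W$ extend uniquely through the chosen homomorphism $\pi$ (combined with conjugation by the element $u^\mbt$ of~\eqref{eq:umbt}) to actions of the multiplier algebras $\mcU(G^\theta_\mbt)\fps$ and $\mcU(G)\fps$. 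With these extensions, the left-hand $\Gamma_n$-representation coincides with the one built from the multiplier algebra model $(\mcU(G^\theta_\mbt)\fps,\alpha_h)$, the $R$-matrix $\mcR_h=(\pi\otimes\pi)(\msR)$, and the ribbon braid $\mcE^\mbt_h$.

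Next, Theorem~\ref{thm:qbialg-compar-KZ-LK-Hermitian} (together with the unitarity refinement of Remark~\ref{rem:unitarity}) provides twisting data $\mcF\in\mcU(G^2)\fps$ and $\mcG\in\mcU(G^\theta_\mbt\times G)\fps$ such that $(\mcU(G)\fps,\Delta_h,\mcR_h)$ is obtained from $(\mcU(G)\fps,\Delta,\Phi_\KZ,\mcR_\KZ)$ by twisting by $\mcF$ (compatibly with~\eqref{eq:R-matrix}), while $(\mcU(G^\theta_\mbt)\fps,\alpha_h)$ arises from $(\mcU(G^\theta_\mbt)\fps,\Delta,\Psi_{\KZ,s;\mu})$ by twisting by $(\mcF,\mcG)$. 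By Theorem~\ref{compar-rib-tw-br-KZ-Hermitian}, the twisting formula~\eqref{eq:braid-twist0} (with $\beta=\id$) converts the Balagovi\'c--Kolb ribbon braid $\mcE^\mbt_h$ into a ribbon braid of the form $\mcE''_{\KZ,s;\mu}g_1$ for a unique $g\in Z(U)$, pinned down by matching constant terms.

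Finally, I would construct an explicit $\C\fps$-linear isomorphism
\[
\Theta_n\colon V\otimes_{\C\fps}W^{\otimes n}\longrightarrow\bigl((V/hV)\otimes(W/hW)^{\otimes n}\bigr)\fps,
\]
compatible with the parenthesization fixed on the right-hand side. Concretely, $\Theta_n$ will be an iterated product of operators obtained by evaluating $\mcG$ on the $(V,W)$-slot and $\mcF$ on consecutive $W$-pairs, positioned according to the nested coproducts prescribed by the parenthesization. The cocycle identities~\eqref{eq:mix-pent} and~\eqref{eq:Drinfeld-twist} for $\mcF,\mcG$ ensure that $\Theta_n$ intertwines the $\Delta_h$- and $\Delta$-actions slot by slot; the twisting formulas then force $\Theta_n$ to convert $\Sigma\mcR_h$ into $\Sigma\mcR_\KZ$ conjugated by the relevant $\Phi_\KZ$-associators, and $\mcE^\mbt_h$ into $\mcE''_{\KZ,s;\mu}g_1$ conjugated by $\Psi_{\KZ,s;\mu}$, matching exactly the prescription defining the right-hand representation. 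The main remaining work is the combinatorial bookkeeping required to verify that each generator $\sigma_i,\rho_1$ of $\Gamma_n$ is intertwined correctly under $\Theta_n$; this is the standard coherence package for equivalences of ribbon braided module categories, and proceeds by a routine induction on $n$ from the identities collected in Section~\ref{sec:quasi-coact-rib-tw-br}, with no further conceptual input required.
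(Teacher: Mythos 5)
Your proposal is correct and takes essentially the same route as the paper: the paper states Theorem~\ref{thm:Kohno-Drinfeld-Herm} simply as ``a consequence of Theorems~\ref{thm:qbialg-compar-KZ-LK-Hermitian} and~\ref{compar-rib-tw-br-KZ-Hermitian},'' leaving the transfer of the braid group representation through the twist equivalence implicit, which is exactly the content of your intertwiner $\Theta_n$. One small imprecision: what you cite as ``the cocycle identities~\eqref{eq:mix-pent} and~\eqref{eq:Drinfeld-twist} for $\mcF,\mcG$'' is not quite right — equation~\eqref{eq:mix-pent} is the mixed pentagon for an associator $\Psi$, not a condition on $\mcG$; the identities you actually need are the twisting relations in Section~\ref{ssec:twisting} (the defining formulas for $\Delta_{h,\mcF}$, $\Phi_\mcF$, $\alpha_\mcG$, $\Psi_{\mcF,\mcG}$), which directly encode the cocycle-type compatibilities of $\mcF$ and $\mcG$ used in the coherence induction.
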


\begin{Rem}
Since the subgroups $G^\theta_\mbt$ are conjugate to $G^\theta$, we could equally well consider the KZ-equations only for $G^\theta<G$.
We do not do this as the extra choice of conjugator will affect the correspondence $\msE^\mbt = \mcE_\KZ \bmod h$.
\end{Rem}

As a corollary we can also get a version of Theorem~\ref{thm:Kohno-Drinfeld-non-Herm} in the analytic setting. We will prove one such result and then discuss how it can be generalized.

We can define the algebras $U_q(\mfg)$ and $U^\theta_q(\mfg)$ for $q\in\C^\times$ not a nontrivial root of unity.
Furthermore, as has been noted in \citelist{\cite{MR3943480}\cite{MR4070299}}, the constructions of a Balagovi\'{c}--Kolb ribbon twist-braid~$\msE$, the associators $\Psi_{\KZ,s}$ and so on make sense in this setting.
We can therefore consider two types of finite dimensional representations of $\Gamma_n$, defined by $\msE$ and monodromy of KZ-equations. To compare such representations we need a way to associate a representation of~$U^\theta_q(\mfg)$ to a representation of $U(\mfg^\theta)$. To simplify matters let us consider only representations obtained by restriction from representations of~$U_q(\mfg)$ and~$U(\mfg)$. The representation theories of $U_q(\mfg)$ and $U(\mfg)$ are well-understood, so for any finite dimensional $U(\mfg)$-module $V$ we have its quantum analogue $V_q$. This correspondence extends also to representations of the crossed products $U(\mfg)\rtimes_{\tau_\theta\tau_0}\Z/2\Z$ and $U_q(\mfg)\rtimes_{\tau_\theta\tau_0}\Z/2\Z$.

\begin{Cor}\label{cor:analytic-DK}
Take $q>0$, and assume that $\mfu^\theta<\mfu$ is a non-Hermitian symmetric pair.
Consider a finite dimensional $U(\mfg)$-module~$V$ and a finite dimensional $(U(\mfg)\rtimes_{\tau_\theta\tau_0}\Z/2\Z)$-module $W$, and view $V$ as a $U(\mfg^\theta)$-module.
Then the representation of $\Gamma_n$ on $V_q\otimes W_q^{\otimes n}$ defined by a Balagovi\'{c}--Kolb ribbon $\tau_\theta\tau_0$-braid for $U^\theta_q(\mfg)$ and the universal $R$-matrix $\msR$ for $U_q(\mfg)$, corresponding to $h = \log q$, is equivalent to the representation on $V\otimes W^{\otimes n}$ defined by $(\mcE'_\KZ g_1, \Psi_{\KZ}, \Sigma \mcR_\KZ, \Phi_{\KZ})$ given by monodromy of the cyclotomic KZ-equations for the subgroup $G^\theta<G$ and some choice of $g \in Z(U^\theta)$.
\end{Cor}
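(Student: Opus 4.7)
The plan is to deduce this analytic statement from the formal version (Theorem~\ref{thm:Kohno-Drinfeld-non-Herm}) by upgrading an equivalence of formal representations to equality of analytic characters. Since both sides give representations of $\Gamma_n$ on finite-dimensional spaces which can be identified (via an analytic $q$-deformation of the underlying vector spaces), equivalence is controlled by the characters $\chi(\gamma)=\Tr\rho(\gamma)$ for $\gamma\in\Gamma_n$, and it suffices to compare these as functions of $q$.

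First I would show that both families of representations depend analytically on $h$ in a common open neighborhood of $h=0$. On the quantum side, fix once and for all a Balagovi\'{c}--Kolb ribbon $\tau_\theta\tau_0$-braid $\msE(q)$ for $U_q^\theta(\mfg)$ and the universal $R$-matrix $\msR(q)$ depending analytically on $q$ near the positive real line (excluding nontrivial roots of unity): the quasi-$K$-matrix recurrence \eqref{eq:quasi-K-recurrence} has rational coefficients in $q$, and Lusztig's automorphisms $T_{w_X}$, $T_{w_0}$ together with the standard factorization of $\msR$ depend analytically on $q$. Using Drinfeld's identification of $U_h(\mfg)$ with $U(\mfg)\fps$ as algebras, the $U_q(\mfg)$-modules $V_q$ and $W_q$ can be realized on the underlying vector spaces of $V$ and $W$ by homomorphisms that extend analytically in $q$ to a neighborhood of $q=1$ and reduce to the classical action at $q=1$. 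This gives an analytic family of representations $\rho^{\mathrm{qu}}_q$ of $\Gamma_n$ on $V\otimes W^{\otimes n}$, and in particular analytic characters $q\mapsto\chi^{\mathrm{qu}}_q(\gamma)$ on the relevant domain. On the KZ side, the monodromy representation $\rho^{\mathrm{KZ}}_h$ is analytic in $h$ since the cyclotomic KZ-equation \eqref{eq:cyc-KZ-2} has rational dependence on $h$, while the ribbon $\tau_\theta\tau_0$-braid $\mcE'_{\KZ}g_1=\exp(-h(2t^{\mfk}_{01}+C^\mfk_1))(zm_Xm_0 g)_1$ in \eqref{eq:braid-non-Hermitian} is entire in $h$, yielding an analytic character $h\mapsto\chi^{\mathrm{KZ}}_h(\gamma)$.

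Next, fix $g\in Z(U^\theta)$ by the normalization $\msE(q)\equiv 1\otimes(zm_Xm_0 g)$ modulo $(q-1)$, so that the assumption $1\otimes(zm_Xm_0 g)=\msE\bmod h$ in Theorem~\ref{thm:Kohno-Drinfeld-non-Herm} is satisfied after passage to formal power series (with the twist $\mcF$ of Lemma~\ref{lem:Drinfeld-twist} providing the bridge between the analytic and formal pictures). That theorem asserts an equivalence of representations over $\C\fps$, in particular equality of characters $\chi^{\mathrm{qu}}_{e^h}(\gamma)=\chi^{\mathrm{KZ}}_h(\gamma)$ in $\C\fps$ for every $\gamma\in\Gamma_n$. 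Since both sides are restrictions of analytic functions of $h$ near $h=0$ whose full Taylor expansions agree, they coincide throughout the connected component of their common domain of analyticity, which contains the set $\{h=\log q\mid q>0\}$. Specializing at $h=\log q$ gives equality of characters in $\C$, and for finite-dimensional representations on the same vector space this is equivalent to equivalence of the two representations.

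The main obstacle is the setup step: one must arrange the analytic identifications $V_q\cong V$ and $W_q\cong W$ (reducing to the identity at $q=1$) so that specialization at $h=0$ of the formal picture really matches the analytic picture at $q=1$, and so that the choice of Balagovi\'{c}--Kolb braid on the $q$-side and of the scalar $g$ on the KZ-side are matched across the two pictures. Once this bookkeeping is in place, the remainder reduces to the routine principle that two holomorphic functions on a connected domain agreeing to all orders at a point must coincide, applied to the traces of $\rho^{\mathrm{qu}}_q(\gamma)$ and $\rho^{\mathrm{KZ}}_{\log q}(\gamma)$.
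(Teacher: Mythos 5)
Your argument establishes that the two representations of $\Gamma_n$ have equal characters (as functions of $q > 0$), but the final sentence --- that equal characters of finite-dimensional representations on the same underlying vector space ``is equivalent to equivalence of the two representations'' --- is false without further hypotheses. Equal traces only determine the semisimplification (the multiset of Jordan--H\"older factors), not the isomorphism class: the classic counterexample is a nontrivial unipotent $2\times 2$ matrix versus the identity, both generating representations of $\Z$ with identical characters. Since $\Gamma_n$ is not a finite or reductive group, there is no general semisimplicity of its finite-dimensional representations, and neither the quantum nor the KZ representation is automatically completely reducible.

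The paper's proof closes exactly this gap. It first observes that, because $U$ is compact and $q > 0$, one can choose $U$-invariant Hermitian forms on $V$, $W$ (and correspondingly on $V_q$, $W_q$), and that $\Phi_\KZ$, $\Psi_\KZ$, $\mcR_\KZ$ and their quantum counterparts are unitary with $\mcR^* = \mcR_{21}$. It then proves that the ribbon twist-braid (multiplied by $1 \otimes \lambda_{\tau_\theta\tau_0}$) coincides with its adjoint up to a central factor $1\otimes g$, using the formal classification from Theorem~\ref{thm:uniq-br-tw-non-Herm} --- this is the step that actually uses the non-Hermitian hypothesis and the detailed analysis of Sections~\ref{sec:classification} and~\ref{sec:Letzter-Kolb}. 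Consequently the operator algebras generated by each $\Gamma_n\times Z(U)$-representation are $*$-closed, hence the representations are completely reducible, and only then does equality of characters imply equivalence. Your proposal never addresses complete reducibility, never invokes the $*$-structure, and never uses the hypothesis $q>0$ in an essential way, so the conclusion does not follow from the stated argument.
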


\bp
We may assume that $V$ and $W$ are equipped with Hermitian scalar products such that they give rise to unitary representations of $U$. In a similar way, the assumption $q>0$ implies that $U_q(\mfg)$ is a $*$-algebra and its representations on $V_q$ and $W_q$ can be turned into $*$-representations.

Theorem~\ref{thm:compar-rib-tw-br-KZ-LK-non-Hermitian} gives us a bijection between the Balagovi\'{c}--Kolb ribbon $\tau_\theta\tau_0$-braids for $U^\theta_h(\mfg)$ and the ribbon $\tau_\theta\tau_0$-braids~\eqref{eq:braid-non-Hermitian}. By specialization this gives us a bijection also in the analytic setting, but a priori it is not given by any formula similar to~\eqref{eq:braid-twist}, as it is not clear when $\mcG$ can be specialized.

Now, using this bijection, it is convenient to extend the representations of $\Gamma_n$ to $\Gamma_n\times Z(U)$, with $Z(U)$ acting on the first factors $W_q$ and $W$ of $V_q\otimes W_q^{\otimes n}$ and $V\otimes W^{\otimes n}$, and prove a formally stronger statement that these representations of $\Gamma_n\times Z(U)$ are equivalent. The representations have the same character, since they are obtained by specialization from the formal case and in that case the representations are equivalent. Therefore it suffices to prove that the representations are completely reducible. For this, in turn, it suffices to show that in both cases the operators of the representations span $*$-algebras.

Observe in general that in the presence of a $*$-involution, if we have a quasi-coaction $(B,\alpha,\Psi)$ of a  quasi-bialgebra $(A,\Delta,\Phi)$ and a ribbon braid $\mcE$ with respect to an $R$-matrix $\mcR$, with unitary $\Psi$ and $\Phi$ and the $R$-matrix satisfying $\mcR^*=\mcR_{21}$, then $\mcE^*$ is also a ribbon braid. Indeed, analogues of identities~\eqref{eq:rib-sig-tw-0} and \eqref{eq:rib-sig-tw-1} for $\mcE^*$ are obtained immediately by taking the adjoints. For \eqref{eq:rib-sig-tw-2}, we in addition have to conjugate by $\mcR_{12}$ and then flip the last two tensor factors.

Since in the formal setting we have a complete classification of ribbon twist-braids, we conclude that every Balagovi\'{c}--Kolb ribbon $\tau_\theta\tau_0$-braid  for $U^\theta_h(\mfg)$ and every ribbon $\tau_\theta\tau_0$-braid~\eqref{eq:braid-non-Hermitian}, being multiplied by $1\otimes\lambda_{\tau_\theta\tau_0}$ on the right, has the property that it coincides with its adjoint up to a factor $1\otimes g$ ($g\in Z(U)$). (For the twist-braids~\eqref{eq:braid-non-Hermitian} this is also not difficult to see by definition, and for the Balagovi\'{c}--Kolb's ones this can be checked by an explicit computation as well~\cite{MR4070299}.) Hence the same is true in the analytic setting, which implies the desired property of the representations.
\ep

\begin{Rem}
Corollary~\ref{cor:analytic-DK} remains true for generic $q\in\C$.
Briefly, this can be proved by viewing both representations as defined over a field of meromorphic functions in $q$.
These representations have the same character by Theorem~\ref{thm:Kohno-Drinfeld-non-Herm}.
They can also be shown to be completely reducible, essentially because everything is determined by restriction to $q>0$, and for every such $q$ the representations are completely reducible.
Hence they are equivalent, and then by specialization we get an equivalence for generic values of $q$.
\end{Rem}

\begin{Rem}
In the Hermitian case, for $q>0$, we can define an analogue of the parameter set $\mcT^*$ for which $U^\mbt_q(\mfg^\theta)$ are $*$-coideals, see~\cite{MR3943480}.
Then the proof of the above corollary still works for such~$\mbt$, but with a caveat.
Assume $\mbt$ is obtained by specialization from a parameter in our set $\mcT^*$.
Then to be able to use Theorem~\ref{thm:Kohno-Drinfeld-Herm}, or even formulate the result in the analytic setting, we need $\mu$ provided by Theorem~\ref{thm:qbialg-compar-KZ-LK-Hermitian} to be specializable.
Assuming we have an explicit formula for $\mu$ as a function of $\mbt$ that can be specialized, this can be further generalized to generic $q \in \C$.
In the type $\mathrm{AIII}$ case analyzed below we see that this is indeed the case, and it is natural to expect that the same it true in all other cases.
\end{Rem}

\subsection{Example: AIII case}\label{ssec:aiii}

In this section we look in detail at the AIII symmetric pairs, that is, the pairs $\mfs(\mfu_p \oplus \mfu_{N-p}) < \mfsu_N$ for $0 < p \le N/2$ and $N\ge2$.

Thus, $\mfu=\mfsu_N$, $\mfg=\mfsl_N(\C)$. The normalized invariant form is $(X,Y)_\mfg=\Tr(X Y)$. As the Cartan subalgebra $\mfh$ we take the diagonal matrices with trace zero. Let $e_{ij}$ be the matrix units of~$M_N(\C)$. Define $L_i\in\mfh^*$ by $L_i(\sum_j a_j e_{jj})=a_i$. As a system of simple roots and generators of $\mfg$ we take
\begin{align*}
\Pi &= \{\alpha_i=L_i-L_{i+1}\}_{1\le i\le N-1},&
H_i &= e_{ii}-e_{i+1,i+1},&
X_{\alpha_i} &= e_{i,i+1},&
X_{-\alpha_i} &= e_{i+1,i}.
\end{align*}
Note that
\begin{align}\label{eq:scalar-product}
(L_i,L_i) &= 1-\frac{1}{N},&
(L_i,L_j) &= -\frac{1}{N}\quad (i\ne j).
\end{align}

Define
\begin{align*}
Z_\nu &= i\diag\Big(\underbrace{1-\frac{p}{N},\dots,1-\frac{p}{N}}_p,\underbrace{-\frac{p}{N},\dots,-\frac{p}{N}}_{N-p}\Big),&
\nu &= \Ad \exp(\pi Z_\nu).
\end{align*}
Then $\mfu^\nu=\mfs(\mfu_p \oplus \mfu_{N-p})$ and the pair $(\nu,Z_\nu)$ is as in Section~\ref{ssec:Cartan}.
We will write $\mfk$ for $\mfu^\nu$. The unique noncompact simple root is $\alpha_p$.

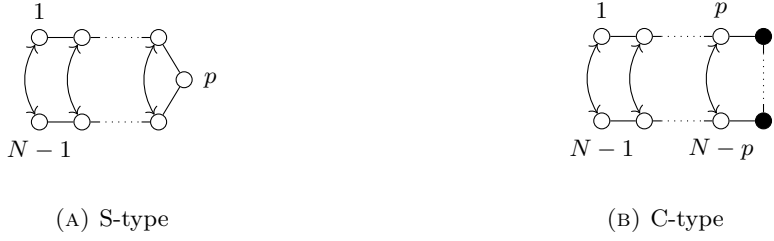
\begin{figure}[ht]
\begin{subfigure}[b]{.45\linewidth}
\centering\begin{tikzpicture}[scale=0.8,baseline,every node/.style={circle,draw,inner sep=2pt,minimum size=6pt}]
\node[label=above:{\small $1$}] (v1) at (0,0) {};
\node (v2) at (1,0) {};
\node (v3) at (2.8,0) {};
\node (v4) at (2.8,-2) {};
\node (v5) at (1,-2) {};
\node[label={[label distance=-8pt]below:{\small $N-1$}}] (v6) at (0,-2) {};
\node[label=right:{\small $p$}] (v7) at (3.4,-1) {};
\draw (v1) -- (v2) -- (1.4,0);
\draw[dotted] (1.4,0) -- (2.4,0);
\draw (2.4,0) -- (v3) -- (v7) -- (v4) -- (2.4,-2);
\draw[dotted] (1.4,-2) -- (2.4,-2);
\draw (1.4,-2) -- (v5) -- (v6);
\draw[<->] (v1) edge[bend right] (v6);
\draw[<->] (v2) edge[bend right] (v5);
\draw[<->] (v3) edge[bend right] (v4);
\end{tikzpicture}
\caption{S-type}
\end{subfigure}
\begin{subfigure}[b]{.45\linewidth}
\centering\begin{tikzpicture}[scale=0.8,baseline,every node/.style={circle,draw,inner sep=2pt,minimum size=6pt}]
\node[label=above:{\small $1$}] (v1) at (0,0) {};
\node (v2) at (1,0) {};
\node[label=above:{\small $p$}] (v3) at (2.8,0) {};
\node[label={[label distance=-8pt]below:{\small $N-p$}}] (v4) at (2.8,-2) {};
\node (v5) at (1,-2) {};
\node[label={[label distance=-8pt]below:{\small $N-1$}}] (v6) at (0,-2) {};
\node[fill] (bv1) at (3.8,0) {};
\node[fill] (bv2) at (3.8,-2) {};
\draw (v1) -- (v2) -- (1.4,0);
\draw[dotted] (1.4,0) -- (2.4,0);
\draw (2.4,0) -- (v3) -- (bv1) -- (3.8,-0.4);
\draw[dotted] (3.8,-0.4) -- (3.8,-1.6);
\draw (2.4,-2) -- (v4) -- (bv2) -- (3.8,-1.6);
\draw[dotted] (1.4,-2) -- (2.4,-2);
\draw (1.4,-2) -- (v5) -- (v6);
\draw[<->] (v1) edge[bend right] (v6);
\draw[<->] (v2) edge[bend right] (v5);
\draw[<->] (v3) edge[bend right] (v4);
\end{tikzpicture}
\caption{C-type}
\label{fig:aiii-c-satake-diag}
\end{subfigure}
\caption{Satake diagrams for AIII symmetric pairs}
\end{figure}

The S-type case corresponds to $N = 2 p$. Then the distinguished simple root is~$\alpha_p$, $X=\emptyset$, and as an involution $\theta$ in Satake form we take
\begin{align*}
\theta &= \Ad m_0, & m_0 &= A_N=
\begin{pmatrix}
 & & & 1 \\
 & & -1 & \\
 & \iddots & & \\
 (-1)^{N-1} & & &
\end{pmatrix},
\end{align*}
so that $z=1$ in~\eqref{eq:factoriz-theta}. It is clear that $(\nu, Z_\nu)$ is associated with $\theta$ as described after Lemma~\ref{lem:noncompact-root}. For every $\mbt\in\mcT^*$ we fix a normalized element $Z^\mbt_\theta\in\mfz(\mfg^\theta_\mbt)$ by requiring $(Z^\mbt_\theta,X_{\alpha_p})_\mfg>0$.
For the standard case $\mbt=0$ we write $Z_\theta = Z^0_\theta \in \mfz(\mfg^\theta)$.
\smallskip

The C-type case corresponds to $0 < p < N/2$. In this case the distinguished simple roots are $\alpha_p$ and $\alpha_{N-p}$, $X=\{\alpha_{p+1},\dots,\alpha_{N-p-1}\}$. As an involution $\theta$ in Satake form we take
\begin{align*}
\theta &= \Ad z m_0 m_X, & z &= e^{\pi i p/N} \diag(\underbrace{(-1)^{p},\dots,(-1)^{p}}_p,-(-1)^{N-p},\dots,-(-1)^{N-p}).
\end{align*}
Note that
\begin{align*}
m_0m_X &=
\begin{pmatrix}
 &  & A_p\\
 & -(-1)^{N-p} I_{N - 2p} & \\
(-1)^{N-p} A_p &  &
\end{pmatrix},&
z m_0m_X &= e^{\pi i p/N}
\begin{pmatrix}
 & & -A_p^t\\
 & I_{N-2p} & \\
 -A_p & &
\end{pmatrix}.
\end{align*}
Again, $(\nu, Z_\nu)$ is the unique pair associated with $\theta$ for which $\alpha_p$ is a noncompact root.
For every $\mbt\in\mcT^*$ we fix a normalized element $Z^\mbt_\theta\in\mfz(\mfg^\theta_\mbt)$ by requiring $-i\alpha_p(\tilde Z^\mbt_\theta)+i\alpha_{N-p}(\tilde Z^\mbt_\theta)>0$.
Again for the standard case we write $Z_\theta = Z^0_\theta \in \mfz(\mfg^\theta)$.

\begin{Thm}\label{thm:AIII}
With the above choices, the parameters $s\in\R$ and $\mu\in h\R\fps$ associated with $\mbt\in\mcT^*$ according to Theorem \ref{thm:qbialg-compar-KZ-LK-Hermitian} are determined as follows (with $q=e^h$):
\begin{itemize}
\item{{\rm S-type:}} $\displaystyle s+\mu=\frac{2}{\pi}\log\Bigl(\Bigl(1-\frac{q(q+1)^2}{4}s_p^2\Bigr)^{1/2}-\frac{q^{1/2}(q+1)}{2}is_p\Bigr)$;
\item{{\rm C-type:}} $\displaystyle s+\mu=\frac{2}{\pi}\log c_p + \frac{h}{\pi}$.
\end{itemize}

In particular, the standard case $\mbt=0$ corresponds to $s+\mu=0$.
\end{Thm}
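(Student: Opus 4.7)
The plan is to pin down $s+\mu$ by matching the $K$-matrices on the vector representation $V=\C^N$, using the identification of ribbon braids provided by Theorem~\ref{compar-rib-tw-br-KZ-Hermitian}. Applying $(\epsilon\otimes\id)$ to the twisting relation~\eqref{eq:braid-twist} shows that the $K$-matrix $\mcK^\mbt_h = (\epsilon\otimes\id)(\mcE^\mbt_h)$ is conjugate (inside $\mcU(G)\fps$) to $\mcK''_{\KZ,s;\mu}\cdot g$ up to the central scalar $v^{-1}$, where $g\in Z(U)$ is the ambiguity in~\eqref{eq:braid-Hermitian}. In particular the spectrum of $\mcK^\mbt_h$ on any irreducible $G$-module is determined by that of $\mcK''_{\KZ,s;\mu}$ up to an overall multiplicative constant, and this single piece of data will suffice to extract $s+\mu$ once it is known on $V$.

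On the KZ side the $K$-matrix is immediate: since $C^{\mfk_\mbt}$ and $Z^\mbt_\theta$ both lie in the centre of $U(\mfk_\mbt)$, we have
\begin{equation*}
\mcK''_{\KZ,s;\mu}=\exp\bigl(-hC^{\mfk_\mbt}\bigr)\exp\bigl(\pi(1-is-i\mu)Z^\mbt_\theta\bigr).
\end{equation*}
The restriction of $V$ to $\mfk_\mbt\cong\mfs(\mfu_p\oplus\mfu_{N-p})$ splits as a $p$-dimensional plus an $(N-p)$-dimensional irreducible, on each of which $C^{\mfk_\mbt}$ and $Z^\mbt_\theta$ act by explicit scalars $c_\pm$ and $\lambda_\pm$. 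The ratio of the two eigenvalues of $\mcK''_{\KZ,s;\mu}$ on $V$ is then $-\exp\bigl(-h(c_+-c_-)\bigr)\exp(\pi(s+\mu))$, with the $(Z(U))$-scalar $g$ cancelling out. In the S-type case $(N=2p)$ the two components are contragredient and $c_+=c_-$, so only the factor $\exp(\pi(s+\mu))$ survives; in the C-type case $c_+\neq c_-$, and this difference will account for the extra additive $h/\pi$ in the asserted formula.

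On the Letzter--Kolb side one evaluates the Balagovi\'c--Kolb $K$-matrix $\msK^\mbt$ of Section~\ref{ssec:K-matrix} on $V$. For this we invoke Mudrov's classification~\cite{MR1917138} of reflection operators on the standard $U_q(\mfsl_N)$-representation which are equivariant under the relevant coideal; alternatively, one may unwind~\eqref{eq:k-mats} directly by noting that on $V$ the quasi-$K$-matrix $\mfX$ truncates drastically (only $\mfX_{2\alpha_p^-}$, and in C-type also $\mfX_{2\alpha_{N-p}^-}$, contribute), so that the matrix entries reduce to rational functions of $q$ and $s_p$ (S-type) or $c_p$ (C-type). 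Taking the ratio of the resulting two eigenvalues in the $\mfk_\mbt$-decomposition of $V$ and comparing with the KZ expression from the previous paragraph yields an equation that solves for $s+\mu$, giving the closed-form formulas in the statement.

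The main obstacle is bookkeeping rather than a single hard step. One must track simultaneously: the normalization of $Z^\mbt_\theta$, fixed by the positivity of $(Z^\mbt_\theta,X_{\alpha_p})_\mfg$ (S-type) or of $-i\alpha_p(\tilde Z^\mbt_\theta)+i\alpha_{N-p}(\tilde Z^\mbt_\theta)$ (C-type); the $Z(U)$-ambiguity in matching the $K$-matrices, which is fixed by the standard benchmark $\mbt=0 \Leftrightarrow s+\mu=0$ (the Hermitian analogue of Theorem~\ref{thm:uniq-tw-br-nonherm-case} applied at $\mbt=0$); and the correct branches of the logarithm and square root in the final formula, fixed by the $h\to 0$ limit where the asserted expression must reduce to the formula of Theorem~\ref{thm:qbialg-compar-KZ-LK-Hermitian} for $s$ alone. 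In the C-type case one further has to verify that the Casimir contribution $-h(c_+-c_-)$, together with the $q$-shift from the character $\xi$ in~\eqref{eq:k-mats} evaluated on the highest weight $L_1$ of $V$ via~\eqref{eq:scalar-product}, combines precisely to give the additive term $h/\pi$.
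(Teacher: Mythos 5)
Your strategy is the same as the paper's: invoke Theorem~\ref{compar-rib-tw-br-KZ-Hermitian} to conclude that $K^\mbt$ is conjugate to the image of~\eqref{eq:K-KZ} in the vector representation, compute the right-hand side from the eigenvalues of $C^{\mfk_\mbt}$ and $Z^\mbt_\theta$, compute the left-hand side via Mudrov's classification~\cite{MR1917138} together with a few explicit matrix entries, and compare spectra. The C-type case (where the two eigenvalue multiplicities $p$ and $N-p$ are distinct) goes through exactly as you sketch.

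There is a genuine gap in the S-type case at $s_p^{(0)}=0$. When $s_p^{(0)}\neq 0$ the two eigenvalues of $K^\mbt$ have distinct constant terms, and the matching with the two (also $p$-fold) eigenvalues of the KZ $K$-matrix is forced by the order-zero comparison, which also determines the $Z(U)$-factor $g$ and the branch of the square root; this is the mechanism you gesture at with the $h\to 0$ limit. But when $s_p^{(0)}=0$ the two eigenvalues coincide modulo $h$, so the spectrum alone no longer singles out a pairing, and your proposed calibration via ``$\mbt=0\Leftrightarrow s+\mu=0$'' is circular: that equivalence is precisely the assertion to be proved (indeed it \emph{is} the degenerate case), and Theorem~\ref{thm:uniq-tw-br-herm-case} gives uniqueness of the ribbon braid for a \emph{fixed} $(s,\mu)$, not the value of $\mu$ for $\mbt=0$. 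The paper closes this gap by a finer argument: it fixes $Z_\theta=\frac{(-1)^p}{2}A_N$ explicitly, shows that $K^\mbt$ and the KZ matrix both preserve the two-dimensional subspaces $\operatorname{span}(e_i,e_{2p-i+1})$, and compares \emph{eigenvectors} (the deformation of $e_1+ie_N$), not merely eigenvalues. Without this step your argument does not pin down $\mu$ at $\mbt=0$.

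A smaller point: the two $p$-dimensional $\mfk_\mbt$-constituents of $V$ in the S-type case are not contragredient to each other as $\mfs(\mfu_p\oplus\mfu_p)$-modules; they are exchanged by the outer automorphism swapping the two $\mfu_p$-factors, which is why the Casimir eigenvalues coincide. The conclusion $c_+=c_-$ is right, but the stated reason is not. Also, in the present Hermitian AIII situation $\tau_\theta\tau_0=\id$, so the element $v$ from Lemma~\ref{lem:automorphism-twisting} satisfies $v^2=1$ in $1+h\mcU(G)\fps$ and hence $v=1$; there is no ``central scalar $v^{-1}$'' left over, and the conjugacy of the two $K$-matrices is exact.
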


We will prove the theorem by comparing the eigenvalues of $K$-matrices. In the $\mathrm{AIII}$ case this is facilitated by the knowledge of solutions of the reflection equation~ \cite{MR1917138}.

As in Section~\ref{ssec:K-matrix}, we will work over the field $\C(q^{1/d})$ and then pass to $\C\fLauser$. The fundamental representation of $U_q(\mfg)$ on $V=\C(q^{1/d})^N$ is given by
\begin{align*}
\pi_V(E_i) &= q^{1/2} e_{i,i+1},&
\pi_V(F_i) &= q^{-1/2} e_{i+1,i},&
\pi_V(K_i) &=q e_{ii}+q^{-1}e_{i+1,i+1}.
\end{align*}
Writing $R = q^{-1/N} (\pi_V\otimes \pi_V)(\msR)$ for the universal $R$-matrix $\msR$, we have
\begin{equation*}
\label{eq:R-mat-vec-vec-pres}
R = \sum_{i,j} q^{-\delta_{ij}}e_{ii}\otimes e_{jj} + (q^{-1}-q)\sum_{i<j} e_{ij}\otimes e_{ji}.
\end{equation*}

Consider the $K$-matrix $\msK^\mbt=(\epsilon\otimes\id)(\msE^\mbt)$ of a Balagovi\'{c}--Kolb ribbon braid $\msE^\mbt$ for $\mbt\in\mcT^*$. Then $K^\mbt=\pi_V(\msK^\mbt)$ satisfies the reflection equation
\begin{equation*}
K^\mbt_1 \hat R_{12}K^\mbt_1\hat R_{12}=\hat R_{12}K^\mbt_1\hat R_{12}K^\mbt_1,
\end{equation*}
where $\hat R=\Sigma R$. The invertible solutions of this equation are classified in \cite{MR1917138} as

\begin{equation}\label{EqSolMud}
\begin{pmatrix}
 \lambda+\mu   &   & &  &   &  &   &  &y_1\\
 & \ddots    &&  &  &  &  &     \iddots    &   \\
 & &    \lambda + \mu& &   &  & y_{r} &  &  \\
&  & &\lambda &  &  & &  &  \\
&  &  && \ddots   &  &  &  & \\
 &  &   & &  & \lambda &  & &  \\
 &  &  y_{N-r+1}& &  &  & &  &  \\
 & \iddots &  &  & & & & & \\
y_{N}   &  &&   &  &  &  &
\end{pmatrix},
\end{equation}
where we have the general requirement $y_i y_{N-i+1} = -\lambda \mu \neq 0$. (This takes into account that the conventions in~\cite{MR1917138} are different. Our matrix $R$ corresponds to $R_{21}$ of \cite{MR1917138}, with $q$ replaced by~$q^{-1}$. Note also that the ground field in~\cite{MR1917138} is $\C$, but the arguments there are purely combinatorial and work for any field of characteristic zero.) We are interested only in the nonconstant solutions, since by Theorem~\ref{compar-rib-tw-br-KZ-Hermitian} the matrix $K^\mbt$ is conjugate (by some element $T \in M_N(\C)\fpser$ such that $T^{(0)} = I_N$) to the image of
\begin{equation}\label{eq:K-KZ}
\exp\bigl(-h C^{\mfk_\mbt}+\pi(1-is-i\mu)Z^\mbt_\theta \bigr)g
\end{equation}
in the fundamental representation of~$\mfsu_N$ for some $g\in Z(U)$, which is clearly nonconstant.

Next, let us make a choice of a Balagovi\'{c}--Kolb ribbon braid.
Recall that in the standard case the $K$-matrix $\msK$ and the ribbon braid $\msE$ are given by~\eqref{eq:k-mats} and~\eqref{eq:BK-braid}.
In our present Hermitian case $\tau_\theta\tau_0=\id$, and $\msK$ can be written as
\begin{equation}\label{eq:K-mats2}
\msK=\tilde\quasiK\xi'T^{-1}_{w_X}T^{-1}_{w_0},
\end{equation}
where $\tilde\quasiK=(\Ad K_{\omega_0})(\quasiK)$ and $\xi'=\xi K_{2\omega_0}$, see \cite{MR4070299}*{Section~4}. Furthermore, by \cite{MR4070299}*{Lemma 4.24}, the function $\gamma$ defining $\xi$ can be chosen so that $\xi'=\tau_\theta(z) C_\Theta$, where $C_\Theta$ acts on every vector of weight~$\omega$ by $q^{-(\omega^+,\omega^+)}$, with $\omega^+=\hlf1(\omega+\Theta(\omega))$. Therefore
\begin{equation}\label{eq:xi-prime}
\pi_V(\xi')e_i=\tau_\theta(z)(L_i)q^{-(L_i^+,L_i^+)}e_i=\overline{z(\Theta(L_i))}q^{-(L_i^+,L_i^+)}e_i,
\end{equation}
where we used~\eqref{eq:Theta-tau} and that $z(\alpha_i)=1$ for $i\in X$.
We will assume from now on that $\msK$ is defined using this particular $\xi'$.

The following formula, which will allow us to compute some matrix coefficients of $K=K^0=\pi_V(\msK)$, is probably well-known to experts.

\begin{Lem}
\label{lem:T-w-0-action-vec-rep}
We have
\begin{equation*}
\pi_V(T_{w_0}) = q^{\frac{N-1}{2}}A_N.
\end{equation*}
\end{Lem}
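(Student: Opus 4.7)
Plan: The proof is a direct computation in three steps.

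First I would determine the action of each Lusztig braid operator $T_i$ on the basis $\{e_j\}$ of $V$. Since each weight space $V_{L_j}=\C e_j$ is one-dimensional and $T_i$ intertwines weight $\mu$ with weight $s_i(\mu)$, we have $T_i e_j = e_j$ for $j \notin\{i,i+1\}$ (with the paper's conventions $E_i e_j = F_i e_j = 0$ for such $j$, so all higher terms in Lusztig's defining sum vanish). On the two-dimensional $U_q(\mfsl_2)$-subrepresentation $\C e_i \oplus \C e_{i+1}$, a direct application of Lusztig's formula, using the normalization $\pi_V(E_i) = q^{1/2}e_{i,i+1}$, $\pi_V(F_i)=q^{-1/2}e_{i+1,i}$, yields
\begin{align*}
T_i e_i &= -q^{1/2}\, e_{i+1}, &
T_i e_{i+1} &= q^{1/2}\, e_i.
\end{align*}

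Second, I would fix any reduced expression for $w_0$ (for instance $w_0 = s_1\,(s_2 s_1)\,(s_3 s_2 s_1)\cdots(s_{N-1}s_{N-2}\cdots s_1)$) and track the trajectory of $e_j$ under the iterated action. At each intermediate stage one has a scalar times a single basis vector $e_k$; each successive factor $T_{s_i}$ either fixes $e_k$, or sends $e_k \mapsto -q^{1/2}e_{k+1}$ (when $i=k$), or $e_k \mapsto q^{1/2}e_{k-1}$ (when $i = k-1$). Since any reduced expression of $w_0$ inverts each pair $\{j,k\}$ exactly once, the ``particle'' originating at $j$ undergoes exactly $N-1$ nontrivial moves; moreover the net displacement $(N+1-j)-j$ forces the number of right-moves and left-moves to be $N-j$ and $j-1$, respectively, independently of the chosen reduced expression. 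The accumulated scalar is therefore
\[
(-q^{1/2})^{N-j}(q^{1/2})^{j-1} \;=\; (-1)^{N-j}\, q^{(N-1)/2},
\]
and comparing with $A_N e_j = (-1)^{N-j} e_{N+1-j}$ gives $\pi_V(T_{w_0}) e_j = q^{(N-1)/2} A_N e_j$ for every $j$, hence the lemma.

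The main delicate point will be matching conventions. Lusztig's braid group operators admit several versions (differing by signs and by half-integer powers of $q$), so the precise constants $\pm q^{1/2}$ appearing in Step 1 depend on which version is in force. I would cross-check against the convention implicit in the construction of $T_{w_X}$ and $T_{w_0}$ entering the universal $K$-matrix in Section~\ref{ssec:K-matrix}, in particular against the computation of $\pi_V(\xi')$ in~\eqref{eq:xi-prime}, to confirm that the prefactor $q^{(N-1)/2}$ and the overall sign pattern in $A_N$ come out correctly. Once the convention is aligned, the bookkeeping in Step 2 is convention-independent and the identity follows.
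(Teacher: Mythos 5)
Your proposal is correct and follows essentially the same route as the paper: determine $\pi_V(T_i)$ on the weight basis (the same formulas $T_i e_i = -q^{1/2}e_{i+1}$, $T_i e_{i+1} = q^{1/2}e_i$, $T_i e_j = e_j$ otherwise) and then compose along a reduced word for $w_0$. The paper uses the specific reduced word $T_{[1]}T_{[2]}\cdots T_{[N-1]}$ with $T_{[k]}=T_kT_{k-1}\cdots T_1$ and leaves the bookkeeping implicit, whereas you spell out a wiring-diagram argument showing the count of $N-j$ right-moves and $j-1$ left-moves is independent of the reduced expression; this is a clean justification but the underlying computation is the same.
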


\bp
By definition (see, e.g., \cite{MR1359532}*{Section 8.6}), we have
\begin{align*}
T_{w_0} &= T_{[1]}T_{[2]}\cdots T_{[N-1]},&
T_{[k]} &= T_{k} T_{k-1}\cdots T_1,
\end{align*}
and the operators $\pi_V(T_i)$ are given by
\begin{align*}
\pi_V(T_i) e_j &= e_j\quad (j\neq i,i+1),&
\pi_V(T_i)e_i &= -q^{1/2}e_{i+1},&
\pi_V(T_i)e_{i+1} &= q^{1/2}e_i.
\end{align*}
This gives the result.
\ep

In the nonstandard case we have
\[
\msK^\mbt=(\chi_\mbt\otimes\id)(\msR_{21}(1\otimes\msK)\msR)
\]
for an appropriate character $\chi_\mbt\colon U^\theta_q(\mfg)\to\C\fLauser$, as described in Section~\ref{ssec:K-matrix}.

Write $T = (\id\otimes \pi_V)(\msR_{21}) \in U_q(\mfg) \otimes M_N(\C(q^{1/d}))$. Then $T$ is an upper triangular matrix with coefficients in $U_q(\mfb^-)$, and
\[
K^\mbt = (\chi_\mbt \otimes \id)(T(1\otimes K)T^*),
\]
where we remind that $h^*=h$.

\begin{Lem}\label{lem:K-N1}
We have
\[
K^\mbt_{N1}=\chi_\mbt\Bigl(K_1^{\frac{2-N}{N}}K_2^{\frac{4-N}{N}} \cdots K_{N-1}^{\frac{N-2}{N}}\Bigr)K_{N1}.
\]
\end{Lem}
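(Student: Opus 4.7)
The plan is to use the triangularity of $T$ in the $M_N$ factor together with the explicit form of $K=\pi_V(\msK)$ to collapse the row-$N$ sum, and then identify the surviving entry via the Cartan part of $\msR$.

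First, because $T$ is upper triangular in $M_N$, we have $T_{Nj}=0$ for $j<N$, so
\[
(T(1\otimes K)T^*)_{N1}=T_{NN}\sum_{k=1}^{N}K_{Nk}(T^*)_{k1}.
\]
By the Mudrov classification \eqref{EqSolMud}, $K$ has nonzero entries in row $N$ only at $k=1$ (antidiagonal) and possibly $k=N$ (diagonal). But the factorization \eqref{eq:K-mats2}, $\msK=\tilde\quasiK\xi'T_{w_X}^{-1}T_{w_0}^{-1}$, expresses $\pi_V(\msK)$ as a product $(\text{upper triangular})\cdot(\text{diagonal})\cdot(\text{permutation on middle block})\cdot(\text{antidiagonal})$, where the antidiagonal factor is $\pi_V(T_{w_0}^{-1})=q^{-(N-1)/2}A_N^{-1}$ by Lemma~\ref{lem:T-w-0-action-vec-rep}. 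Computing the $(N,N)$ entry and using that the permutation factor fixes the outer rows and columns (so in particular row $N$ and column $1$), one gets $K_{NN}=(AP)_{N,1}\cdot B_{1,N}$ where $AP$ is still upper triangular, forcing $K_{NN}=0$ for $N\ge 2$. The sum thus collapses to $(T(1\otimes K)T^*)_{N1}=T_{NN}K_{N1}(T^*)_{11}$.

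Second, I identify $T_{NN}$ and $(T^*)_{11}$ using the standard factorization $\msR=\mcD\Theta'$, where $\mcD\in U_q(\mfh)\hat\otimes U_q(\mfh)$ is the Cartan part acting on weight vectors by $\mcD(v_\mu\otimes w_\nu)=q^{-(\mu,\nu)}v_\mu\otimes w_\nu$ (sign fixed by matching with the explicit $R$ stated at the start of the section) and $\Theta'\in 1+U_q(\mfn^+)\hat\otimes U_q(\mfn^-)$ is Lusztig's quasi-$R$-matrix. Since $\pi_V(U_q(\mfn^\pm))$ consists of strictly triangular matrices, only $\mcD$ contributes to the diagonal entries of $T=(\id\otimes\pi_V)(\msR_{21})$ and $T^*=(\id\otimes\pi_V)(\msR)$, yielding $T_{jj}=(T^*)_{jj}=K_{-L_j}$. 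Using $\sum_iL_i=0$ in $\mfsl_N$, we have $L_1=\omega_1$ and $L_N=-\omega_{N-1}$, hence
\[
T_{NN}(T^*)_{11}=K_{-L_N-L_1}=K_{\omega_{N-1}-\omega_1}.
\]
The inverse Cartan matrix formula $(A^{-1})_{ij}=\min(i,j)(N-\max(i,j))/N$ for type $\mathrm{A}_{N-1}$ gives $\omega_{N-1}-\omega_1=\sum_{i=1}^{N-1}\frac{2i-N}{N}\alpha_i$, so
\[
T_{NN}(T^*)_{11}=K_1^{(2-N)/N}K_2^{(4-N)/N}\cdots K_{N-1}^{(N-2)/N}.
\]
A check using $\Theta(L_i)=L_{N+1-i}$ shows $\omega_{N-1}-\omega_1\in P^\Theta$, so $\chi_\mbt$ is defined on this Cartan element, and applying $(\chi_\mbt\otimes\id)$ yields the stated formula.

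The main obstacle I anticipate is pinning down the exact sign conventions in $\msR=\mcD\Theta'$ and confirming that the middle-block permutation $\pi_V(T_{w_X}^{-1})$ does not resurrect a nonzero contribution at $(N,N)$ in the C-type case; once these bookkeeping points are settled, the computation is a direct matrix manipulation.
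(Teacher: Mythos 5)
Your proof follows the paper's argument exactly: collapse the row-$N$ sum via the upper triangularity of $T$, reduce to the $(N,1)$ entry of $K$ using Mudrov's form \eqref{EqSolMud}, and compute $T_{NN}T_{11}^*$ from the Cartan part of $\msR$ (with the conversion $-L_N-L_1=\omega_{N-1}-\omega_1$ giving the exponents via the inverse Cartan matrix). The one divergence is that the paper reads off $K_{NN}=0$ directly from \eqref{EqSolMud} (using that $K$ is not scalar), so your additional derivation of this from the factorization $\msK=\tilde\quasiK\xi'T_{w_X}^{-1}T_{w_0}^{-1}$ is correct but not needed.
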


\bp
By definition we have
\[
K^\mbt_{N1} =\chi_\mbt\Big(\sum_{i,j}T_{Ni}^{\phantom{*}}T_{1j}^*\Big) K_{ij}.
\]
Since $T$ is upper triangular, the contribution of $i \ne N$ is zero. The form of \eqref{EqSolMud}, and the fact that~$K$ is not scalar, further tell us that $K_{N j}=0$ for $j\ne 1$. Therefore
\[
K^\mbt_{N1} = \chi_\mbt(T_{NN}T_{11}^*)K_{N1}.
\]
Since $T_{NN}T_{11}^*=T_{NN}T_{11} = K_1^{\frac{2-N}{N}}K_2^{\frac{4-N}{N}} \cdots K_{N-1}^{\frac{N-2}{N}}$ by the usual factorization of $\msR$ (see, e.g.,~\cite{MR1321145}*{Section XVII.2}), this proves the lemma.
\ep

To get further information on the $K$-matrices we will use that $K^\mbt$ must commute with $\pi_V(U^\mbt_h(\mfg))$. We will treat the S-type and C-type cases separately.

\smallskip

In the S-type case, $\Theta(L_i)=L_{N-i+1}$. The coideal $U^\mbt_h(\mfg^\theta)$ is generated by $U_h(\mfh^\theta)$, the elements
\begin{align*}
B_i &= F_i-q^{-1}E_{N-i}K_i^{-1} \quad (i<p),&
B_p &=F_p-q^{-2}E_p K_p^{-1}+\frac{s_p(K_p^{-1}-1)}{q^{-1}-1},
\end{align*}
and their adjoints.

\begin{Prop}\label{prop:K-matrix-AIII-S}
In the S-type case, for every $\mbt\in\mcT^*$, we have
\begin{equation}\label{eq:K-mats-AIII-S}
K^\mbt =(-1)^{p-1} q^{\frac{1}{2p} -p}
\begin{pmatrix}
 q^{1/2}(q+1)s_p I_p & - A_p^t\\
 A_p & 0
\end{pmatrix}.
\end{equation}
\end{Prop}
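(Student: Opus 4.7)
The plan is to determine $K^\mbt$ up to a single overall scalar via commutation of $K^\mbt$ with $\pi_V(U^\mbt_h(\mfg^\theta))$, and then to fix that scalar using Lemma~\ref{lem:K-N1} together with a direct computation of $K=K^0$ in the standard case.

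First I would record the explicit form of the generators under $\pi_V$. Using $z=1$, $T_{w_X}=\id$, and $c_i=q^{-(\alpha_i^-,\alpha_i^-)}$ valid in the S-type AIII setting, one computes $\pi_V(B_i)=q^{-1/2}(e_{i+1,i}-e_{N-i,N-i+1})$ for $1\le i<p$, and
\[
\pi_V(B_p)=q^{-1/2}(e_{p+1,p}-e_{p,p+1})+s_p(e_{pp}-q\,e_{p+1,p+1}).
\]
Commutation of $K^\mbt$ with $\pi_V(K_\omega)$ for $\omega\in P^\Theta$ forces $K^\mbt_{ab}=0$ unless $a=b$ or $a+b=N+1$, so $K^\mbt$ is supported only on the diagonal and antidiagonal. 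A direct entry-by-entry analysis of $[K^\mbt,\pi_V(B_i)]=0$ for $1\le i<p$ then shows that the diagonal is constant on each of $\{1,\dots,p\}$ and $\{p+1,\dots,N\}$, say with values $\alpha$ and $\beta$, while the antidiagonal entries $y_i:=K^\mbt_{i,N-i+1}$ satisfy $y_k=(-1)^{k-1}y_1$ for $1\le k\le p$ and $y_{p+k}=(-1)^{k-1}y_{p+1}$ for $1\le k\le p$. Commutation with $\pi_V(B_p)$ at the positions $(p+1,p)$ and $(p,p+1)$ yields
\[
\alpha-\beta=q^{1/2}(q+1)s_p\,y_{p+1}=-q^{1/2}(q+1)s_p\,y_p,
\]
so $y_{p+1}=-y_p$ whenever $s_p\ne 0$.

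Next I would invoke Mudrov's classification of invertible solutions of the reflection equation to obtain the constraint $y_iy_{N-i+1}=-\beta(\alpha-\beta)$. Taking $i=1$ and combining with the sign-alternations (which in particular give $y_N=-y_1$) forces $\beta=0$, whence $\alpha=q^{1/2}(q+1)s_p\,y_{p+1}$ and $K^\mbt$ has the claimed block form up to a global scalar. Finally, Lemma~\ref{lem:K-N1} fixes the scalar: since $-L_1-L_N\in P^\Theta$ and $\chi_\mbt$ is trivial on $P^\Theta$ in the S-type case, $K^\mbt_{N1}=K_{N1}$. An elementary weight argument in $V$ (using that $\quasiK_\mu$ vanishes unless $\mu=\sum_{i\in I^*}k_i\alpha_i^-$ with even $k_i\ge 0$, and that no such nonzero $\mu$ equals $L_a-L_b$) shows $\pi_V(\tilde{\quasiK})=I$; combined with $\pi_V(\xi')=q^{1/N-1/2}I$ from \eqref{eq:xi-prime} and Lemma~\ref{lem:T-w-0-action-vec-rep}, the factorization \eqref{eq:K-mats2} gives $K=-q^{1/(2p)-p}A_N$, hence $K_{N1}=q^{1/(2p)-p}$. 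Propagating through $y_N=(-1)^{p-1}y_{p+1}$ and $\alpha=q^{1/2}(q+1)s_p\,y_{p+1}$ then recovers the stated formula.

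The hardest step will be forcing $\beta=0$, since Mudrov's classification as quoted requires $\lambda\mu\ne 0$, which rules out $K^\mbt$ a priori. The cleanest workaround is to treat $s_p$ as a formal parameter and argue by continuity from the standard case $s_p=0$, where $K^0=-q^{1/(2p)-p}A_N$ is manifestly antidiagonal and so $\beta=0$ holds trivially; the entry-by-entry analysis then propagates $\beta=0$ order by order in~$s_p$.
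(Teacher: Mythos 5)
Your plan follows the same route as the paper: determine the form of $K^\mbt$ up to a scalar from commutation with $\pi_V(U^\mbt_h(\mfg^\theta))$ and Mudrov's classification, then fix the scalar via Lemma~\ref{lem:K-N1} and the explicit computation $K_{N1}=q^{\frac{1}{2p}-p}$ from~\eqref{eq:K-mats2}. The commutation analysis and the scalar normalization (including the observation that $\pi_V(\tilde\quasiK)=I$, which is slightly stronger than the paper needs) are correct, and your computation of the generators $\pi_V(B_i)$, $\pi_V(B_p)$ is right. One small inaccuracy: $y_{p+1}=-y_p$ holds unconditionally, not only for $s_p\ne0$, since the $(p,p)$ and $(p+1,p+1)$ entries of $[K^\mbt,\pi_V(B_p)]=0$ already give $y_p+y_{p+1}=0$.

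The genuine gap is in the step forcing $\beta=0$. You identify the parameter $\lambda$ in Mudrov's form~\eqref{EqSolMud} with the lower-right diagonal value $\beta$, deriving the constraint $y_iy_{N-i+1}=-\beta(\alpha-\beta)$ and then worrying that $\lambda\mu\neq0$ rules out the desired answer. This rests on a misreading of~\eqref{EqSolMud}: the lower-right $r\times r$ block of Mudrov's matrix has \emph{zero} diagonal (this is explicit in the C-type formula~\eqref{eq:aiii-ctype-K-formula}, where the lower-right $p\times p$ block is empty), while $\lambda,\mu$ are the two eigenvalues of $K$, with $\lambda+\mu$ the common diagonal entry of the upper block and $\lambda\mu=-y_iy_{N+1-i}$ the determinant of each $2\times2$ anti-diagonal minor; invertibility is exactly $\lambda\mu\neq0$, which is perfectly compatible with the answer (here $\lambda\mu=q^{1/p-2p}$). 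Reading~\eqref{EqSolMud} correctly, the conclusion is immediate: by non-constancy all antidiagonal entries $y_i$ are nonzero, so $r=p$ and (since $N=2p$) the lower-right $p\times p$ block of any matrix of form~\eqref{EqSolMud} has zero diagonal, giving $\beta=0$ directly. Your proposed workaround by ``continuity from $s_p=0$'' does not close the gap: the commutation analysis leaves both $\beta$ and the overall scale as free parameters at every order in $h$, so $\beta=0$ at $s_p=0$ does not propagate without invoking the reflection equation (or Mudrov's form) again.
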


\bp
It is easily seen that the nonconstant matrices~\eqref{EqSolMud} commuting with the generators of $U^\mbt_h(\mfg^\theta)$ are precisely of the form
\begin{equation*}
y
\begin{pmatrix}
 q^{1/2}(q+1)s_p I_p & - A_p^t\\
 A_p & 0
\end{pmatrix}
\end{equation*}
for $y\in\C\fLauser^\times$.
(Note that these solutions were also described in \cite{MR2565052}*{Section 5}.)

To determine $y$, we look at the matrix coefficient $K^\mbt_{N1}$. By Lemma~\ref{lem:K-N1} it is independent of $\mbt$, since $\chi_\mbt$ is trivial on the Cartan part in the S-type case. In the standard case we compute $K_{N1}$ using definition~\eqref{eq:K-mats2}.

By~\eqref{eq:xi-prime} and~\eqref{eq:scalar-product} we have $\pi_V(\xi')=q^{\frac{1}{2p}-\hlf1}$, while Lemma \ref{lem:T-w-0-action-vec-rep} describes the action of $T_{w_0}$.
Since~$\widetilde{\mathfrak{X}}$ lies in a completion of $U_q(\mfn^+)$ and has the weight zero component $1$, we conclude that
\[
K^\mbt_{N1}=K_{N1}^{\phantom{\mbt}}  = q^{\frac{1}{2p}-p}.
\]
Hence $y = (-1)^{p-1} q^{\frac{1}{2p} -p}$.
\ep

\bp[Proof of Theorem~\ref{thm:AIII}: S-type case]
Note that the matrix $K^\mbt$ given by~\eqref{eq:K-mats-AIII-S} leaves the two-dimensional spaces spanned by $e_i$ and $e_{2p-i+1}$ invariant. From this we see that it has eigenvalues
\[
x_\pm=(-1)^{p-1} q^{\frac{1}{2p} -p}\Bigl(\frac{q^{1/2}(q+1)}{2}s_p\pm i\Bigl(1-\frac{q(q+1)^2}{4}s_p^2\Bigr)^{1/2}\Bigr),
\]
each of multiplicity $p$.
(Recall that we are assuming $s_p \in i h\R\fpser$, so $\bigl(1-\frac{q(q+1)^2}{4}s_p^2\bigr)^{1/2}$ is well-defined as an element of $\R\fpser$.)

On the other hand, consider a $K$-matrix as in \eqref{eq:K-KZ} but using $(\mfk,Z_\nu)$ instead of $(\mfu^\theta_\mbt,Z^\mbt_\theta)$, and denote its image under the fundamental representation of $\mfsu_N$ by $M$.
(For the moment we leave the choice of $g \in Z(U)$ free.)
Then $K^\mbt$ is conjugate to $M$ by a formal matrix.
Let us compute the eigenvalues of $M$.

The Casimir operator $C^{\mfk}$ equals $C^{\mfsu_p \oplus \mfsu_p} - \Tr(Z_\nu^2)^{-1}Z_\nu^2$. Since the Casimir operator of $\mfsu_p$ acts as the scalar $\frac{p^2 - 1}{p}$ in the fundamental representation of $\mfsu_p$, it follows that $C^\mfk$ acts as $\frac{p^2 - 1}{p}+\frac{1}{2p}=p-\frac{1}{2p}$. Hence the eigenvalues of $M$ are
\[
y_{\pm}=\pm q^{\frac{1}{2p} -p} i e^{\pm\frac{\pi}{2}(s+\mu)}e^{\frac{2\pi i k}{2p}},
\]
each of multiplicity $p$, for some $0\le k\le 2p-1$.

It follows that $y_+$ coincides with $x_+$ or $x_-$. Since $s$ is real, by looking at the order zero terms we can conclude that this is possible only if $e^{\frac{\pi i k}{p}}=\pm1$ and
\[
e^{\frac{\pi}{2}(s+\mu)}=\Big(1-\frac{q(q+1)^2}{4}s_p^2\Big)^{1/2}\pm \frac{q^{1/2}(q+1)}{2}is_p.
\]
Furthermore, since we already know the formula for $s$ by Theorem~\ref{thm:qbialg-compar-KZ-LK-Hermitian}, we see that for $s^{(0)}_p\ne0$ the sign must be $-$ and $g\in Z(SU(N))$ is the scalar matrix $e^{\frac{\pi i k}{p}} = (-1)^{p-1}$.

It remains to handle the case $s^{(0)}_p = 0$, so that $\mfg^\theta_\mbt = \mfg^\theta$.
Then Theorem~\ref{thm:qbialg-compar-KZ-LK-Hermitian} implies that $s = 0$.
We first claim that
\[
Z_\theta = \frac{(-1)^p}{2} A_N.
\]
Since $\theta=\Ad A_N$, we have $A_N\in\mfz(\mfu^\theta)$, hence this formula must be true up to a sign. Then the requirement $(Z_\theta, X_p)_{\mfg} > 0$ forces this choice.

Now, write $M'$ for the image of \eqref{eq:K-KZ} under the fundamental representation of $\mfsu_N$.
From the above formula for $Z_\theta$, we see that $M'$ preserves the span of $e_i$ and $e_{2p-i+1}$ for each $i$, analogously to the situation for $K^\mbt$ observed above.
Restricting to the span of $e_1$ and $e_N$, we find
\[
M'(e_1 + i e_N) = (-1)^p i q^{\frac{1}{2p}-p} e^{(-1)^p \frac{\pi \mu}{2}} e^{\frac{\pi i k}{p}} (e_1 + i e_N),
\]
where $e^{\frac{\pi i k}{p}}$ is the effect of $g \in Z(U)$, and a similar formula for $e_1 - i e_N$ (which we do not use).

Now, we also know that $M'$ and $\pi_V(K^\mbt)$ are conjugate by a formal matrix with constant term $I_N$.
In particular $K^\mbt$ has eigenvectors which are deformations of $e_1 \pm i e_N$, with the same eigenvalues.

From \eqref{eq:K-mats-AIII-S}, the restriction of $K^\mbt$ to the span of $e_1$ and $e_N$ gives
\begin{align*}
K^\mbt \eta &= - q^{\frac{1}{2p} - p} \left(i \Bigl(1 - \frac{t'^2}{4}\Bigr) + \frac{t'}{2} \right) \eta,&\text{with}\qquad
\eta &= \biggl( \biggl( -\frac{t'}{2} - i \Bigl( 1 - \frac{t'^2}{4} \Bigr) \biggr) e_1 + e_N\biggr),
\end{align*}
where $t' = (-1)^p q^{\frac{1}{2}}(q+1) s_p \in i h \R\fpser$.
This eigenvector $\eta$ is a deformation of $-i(e_1 + i e_N)$, hence we obtain the equality of eigenvalues
\[
(-1)^p i q^{\frac{1}{2p}-p} e^{(-1)^p \frac{\pi \mu}{2}} e^{\frac{\pi i k}{p}} = - q^{\frac{1}{2p} - p} \left(i\Bigl( 1 - \frac{t'^2}{4} \Bigr) + \frac{t'}{2} \right),
\]
or equivalently,
\[
e^{(-1)^p \frac{\pi \mu}{2}} e^{\frac{\pi i k}{p}} = (-1)^{p-1} \left( \Bigl( 1 - \frac{t'^2}{4} \Bigr) - i \frac{t'}{2} \right).
\]

When $p$ is even, this implies $e^{\frac{\pi i k}{p}} = -1$ and the formula for $\mu$ follows by taking the logarithm.
When $p$ is odd, we first obtain $e^{\frac{\pi i k}{p}} = 1$, and then the formula for $\mu$ follows by taking the logarithm of inverses (note that $(\sqrt{1 + x^2} + x)(\sqrt{1 + x^2} - x) = 1$).
\ep

Next let us consider the C-type case. Then $\Theta(L_i)=L_i$ for $p+1\le i\le N-p$, and $\Theta(L_i)=L_{N-i+1}$ for all other~$i$.
The coideal $U^\mbt_h(\mfg^\theta)$ is generated by $U_h(\mfh^\theta)$, $U_q(\mfg_X)$, the elements
\begin{align*}
B_i&=F_i-q^{-1}T_{w_X}(E_{N-i})K_i^{-1}\quad (i<p),&
B_p&=F_p-c_p T_{w_X}(E_{N-p})K_p^{-1}
\end{align*}
and their adjoints. Similarly to Lemma~\ref{lem:T-w-0-action-vec-rep} we have
\begin{equation}\label{eq:Twx}
\pi_V(T_{w_X}) =
\begin{pmatrix}
I_p  &  &  \\
 & q^{\frac{N-1}{2}-p} A_{N-2p} &  \\
 & & I_p
\end{pmatrix}.
\end{equation}
It follows that
\begin{align*}
\pi_V(B_i) &= q^{-1/2} e_{i+1,i} -q^{-1/2} e_{N-i,N-i+1}\quad (i<p),&
\pi_V(B_{p}) &= q^{-1/2}e_{p+1,p} - q^{\frac{N}{2}-p}c_p e_{p+1,N-p+1}.
\end{align*}

In the next proposition we assume that the character $\chi_\mbt$ is defined using the unique homomorphism $P\to\R\fps^*$ with values in the power series with positive constant terms such that $\alpha_p\mapsto c_p^{-1}q^{-\hlf1}$ and $\alpha_i\mapsto1$ for $i\ne p$.

\begin{Prop}\label{prop:K-matrix-AIII-C}
In the C-type case, for every $\mbt\in\mcT^*$, we have
\begin{equation}
\label{eq:aiii-ctype-K-formula}
K^\mbt =
\begin{pmatrix}
 (\lambda+\mu) I_p   &   & -q^{\frac{N+1}{2} -p}c_p \lambda A_p^t\\
 & \lambda I_{N-2p}  & \\
 q^{-\frac{N+1}{2} +p}c_p^{-1}\mu A_p & &
\end{pmatrix},
\end{equation}
where $\displaystyle \lambda = e^{-\pi i \frac{p}{N}} q^{\frac{1}{N}-(N -p)-\frac{p}{N}}c_p^{-\frac{2p}{N}}$ and
$\displaystyle\mu = e^{\pi i \frac{N- p}{N}}q^{\frac{1}{N}-p +\frac{N-p}{N} }c_p^{\frac{2(N-p)}{N}}$.
\end{Prop}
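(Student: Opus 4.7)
The plan follows the two-step approach of Proposition~\ref{prop:K-matrix-AIII-S}: first narrow down the general form of $K^\mbt$ using Mudrov's classification \eqref{EqSolMud} combined with $[K^\mbt,\pi_V(U^\mbt_h(\mfg^\theta))] = 0$, then pin down the remaining free scalars by computing explicit matrix entries via Lemma~\ref{lem:K-N1}.

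First I would show that the nonconstant invertible solutions \eqref{EqSolMud} commuting with $\pi_V(B_i)$ for $1\le i<p$, with $\pi_V(U_q(\mfg_X))$ (which stabilizes $e_{p+1},\dots,e_{N-p}$ via \eqref{eq:Twx} and fixes the other basis vectors), with $\pi_V(B_p) = q^{-1/2}e_{p+1,p} - q^{N/2-p}c_p e_{p+1,N-p+1}$, and with the image of $U_h(\mfh^\theta)$ under $\pi_V$, form exactly the two-parameter family in \eqref{eq:aiii-ctype-K-formula}. The middle block being scalar $\lambda I_{N-2p}$ and the top-left being scalar $(\lambda+\mu)I_p$ follow from commutation with $U_q(\mfg_X)$ and with the Cartan part. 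The key constraint comes from commutation with $\pi_V(B_p)$, which relates the $(p,N-p+1)$ entry of $K^\mbt$ to $(\lambda+\mu)$ and $\lambda$; the combination $c_p$-factor is forced by the specific shape of $\pi_V(B_p)$, and commutation with $\pi_V(B_p)^*$ gives the complementary relation on the $(N-p+1,p)$ entry. Commutation with $\pi_V(B_i)$ for $i<p$ then propagates these to the anti-diagonal entries indicated by $A_p^t$ and $A_p$, and the required identity $y_i y_{N-i+1} = -\lambda\mu$ is a consistency check.

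Next I would fix $\lambda$ and $\mu$ by two independent matrix coefficient computations. The entry $K^\mbt_{N1}$ equals the lower-left corner of \eqref{eq:aiii-ctype-K-formula} (up to a sign from $A_p$), while by Lemma~\ref{lem:K-N1}
\[
K^\mbt_{N1} = \chi_\mbt\bigl(K_1^{(2-N)/N}K_2^{(4-N)/N}\cdots K_{N-1}^{(N-2)/N}\bigr) K_{N1},
\]
with $K_{N1}$ computed in the standard $\mbt=0$ case from \eqref{eq:K-mats2}: the matrix $T_{w_0}^{-1}T_{w_X}^{-1}$ sends $e_N$ into $e_1$ up to an explicit scalar using Lemma~\ref{lem:T-w-0-action-vec-rep} and \eqref{eq:Twx}, the operator $\xi'$ acts on $e_1$ by the scalar $\overline{z(L_N)}q^{-(L_1^+,L_1^+)}$ by \eqref{eq:xi-prime}, and $\tilde\quasiK$ contributes only its weight-zero component $1$ to this particular coefficient. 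Substituting the explicit value of $\chi_\mbt$ on the stated combination of $K_i$'s (which becomes a power of $c_p^{-1}q^{-1/2}$ corresponding to the weight of $e_1$) should produce the claimed formula for $\mu$. An analogous computation of $K^\mbt_{11} = \chi_\mbt(T_{11}T_{11}^*)K_{11}$, using that $T$ is upper triangular (so only $T_{11}$ contributes to both sums) and that $T_{11}$ is a product of $K_i$'s coming from the standard factorization of $\msR$, should pin down $\lambda+\mu$ and hence $\lambda$.

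The main difficulty I anticipate is the careful bookkeeping of exponents and phases: the interplay between the factor $e^{\pi i p/N}$ in $z$, the half-integer power $q^{(N-1)/2}$ from $T_{w_0}$, the shift $q^{(N-1)/2-p}$ from $T_{w_X}$, the quadratic term $q^{-(L_i^+,L_i^+)}$ from $\xi'$ (which depends on $i$ in a piecewise way reflecting the structure of $\Theta$ in the C-type case), and the contribution $c_p^{-2p/N}$ or $c_p^{2(N-p)/N}$ from $\chi_\mbt$ applied to the fractional powers of $K_{\alpha_p}$, must all assemble into the clean expressions for $\lambda$ and $\mu$. One should also check that the sign $(-1)$ in the top-right block of \eqref{eq:aiii-ctype-K-formula} and the sign conventions for $A_p, A_p^t$ are consistent with the signs arising from the action of $T_{w_0}$ on the fundamental representation.
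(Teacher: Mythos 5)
The first half of your proposal — narrowing down the general form of $K^\mbt$ to \eqref{eq:aiii-ctype-K-formula} via Mudrov's classification and commutation with $\pi_V(U^\mbt_h(\mfg^\theta))$, and then computing $K^\mbt_{N1}$ via Lemma~\ref{lem:K-N1} to pin down $\mu$ — matches the paper's strategy. But your second matrix coefficient computation, $K^\mbt_{11}$, has a genuine error that would derail the proof.

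You claim $K^\mbt_{11} = \chi_\mbt(T_{11}T_{11}^*)K_{11}$ ``using that $T$ is upper triangular (so only $T_{11}$ contributes to both sums).'' The analogy with $K^\mbt_{N1}$ is reversed. In the sum $K^\mbt_{ab} = \chi_\mbt\bigl(\sum_{i,j} T_{ai} K_{ij} (T^*)_{jb}\bigr)$, upper triangularity of $T$ forces $i\ge a$, and lower triangularity of $T^*$ forces $j\ge b$. For $(a,b)=(N,1)$ this pins $i=N$, and then the structure of $K$ forces $j=1$: a single term. For $(a,b)=(1,1)$ there is no constraint at all — $T_{1i}$ is nonzero for every $i$ and $(T^*)_{j1}$ is nonzero for every $j$ — so the sum runs over \emph{all} $(i,j)$ with $K_{ij}\ne 0$, including all the diagonal terms $i=j\le N-p$ and both anti-diagonal blocks. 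The single-term formula you wrote is false.

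There is a second, independent obstruction: even in the standard case, computing $K_{11}$ from \eqref{eq:K-mats2} requires the coefficient of $e_1$ in $\tilde\quasiK\,\pi_V(\xi' T_{w_X}^{-1}T_{w_0}^{-1})e_1$, and since $T_{w_X}^{-1}T_{w_0}^{-1}$ sends $e_1$ to (a scalar times) $e_N$, this coefficient depends on the \emph{top-weight} component $\tilde\quasiK_{L_1 - L_N}$ of the quasi-$K$-matrix, which is not explicitly accessible. The paper instead computes $K^\mbt_{p+1,p+1}$, which is tailored to avoid both problems at once: the constraints $i,j\ge p+1$ combined with the shape of $K$ restrict the sum to diagonal terms in the middle block, and $T_{w_X}^{-1}T_{w_0}^{-1}$ maps $e_{p+1}$ back to a multiple of $e_{p+1}$, so only $\tilde\quasiK_0 = 1$ is needed. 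To fix your proposal, replace the $K^\mbt_{11}$ computation with $K^\mbt_{p+1,p+1}$, and argue that the remaining off-diagonal terms $T_{p+1,i}T_{p+1,i}^*$ ($p+1 < i \le N-p$) lie in $U_q(\mfh)U_q(\mfn_X^-)\cdot U_q(\mfn_X^+)U_q(\mfh)$, so that only the Cartan piece $T_{p+1,p+1}$ is sensitive to the change of $\chi_\mbt$ away from the standard parameter.
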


\bp
Again, some elementary computations show that a nonconstant solution \eqref{EqSolMud} commutes with the generators of $\pi_V(U^\mbt_h(\mfg^\theta))$ if and only if it has the form~\eqref{eq:aiii-ctype-K-formula}, with no restrictions on $\lambda$ and $\mu$.

Consider first the standard case. Then $c_p=q^{-(\alpha_p^-,\alpha_p^-)}=q^{-1/2}$. By~\eqref{eq:xi-prime} and~\eqref{eq:scalar-product} we have
\[
\pi_V(\xi')e_N=(-1)^p e^{-\pi i\frac{p}{N}}q^{\frac{1}{N}-\hlf1}e_N,
\]
from which we deduce, similarly to the proof of Proposition~\ref{prop:K-matrix-AIII-S}, that
\[
K_{N1}= (-1)^p e^{-\pi i\frac{p}{N}} q^{\frac{1}{N}-\frac{N}{2}}.
\]
It follows that $\mu = -e^{-\pi i \frac{p}{N}} q^{\frac{1}{N} -p}$. In a similar way, using~\eqref{eq:Twx}, we compute
\[
\lambda = K_{p+1,p+1} = e^{-\pi i \frac{p}{N}} q^{\frac{1}{N}-(N -p)}.
\]

For general $\mbt\in\mcT^*$, by Lemma~\ref{lem:K-N1} we have
\[
K^\mbt_{N1}=c_p^{1-\frac{2p}{N}}q^{\hlf1-\frac{p}{N}}K_{N1}= (-1)^p e^{-\pi i\frac{p}{N}} q^{\hlf1-\frac{p}{N}+\frac{1}{N}-\frac{N}{2}}c_p^{1-\frac{2p}{N}},
\]
which gives the asserted formula for $\mu$. Similarly to the proof of Lemma~\ref{lem:K-N1} we also have
\[
\lambda=K^\mbt_{p+1,p+1}=\chi_\mbt\Big( \sum^{N-p}_{i=p+1} T_{p+1,i}^{\phantom{*}}T_{p+1,i}^*\Big)K_{ii}
=\chi_\mbt\Big( \sum^{N-p}_{i=p+1} T_{p+1,i}^{\phantom{*}}T_{p+1,i}^*\Big)K_{p+1,p+1}.
\]
Using again the usual factorization of the $R$-matrix we see that $T_{p+1,i}$ for $p+1<i\le N-p$ is an element of $U_q(\mfh)U_q(\mfn_X^-)$ of weight $L_i-L_{p+1}$, while
\[
T_{p+1,p+1}=K_1^{\frac{1}{N}} \cdots K_p^{\frac{p}{N}}K_{p+1}^{\frac{p+1}{N}-1} \cdots K_{N-1}^{\frac{N-1}{N}-1}.
\]
By definition of $\chi_\mbt$ we conclude that $\lambda$ differs from the standard case by the factor
\[
\chi_\mbt(T_{p+1,p+1}^2)=q_{\phantom{p}}^{-\frac{p}{N}}c_p^{-\frac{2p}{N}}.
\]
This gives the required formula for $\lambda$.
\ep

\bp[Proof of Theorem~\ref{thm:AIII}: C-type case]
Similarly to the S-type case, the matrix $K^\mbt$ given by~\eqref{eq:aiii-ctype-K-formula} has eigenvalues
\begin{align*}
- e^{-\pi i \frac{p}{N}}q^{\frac{1}{N}-p +\frac{N-p}{N} }c_p^{\frac{2(N-p)}{N}}, & &
e^{-\pi i \frac{p}{N}}q^{\frac{1}{N}-(N -p)-\frac{p}{N}}c_p^{-\frac{2p}{N}}
\end{align*}
of multiplicities $p$ and $N-p$, resp., while the image of \eqref{eq:K-KZ} in the fundamental representation of $\mfsu_{N}$ has eigenvalues
\begin{align*}
-e^{\frac{2\pi i k}{N}}e^{-\pi i \frac{p}{N}}q^{\frac{1}{N}-p}e^{\frac{N-p}{N}\pi(s+\mu)}, & &
e^{\frac{2\pi i k}{N}}e^{-\pi i \frac{p}{N}}q^{\frac{1}{N}-(N-p)}e^{-\frac{p}{N}\pi(s+\mu)}
\end{align*}
of multiplicities $p$ and $N-p$, resp., for some $0\le k\le N-1$. By looking at the order zero terms we conclude that $k=0$ and
\[
q^{-\frac{p}{N}}c_p^{-\frac{2p}{N}}=e^{-\frac{p}{N}\pi(s+\mu)},
\]
giving the formula for $s+\mu$ in the statement of the theorem.
\ep

\appendix

\section{Co-Hochschild cohomology}
\label{sec:appendix-dg}

Our goal is to prove an analogue of Corollary~\ref{cor:cohom-B-G-Gtheta} for universal enveloping algebras.
The result essentially follows from computations in~\cite{MR2233127}, but since we formally claim slightly more than what is stated there, we give a self-contained argument.

\smallskip

Let $V$ be a finite dimensional vector space over $\C$ (we could consider any field of characteristic zero), and $W\subset V$ be a subspace.
Viewing $V$ as an abelian Lie algebra, consider its universal enveloping algebra $(U(V),\Delta)$. As an algebra it is the symmetric algebra $\Sym(V)$. But we will mostly need only the coalgebra structure, in which case we write $\Sym^c(V)$. Consider the tensor algebra $T(\Sym^c(V))$ of the vector space $\Sym^c(V)$. We then make $\Sym^c(W)\otimes T(\Sym^c(V))$ into a cochain complex by defining
\[
d\colon \Sym^c(W)\otimes\Sym^c(V)^{\otimes n}\to \Sym^c(W)\otimes\Sym^c(V)^{\otimes (n+1)}
\]
by formula~\eqref{eq:cohoch-diff}, so
\[
d T = T_{01,2,\dots,n+1} - T_{0,1 2, \dots, n+1} + \dots + (-1)^n T_{0,1,\dots,n(n+1)} + (-1)^{n+1} T_{0,1,\dots,n}.
\]
Consider the linear map $p_{V/W}\colon\Sym^c(V)\to V/W$ obtained by composing the projection $\Sym^c(V)\to\Sym^1(V)=V$ with the quotient map $V\to V/W$. It extends to an algebra homomorphism
\[
T(\Sym^c(V))\to\medwedge V/W,
\]
which we continue to denote by $p_{V/W}$. Consider also the counit $\epsilon\colon\Sym^c(W)\to\C$.

\begin{Prop}\label{prop:co-Hoshchild-alg-model1}
The map $\epsilon\otimes p_{V/W}$ defines a quasi-isomorphism of $(\Sym^c(W)\otimes T(\Sym^c(V)),d)$ onto $(\bigwedge V/W,0)$.
\end{Prop}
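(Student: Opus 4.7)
The plan is to establish the quasi-isomorphism by reducing to the classical Koszul-type computation of the cobar cohomology of a symmetric coalgebra.

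First I would verify that $\epsilon\otimes p_{V/W}$ is a cochain map (equivalently, $(\epsilon\otimes p_{V/W})\circ d=0$). Since $p_{V/W}$ vanishes on $\Sym^{\ge 2}(V)$ and on $W\subset\Sym^1(V)=V$, only those terms contribute in which the $\Sym^c(W)$-factor at position $0$ has degree $0$ and each $\Sym^c(V)$-factor has degree exactly $1$ modulo $W$. A direct inspection of the seven or so non-vanishing contributions to $(\epsilon\otimes p_{V/W})(dT)$ shows they cancel in pairs after antisymmetrization.

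Next, I would choose a linear complement $U\subset V$ to $W$, which induces a coalgebra isomorphism $\Sym^c(V)\cong\Sym^c(W)\otimes\Sym^c(U)$. Using this, I define natural cochain maps
\[
\phi\colon T(\Sym^c(U))^\bullet\longrightarrow \Sym^c(W)\otimes T(\Sym^c(V))^\bullet,\quad c_1\otimes\cdots\otimes c_n\mapsto 1\otimes(1\otimes c_1)\otimes\cdots\otimes(1\otimes c_n),
\]
and $\psi$ in the opposite direction by applying $\epsilon$ to the $\Sym^c(W)$-factor at position $0$ and the coalgebra projection $\pi_U\colon\Sym^c(V)\to\Sym^c(U)$ (given by the counit of $\Sym^c(W)$) to every other factor. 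One checks immediately that $\psi\circ\phi=\id$ and that $(\epsilon\otimes p_{V/W})\circ\phi$ is the analogous map $T(\Sym^c(U))^\bullet\to\medwedge U$. This reduces the proposition to two claims: (i) $\phi$ is a quasi-isomorphism, and (ii) the case $W=0$ (with $V$ replaced by $U$).

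To establish (i), which is the hard step, I would filter the ambient complex by the total $\Sym^c(W)$-polynomial degree, that is, the sum of the $W$-degree of the position-$0$ factor together with the $\Sym^c(W)$-components of the factors $b_1,\dots,b_n$ under the decomposition $\Sym^c(V)\cong\Sym^c(W)\otimes\Sym^c(U)$. Since the coproduct of $\Sym^c(V)$ respects the tensor decomposition, the differential preserves this total $W$-degree, and we get a splitting $\Sym^c(W)\otimes T(\Sym^c(V))^\bullet=\bigoplus_{m\ge 0}C^\bullet_{(m)}$. The image of $\phi$ is exactly $C^\bullet_{(0)}$, so it suffices to show $C^\bullet_{(m)}$ is acyclic for every $m\ge 1$. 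For this I would build an explicit contracting homotopy modelled on the extra-degeneracy argument for the two-sided bar complex of the bialgebra $\Sym(W)$: on a tensor $a\otimes b_1\otimes\cdots\otimes b_n$ with $m\ge 1$ one absorbs the $\Sym^c(W)$-component of the leftmost factor with nontrivial $W$-content into $a$ via multiplication in $\Sym(W)$, leaving the $\Sym^c(U)$-part to recombine with the next factor. Claim (ii) is the classical Koszul-duality identification $\Cotor^{\Sym^c(U)}(\C,\C)\cong\medwedge U$, which can be cited or proved by induction on $\dim U$.

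The main obstacle is the precise bookkeeping of signs required to verify the identity $dh+hd=\id$ on each $C^\bullet_{(m)}$ with $m\ge 1$. If the direct homotopy becomes unwieldy, an alternative is to run the spectral sequence of the $W$-degree filtration: because $d$ strictly preserves $W$-degree, the $E_1$-page is already the associated graded, and the asserted quasi-isomorphism amounts to the acyclicity of $C^\bullet_{(m)}$ for $m\ge 1$, which is tractable by a separate contracting homotopy or by dualising to the classical Koszul resolution of $\Sym(W)$-modules.
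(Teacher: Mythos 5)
Your strategy is genuinely different from the paper's. The paper treats the complex as $C\otimes_\alpha A$ for the twisting morphism $\alpha\colon C=\Sym^c(W)\to A^1=\Sym^c(V)$ induced by the inclusion $W\hookrightarrow V$, and invokes a Comparison Lemma (proved there via a spectral sequence filtered by the weight in the $A$-factor, i.e.\ the total symmetric degree carried by positions $1,\dots,n$). This transports the known $W=0$ quasi-isomorphism $p_V\colon T(\Sym^c(V))\to\bigwedge V$ to a quasi-isomorphism onto $(\Sym(W)\otimes\bigwedge V,d)$, after which the passage to $(\bigwedge V/W,0)$ is the classical Koszul resolution of $\C$ over $\Sym(W)$. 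No splitting of $V$ is ever chosen. You instead pick a complement $U\cong V/W$, use the coalgebra factorization $\Sym^c(V)\cong\Sym^c(W)\otimes\Sym^c(U)$ to split the entire complex as the direct sum of its total $W$-degree pieces $C_{(m)}$ (a grading, not merely a filtration, since $d$ preserves the degree on the nose), identify $C_{(0)}$ with the $W=0$ complex built from $\Sym^c(U)$, and reduce, via $\phi$, $\psi$, $\psi\phi=\id$, and two-out-of-three, to the acyclicity of $C_{(m)}$ for $m\ge1$. That reduction is correct.

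The gap is exactly that acyclicity, which once the $W=0$ case is granted constitutes the entire content of the proposition. You only sketch a ``contracting homotopy \ldots\ via multiplication in $\Sym(W)$'' and concede the bookkeeping is undone, but the issue is not cosmetic. The natural extra codegeneracy $h(a\otimes b_1\otimes\cdots)=\epsilon(a)\,\pi_W(b_1)\otimes b_2\otimes\cdots$ (with $\pi_W=\id\otimes\epsilon_U$) fails the cosimplicial relation $hd_1=d_0h$ as soon as $W\subsetneq V$, because $\pi_W$ is applied to only one leg of $\Delta_V(b_1)$; one finds $dh+hd=\id-E$ for an idempotent chain map $E\neq 0$, so the homotopy contracts only part of $C_{(m)}$. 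A multiplication-based homotopy is, on the other hand, not visibly a map of this purely coalgebraic complex. Your spectral-sequence fallback is circular: since the $W$-degree decomposition is already a direct sum, the spectral sequence degenerates at the first page and merely restates the acyclicity being sought. What would actually close the gap is the K\"unneth theorem for $\mathrm{Cotor}$ applied to $\Sym^c(V)\cong\Sym^c(W)\otimes\Sym^c(U)$ with coefficient $\Sym^c(W)=\Sym^c(W)\otimes\C$ (the first factor being cofree over $\Sym^c(W)$), or a contraction constructed carefully from that cofreeness; neither appears. The paper's twisting-morphism route avoids this difficulty altogether by filtering on $A$-weight rather than on $W$-degree, so that the genuine filtration (rather than a degenerate grading) lets the Comparison Lemma absorb the homological work.
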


The result is well-known for $W=0$, see, e.g., \cite{MR1321145}*{Theorem~XVIII.7.1}. We will deduce the proposition from this particular case using the formalism of \emph{twisting morphisms}.

Let $(A, m_A, d_A)$ be a differential graded algebra with product $m_A$ and cohomological differential $d_A \colon A^n \to A^{n+1}$. Let $(C, \Delta_C)$ be a coalgebra (concentrated in degree $0$).
A twisting morphism is a linear map $\alpha\colon C \to A^1$ satisfying
\[
d_A \alpha + \alpha \star \alpha = 0,
\]
where $\alpha \star \alpha = m_A (\alpha \otimes \alpha) \Delta_C$.
Then the degree $1$ map $\id \otimes d_A + d_\alpha$, with
\[
d_\alpha = (\id \otimes m_A) (\id \otimes \alpha \otimes \id) (\Delta_C \otimes \id),
\]
defines the structure of a cochain complex on the graded vector space $C \otimes A$.
We denote this complex by $C \otimes_\alpha A$.

We further assume that
\begin{itemize}
\item $A^n$ and $C$ have auxiliary gradings, called the \emph{weight gradings}; we write $A^{w: m, ch: n}$ for the weight $m$ part of $A^n$ and $C^{w: m}$ for the weight $m$ part of $C$;
\item $m_A$, $d_A$, $\Delta_C$ and $\alpha$ have degree $0$ for the weight grading;
\item $C$ and $A$ are nonnegatively graded with respect to both the weight degree and the cohomological degree.
\end{itemize}

In our application $A=T(\Sym^c(V))$, $C=\Sym^c(W)$ and the weight gradings are defined by declaring the elements of $\Sym^n(V)$ and $\Sym^n(W)$ to be of weight $n$.

\begin{Lem}[cf.~\cite{MR2954392}*{Lemma 2.1.5}]
\label{lem:comparision-lemma}
Let $A$ and $B$ be weight graded differential graded algebras as above and $f\colon A \to B$ be a quasi-isomorphism (i.e., a weight degree preserving homomorphism of dg algebras inducing an isomorphism in cohomology). Then for any weight graded coalgebra $C$ and any twisting morphism $\alpha\colon C\to A^1$ such that $\alpha(C^{w:0}) = 0$, the map $\id \otimes f\colon C \otimes_\alpha A \to C \otimes_\beta B$ is a quasi-isomorphism, where $\beta = f \alpha$.
\end{Lem}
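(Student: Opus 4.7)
The plan is to run a spectral sequence argument driven by a weight filtration on the twisted complexes, exploiting the key hypothesis $\alpha(C^{w:0}) = 0$.

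First I would define an increasing filtration on $C \otimes_\alpha A$ by
\[
F_p(C \otimes A) = \bigoplus_{m \le p} C \otimes A^{w: m},
\]
and analogously on $C \otimes_\beta B$. The piece $\id \otimes d_A$ of the total differential obviously preserves this filtration, as $d_A$ is weight-preserving. The crucial observation is that the twisting piece $d_\alpha$ strictly raises the $A$-weight: for $c \otimes a$ with $a \in A^{w:m}$, each term of $d_\alpha(c\otimes a) = \sum c_{(1)} \otimes \alpha(c_{(2)}) a$ lies in $C \otimes A^{w: m + |c_{(2)}|}$, and the assumption $\alpha(C^{w:0}) = 0$ forces only terms with $|c_{(2)}| \ge 1$ to contribute. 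Hence $d_\alpha(F_p) \subseteq F_{p-1}$ is false while $d_\alpha(F_p) \subseteq F_{p+k}$ strictly increases weight; on the associated graded only $\id \otimes d_A$ survives. The map $\id \otimes f$ is filtration-preserving because $f$ is weight-preserving.

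Next I would identify the spectral sequence pages. The $E_0$ page is the associated graded with differential $\id \otimes d_A$, which is just $C \otimes (A, d_A)$ as a cochain complex (and similarly $C \otimes (B, d_B)$ on the target side). Taking cohomology yields
\[
E_1^{*,*}(C \otimes_\alpha A) = C \otimes \rH^*(A, d_A),\qquad E_1^{*,*}(C \otimes_\beta B) = C \otimes \rH^*(B, d_B),
\]
because $C$ is a vector space (so tensoring with it is exact). The induced map on $E_1$ is $\id \otimes \rH^*(f)$, which is an isomorphism precisely because $f$ is a quasi-isomorphism.

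Finally, to conclude that $\id \otimes f$ is a quasi-isomorphism it suffices to check convergence of both spectral sequences to the cohomology of the total complex; this is the main technical point but is mild here. The filtration is exhaustive, and it is bounded below since $A$ and $B$ are nonnegatively weight-graded (so $F_{-1} = 0$). To control convergence in each fixed cohomological degree one notes that the differential of the total complex, as a composite of $\id \otimes d_A$ and $d_\alpha$, has the property that in every total degree only finitely many filtration steps are relevant (because the cochain grading on $A$ is nonnegative and each component $C \otimes A^{w:m, ch:n}$ is fixed once one fixes the weight and cochain degree). Once convergence is established, the isomorphism of $E_1$ pages induced by $\id \otimes f$ upgrades, via the comparison theorem for spectral sequences of filtered complexes, to an isomorphism on $E_\infty$ and hence on the cohomology of the totalizations. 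The only real pitfall is checking that the filtration is \emph{regular} (eventually stable in each bidegree) so that the standard convergence criterion applies; this is the step that requires care in the subsequent application to $C = \Sym^c(W)$ and $A = T(\Sym^c(V))$ where $C$ is infinite-dimensional, but it follows from the weight boundedness after fixing a total cohomological degree.
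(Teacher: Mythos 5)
Your spectral sequence strategy is the same as the paper's, but your filtration is set up in the wrong direction, which breaks the argument before it starts. You define the increasing filtration
\[
F_p(C \otimes A) = \bigoplus_{m \le p} C \otimes A^{w: m},
\]
and correctly observe that $d_\alpha$ raises the $A$-weight. But then $d_\alpha$ sends $F_p$ \emph{out} of $F_p$: it is not a filtration of the complex at all, and there is no associated spectral sequence. For $d_\alpha$ to preserve the filtration and vanish on the associated graded, the twisting differential must move strictly \emph{inward}. The paper uses the decreasing filtration $F_s = \bigoplus_{m \ge s} C \otimes A^{w:m}$, so that $\id \otimes d_A$ preserves $F_s$ while $d_\alpha(F_s) \subset F_{s+1}$; equivalently one could filter by $C$-weight increasing (since $d_\alpha$ transfers weight from $C$ to $A$ and hence lowers $C$-weight). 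Either fix works, but as written your filtration is incompatible with the differential.

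The convergence discussion also has a real gap. You propose to bound the filtration by "fixing a total cohomological degree," but that does not bound the weight: in the intended application $A = T(\Sym^c(V))$, the cohomological degree $1$ part $\Sym^c(V)$ already has unbounded weight, so at fixed cohomological degree the filtration is not bounded in either direction. The correct observation, which is what the paper uses, is that the complex $C \otimes_\alpha A$ splits as a direct sum over the \emph{total weight} $n$ (which is preserved by both $\id\otimes d_A$ and $d_\alpha$), and within each fixed total weight the decreasing filtration is finite: $F_0^{w:n}$ is everything and $F_s^{w:n}=0$ for $s>n$, because the $C$-degree $n-s$ must be nonnegative. Convergence is then immediate and requires no regularity discussion. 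Your identification of the $E_1$ page and the role of $\id \otimes f$ are fine once these two points are repaired.
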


\bp
The assumption $\alpha(C^{w:0}) = 0$ implies that $d_\alpha$ ``carries'' weight from $C$ to $A$. This can be used to build a spectral sequence.

Specifically, define a decreasing filtration $F_s$ of $C \otimes_\alpha A$ by setting
\[
F_s = \bigoplus_{m \ge s} C \otimes A^{w:m}.
\]
We clearly have $(\id\otimes d_A)(F_s)\subset F_s$. Since $(\id \otimes \alpha)\Delta_C(C^{w:n})$ belongs to $\bigoplus_{k\ge 1} C^{w:n-k} \otimes A^{w:k}$ by the assumption $\alpha(C^{w:0}) = 0$, we have $d_\alpha(F_s) \subset F_{s+1}$. Note also that, for each weight $n$, we have $F_0^{w:n} = (C \otimes_\alpha A)^{w:n}$ and $F_s^{w:n} = 0$ for $s > n$.
Hence the associated spectral sequence starting with
\[
E_0^{s, t} = F_s^{ch: s + t} / F_{s+1}^{ch: s + t} = C \otimes A^{w: s, ch: s + t}
\]
is convergent at each weight, and the $E_0$-differential $d_0\colon E_0^{s,t} \to E_0^{s,t+1}$ is just $\id \otimes d_A$.
Therefore
\[
E_1^{s,t} = C \otimes \rH^{s+t}(A^{w:s}).
\]

We can do the same construction for $B$. Then $\id \otimes f$ induces an isomorphism at the $E_1$-page.
Combined with the convergence of the spectral sequences, we obtain the assertion.
\ep

\bp[Proof of Proposition~\ref{prop:co-Hoshchild-alg-model1}]
Denote the complex we get for $W=0$ by $(A,d_A)$, so that $A=T(\Sym^c(V))$. For general $W$, let $C=\Sym^c(W)$ and define a linear map
$\alpha\colon C=\Sym^c(W)\to A^1=\Sym^c(V)$ as the zero map on $\C=\Sym^0(W)$ and the inclusion map $\Sym^n(W)\to\Sym^n(V)$ for $n\ge1$. Then $(\alpha\star\alpha)(1)=0$ and
\[
(\alpha\star\alpha)(x)=\Delta(x)-1\otimes x- x\otimes1\quad (x\in\Sym^n(W), n\ge1).
\]
It follows that $\alpha$ is a twisting morphism. We also have
\[
d_\alpha(x\otimes y)=\Delta(x)\otimes y-x\otimes1\otimes y \quad (x\in\Sym^c(W), y\in T^m(\Sym^c(V))).
\]
Therefore the complex $C\otimes_\alpha A$ is exactly $(\Sym^c(W)\otimes T(\Sym^c(V)),d)$.

Since the proposition is true for $W=0$, by Lemma~\ref{lem:comparision-lemma} we conclude that the map $\id\otimes p_V$ defines a quasi-isomorphism of $(\Sym^c(W)\otimes T(\Sym^c(V)),d)$ onto $(\Sym(W)\otimes \bigwedge V,d)$, where the new differential $d=d_{p_V\alpha}$ is given by $d(1\otimes y)=0$ and
\[
d(x_1 \cdots x_m \otimes y)=\sum_i x_1 \cdots \hat{x}_i \cdots x_m \otimes x_i \wedge y\quad (x_1,\dots,x_m\in W, y\in\medwedge V).
\]

It remains to show that the homomorphism of graded algebras $\Sym(W)\otimes \bigwedge V\to \bigwedge V/W$ defined by $\epsilon$ and the quotient map $V\to V/W$ gives a quasi-isomorphism of $(\Sym(W)\otimes \bigwedge V,d)$ onto $(\bigwedge V/W,0)$.
This is well-known in the case when $V=W$, that is, $\Sym(W) \otimes \bigwedge W$ is quasi-isomorphic to $\C$ concentrated in degree $0$.
The general case follows from this and the standard isomorphism of graded algebras $\bigwedge V \cong \bigwedge W \hotimes \bigwedge V/W$.
\ep

Let $\mfa$ be a finite dimensional complex Lie algebra and $\mfb<\mfa$ a Lie subalgebra.
For $n = 0, 1, \dots$, put
\[
\tB_{\mfa,\mfb}^n = U(\mfb) \otimes U(\mfa)^{\otimes n}.
\]
These spaces form a cochain complex by the same formula as in \eqref{eq:cohoch-diff}. The differential $d_\cH$ is equivariant with respect to the diagonal adjoint action of $\mfb$, so we also obtain a subcomplex $B_{\mfa, \mfb} = (\tB_{\mfa, \mfb})^\mfb$.

It is well-known that the symmetrization map defines an $\mfa$-equivariant coalgebra isomorphism of $\Sym^c(\mfa)$ onto $U(\mfa)$, see~\cite{MR1321145}*{Theorem~V.2.5}. Since the definition of $d_\cH$ uses only the coalgebra structures, it follows that the complexes $(\Sym^c(\mfb)\otimes T(\Sym^c(\mfa)),d)$ and $(\tB_{\mfa,\mfb},d_\cH)$ are isomorphic. Therefore Proposition~\ref{prop:co-Hoshchild-alg-model1} implies the following.

\begin{Prop}\label{prop:co-Hoshchild-alg-model2}
For any finite dimensional complex Lie algebra $\mfa$ and a Lie subalgebra $\mfb$, there is a $\mfb$-equivariant quasi-isomorphism of $(\tB_{\mfa, \mfb},d_\cH)$ onto $(\bigwedge \mfa/\mfb,0)$.
\end{Prop}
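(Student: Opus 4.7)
The plan is to deduce this from Proposition~\ref{prop:co-Hoshchild-alg-model1} applied to $V = \mfa$ and $W = \mfb$ (viewed as vector spaces), using the Poincar\'e--Birkhoff--Witt symmetrization map to transport the statement from the setting of symmetric coalgebras to that of universal enveloping algebras.

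First I would recall that the symmetrization map $\sigma_\mfa\colon \Sym^c(\mfa) \to U(\mfa)$, given by $X_1\cdots X_n \mapsto \frac{1}{n!}\sum_{\pi\in S_n} X_{\pi(1)}\cdots X_{\pi(n)}$, is an $\mfa$-equivariant coalgebra isomorphism (as cited in the excerpt just before the proposition). Tensoring $\sigma_\mfb$ in the zeroth slot with $\sigma_\mfa^{\otimes n}$ in the remaining slots yields, for each $n$, a $\mfb$-equivariant linear isomorphism
\[
\Sym^c(\mfb)\otimes\Sym^c(\mfa)^{\otimes n} \xrightarrow{\;\sim\;} U(\mfb)\otimes U(\mfa)^{\otimes n} = \tB^n_{\mfa,\mfb}.
\]
Since the co-Hochschild differential $d_\cH$ in~\eqref{eq:cohoch-diff} is built entirely from the coproduct $\Delta$ and the unit $T\mapsto T\otimes 1$, these isomorphisms assemble into a $\mfb$-equivariant isomorphism of cochain complexes between $(\Sym^c(\mfb)\otimes T(\Sym^c(\mfa)),d)$ and $(\tB_{\mfa,\mfb},d_\cH)$.

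Composing with the quasi-isomorphism $\epsilon\otimes p_{\mfa/\mfb}$ from Proposition~\ref{prop:co-Hoshchild-alg-model1} would then produce the desired map into $(\bigwedge\mfa/\mfb,0)$. To finish it remains to verify $\mfb$-equivariance of this final map: the counit $\epsilon\colon \Sym^c(\mfb)\to\C$ is automatically $\mfb$-equivariant because the adjoint action preserves the weight grading and acts trivially on $\Sym^0(\mfb)=\C$, while the extension of $p_{\mfa/\mfb}$ to $\bigwedge\mfa/\mfb$ is $\mfb$-equivariant because $\mfb$ preserves itself inside $\mfa$ under the adjoint action, so $\mfa/\mfb$ inherits a well-defined $\mfb$-module structure.

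I do not anticipate a real obstacle. The one conceptual point worth verifying carefully is the compatibility between the two roles played by $\mfa$: in Proposition~\ref{prop:co-Hoshchild-alg-model1} the ambient $V$ was treated as an \emph{abelian} Lie algebra (so that $U(V)=\Sym(V)$ even as algebras), whereas here $\mfa$ is genuinely nonabelian and carries a nontrivial adjoint $\mfb$-action. This causes no mismatch, because Proposition~\ref{prop:co-Hoshchild-alg-model1} uses only the coalgebra structure of $\Sym^c(V)$ and the linear projection $V\to V/W$, both of which are intertwined by $\sigma_\mfa$ with the corresponding coalgebra structure on $U(\mfa)$ and the quotient $\mfa\to\mfa/\mfb$ carrying the adjoint $\mfb$-action.
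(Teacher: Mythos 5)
Your proof is correct and follows exactly the paper's argument: transport the complex via the PBW symmetrization coalgebra isomorphism and then invoke Proposition~\ref{prop:co-Hoshchild-alg-model1}, noting that $d_\cH$ depends only on the coalgebra structure. The extra care you take with $\mfb$-equivariance and the abelian/nonabelian distinction is a sound elaboration of the same route the paper leaves implicit.
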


\begin{Rem}\label{rem:co-Hochshc-classes}
For any $X_1,\dots,X_n\in\mfa$, the element $1\otimes X_1\otimes\dots\otimes X_n\in\tB_{\mfa,\mfb}^n$ is a cocycle. By the definition of $\epsilon\otimes p_{\mfa/\mfb}$ and the symmetrization map, the image of this cocycle in $\bigwedge^n \mfa/\mfb$ is $\tilde X_1\wedge\dots\wedge\tilde X_n$, where $\tilde X_i$ is the image of $X_i$ in $\mfa/\mfb$.
\end{Rem}

Consider now a reductive algebraic subgroup $H<G$ as in Section~\ref{sec:geom-comput}.

\begin{Cor}
\label{cor:Buk-cohom}
The cohomology $(\tB_{\mfg,\mfh}, d_{\cH})$ is isomorphic to $\bigwedge  \mfg/\mfh$ and the cohomology of $(B_{\mfg,\mfh}, d_{\cH})$ is isomorphic to $(\bigwedge  \mfg/\mfh)^\mfh$. Furthermore, the embedding $\tB_{\mfg,\mfh}\to\tB_{G,H}$ is a quasi-isomorphism, and if~$H$ is connected, then $B_{\mfg,\mfh}\to B_{G,H}$ is a quasi-isomorphism as well.
\end{Cor}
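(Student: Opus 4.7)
The first assertion is exactly Proposition~\ref{prop:co-Hoshchild-alg-model2} applied to $\mfa=\mfg$ and $\mfb=\mfh$: the quasi-isomorphism $(\tB_{\mfg,\mfh},d_\cH)\to(\bigwedge\mfg/\mfh,0)$ is $\mfh$-equivariant. For the second assertion I would take $\mfh$-invariants. Since $H$ is reductive by assumption, $\mfh$ is reductive, and the adjoint action of $\mfh$ on $\tB_{\mfg,\mfh}$ is locally finite (as every element lies in a finite dimensional $\ad\mfh$-stable subspace of $U(\mfh)\otimes U(\mfg)^{\otimes n}$). Hence $\tB_{\mfg,\mfh}$ decomposes as a direct sum of finite dimensional completely reducible $\mfh$-modules, the functor of $\mfh$-invariants is exact on this complex, and passing to invariants in the quasi-isomorphism gives $\rH^*(B_{\mfg,\mfh})\cong(\bigwedge\mfg/\mfh)^\mfh$.

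For the quasi-isomorphism $\tB_{\mfg,\mfh}\to\tB_{G,H}$, my plan is to produce a commutative triangle of quasi-isomorphisms
\[
\begin{tikzcd}[column sep=small]
\tB_{\mfg,\mfh} \arrow[rr] \arrow[dr] & & \tB_{G,H} \arrow[dl] \\
& \medwedge\mfg/\mfh &
\end{tikzcd}
\]
in which the left-hand arrow is the one from Proposition~\ref{prop:co-Hoshchild-alg-model2} (sending a cocycle of the form $1\otimes X_1\otimes\dots\otimes X_n$ with $X_i\in\mfg$ to $\tilde X_1\wedge\dots\wedge\tilde X_n$ by Remark~\ref{rem:co-Hochshc-classes}) and the right-hand arrow is the HKR isomorphism constructed in the proof of Proposition~\ref{prop:comput-cohoch-mult-model}. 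The main point is thus to verify that the HKR identification on $\tB_{G,H}$, read backwards, sends $\tilde X_1\wedge\dots\wedge\tilde X_n\in\bigwedge^n\mfg/\mfh$ to the class of the cocycle $1\otimes X_1\otimes\dots\otimes X_n\in\tB^n_{G,H}$ (where we view $X_i\in\mfg\subset\mcU(G)$). Both cohomologies carry a graded-commutative product induced by concatenation (dual to the cup product on Hochschild cohomology of $\mcO(G)$ with coefficients in $\OHbimod$), and under both identifications this product corresponds to the wedge product on $\bigwedge\mfg/\mfh$; so it suffices to check the degree-one statement, that a primitive element $X\in\mfg$ viewed as $1\otimes X\in\tB^1_{G,H}$ corresponds under HKR to its image $\tilde X$ in the conormal space $(\mfg/\mfh)^{**}\cong\mfg/\mfh$. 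This is a direct computation tracing through the duality between $\tB^{\le 1}_{G,H}$ and $\mcO(H)\otimes\mcO(G)$ and the identification of $\Tor^{\mcO(G\times G)}_1(\mcO(\Delta),\mcO(H\times\{e\}))$ with $(\mfg/\mfh)^*$ via evaluation of $1$-jets at $e$.

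The main obstacle is precisely this compatibility check: once it is made, commutativity of the triangle on cocycles of the form $1\otimes X_1\otimes\dots\otimes X_n$ follows from multiplicativity, and these classes exhaust $\rH^*(\tB_{\mfg,\mfh})\cong\bigwedge\mfg/\mfh$, so the inclusion is a quasi-isomorphism. Finally, for the last statement assume $H$ is connected, so that $H$-invariants and $\mfh$-invariants coincide on any locally finite $\mfh$-module. Both $\tB_{\mfg,\mfh}$ and $\tB_{G,H}$ are such modules (the latter by definition as the product of matrix blocks), and $\mfh$ is reductive, so taking $\mfh$-invariants commutes with cohomology on both sides. Applying this functor to the quasi-isomorphism $\tB_{\mfg,\mfh}\to\tB_{G,H}$ yields the quasi-isomorphism $B_{\mfg,\mfh}\to B_{G,H}$.
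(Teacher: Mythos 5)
Your treatment of the first two assertions and the final one (granted the third) is correct and coincides with the paper's: Proposition~\ref{prop:co-Hoshchild-alg-model2} gives the first, $\mfh$-reductivity plus exactness of invariants gives the second, and for a connected reductive $H$ the identification of $H$-invariants with $\mfh$-invariants gives the last from the third.

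The third assertion, that $\tB_{\mfg,\mfh}\to\tB_{G,H}$ is a quasi-isomorphism, is where you depart from the paper and where a genuine gap remains. You propose a commutative triangle over $\bigwedge\mfg/\mfh$, with commutativity to be verified in degree one and then propagated by multiplicativity. Two concerns. First, the identification $\rH^*(\tB_{G,H})\cong\bigwedge\mfg/\mfh$ in Proposition~\ref{prop:comput-cohoch-mult-model} is obtained by dualizing a Tor computation, not via a chain-level HKR map; there is no ``right-hand arrow'' as a morphism of complexes, only an isomorphism on (co)homology defined through the dual chain complex $\tB'_{G,H}$. Second, and more seriously, the graded-commutative product you invoke is not an innocent formality. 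The differential $d_\cH$ is a co-Hochschild differential built from the \emph{coproducts} on $\mcU(G)$ and $\mcU(H)$, so the relevant ring-like structure on $\tB_{G,H}$ is a shuffle-type operation on a completed tensor algebra, whose compatibility with $d_\cH$ and with the Tor identification is precisely the kind of thing that needs to be constructed and checked, not assumed. Neither Proposition~\ref{prop:comput-cohoch-mult-model} nor Proposition~\ref{prop:co-Hoshchild-alg-model2} in the paper sets up a multiplicative comparison, and making your sketch rigorous would require essentially building one from scratch.

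The paper avoids all of this by working directly with the pairing against the dual complex $\tB'_{G,H}$. Picking a complement of $\mfh$ in $\mfg$ with basis $X_1,\dots,X_m$, it chooses right $H$-invariant functions $f_i\in\mcO(G)$ with $d_e f_i(X_j)=\delta_{ij}$, builds antisymmetrized cycles $a_{i_1\cdots i_n}\in\tB'_{G,H}$ from them (using $f_i|_H\equiv f_i(e)$ to check the cycle condition), and verifies $\langle a_{i_1\cdots i_n},1\otimes X_{j_1}\otimes\cdots\otimes X_{j_n}\rangle=\delta_{i_1 j_1}\cdots\delta_{i_n j_n}$. This exhibits a nondegenerate pairing between $\rH^n(\tB_{\mfg,\mfh})$ and $\rH_n(\tB'_{G,H})$ that factors through $\rH^n(\tB_{G,H})$, giving the quasi-isomorphism in all degrees at once. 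This argument is entirely at the chain level, uses only Remark~\ref{rem:co-Hochshc-classes} and Proposition~\ref{prop:comput-cohoch-mult-model}, and requires no multiplicative scaffolding. If you want to pursue your route, you would need to produce and justify the product structure on $\rH_*(\tB'_{G,H})$ and its compatibility with the Tor ring structure of \cite{MR0354655}*{Proposition~VII.2.5}; as it stands, that is the missing step, not a routine compatibility check.
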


\bp
The first statement follows immediately from Proposition~\ref{prop:co-Hoshchild-alg-model2} and the fact that computing the cohomology commutes with taking the $\mfh$-invariants in the reductive case.

Consider now the embedding $\tB_{\mfg,\mfh} \to\tB_{G,H}$.
This induces a pairing between cochains in $\tB_{\mfg,\mfh}$ and chains $f_0 \otimes \dots \otimes f_n$ in $\tilde B_{G,H}'^n = \mcO(H) \otimes \mcO(G)^{\otimes n}$ from the proof of Proposition~\ref{prop:comput-cohoch-mult-model}.
Unpacking the definitions, for the cocycle $1\otimes X_1\otimes\dots\otimes X_n\in\tB_{\mfg,\mfh}^n$ from Remark \ref{rem:co-Hochshc-classes}, we have
\[
\langle f_0 \otimes \dots \otimes f_n, 1\otimes X_1\otimes\dots\otimes X_n \rangle = f_0(e) \prod_{i=1}^n d_e f_i(X_i).
\]
Restricting to cycles of $\tilde B_{G,H}'$, this reduces to the canonical duality pairing between $\bigwedge \mfg/\mfh \cong \rH(\tB_{\mfg,\mfh})$ and $(\bigwedge \mfg/\mfh)' \cong \rH(\tilde B_{G,H}')$, see the identification of~\cite{MR0354655}*{Proposition~VII.2.5}.

Let us be more concrete.
Choose a basis $X_1,\dots,X_m$ in a complement of $\mfh$ in $\mfg$.
By Remark~\ref{rem:co-Hochshc-classes}, the cohomology classes of
$c_{i_1\dots i_n}=1\otimes X_{i_1}\otimes\dots\otimes X_{i_n}\in\tB^n_{\mfg,\mfh}$, $i_1<\dots<i_n$, form a basis in $\rH^n(\tB_{\mfg,\mfh})$. Choose right $H$-invariant functions $f_1,\dots, f_m\in\mcO(G)$ such that $d_e f_i(X_j)=\delta_{ij}$. Define functions $a_{i_1\dots i_n}$ on $H\times G^n$ by
\[
a_{i_1\dots i_n}(g_0,g_1,\dots,g_n)=\sum_{\sigma\in S_n}\sgn(\sigma)f_{i_1}(g_{\sigma(1)})\dots f_{i_n}(g_{\sigma(n)}),
\]
so that we have $\langle a_{i_1\dots i_n},  c_{j_1\dots j_n}\rangle = \delta_{i_1j_1}\dots\delta_{i_n j_n}$ for all $i_1<\dots<i_n$ and $j_1<\dots<j_n$.
Moreover, using that $f_i(g)=f_i(e)$ for $g\in H$, one can check that the $a_{i_1\dots i_n}$ are cycles in $\tB_{G,H}'$.
We thus obtain a nondegenerate pairing between $\rH^n(\tB_{\mfg,\mfh})$ and $\rH_n(\tilde B_{G,H}')$ which factors through $\rH^n(\tB_{\mfg,\mfh}) \to \rH^n(\tB_{G,H})$. Hence the last map is an isomorphism.

Finally, by considering the $\mfh$-invariants we conclude that if $H$ is connected, then $B_{\mfg,\mfh}\to B_{G,H}$ is a quasi-isomorphism as well.
\ep

\section{Spherical vectors}\label{sec:spherical}

The goal of this appendix is to prove the following result essentially due to Letzter~\cite{MR1742961}.
Since her setting and assumptions are slightly different, we will give a complete argument for the reader's convenience.

\begin{Thm}~\label{thm:Letzter1}
In the notation of Section \ref{sec:mult-alg-model-of-LK-coids}, assume $\mbt\in\mcT^*$ and $\lambda\in P_+$ are such that the highest weight $\mfg$-module $V_\lambda$ has a nonzero $\mfg^\theta_\mbt$-invariant vector.
Then this vector can be lifted to a $U^\mbt_h(\mfg^\theta)$-invariant vector in $V_\lambda\fps$.
\end{Thm}

In the non-Hermitian case we have $\mfg^\theta_\mbt=\mfg^\theta_0=\mfg^\theta$. In the Hermitian case, by Lemma~\ref{lem:mbt-vs-phi}, the Lie subalgebra $\mfg^\theta_\mbt<\mfg$ is conjugate to $\mfg^\theta$. Hence in both cases, by \cite{MR1920389}*{Theorem~8.49} and its proof, necessary (and, as we will see shortly, sufficient) conditions for the existence of a nonzero $\mfg^\theta_\mbt$-invariant vector in $V_\lambda$ are the following:
\begin{equation}\label{eq:Let1}
\text{the weight}\ \lambda\in P_+\ \text{vanishes on}\ \mfh^\Theta,
\end{equation}
\begin{equation}\label{eq:Let2}
(\lambda,\alpha_i^\vee)\in2\Z\ \text{for all}\ i\in I\ \text{such that}\ \Theta(\alpha_i)=-\alpha_i.
\end{equation}

\begin{Lem}\label{lem:Theta}
Given $i\in I$, we have $\Theta(\alpha_i)=-\alpha_j$ for some $j$ if and only if $i\in I\setminus X$ and $\alpha_i$ is orthogonal to $\alpha_k$ for all $k\in X$, in which case we also have $j=\tau_\theta(i)$.
\end{Lem}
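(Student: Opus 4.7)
The plan is to use the factorization $\Theta = -w_X \tau_\theta$ from \eqref{eq:Theta-tau} to rewrite the condition $\Theta(\alpha_i) = -\alpha_j$ as $w_X(\alpha_{\tau_\theta(i)}) = \alpha_j$, and then analyze when $w_X$ sends a simple root to a simple root. Since $\tau_\theta$ preserves $X$, the case $i \in X$ is immediate: then $\alpha_{\tau_\theta(i)} \in \Pi_X$, and $w_X$ (being the longest element of $W_X$) sends it to a negative root, which cannot be a positive simple root $\alpha_j$. So we may restrict to $i \in I \setminus X$.

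Assuming $i \in I \setminus X$, set $\beta = \alpha_{\tau_\theta(i)} \in \Pi \setminus \Pi_X$. Since $w_X \in W_X$ only flips signs of roots in $\Z\Pi_X$, one has $w_X(\beta) \in \Phi^+$ with $w_X(\beta) - \beta \in \Z\Pi_X$. If $w_X(\beta) = \alpha_j$ for some $j \in I$, linear independence of $\Pi$ forces $\alpha_j = \beta$, hence $j = \tau_\theta(i)$ and $w_X(\beta) = \beta$. Conversely, $w_X(\beta) = \beta$ immediately gives $\Theta(\alpha_i) = -\alpha_{\tau_\theta(i)}$.

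The crux is then to prove the equivalence: for $\beta \in \Pi \setminus \Pi_X$, we have $w_X(\beta) = \beta$ if and only if $(\beta, \alpha_k) = 0$ for all $k \in X$. The ``if'' direction is trivial since then every $s_{\alpha_k}$ ($k \in X$) fixes $\beta$. For the ``only if'' direction, I would use that $w_X$ sends $\Pi_X$ to $-\Pi_X$ via a diagram involution $\tau_X$ (i.e., $w_X(\alpha_k) = -\alpha_{\tau_X(k)}$). Applying $(w_X(\beta) - \beta, \alpha_k) = 0$ and using $w_X = w_X^{-1}$ yields
\[
(\beta, \alpha_k) + (\beta, \alpha_{\tau_X(k)}) = 0.
\]
Since $\beta$ is a simple root distinct from both $\alpha_k$ and $\alpha_{\tau_X(k)}$ (which lie in $\Pi_X$), both summands are $\le 0$, so both vanish. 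This is the main step; the subtlety is precisely that $w_X$ need not act as $-1$ on $\mathrm{span}(\Pi_X)$ when $-1 \notin W_X$, so one cannot bypass the diagram-involution computation.

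Finally, to translate the orthogonality condition from $\alpha_{\tau_\theta(i)}$ to $\alpha_i$, I would note that $\tau_\theta$ is a Dynkin diagram involution, hence preserves $(\cdot, \cdot)$, and satisfies $\tau_\theta(X) = X$. Thus
\[
(\alpha_i, \alpha_k) = (\alpha_{\tau_\theta(i)}, \alpha_{\tau_\theta(k)}) \quad (k \in X),
\]
and as $k \mapsto \tau_\theta(k)$ permutes $X$, the condition $\alpha_i \perp \Pi_X$ is equivalent to $\alpha_{\tau_\theta(i)} \perp \Pi_X$. Assembling these pieces yields the lemma, with $j = \tau_\theta(i)$.
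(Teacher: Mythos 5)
Your proof is correct and follows the same overall strategy as the paper's: rewrite the condition via the factorization $\Theta = -w_X\tau_\theta$ from \eqref{eq:Theta-tau}, deduce $i\in I\setminus X$ and $j = \tau_\theta(i)$, and then estimate inner products with $\Pi_X$. The one place you take a longer route — and which you flag as a subtlety one ``cannot bypass'' — is the orthogonality step, which you establish by isolating the equivalence ``$w_X\beta = \beta$ iff $\beta\perp\Pi_X$'' and proving its nontrivial half via the diagram involution $\tau_X$ of $X$ (using $w_X\alpha_k = -\alpha_{\tau_X(k)}$). In fact the paper does bypass $\tau_X$: having deduced $j=\tau_\theta(i)$ from $-\alpha_j+\alpha_{\tau_\theta(i)}\in\Z\Pi_X$ and $\tau_\theta$-invariance of $I\setminus X$, it directly computes, for each $k\in X$,
\[
0 \ge (\alpha_j,\alpha_k) = -(\Theta(\alpha_i),\alpha_k) = (\alpha_{\tau_\theta(i)}, w_X\alpha_k) \ge 0,
\]
where the final inequality uses only that $w_X\alpha_k\in\Phi_X^-$ is a nonpositive integral combination of $\Pi_X$ while $\alpha_{\tau_\theta(i)}\notin\Pi_X$; the finer identification of $w_X\alpha_k$ as $-\alpha_{\tau_X(k)}$ is not needed. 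Your version has the merit of making the fixed-point statement $w_X\alpha_{\tau_\theta(i)}=\alpha_{\tau_\theta(i)}$ explicit, which is a clean structural reading of the lemma, but it is slightly less economical than the paper's direct two-sided estimate.
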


\bp
Assume $\Theta(\alpha_i)=-\alpha_j$.
Since $\Theta(\alpha_k)=\alpha_k$ for all $k\in X$, we must have $i\in I\setminus X$, and since $\Theta(\alpha_i)+\alpha_{\tau_\theta(i)}\in\Z X$ and the set $I\setminus X$ is $\tau_\theta$-invariant, it follows also that $j=\tau_\theta(i)$.
As $\Theta(\alpha_i)=-w_X\alpha_{\tau_\theta(i)}$, we have
\[
0\ge(\alpha_j,\alpha_k)=-(\Theta(\alpha_i),\alpha_k)=(w_X\alpha_{\tau_\theta(i)},\alpha_k)=(\alpha_{\tau_\theta(i)},w_X\alpha_k)\ge0,
\]
for all $k\in X$, where the last inequality holds as $w_X\alpha_k\in \Phi^-_X$.
Therefore $\alpha_j=\alpha_{\tau_\theta(i)}$ is orthogonal to $\alpha_k$ for all $k\in X$, hence the same is true for $\alpha_i$ as well.
This proves the lemma in one direction, the other direction is obvious.
\ep

It is well-known that $(U,U^\theta)$, or equivalently $(U,K_\mbt)$, is a Gelfand pair.
Since every $U^\mbt_h(\mfg^\theta)$-invariant vector reduces modulo $h$ to a $K_\mbt$-invariant vector, to prove Theorem~\ref{thm:Letzter1} it therefore suffices to show that there is a nonzero $U^\mbt_h(\mfg^\theta)$-invariant vector whenever conditions~\eqref{eq:Let1} and~\eqref{eq:Let2} are satisfied.

Denote by $\K$ the field $\C[h^{-1},h\rrbracket$.
Instead of working with $U_h(\mfg)$ and $U_h^\mbt(\mfg^\theta)$, we extend the scalars to~$\K$. Recall that we denote $e^h$ by $q$. Consider the $\K$-subalgebra $U_q(\mfg)$ of $U_h(\mfg)\otimes_{\C\fps}\K$ generated by $E_i$, $F_i$ and $K_\omega$ ($\omega\in P$). Consider also the $\K$-subalgebra $U_q^\mbt(\mfg^\theta)$ generated by $K_\omega$ ($\omega\in P^\Theta$), $K_i^{\pm1}$, $E_i$, $F_i$ ($i\in X$) and $B_i$ ($i\in I\setminus X$). Then $V^q_\lambda=V_\lambda\fps\otimes_{\C\fps}\K=V_\lambda\otimes_\C\K$ is the irreducible $U_q(\mfg)$-module with highest weight $\lambda$. If we can show that it contains a nonzero $U^\mbt_q(\mfg^\theta)$-invariant vector $v$, then $h^n v$ becomes a $U^\mbt_h(\mfg^\theta)$-invariant vector in $V_\lambda\fps$ for $n\in\N$ large enough.
To prove that such a vector $v$ exists it is enough, in turn, to show that if $\xi_\lambda\in V^q_\lambda$ is the highest weight vector, then $\xi_\lambda\not\in U_q^\mbt(\mfg^\theta)^+\xi_\lambda$, where $U_q^\mbt(\mfg^\theta)^+$ denotes the augmentation ideal of $U_q^\mbt(\mfg^\theta)$.
Indeed, since $U_q^\mbt(\mfg^\theta)$ is $*$-invariant, the $U_q^\mbt(\mfg^\theta)$-module $V^q_\lambda$ is completely reducible by~\cite{MR1742961}*{Theorem~3.3}.
Then the projection of $\xi_\lambda$ onto a complementary submodule to $U_q^\mbt(\mfg^\theta)^+\xi_\lambda$ is a nonzero invariant vector.
Therefore it suffices to establish the following result.

\begin{Thm}[cf.~\cite{MR1742961}*{Theorem~4.3}]\label{thm:Letzter2}
Assume $\mbt\in\mcT$ and $\lambda\in P_+$ is a weight satisfying conditions~\eqref{eq:Let1} and~\eqref{eq:Let2}.
Then for the highest weight vector $\xi_\lambda\in V^q_\lambda$ we have $\xi_\lambda\notin U_q^\mbt(\mfg^\theta)^+\xi_\lambda$.
\end{Thm}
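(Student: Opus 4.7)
The proof will adapt Letzter's strategy from \cite{MR1742961}*{Theorem~4.3}. The key is to replace the nonmembership statement with the construction of a nonzero $U_q^\mbt(\mfg^\theta)$-invariant vector $v\in V^q_\lambda$ having nonzero weight-$\lambda$ component. Granted such a $v$, complete reducibility of $V^q_\lambda$ as a $U_q^\mbt(\mfg^\theta)$-module (which applies since the coideal is $*$-invariant, \cite{MR1742961}*{Theorem~3.3}) yields a $U_q^\mbt(\mfg^\theta)$-invariant functional pairing nontrivially with $\xi_\lambda$ and trivially with $U_q^\mbt(\mfg^\theta)^+\xi_\lambda$, giving the theorem.

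The construction of $v$ will exploit the triangular decomposition
\[
U_q^\mbt(\mfg^\theta)=\mcB\cdot U_q(\mfg_X)\cdot U_q(\mfh^\Theta),
\]
with $\mcB$ generated by the $B_i$ ($i\in I\setminus X$), which follows from \cite{MR3269184}*{Section~7}. The starting point is a classical $\mfg^\theta_\mbt$-invariant vector $v_0\in V_\lambda$, provided by \cite{MR1920389}*{Theorem~8.49} under \eqref{eq:Let1}--\eqref{eq:Let2} and normalizable to have nonzero $\xi_\lambda$-component. I would lift it to a formal $h$-series $v=v_0+hv_1+h^2v_2+\cdots\in V_\lambda\otimes_\C\C\fps\subset V^q_\lambda$ by solving the invariance equations order by order. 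Condition \eqref{eq:Let1} makes the Cartan invariance automatic once the weight support of $v$ is controlled (which is the case, since the remaining operators respect the weight-lattice coset of $\lambda$); the Levi invariance $E_jv=F_jv=0$ for $j\in X$ follows from the $\mfg_X$-invariance of $v_0$ together with semisimplicity of $\mfg_X$. The substantial content is thus the inductive lift of $B_i$-invariance.

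The main obstacle is the cohomological obstruction to lifting $B_iv=0$: at order $n$ the correction $v_n$ must solve an inhomogeneous $\mfg^\theta_\mbt$-equivariance equation whose solvability obstruction is a cocycle in $\rH^1(\mfg^\theta_\mbt,V_\lambda)$. In the non-Hermitian case $\mfg^\theta_\mbt$ is semisimple and Whitehead's lemma kills the obstruction outright. In the Hermitian case, $\mfg^\theta_\mbt$ is only reductive with a one-dimensional center and $\rH^1$ need not vanish. Here condition~\eqref{eq:Let2} becomes essential: the integrality $(\lambda,\alpha_i^\vee)\in 2\Z$ when $\Theta(\alpha_i)=-\alpha_i$ matches the factor of $2$ in $2\alpha_i^-=\alpha_i-\Theta(\alpha_i)$ and ensures that each potential central obstruction can be absorbed by rescaling $v_0$ along the central direction by the well-defined $q$-power $q^{(Z^\mbt_\theta,\lambda)}$, with $Z^\mbt_\theta$ the normalized central generator from Section~\ref{sec:para-case-cayley-trans}. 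The careful bookkeeping of these central shifts at each order is the delicate part of the proof.
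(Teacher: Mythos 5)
Your proposal takes a genuinely different route from the paper, but it has two gaps, one logical and one technical.

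The logical gap: you propose to first construct a $U_q^\mbt(\mfg^\theta)$-invariant vector $v$ and then extract the nonmembership statement from it via complete reducibility of $V_\lambda^q$ as a $U_q^\mbt(\mfg^\theta)$-module, citing Letzter's Theorem~3.3. But that complete-reducibility result requires the coideal to be $*$-invariant, i.e.\ $\mbt\in\mcT^*$. The theorem as stated holds for \emph{all} $\mbt\in\mcT$, and the paper emphasizes (in the remark immediately before the theorem) that no $*$-invariance is needed. In fact the paper's logical flow is the reverse of yours: Theorem~\ref{thm:Letzter2} (nonmembership, purely algebraic, any $\mbt\in\mcT$) is proved first by elementary means, and only then is $*$-invariance used to pass from nonmembership to existence of an invariant vector (Theorem~\ref{thm:Letzter1}). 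Your architecture cannot reproduce the generality of the statement.

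The technical gap: in the Hermitian case, $\mfg^\theta_\mbt$ is reductive with one-dimensional center, so $\rH^1(\mfg^\theta_\mbt,V_\lambda)$ need not vanish, and your order-by-order lift of $v_0$ faces genuine obstructions. The claim that these obstructions are ``absorbed by rescaling $v_0$ along the central direction by $q^{(Z^\mbt_\theta,\lambda)}$'' does not make sense as written: rescaling $v_0$ by an overall scalar does not change whether the inhomogeneous equation for $v_n$ is solvable, and $Z^\mbt_\theta$ acts by different scalars on the different $\mfg^\theta_\mbt$-isotypic components of $V_\lambda$, not by a single scalar, so there is no coherent notion of such a rescaling. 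You would need to show the obstruction cocycle actually vanishes, and no mechanism for this is supplied. Note also that the paper uses condition~\eqref{eq:Let2} in a completely different (and concrete) way: in Lemma~\ref{lem:poly}, for roots with $\Theta(\alpha_i)=-\alpha_i$, the evenness of $m_i=(\lambda,\alpha_i^\vee)$ is exactly what allows the rank-one $\mfsl_2$ computation of Lemma~\ref{lem:sl2} to produce a polynomial annihilator of $B_i$ on $\xi_\lambda$. The paper's proof then proceeds by pure basis bookkeeping (Lemmas~\ref{lem:basis1} and~\ref{lem:basis2}) with no cohomology at all — this is both more elementary and applicable over any extension of $\Q(q^{1/d})$, which your deformation-theoretic argument, tied to formal power series in $h$, would not directly give.
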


Note that for this result we no longer need $*$-invariance, so we can take any parameter $\mbt\in\mcT$.
The proof also works for any field extension of $\Q(q^{1/d})$ in place of $\K$ (with parameters $c_i$ and $s_i$ taken from this field), where $d$ is the determinant of the Cartan matrix.

We start by analyzing the rank one case.

\begin{Lem}\label{lem:sl2}
Consider $\mfg=\mfsl_2(\C)$ and the element $B=F-c E K^{-1}+s(K^{-1}-1)\in U_q(\mfsl_2)$, with $c\in\K^\times$ and $s\in\K$.
Then, for every $n\in\Z_+$, the highest weight module $V^q_n$ contains a nonzero vector killed by $B$.
\end{Lem}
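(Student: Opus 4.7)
The proof is direct linear algebra inside the finite-dimensional $(n+1)$-dimensional module $V_n^q$. I would expand a candidate vector as $v = \sum_{k=0}^n a_k F^k \xi_n$ with unknowns $a_k \in \K$, and compute $Bv$ using the standard $\mfsl_2$-identities $F \cdot F^k \xi_n = F^{k+1}\xi_n$ (with $F^{n+1}\xi_n = 0$), $E F^k \xi_n = [k]_q [n-k+1]_q F^{k-1}\xi_n$, and $K^{-1} F^k \xi_n = q^{2k-n} F^k \xi_n$. Collecting the coefficient of $F^j \xi_n$ in $Bv$ would then yield the three-term recurrence
\[
a_{j-1} + s(q^{2j-n}-1)\, a_j \;=\; c\, q^{2(j+1)-n}\, [j+1]_q\, [n-j]_q\, a_{j+1} \qquad (0 \le j \le n),
\]
with convention $a_{-1} := 0$, and the crucial feature that at $j = n$ the right-hand side vanishes identically because $[n-n]_q = [0]_q = 0$.

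Setting $a_0 = 1$, the equations for $j = 0, 1, \dots, n-1$ solve forward to determine $a_1, \dots, a_n$ uniquely: each leading coefficient $c\, q^{2(j+1)-n}\, [j+1]_q\, [n-j]_q$ is a unit of $\K$ in that range, since $c \in \K^\times$ and each $[m]_q$ with $1 \le m \le n$ has nonzero constant term $m$ in $\C\fps$ and so belongs to $\K^\times$. The resulting $v$ is nonzero since $a_0 = 1$.

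The main obstacle is the closing equation at $j = n$, namely $a_{n-1} + s(q^n - 1)\, a_n = 0$, which has to be shown to hold \emph{automatically} for the values produced by the forward recursion. My plan is to attack this by finding a closed form for $a_k$ as a ratio of $q$-shifted factorials, realizing $v$ as a terminating basic hypergeometric series of ${}_2\phi_1$-type in parameters built from $s^2$, $c$, and $q^{-n}$, and then invoking a terminating $q$-Chu--Vandermonde summation (or an appropriate contiguous relation) to verify the endpoint match. A complementary route is a symmetry argument: the $U_q(\mfsl_2)$-automorphism exchanging $E \leftrightarrow F$ and inverting $K$ transports $B$, up to a group-like rescaling, to an operator whose natural recursion runs from the lowest-weight end $F^n\xi_n$ of $V_n^q$; the self-duality of $V_n^q$ up to this twist then forces the two recursions to coincide, yielding the boundary compatibility. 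In either approach, the bookkeeping with the powers $q^{2(j+1)-n}$ and the products $[j+1]_q[n-j]_q$, as well as the interplay between the two terms on the left-hand side of the recursion, is where I expect the technical heart of the argument to lie.
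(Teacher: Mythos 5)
Your expansion and the resulting three-term recurrence are correct as bookkeeping, but there are two problems with the proposal, the first of which is fatal to the approach as you have set it up.

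First, you have identified the wrong module. In this lemma $V_n^q$ is the spin-$n$ module, of dimension $2n+1$: the paper's displayed basis $\{\xi_1, F\xi_1, F^2\xi_1\}$ and the eigenvalue formula $KF^k\xi_1 = q^{2-2k}F^k\xi_1$ (so $K\xi_1 = q^2\xi_1$) make this explicit, and the application in Lemma~\ref{lem:poly} plugs in $n = m_i/2$ for a string $\xi_\lambda, F_i\xi_\lambda,\dots,F_i^{m_i}\xi_\lambda$ of length $m_i+1 = 2n+1$. With your $(n+1)$-dimensional normalization the statement is simply false for odd $n$. Take $n=1$ and $s=0$, so $B = F - cEK^{-1}$ on the $2$-dimensional module: then $B\xi = F\xi$ and $BF\xi = -cq\,\xi$, so $B^2 = -cq\cdot\mathrm{id}$ and $\ker B = 0$. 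More generally the determinant of $B$ in your basis $\{\xi, F\xi\}$ is $s^2(2 - q - q^{-1}) + cq$, whose constant term in $\C\fps$ is $c^{(0)}\ne 0$; this nonvanishing is precisely the closing equation at $j = n$ refusing to hold.

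Second, even after correcting the module (replace $n$ by $2n$ everywhere, so the string has length $2n+1$), the boundary identity is the entire content of the lemma, and your proposal does not prove it: both the terminating $q$-hypergeometric summation and the ``self-duality'' route are left as plans. The latter in particular does not look easy to make precise, since the automorphism $E\leftrightarrow F$, $K\mapsto K^{-1}$ sends $B$ to $E - cFK + s(K-1)$, which is not obviously conjugate to $B$ for general $c, s$.

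The paper's proof sidesteps the boundary problem entirely and is a genuinely different, much shorter argument: it checks the base case $n=1$ by exhibiting the killed vector in the $3$-dimensional module, then for $n\ge 2$ uses the coideal property $\Delta(B) = B\otimes K^{-1} + 1\otimes B$ to conclude that $v^{\otimes n}$ is killed by $B$ inside $(V_1^q)^{\otimes n}$. Projecting onto the Cartan component $V_n^q \subset (V_1^q)^{\otimes n}$ is a $U_q(\mfsl_2)$-module map and hence commutes with the action of $B\in U_q(\mfsl_2)$; the image is therefore killed by $B$, and it is nonzero because its top-weight component is $\xi_1^{\otimes n}$. If you want to salvage the direct computation, you must both switch to the $(2n+1)$-dimensional module and actually carry out the boundary verification (a terminating ${}_2\phi_1$ evaluation is plausible there), but the tensor argument is substantially cleaner and already complete.
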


\bp
For $n=0$ the lemma is obvious.
For $n=1$, the $U_q(\mfsl_2)$-module $V^q_1$ has the basis $\xi_1,F\xi_1,F^2\xi_1$ over~$\K$, and the actions of $E$ and $K$ on this basis are given by
\begin{align*}
K F^k \xi_1 &= q^{2-2k} F^k \xi_1,&
E F^k \xi_1 &= [2-(k-1)]_q[k]_q F^{k-1}\xi_1.
\end{align*}
One can then easily check that the vector
\[
v=\xi_1+\frac{s(1-q^2)}{c q^2 [2]_q}F\xi_1+\frac{1}{c q^2[2]_q}F^2\xi_1
\]
lies in the kernel of $B$.

For $n\ge 2$, the vector $\xi_1^{\otimes n}\in (V^q_1)^{\otimes n}$ has weight $n$ and generates a $U_q(\mfsl_2)$-submodule isomorphic to~$V^q_n$.
As
$\Delta_q(B)=B\otimes K^{-1}+1\otimes B$, the vector $v^{\otimes n}\in (V^q_1)^{\otimes n}$ is killed by $B$.
Since its weight~$n$ component is nonzero, the projection of this vector onto $V^q_n\subset (V^q_1)^{\otimes n}$ is a nonzero vector killed by~$B$.
\ep

To deal with the general case, let us introduce the following notation.
For a multi-index $J=(j_1,\dots,j_n)$, define $F_J=F_{j_1}\cdots F_{j_n}$ and $B_J=B_{j_1}\cdots B_{j_n}$.
We also let $\operatorname{wt}(J)=\alpha_{j_1}+\dots+\alpha_{j_n}$.
Denote by $U_-$ the unital $\K$-subalgebra of $U_q(\mfg)$ generated by the elements $F_j$, $j\in I$, by $\mcM_{X,+}$ the unital $\K$-subalgebra of $U^\mbt_q(\mfg^\theta)$ generated by the elements $E_j$, $j\in X$, and by $U_\Theta$ the $\K$-algebra generated by the elements $K_\omega$, $\omega\in P^\Theta$.
Fix $\lambda\in P_+$.

\begin{Lem}\label{lem:basis1}
Choose a finite collection $\mcJ$ of multi-indices such that $\xi_\lambda$ and $F_J\xi_\lambda$, $J\in\mcJ$, form a basis of $V^q_\lambda$ over~$\K$.
Then the vectors $\xi_\lambda$ and $B_J\xi_\lambda$, $J\in\mcJ$, also form a basis of $V^q_\lambda$.
\end{Lem}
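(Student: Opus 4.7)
The plan is to establish that the transition between the two spanning tuples is given by a unipotent upper-triangular matrix with respect to the weight grading on $V^q_\lambda$, from which the $B_J$-tuple being a basis is immediate.

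First I would decompose each generator $B_j$ of \eqref{eq:generator-B} into its weight-homogeneous components. For $j\in X$ one has $B_j=F_j$, while for $j\in I\setminus X$ the summand $F_j$ has weight $-\alpha_j$, the summand $T_{w_X}(E_{\tau_\theta(j)})K_j^{-1}$ has weight $w_X(\alpha_{\tau_\theta(j)})=-\Theta(\alpha_j)$ by \eqref{eq:Theta-tau}, and $\kappa_j(K_j^{-1}-1)/(e^{-d_j h}-1)$ has weight $0$. The crucial input is the positivity
\[
\alpha_j-\Theta(\alpha_j)\in Q_+\setminus\{0\}\quad (j\in I\setminus X),
\]
which holds because $\tau_\theta(j)\notin X$, so $\alpha_{\tau_\theta(j)}\notin\Phi_X^+$, and the longest element $w_X$ of $W_X$ sends it to a positive root; thus $\alpha_j-\Theta(\alpha_j)=\alpha_j+w_X(\alpha_{\tau_\theta(j)})$ is a sum of two positive roots. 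Writing $R_j=B_j-F_j$, we conclude that every weight-homogeneous component of $R_j$ has weight strictly greater than $-\alpha_j$ in the partial order on the root lattice $Q$.

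Next, expanding $B_J=(F_{j_1}+R_{j_1})\cdots(F_{j_n}+R_{j_n})$, every summand other than $F_J$ contains at least one factor $R_{j_k}$, hence all of its weight-homogeneous components have weight strictly greater than $-\operatorname{wt}(J)$ in $Q$. Applied to $\xi_\lambda$ this yields
\[
B_J\xi_\lambda=F_J\xi_\lambda+\sum_{\mu>\lambda-\operatorname{wt}(J)}v_{J,\mu},
\]
where $v_{J,\mu}$ denotes the weight-$\mu$ component. Since $\{\xi_\lambda\}\cup\{F_{J'}\xi_\lambda\}_{J'\in\mcJ}$ is a weight-graded basis of $V^q_\lambda$, each $v_{J,\mu}$ lies in the $\K$-span of those $F_{J'}\xi_\lambda$ with $\operatorname{wt}(J')<\operatorname{wt}(J)$ in $Q_+$, together with a scalar multiple of $\xi_\lambda$ in case $\mu=\lambda$.

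Finally, I would order $\{*\}\sqcup\mcJ$ by any linear extension of the partial order in which $*$ (corresponding to $\xi_\lambda$) is the minimum and $J\prec J'$ whenever $\operatorname{wt}(J)<\operatorname{wt}(J')$ in $Q_+$. With this ordering, the displayed expansion shows that the transition matrix from the ordered basis $(\xi_\lambda,(F_J\xi_\lambda)_{J})$ to the ordered tuple $(\xi_\lambda,(B_J\xi_\lambda)_{J})$ is upper-triangular with $1$'s on the diagonal, so it is invertible over $\K$; consequently $\xi_\lambda$ together with $\{B_J\xi_\lambda\}_{J\in\mcJ}$ is a basis of $V^q_\lambda$. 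The only conceptual ingredient is the weight-positivity $\alpha_j-\Theta(\alpha_j)\in Q_+\setminus\{0\}$; once this is in hand, the rest is pure linear algebra, so no real obstacle remains.
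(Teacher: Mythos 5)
Your proof is correct and follows essentially the same route as the paper, whose proof just asserts that $B_J\xi_\lambda = F_J\xi_\lambda$ plus higher-weight terms and invokes ``a simple induction argument.'' Your formulation in terms of the unipotent triangular transition matrix (after linearly ordering $\{*\}\sqcup\mcJ$ compatibly with $\operatorname{wt}$) is precisely that induction made explicit; the positivity $\alpha_j-\Theta(\alpha_j)=\alpha_j+w_X(\alpha_{\tau_\theta(j)})\in Q_+\setminus\{0\}$ is the right justification that each weight-homogeneous piece of $R_j=B_j-F_j$ has weight strictly larger than $-\alpha_j$.
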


\bp
By the definition of $B_i$, we have that $B_J\xi_\lambda$ equals $F_J\xi_\lambda$ plus a linear combination of vectors of higher weights.
A simple induction argument using this property gives the result.
\ep

For each $i\in X$, put $m_i=(\lambda,\alpha_i^\vee)\in\Z_+$.

\begin{Lem}\label{lem:basis2}
There are collections $J_i$, $i\in I$, of multi-indices such that the elements $1$, $F_J$ ($J\in\mcJ$, with~$\mcJ$ as in the previous lemma) and $F_J F_i^{m_i+1}$ ($J\in \mcJ_i$, $i\in I$) form a basis of $U_-$ over $\K$.
Moreover the elements $1$, $B_J$ ($J\in\mcJ$) and $B_J B_i^{m_i+1}$ ($J\in \mcJ_i$, $i\in I$) form a basis of the right $U_\Theta\mcM_{X,+}$-module $U_q^\mbt(\mfg^\theta)$.
More generally, for any choice of degree $m_i+1$ polynomials $p_i\in\K[x]$, the elements $1$, $B_J$ ($J\in\mcJ$) and $B_J p_i(B_i)$ ($J\in \mcJ_i$, $i\in I$) form a basis of the right $U_\Theta\mcM_{X,+}$-module $U_q^\mbt(\mfg^\theta)$.
\end{Lem}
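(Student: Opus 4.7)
The plan is to establish the three assertions in sequence: first the basis of $U_-$ by a Verma-module-type argument, then the coideal version by transporting it through the Letzter--Kolb triangular decomposition, and finally the $p_i$-generalization via a triangularity argument in the same filtration.

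For the first assertion, consider the surjection $\epsilon_\lambda\colon U_-\to V^q_\lambda$, $u\mapsto u\xi_\lambda$. Its kernel is the left ideal $N:=\sum_{i\in I} U_- F_i^{m_i+1}$, as follows from the highest-weight presentation of $V^q_\lambda$ (valid because of conditions \eqref{eq:Let1}--\eqref{eq:Let2}). Since the monomials $F_J$, as $J$ ranges over all multi-indices, span $U_-$, the products $F_J F_i^{m_i+1}$ span $N$; a Zorn's-lemma extraction yields collections $\mcJ_i$ of multi-indices such that $\{F_J F_i^{m_i+1}:J\in\mcJ_i,\,i\in I\}$ is a $\K$-basis of $N$. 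Since $1$ and $\{F_J\}_{J\in\mcJ}$ project to the basis of $V^q_\lambda\cong U_-/N$ given in Lemma~\ref{lem:basis1}, they lift to a complement of $N$ in $U_-$, and concatenating produces the desired basis of $U_-$.

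For the second assertion, I invoke the Letzter--Kolb triangular decomposition of the coideal: writing $\mcB$ for the $\K$-subalgebra of $U^\mbt_q(\mfg^\theta)$ generated by all $B_i$ ($i\in I$, with the convention $B_j=F_j$ for $j\in X$), the multiplication map
\[
\mcB\otimes U_\Theta\otimes\mcM_{X,+}\longrightarrow U^\mbt_q(\mfg^\theta)
\]
is a $\K$-linear isomorphism. It therefore suffices to show that the $B$-family is a $\K$-basis of $\mcB$. Introduce the filtration $\mcF_\bullet$ on $U_q(\mfg)$ by the number of $F$-factors appearing in the standard triangular decomposition, so that $U_\Theta,\mcM_{X,+}\subset\mcF_0$ and $F_i\in\mcF_1\setminus\mcF_0$. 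Since the correction $B_i-F_i$ lies in $\mcM_{X,+}\cdot U_\Theta\subset\mcF_0$, the induced filtration on $\mcB$ has associated graded naturally identified with $U_-$ via $B_J\mapsto F_J$. Lifting the basis of $U_-$ from the first part along this filtration identification yields the required basis of $\mcB$, and hence of $U^\mbt_q(\mfg^\theta)$ as a right $U_\Theta\mcM_{X,+}$-module.

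For the third assertion, observe that
\[
B_J\, p_i(B_i)\;\equiv\;p_{i,m_i+1}\,B_J B_i^{m_i+1}\pmod{\mcF_{|J|+m_i}},
\]
where $p_{i,m_i+1}\in\K^\times$ is the leading coefficient of $p_i$. Consequently, the images of the new family and of the family from the second part in the associated graded coincide up to invertible scalar factors on each basis element, and since the latter is a basis so is the former. The main obstacle, beyond the elementary linear algebra, is the triangular decomposition of $U^\mbt_q(\mfg^\theta)$ together with the compatibility of the $F$-length filtration with $\mcB$; both are standard inputs from the Letzter--Kolb theory, and once these are in hand the whole lemma reduces to a transparent filtered-graded translation of the $U_-$ basis.
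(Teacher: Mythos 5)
Your first part matches the paper's Verma-module argument; the kernel of $U_-\to V^q_\lambda$ is $\sum_i U_-F_i^{m_i+1}$ for any $\lambda\in P_+$, so the parenthetical appeal to \eqref{eq:Let1}--\eqref{eq:Let2} is spurious, but otherwise this is correct.

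The gap is in the second part. You take $\mcB$ to be the $\K$-\emph{subalgebra} generated by all $B_i$ and assert that multiplication $\mcB\otimes U_\Theta\otimes\mcM_{X,+}\to U^\mbt_q(\mfg^\theta)$ is a linear isomorphism. That is not what Letzter--Kolb theory provides, and it is in general false: $\mcB$ is the $\K$-span of all words $B_{i_1}\cdots B_{i_k}$, and reordering such a word produces right-hand tails in $U_\Theta\mcM_{X,+}$, so $\mcB$ typically meets $U_\Theta\mcM_{X,+}$ nontrivially and cannot be a tensor complement. The actual input is \cite{MR3269184}*{Proposition~6.2}: the monomials $B_J$ with $J$ in a set indexing a basis of $U_-$ form a basis of $U^\mbt_q(\mfg^\theta)$ as a right $U_\Theta\mcM_{X,+}$-module, and the span of those ordered monomials is not a subalgebra. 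Your further claim that $B_i-F_i\in\mcM_{X,+}U_\Theta$ is also incorrect: for $i\notin X$ the correction contains $T_{w_X}(E_{\tau_\theta(i)})K_i^{-1}$, whose $E$-factor is a root vector for a positive root not in $\Z\Pi_X$ (hence outside $\mcM_{X,+}$) and whose Cartan factor $K_i^{-1}$ need not lie in $U_\Theta$ since $\Theta(\alpha_i)\ne\alpha_i$; all one can say is that $B_i-F_i$ contains no $F$-factors. Consequently the identification of the associated graded of $\mcB$ with $U_-$ is not established, and the filtration argument on which your second and third assertions rest does not go through. The paper instead cites Kolb's Proposition~6.2 directly and runs a triangularity argument in the weight filtration inside the right $U_\Theta\mcM_{X,+}$-module $U^\mbt_q(\mfg^\theta)$, first to pass from Kolb's normalized generators $\tilde B_i$ to the $B_i$ used here and then from $B_i^{m_i+1}$ to $p_i(B_i)$; replacing your subalgebra-based decomposition by this module-level argument repairs the proof.
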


\bp
Consider the Verma module $L_\lambda$ with highest weight vector $v_\lambda$ of weight $\lambda$.
The first part of the lemma follows from the well-known facts that the map $U_-\to L_\lambda$, $a\mapsto av_\lambda$, is a linear isomorphism and the kernel of the quotient map $L_\lambda\to V_\lambda$ is $\sum_i U_-F_i^{m_i+1}v_\lambda$.

The second part of the lemma follows then from~\cite{MR3269184}*{Proposition~6.2}.
To be more precise, some of our generators $B_i$ differ from the ones used by Kolb by scalar summands.
Let us denote Kolb's generators by $\tilde B_i$.
Then every element $B_J$ equals $\tilde B_J$ plus a linear combination of the elements $\tilde B_{J'}$ with $\operatorname{wt}(J')<\operatorname{wt}(J)$.
Similarly to the previous lemma, we see that whenever $\{\tilde B_J\}_{J\in\tilde\mcJ}$ is a basis of the right $U_\Theta\mcM_{X,+}$-module $U_q^\mbt(\mfg^\theta)$, then $\{B_J\}_{J\in\tilde\mcJ}$ also forms a basis.
For the same reason we can add to every~$B_J$ any linear combination of the elements $B_{J'}$ with $\operatorname{wt}(J')<\operatorname{wt}(J)$ and still get a basis.
In particular, we can replace every element of the form $B_J B_i^{m_i+1}$ by $B_J p_i(B_i)$.
\ep

\begin{Lem}\label{lem:poly}
If $\lambda\in P_+$ satisfies~\eqref{eq:Let2}, we can find for all $i\in I$, degree $m_i+1$ polynomials $p_i\in\K[x]$ such that $p_i(B_i)\xi_\lambda=0$ and $p_i(0)=0$.
\end{Lem}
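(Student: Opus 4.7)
The plan is to build $p_i$ separately for each $i \in I$, writing $p_i(x) = x\,r_i(x)$ so that $p_i(0)=0$ is automatic; the normalization $\deg p_i = m_i+1$ will be achieved at the end by padding with a power of $x$ when necessary. When $i \in X$ we have $B_i = F_i$, so I will take $p_i(x) = x^{m_i+1}$, since $F_i^{m_i+1}\xi_\lambda = 0$ in $V^q_\lambda$. For $i \in I \setminus X$, the definition of $\mcS$ forces $s_i = 0$ unless $i \in I_\mcS \subset I_\ns$, so I will split the remaining analysis according to whether $i \in I_\ns$ or not.

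For $i \in I \setminus (X\cup I_\ns)$, the vanishing $s_i = 0$ gives $B_i = F_i - c_i z_{\tau_\theta(i)} T_{w_X}(E_{\tau_\theta(i)}) K_i^{-1}$, and I claim $p_i(x) = x^{m_i+1}$ again suffices. I will induct on $k$: the computation
\begin{equation*}
B_iF_i^k\xi_\lambda = F_i^{k+1}\xi_\lambda - c_iz_{\tau_\theta(i)}q^{-(\lambda - k\alpha_i,\alpha_i)}T_{w_X}(E_{\tau_\theta(i)})F_i^k\xi_\lambda
\end{equation*}
reduces $B_i^k\xi_\lambda = F_i^k\xi_\lambda$ to the vanishing $T_{w_X}(E_{\tau_\theta(i)})F_i^k\xi_\lambda = 0$ for all $k \ge 0$. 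To verify the latter, I will argue by weights: using the standard expansion $w_X\alpha_{\tau_\theta(i)} = \alpha_{\tau_\theta(i)} + \sum_{j\in X}n_j\alpha_j$ with $n_j \in \Z_{\ge 0}$, the vector has weight $\lambda - k\alpha_i + \alpha_{\tau_\theta(i)} + \sum_j n_j\alpha_j$, and membership in $V^q_\lambda$ would require $k\alpha_i - \alpha_{\tau_\theta(i)} - \sum_j n_j\alpha_j \in Q_+$. When $\tau_\theta(i) \neq i$, the coefficient $-1$ on $\alpha_{\tau_\theta(i)}$ blocks this; when $\tau_\theta(i) = i$, the hypothesis $i \notin I_\ns$ forces some $n_j > 0$ and again blocks it.

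For $i \in I_\ns$ we have $\tau_\theta(i) = i$ and $\alpha_i$ orthogonal to every $\alpha_j$ with $j \in X$, whence $T_{w_X}(E_i) = E_i$, and the generators $E_i, F_i, K_i^{\pm 1}$ span a copy of $U_{q_i}(\mfsl_2)$ that contains $B_i$. The $U_{q_i}(\mfsl_2)$-submodule $M_i \subset V^q_\lambda$ generated by $\xi_\lambda$ is then the irreducible highest weight module of dimension $m_i+1$. By Lemma~\ref{lem:Theta}, the condition $\Theta(\alpha_i) = -\alpha_i$ is equivalent to $i \in I_\ns$, so hypothesis~\eqref{eq:Let2} forces $m_i$ to be even; then $M_i$ corresponds to $V^q_{m_i/2}$ in the notation of Lemma~\ref{lem:sl2}, which supplies a nonzero $v \in M_i$ with $B_iv = 0$. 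Consequently $0$ is an eigenvalue of $B_i|_{M_i}$, so its minimal polynomial $\bar p_i$ satisfies $\bar p_i(0) = 0$ and $\deg\bar p_i \le m_i+1$; setting $p_i(x) = x^{m_i+1-\deg\bar p_i}\bar p_i(x)$ yields the desired polynomial. The main difficulty is the weight bookkeeping of the middle paragraph; the last step is a rank-one reduction where hypothesis~\eqref{eq:Let2} is precisely what allows Lemma~\ref{lem:sl2} to apply.
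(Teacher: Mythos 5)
Your proof is correct and takes essentially the same approach as the paper: the same three-way case split (trivial for $i\in X$; a weight/height argument showing $B_i^n\xi_\lambda=F_i^n\xi_\lambda$ when $\Theta(\alpha_i)\ne-\alpha_i$, which by Lemma~\ref{lem:Theta} is the condition $i\notin I_\ns$; and a rank-one $\mfsl_2$ reduction via Lemma~\ref{lem:sl2} for $i\in I_\ns$, where \eqref{eq:Let2} guarantees $m_i$ is even). The only minor variation is in the last case, where you read off $p_i$ from the minimal polynomial of $B_i|_{M_i}$, while the paper instead expands the spherical vector from Lemma~\ref{lem:sl2} in the basis $\{B_i^k\xi_\lambda\}_{k\le m_i}$ of $M_i$ supplied by Lemma~\ref{lem:basis1}; both routes yield the same polynomial up to normalization.
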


\bp
Consider three cases.
If $i\in X$, then $B_i=F_i$ and we can take $p_i(x)=x^{m_i+1}$.

Next, assume $i\in I\setminus X$ but $\Theta(\alpha_i)\ne-\alpha_i$.
Then $i\not\in I_\ns$, hence $s_i=0$ and
\[
B_i= F_i - c_i z_{\tau_\theta(i)}T_{w_X}(E_{\tau_\theta(i)})K_i^{-1}.
\]
As $-\Theta(\alpha_i)=w_X\alpha_{\tau_\theta(i)}\in\Delta^+$ is different from $\alpha_i$, we have $-k\Theta(\alpha_i)-(n-k)\alpha_i\not\le0$ for any $n\ge k\ge1$.
Since any product of $k$ elements $T_{w_X}(E_{\tau_\theta(i)})$ and $n-k$ elements $F_i$ has weight $-k\Theta(\alpha_i)-(n-k)\alpha_i$, it must therefore kill $\xi_\lambda$.
It follows that
$B_i^n\xi_\lambda=F_i^n\xi_\lambda$ for any $n\ge1$.
In particular, we have $B_i^{m_i+1}\xi_\lambda=0$, so in this case we can again take $p_i(x)=x^{m_i+1}$.

Finally, assume $i$ is such that $\Theta(\alpha_i)=-\alpha_i$.
Then, by Lemma~\ref{lem:Theta}, we have $i\in I_\ns$, hence
\[
B_i=F_i - c_i z_i E_i K_i^{-1} + s_i\kappa_i\frac{K_i^{-1}-1}{q^{-d_i}-1}.
\]
The elements $E_i$, $F_i$, $K_i^{\pm1}$ generate a copy of $U_{q^{d_i}}(\mfsl_2)$ in $U_q(\mfg)$.
By acting on $\xi_\lambda$ we get a spin $\frac{m_i}{2}$ $U_{q^{d_i}}(\mfsl_2)$-module with basis $\xi_\lambda,F_i\xi_\lambda,\dots,F_i^{m_i}\xi_\lambda$ over $\K$.
By Lemma~\ref{lem:basis1}, the elements $\xi_\lambda,B_i\xi_\lambda,\dots,B_i^{m_i}\xi_\lambda$ also form a basis.
As $\frac{m_i}{2}\in\Z_+$ by assumption, we can apply Lemma~\ref{lem:sl2} and conclude that there is a nonzero polynomial $f\in\K[x]$ of degree $m\le m_i$ such that $B_if(B_i)\xi_\lambda=0$.
Hence we can take $p_i(x)=x^{m_i+1-m}f(x)$.
\ep

\bp[Proof of Theorem~\ref{thm:Letzter2}]
Take $a\in U_q^\mbt(\mfg^\theta)^+$.
We want to show that $a\xi_\lambda\ne\xi_\lambda$.
By Lemma~\ref{lem:basis2} we can write~$a$~as
\[
a_0+\sum_{J\in\mcJ}B_J a_J+\sum_{i\in I}\sum_{J\in\mcJ_i}B_J p_i(B_i)b_{i;J},
\]
where $a_0$, $a_J$ and $b_{i;J}$ are in $U_\Theta\mcM_{X,+}$ and $p_i$ are the polynomials from Lemma~\ref{lem:poly}.
Note that since $\epsilon(B_i)=0$ and the polynomials $p_i$ have zero constant terms, we must have $\epsilon(a_0)=0$.
By assumption~\eqref{eq:Let1} we have $y\xi_\lambda=\epsilon(y)\xi_\lambda$ for every $y\in U_\Theta\mcM_{X,+}$.
Hence
\[
a\xi_\lambda=\sum_{J\in\mcJ}\epsilon(a_J)B_J\xi_\lambda,
\]
which is different from $\xi_\lambda$ by our choice of $\mcJ$.
\ep

\begin{Rem}\label{rem:generic-spherical}
Theorem~\ref{thm:Letzter1} remains true for $\mbt\in\mcT^*_\C$ if we exclude a finite set (depending on $\lambda$) of values of $s_o^{(0)}$ (S-type) or $c_o^{(0)}$ (C-type). Indeed, let us look for spherical vectors of the form $\xi_\lambda+\sum_{J\in\mcJ}c_J F_J\xi_\lambda$, $c_J\in\K$, where $\mcJ$ is as in Lemma~\ref{lem:basis1}. The sphericity condition gives us a system of linear equations for $c_J$ with coefficients that are rational functions (with complex coefficients) in $q$, $s_o$ or $c_o$. A spherical vector exists if and only if the rank of the matrix $A$ of this system is the same as the rank of the augmented matrix $B$. Take a submatrix of~$A$ of maximal size giving a nonzero minor for some $\mbt\in\mcT^*$. Then the lowest order nonzero term of the minor's expansion in $h$ is a rational function of $s_o^{(0)}$ or $c_o^{(0)}$, so the corresponding minor remains nonzero for all $\mbt\in\mcT^*_\C$ excluding a finite set of values of $s_o^{(0)}$ or $c_o^{(0)}$. On the other hand, if we take any larger minor of $B$ and consider its expansion in $h$, then the coefficients will be rational functions in the parameters $s_o^{(n)}$ or $c_o^{(n)}$. These functions must vanish for all purely imaginary (S-type) or real (C-type) values of the parameters, hence they are identically zero.
\end{Rem}

\section{Coideals as deformations}
\label{app:univ-env-alg-model}

For $\mbt\in\mcT$, recall $\mfg^\theta_\mbt < \mfg$ from Definition \ref{def:limit-Lie-alg-of-LK-coideal}.

\begin{Prop}\label{Propform}
For every $\mbt\in\mcT$, the $\C\fpser$-module $U^\mbt_h(\mfg^\theta)$ is topologically free and the homomorphism $U_h(\mfg)\to U(\mfg)$ induces an isomorphism $U^\mbt_h(\mfg^\theta)/h U^\mbt_h(\mfg^\theta)\cong U(\mfg^\theta_\mbt)$.
\end{Prop}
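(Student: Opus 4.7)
My plan is to produce a $\C\fps$-linear isomorphism $U(\mfg^\theta_\mbt)\fps \to U^\mbt_h(\mfg^\theta)$ that reduces to the identity modulo $h$. The main tool is a quantum PBW-type theorem for the Letzter--Kolb coideals due to Kolb, namely~\cite{MR3269184}*{Proposition~6.2}, which we already invoked to prove Lemma~\ref{lem:basis2}. Although that proposition is formulated over $\C(q^{1/d})$, essentially the same argument yields the analogous statement in the $h$-adic setting.

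First I fix a totally ordered $\C$-basis $(X_\alpha)_{\alpha \in \mcI}$ of $\mfg^\theta_\mbt$ that refines the direct sum decomposition $\mfg^\theta_\mbt = \mfh^\theta \oplus \mfg_X \oplus \bigoplus_{i \in I \setminus X} \C Y_i$, where $Y_i = X_{-\alpha_i} + c_i^{(0)}\theta(X_{-\alpha_i})+ s_i^{(0)} \kappa_i H_i$. For each $X_\alpha$ I lift it to an element $\tilde X_\alpha \in U^\mbt_h(\mfg^\theta)$ with $\tilde X_\alpha^{(0)} = X_\alpha$: for $X_\alpha \in \mfh^\theta \oplus \mfg_X$ I take a standard PBW lift inside $U_h(\mfh^\theta) U_h(\mfg_X) \subset U^\mbt_h(\mfg^\theta)$, and for $X_\alpha = Y_i$ I take $\tilde X_\alpha = B_i$. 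Sending the ordered monomial $X_{\alpha_1}^{n_1} \cdots X_{\alpha_k}^{n_k}$ (with $\alpha_1 < \dots < \alpha_k$) to $\tilde X_{\alpha_1}^{n_1} \cdots \tilde X_{\alpha_k}^{n_k}$ and extending $\C\fps$-linearly and $h$-adically defines a map $\Phi\colon U(\mfg^\theta_\mbt)\fps \to U^\mbt_h(\mfg^\theta)$.

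The remaining task is to verify that $\Phi$ is a $\C\fps$-linear bijection. By construction, $\Phi \bmod h$ is the classical PBW isomorphism for $U(\mfg^\theta_\mbt)$, so $\Phi$ is an isomorphism modulo~$h$. To conclude from this that $\Phi$ itself is an isomorphism, I want to write every element of $U^\mbt_h(\mfg^\theta)$ uniquely as an $h$-adically convergent $\C\fps$-linear combination of the ordered monomials in the $\tilde X_\alpha$. The existence of such an expansion for elements of the (uncompleted) subalgebra generated by $U_h(\mfh^\theta)$, $U_h(\mfg_X)$ and the $B_i$ is obtained by the standard PBW-style reordering argument: a commutator $[\tilde X_\alpha, \tilde X_\beta]$ for $\alpha > \beta$ equals the classical bracket $[X_\alpha, X_\beta]$ modulo $h$ (which, by Definition~\ref{def:limit-Lie-alg-of-LK-coideal}, lies in $\mfg^\theta_\mbt$ and hence has a known expansion in the $X_\gamma$), and one then proceeds by induction on the length of the monomial together with lex order, absorbing the $O(h)$ correction into the next step of the induction. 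Passing to the $h$-adic completion, this yields a convergent expansion for every element of $U^\mbt_h(\mfg^\theta)$, and uniqueness follows from the classical PBW basis via reduction modulo $h$.

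The main obstacle I anticipate is ensuring that the PBW reordering procedure converges properly in the $h$-adic topology: the commutator $[\tilde X_\alpha, \tilde X_\beta]$ is generally a sum in $U^\mbt_h(\mfg^\theta)$ with contributions of arbitrarily high weight (because of the $K_i^{-1}$ factors and the quantum Serre-like relations among the $B_i$). To handle this I will work weight by weight with respect to the $\mfh^\theta$-grading on $U_h(\mfg)$: the generators $B_i$ have bounded weights (concentrated in finitely many root spaces of $\mfh^\theta$), only finitely many ordered monomials have a given weight, and the reordering procedure respects this grading modulo terms one higher order in~$h$. This reduces the problem to a finite-dimensional linear-algebra statement at each order of~$h$ and each weight, where the classical PBW isomorphism modulo $h$ gives both existence and uniqueness, completing the proof.
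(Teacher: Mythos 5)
There is a genuine gap in the convergence argument, and the specific fix you propose does not work.

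The claim that "only finitely many ordered monomials have a given weight" with respect to the $\mfh^\theta$-grading is false: every monomial built purely from Cartan generators $H_1',\dots,H_l'$ of $\mfh^\theta$ has weight zero, and there are infinitely many of them. So the weight grading does not reduce the problem to a finite-dimensional statement at each order of $h$, and your proposed mechanism for controlling the reordering process collapses. The deeper issue is that in $U_h(\mfg)$ commutators do not decrease degree in any naive sense: for instance $[E_i,F_i]$ involves $K_i^{\pm 1}=\exp(\pm h d_i H_i)$, whose $h$-order-$n$ coefficient is a degree-$n$ Cartan monomial, so the reordering procedure can keep producing terms of growing complexity. Making this converge $h$-adically is precisely the content of a PBW theorem for the coideal, which is nontrivial; your sketch is essentially a claim that such a theorem holds without a real proof. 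There is also a latent circularity: when you write an element $a$ as $\sum_M c_M^{(0)}M + (\text{something in }hU_h(\mfg))$ and want to iterate, you need to know the remainder lies in $hU^\mbt_h(\mfg^\theta)$ rather than merely in $U^\mbt_h(\mfg^\theta)\cap hU_h(\mfg)$, which is exactly the statement you are trying to prove.

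The paper's proof avoids all of this by a different route. It first reduces the whole statement to showing $U^\mbt_h(\mfg^\theta)\cap hU_h(\mfg)=hU^\mbt_h(\mfg^\theta)$, and then to the finite-level inclusions $U^\mbt_h(\mfg^\theta)\cap hU_h(\mfg)\subset hU^\mbt_h(\mfg^\theta)+h^nU_h(\mfg)$ for all $n$. These are proved by pure linear algebra over the Artinian ring $\C\fps/(h^n)$: the elements $E_J(H_1')^{k_1}\cdots(H_l')^{k_l}B_{J'}$ form a basis of the free module $U_h(\mfg)/h^nU_h(\mfg)$ because their images in $U(\mfg)$ are a basis (by \cite{MR3269184}*{Proposition 6.1}), and the subset with $J\in\mcJ_X$ spans the coideal modulo $h^n$ by the argument of \cite{MR3269184}*{Proposition 6.2}. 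No $h$-adic convergence of a reordering procedure is needed, because one never tries to produce a global PBW expansion of an element of the coideal; one only compares two spanning sets after truncating modulo $h^n$. If you wish to pursue your route, you would need to supply a genuine $h$-adic PBW theorem for $U^\mbt_h(\mfg^\theta)$ with a correct convergence argument, which is more work than the reduction modulo $h^n$ that the paper uses.
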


\bp
Since $U_h(\mfg)$ is topologically free and $U^\mbt_h(\mfg^\theta)\subset U_h(\mfg)$ is closed, to prove both statements it suffices to show that
\[
U^\mbt_h(\mfg^\theta)\cap h U_h(\mfg)=h U^\mbt_h(\mfg^\theta).
\]
For this, in turn, it is enough to check that for all $n\ge2$ we have
\begin{equation}\label{EqCoidhAlg}
U^\mbt_h(\mfg^\theta)\cap h U_h(\mfg)\subset h U^\mbt_h(\mfg^\theta)+h^n U_h(\mfg).
\end{equation}

Let
\[
\mcJ_X=\{J = (i_1,\dots,i_k)\mid i_j\in X\}\subset \cup_{k=0}^{\infty} X^k
\]
be such that the elements $X_{\alpha_{i_1}}\cdots X_{\alpha_{i_k}}$ with $(i_1,\dots,i_k)\in J$ form a basis of $U(\mfn^+_X)\subset U(\mfg_X)$. Consider also a larger set $\mcJ\subset \cup_{k=0}^{\infty} I^k$ giving a basis of $U(\mfn^+)$. For $J=(i_1,\dots,i_k)\in\mcJ$, put
\[
E_J=E_{i_1}\cdots E_{i_k}\in U_h(\mfg)\quad\text{and}\quad B_J=B_{i_1}\cdots B_{i_k}\in U_h(\mfg).
\]
Let $H_1',\dots,H_l'$ be a basis of $\mfh$.
Then by the proof of \cite{MR3269184}*{Proposition 6.1}, the image of the set
\[
\{E_{J}(H_1')^{k_1}\cdots (H_l')^{k_l} B_{J'}\mid J \in \mcJ,k_i\ge0, J'\in \mcJ\}
\]
in $U(\mfg)$ is a basis. This implies that this set is a basis of the free $\C\fps/(h^n)$-module $U_h(\mfg)/h^n U_h(\mfg)$.
On the other hand, the same argument as in the proof of \cite{MR3269184}*{Proposition 6.2} shows that
\[
\{E_{J}(H_1')^{k_1}\cdots (H_l')^{k_l} B_{J'}\mid J \in \mcJ_X,k_i\ge0, J'\in \mcJ\}
\]
generates $U_{h}(\mfg_{\mbt}^{\theta})/(U_{h}(\mfg_{\mbt}^{\theta})\cap h^n U_h(\mfg))$ as a $\C\fpser/(h^n)$-module. These two facts clearly imply~\eqref{EqCoidhAlg}.
\ep

The following lemma slightly generalizes the second Whitehead lemma.

\begin{Lem}\label{LemSecWhite}
Assume $\mfa$ is a finite dimensional Lie algebra over a field of characteristic zero such that the derived Lie subalgebra $[\mfa,\mfa]$ is semisimple and has codimension $1$.
Then $\rH^2(\mfa,V) = 0$ for all finite dimensional $\mfa$-modules $V$.
\end{Lem}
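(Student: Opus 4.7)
The strategy is to run the Hochschild--Serre spectral sequence for the ideal $\mfs = [\mfa,\mfa] \subset \mfa$ and observe that every term contributing to $\rH^2(\mfa,V)$ vanishes.

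First I would record that $\mfa/\mfs$ is one-dimensional and abelian by hypothesis. Thus, for any finite dimensional $\mfa/\mfs$-module $W$, the standard computation for a one-dimensional abelian Lie algebra gives
\[
\rH^p(\mfa/\mfs, W) = 0 \quad (p \geq 2),
\]
while Whitehead's first and second lemmas for the semisimple Lie algebra $\mfs$ give
\[
\rH^1(\mfs, V) = \rH^2(\mfs, V) = 0
\]
for any finite dimensional $\mfs$-module $V$.

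Next, I would invoke the Hochschild--Serre spectral sequence associated with the ideal $\mfs \subset \mfa$,
\[
E_2^{p,q} = \rH^p\bigl(\mfa/\mfs, \rH^q(\mfs, V)\bigr) \Rightarrow \rH^{p+q}(\mfa, V),
\]
where $\rH^q(\mfs,V)$ is naturally a finite dimensional module over $\mfa/\mfs$. The three contributions to $\rH^2(\mfa,V)$ then vanish for the following reasons: $E_2^{0,2}$ and $E_2^{1,1}$ vanish because $\rH^2(\mfs,V) = \rH^1(\mfs,V) = 0$ by Whitehead, and $E_2^{2,0}$ vanishes because $\mfa/\mfs$ is one-dimensional. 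Since the spectral sequence has no nonzero contributions in total degree $2$, we conclude $\rH^2(\mfa,V) = 0$.

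I expect no genuine obstacle here; the only thing to be slightly careful about is to check that the form of Hochschild--Serre being invoked is valid in this generality (arbitrary field of characteristic zero, finite dimensional coefficients), which is standard. Alternatively, if one prefers a more elementary route, one can first observe that since every derivation of the semisimple Lie algebra $\mfs$ is inner, any lift $X \in \mfa$ of a generator of $\mfa/\mfs$ can be adjusted by an element of $\mfs$ so as to become central, yielding a direct product decomposition $\mfa \cong \mfs \times \C$; then a K\"unneth-type argument combined with Whitehead's lemmas gives the same conclusion.
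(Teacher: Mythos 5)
Your main argument via the Hochschild--Serre spectral sequence is correct and complete: with $\mfs = [\mfa,\mfa]$ semisimple, $E_2^{0,2}$ and $E_2^{1,1}$ die by the two Whitehead lemmas, and $E_2^{2,0}$ dies because $\Lambda^2(\mfa/\mfs) = 0$, so nothing survives in total degree two. The paper does not spell this out; it simply cites Dixmier's vanishing result for central elements acting invertibly, the usual second Whitehead lemma, and Zusmanovich's converse-to-Whitehead paper. The route implied by those citations is essentially your ``alternative'' at the end: first correct the lift $X$ of a generator of $\mfa/\mfs$ by an inner derivation of $\mfs$ to obtain a central element $z$ and a direct product $\mfa \cong \mfs \times k z$, then decompose $V$ into generalized $z$-eigenspaces, kill the nonzero-eigenvalue pieces by Dixmier (a central element acting invertibly forces all cohomology to vanish, via the Cartan homotopy $L_z = d\iota_z + \iota_z d$), and finish the eigenvalue-zero piece with Whitehead. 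Your spectral-sequence argument is notably cleaner in that it bypasses both the decomposition of $\mfa$ and the eigenspace analysis of $V$; if you keep the alternative sketch, be aware that ``a K\"unneth-type argument'' as written glosses over the fact that $V$ need not be a tensor product over $\mfs \times kz$, so one really must do the generalized-eigenspace reduction (or invoke Dixmier) before K\"unneth applies.
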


\begin{proof}
This follows from \cite{MR0074780}*{Proposition 1} and the usual second Whitehead lemma, see also \cite{MR2431117}.
\end{proof}

Assume now that $\mbt\in\mcT^*_\C$. In the non-Hermitian case the set $\mcT^*_\C$ consists of one point and we have $\mfg^\theta_0=\mfg^\theta$. In the Hermitian case, by Lemma~\ref{lem:mbt-vs-phi} we have $\mfg^\theta_\mbt\cong\mfg^\theta$. Therefore in both cases the above lemma applies to $\mfg^\theta_\mbt$. As $U^\mbt_h(\mfg^\theta)$ is a deformation of $U(\mfg^\theta_\mbt)$ by Proposition~\ref{Propform}, this leads to the following result.

\begin{Prop} \label{prop:coideal-deform}
For all $\mbt\in\mcT^*_\C$, the isomorphism $U^\mbt_h(\mfg^\theta)/h U^\mbt_h(\mfg^\theta)\cong U(\mfg^\theta_\mbt)$ lifts to an isomorphism $U^\mbt_h(\mfg^\theta)\cong U(\mfg^\theta_\mbt)\fps$ of $\C\fps$-algebras.
\end{Prop}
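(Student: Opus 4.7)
The plan is to invoke formal deformation theory for associative algebras, reducing the problem to vanishing of a Hochschild cohomology group which, via Cartan--Eilenberg, reduces to Lie algebra cohomology covered by Lemma~\ref{LemSecWhite}. By Proposition~\ref{Propform}, $U^\mbt_h(\mfg^\theta)$ is topologically free over $\C\fpser$ with classical limit $U(\mfg^\theta_\mbt)$, so it is a formal deformation in the standard sense. I would construct a $\C\fpser$-algebra isomorphism $\phi\colon U(\mfg^\theta_\mbt)\fpser \to U^\mbt_h(\mfg^\theta)$ lifting the canonical map modulo $h$ by inducting on the order: given $\phi_n$ defined modulo $h^{n+1}$, any $\C$-linear lift $\tilde\phi$ to the next order has multiplicativity defect $\omega(a,b) = h^{-(n+1)}(\tilde\phi(ab) - \tilde\phi(a)\tilde\phi(b))$ which, reduced modulo $h$, is a Hochschild 2-cocycle on $U(\mfg^\theta_\mbt)$ with values in itself. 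If its cohomology class vanishes, writing $\omega = \delta\eta$ and replacing $\tilde\phi$ by $\tilde\phi - h^{n+1}\eta$ gives $\phi_{n+1}$, so the entire problem reduces to showing $\HH^2(U(\mfg^\theta_\mbt), U(\mfg^\theta_\mbt)) = 0$.

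For this I would use the classical Cartan--Eilenberg identification
\[
\HH^*(U(\mfa), M) \cong \rH^*(\mfa, M^{\mathrm{ad}}),
\]
valid for any Lie algebra $\mfa$ and $U(\mfa)$-bimodule $M$, where $M^{\mathrm{ad}}$ denotes $M$ viewed as an $\mfa$-module via the commutator action $X \cdot m = Xm - mX$; this reduces the target to $\rH^2(\mfg^\theta_\mbt, U(\mfg^\theta_\mbt)^{\mathrm{ad}}) = 0$. The center $\mfz(\mfg^\theta_\mbt)$ acts by zero under the adjoint action, so this action factors through the semisimple Lie algebra $\mfs = [\mfg^\theta_\mbt, \mfg^\theta_\mbt]$; the PBW filtration of $U(\mfg^\theta_\mbt)$ is a filtration by finite-dimensional $\mfs$-submodules, and by Weyl complete reducibility it splits, so $U(\mfg^\theta_\mbt)^{\mathrm{ad}}$ decomposes as a direct sum $\bigoplus_i V_i$ of finite-dimensional $\mfg^\theta_\mbt$-modules. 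Since the Chevalley--Eilenberg complex $\Hom(\bigwedge^\bullet \mfg^\theta_\mbt, -)$ commutes with direct sums in the coefficient module (as $\mfg^\theta_\mbt$ is finite-dimensional), we obtain
\[
\rH^2(\mfg^\theta_\mbt, U(\mfg^\theta_\mbt)^{\mathrm{ad}}) \cong \bigoplus_i \rH^2(\mfg^\theta_\mbt, V_i) = 0
\]
by Lemma~\ref{LemSecWhite}, which applies uniformly in both the non-Hermitian case (where $\mfg^\theta_\mbt$ is semisimple) and the Hermitian case (where the derived subalgebra is semisimple of codimension one, by Lemma~\ref{lem:mbt-vs-phi} and the corresponding property of $\mfg^\theta$).

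The main conceptual step is the Cartan--Eilenberg reduction together with the decomposition of $U(\mfg^\theta_\mbt)^{\mathrm{ad}}$ into locally finite pieces; everything else is formal, and no $h$-adic convergence issue arises because the inductive extension only proceeds one power of $h$ at a time, so the resulting isomorphism is well-defined as a power series limit.
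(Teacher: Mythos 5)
Your proof is correct and follows the same route the paper intends but does not spell out: the paper's own argument consists only of noting that Lemma~\ref{LemSecWhite} applies to $\mfg^\theta_\mbt$ and that Proposition~\ref{Propform} exhibits $U^\mbt_h(\mfg^\theta)$ as a formal deformation of $U(\mfg^\theta_\mbt)$, leaving the reader to invoke standard deformation rigidity. You have filled in exactly the intermediate steps this requires — reducing triviality of the deformation to $\HH^2(U(\mfg^\theta_\mbt), U(\mfg^\theta_\mbt))=0$, passing to Lie algebra cohomology via Cartan--Eilenberg, and handling the infinite-dimensionality of the adjoint module by decomposing the PBW filtration (on which the centre acts trivially) into finite-dimensional summands — and each step is sound.
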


Note that since $\mfg^\theta_\mbt$ for $\mbt=(\mbc,\mbs)\in\mcT$ depends only $\mbt^{(0)}$, the same result holds for every $\mbt\in\mcT$ such that $\mbt^{(0)}=\mbt'^{(0)}$ for some $\mbt'\in\mcT^*_\C$.

\begin{bibdiv}
\begin{biblist}

\bib{MR0153782}{article}{
      author={Araki, Sh{\^o}r{\^o}},
       title={On root systems and an infinitesimal classification of
  irreducible symmetric spaces},
        date={1962},
     journal={J. Math. Osaka City Univ.},
      volume={13},
       pages={1\ndash 34},
      review={\MR{0153782}},
}

\bib{MR3414769}{article}{
      author={Balagovi{\'c}, Martina},
      author={Kolb, Stefan},
       title={The bar involution for quantum symmetric pairs},
        date={2015},
        ISSN={1088-4165},
     journal={Represent. Theory},
      volume={19},
       pages={186\ndash 210},
      eprint={\href{http://arxiv.org/abs/1409.5074}{\texttt{arXiv:1409.5074
  [math.QA]}}},
         url={http://dx.doi.org/10.1090/ert/469},
         doi={10.1090/ert/469},
      review={\MR{3414769}},
}

\bib{MR3905136}{article}{
      author={Balagovi\'{c}, Martina},
      author={Kolb, Stefan},
       title={Universal {K}-matrix for quantum symmetric pairs},
        date={2019},
        ISSN={0075-4102},
     journal={J. Reine Angew. Math.},
      volume={747},
       pages={299\ndash 353},
      eprint={\href{http://arxiv.org/abs/1507.06276}{\texttt{arXiv:1507.06276 [math.QA]}}},
         url={https://doi.org/10.1515/crelle-2016-0012},
         doi={10.1515/crelle-2016-0012},
      review={\MR{3905136}},
}

\bib{MR3864017}{article}{
      author={Bao, Huanchen},
      author={Wang, Weiqiang},
       title={A new approach to {K}azhdan-{L}usztig theory of type {$B$} via
  quantum symmetric pairs},
        date={2018},
        ISSN={0303-1179},
     journal={Ast\'{e}risque},
      number={402},
       pages={vii+134},
      eprint={\href{http://arxiv.org/abs/1310.0103}{\texttt{arXiv:1310.0103
  [math.RT]}}},
      review={\MR{3864017}},
}

\bib{MR0354655}{book}{
   editor={Berthelot, P.},
   editor={Grothendieck, A.},
   editor={Illusie, L.},
   title={Th\'{e}orie des intersections et th\'{e}or\`eme de Riemann-Roch},
   language={French},
   series={Lecture Notes in Mathematics, Vol. 225},
   note={S\'{e}minaire de G\'{e}om\'{e}trie Alg\'{e}brique du Bois-Marie 1966--1967 (SGA 6);
   Avec la collaboration de D. Ferrand, J. P. Jouanolou, O. Jussila, S. Kleiman, M.
   Raynaud et J. P. Serre},
   publisher={Springer-Verlag, Berlin-New York},
   date={1971},
   pages={xii+700},
   review={\MR{0354655}},
}

\bib{MR1661166}{book}{
   author={Borel, Armand},
   title={Semisimple groups and Riemannian symmetric spaces},
   series={Texts and Readings in Mathematics},
   volume={16},
   publisher={Hindustan Book Agency, New Delhi},
   date={1998},
   pages={x+136},
   isbn={81-85931-18-6},
   review={\MR{1661166}},
}

\bib{bourbaki-lie-fr-7-8}{book}{
      author={Bourbaki, Nicolas},
       title={\'{E}l\'{e}ments de math\'{e}matique. {G}roupes et alg\`{e}bres
  de {L}ie. chapitre 7 et 8},
   publisher={Springer, Berlin-Heidelberg},
        date={2007},
        note={Reprint of the 1975 (Hermann) edition},
}

\bib{MR2892463}{article}{
      author={Brochier, Adrien},
       title={A {K}ohno-{D}rinfeld theorem for the monodromy of cyclotomic {KZ}
  connections},
        date={2012},
        ISSN={0010-3616},
     journal={Comm. Math. Phys.},
      volume={311},
      number={1},
       pages={55\ndash 96},
      eprint={\href{http://arxiv.org/abs/1011.4285}{\texttt{arXiv:1011.4285
  [math.QA]}}},
         url={http://dx.doi.org/10.1007/s00220-012-1424-0},
         doi={10.1007/s00220-012-1424-0},
      review={\MR{2892463}},
}

\bib{MR3248737}{article}{
      author={Brochier, Adrien},
       title={Cyclotomic associators and finite type invariants for tangles in
  the solid torus},
        date={2013},
        ISSN={1472-2747},
     journal={Algebr. Geom. Topol.},
      volume={13},
      number={6},
       pages={3365\ndash 3409},
      eprint={\href{http://arxiv.org/abs/1209.0417}{\texttt{arXiv:1209.0417
  [math.QA]}}},
         url={http://dx.doi.org/10.2140/agt.2013.13.3365},
         doi={10.2140/agt.2013.13.3365},
      review={\MR{3248737}},
}

\bib{MR2233127}{article}{
      author={Calaque, Damien},
       title={Quantization of formal classical dynamical {$r$}-matrices: the
  reductive case},
        date={2006},
        ISSN={0001-8708},
     journal={Adv. Math.},
      volume={204},
      number={1},
       pages={84\ndash 100},
  eprint={\href{http://arxiv.org/abs/math/0412042}{\texttt{arXiv:math/0412042
  [math.QA]}}},
         url={https://doi.org/10.1016/j.aim.2005.05.009},
         doi={10.1016/j.aim.2005.05.009},
      review={\MR{2233127}},
}

\bib{MR774205}{article}{
      author={Cherednik, I.~V.},
       title={Factorizing particles on a half line, and root systems},
        date={1984},
        ISSN={0564-6162},
     journal={Teoret. Mat. Fiz.},
      volume={61},
      number={1},
       pages={35\ndash 44},
      review={\MR{774205}},
}

\bib{MR4070299}{article}{
      author={De~Commer, K.},
      author={Matassa, M.},
       title={Quantum flag manifolds, quantum symmetric spaces and their
  associated universal {$K$}-matrices},
        date={2020},
        ISSN={0001-8708},
     journal={Adv. Math.},
      volume={366},
       pages={107029, 100},
      eprint={\href{http://arxiv.org/abs/1809.08471}{\texttt{arXiv:1809.08471
  [math.QA]}}},
         url={https://doi-org.ezproxy.uio.no/10.1016/j.aim.2020.107029},
         doi={10.1016/j.aim.2020.107029},
      review={\MR{4070299}},
}

\bib{MR3943480}{article}{
      author={De~Commer, Kenny},
      author={Neshveyev, Sergey},
      author={Tuset, Lars},
      author={Yamashita, Makoto},
       title={Ribbon braided module categories, quantum symmetric pairs and
  {K}nizhnik-{Z}amolodchikov equations},
        date={2019},
        ISSN={0010-3616},
     journal={Comm. Math. Phys.},
      volume={367},
      number={3},
       pages={717\ndash 769},
      eprint={\href{http://arxiv.org/abs/1712.08047}{\texttt{arXiv:1712.08047
  [math.QA]}}},
         url={https://doi.org/10.1007/s00220-019-03317-7},
         doi={10.1007/s00220-019-03317-7},
      review={\MR{3943480}},
}

\bib{MR3121622}{article}{
      author={De~Commer, Kenny},
      author={Yamashita, Makoto},
       title={Tannaka-{K}re\u\i n duality for compact quantum homogeneous
  spaces. {I}. {G}eneral theory},
        date={2013},
        ISSN={1201-561X},
     journal={Theory Appl. Categ.},
      volume={28},
       pages={No. 31, 1099\ndash 1138},
      eprint={\href{http://arxiv.org/abs/1211.6552}{\texttt{arXiv:1211.6552
  [math.OA]}}},
      review={\MR{3121622}},
}

\bib{MR1644317}{article}{
      author={tom Dieck, Tammo},
      author={H{\"a}ring-Oldenburg, Reinhard},
       title={Quantum groups and cylinder braiding},
        date={1998},
        ISSN={0933-7741},
     journal={Forum Math.},
      volume={10},
      number={5},
       pages={619\ndash 639},
         url={http://dx.doi.org/10.1515/form.10.5.619},
         doi={10.1515/form.10.5.619},
      review={\MR{1644317}},
}

\bib{MR0074780}{article}{
      author={Dixmier, J.},
       title={Cohomologie des alg\`ebres de {L}ie nilpotentes},
        date={1955},
        ISSN={0001-6969},
     journal={Acta Sci. Math. Szeged},
      volume={16},
       pages={246\ndash 250},
      review={\MR{0074780}},
}

\bib{MR2102846}{article}{
      author={Dolgushev, Vasiliy},
       title={Covariant and equivariant formality theorems},
        date={2005},
        ISSN={0001-8708},
     journal={Adv. Math.},
      volume={191},
      number={1},
       pages={147\ndash 177},
  eprint={\href{http://arxiv.org/abs/math/0307212}{\texttt{arXiv:math/0307212
  [math.QA]}}},
         url={http://dx.doi.org/10.1016/j.aim.2004.02.001},
         doi={10.1016/j.aim.2004.02.001},
      review={\MR{2102846 (2006c:53101)}},
}

\bib{MR1357743}{article}{
      author={Donin, J.},
      author={Gurevich, D.},
       title={Some {P}oisson structures associated to {D}rinfeld-{J}imbo {$R$}-matrices and their quantization},
        date={1995},
        ISSN={0021-2172},
     journal={Israel J. Math.},
      volume={92},
      number={1-3},
       pages={23\ndash 32},
         url={http://dx.doi.org/10.1007/BF02762068},
         doi={10.1007/BF02762068},
      review={\MR{1357743}},
}

\bib{MR1705663}{article}{
      author={Donin, J.},
      author={Gurevich, D.},
      author={Shnider, S.},
       title={Double quantization on some orbits in the coadjoint
  representations of simple {L}ie groups},
        date={1999},
        ISSN={0010-3616},
     journal={Comm. Math. Phys.},
      volume={204},
      number={1},
       pages={39\ndash 60},
  eprint={\href{http://arxiv.org/abs/math/9909160}{\texttt{arXiv:math/9909160
  [math.QA]}}},
         url={http://dx.doi.org/10.1007/s002200050636},
         doi={10.1007/s002200050636},
      review={\MR{1705663}},
}

\bib{MR1998103}{article}{
      author={Donin, J.},
      author={Mudrov, A.},
       title={Reflection equation, twist, and equivariant quantization},
        date={2003},
        ISSN={0021-2172},
     journal={Israel J. Math.},
      volume={136},
       pages={11\ndash 28},
  eprint={\href{http://arxiv.org/abs/math/0204295}{\texttt{arXiv:math/0204295
  [math.QA]}}},
         url={https://doi.org/10.1007/BF02807191},
         doi={10.1007/BF02807191},
      review={\MR{1998103}},
}

\bib{MR1964382}{article}{
      author={Donin, J.},
      author={Mudrov, A.},
       title={Method of quantum characters in equivariant quantization},
        date={2003},
        ISSN={0010-3616},
     journal={Comm. Math. Phys.},
      volume={234},
      number={3},
       pages={533\ndash 555},
  eprint={\href{http://arxiv.org/abs/math/0204298}{\texttt{arXiv:math/0204298
  [math.QA]}}},
         url={https://doi.org/10.1007/s00220-002-0771-7},
         doi={10.1007/s00220-002-0771-7},
      review={\MR{1964382}},
}

\bib{MR1390978}{article}{
      author={Donin, Joseph},
      author={Shnider, Steven},
       title={Cohomological construction of quantized universal enveloping
  algebras},
        date={1997},
        ISSN={0002-9947},
     journal={Trans. Amer. Math. Soc.},
      volume={349},
      number={4},
       pages={1611\ndash 1632},
  eprint={\href{http://arxiv.org/abs/q-alg/9506013}{\texttt{arXiv:q-alg/9506013
  [math.QA]}}},
         url={http://dx.doi.org/10.1090/S0002-9947-97-01787-X},
         doi={10.1090/S0002-9947-97-01787-X},
      review={\MR{1390978}},
}

\bib{MR1025154}{article}{
      author={Drinfel{\cprime}d, V.~G.},
       title={On almost cocommutative {H}opf algebras},
        date={1989},
        ISSN={0234-0852},
     journal={Algebra i Analiz},
      volume={1},
      number={2},
       pages={30\ndash 46},
        note={Translation in Leningrad Math. J. \textbf{1} (1990), no. 2,
  321--342},
      review={\MR{1025154 (91b:16046)}},
}

\bib{MR1047964}{article}{
      author={Drinfeld, V.~G.},
       title={Quasi-{H}opf algebras},
        date={1989},
        ISSN={0234-0852},
     journal={Algebra i Analiz},
      volume={1},
      number={6},
       pages={114\ndash 148},
        note={Translation in Leningrad Math. J. \textbf{1} (1990), no. 6,
  1419--1457},
      review={\MR{1047964 (91b:17016)}},
}

\bib{MR1080203}{article}{
   author={Drinfeld, V. G.},
   title={On quasitriangular quasi-Hopf algebras and on a group that is
   closely connected with ${\rm Gal}(\overline{\bf Q}/{\bf Q})$},
   language={Russian},
   journal={Algebra i Analiz},
   volume={2},
   date={1990},
   number={4},
   pages={149--181},
   issn={0234-0852},
   translation={
      journal={Leningrad Math. J.},
      volume={2},
      date={1991},
      number={4},
      pages={829--860},
      issn={1048-9924},
   },
   review={\MR{1080203}},
}

\bib{MR1243249}{article}{
      author={Drinfeld, V.~G.},
       title={On {P}oisson homogeneous spaces of {P}oisson-{L}ie groups},
        date={1993},
        ISSN={0564-6162},
     journal={Teoret. Mat. Fiz.},
      volume={95},
      number={2},
       pages={226\ndash 227},
         url={http://dx.doi.org/10.1007/BF01017137},
      review={\MR{1243249 (94k:58045)}},
}

\bib{MR2383601}{article}{
      author={Enriquez, Benjamin},
       title={Quasi-reflection algebras and cyclotomic associators},
        date={2007},
        ISSN={1022-1824},
     journal={Selecta Math. (N.S.)},
      volume={13},
      number={3},
       pages={391\ndash 463},
  eprint={\href{http://arxiv.org/abs/math/0408035}{\texttt{arXiv:math/0408035
  [math.QA]}}},
         url={http://dx.doi.org/10.1007/s00029-007-0048-2},
         doi={10.1007/s00029-007-0048-2},
      review={\MR{2383601 (2010e:17009)}},
}

\bib{MR2126485}{article}{
      author={Enriquez, Benjamin},
      author={Etingof, Pavel},
       title={Quantization of classical dynamical {$r$}-matrices with
  nonabelian base},
        date={2005},
        ISSN={0010-3616},
     journal={Comm. Math. Phys.},
      volume={254},
      number={3},
       pages={603\ndash 650},
  eprint={\href{http://arxiv.org/abs/math/0311224}{\texttt{arXiv:math/0311224
  [math.QA]}}},
         url={http://dx.doi.org/10.1007/s00220-004-1243-z},
         doi={10.1007/s00220-004-1243-z},
      review={\MR{2126485 (2006a:17011)}},
}

\bib{MR1612160}{article}{
      author={Etingof, Pavel},
      author={Varchenko, Alexander},
       title={Geometry and classification of solutions of the classical
  dynamical {Y}ang-{B}axter equation},
        date={1998},
        ISSN={0010-3616},
     journal={Comm. Math. Phys.},
      volume={192},
      number={1},
       pages={77\ndash 120},
  eprint={\href{http://arxiv.org/abs/q-alg/9703040}{\texttt{arXiv:q-alg/9703040
  [math.QA]}}},
         url={https://doi.org/10.1007/s002200050292},
         doi={10.1007/s002200050292},
      review={\MR{1612160}},
}

\bib{MR1404026}{inproceedings}{
      author={Felder, Giovanni},
       title={Conformal field theory and integrable systems associated to
  elliptic curves},
        date={1995},
   booktitle={Proceedings of the {I}nternational {C}ongress of
  {M}athematicians, {V}ol.\ 1, 2 ({Z}{\"u}rich, 1994)},
   publisher={Birkh{\"a}user, Basel},
       pages={1247\ndash 1255},
  eprint={\href{http://arxiv.org/abs/hep-th/9407154}{\texttt{arXiv:hep-th/9407154
  [hep-th]}}},
      review={\MR{1404026 (97m:81015)}},
}

\bib{MR2102330}{article}{
      author={Foth, P.},
      author={Lu, J.-H.},
       title={A {P}oisson structure on compact symmetric spaces},
        date={2004},
        ISSN={0010-3616},
     journal={Comm. Math. Phys.},
      volume={251},
      number={3},
       pages={557\ndash 566},
  eprint={\href{http://arxiv.org/abs/math/0208051}{\texttt{arXiv:math/0208051
  [math.SG]}}},
         url={http://dx.doi.org/10.1007/s00220-004-1178-4},
         doi={10.1007/s00220-004-1178-4},
      review={\MR{2102330}},
}

\bib{MR1940926}{inproceedings}{
      author={Golubeva, V.~A.},
      author={Leksin, V.},
       title={On a generalization of the {D}rinfeld-{K}ohno theorem},
        date={2000},
   booktitle={Proceedings of the {S}econd {ISAAC} {C}ongress, {V}ol. 2
  ({F}ukuoka, 1999)},
      series={Int. Soc. Anal. Appl. Comput.},
      volume={8},
   publisher={Kluwer Acad. Publ., Dordrecht},
       pages={1371\ndash 1386},
         url={http://dx.doi.org/10.1007/978-1-4613-0271-1_61},
      review={\MR{1940926 (2004a:32023)}},
}

\bib{arXiv:math/991141}{misc}{
      author={Gurevich, D.},
      author={Saponov, P.},
       title={Quantum sphere via reflection equation algebra},
         how={preprint},
        date={1999},
      eprint={\href{http://arxiv.org/abs/math/991141}{\texttt{arXiv:math/991141
  [math.QA]}}},
}

\bib{MR0072427}{article}{
      author={Harish-Chandra},
       title={Representations of semisimple {L}ie groups. {IV}},
        date={1955},
        ISSN={0002-9327},
     journal={Amer. J. Math.},
      volume={77},
       pages={743\ndash 777},
         url={http://dx.doi.org/10.2307/2372596},
         doi={10.2307/2372596},
      review={\MR{0072427}},
}

\bib{MR0082056}{article}{
      author={Harish-Chandra},
       title={Representations of semisimple {L}ie groups. {VI}. {I}ntegrable
  and square-integrable representations},
        date={1956},
        ISSN={0002-9327},
     journal={Amer. J. Math.},
      volume={78},
       pages={564\ndash 628},
         url={http://dx.doi.org/10.2307/2372674},
         doi={10.2307/2372674},
      review={\MR{0082056}},
}

\bib{MR1834454}{book}{
      author={Helgason, Sigurdur},
       title={Differential geometry, {L}ie groups, and symmetric spaces},
      series={Graduate Studies in Mathematics},
   publisher={American Mathematical Society, Providence, RI},
        date={2001},
      volume={34},
        ISBN={0-8218-2848-7},
         url={http://dx.doi.org/10.1090/gsm/034},
         doi={10.1090/gsm/034},
        note={Corrected reprint of the 1978 original},
      review={\MR{1834454}},
}

\bib{MR1359532}{book}{
      author={Jantzen, Jens~Carsten},
       title={Lectures on quantum groups},
      series={Graduate Studies in Mathematics},
   publisher={American Mathematical Society, Providence, RI},
        date={1996},
      volume={6},
        ISBN={0-8218-0478-2},
      review={\MR{1359532}},
}

\bib{MR1104219}{book}{
      author={Kac, Victor~G.},
       title={Infinite-dimensional {L}ie algebras},
     edition={Third},
   publisher={Cambridge University Press, Cambridge},
        date={1990},
        ISBN={0-521-37215-1; 0-521-46693-8},
         url={http://dx.doi.org/10.1017/CBO9780511626234},
         doi={10.1017/CBO9780511626234},
      review={\MR{1104219 (92k:17038)}},
}

\bib{MR1155464}{article}{
      author={Kac, V.~G.},
      author={Wang, S.~P.},
       title={On automorphisms of {K}ac-{M}oody algebras and groups},
        date={1992},
        ISSN={0001-8708},
     journal={Adv. Math.},
      volume={92},
      number={2},
       pages={129\ndash 195},
         url={http://dx.doi.org/10.1016/0001-8708(92)90063-Q},
      review={\MR{1155464}},
}

\bib{MR1485682}{article}{
      author={Karolinski{\u \i}, E.~A.},
       title={Classification of {P}oisson homogeneous spaces of a compact
  {P}oisson-{L}ie group},
        date={1996},
        ISSN={1027-1767},
     journal={Mat. Fiz. Anal. Geom.},
      volume={3},
      number={3-4},
       pages={274\ndash 289},
      review={\MR{1485682}},
}

\bib{MR1321145}{book}{
      author={Kassel, Christian},
       title={Quantum groups},
      series={Graduate Texts in Mathematics},
   publisher={Springer-Verlag},
     address={New York},
        date={1995},
      volume={155},
        ISBN={0-387-94370-6},
      review={\MR{1321145 (96e:17041)}},
}

\bib{MR1920389}{book}{
   author={Knapp, Anthony W.},
   title={Lie groups beyond an introduction},
   series={Progress in Mathematics},
   volume={140},
   edition={2},
   publisher={Birkh\"{a}user Boston, Inc., Boston, MA},
   date={2002},
   pages={xviii+812},
   isbn={0-8176-4259-5},
   review={\MR{1920389}},
}

\bib{MR927394}{article}{
      author={Kohno, Toshitake},
       title={Monodromy representations of braid groups and {Y}ang-{B}axter
  equations},
        date={1987},
        ISSN={0373-0956},
     journal={Ann. Inst. Fourier (Grenoble)},
      volume={37},
      number={4},
       pages={139\ndash 160},
         url={http://www.numdam.org/item?id=AIF_1987__37_4_139_0},
      review={\MR{927394 (89h:17030)}},
}

\bib{MR2453228}{article}{
      author={Kolb, Stefan},
       title={Quantum symmetric pairs and the reflection equation},
        date={2008},
        ISSN={1386-923X},
     journal={Algebr. Represent. Theory},
      volume={11},
      number={6},
       pages={519\ndash 544},
  eprint={\href{http://arxiv.org/abs/math/0512581}{\texttt{arXiv:math/0512581
  [math.QA]}}},
         url={http://dx.doi.org/10.1007/s10468-008-9093-6},
         doi={10.1007/s10468-008-9093-6},
      review={\MR{2453228}},
}

\bib{MR3269184}{article}{
      author={Kolb, Stefan},
       title={Quantum symmetric {K}ac-{M}oody pairs},
        date={2014},
        ISSN={0001-8708},
     journal={Adv. Math.},
      volume={267},
       pages={395\ndash 469},
      eprint={\href{http://arxiv.org/abs/1207.6036}{\texttt{arXiv:1207.6036
  [math.QA]}}},
         url={http://dx.doi.org/10.1016/j.aim.2014.08.010},
         doi={10.1016/j.aim.2014.08.010},
      review={\MR{3269184}},
}

\bib{MR4048733}{article}{
      author={Kolb, Stefan},
       title={Braided module categories via quantum symmetric pairs},
        date={2020},
        ISSN={0024-6115},
     journal={Proc. Lond. Math. Soc. (3)},
      volume={121},
      number={1},
       pages={1\ndash 31},
      eprint={\href{http://arxiv.org/abs/1705.04238}{\texttt{arXiv:1705.04238
  [math.QA]}}},
         url={https://doi-org.ezproxy.uio.no/10.1112/plms.12303},
         doi={10.1112/plms.12303},
      review={\MR{4048733}},
}

\bib{MR2565052}{article}{
      author={Kolb, Stefan},
      author={Stokman, Jasper~V.},
       title={Reflection equation algebras, coideal subalgebras, and their
  centres},
        date={2009},
        ISSN={1022-1824},
     journal={Selecta Math. (N.S.)},
      volume={15},
      number={4},
       pages={621\ndash 664},
      eprint={\href{http://arxiv.org/abs/0812.4459}{\texttt{arXiv:0812.4459
  [math.QA]}}},
         url={http://dx.doi.org/10.1007/s00029-009-0007-1},
         doi={10.1007/s00029-009-0007-1},
      review={\MR{2565052}},
}

\bib{MR1215439}{article}{
      author={Koornwinder, Tom~H.},
       title={Askey-{W}ilson polynomials as zonal spherical functions on the
  {${\rm SU}(2)$} quantum group},
        date={1993},
        ISSN={0036-1410},
     journal={SIAM J. Math. Anal.},
      volume={24},
      number={3},
       pages={795\ndash 813},
         url={http://dx.doi.org/10.1137/0524049},
      review={\MR{1215439 (94k:33042)}},
}

\bib{MR1289327}{article}{
      author={Leibman, A.},
       title={Some monodromy representations of generalized braid groups},
        date={1994},
        ISSN={0010-3616},
     journal={Comm. Math. Phys.},
      volume={164},
      number={2},
       pages={293\ndash 304},
         url={http://projecteuclid.org/euclid.cmp/1104270834},
      review={\MR{1289327 (95k:52019)}},
}

\bib{MR1717368}{article}{
      author={Letzter, Gail},
       title={Symmetric pairs for quantized enveloping algebras},
        date={1999},
        ISSN={0021-8693},
     journal={J. Algebra},
      volume={220},
      number={2},
       pages={729\ndash 767},
         url={http://dx.doi.org/10.1006/jabr.1999.8015},
      review={\MR{1717368}},
}

\bib{MR1742961}{article}{
      author={Letzter, Gail},
       title={Harish-{C}handra modules for quantum symmetric pairs},
        date={2000},
        ISSN={1088-4165},
     journal={Represent. Theory},
      volume={4},
       pages={64\ndash 96},
         url={http://dx.doi.org/10.1090/S1088-4165-00-00087-X},
         doi={10.1090/S1088-4165-00-00087-X},
      review={\MR{1742961}},
}

\bib{MR2954392}{book}{
      author={Loday, Jean-Louis},
      author={Vallette, Bruno},
       title={Algebraic operads},
      series={Grundlehren der Mathematischen Wissenschaften [Fundamental
  Principles of Mathematical Sciences]},
   publisher={Springer, Heidelberg},
        date={2012},
      volume={346},
        ISBN={978-3-642-30361-6},
         url={http://dx.doi.org/10.1007/978-3-642-30362-3},
      review={\MR{2954392}},
}

\bib{MR1037412}{article}{
   author={Lu, Jiang-Hua},
   author={Weinstein, Alan},
   title={Poisson Lie groups, dressing transformations, and Bruhat
   decompositions},
   journal={J. Differential Geom.},
   volume={31},
   date={1990},
   number={2},
   pages={501--526},
   issn={0022-040X},
   review={\MR{1037412}},
}

\bib{MR2759715}{book}{
   author={Lusztig, George},
   title={Introduction to quantum groups},
   series={Modern Birkh\"{a}user Classics},
   note={Reprint of the 1994 edition},
   publisher={Birkh\"{a}user/Springer, New York},
   date={2010},
   pages={xiv+346},
   isbn={978-0-8176-4716-2},
   review={\MR{2759715}},
   doi={10.1007/978-0-8176-4717-9},
}
	
\bib{MR0161943}{article}{
      author={Moore, Calvin~C.},
       title={Compactifications of symmetric spaces. {II}. {T}he {C}artan
  domains},
        date={1964},
        ISSN={0002-9327},
     journal={Amer. J. Math.},
      volume={86},
       pages={358\ndash 378},
         url={https://doi.org/10.2307/2373170},
      review={\MR{0161943}},
}

\bib{MR1917138}{article}{
      author={Mudrov, A.},
       title={Characters of {$\scr U_q({\rm gl}(n))$}-reflection equation
  algebra},
        date={2002},
        ISSN={0377-9017},
     journal={Lett. Math. Phys.},
      volume={60},
      number={3},
       pages={283\ndash 291},
  eprint={\href{http://arxiv.org/abs/math/0204296}{\texttt{arXiv:math/0204296
  [math.QA]}}},
         url={https://doi.org/10.1023/A:1016283527111},
         doi={10.1023/A:1016283527111},
      review={\MR{1917138}},
}

\bib{MR3426224}{article}{
      author={Neshveyev, Sergey},
       title={Duality theory for nonergodic actions},
        date={2014},
        ISSN={1867-5778},
     journal={M{\"u}nster J. Math.},
      volume={7},
      number={2},
       pages={413\ndash 437},
      eprint={\href{http://arxiv.org/abs/1303.6207}{\texttt{arXiv:1303.6207
  [math.OA]}}},
      review={\MR{3426224}},
}

\bib{MR2832264}{article}{
      author={Neshveyev, Sergey},
      author={Tuset, Lars},
       title={Notes on the {K}azhdan-{L}usztig theorem on equivalence of the
  {D}rinfeld category and the category of {$U_q\germ g$}-modules},
        date={2011},
        ISSN={1386-923X},
     journal={Algebr. Represent. Theory},
      volume={14},
      number={5},
       pages={897\ndash 948},
      eprint={\href{http://arxiv.org/abs/0711.4302}{\texttt{arXiv:0711.4302
  [math.QA]}}},
         url={http://dx.doi.org/10.1007/s10468-010-9223-9},
         doi={10.1007/s10468-010-9223-9},
      review={\MR{2832264 (2012j:17022)}},
}

\bib{MR1413836}{article}{
      author={Noumi, Masatoshi},
       title={Macdonald's symmetric polynomials as zonal spherical functions on
  some quantum homogeneous spaces},
        date={1996},
        ISSN={0001-8708},
     journal={Adv. Math.},
      volume={123},
      number={1},
       pages={16\ndash 77},
  eprint={\href{http://arxiv.org/abs/math/9503224}{\texttt{arXiv:math/9503224
  [math.QA]}}},
         url={https://doi.org/10.1006/aima.1996.0066},
         doi={10.1006/aima.1996.0066},
      review={\MR{1413836}},
}

\bib{MR1064110}{book}{
   author={Onishchik, A. L.},
   author={Vinberg, {\`E}. B.},
   title={Lie groups and algebraic groups},
   series={Springer Series in Soviet Mathematics},
   note={Translated from the Russian and with a preface by D. A. Leites},
   publisher={Springer-Verlag, Berlin},
   date={1990},
   pages={xx+328},
   isbn={3-540-50614-4},
   review={\MR{1064110}},
   doi={10.1007/978-3-642-74334-4},
}

\bib{MR1976459}{article}{
      author={Ostrik, Victor},
       title={Module categories, weak {H}opf algebras and modular invariants},
        date={2003},
        ISSN={1083-4362},
     journal={Transform. Groups},
      volume={8},
      number={2},
       pages={177\ndash 206},
  eprint={\href{http://arxiv.org/abs/math/0111139}{\texttt{arXiv:math/0111139
  [math.QA]}}},
         url={http://dx.doi.org/10.1007/s00031-003-0515-6},
         doi={10.1007/s00031-003-0515-6},
      review={\MR{1976459 (2004h:18006)}},
}

\bib{MR919322}{article}{
      author={Podle{\'s}, P.},
       title={Quantum spheres},
        date={1987},
        ISSN={0377-9017},
     journal={Lett. Math. Phys.},
      volume={14},
      number={3},
       pages={193\ndash 202},
      review={\MR{919322 (89b:46081)}},
}

\bib{MR1087382}{article}{
      author={Sheu, Albert Jeu-Liang},
       title={Quantization of the {P}oisson {${\rm SU}(2)$} and its {P}oisson
  homogeneous space---the {$2$}-sphere},
        date={1991},
        ISSN={0010-3616},
     journal={Comm. Math. Phys.},
      volume={135},
      number={2},
       pages={217\ndash 232},
         url={http://projecteuclid.org/getRecord?id=euclid.cmp/1104202024},
        note={With an appendix by Jiang-Hua Lu and Alan Weinstein},
      review={\MR{1087382 (91m:58011)}},
}

\bib{MR1995786}{article}{
      author={Stokman, Jasper~V.},
       title={The quantum orbit method for generalized flag manifolds},
        date={2003},
        ISSN={1073-2780},
     journal={Math. Res. Lett.},
      volume={10},
      number={4},
       pages={469\ndash 481},
  eprint={\href{http://arxiv.org/abs/math/0206245}{\texttt{arXiv:math/0206245
  [math.QA]}}},
      review={\MR{1995786 (2004e:20086)}},
}

\bib{MR0214638}{article}{
      author={Tits, J.},
       title={Sur les constantes de structure et le th\'eor\`eme d'existence
  des alg\`ebres de {L}ie semi-simples},
        date={1966},
        ISSN={0073-8301},
     journal={Inst. Hautes \'Etudes Sci. Publ. Math.},
      number={31},
       pages={21\ndash 58},
         url={http://www.numdam.org/item?id=PMIHES_1966__31__21_0},
      review={\MR{0214638}},
}

\bib{MR1378538}{article}{
      author={Van~Daele, A.},
       title={Discrete quantum groups},
        date={1996},
        ISSN={0021-8693},
     journal={J. Algebra},
      volume={180},
      number={2},
       pages={431\ndash 444},
         url={http://dx.doi.org/10.1006/jabr.1996.0075},
      review={\MR{1378538 (97a:16076)}},
}

\bib{MR890482}{article}{
      author={Woronowicz, S.~L.},
       title={Twisted {${\rm SU}(2)$} group. {A}n example of a noncommutative
  differential calculus},
        date={1987},
        ISSN={0034-5318},
     journal={Publ. Res. Inst. Math. Sci.},
      volume={23},
      number={1},
       pages={117\ndash 181},
         url={http://dx.doi.org/10.2977/prims/1195176848},
      review={\MR{890482 (88h:46130)}},
}

\bib{MR2431117}{article}{
      author={Zusmanovich, Pasha},
       title={A converse to the second {W}hitehead lemma},
        date={2008},
        ISSN={0949-5932},
     journal={J. Lie Theory},
      volume={18},
      number={2},
       pages={295\ndash 299},
      eprint={\href{http://arxiv.org/abs/0704.3864}{\texttt{arXiv:0704.3864
  [math.RA]}}},
      review={\MR{2431117}},
}

\end{biblist}
\end{bibdiv}
\end{document}